\documentclass{article}
\usepackage[english]{babel}
\usepackage{graphicx}
\usepackage{slashed}
\usepackage{stmaryrd}
\usepackage{calligra}
\usepackage{mathtools}
\usepackage{amsmath,amsfonts,amssymb}

\usepackage{ntheorem}
\usepackage{dsfont}
\usepackage{verbatim}
\usepackage{float}
\usepackage{accents}
\usepackage{mathrsfs}
\usepackage{cancel}
\usepackage{amsmath}
\usepackage{amssymb}
\usepackage{hyperref}
\usepackage{tikz}
\usepackage{pgfplots}
\usepackage{mathtools}
\let\savering\ring
\let\ring\relax
\usepackage{mathabx}
\let\ring\savering
\pgfplotsset{compat=1.14}
\usetikzlibrary{patterns}
\usetikzlibrary{positioning,arrows,arrows.meta}
\usepackage{geometry}
\geometry{hmargin=2.5cm,vmargin=1.5cm}
\usepackage{lipsum} 
\usepackage{titlesec}

\titleformat{\subsection}[runin]
       {\normalfont\bfseries}
       {\thesubsection}
       {0.5em}
       {}
       [.]
       
\titleformat{\subsubsection}[runin]
       {\normalfont\bfseries}
       {\thesubsubsection}
       {0.5em}
       {}
       [.]

 \DeclareFontFamily{U}{dutchcal}{\skewchar \font =45}
  \DeclareFontShape{U}{dutchcal}{m}{n}{
    <-> dutchcal-r}{}
  \DeclareFontShape{U}{dutchcal}{b}{n}{
    <-> dutchcal-b}{}
  \DeclareMathAlphabet{\mdutchcal}{U}{dutchcal}{m}{n}
  \SetMathAlphabet{\mdutchcal}{bold}{U}{dutchcal}{b}{n}
  \DeclareMathAlphabet{\mdutchbcal} {U}{dutchcal}{b}{n}

\usepackage{enumitem}
\theoremseparator{.} 
\setcounter{tocdepth}{2}
\newtheorem{Th}{Theorem}[section]
\newtheorem{Def}[Th]{Definition}
\newtheorem{Rq}[Th]{Remark}
\newtheorem{Pro}[Th]{Proposition}
\newtheorem{Cor}[Th]{Corollary}
\newtheorem{Lem}[Th]{Lemma}
\theoremstyle{empty}
\newtheorem{refproof}{Proof}

\newcommand{\R}{\mathbb{R}}
\newcommand{\C}{\mathscr{C}}
\newcommand{\T}{\mathbf{T}}
\newcommand{\X}{\mathbf{X}}
\newcommand{\XX}{\mathrm{X}}
\newcommand{\V}{\mathbf{V}}
\newcommand{\Z}{\mathbf{Z}}
\newcommand{\K}{\widehat{\mathbb{P}}_S}
\newcommand{\dr}{\mathrm{d}}

\pdfcompresslevel=0
\pdfobjcompresslevel=0

\newenvironment{proof}{\noindent\textit{Proof.~}}{\hfill$\Box$\bigbreak} 

\title{Scattering map for the Vlasov-Maxwell system around source-free electromagnetic fields}

\author{L\'eo Bigorgne\footnote{Univ Rennes, CNRS, IRMAR - UMR 6625, F-35000 Rennes, France.
{\em E-mail address:} {\tt leo.bigorgne@univ-rennes.fr}}}
\date{}
\begin{document}

\maketitle
    
\begin{abstract}
We construct an isometric modified scattering operator, mapping any sufficiently regular past scattering state, with a small distribution function, to the future one corresponding to forward evolution by the Vlasov-Maxwell system.

The main part of this work is devoted to the construction of a modified wave operator, which relates the future asymptotic dynamics of the solutions to their initial data. Then, by applying our previous results on modified scattering \cite{scat}, we are able to construct a small data scattering map for the Vlasov-Maxwell system. Our analysis relies in particular on the study of the  \textit{asymptotic Maxwell equations}. Its solution captures not only the large time behavior of the electromagnetic field of the plasma near future timelike infinity as well as future null infinity, but also the one of its weighted derivatives. 

The wave operator provides as well a class of global solutions to the Vlasov-Maxwell system which are large in $L^p_{x,v} \times L^2_x$ but initially dispersed. 
\end{abstract}

    \tableofcontents

\section{Introduction}

This paper is concerned with the Vlasov-Maxwell system on $\R_t \times \R^3_x \times \R^3_v$ in the perturbative regime. This set of equations are used to describe the evolution of collisionless plasma and can be written as
\begin{alignat}{2}
& \partial_t f+\widehat{v} \cdot \nabla_x f + ( E+\widehat{v} \times B) && \cdot \nabla_v f = 0, \label{VM1}  \\
&  \nabla_x \cdot E = \int_{\R^3_v}f \mathrm{d}v,  && \partial_t E = \nabla_x \times B -\int_{\R^3_v} \widehat{v}f \mathrm{d}v, \label{VM2}  \\
&  \nabla_x \cdot B = 0,   && \partial_t B = - \nabla_x \times E, \label{VM3}
\end{alignat} 
where
\begin{itemize}
\item $f:\R_{ t} \times \R^3_x \times \R^3_v \rightarrow \R_+$ is the density distribution function of the particles. 
\item $\widehat{v}=\frac{v}{v^0}$, where $v^0=\langle v \rangle :=\sqrt{1+|v|^2}$, is the relativistic speed of a particle of momentum $v \in \R^3_v$.
\item $\int_{\R^3_v} f \mathrm{d}v$ and $\int_{\R^3_v} \widehat{v}f \mathrm{d}v$ are respectively the total charge density and the total current density.
\item $E,B:\R_{ t} \times \R^3_x \rightarrow \R^3$ are respectively the electric and the magnetic field.
\end{itemize}
Although we consider, as it is usually done, plasmas composed by one species of particles of charge $q=1$ and mass $m=1$, all the results of this paper can be extended without any additional difficulty to the multispecies case, with particles of different charges and strictly positive masses\footnote{In contrast, in view of the analysis performed in \cite{massless}, linear scattering is expected to hold for massless particles.}. For convenience, we set in this article the speed of light $c$ to one, $c=1$, but in certain context it is important to keep it as a parameter in \eqref{VM1}--\eqref{VM3}. For instance, it is well known from \cite{Asano,DegondVMVP,SchaefferVMVP,BriHan} that in the non-relativistic limit $c \to +\infty$, the solutions to the Vlasov-Maxwell equations converge to the ones of the repulsive Vlasov-Poisson system
\begin{equation}\label{eq:VP}
\partial_t f+v \cdot \nabla_x f+ \nabla_x \phi \cdot \nabla_v f =0, \qquad \qquad \Delta \phi = \int_{\R^3_v} f \mathrm{d} v .
\end{equation}
A detailed introduction to kinetic equations, including these two systems, can be found in \cite{Glassey}. 

\subsection{Local and global well-posedness for the Vlasov-Maxwell system} An initial data set $(f_0,E_0,B_0)$ for \eqref{VM1}--\eqref{VM3} is composed by a function $f_0:\R^3_x \times \R^3_v \rightarrow \R_+$ and two fields $E_0,B_0:\R^3_x \rightarrow \R^3$ which satisfy the constraint equations 
\begin{equation}\label{eq:constraint}
\nabla_x \cdot E_0=\int_{\R^3_v} f_0 \, \mathrm{d}v, \qquad \qquad \nabla_x \cdot B_0=0.
\end{equation} The local existence and uniqueness problem for the solutions to the Vlasov-Maxwell system has been adressed by Wollman and Glassey-Strauss \cite{Woll,GlStrauss}. While global weak solutions were constructed by DiPerna-Lions \cite{DPLions} (see also \cite{Reinweak}), the existence of global in time classical solutions to the Vlasov-Maxwell system is only known in the pertubative regime \cite{GSt,GSc,Rein2,Sc,dim3,Wang,WeiYang,scat} or under certain symmetry assumptions \cite{GSc90,GSc97,Gsc98,LukStrain,XWang}. Although the problem remains open besides these specific cases, various continuation criteria have been obtained during the past decades \cite{GlStrauss,GlasseyStrauss0,GlStracrit,KlSta,BGP2,Pallard1,SAI,LukStrain2,Kunze,Pallard2,Patel}.

\subsection{Asymptotic stability of vacuum for the Vlasov-Maxwell system} The solutions arising from sufficiently small and regular initial data are known, in addition to be global in time, to decay with a rate corresponding to the solutions of the linearised equations around $0$,
\begin{alignat}{2}
&  \partial_t f^{\mathrm{lin}}+\widehat{v} \cdot \nabla_x f^{\mathrm{lin}}= 0, && \label{VM1lin}  \\
&  \nabla_x \cdot E^{\mathrm{lin}} = \int_{\R^3_v}f^{\mathrm{lin}} \mathrm{d}v, \qquad  && \qquad \partial_t E^{\mathrm{lin}} = \nabla_x \times B^{\mathrm{lin}} -\int_{\R^3_v} \widehat{v}f^{\mathrm{lin}} \mathrm{d}v, \label{VM2lin}  \\
&  \nabla_x \cdot B^{\mathrm{lin}} = 0,  \qquad   && \qquad \partial_t B^{\mathrm{lin}} = - \nabla_x \times E^{\mathrm{lin}}. \label{VM3lin}
\end{alignat} 
Thus, for any such solution $(f,E,B)$ to \eqref{VM1}--\eqref{VM3},
\begin{equation}\label{eq:intro}
\hspace{-10mm} \forall \, (t,x) \in \R_+ \times \R^3, \qquad \int_{\R^3_v} f(t,x,v) \mathrm{d} v \, \lesssim t^{-3}, \qquad |E|(t,x)+|B|(t,x) \lesssim \langle t+|x| \rangle^{-1} \langle t-|x| \rangle^{-1}.
\end{equation}
 This result was first obtained for compactly supported initial data by Glassey-Strauss \cite{GSt}, where estimates on the first order derivatives of the electromagnetic field were derived as well. Soon after, in the multi-species case, the smallness assumptions on the individual density distribution functions has been relaxed \cite{GSc}. Later, Schaeffer \cite{Sc} removed the support restriction on the velocity variable. However, his method lead to a loss on the estimate of $\int_v f \mathrm{d}v$. 

In recent years, all the compact support assumptions on the initial data were removed independently by \cite{dim3,Wang} using robust approaches. More precisely, they rely on vector field methods as well as, for the latter, Fourier analysis. It allowed them to obtain (almost) optimal pointwise decay estimates, similar to \eqref{eq:intro}, on the solutions and their higher order derivatives. Furthermore, in \cite{dim3}, the initial decay assumption in $v$ is optimal and improved estimates on certain \textit{null} components of the electromagnetic field are derived. Thereafter, Wei-Yang proved a global existence result allowing for large initial Maxwell fields \cite{WeiYang}. Thus, their result implies the asymptotic stability of electromagnetic fields in vacuum, of regularity $C^2$ and decaying fast enough, for the Vlasov-Maxwell system. Their work, based on the framework of Glassey-Strauss, does not require any compact support restriction on the data and provides the optimal decay rates \eqref{eq:intro}.

In \cite{scat}, we provided a shorter proof of the main results of \cite{dim3,Wang} and we allowed, as \cite{WeiYang}, the electromagnetic field to be large. Moreover, we investigate further the asymptotic dynamics of the solutions. Once the optimal decay estimates \eqref{eq:intro} are proved, the next question one may ask is whether or not $f$ and $(E,B)$ can be approached by a linear solution. 

\subsection{Modified scattering for the small data solutions to the Vlasov-Maxwell system} The purpose of scattering theory is to compare the dynamics of a perturbed system, here the Vlasov-Maxwell equations, with a free dynamics, which is meant to be much simpler. At first glance, we can consider the free dynamics of the linear Vlasov equation and the homogeneous Maxwell equations. However, we will see that in order to achieve the three requirements of a satisfactory scattering theory, a more involved free dynamics such as the one of the linearised system \eqref{VM1lin}--\eqref{VM3lin} must be considered. We refer to \cite{ReedSimon} for a general introduction on this subject. 

Ideally, one would like the asymptotic behavior of the solutions to be captured by past and future \textit{scattering states}. For the free relativistic transport equation \eqref{VM1lin}, the distribution function $f^{\mathrm{lin}}$ is constant along the characteristics, which are \textit{timelike straight lines}, the trajectories of isolated massive bodies. More precisely,
$$ \exists \, f_\infty : \R^3_x \times \R^3_v \to \R, \; \; \forall \, (t,x,v) \in \R_t \times \R^3_x \times \R^3_v, \qquad \qquad f^{\mathrm{lin}}(t,x+t\widehat{v},v)=f_\infty(x,v),$$
so that the asymptotic dynamics of $f^{\mathrm{lin}}$ can be fully described by the knowledge of $f_\infty$. In that case, the (future) scattering state $f_\infty$ turns out to also correspond to the initial data $f^{\mathrm{lin}}(0,\cdot , \cdot)$. Consider an electromagnetic field $(E^{\mathrm{hom}},B^{\mathrm{hom}})$ in vacuum, that is a solution to the homogeneous Maxwell equations
$$ \nabla_x \cdot E^{\mathrm{hom}}=0, \qquad  \nabla_x \cdot B^{\mathrm{hom}}=0, \qquad \partial_t E^{\mathrm{hom}} = \nabla_x \times B^{\mathrm{hom}}, \qquad \partial_t B^{\mathrm{hom}} = -\nabla_x \times E^{\mathrm{hom}}.$$ 
Then, under suitable assumptions on the data, $(rE^{\mathrm{hom}},rB^{\mathrm{hom}})$ converges along \textit{null rays}, the trajectories of photons. There exists $E_\infty, \, B_\infty : \R_u \times \mathbb{S}^2_\omega \to \R^3$, the radiation fields of $E^{\mathrm{hom}}$ and $B^{\mathrm{hom}}$ along \textit{future null infinity} $\mathcal{I}^+ \! \simeq \R_u \! \times \mathbb{S}^2_\omega$, such that
\begin{equation}\label{eq:introscatEB}
\hspace{-5mm} \forall \, (u,\omega) \in \R_u \! \times \mathbb{S}^2_\omega,   \; \; \qquad \lim_{r \to+\infty} rE^{\mathrm{hom}}(r+u,r\omega)=E_\infty(u,\omega), \quad \;  \lim_{r \to+\infty} rB^{\mathrm{hom}}(r+u,r\omega)=B_\infty(u,\omega).
\end{equation}
Moreover, the future scattering state $(E_\infty , B_\infty)$ satisfies constraint equations on $\mathcal{I}^+$ inherited from the Maxwell equations. By denoting abusively by $\omega$ the function $\omega \mapsto \omega \in \mathbb{S}^2 $, they can be written as
\begin{equation}\label{eq:Maxnull}
 \omega \cdot E_\infty  = 0, \qquad  \omega \cdot B_\infty   = 0, \qquad E_\infty+ \omega \times B_\infty =0, \qquad B_\infty- \omega \times E_\infty =0 .
\end{equation} 
Conversally, we proved in \cite[Theorem~$7.6$]{scat} that for any $L^2_{u,\omega} $ scattering state $(E_\infty,B_\infty)$ satisfying the constraint equations \eqref{eq:Maxnull}, there exists a unique $L^\infty(\R_+,L^2_x)$ solution $(E^{\mathrm{hom}},B^{\mathrm{hom}})$ to the vacuum Maxwell equations such that \eqref{eq:introscatEB} holds, at least in a weak sense.

Let us now review the scattering results that we derived in \cite{scat} for the solutions $(f,E,B)$ to the Vlasov-Maxwell system arising from sufficiently regular data and a small initial distribution function.

\begin{itemize}
\item For the Vlasov field, a \textit{linear scattering} statement would be
\begin{equation}\label{eq:linscatt}
\exists \, f_\infty \in L^1 (\R^3_x \times \R^3_v), \qquad \lim_{t \to + \infty} \|f(t,x+t\widehat{v},v)-f_\infty(x,v)\|_{L^1_{x,v}} =0 .
\end{equation}
In other words, $f$ would converge along the linear characteristics and would approach, for large times, the solution to the free relativistic transport equation \eqref{VM1lin} with scattering data $f_\infty$.
However, because of the long-range effects of the Lorentz force, \eqref{eq:linscatt} holds if and only if\footnote{In the multispecies case or if we allow the distribution function $f$ to take negative values, linear scattering occurs for solutions arising from nontrivial initial data.} $f(0,\cdot , \cdot)=0$. Instead, we could expect a weaker quantity, which contains less informations, to have a linear behavior for large times. It turns out that the spatial average of $f$, which is conserved in time for the solutions to \eqref{VM1lin}, converges,
$$ \exists \, Q_\infty \in L^{\infty}(\R^3_v), \quad \forall \, v \in \R^3_v, \qquad \qquad \lim_{t \to +\infty} \int_{\R^3_x}   f(t,x,v) \dr v = Q_\infty(v).$$
It allowed us to determine, as in Corollary \ref{Corlinbound} below, the leading order contribution of the source terms in the Maxwell equations \eqref{VM2}--\eqref{VM3}, which are the charge and the current density. In particular, once multiplied by their decay rate $t^3$, these two quantities have a self-similar asymptotic profile, as the corresponding linear ones. This enabled us to capture the large time behavior of the electromagnetic field along linear approximations of the trajectories of the massive particles,
$$\exists \, \mathrm{Lor}:\R^3_v \to \R^3, \quad \forall \, (x,v) \in \R^3_x \times \R^3_v, \qquad \lim_{t \to + \infty} t^2 \big( E(t,x+t\widehat{v})+\widehat{v} \times B(t,x+t\widehat{v}) \big) = \mathrm{Lor}(v).$$
It is worth noting that, as we prove in Section \ref{SecMaxasymp}, the long range Lorentz force $t^{-2} \, \mathrm{Lor}$ arises from a solution to the (asymptotic) Maxwell equations. Then, we obtained from this property that the density function $f$ verifies a \textit{modified scattering} dynamics.  Heuristically, the characteristics of the Vlasov equation satisfy, for $t \gg 1$,
$$ \dot{X} =\widehat{V}, \qquad \qquad \dot{V} \approx t^{-2} \, \mathrm{Lor}(V),$$
so that we expect $V(t) \to v$ as $t \to +\infty$. It yields to the approximations 
$$V(t) \approx v-\frac{1}{t}\mathrm{Lor}(v), \qquad \dot{X}(t) \approx \widehat{v}+\partial_t \C_{t,v} +O \big(t^{-2} \big), \qquad \C^k_{t,v}:=\frac{\log(t)}{v^0}\left(\widehat{v}\cdot \mathrm{Lor} (v) \widehat{v}^k -\mathrm{Lor}^k(v) \right),$$
and then $X(t) \approx x+t\widehat{v}+\C_{t,v}$. Consistently with these heuristic computations, we proved that the distribution function converges along these logarithmic corrections of the linear characteristics. There exists $f_\infty \in L^1 (\R^3_x \times \R^3_v)$ such that
\begin{equation}\label{eq:introscatf}
\hspace{-10mm} \lim_{t \to + \infty} \big\|f \big(t,x+t\widehat{v}+\C_{t,v},v \big)-f_\infty(x,v) \big\|_{L^1_{x,v}} =0 .
\end{equation}
A crucial property for the purpose of this paper is that the asymptotic Lorentz force is a functional of (the spatial average $Q_\infty$ of) $f_\infty$. More precisely, it arises from an asymptotic electromagnetic field $v \mapsto (\mathbb{E}[f_\infty],\mathbb{B}[f_\infty])(v)$, which is given, for $1 \leq k \leq 3$, by
\begin{align}
 \mathbb{E}^k[f_\infty](v) &:=- \frac{1}{4  \pi} \int_{\substack{|z| \leq 1 \\ |z+\widehat{v}| <1-|z|  }}\mathbf{Q}^k \big[  f_\infty \big] \left( \frac{\widecheck{  z+\widehat{v}  }}{1-|z|} \right)  \frac{\dr z}{(1-|z|)^4 |z|} , \label{kev:defasympelec} \\
 \mathbb{B}^k[f_\infty](v) &:= -  \frac{1}{4  \pi} \int_{\substack{|z| \leq 1 \\ |z+\widehat{v}| <1-|z|  }}\mathbf{Q}^{ij} \big[  f_\infty \big] \left( \frac{\widecheck{  z+\widehat{v}  }}{1-|z|} \right)  \frac{\dr z}{(1-|z|)^4 |z|}, \label{kev:defasympmag}
\end{align}
where $1 \leq i , \, j \leq 3$ are such that the signature of the permutation $(1,2,3) \mapsto (k,i,j)$ is equal to $1$, $w \mapsto \widecheck{w}$ is the inverse function of the relativistic speed $v \mapsto \widehat{v}$ and
$$ \mathbf{Q}^k \big[ f_\infty \big](v) =  \int_{\R^3_x} \langle v \rangle^5 \partial_{v^k} \big( \langle v \rangle f_\infty \big) (x,v) \dr x, \qquad \quad \mathbf{Q}^{ij} \big[ f_\infty \big](v) =  \int_{\R^3_x} \langle v \rangle^5  \big( v^i \partial_{v^j}f_\infty-v^j \partial_{v^i} f_\infty \big) (x,v) \dr x.$$
In fact, one can express $(\mathbb{E}[f_\infty],\mathbb{B}[f_\infty])$ in terms of $f_\infty$, instead of its first order $v$-derivatives, and then gain regularity.
\item In contrast, the electromagnetic field exhibits a \textit{linear scattering} dynamics. More precisely, it has a radiation field $(E_\infty,B_\infty)$ along future null infinity $\mathcal{I}^+$, that is \eqref{eq:introscatEB} holds for $(E,B)$. Furthermore, since Vlasov fields enjoy strong decay properties along null straight lines, $(E_\infty,B_\infty)$ verifies the constraint equations \eqref{eq:Maxnull} of the scattering states for the source free Maxwell equations. Thus, $(E,B)$ approaches for large time the homogeneous solution $(E^{\mathrm{hom}},B^{\mathrm{hom}})$ corresponding to this scattering state. 

It turns out that $(f_\infty, E_\infty,B_\infty)$ satisfy two other constraint equations, given by \eqref{eq:constr1}--\eqref{eq:constr2} below, that we identify in this paper and which are crucial for the purpose of constructing the modified wave operator. They imply that, contrary to a smooth solution to the homogeneous Maxwell equations, the field $(E,B)$ may have a nontrivial \textit{electromagnetic memory effect} \cite{BieriGarfinkle}. Thus, in some sense, $(E,B)$ has a linear behavior merely at first order. For this reason, $(f^{\mathrm{lin}},E^{\mathrm{lin}},B^{\mathrm{lin}})$, the solution to the linearised Vlasov-Maxwell system \eqref{VM1lin}--\eqref{VM3lin} with asymptotic data $(f_\infty, E_\infty,B_\infty)$, is a better approximation than $(f^{\mathrm{lin}},E^{\mathrm{hom}},B^{\mathrm{hom}})$. Note that we study $f^{\mathrm{lin}}$ throughout Section \ref{SubseclinVla} and $(E^{\mathrm{lin}},B^{\mathrm{lin}})$ in Proposition \ref{ProforF1}.
\item We also derived similar statements for the derivatives of the solutions. In particular, although there is a loss of derivatives in the process, the smoother the initial data set $(f_0,E_0,B_0)$ is, the more regular the scattering state $(f_\infty,E_\infty,B_\infty)$ is.
\item The total energy of the system,
\begin{equation}\label{eq:defnorm0}
 \mathbb{E}_{\mathrm{tot}}[f_0,E_0,B_0]:=\int_{\R^3_x} \int_{\R^3_v} \langle v \rangle \,  f_0( x,v) \mathrm{d} v \mathrm{d} x+ \frac{1}{2} \int_{\R^3_x} |E_0(x)|^2+|B_0(x)|^2 \mathrm{d}x,
\end{equation}
which is a conserved quantity\footnote{More precisely, for any sufficiently regular solution to \eqref{VM1}--\eqref{VM3}, $t \mapsto \mathbb{E}_{\mathrm{tot}}[f(t,\cdot , \cdot ),E(t,\cdot) ,B (t, \cdot)]$ is constant.}, is equal to the corresponding norm of the scattering state $(f_\infty,E_\infty,B_\infty)$,
\begin{align*}
 \mathbb{E}_{\infty}[f_\infty , E_\infty , B_\infty] := & \int_{\R^3_x} \int_{\R^3_v} \langle v \rangle \, f_\infty( x,v) \mathrm{d} v \mathrm{d} x \nonumber \\
 &+ \frac{1}{4} \int_{\R_u} \int_{\mathbb{S}^2_\omega} |E_\infty- \omega \times B_\infty|^2(u,\omega)+|B_\infty+ \omega \times E_\infty|^2(u,\omega) \mathrm{d} \mu_{\mathbb{S}^2_\omega} \mathrm{d} u. 
 \end{align*}
\end{itemize}
As formulated in \cite{ReedSimon} in a general setting, a satisfactory scattering theory for the Vlasov-Maxwell system must verify the following three properties.

\begin{enumerate}
\item \textit{Existence of scattering states}: for any sufficiently regular scattering state $(f_\infty,E_\infty,B_\infty)$, there exists a finite-energy solution $(f,E,B)$ to the Vlasov-Maxwell system evolving to this given scattering state.
\item \textit{Uniqueness of scattering states}: two finite-energy solutions to \eqref{VM1}--\eqref{VM3} corresponding to the same scattering state must coincide.
\item \textit{Asymptotic completeness}: the global finite-energy solutions to the Vlasov-Maxwell system are either bound states or evolve to such a scattering state.
\end{enumerate}
The first two properties would constitute a global well-posedness statement for the Vlasov-Maxwell system with asymptotic data. They allow to construct the (modified) wave operator $\mathscr{W}$, mapping any scattering state to the corresponding solution to \eqref{VM1}--\eqref{VM3}. Note that we do not know if bound states of finite energy exist for the Vlasov-Maxwell system. Though, if the distribution function is small, \eqref{eq:intro} implies that such solutions does not exist. 

According to the results proved in \cite{scat}, $3.$ holds true for the solutions arising from a small initial Vlasov field. The main goal of this article is to complete the construction of the scattering theory for the Vlasov-Maxwell system with a small distribution function. In other words, we will prove $1.$ and $2.$ for this class of solutions. It will allow us to relate the past asymptotic dynamics, of the solutions $(f,E,B)$ to \eqref{VM1}--\eqref{VM3} for which $f$ is small, to the future one.
\begin{Rq}
A global well-posedness result for the solutions to the Vlasov-Maxwell system would imply, together with Theorem \ref{Theo1} below, that existence and uniqueness of scattering states hold for the large data solutions as well.

 Indeed, given a smooth and large scattering state $(f_\infty,E_\infty,B_\infty)$, one can uniquely associate to it a solution $(f,E,B)$ to \eqref{VM1}--\eqref{VM3}, defined on $[T,+\infty[$ and evolving to $(f_\infty,E_\infty,B_\infty)$. However, Theorem \ref{Theo1} provides a lower bound on $T$ which, without a smallness assumption on $\epsilon$, can be large. Then, we \underline{could} extend $(f,E,B)$ on $\R_+$ and, by a density argument, construct a bijective isometric wave operator
$$ \mathscr{W} : (f_\infty,E_\infty,B_\infty) \mapsto \big(f(0,\cdot,\cdot),E(0,\cdot ),B(0,\cdot) \big), \qquad \mathbb{E}_{\infty}[f_\infty , E_\infty , B_\infty]=\mathbb{E}_{\mathrm{tot}}[f(0,\cdot,\cdot),E(0,\cdot ),B(0,\cdot)]<+\infty.$$
\end{Rq}

Finally, let us mention the work of Ben Artzi-Pankavich \cite{BAP}. Given a small data solution to the Vlasov-Maxwell system $(f,E,B)$ constructed by Glassey-Strauss in \cite{GSt}, they proved modified scattering for the distribution function $f$.

\subsection{Analogous results for the Vlasov-Poisson system}

In contrast with the case of the Vlasov-Maxwell system, global existence holds for the classical solutions to the Vlasov-Poisson system \cite{Pfa,LionsPerthame,SchaefferPoisson}. Sharp decay rates for the small data solutions to \eqref{eq:VP} were first obtained in \cite{Bardos} and then, with several improvements, by \cite{HRV,Poisson,Duan,smallSchaeffer} (see also \cite{rVP0,rVPWang,rVP} for the massive and massless relativistic Vlasov-Poisson systems). The study of the modified scattering dynamics of these solutions started with \cite{Choi} and, more recently, \cite{scattPoiss,Panka} clearly identified the asymptotic electrostatic force field responsible for this phenomenon. It is related, through the Poisson equation, to the spatial average of the limit distribution function $f_\infty$. The understanding of these asymptotic dynamics allowed Flynn-Ouyang-Pausader-Widmayer to construct a scattering map for the small data solutions to the Vlasov-Poisson system \cite{scattmap}.

Let us mention finally that the asymptotic stability of other steady states has been recently adressed. The perturbations of a point charge exhibits a modified scattering dynamics \cite{PausaWid,PWY} and the ones of the Poisson equilibrium scatter to linear solutions \cite{IPWW}. For the study of the solutions to the linearised
Vlasov-Poisson system near more general spatially homogenous equilibria, we refer to \cite{Toan,IPWW2}.

\subsection{General notations}

In this subsection as well as in the next one, we introduce certain quantities, used throughout this article, in order to properly state the main results. We work on the $1+3$ dimensional Minkowski spacetime $(\R^{1+3},\eta)$ and we will use two sets of coordinates, the Cartesian $(x^0=t,x^1,x^2,x^3)$, in which $\eta=\mathrm{diag}(-1,1,1,1)$, and null coordinates $(u,\underline{u},\theta, \varphi)$, where
$$u=t-r, \qquad \underline{u}=t+r, \qquad  r:= |x|=\sqrt{|x^1|^2+|x^2|^2+|x^3|^2},$$
and $(\theta,\varphi)\in ]0,\pi[ \times ]0,2 \pi[$ are spherical coordinates on the spheres of constant $(t,r)$. These coordinates are defined globally on $\R^{1+3}$ apart from the usual degeneration of spherical coordinates and at $r=0$. Sometimes, for a tensor field $\mathrm{T}$ defined on a subset of $\R_t \times \R^3_x$, it will be convenient to write
$$ \mathrm{T}(u,\underline{u},\omega):=\mathrm{T}\left( \frac{\underline{u}+u}{2},\frac{\underline{u}-u}{2} \omega \right), \qquad \qquad (u,\underline{u}, \omega ) \in \R \times \R \times \mathbb{S}^2 .$$
We denote by $\dr \mu_{\mathbb{S}^2}=\sin(\theta) \dr \theta \dr \varphi$ the volume form on the sphere $\mathbb{S}^2$. We will work with the null frame $(L,\underline{L},e_\theta,e_\varphi)$, where $L=2\partial_u$, $\underline{L}=2\partial_{\underline{u}}$ are null derivatives and $(e_\theta,e_\varphi)$ is the standard orthonormal basis on the spheres. More precisely,
$$L=\partial_t+\partial_r , \qquad \underline{L}=\partial_t-\partial_r, \qquad e_\theta=\frac{1}{r}\partial_{\theta}, \qquad e_\varphi=\frac{1}{r \sin \theta} \partial_{\varphi}.$$
The Einstein summation convention will often be used, for instance $v^{\mu}\partial_{x^{\mu}} f=\sum_{\mu = 0}^3v^{\mu}\partial_{x^{\mu}}f$. The lower case Latin indices goes from $1$ to $3$ and the Greek indices from $0$ to $3$. We will raise and lower indices with respect to the Minkowski metric $\eta$, so that $v_i=\eta_{i \mu}v^\mu=v^i$ and $v_0=-v^0$. Capital Roman letters, in particular $A$ and $B$, will correspond to spherical variables.

The four-momentum vector $\mathbf{v}=(v^{\mu})_{0 \leq \mu \leq 3}$ is parameterized by $v=(v^i)_{1 \leq i \leq 3} \in \R^3_v$ and $v^0=\sqrt{1+|v|^2}$ since the mass of the particles is equal to $1$. The relativistic speed $\widehat{v} \in \R^3$ is given by $\widehat{v}^i=\frac{v^i}{v^0}$ and, for convenience, we define $\widehat{v}^0 := 1$. Since we will perform the change of variables $y=x-\widehat{v}t$ in integrals over the domain $\R^3_v$, we will have to consider the inverse function of $v \mapsto \widehat{v}$. Note that we already used it as well in order to define the asymptotic electromagnetic field \eqref{kev:defasympelec}--\eqref{kev:defasympmag}.
\begin{Def}\label{Definvrela}
Let $\widecheck{\; \color{white} x \color{black} \; }$ be the operator defined, on the domain $\{ y \in \R^3 \; \; |y|<1 \}$, as 
$$y \mapsto \widecheck{y} = \frac{y}{\sqrt{1-|y|^2}}, \qquad \text{so that} \qquad \forall \, |y| <1, \; \forall \, v \in \R^3_v, \qquad \widehat{\widecheck{y}}=y, \quad \widecheck{\widehat{v}}=v .$$
\end{Def}
Sometimes, for convenience, we will write $(|v^0|^pg)(w)$ to denote $|w^0|^pg(w)$, where $w \in \R^3_v$ and $g:\R^3_v \rightarrow \R$. For $a \in \R$ and $x \in \R^3$, we will use the Japanese brackets $\langle a \rangle := (1+|a|^2)^{\frac{1}{2}}$ and $\langle x \rangle := \langle |x| \rangle$. Finally, the notation $D_1 \lesssim D_2$ will stand for the statement that $\exists \, C>0$ a positive constant independent of the solutions such as $ D_1 \leq C D_2$. Sometimes, we will write $\lesssim_{t_0}$ if $C$ depends on the initial time $t_0$.

\subsection{The Vlasov-Maxwell system in geometric form}

In order to capture the good behavior of certain geometric quantities associated to the solutions in the good null directions $(L,e_\theta,e_\varphi)$, which will in particular be useful for proving scattering results for the electromagnetic field, we represent $(E,B)$ as a $2$-form. Let $F_{\mu \nu}$ be the Faraday tensor and $J(f)_{\mu}$ be the four-current density, defined in cartesian coordinates as
\begin{equation}\label{eq:defF} F = \begin{pmatrix}
0 & E^1 & E^2 & E^3\\
-E^1 & 0 & -B^3 & B^2 \\
- E^2 & B^3 & 0& -B^1 \\
- E^3 &-B^2 & B^1 & 0
\end{pmatrix}, \qquad \qquad J(f) := \begin{pmatrix}
-\int_{\R^3_v}f \mathrm{d}v \\\noalign{\vskip2pt} \int_{\R^3_v}\widehat{v}_1 f \mathrm{d}v \\\noalign{\vskip2pt} \int_{\R^3_v}\widehat{v}_2f \mathrm{d}v \\\noalign{\vskip2pt} \int_{\R^3_v}\widehat{v}_3 f \mathrm{d}v
\end{pmatrix}.
\end{equation}
The null decomposition $(\underline{\alpha}(F),\alpha(F),\rho(F),\sigma(F))$ of $F$ is defined, for $A \in \{\theta, \varphi\}$, as
\begin{equation}\label{defnullcompoF}
 \underline{\alpha}(F)_{e_A}:=F_{e_A \underline{L}}, \qquad \alpha(F)_{e_A}:=F_{e_A L}, \qquad \rho(F):=\frac{1}{2}F_{\underline{L} L}, \qquad \sigma(F):=F_{e_\theta e_\varphi},
 \end{equation}
where $\underline{\alpha}(F)$ and $\alpha(F)$ are $1$-forms tangential to the $2$-spheres.
 \begin{Rq}
Note that $-J(f)_0$ is the charge density and $(J(f)_i)_{1 \leq i \leq 3}$ is the current density. The nonvanishing cartesian components of $F$ are equal to a cartersian component of $\pm E$ or $ \pm B$.

Concerning the elements of the null decomposition of $F$, the $1$-forms $\alpha(F)$ and $\underline{\alpha}(F)$ verify $$\alpha(F)_{e_\theta}\!=B \cdot e_\varphi-E \cdot e_\theta , \quad \alpha(F)_{e_\varphi}\!= -B \cdot e_\theta -E \cdot e_\varphi \quad  \underline{\alpha}(F)_{e_\theta}\!=-B \cdot e_\varphi-E \cdot e_\theta , \quad \underline{\alpha}(F)_{e_\varphi}\!= B \cdot e_\theta -E \cdot e_\varphi.$$ 
Finally, $\rho(F)=E \cdot \omega$ and $-\sigma(F)=B \cdot \omega$ are the radial components of the electric and the magnetic field. 
\end{Rq}
Then, the Vlasov equation \eqref{VM1} can be rewritten as
\begin{equation}\label{Vlasov1}
\T_F(f)=0, \qquad  \qquad \T_F : f \mapsto \partial_t f+\widehat{v} \cdot \nabla_x f+\widehat{v}^{\mu}{F_{\mu}}^j \partial_{v^j}f,
\end{equation}
and the Maxwell equations \eqref{VM2}--\eqref{VM3} are equivalent to the Gauss-Ampère law and the Gauss-Farady law
\begin{equation}\label{Maxwell23}
\nabla^{\mu} F_{\mu \nu}=J(f)_{\nu}, \qquad \qquad \nabla^\mu {}^* \!F_{\mu \nu}=0,
\end{equation}
where ${}^* \! F_{\mu \nu} = \frac{1}{2} F^{\lambda \sigma} \varepsilon_{ \lambda \sigma \mu \nu}$ is the Hodge dual of $F$ and $\varepsilon$ the Levi-Civita symbol. The Gauss-Faraday law can also be written as $\nabla_{[\lambda}F_{\mu \nu]}:=\nabla_\lambda F_{\mu \nu}+\nabla_\mu F_{ \nu \lambda}+\nabla_\nu F_{  \lambda \mu}=0$.

The intrinsic covariant differentiation on the spheres will be denoted by $\slashed{\nabla}$. The spherical divergence and curl of a $1$-form $\beta$ tangential to the $2$-spheres, such as $\underline{\alpha} (F)$ and $\alpha (F)$, are then
$$ \slashed{\nabla} \cdot \beta := \slashed{\nabla}_{e_\theta} \beta_{e_\theta}+\slashed{\nabla}_{e_\varphi} \beta_{e_\varphi}, \qquad  \slashed{\nabla} \times \beta := \slashed{\nabla}_{e_\theta} \beta_{e_\varphi}-\slashed{\nabla}_{e_\varphi} \beta_{e_\theta}, \qquad \qquad \slashed{\nabla}_{e_A}e_B := \nabla_{e_A} e_B-\Gamma_{AB}^r \partial_r,$$
where $\Gamma^\lambda_{\mu \nu}$ are the Christofel symbols in the basis $(\partial_t,\partial_r,e_\theta, e_\varphi)$, so that $\Gamma^0_{AB}=0$.

 We will further use the pointwise norms
\begin{align*}
 |\alpha(F)|^2 &:= |\alpha(F)_{e_\theta}|^2+|\alpha(F)_{e_\varphi}|^2, \qquad \qquad |\underline{\alpha}(F)|^2 := |\underline{\alpha}(F)_{e_\theta}|^2+|\underline{\alpha}(F)_{e_\varphi}|^2, \\
|F|^2 &:= \sum_{0 \leq \mu < \nu \leq 3} |F_{\mu \nu}|^2 = \frac{1}{2}|\alpha(F)|^2+\frac{1}{2}|\underline{\alpha}(F)|^2+|\rho (F)|^2+|\sigma(F)|^2.
\end{align*}

\subsection{Charged electromagnetic field}\label{Subsecpurecharge}

If the plasma is not neutral, one cannot expect the electromagnetic field that we will construct from the scattering data to decay faster than $r^{-2}$. Indeed, if $(f,F)$ is a sufficiently regular solution to \eqref{Vlasov1}--\eqref{Maxwell23} on $[T,+\infty[$, we obtain from Gauss's law that the total charge
$$ \forall\, t \geq T, \qquad \quad Q_F(t):=\lim_{r \rightarrow + \infty}  \int_{\mathbb{S}^2_\omega } \rho(F)(t,r\omega) r^2 \dr \mu_{\mathbb{S}^2_\omega}= \int_{\R^3_x} \int_{\R^3_v} f(t,x,v) dv dx=\int_{\R^3_x} \int_{\R^3_v} f_\infty (x,v) dv dx,$$
is a conserved quantity and that $|F| =o(r^{-2})$ implies $Q_F=0$. In order to avoid such a restrictive assumption, we introduce the pure charge part $\overline{F}$ of $F$,
\begin{equation}\label{defFoverlin}
\hspace{-4.5mm} \overline{F}(t,x) := \overline{\chi}(t-|x|)\frac{Q_F}{4 \pi |x|^2} \frac{x_i}{|x|} \dr t \wedge \dr x^i , \quad \; \rho\big(\overline{F} \big)(t,x)= \overline{\chi}(t-|x|)\frac{Q_F}{4 \pi |x|^2}, \; \; \, \alpha(\overline{F})=\underline{\alpha}(\overline{F})=\sigma(\overline{F})=0,
\end{equation}
where $\overline{\chi} \in C^\infty(\R,[0,1])$ is a cutoff function such that $\overline{\chi}(s)=1$ for $s \leq -2$ and $\chi(s)=0$ for $s \geq -1$. It corresponds to the electromagnetic field generated by a point charge $Q_F$ at $x=0$. Since $Q_{\overline{F}}=Q_F$, so that $F-\overline{F}$ is chargeless, one can then expect $F$ to have an asymptotic expansion of the form $F=\overline{F}+O(r^{-2-\delta})$, with $\delta >0$. In fact, $E=E^{\mathrm{df}}+E^{\mathrm{cf}}$ and $B=B^{\mathrm{df}}+B^{\mathrm{cf}}$ can be decomposed into their divergence-free and curl-free components. Then, $B^{\mathrm{cf}}=0$ and $E^{\mathrm{cf},i}=\overline{F}_{0i}+O(r^{-3})$ if $J(f)_0$ is sufficiently regular.

\subsection{Constraint equations satisfied by the scattering states} For a solution to the Maxwell equations with a source term decaying fast enough and arising from regular initial data, the null components $\alpha$, $\rho$ and $\sigma$ enjoy stronger decay properties than $\underline{\alpha}$. In particular, this holds true for any solution $(f,F)$ to the Vlasov-Maxwell system \eqref{Vlasov1}--\eqref{Maxwell23} arising from sufficiently regular data and a small distribution function. The constraints \eqref{eq:Maxnull} are the consequence of
$$\forall \, (u,\omega) \in \R_u \times \mathbb{S}^2_\omega, \qquad \; \; \lim_{r \to + \infty} r \alpha(F)(r+u,r\omega)=\lim_{r \to + \infty} r \rho(F)(r+u,r\omega)=\lim_{r \to + \infty} r \sigma(F)(r+u,r\omega)=0.$$
The radiation field along future null infinity of the electromagnetic field $F$ can then be represented by a $1$-form $\underline{\alpha}^\infty$, defined on $\R_u \times \mathbb{S}^2_\omega$ and tangential to the $2$-spheres, such that
$$\forall \, (u,\omega) \in \R_u \times \mathbb{S}^2_\omega, \qquad \; \; \lim_{r \to + \infty} r \underline{\alpha}(F)_{e_A}(r+u,r\omega)=\underline{\alpha}^\infty_{e_A}, \qquad \qquad A \in \{ \theta , \varphi \}.$$
We will prove in this article that there are in addition two constraint equations relating the limit distribution $f_\infty$ with the radiation field $\underline{\alpha}^\infty$. In order to write them, we consider the Faraday tensor $\mathbb{F}[f_\infty]$ associated through \eqref{eq:defF} to the asymptotic electromagnetic field $(\mathbb{E}[f_\infty],\mathbb{B}[f_\infty])$ introduced in \eqref{kev:defasympelec}--\eqref{kev:defasympmag}. It is a constant in time $2$-form defined on $\R \times \R^3$. Then, the following relations hold,
\begin{align}
  \hspace{-5.5mm}   \int_{\R_u} \! \slashed{\nabla} \cdot \underline{\alpha}^{\infty} (u,\omega) \dr u =& \, \frac{1}{2 \pi} \int_{\R^3_x} \int_{\R^3_v} f_\infty(x,v) - f_\infty(x,|v| \omega)\dr v \dr x -  \int_{\tau=0}^{+\infty}  \frac{2 \tau^2}{\langle \tau \rangle + \tau} \, \slashed{\nabla} \cdot  \underline{\alpha} \big( \mathbb{F} [ f_\infty ] \big) (\tau \omega) \frac{\dr \tau}{\langle \tau \rangle^3}  \label{eq:constr1} ,\\ 
  \hspace{-5.5mm}   \int_{\R_u} \! \slashed{\nabla} \times \underline{\alpha}^{\infty} (u,\omega) \dr u =& - \int_{\tau=0}^{+\infty}  \frac{2 \tau^2}{\langle \tau \rangle + \tau} \, \slashed{\nabla} \times  \underline{\alpha} \big( \mathbb{F} [ f_\infty ] \big) (\tau \omega) \frac{\dr \tau}{\langle \tau \rangle^3}. \label{eq:constr2}
\end{align}
These equations imply that the field $F$ may have a nonvanishing electromagnetic memory effect \cite{BieriGarfinkle}. It is a change of velocity, usually referred as a kick, experienced by a test charged particle which remained far from $r=0$ during its evolution.
\begin{Rq}
It is the field $F^{\mathrm{asymp}}[f_\infty]$, studied in Section \ref{SecMaxasymp} and capturing the asymptotic behavior of $F$, which gives rise to \eqref{eq:constr1}--\eqref{eq:constr2}. In particular, we have $t^2 F^{\mathrm{asymp}}[f_\infty](t,t\widehat{v})=\mathbb{F}[f_\infty](v)$ for $t \geq 4 \, \langle v \rangle^2$ and
$$ \forall \, (t,x) \in [T,+\infty[ \times \R^3, \qquad \big|F-F^{\mathrm{asymp}}[f_\infty] \big|(t,x) \lesssim \Lambda \langle t+|x| \rangle^{-1} \, \langle t-|x| \rangle^{-\frac{3}{2}}.$$
It is a solution to the Maxwell equations with a well-chosen source term, governing the large time behavior of the current density $J(f)$.   
\end{Rq}
\begin{Rq}
If $v=\tau \omega$, then $2 \tau^2(\langle \tau \rangle + \tau)^{-1}\langle \tau \rangle^{-3}=2|\widehat{v}|^2(1-|\widehat{v}|)$.
\end{Rq}
\subsection{Future and past scattering states}\label{Subsecsoncstr}  We are now able to define the set of the scattering states for the Vlasov-Maxwell system. 
 \begin{Def}
 A future scattering state for the Vlasov-Maxwell system is an ordered pair $(f_\infty , \underline{\alpha}^\infty)$ where
 \begin{itemize}
\item $f_\infty : \R^3_x \times \R^3_v \to \R$ is a distribution function,
\item $\underline{\alpha}^\infty$ is a $1$-form defined on $\mathcal{I}^+ \simeq \R_u \times \mathbb{S}^2$ which is tangential to the $2$-spheres,
\item $f_\infty$ and $\underline{\alpha}^\infty$ are sufficiently regular so that any quantity in \eqref{eq:constr1}--\eqref{eq:constr2} is well-defined and these equations are verified.
\end{itemize}
 \end{Def}   
The distribution function is usually assumed to be nonnegative for physical reasons. Since this restriction is not required for the results of this paper to hold, we allow it to take negative values. 

In order to relate the small data past asymptotic dynamics to the future one, we need to introduce the past scattering states as well. For this, given $(f,E,B)$, we define $(\widetilde{f},\widetilde{E}, \widetilde{B})$ as 
$$\widetilde{f}(t,x,v):=f(-t,x,-v), \qquad (\widetilde{E}, \widetilde{B})(t,x):=(E,-B)(-t,x).$$
 This operation is an involution leaving invariant the set of the solutions to the Vlasov-Maxwell system. The null components of the Faraday tensor $\widetilde{F}$ are related to the ones of $F$ as follows,
\begin{alignat*}{2}
 \rho \big( \widetilde{F} \big)(t,x)&=\rho \big( F \big)(-t,x), \qquad \qquad \; \sigma \big( \widetilde{F} \big)(t,x)&&=-\sigma \big( F \big)(-t,x), \\ 
 \alpha \big( \widetilde{F} \big)(t,x)&=\underline{\alpha} \big( F \big)(-t,x), \qquad \qquad \underline{\alpha} \big( \widetilde{F} \big)(t,x)&&=\alpha \big( F \big)(-t,x).
 \end{alignat*}
 \begin{Def}
 A past scattering state for the Vlasov-Maxwell system is an ordered pair $(f_{-\infty} , \alpha^{-\infty})$ such that $(\widetilde{f}_{\infty}, \widetilde{\underline{\alpha}}^{\infty})$ is a future scattering state, where
 $$ \widetilde{f}_{\infty}(x,v):=f_{-\infty}(x,-v), \qquad \qquad \widetilde{\underline{\alpha}}^{\infty}(u,\omega):=\alpha^{-\infty}(-u,\omega).$$
\end{Def}   
If $(f,F)$ is a solution to the Vlasov-Maxwell system admitting $(f_{-\infty} , \alpha^{-\infty})$ as past scattering state, $\alpha^{-\infty}$ is the radiation field of $F$ along past null infinity $\mathcal{I}^- \simeq \R_{\underline{u}} \times \mathbb{S}^2_\omega$, that is
$$ \forall \, (\underline{u} , \omega) \in \R_{\underline{u}} \times \mathbb{S}^2_\omega, \qquad \qquad \lim_{r \to +\infty} \alpha(F)(-r+\underline{u},r\omega)=\alpha^{-\infty}(\underline{u},\omega).$$

\subsection{Statement of the main results}

We now state our result concerning the construction of the modified wave operator. For this, recall the correction $\C_{t,v}$ to the linear spatial characteristics introduced in \eqref{eq:introscatf} and which is a functional of $f_\infty$. Certain estimates verified by the electromagnetic field will use notations not yet defined. We refer to Section \ref{SecenergyMaxweel} for the weighted $L^2_x$ norms $\mathcal{E}^{K}_{7}[ \cdot]$ and $\mathcal{E}^{K,1}[ \cdot]$ as well as Section \ref{SecMaxasymp} for $F^{\mathrm{asymp}}[f_\infty]$.

\begin{Th}\label{Theo1}
Let $N \geq 8$, and $(f_\infty , \underline{\alpha}^\infty)$ be a future scattering state for the Vlasov-Maxwell system. Assume that there exists $\epsilon >0$ and $\Lambda \geq 1$ such that
\begin{align}
 \sup_{|\kappa_z|+|\kappa_v| \leq N+1} \, \sup_{(z,v) \in \R^3 \times \R^3}  \langle z \rangle^{N+12} \,  \langle v \rangle^{25+|\kappa_v|} \, \big| \partial_{z}^{\kappa_z} \partial_{v}^{\kappa_v} f_\infty (z,v) \big|  & \leq \sqrt{\epsilon} ,  \label{eq:assumpfinfty} \\
 \sup_{\gamma_u+|\gamma_\omega| \leq N+1 } \int_{\R_u} \int_{\mathbb{S}^2_\omega} \langle u \rangle^{2} \log^2\! \big( 1+\langle u \rangle \big) \big| \nabla_{\partial_u}^{\gamma_u} \slashed{\nabla}^{\gamma_\omega} \underline{\alpha}^\infty (u,\omega) \big|^2 \dr \mu_{\mathbb{S}^2_\omega} \dr u &\leq  \Lambda. \label{eq:assumpunderalphainfty}
\end{align}
Let further $T \geq 2$ be an initial time. There exists an absolute constant $\varepsilon_0>0$ and $C_\Lambda := \exp \big( \exp \big( C \sqrt{\Lambda} \big) \big)$, where $C>0$ depends only on $N$, such that the next statement hold. If $C_\Lambda \epsilon  \log^{-1}(T) \leq \varepsilon_0$, then there exists a unique solution $(f,F)$ to the Vlasov-Maxwell system, defined on $[T,+\infty[$, such that
\begin{itemize}
\item the following asymptotic conditions hold,
\begin{alignat*}{2}
&\forall \, (x,v) \in \R^3_x \times \R^3_v, \qquad \qquad &&   \lim_{t \to +\infty} f \big(t,x+t\widehat{v}+\C_{t,v},v \big) = f_\infty(x,v) , \\
&\forall \, (u,\omega) \in \R_u \times \mathbb{S}^2_\omega , \qquad \qquad && \quad \; \, \lim_{r \to +\infty} r \underline{\alpha}(F)(r+u,r\omega)=\underline{\alpha}^\infty (u,\omega).
\end{alignat*}
\item The function $g(t,z,v):= f(t,z+t\widehat{v}+\C_{t,v},v)$ verifies the uniform $L^2_{z,v}$ bound
$$ \sup_{\kappa_t+|\kappa_z|+|\kappa_v| \leq N} \, \sup_{t \geq T}\int_{\R^3_z}\int_{\R^3_v}   \langle z \rangle^{16+2N-2|\kappa_v|} \, \langle v \rangle^{30+2|\kappa_v|} \big| \partial_t^{\kappa_t} \partial_{z}^{\kappa_z} \partial_{v}^{\kappa_v} g (t,z,v) \big|^2 \dr v \dr z  \leq   C_\Lambda \epsilon.$$
\item The electromagnetic field $F$ and its derivatives up to order $N$ are uniformly bounded in $L^2(\R^3_x)$ and
$$ \sup_{t \geq T} \, \mathcal{E}^K_{7} \big[ F-F^{\mathrm{asymp}}[f_\infty] \big](t)+\sup_{|\gamma|=7} \, \sup_{t \geq T} \,\mathcal{E}^{K,1} \big[ \nabla_{t,x} \mathcal{L}_{Z^\gamma}\big(F-F^{\mathrm{asymp}}[f_\infty] \big) \big](t) \lesssim \Lambda.$$
Moreover, if $T=2$ and $C_\Lambda  \epsilon\leq \varepsilon_0 $, then 
$$ \forall \, |\gamma| \leq N-1, \qquad \qquad  \int_{\R^3_x} \langle x\rangle^{2+2|\gamma|}\big| \nabla_{t,x}^\gamma \big( F-\overline{F} \, \big) \big|^2(2,x) \dr x \lesssim  \Lambda .$$
\end{itemize}
\end{Th}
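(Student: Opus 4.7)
The natural approach is to construct the solution by backward evolution from large times, combined with a vector-field method. One first builds an approximate solution $(f^{\mathrm{app}}, \mathcal{F}^{\mathrm{app}})$ directly from the scattering data: set $f^{\mathrm{app}}(t,x,v) := f_\infty(x-t\widehat{v}-\C_{t,v}, v)$ and $\mathcal{F}^{\mathrm{app}} := F^{\mathrm{asymp}}[f_\infty] + F^{\mathrm{hom}}$, where $F^{\mathrm{hom}}$ is a source-free Maxwell solution produced via \cite[Thm~$7.6$]{scat} whose radiation field along $\mathcal{I}^+$ is adjusted so that $\mathcal{F}^{\mathrm{app}}$ has $\underline{\alpha}^\infty$ as its limit of $r\underline{\alpha}$. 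The constraint equations \eqref{eq:constr1}--\eqref{eq:constr2} are precisely what guarantees that this adjustment is compatible with the source-free Maxwell constraints \eqref{eq:Maxnull}: without them, no $F^{\mathrm{hom}}$ compatible with the long-range part $F^{\mathrm{asymp}}[f_\infty]$ could realize $\underline{\alpha}^\infty$ as radiation field. Direct computation then shows that $(f^{\mathrm{app}}, \mathcal{F}^{\mathrm{app}})$ solves \eqref{Vlasov1}--\eqref{Maxwell23} up to error terms that decay integrably in $t$.

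\textbf{Bootstrap argument.} For each sufficiently large $T_n \to +\infty$, solve backward in time on $[T, T_n]$ the Vlasov-Maxwell system with data $(f^{\mathrm{app}}, \mathcal{F}^{\mathrm{app}})|_{t=T_n}$, producing a sequence $(f_n, F_n)$. The heart of the proof is a bootstrap, uniform in $n$, on the weighted $L^2$ norms of $g_n - f_\infty$, where $g_n(t,z,v) := f_n(t, z+t\widehat{v}+\C_{t,v}, v)$, and of $F_n - \mathcal{F}^{\mathrm{app}}$ in the norms $\mathcal{E}^K_7$ and $\mathcal{E}^{K,1}$. On the Vlasov side, the equation for $g_n$ has right-hand side driven by $F_n - \mathcal{F}^{\mathrm{app}}$; the long-range piece of the Lorentz force is cancelled by the very choice of $\C_{t,v}$ and $F^{\mathrm{asymp}}$, so the remaining force is integrable in $t$ against the $L^2_{z,v}$ norm of $g_n - f_\infty$. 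On the Maxwell side, the source $J(f_n) - J(f^{\mathrm{app}}) + (\text{correction})$ decays faster than the naive rate, and a weighted vector-field energy method analogous to \cite{scat, dim3}, applied to $F_n - \mathcal{F}^{\mathrm{app}}$, propagates the bounds. The smallness $C_\Lambda\epsilon\log^{-1}(T) \le \varepsilon_0$ is used precisely to close the coupled bootstrap.

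\textbf{Conclusion and extension to $T=2$.} Weak compactness extracts a limit $(f,F)$ solving the system on $[T,+\infty[$. The convergence of $g(t,\cdot)$ to $f_\infty$ follows since $\int_T^{+\infty}\|\partial_t g\|\,\dr t < +\infty$ from the bootstrap, and the radiation field condition follows by construction of $\mathcal{F}^{\mathrm{app}}$ together with the integrable decay of $F - \mathcal{F}^{\mathrm{app}}$ along null directions. Uniqueness is a classical energy estimate on the difference of two candidates. To reach $T=2$ under the stronger hypothesis $C_\Lambda\epsilon \le \varepsilon_0$, one first obtains the solution on $[T_*, +\infty[$ for some $T_* \gg 1$ as above, then uses the small-data global theory of \cite{dim3, scat} to propagate the weighted energy norms backward down to $t=2$; the additional bound on $F - \overline{F}$ at $t=2$ follows from the fact that the chargeless part decays as $r^{-3}$, propagated by the same weighted energy framework.

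\textbf{Main obstacle.} The principal difficulty is the precise coordination of the long-range Vlasov correction $\C_{t,v}$ with the long-range electromagnetic field $F^{\mathrm{asymp}}[f_\infty]$: one must verify that the remainder $F - \mathcal{F}^{\mathrm{app}}$ exhibits integrable decay simultaneously in every null component at the level of high-order weighted energy norms. The integrals of $\rho(F)$ and $\sigma(F)$ along $\mathcal{I}^+$ that encode the electromagnetic memory effect must match exactly what \eqref{eq:constr1}--\eqref{eq:constr2} prescribe, and the corresponding cancellation must be visible throughout the bootstrap hierarchy. Implementing this matching at top order, while tolerating only a logarithmic smallness in $T$ in the final bound, is the main technical challenge.
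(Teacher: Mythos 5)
Your overall architecture (an explicit approximate solution built from the scattering data, backward evolution from finite times, a uniform bootstrap, and a compactness limit) is a sensible alternative to the paper's Picard iteration on the asymptotic Cauchy problem, but two of your central quantitative claims fail as stated, and the ideas needed to repair them are absent. First, the assertion that $(f^{\mathrm{app}},\mathcal{F}^{\mathrm{app}})$ solves the system ``up to error terms that decay integrably in $t$,'' so that ``the remaining force is integrable in $t$ against the $L^2_{z,v}$ norm of $g_n-f_\infty$,'' is not true in the form you need. The Vlasov error is governed by the convergence of $t^2\widehat{v}^\mu F_{\mu j}(t,\XX_\C)$ to $\widehat{v}^\mu\mathbb{F}_{\mu j}[f_\infty](v)$, and the best available rate either degenerates near the light cone or carries polynomial weights in $\langle z\rangle$ and $\langle v\rangle$ (compare \eqref{eq:introconvesti} and \eqref{eq:keyintro2}); since every commutation with a homogeneous vector field reproduces such a weighted term, a fixed weighted $L^2_{z,v}$ norm cannot absorb the loss — controlling one norm requires a strictly stronger one, and no Gronwall argument in $t$ alone closes. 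The closure requires (i) the null structure of the Lorentz force, converting the $\langle t-|x|\rangle$-degeneracy into a flux through outgoing cones and hence a two-variable Gronwall inequality in $(t,u)$ with $u=t-r$, and (ii) a hierarchy of norms $\langle z\rangle^{N_z-\beta_H}\langle v\rangle^{N_v}\widehat{Z}^\beta_\infty g$ whose spatial weight decreases with the number $\beta_H$ of homogeneous vector fields, which makes the commuted system triangular. You flag this coordination as the ``main obstacle'' but do not supply the mechanism that resolves it.

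Second, there is a loss of derivatives at top order on the Maxwell side that a direct vector-field energy estimate cannot avoid. Obtaining the improved interior behaviour of $\mathcal{L}_{Z^\gamma}(F-F^{\mathrm{asymp}}[f_\infty])$ requires comparing $J(\widehat{Z}^\gamma f)$ with $J^{\mathrm{asymp}}[\widehat{Z}^\gamma_\infty f_\infty]$, and this comparison costs one derivative of the distribution function (Proposition \ref{Protechfordecay}); at the top order that derivative is unavailable, so the norm $\mathcal{E}^{K,1}[\nabla_{t,x}\mathcal{L}_{Z^\gamma}(\cdot)]$ with $|\gamma|=N-1$ cannot be propagated by your scheme. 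The paper recovers it through the Glassey--Strauss decomposition of $\nabla_{t,x}\mathcal{L}_{Z^\gamma}F$, which expresses the derivative of the field as a functional of $J(\widehat{Z}^\gamma f)$ itself rather than of its derivatives, together with the identification of a singular free Vlasov solution generating $J^{\mathrm{asymp}}$ so that the same decomposition applies to the asymptotic field. Two smaller gaps: a weak-compactness limit of backward solutions does not automatically attain the prescribed scattering data (the paper instead proves the iteration is Cauchy in a low-regularity norm), and the uniqueness ``energy estimate on the difference'' must be run from $t=+\infty$ where the two candidates agree only asymptotically, so it requires the same contraction machinery rather than a classical finite-time argument.
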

\begin{Rq}\label{Rqhighermoment}
The finiteness of higher moments of $f_\infty$ is sufficient in order to propagate stronger norms for $f$, as it is stated by Proposition \ref{ProCauchpb}. If $N \geq 9$, the higher order derivatives of $F$ satisfy similar weighted $L^2_x$ estimates. However, without stronger velocity decay assumptions for $f_\infty$, the norms are slightly less convenient to write.
\end{Rq}
\begin{Rq}
In fact, we use a slightly weaker decay assumption on $\underline{\alpha}^\infty$ than \eqref{eq:assumpunderalphainfty}. Similarly, as suggested by Remark \ref{RqenergyforTh1}, $2-\delta$ powers of $|z|$ and $6-\delta$ powers of $|v|$ could be saved in \eqref{eq:assumpfinfty} but the $L^2_{z,v}$ estimates for $g$ would be weaker and would not allow us to construct the scattering map.
\end{Rq}
\begin{Rq}
If $\epsilon$ is large, the solution $(f,F)$ constructed here is a future-global-in-time large solution to the Vlasov-Maxwell system in the following sense. It is defined for all time $t \geq T$, $F$ is of size $\Lambda$ in $L^2_x$ and $f$ is of size $\epsilon$ in $L^p_{x,v}$, so that $(f,F)$ is large in $ L^p_{x,v} \times L^2_x$ for any $1 \leq p \leq \infty$. The solution is however initially dispersed since $T \gg 1 $ and, for instance, $ |J(f)|(T,x,v) \lesssim  T^{-3}$.
\end{Rq}
Using the involution $\sim$ introduced in the previous Section \ref{Subsecsoncstr}, Theorem \ref{Theo1} ensures as well that for any sufficiently regular past scattering state, there exists a unique solution to the Vlasov-Maxwell system converging to it. Thus, Theorem \ref{Theo1} and \cite[Theorem~$2.11$]{scat} provide us a small data scattering map for the Vlasov-Maxwell equations, for which the electromagnetic field is allowed to be large.
\begin{Th}\label{Theo3}
Let $N \geq 8$, $\epsilon >0$, $\Lambda \geq 1$ and $(f_{-\infty} , \alpha^{-\infty})$ be a past scattering state for the Vlasov-Maxwell system verifying the bounds \eqref{eq:assumpfinfty}--\eqref{eq:assumpunderalphainfty}. There exists an absolute constant $\varepsilon_1>0$ and $D_\Lambda := \exp \big( \exp \big( D \sqrt{\Lambda} \big) \big)$, where $D>C$ depends only on $N$, such that the following statement hold. If $\epsilon D_\Lambda \leq \varepsilon_1$, then
\begin{itemize}
\item there exists a unique classical solution $(f,F)$ to the Vlasov-Maxwell system \eqref{VM1}--\eqref{VM3}, defined for all time $t \in \R$, and a future scattering state $(f_{+\infty},\underline{\alpha}^{+\infty})$, such that
\begin{alignat*}{2}
& \lim_{t \to \pm\infty} f\big(t,x+t\widehat{v}\pm\C_{t,v},v \big) = f_{\pm\infty}(x,v), \qquad \qquad && \forall \, t >0, \quad \C_{\pm t,v}^k := -\log(|t|)\frac{\delta_k^i-\widehat{v}_k \widehat{v}^i}{v^0} \widehat{v}^\mu \mathbb{F}_{\mu i}[f_{\pm \infty}](v) , 
\end{alignat*}
for all $(x,v) \in \R^3_x \times \R^3_v$, as well as
\begin{alignat*}{2}
&\lim_{r \to +\infty} r \alpha(F)(-r+\underline{u},r\omega)=\alpha^{-\infty} (\underline{u},\omega), \qquad \qquad && \lim_{r \to +\infty} r \underline{\alpha}(F)(r+u,r\omega)=\underline{\alpha}^{+\infty} (u,\omega),
\end{alignat*}
for all $\omega \in \mathbb{S}^2_\omega$, $u \in \R_u$ and $\underline{u} \in \R_{\underline{u}}$. Moreover, the initial data satisfy
\begin{align*}
\sup_{|\kappa_z|+|\kappa_v| \leq N-4} \,\langle z \rangle^{8+|\kappa_z|} \, \langle v \rangle^{15+|\kappa_v|} \big| \partial_t^{\kappa_t} \partial_{z}^{\kappa_z} \partial_{v}^{\kappa_v} f (0,z,v) \big| & \leq   C_\Lambda \epsilon, \\
  \sup_{|\gamma| \leq N-3}\langle x\rangle^{\frac{5}{2}+|\gamma|}\big| \nabla_{t,x}^\gamma \big( F-\overline{F} \, \big) \big|(0,x) & \lesssim  \Lambda 
  \end{align*}
\item The future scattering state verifies the pointwise bounds
\begin{alignat*}{2}
\sup_{(z,v) \in \R^3 \times \R^3} \, \langle z \rangle^{4-|\kappa_v|} \, \langle v \rangle^{5+|\kappa_v|} \big| \partial_{z}^{\kappa_z} \partial_{v}^{\kappa_v} f_{+\infty}(z,v) \big| & \leq  D_\Lambda \sqrt{\epsilon} , \qquad \qquad &&|\kappa_z|+|\kappa_v| \leq N-6, \\
\sup_{(u,\omega) \in \R \times \mathbb{S}^2}  \langle u \rangle^{\frac{3}{2}} \big| \nabla_{\partial_u}^{\gamma_u} \, \slashed{\nabla}^{\gamma_\omega} \underline{\alpha}^{+\infty}  \big| (u,\omega)   & \lesssim \Lambda, \qquad \qquad && \gamma_{u}+|\gamma_\omega| \leq N-7 .
\end{alignat*}
\item The total energy of the system is conserved. For all $t \in \R$,
\begin{align*}
\int_{\R^3_x} \int_{\R^3_v} v^0 f(t,x,v) \dr x \dr v+\frac{1}{2}\int_{\R^3_x} |F(t,x)|^2 \dr x &= \int_{\R^3_x} \int_{\R^3_v} f_{-\infty}(x,v) \dr x \dr v+\frac{1}{4} \int_{\R_{\underline{u}}}\int_{\mathbb{S}^2_\omega} \big| \alpha^{-\infty} (\underline{u},\omega) \big|^2 \dr \mu_{\mathbb{S}^2_\omega} \dr \underline{u} \\
& = \int_{\R^3_x} \int_{\R^3_v} f_{+\infty}(x,v) \dr x \dr v+\frac{1}{4} \int_{\R_u}\int_{\mathbb{S}^2_\omega} \big| \underline{\alpha}^{+\infty} (u,\omega) \big|^2 \dr \mu_{\mathbb{S}^2_\omega} \dr u.
\end{align*}
\end{itemize}
\end{Th}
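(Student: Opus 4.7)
The plan is to use the involution $\sim$ of Section \ref{Subsecsoncstr} to reduce the problem to the forward wave operator of Theorem \ref{Theo1}, then glue the resulting backward-in-time solution with a forward-in-time Cauchy extension, and finally apply the asymptotic completeness theorem of \cite{scat} to produce the future scattering state. Setting $\widetilde f_\infty(x,v) := f_{-\infty}(x,-v)$ and $\widetilde{\underline\alpha}^\infty(u,\omega) := \alpha^{-\infty}(-u,\omega)$, the pair $(\widetilde f_\infty, \widetilde{\underline\alpha}^\infty)$ is by definition a future scattering state, and the bounds \eqref{eq:assumpfinfty}--\eqref{eq:assumpunderalphainfty} are preserved under this change of variables with the same $\epsilon$ and $\Lambda$. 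Choosing $\varepsilon_1$ small enough and $D > C$ large enough so that $\epsilon D_\Lambda \leq \varepsilon_1$ implies $C_\Lambda \epsilon \leq \varepsilon_0$, I invoke Theorem \ref{Theo1} with $T = 2$ on this future scattering state. This produces a unique solution $(\widetilde f, \widetilde F)$ of \eqref{VM1}--\eqref{VM3} on $[2, +\infty[$ realising $(\widetilde f_\infty, \widetilde{\underline\alpha}^\infty)$ as its future scattering state, together with the pointwise control on $\widetilde f$ and the weighted $L^2_x$ control on $\widetilde F - \overline{\widetilde F}$ at time $2$ provided by the last bullet of that theorem.

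Applying $\sim$ once more produces a solution $(f, F)$ to \eqref{VM1}--\eqref{VM3} on $]-\infty, -2]$ whose past scattering state is $(f_{-\infty}, \alpha^{-\infty})$ and whose Cauchy data at $t = -2$ is of size $C_\Lambda \epsilon$ for the distribution function and of size $\Lambda$ for the electromagnetic field (after removing the pure charge part $\overline F$). These data precisely fit the hypotheses of the forward Cauchy theory of \cite{scat, WeiYang}, which allows a large Maxwell field provided the distribution function is small; I use it to extend $(f, F)$ into a global classical solution on $\R$. The asymptotic completeness result \cite[Theorem~2.11]{scat} then automatically yields a future scattering state $(f_{+\infty}, \underline\alpha^{+\infty})$ toward which $(f, F)$ converges in the modified sense, along with the explicit expression of the logarithmic correction $\C_{+t,v}^k$ in terms of $\mathbb F[f_{+\infty}]$. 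The pointwise bounds on $(f_{+\infty}, \underline\alpha^{+\infty})$ and the weighted initial-data estimates at $t = 0$ then follow from the quantitative estimates of \cite{scat} propagated through this double application, with the loss from $N+1$ to $N-6$ and $N-7$ derivatives accounting for Sobolev embeddings and for the two successive scattering steps, and justifying the choice $D > C$ in the definition of $D_\Lambda$.

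The energy identity is a consequence of the standard conservation of $\mathbb E_\mathrm{tot}$ for smooth solutions of \eqref{VM1}--\eqref{VM3}, combined with the identification, at both $\mathcal{I}^+$ and $\mathcal{I}^-$, of the electromagnetic radiation energy with the $L^2_{u,\omega}$-norm of $\underline\alpha^{+\infty}$ and $\alpha^{-\infty}$ respectively; the argument used in \cite{scat} to match $\mathbb E_\mathrm{tot}$ with $\mathbb E_\infty$ at future infinity applies symmetrically at past infinity via $\sim$. The main obstacle of the proof is actually already encapsulated in Theorem \ref{Theo1} (the construction of the modified wave operator toward the future); the remaining work for Theorem \ref{Theo3} is the careful bookkeeping required to check that the intermediate Cauchy data at $t = -2$ produced by Theorem \ref{Theo1} do satisfy both the smallness hypotheses of the forward Cauchy theory and those of the asymptotic completeness result of \cite{scat}, and to verify that the implicit constants from the two successive applications combine into a single $D_\Lambda$ of the advertised exponential-of-exponential form.
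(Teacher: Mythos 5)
Your proposal follows exactly the route the paper takes: apply the involution $\sim$ to convert the past scattering state into a future one, invoke Theorem \ref{Theo1} with $T=2$ to build the solution on $]-\infty,-2]$, extend it globally via the forward Cauchy theory for small distribution functions with large Maxwell fields, and conclude with the asymptotic completeness result of \cite{scat} together with the conservation of $\mathbb{E}_{\mathrm{tot}}$ and the identification of the radiation energies at $\mathcal{I}^{\pm}$. The bookkeeping you describe (preservation of \eqref{eq:assumpfinfty}--\eqref{eq:assumpunderalphainfty} under $\sim$, the choice of $D>C$, and the derivative losses from passing between the $L^2$ bounds of Theorem \ref{Theo1} and the $L^\infty$ framework of \cite{scat}) matches the paper's argument.
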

\begin{Rq}
Part of the loss of regularity is purely technical and could be avoided, Theorem \ref{Theo1} provides $L^2$ bounds on the solutions whereas we propagated $L^\infty$ estimates in \cite[Theorem~$2.11$]{scat}.
\end{Rq}

Consistently with Remark \ref{Rqhighermoment}, under stronger decay assumptions on $f_{-\infty}$, Proposition \ref{ProCauchpb} and the analysis carried out in \cite{scat} imply that the future scattering state $f_{+\infty}$ possibly belongs to a stronger weighted Sobolev space.
\begin{Cor}
Let $(f,F)$ be one of the solutions to the Vlasov-Maxwell system considered in Theorem \ref{Theo3}. If, for $N_z \geq 12$ and $N_v \geq 25$,
\begin{align*}
\mathbf{B}(N,N_z,N_v):= \sup_{|\kappa_z|+|\kappa_v| \leq N} \big\| \,  \langle z \rangle^{N_z+N} \, \langle v \rangle^{N_v+|\kappa_v|} \,  \partial_z^{\kappa_z} \partial_v^{\kappa_v} f_{-\infty} \big\|_{L^2(\R^3_z \times \R^3_v)} <+\infty, 
\end{align*}
then, there exists $C >0$, depending only on $(\Lambda,N,N_z,N_v)$ such that, for any $|\kappa_z|+|\kappa_v| \leq N-6$,
$$\sup_{(z,v) \in \R^3 \times \R^3} \, \langle z \rangle^{N_z-7-|\kappa_v|} \, \langle v \rangle^{N_v-16+|\kappa_v|} \big| \partial_{z}^{\kappa_z} \partial_{v}^{\kappa_v} f_{+\infty}(z,v) \big| \leq C\mathbf{B}(N,N_z,N_v).$$ 
\end{Cor}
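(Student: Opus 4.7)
The plan is to follow the same strategy as in Theorem \ref{Theo3}, upgrading each step to accommodate the stronger weighted norm $\mathbf{B}(N,N_z,N_v)$. First, I would combine the past analog of Theorem \ref{Theo1} with the higher-moment refinement of Proposition \ref{ProCauchpb} announced in Remark \ref{Rqhighermoment}, in order to propagate weighted $L^2$ norms of the modified profile $g_+(t,z,v):=f(t,z+t\widehat{v}+\C_{t,v},v)$ from $t=0$ onwards. The initial data for $g_+$ at $t=0$ is controlled by $\mathbf{B}(N,N_z,N_v)$ because the past wave operator furnished by Theorem \ref{Theo3} is essentially an isomorphism of the relevant weighted Sobolev spaces, and the $v$-weights $\langle v\rangle^{N_v+|\kappa_v|}$ are consistent with the fact that differentiating in $v$ generates factors of $\langle v\rangle$ through the logarithmic correction $\C_{t,v}$. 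The goal of this first step is the uniform-in-time estimate
\begin{equation*}
\sup_{|\kappa_z|+|\kappa_v|\leq N}\,\sup_{t\geq 0}\,\big\|\langle z\rangle^{N_z+N-|\kappa_v|}\langle v\rangle^{N_v+|\kappa_v|}\partial_z^{\kappa_z}\partial_v^{\kappa_v}g_+(t,\cdot,\cdot)\big\|_{L^2_{z,v}}\leq C\,\mathbf{B}(N,N_z,N_v),
\end{equation*}
with $C$ depending only on $(\Lambda,N,N_z,N_v)$. Using the pointwise convergence $g_+(t,z,v)\to f_{+\infty}(z,v)$ established in Theorem \ref{Theo1} together with lower semicontinuity of the $L^2$ norm under a.e.\ convergence, this weighted $L^2$ bound is then transferred to $f_{+\infty}$.

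Second, I would convert this weighted $L^2$ control into the claimed pointwise bound via a weighted Sobolev embedding on $\R^3_z\times\R^3_v$. Since the total dimension is $6$, any $H^s\hookrightarrow L^\infty$ embedding requires $s>3$, so working locally at each $(z_0,v_0)$ with the weighted product $\langle z\rangle^{N_z-7-|\kappa_v|}\langle v\rangle^{N_v-16+|\kappa_v|}\partial_z^{\kappa_z}\partial_v^{\kappa_v}f_{+\infty}$ costs $4$ extra derivatives plus a small margin absorbed when distributing the weights via the Leibniz rule. The drop from $N_z+N$ to $N_z-7-|\kappa_v|$ accommodates both this cost and a few powers of $\langle z\rangle^{-3-}\langle v\rangle^{-3-}$ needed to ensure $L^2$-integrability in $\R^6$ once the pointwise weight has been factored out; an analogous account is responsible for the drop in the $v$-weight. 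This yields the stated estimate for all $|\kappa_z|+|\kappa_v|\leq N-6$.

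The main obstacle is the first step: the higher-moment version of Proposition \ref{ProCauchpb} must close at the stronger weights $N_z+N-|\kappa_v|$ in $z$ and $N_v+|\kappa_v|$ in $v$. The delicate point is to control the source terms produced when the operator $\T_F$, written in the coordinates $(t,z,v)$, is commuted with $\partial_z^{\kappa_z}\partial_v^{\kappa_v}$: each $v$-derivative falling on $g_+$ can be converted into a $z$-weight through the change of variable $\phi_t(z,v)=z+t\widehat{v}+\C_{t,v}$, and the Jacobian factors $\nabla_v\phi_t$ generate the precise powers of $\langle v\rangle$ that explain the shift $|\kappa_v|$ appearing in the conclusion. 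Likewise, any $\langle z\rangle$-weight gained through $L^2$ integration must be absorbed by the decay of $F-F^{\mathrm{asymp}}[f_\infty]$ provided by Theorem \ref{Theo1}, which is what forces the constant to depend on $\Lambda$; this is also the source of the specific balance between the $z$- and $v$-weights in the statement.
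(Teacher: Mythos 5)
Your outline correctly identifies the two structural ingredients the paper uses (backward propagation of higher moments from $f_{-\infty}$ to the initial slice, then forward propagation to $f_{+\infty}$, closed by a Sobolev embedding), and your exponent bookkeeping is plausibly consistent with the statement. However, there is a genuine gap in the forward step. Proposition \ref{ProCauchpb} is an a priori estimate for the \emph{backward} Cauchy problem: it propagates weighted energy from asymptotic data prescribed at $t=+\infty$ down to a finite time $T$. It does not give uniform-in-time control of $g_+$ starting from data at $t=0$ and going forward; that is the asymptotic-completeness direction, which is proved in \cite{scat} (specifically the forward modified-scattering estimates in \cite[Theorem~2.11]{scat}), and it is precisely what the paper invokes in the sentence preceding the Corollary (``Proposition \ref{ProCauchpb} and the analysis carried out in \cite{scat}''). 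By attributing the forward propagation to a ``higher-moment refinement of Proposition \ref{ProCauchpb}'', you are pointing at the wrong direction of the Cauchy problem, and the substantive work --- re-running \cite{scat}'s forward energy hierarchy with the stronger $(N_z,N_v)$ weights, and tracking the logarithmic and polynomial losses it incurs --- is simply not addressed. Relatedly, the statement that ``the past wave operator furnished by Theorem \ref{Theo3} is essentially an isomorphism'' is circular: Theorem \ref{Theo3} gives specific exponents ($\langle z\rangle^{8+|\kappa_z|}\langle v\rangle^{15+|\kappa_v|}$ on the initial slice), so to upgrade to a norm controlled by $\mathbf{B}(N,N_z,N_v)$ one must rerun the time-reversed Proposition \ref{ProCauchpb} with the new weights, which is the same ingredient you are already invoking --- the reference to Theorem \ref{Theo3} adds nothing. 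To make the argument correct, replace the forward invocation of Proposition \ref{ProCauchpb} by the forward higher-moment estimates from \cite{scat} and verify that those estimates close under the assumption $\mathbf{B}(N,N_z,N_v)<+\infty$.
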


The main ingredients of the proof of Theorem \ref{Theo1} are presented in Section \ref{Subseckeyidea}.

\subsection{Related open problems} We list here some questions connected to the results proved in this article.
\begin{enumerate}
\item In the multispecies case or if we allow $f$ to take negative values, can a solution to the Vlasov-Maxwell system satisfy linear scattering as $t \to - \infty$ and modified scattering as $t \to + \infty$? This problem was raised in the fifth part of \cite[Remark~$1.2$]{scattmap}.
\item Given a past and a future radiation field, does the knowledge of the scattering map allows us to determine the properties of the plasma? In other words, given $(E_{-\infty},B_{-\infty})$ and $(E_{+\infty},B_{+\infty})$, is there a unique solution $(f,E,B)$ to the Vlasov-Maxwell system such that the past (respectively the future) radiation field of $(E,B)$ corresponds to $(E_{-\infty},B_{-\infty})$ (respectively $(E_{+\infty},B_{+\infty})$)?

This question, which is part of inverse scattering problems, has been raised to me by Claude Bardos. $(E_{-\infty},B_{-\infty})$ would correspond to a signal emitted by a far away observer in the direction of the plasma and $(E_{+\infty},B_{+\infty})$ would be the signal measured by this observer after the interaction between the electromagnetic field and the particles.
\item In the linear case, $t^3J(f)(t,t\widehat{v})$ has an asymptotic expansion in powers of $t^{-1}$. In the nonlinear case, although the four-current density has a linear behavior at zeroth order, that is $t^3J(f)(t,t\widehat{v}) \to \int_z \langle v \rangle^5 f_\infty(z,v)\dr z$, we expect it to have a polyhomogeneous expansion, as the charge density in the context of the Vlasov-Poisson system \cite{latetime}. More generally, it would be interesting to determine the asymptotic expansion of $J(f)$ as well as $(E,B)$.
\end{enumerate} 

\subsection{Structure of the paper}

We introduce in Section \ref{Sec2} the tools used throughout this article. We further analyse the solutions to the linear Vlasov equation in order to motivate the introduction of the asymptotic Maxwell equations. Then, we present the key ideas of the proof. In Section \ref{Secenergy}, we prove energy estimates for both the distribution functions and the electromagnetic field. Section \ref{SecscattMax} is devoted to scattering results for the Maxwell equations. In particular, we construct different wave operators for the Maxwell equations with a vanishing or a strongly decaying source term. 

The asympotic Maxwell equations are introduced in Section \ref{SecMaxasymp}. We perform a thorough analysis of its solution $F^{\mathrm{asymp}}[f_\infty]$, which captures the large time dynamics of $F$. In Section \ref{SecPicard}, we set up the Picard iteration which will allow us to construct the class of solutions $(f,F)$ to the Vlasov-Maxwell system presented in Theorem \ref{Theo1}. If $N \geq 9$, for the purpose of working in a more natural functional space and simplifying the presentation of the proof, we will first make a stronger assumption on $f_\infty$ than \eqref{eq:assumpfinfty}. The modifications to bring to the proof in order to deal with the general case are presented in Section \ref{Secweakerassump}. We control the sequence of approximate solutions to \eqref{VM1}--\eqref{VM2} in a suitable functional space throughout Sections \ref{SecVlasov}--\ref{SecMax}. In particular, we study the Vlasov equation (respectively the Maxwell equations), with a prescribed electromagnetic field (respectively a prescribed current density) and asymptotic data. We conclude the proof of Theorem \ref{Theo1} by proving in Section \ref{SecCauchy} that the sequence of approximate solutions is Cauchy for a weaker energy norm. Finally, Section \ref{secApp} contains elliptic estimates.

\subsubsection*{Acknowledgements} I would like to thank Renato Velozo Ruiz for several insightful discussions. This work was conduced within the France 2030 framework porgramme, the Centre Henri Lebesgue ANR-11-LABX-0020-01.

\section{Preliminaries}\label{Sec2}

\subsection{Commutation vector fields}

Our analysis of the electromagnetic field will rely on vector field methods, initially introduced in \cite{Kl85} in order to study solutions to wave equations and then adapted to the Maxwell equations by \cite{CK}. It will in particular allow us to obtain improved decay estimates for the good null components $\alpha$, $\rho$ and $\sigma$. These kind of approaches are usually based on
\begin{itemize}
\item a set of weighted derivatives (vector fields) \textit{commuting} with the linear operator of the equations studied.
\item Energy estimates, used to control $L^p$ norms of the solutions and their derivatives, such as Proposition \ref{ProMoravac} below. Vector fields are here used as \textit{multipliers}.
\item Weighted Sobolev inequalities, such as Proposition \ref{decayMaxell} stated in the next Section \ref{Secenergy}, providing decay estimates on the fields.
\end{itemize}
The set of the commutators for the Maxwell equations is composed of Killing vector fields and the scaling vector field $S$, which is merely conformal Killing. They then generate (conformal) isometries of the Minkowski space.
\begin{Def}
Let $\mathbb{K}$ be the set composed by the vector fields
$$ \partial_t, \qquad \partial_{x^k}, \qquad \Omega_{ij}:=x^i\partial_{x^j}-x^j\partial_{x^i}, \qquad \Omega_{0k}:=t\partial_{x^k}+x^k \partial_t, \qquad S:=t\partial_t+ x^\ell \partial_{x^\ell}=t\partial_t+r\partial_r,$$
where $1 \leq k \leq 3$ and $1\leq i < j \leq 3$.
\end{Def}
\begin{Rq}
The vector fields $\partial_{x^\mu}$ (respectively $\Omega_{ij}$ and $\Omega_{0k}$) are the infinitesimal generators of translations (respectively rotations and Lorentz boosts). Usually, the set $\mathbb{K} \setminus \{ S \}$ is denoted by $\mathbb{P}$.
\end{Rq}

To commute the Maxwell equations, it is important to differentiate the electromagnetic field geometrically. For a $2$-form $F$ and a vector field $Z=Z^{\mu}\partial_{x^{\mu}}$, the Lie derivative $\mathcal{L}_Z(F)$ of $F$ with respect to $Z$ is defined, in coordinates, as
\begin{equation}\label{keva:defLie}
 \mathcal{L}_Z(F)_{\mu \nu} = Z(F_{\mu \nu})+\partial_{\mu}(Z^{\lambda})F_{\lambda \nu}+\partial_{\nu}(Z^{\lambda}) F_{\mu \lambda}.
 \end{equation}
 Similarly, if $J$ is $1$-form, its Lie derivative with respect to $Z$ is
  $$\mathcal{L}_Z(J)_\nu= Z(J_\nu)+\partial_{x^\nu}(Z^\lambda)J_\lambda.$$
  
 \begin{Lem}\label{LemComMaxwell}
 Let $F$ be a solution to the Maxwell equations with source term $J$, 
 $$\nabla^{\mu} F_{\mu \nu}=J_\nu, \qquad \qquad \nabla^{\mu} {}^* \! F_{\mu \nu}=0.$$
 Then, for all Killing vector field $Z \in \mathbb{K} \setminus \{S \}$, we have
 $$ \nabla^{\mu} \mathcal{L}_Z(F)_{\mu \nu}=\mathcal{L}_Z(J)_\nu, \qquad \nabla^{\mu} {}^* \! \mathcal{L}_Z(F)_{\mu \nu}=0 \,; \qquad \quad \; \nabla^{\mu} \mathcal{L}_S(F)_{\mu \nu}=\mathcal{L}_S(J)_\nu+2J_\nu, \qquad \nabla^{\mu} {}^* \! \mathcal{L}_S(F)_{\mu \nu}=0.$$
 \end{Lem}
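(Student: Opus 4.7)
The plan is to argue intrinsically with differential forms on Minkowski spacetime, where the two pairs of Maxwell equations read $dF = 0$ (equivalent to $\nabla^{\mu}{}^{*}\!F_{\mu\nu}=0$) and $d({}^{*}\!F) = {}^{*}\!J$ (equivalent to $\nabla^{\mu}F_{\mu\nu}=J_{\nu}$). Since the Lie derivative commutes with the exterior derivative for any vector field $Z$, one immediately obtains $d(\mathcal{L}_Z F) = \mathcal{L}_Z (dF) = 0$, which gives $\nabla^{\mu}{}^{*}\!\mathcal{L}_Z(F)_{\mu\nu} = 0$ for every $Z\in\mathbb{K}$, including $S$. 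This takes care of the homogeneous equations in one stroke.

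For the inhomogeneous pair, the key tool is the standard identity describing the interplay between the Hodge star and the Lie derivative along a conformal Killing field: if $\mathcal{L}_Z \eta = 2\lambda\,\eta$ in dimension $n=4$ and $\omega$ is a $k$-form, then
\[ \mathcal{L}_Z({}^{*}\omega) = {}^{*}(\mathcal{L}_Z \omega) + (n-2k)\lambda\,{}^{*}\omega. \]
For every $Z\in\mathbb{K}\setminus\{S\}$ one has $\lambda=0$, whereas $\lambda=1$ for $S$. Applied to the $2$-form $F$ (so $k=2$, $n-2k=0$), the weight vanishes in both cases, so $\mathcal{L}_Z {}^{*}\!F = {}^{*}\!\mathcal{L}_Z F$. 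Applied to the $1$-form $J$ (so $k=1$, $n-2k=2$), the weight is still zero for translations, rotations and boosts, but produces an extra term $2\,{}^{*}\!J$ when $Z=S$.

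Commuting $d({}^{*}\!F) = {}^{*}\!J$ with $\mathcal{L}_Z$ and using these identities,
\[ d({}^{*}\!\mathcal{L}_Z F) = d(\mathcal{L}_Z {}^{*}\!F) = \mathcal{L}_Z({}^{*}\!J) = {}^{*}\!\mathcal{L}_Z J + (n-2)\lambda\,{}^{*}\!J, \]
which translates back to $\nabla^{\mu}\mathcal{L}_Z(F)_{\mu\nu} = \mathcal{L}_Z(J)_{\nu}$ for $Z\in\mathbb{K}\setminus\{S\}$ and to $\nabla^{\mu}\mathcal{L}_S(F)_{\mu\nu} = \mathcal{L}_S(J)_{\nu}+2J_{\nu}$ for $S$, as claimed.

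The only nontrivial point is the formula relating $\mathcal{L}_Z \circ {}^{*}$ and ${}^{*}\circ \mathcal{L}_Z$ with its conformal weight, which I would either invoke as a standard result or verify by computing $\mathcal{L}_Z \varepsilon_{\mu\nu\rho\sigma}$ from the (conformal) Killing equation $\partial_\mu Z_\nu + \partial_\nu Z_\mu = 2\lambda\,\eta_{\mu\nu}$. Taking the trace of this relation yields $\partial_\mu Z^\mu = n\lambda$, so $\mathcal{L}_Z \varepsilon = n\lambda\,\varepsilon$; tracking the raising of indices needed to pass from ${}^{*}\mathcal{L}_Z$ to $\mathcal{L}_Z{}^{*}$ produces the $(n-2k)\lambda$ factor. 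As a sanity check, one can reproduce the scaling case by a brute-force coordinate computation: expanding $\partial^{\mu}\mathcal{L}_Z F_{\mu\nu}$ from the definition \eqref{keva:defLie} and using $\partial_\mu S_\nu + \partial_\nu S_\mu = 2\eta_{\mu\nu}$ together with $\Box S^\mu = 0$, the antisymmetry of $F$ combines with the term $2\eta_{\mu\nu}$ to produce exactly the missing $2J_\nu$.
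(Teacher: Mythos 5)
Your proof is correct. The paper states this lemma without proof (it is a classical commutation identity going back to Christodoulou--Klainerman), so there is no argument to compare against, but your route is sound: the homogeneous equation follows in one stroke from $d\circ\mathcal{L}_Z=\mathcal{L}_Z\circ d$ applied to $dF=0$, and the conformal-weight bookkeeping for the Hodge star is exactly right --- the weight $(n-2k)\lambda$ vanishes on the $2$-form $F$ in $n=4$ for every $Z\in\mathbb{K}$, while on the $1$-form $J$ it vanishes for the Killing fields and equals $2$ for $S$ (since $\mathcal{L}_S\eta=2\eta$), producing the extra $2J_\nu$. Your closing sanity check also goes through: in the coordinate expansion of $\partial^\mu\mathcal{L}_Z(F)_{\mu\nu}$ the terms $\partial^\mu Z^\lambda\big(\partial_\lambda F_{\mu\nu}+\partial_\mu F_{\lambda\nu}\big)$ are symmetric in $(\mu,\lambda)$, so only $\partial^{(\mu}Z^{\lambda)}$ contributes, which is $0$ for Killing fields and $\eta^{\mu\lambda}$ for $S$, yielding $2\nabla^\mu F_{\mu\nu}=2J_\nu$; the remaining terms assemble into $\mathcal{L}_Z(J)_\nu$ because $Z^\lambda$ is affine in $x$. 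The only cosmetic caveat is the sign convention in $d({}^*F)=\pm{}^*J$, which cancels on both sides when commuting with $\mathcal{L}_Z$ and so does not affect the conclusion.
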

Thus, if $F$ is solution to the vacuum Maxwell equations $\nabla^{\mu} F_{\mu \nu}=\nabla^{\mu} {}^* \! F_{\mu \nu}=0$, then same holds true for $\mathcal{L}_Z F$ for all conformal Killing vector field $Z \in \mathbb{K}$. 

In the context of the Vlasov-Maxwell system, controlling the derivatives of $F$ requires to bound $Z J(f)$. Motivated by this problem, Fajman-Joudioux-Smulevici developed a vector field method for Vlasov equations compatible with the one for wave equations \cite{FJS}. Their approach relies on the fact that for any Killing vector field $Z$, its complete lift\footnote{We refer to \cite[Section~$2G$]{FJS} for more details about the relations between the Vlasov operator on a Lorentzian manifold and the complete lift of its Killing vector fields.} $\widehat{Z}$ commute with $v^0 \T_0$, where $\T_0 := \partial_t+\widehat{v} \cdot \nabla_x$ is the linear transport operator. The scaling, which verifies $[\T_0 , S]=\T_0$, will also be useful.
\begin{Def}
Let $\K$ be the set composed by
$$\widehat{ \partial_t}= \partial_t, \quad \widehat{\partial_{x^k}}=\partial_{x^k}, \quad \widehat{\Omega}_{ij}:=x^i\partial_{x^j}-x^j\partial_{x^i}+v^i\partial_{v^j}-v^j\partial_{v^i}, \quad \widehat{\Omega}_{0k}:=t\partial_{x^k}+x^k \partial_t+v^0 \partial_{v^k}, \quad  \widehat{S}=t\partial_t+r\partial_r+3,$$
where $1 \leq k \leq 3$ and $1\leq i < j \leq 3$.
\end{Def}
\begin{Rq}
Although the differential operator $S+3$ is not the complete lift of $S$, we will denote it by $\widehat{S}$ for simplicity. We chose to work with it in order to lighten the presentation and simplify the study of the commuted Maxwell equations (see Proposition \ref{Com} below).
\end{Rq}
\begin{Rq}
The good commutation properties of $S$ with the free transport operator $\T_0$ are explained by the relations $S=t\T_0+(x^i-t\widehat{v}^i)\partial_{x^i}$ and $\T_0(x-t\widehat{v})=0$, so that $x-t\widehat{v}$ is conserved along the linear flow.
\end{Rq}

For the purpose of considering higher order derivatives, we introduce an ordering on $\mathbb{K}$ and $\K$,
$$ \mathbb{K}=\{Z^i \, | \, 1 \leq i \leq 11\}, \qquad \qquad \K=\{\widehat{Z}^i\,  | \, 1 \leq i \leq 11\}.$$
It will be convenient to assume that $Z^{11}=\widehat{Z}^{11}=S$ and $\widehat{Z^i}=\widehat{Z}^i$ for any $1 \leq i \leq 10$. Moreover, for a multi-index $\kappa \in \llbracket 1,11 \rrbracket^p$ of length $|\kappa|=p$, we denote by $\mathcal{L}_{Z^{\kappa}}$ the Lie derivative $\mathcal{L}_{Z^{\kappa_1}} \dots \mathcal{L}_{Z^{\kappa_p}}$ of order $|\kappa|$. Similarly, we define $\widehat{Z}^{\kappa}$ as $\widehat{Z}^{\kappa_1} \dots \widehat{Z}^{\kappa_p}$. Note the equivalence between the pointwise norms
\begin{equation}\label{equinorm}
  \; \sup_{|\gamma| \leq N} \left|\mathcal{L}_{Z^{\gamma}}(F)\right| \lesssim \sup_{|\beta| \leq N} \, \sup_{0 \leq \mu,\nu \leq 3} \left| Z^{\beta}(F_{\mu \nu}) \right| \lesssim \sup_{|\gamma| \leq N} \left|\mathcal{L}_{Z^{\gamma}}(F)\right|.
\end{equation} 
In order to exploit certain hierarchies in the commuted equations, we will denote, for a multi-index $\beta$, by $\beta_H$ (respectively $\beta_T$) the number of homogeneous vector fields (respectively translations) composing $\widehat{Z}^\beta$ as well as $Z^\beta$. For instance,
$$ Z^\beta= S\partial_t \Omega_{23} \partial_{x^1} \Omega_{02},  \qquad \qquad |\beta|=5, \quad \beta_H=3, \quad \beta_T=2.$$
The following geometric formula, proved in \cite[Lemma~$2.8$]{massless}, will be fundamental for us.
\begin{Lem}\label{LemCom}
Let $H$ be a $2$-form and $h : [T,+\infty[ \times \R^3_x \times \R^3_v \rightarrow \R$ be a function, both sufficiently regular. Let further $Z \in \mathbb{K} \setminus \{ S \}$ be a Killing vector field and $\widehat{Z} \in \K \setminus \{ S \}$ be its complete lift. Then, we have
\begin{align*}
\mathcal{L}_Z \big( J(h) \big) = J(\widehat{Z}h),  \qquad \qquad \qquad \mathcal{L}_S \big( J(h) \big) = J(Sh)+J(h)
\end{align*}
as well as
\begin{align*}
& \widehat{Z} \left( v^\mu {H_\mu}^j \partial_{v^j} h \right)= v^\mu {\mathcal{L}_Z(H)_\mu}^j \partial_{v^j} h +v^\mu {H_\mu}^j \partial_{v^j} \widehat{Z}h, \\
& S \left( v^\mu {H_\mu}^j \partial_{v^j} h \right)=v^{\mu} {\mathcal{L}_S(H)_{\mu}}^j \partial_{v^j} h-2v^\mu {H_\mu}^j \partial_{v^j} h +v^\mu {H_\mu}^j \partial_{v^j}S h.
\end{align*}
\end{Lem}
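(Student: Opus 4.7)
The plan is to verify the four identities by direct coordinate computation, case-by-case over the generators of $\mathbb{K}$, exploiting the fact that the vertical part of the complete lift $\widehat{Z}-Z$ is engineered so that integration by parts in $v$ (for the current density statement) and the commutator with $\partial_{v^j}$ (for the force-term statement) reproduce exactly the derivative-of-coefficient contributions appearing in the Lie derivative.

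For the identities on $J(h)$, I expand $\mathcal{L}_Z (J(h))_\nu = Z(J(h)_\nu) + \partial_\nu(Z^\lambda) J(h)_\lambda$. For a translation $Z = \partial_{x^\mu}$, one has $\widehat{\partial_{x^\mu}} = \partial_{x^\mu}$ and the correction term vanishes, so both sides equal $J(\partial_{x^\mu} h)_\nu$. For $S$, since $\partial_\nu S^\lambda = \delta_\nu^\lambda$ the correction is exactly $J(h)_\nu$, and as $S$ carries no velocity derivative it passes through the $v$-integral, yielding $\mathcal{L}_S(J(h))_\nu = J(Sh)_\nu + J(h)_\nu$. For rotations and boosts, write $\widehat{Z} = Z + \widehat{Z}_{\mathrm{vert}}$ with $\widehat{Z}_{\mathrm{vert}}$ a pure $v$-derivative; by the decay of $h$ in $v$ there is no boundary term when integrating $\int_{\R^3_v} (v_\nu/v^0)\,\widehat{Z}_{\mathrm{vert}} h \, \mathrm{d}v$ by parts, and an elementary computation using $\partial_{v^j}(v^i/v^0) = \delta^i_j / v^0 - v^i v^j / (v^0)^3$ shows that the resulting expression equals $\partial_\nu(Z^\lambda) J(h)_\lambda$ after index relabeling.

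For the identity on $v^\mu H_\mu{}^j \partial_{v^j} h$, I apply Leibniz to $\widehat{Z}(v^\mu H_\mu{}^j \partial_{v^j} h)$ and regroup three contributions. The action of $\widehat{Z}$ on $v^\mu$, nontrivial only through the vertical part, gives $\partial_\mu(Z^\lambda) H_\lambda{}^j \partial_{v^j} h$; the action on $H_\mu{}^j$ produces $v^\mu Z(H_\mu{}^j) \partial_{v^j} h$; finally $\widehat{Z}(\partial_{v^j} h) = \partial_{v^j}(\widehat{Z} h) - [\partial_{v^j}, \widehat{Z}] h$, where the commutators $[\widehat{\Omega}_{ij}, \partial_{v^k}] = -\delta_{ik}\partial_{v^j} + \delta_{jk}\partial_{v^i}$ and $[\widehat{\Omega}_{0k}, \partial_{v^j}] = -(v^j/v^0)\partial_{v^k}$ combine with $v^\mu H_\mu{}^j$ (using $v^j/v^0 = \widehat{v}^j$ and $v^\mu = v^0 \widehat{v}^\mu$ in the boost case) to yield exactly $-v^\mu \partial_\lambda(Z^j) H_\mu{}^\lambda \partial_{v^j} h$; adding the three contributions recovers the formula via the coordinate expression $\mathcal{L}_Z(H)_\mu{}^j = Z(H_\mu{}^j) + \partial_\mu(Z^\lambda) H_\lambda{}^j - \partial_\lambda(Z^j) H_\mu{}^\lambda$. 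For $S$, no vertical part and no velocity commutator are present, but $\mathcal{L}_S H_\mu{}^j = S(H_\mu{}^j) + 2 H_\mu{}^j$, which follows from $\partial_\alpha S^\beta = \delta_\alpha^\beta$ entering twice at the level of $H_{\mu\nu}$ before raising the second index, forces the $-2 v^\mu H_\mu{}^j \partial_{v^j} h$ correction once one expands $S(v^\mu H_\mu{}^j \partial_{v^j} h)$ by Leibniz and uses $[S, \partial_{v^j}] = 0$.

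The main obstacle is strictly the bookkeeping: indices, sign conventions such as $v_0 = -v^0$, and keeping track of the raising of $j$ by the Minkowski metric. The identities hold essentially by design of the complete lift, but matching the vertical-part contributions and the commutators against the $\partial_\nu(Z^\lambda)$ expressions requires a short but careful case-by-case verification for each generator.
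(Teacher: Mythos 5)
Your computation is correct, and it is the standard verification of this lemma. Note that the paper itself gives no proof here: it simply cites Lemma~2.8 of the author's earlier work \cite{massless}, so there is nothing in the present text to compare against; the direct case-by-case check over the generators of $\mathbb{K}$ is exactly what one expects that cited proof to contain. All the individual steps you describe check out: for the current density, the vertical parts $v^i\partial_{v^j}-v^j\partial_{v^i}$ and $v^0\partial_{v^k}$ integrate by parts against $\widehat{v}_\nu$ to reproduce $\partial_{x^\nu}(Z^\lambda)J(h)_\lambda$ (the rotational vertical part is divergence-free while the boost one has divergence $\widehat{v}^k$, and both bookkeepings come out right); for the force term, your identity $\mathcal{L}_Z(H)_\mu{}^j=Z(H_\mu{}^j)+\partial_\mu(Z^\lambda)H_\lambda{}^j-\partial_\lambda(Z^j)H_\mu{}^\lambda$ is the correct raised-index form for Killing $Z$, and the three Leibniz contributions match it term by term. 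The one step you gloss over that deserves to be made explicit is the boost commutator: $[\widehat{\Omega}_{0k},\partial_{v^j}]=-\widehat{v}^j\partial_{v^k}$ produces the term $-v^\mu H_\mu{}^j\,\widehat{v}^j\,\partial_{v^k}h$, and identifying this with $-v^\mu\partial_\lambda(Z^j)H_\mu{}^\lambda\partial_{v^j}h=-v^\mu H_\mu{}^0\partial_{v^k}h$ requires the antisymmetry of the $2$-form $H$ (i.e.\ $v^\mu v^\nu H_{\mu\nu}=0$, whence $v^\mu H_\mu{}^jv^j=v^0\,v^\mu H_\mu{}^0$); this is the only place where a hypothesis beyond the structure of the complete lift enters, and it should be named rather than absorbed into ``bookkeeping''.
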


Iterating Lemmata \ref{LemComMaxwell} and \ref{LemCom}, we obtain that the structure of the Vlasov-Maxwell equations \eqref{Vlasov1}--\eqref{Maxwell23} are preserved by commutation.
\begin{Pro}\label{Com}
Let $(f,F)$ be a sufficiently regular solution to the Vlasov-Maxwell system. For any multi-index $\beta$, there exists $C^{\beta}_{\gamma, \kappa} \in \mathbb{Z}$ such that
\begin{align*} 
\T_F \big( \widehat{Z}^{\beta} f \big) &= \sum_{|\kappa| \leq |\beta|-1} \,  \sum_{|\gamma|+|\kappa| \leq |\beta|} C^{\beta}_{\gamma,\kappa} \; \widehat{v}^\mu {\mathcal{L}_{Z^{\gamma}} (F)_{\mu}}^j \partial_{v^j} \widehat{Z}^{\kappa} f , \\
\nabla^{\mu} \mathcal{L}_{Z^\beta}(F)_{\mu \nu} &=  J(\widehat{Z}^\beta f), \qquad \qquad \nabla^\mu {}^* \mathcal{L}_{Z^\beta}(F)_{\mu \nu} = 0.
\end{align*}
\end{Pro}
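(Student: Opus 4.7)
The plan is to argue by induction on $|\beta|$. The case $|\beta|=0$ is the hypothesis that $(f,F)$ solves \eqref{Vlasov1}--\eqref{Maxwell23}. For the base case $|\beta|=1$, I would apply Lemmas \ref{LemComMaxwell} and \ref{LemCom} directly. On the Maxwell side, for a Killing $Z \in \mathbb{K} \setminus \{S\}$ one has $\nabla^\mu \mathcal{L}_Z(F)_{\mu\nu} = \mathcal{L}_Z(J(f))_\nu = J(\widehat{Z}f)_\nu$ immediately. For $Z = S$, combining the two corrections,
\[
\nabla^\mu \mathcal{L}_S(F)_{\mu\nu} = \mathcal{L}_S(J(f))_\nu + 2J(f)_\nu = J(Sf)_\nu + J(f)_\nu + 2J(f)_\nu = J(\widehat{S}f)_\nu,
\]
explains the offset $\widehat{S} = S+3$: it is calibrated so that the $+J$ from Lemma \ref{LemCom} and the $+2J$ from Lemma \ref{LemComMaxwell} sum to $3J(f)$. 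The dual identity $\nabla^\mu {}^*\!\mathcal{L}_Z(F)_{\mu\nu}=0$ is a direct consequence of Lemma \ref{LemComMaxwell}. On the Vlasov side, for Killing $Z$, the commutation $[\widehat{Z}, \T_0] = 0$ combined with Lemma \ref{LemCom} gives $\T_F(\widehat{Z}f) = \widehat{Z}(\T_F f) - \widehat{v}^\mu {\mathcal{L}_Z(F)_\mu}^j \partial_{v^j} f$, which is of the claimed form when $\T_F f = 0$. For $S$, I would use $[\T_0, S] = \T_0$ together with Lemma \ref{LemCom} and rewrite the residual $\T_0 f$ as $-\widehat{v}^\mu {F_\mu}^j \partial_{v^j} f$; the $+3$ in $\widehat{S}$ then absorbs the extra scaling contributions to produce $\T_F(\widehat{S}f) = \widehat{v}^\mu {F_\mu}^j \partial_{v^j} f - \widehat{v}^\mu {\mathcal{L}_S(F)_\mu}^j \partial_{v^j} f$, matching the constraints $|\kappa| \leq 0$ and $|\gamma|+|\kappa| \leq 1$.

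For the inductive step, write $\widehat{Z}^\beta = \widehat{Z}^{\beta_1} \widehat{Z}^{\beta'}$ with $|\beta'| = |\beta|-1$ and set $g := \widehat{Z}^{\beta'} f$. The Maxwell part propagates cleanly: by the inductive hypothesis, $\mathcal{L}_{Z^{\beta'}}(F)$ solves the Maxwell equations with source $J(g)$, so applying Lemma \ref{LemComMaxwell} to this $2$-form and then Lemma \ref{LemCom} to rewrite $\mathcal{L}_{Z^{\beta_1}}(J(g))$ as $J(\widehat{Z}^{\beta_1}g) = J(\widehat{Z}^\beta f)$ (with the same Killing/scaling dichotomy as in the base case, via $\mathcal{L}_S(J(g)) + 2J(g) = J(Sg) + 3J(g) = J(\widehat{S}g)$) yields the desired identity; the divergence-free condition for $^*\!\mathcal{L}_{Z^\beta}(F)$ propagates through the second half of Lemma \ref{LemComMaxwell}. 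For the Vlasov equation, the base-case identities applied to $g$ in place of $f$ (but now with $\T_F g \neq 0$) give, for Killing $Z^{\beta_1}$,
\[
\T_F(\widehat{Z}^{\beta_1} g) = \widehat{Z}^{\beta_1}(\T_F g) - \widehat{v}^\mu {\mathcal{L}_{Z^{\beta_1}}(F)_\mu}^j \partial_{v^j} g,
\]
with an analogous identity modulo scaling corrections already of the target form when $Z^{\beta_1}=S$. One then substitutes the inductive hypothesis for $\T_F g$ and applies Lemma \ref{LemCom} to each source term $\widehat{v}^\mu {\mathcal{L}_{Z^{\gamma'}}(F)_\mu}^j \partial_{v^j} \widehat{Z}^{\kappa'} f$: the differentiation by $\widehat{Z}^{\beta_1}$ either hits the $F$ factor (replacing $\mathcal{L}_{Z^{\gamma'}}(F)$ by $\mathcal{L}_{Z^{\beta_1}} \mathcal{L}_{Z^{\gamma'}}(F)$, a Lie derivative of length $|\gamma'|+1$) or the $\widehat{Z}^{\kappa'} f$ factor (producing $\widehat{Z}^{\beta_1} \widehat{Z}^{\kappa'} f$), with the scaling corrections in the $S$ case reabsorbed via $S = \widehat{S} - 3$. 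Counting confirms that every term produced has $|\tilde \gamma| + |\tilde \kappa| \leq |\beta'|+1 = |\beta|$ and $|\tilde \kappa| \leq (|\beta'|-1)+1 = |\beta|-1$.

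The main obstacle is the careful bookkeeping for the scaling vector field: the $\T_0$ contribution from $[\T_0, S] = \T_0$, the $-2\widehat{v}^\mu F\cdot\partial_v$ term from Lemma \ref{LemCom}, and the $+2J$ term from Lemma \ref{LemComMaxwell} must combine consistently with the $+3$ offset in $\widehat{S}$ so that every iterative step yields formulas with integer coefficients exactly of the form stated. All commutations involving purely Killing vector fields are straightforward in comparison.
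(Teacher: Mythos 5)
Your proposal takes exactly the route the paper does (the paper's entire proof is the sentence ``Iterating Lemmata~\ref{LemComMaxwell} and~\ref{LemCom}\dots''), and the induction you set up, including the bookkeeping of the $+3$ in $\widehat{S}$ against the $+J$ and $+2J$ corrections, is correct. One caveat on the Vlasov side: the identity $[\widehat{Z},\T_0]=0$ you invoke is false for the lifted boosts --- what holds is $[\widehat{Z},v^0\T_0]=0$, and a direct computation gives $[\widehat{\Omega}_{0k},\T_0]=-\widehat{v}^k\,\T_0$. Consequently your inductive identity $\T_F(\widehat{Z}^{\beta_1}g)=\widehat{Z}^{\beta_1}(\T_F g)-\widehat{v}^\mu{\mathcal{L}_{Z^{\beta_1}}(F)_\mu}^j\partial_{v^j}g$ acquires an extra term $\widehat{v}^k\,\T_F g$ once $\T_F g\neq 0$, which at first sight destroys the claim $C^\beta_{\gamma,\kappa}\in\mathbb{Z}$ by introducing $\widehat{v}$-dependent coefficients. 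The danger is illusory: that term cancels exactly against the contribution of $\widehat{Z}(1/v^0)$ that appears when Lemma~\ref{LemCom} (which is stated for $v^\mu {H_\mu}^j\partial_{v^j}h$, not for $\widehat{v}^\mu {H_\mu}^j\partial_{v^j}h$) is applied to each source term $\widehat{v}^\mu{\mathcal{L}_{Z^{\gamma'}}(F)_\mu}^j\partial_{v^j}\widehat{Z}^{\kappa'}f$. The cleanest way to make this cancellation automatic is to run the induction on the quantity $v^0\T_F(\widehat{Z}^\beta f)=v^\mu\partial_{x^\mu}\widehat{Z}^\beta f+v^\mu{F_\mu}^j\partial_{v^j}\widehat{Z}^\beta f$, for which $[\widehat{Z},v^\mu\partial_{x^\mu}]=0$ and Lemma~\ref{LemCom} apply verbatim and the integrality of the coefficients is manifest; dividing by $v^0$ only at the very end yields the stated formula.
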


The vector field method developed by Fajman-Joudioux-Smulevici \cite{FJS} for the study of relativistic transport equations lead to a proof of the stability of Minkowski spacetime for both the massive and massless Einstein-Vlasov system \cite{FJS3,EVmassless} (these results were also obtained by \cite{LindbladTaylor,WangMinko,Taylor} using different approaches). Our analysis carried out in \cite{dim3,scat} of the small data solutions to the Vlasov-Maxwell system rely on such techniques as well. 

Vector field methods have also been used in order to derive boundedness and decay estimates for the solutions to the massless Vlasov equation on a fixed Schwarzschild or slowly rotating Kerr black hole \cite{ABJ,Schwa}. However, due to the difficulties related to the trapped null geodesics, the control of certain derivatives of the solutions still remain to be understood. In this perspective, Velozo Ruiz-Velozo Ruiz studied in \cite{VRVR} the small data solutions to the Vlasov-Poisson system with an external trapping potential by developing a commuting vector field approach adapted to such problems. It turns out that these solutions exhibit, in dimension $2$, a modified scattering dynamic \cite{VPTPscat}.

Finally, in the non-relativistic setting, such approaches have been applied to the Vlasov-Poisson and Vlasov-Yakawa systems \cite{Poisson,Duan} as well as in the collisional regime \cite{ Chatu1,Chatu2,Chatu3}.

\subsection{Weights capturing the good behavior of distribution functions for $t \sim |x|$ and $|x| \geq t$}\label{subsecweight}

Since the (massive) particles follow timelike trajectories, most of the matter should be located in the interior of the light cone when $t \gg 1$. We then expect Vlasov fields to enjoy stronger decay estimates near and in the exterior of the light cone $t=|x|$. Moreover, as for electromagnetic fields, we expect the four-current density $J(f)$ to have a better behavior in the good null directions $(L,e_\theta,e_\varphi)$. For this, we introduce $(v^L,v^{\underline{L}},v^{e_1},v^{e_2})$, the null components of the four-momentum vector $\mathbf{v}$, and $\slashed{v}=(v^{e_\theta},v^{e_\varphi})$, its angular part, so that
$$\mathbf{v}=v^L L+ v^{\underline{L}} \underline{L}+v^{e_\theta}e_\theta+v^{e_\varphi}e_\varphi, \qquad v^L=\frac{v^0+\frac{x_i}{r}v^i}{2} , \qquad v^{\underline{L}}=\frac{v^0-\frac{x_i}{r}v^i}{2}, \qquad |\slashed{v}|^2=|v^{e_\theta}|^2+|v^{e_\varphi}|^2.$$
Sometimes, we will write $v^{\underline{L}}(x)$ or $v^{\underline{L}}(\omega)$ in order to emphasise the dependence of this quantity in the spherical variables $\omega=x/|x|$. For convenience, we define as well
$$ \widehat{v}^0 := \frac{v^0}{v^0}=1, \qquad  \widehat{v}^L := \frac{v^L}{v^0}, \qquad  \widehat{v}^{\underline{L}} := \frac{v^{\underline{L}}}{v^0}, \qquad \slashed{\widehat{v}}:= \frac{\slashed{v}}{v^0} , \qquad \widehat{v}^{e_A} := \frac{v^{e_A}}{v^0}, \quad A \in \{ \theta,\varphi \}.$$
In the linear setting, the improved decay properties can be obtained by exploiting the weight
\begin{equation*}
 \langle x-t\widehat{v} \rangle, \qquad \qquad \T_0(\langle x-t\widehat{v} \rangle)= \widehat{v}^\mu \partial_{x^\mu} (\langle x-t\widehat{v} \rangle)=0,
 \end{equation*}
which is then constant, equal to $\langle x \rangle$, along the linear flow $t \mapsto (t,x+t\widehat{v} , v)$. Consequently, if $\T_0(f)=0$, one can propagate boundedness for $\langle x-t\widehat{v} \rangle^a f$ and obtain extra decay for $f$ in the favourable regions using the next inequalities.
\begin{Lem}\label{gainv}
For all $(t,x,v) \in \R_+ \times \R^3_x \times \R^3_v$, there holds
$$1+|\slashed{v}|^2  \leq 4 \langle v \rangle^2 \widehat{v}^{\underline{L}}, \qquad \qquad \widehat{v}^{\underline{L}}(x) \lesssim \frac{\max(t-|x|,1)}{\langle t+|x| \rangle} \langle x-t\widehat{v} \rangle, \qquad \qquad \big|\slashed{\widehat{v}}\big|(x) \lesssim \frac{\langle x-t\widehat{v} \rangle}{\langle t+|x| \rangle}.$$
It implies in particular
\begin{equation*}
 \forall \; (t,x,v) \in \R_+ \times \R^3_x \times \R^3_v, \qquad 1 \lesssim \frac{\max(t-|x|,1)}{\langle t+|x| \rangle}\langle v \rangle^2 \langle x-t\widehat{v} \rangle.
 \end{equation*}
\end{Lem}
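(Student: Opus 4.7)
The plan is to prove the three inequalities in order, using two basic inputs: the mass-shell relation $\eta(\mathbf v,\mathbf v)=-1$, which in the null frame reads $4v^Lv^{\underline L}=1+|\slashed v|^2$, and an explicit identity for $\widehat v^{\underline L}$ in terms of $y:=x-t\widehat v$.

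The first inequality is purely algebraic: combining $1+|\slashed v|^2=4v^Lv^{\underline L}$ with $v^L=\tfrac12(v^0+\omega\cdot v)\leq v^0=\langle v\rangle$ (from $|\omega\cdot v|\leq|v|\leq v^0$) and dividing by $v^0$ gives $1+|\slashed v|^2\leq 4\langle v\rangle^2\widehat v^{\underline L}$.

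For the second inequality, expanding $\omega\cdot y=|x|-t\,\omega\cdot\widehat v$ and halving $1-\omega\cdot\widehat v$ yields, for $t>0$, the linear identity $\widehat v^{\underline L}=(t-|x|+\omega\cdot y)/(2t)$. I then split into regimes. If $t\leq|x|/2$, then $|y|\geq|x|-t|\widehat v|\geq|x|/2\gtrsim\langle t+|x|\rangle$, so $\widehat v^{\underline L}\leq 1\lesssim\langle y\rangle/\langle t+|x|\rangle$; the case $t+|x|\lesssim 1$ is trivial. Otherwise $t\sim\langle t+|x|\rangle$, and the identity gives $2t\,\widehat v^{\underline L}\leq\max(t-|x|,0)+|y|$; if $t-|x|\geq 1$, I bound this by $(t-|x|)(1+|y|)\leq 2(t-|x|)\langle y\rangle$, and otherwise $\max(t-|x|,0)\leq 1$ forces the right-hand side to be $\leq 2\langle y\rangle$. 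Dividing by $2t$ yields the bound in every case.

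The third inequality uses $\omega\times y=-t\,\omega\times\widehat v$, giving $|\slashed{\widehat v}|=|\omega\times y|/t\leq|y|/t$ when $t\gtrsim\langle t+|x|\rangle$; in the complementary regime $t\leq|x|/2$ the previous paragraph shows $\langle y\rangle\gtrsim\langle t+|x|\rangle$, so the bound is trivial. The final consequence follows by combining the second inequality with the lower bound $\widehat v^{\underline L}\geq 1/(4\langle v\rangle^2)$, obtained from the mass-shell identity via $1\leq 4v^Lv^{\underline L}\leq 4\langle v\rangle v^{\underline L}$. The main obstacle is the second inequality, as it encodes the geometric fact that a sub-luminal particle ($|\widehat v|<1$) located near the light cone $|x|\sim t$ with small $y$ must itself be nearly null, i.e.\ have $\langle v\rangle$ very large; this forces the case analysis above. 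The quadratic identity $2t|x|(1-\omega\cdot\widehat v)=|y|^2+t^2/\langle v\rangle^2-(t-|x|)^2$ would only yield $(1-\omega\cdot\widehat v)\leq(|y|^2+t^2/\langle v\rangle^2)/(2t|x|)$, whose $t^2/\langle v\rangle^2$ term is not absorbed by $\langle y\rangle/\langle t+|x|\rangle$, which is why I work with the linear identity instead.
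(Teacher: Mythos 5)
Your proof is correct and uses essentially the same ingredients as the paper: the mass-shell relation $4v^Lv^{\underline L}=1+|\slashed v|^2$ together with $v^L\le v^0$ for the first inequality, the linear identity $2t\widehat v^{\underline L}=(t-|x|)+\omega\cdot(x-t\widehat v)$ for the second, and the vanishing of $\omega\times x$ for the third. The only superficial difference is in the case-split for the exterior region: the paper bounds $|x|v^{\underline L}$ via $|x|v^{\underline L}=(|x|-t)v^{\underline L}+tv^{\underline L}$ and $|x|-t\le|y|$, while you reduce to the trivial bound $\widehat v^{\underline L}\le 1$ by observing $\langle y\rangle\gtrsim\langle t+|x|\rangle$ when $t\le|x|/2$; both yield the same conclusion.
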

\begin{proof}
The first inequality is implied by the mass-shell relation $\eta(\mathbf{v},\mathbf{v})=-1$, which implies $-4v^Lv^{\underline{L}}+|\slashed{v}|^2=-1$, and $v^L \leq v^0$. For the second one, use first
$$  2 tv^{\underline{L}}(x)=(t-|x|)v^0-\frac{x_i}{|x|}v^0\big(t\widehat{v}^i-x^i \big), \qquad  |x|v^{\underline{L}}(x)=(|x|-t|)v^{\underline{L}}(x)+tv^{\underline{L}}(x)$$
in order to deal with the interior of the light cone $t \geq |x|$. If $|x| \geq t$, remark further that $|x|-t\leq |x-t\widehat{v}|$. Finally, for the last one we have
$$ |\slashed{v}(x)|^2=\frac{1}{|x|^2}\sum_{1 \leq i < j \leq 3} |x^iv^j-x^jv^i|^2 , \quad \; x^iv^j-x^jv^i=v^j(x^i-t\widehat{v}^i)-v^i(x^j-t\widehat{v}^j)=\frac{x^i}{t} (t\widehat{v}^j-x^j) - \frac{x^j}{t} (t\widehat{v}^i-x^i)    .$$
\end{proof}

\subsection{Linear Vlasov fields}\label{SubseclinVla}

The purpose of this section, where we will recall results concerning the asymptotic behavior of solutions to the free massive relativistic transport equation $\T_0(f)=\partial_t f+ \widehat{v} \cdot \nabla_x f=0$, is twofold.
\begin{enumerate}
\item We will initialise the Picard iteration for the proof of Theorem \ref{Theo1} by considering $f_1(t,x,v):=f_\infty (x-t\widehat{v},v)$, the unique solution to the free Vlasov equation equal to $f_\infty$ at $t=0$. Moreover, certain of the estimates proved in this section will in fact be useful later in the paper as well.
\item The results of this section will give insights concerning distribution functions satisfying modified scattering $f(t,x+t\widehat{v}+\C_{t,v},v) \to f_\infty(x,v)$. In particular, the velocity averages of such a distribution function, which decays as $t^{-3}$, behave as the ones of $f_1$. More precisely $|J(\widehat{Z}^\beta f)-J(\widehat{Z}^\beta f_1) |(t,x)= O(t^{-4}\log(t))$. It will motivate us to introduce the asymptotic Maxwell equations.
\end{enumerate}
Let $N \in \mathbb{N}$ and assume that $f_\infty$ satisfy the assumptions of Theorem \ref{Theo1}. Recall that $[\T_0 , \widehat{Z}]=0$ for any $\widehat{Z} \in \K \setminus \{ S \}$, $[\T_0 , \widehat{S}]=\T_0$ and $\T_0(\langle v \rangle)=\T_0 (x-t\widehat{v})=0$. Thus, for any $(N_z,N_v) \in \mathbb{N}^2$,
$$ \forall \, (t,x,v) \in \R_+ \times \R^3_x \times \R^3_v, \quad \langle x-t \widehat{v} \rangle^{N_z} \langle v \rangle^{N_v} \widehat{Z}^\beta f_1(t,x,v) = \langle x \rangle^{N_z} \langle v \rangle^{N_v} \widehat{Z}^\beta f_1 (0,x,v) , \qquad |\beta| \leq N+1. $$
Recall that $\beta_H$ denotes the number of homogeneous vector fields composing $\widehat{Z}^\beta$. We then deduce, in view of the decay assumption \eqref{eq:assumpfinfty} on $f_\infty$, that 
\begin{equation}\label{eq:boundlin}
\hspace{-10mm} \forall \, (t,x,v) \in \R_+ \times \R^3_x \times \R^3_v, \qquad \langle x-t \widehat{v} \rangle^{12+N-\beta_H} \langle v \rangle^{25} \big|\widehat{Z}^\beta f_1\big|(t,x,v)  \lesssim \sqrt{\epsilon}, \qquad \qquad |\beta| \leq N+1. 
 \end{equation} 
 Moreover, using the equation $\T_0(\widehat{Z}^\beta f)=0$ and performing integration by parts in $x$, we have further the conservation of the spatial averages,
\begin{equation}\label{eq:consspatf1}
\hspace{-6mm} \forall \, (t,v) \in \R_+ \times \R^3_v , \quad \langle v \rangle^{N_v} \int_{\R^3_x} \widehat{Z}^\beta f_1(t,x,v) \dr x = \langle v \rangle^{N_v} \int_{\R^3_x} \widehat{Z}^\beta f_1(0,x,v) \dr x , \qquad \qquad |\beta| \leq N+1 .
 \end{equation}
\subsubsection{Asymptotic behavior of the current density}\label{subsubseclin}  The goal now is to derive the large time behavior of $J(\widehat{Z}^\beta f_1)$, which is closely related to the spatial average of $\widehat{Z}^\beta f_1$. For this, we recall a standard identity, proved for instance in \cite[Lemma~$2.9$]{scat}.
\begin{Lem}\label{cdv}
For all $(t,x) \in \R_+^* \times \R^3$, the Jacobian determinant of $v \mapsto x-t \widehat{v}$ is equal to $-t^3 |v^0|^{-5}$.
\end{Lem}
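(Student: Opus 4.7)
The plan is a direct computation of the $3\times 3$ Jacobian. First I would differentiate $\widehat{v}^i = v^i/v^0$, with $v^0 = \sqrt{1+|v|^2}$, to get
\[
\partial_{v^j} \widehat{v}^i \;=\; \frac{\delta_{ij}}{v^0} - \frac{v^i v^j}{(v^0)^3} \;=\; \frac{1}{v^0}\left(\delta_{ij} - \widehat{v}^i \widehat{v}^j\right),
\]
so that the Jacobian matrix of $v \mapsto x - t\widehat{v}$ reads $M_{ij} = -t\, \partial_{v^j} \widehat{v}^i = -(t/v^0)(\delta_{ij} - \widehat{v}^i\widehat{v}^j)$.

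The key step is then to recognise the rank-one structure: the factor in parentheses is $I_3 - \widehat{v}\widehat{v}^{\mathrm{T}}$. By the matrix determinant lemma (or a direct computation in an orthonormal basis whose first vector is $\widehat{v}/|\widehat{v}|$), one has $\det(I_3 - \widehat{v}\widehat{v}^{\mathrm{T}}) = 1 - |\widehat{v}|^2$. Using $|\widehat{v}|^2 = |v|^2/(v^0)^2$ and the mass-shell identity $(v^0)^2 = 1 + |v|^2$, this quantity equals $(v^0)^{-2}$.

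Combining the two factors, I would conclude
\[
\det M \;=\; \left(-\frac{t}{v^0}\right)^{\!3} \det\!\big(I_3 - \widehat{v}\widehat{v}^{\mathrm{T}}\big) \;=\; -\frac{t^3}{(v^0)^3} \cdot \frac{1}{(v^0)^2} \;=\; -\frac{t^3}{(v^0)^5},
\]
which is the claimed formula. There is no real obstacle here; the statement is essentially a linear-algebra identity for a rank-one perturbation of the identity, and the slight subtlety is only to keep track of the factor $(-t/v^0)^3$ coming out of the rows before applying the determinant formula. If I wanted a more coordinate-free viewpoint, I could alternatively note that $v \mapsto \widehat{v}$ maps $\R^3_v$ diffeomorphically onto the open unit ball, and the inverse $\widecheck{\;\cdot\;}$ from Definition \ref{Definvrela} has Jacobian $(1-|y|^2)^{-5/2}$, which is consistent with the above through $1-|\widehat{v}|^2 = (v^0)^{-2}$.
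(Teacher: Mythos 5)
Your computation is correct, and since the paper does not spell out a proof (it simply cites \cite[Lemma~2.9]{scat}), there is nothing to compare against: your direct linear-algebra route — extracting the factor $-t/v^0$ from each row, then applying the rank-one determinant identity $\det(I_3 - \widehat{v}\widehat{v}^{\mathrm{T}}) = 1 - |\widehat{v}|^2 = (v^0)^{-2}$ — is exactly the standard argument one would expect behind such a reference.
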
 
 
In order to be able to apply the next results later in this paper, we perform an analysis in a slightly more general setting. Let
$$
f: [T,+\infty[ \times \R^3_x \times \R^3_v \rightarrow \R, \qquad \qquad h(t,z,v) := f(t,z+t\widehat{v},v), \qquad \qquad T \geq 0,
$$
and remark that $h$ is contant if $f=f_1$. Here, we will assume that $f$ behaves well along the linear characteristics $t \mapsto (t,x+t\widehat{v},v)$ but we will not require $h$ to be constant. Though, we stress that in the next statements, all the norms appearing in the right hand sides of the inequalities will be conserved in time if $f=f_1$. 

The optimal $L^\infty_x$ and $L^2_x$ decay can be obtained through the next estimate.
\begin{Pro}\label{Prosimpledecay}
There holds, for all $(t,x) \in [T,+ \infty[ \times \R^3$,
$$ \int_{\R^3_v} |f|(t,x,v) \dr v \lesssim \frac{1}{\langle t+|x| \rangle^3}\sup_{(z,v) \in \R^3 \times \R^3} \langle x-t\widehat{v} \rangle^4 \, \langle v \rangle^{5}\big|  f (t,x,v) \big|.$$
Similarly, we have for all $t \geq T$,
 \begin{align*}
\int_{\R^3_x} \langle t+|x| \rangle^{3} \bigg|\int_{\R^3_v} |f|(t,x,v) \mathrm{d} v \bigg|^2  \dr x   \lesssim   \int_{\R^3_x} \int_{\R^3_v} \langle x-t\widehat{v} \rangle^4 \, \langle v \rangle^{5}\big|  f (t,x,v) \big|^2 \mathrm{d} v   \dr x.
\end{align*}.
\end{Pro}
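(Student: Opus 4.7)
Both estimates reduce to the single pointwise kernel bound
\begin{equation*}
 \mathcal{I}(t,x) := \int_{\R^3_v} \frac{\dr v}{\langle x-t\widehat{v} \rangle^4 \, \langle v \rangle^5} \; \lesssim \; \frac{1}{\langle t+|x| \rangle^3} , \qquad t \geq 0, \; x \in \R^3.
\end{equation*}
Indeed, the $L^{\infty}_x$ estimate follows by pulling the supremum out of the integral, and the $L^2_x$ estimate follows from Cauchy--Schwarz:
\begin{equation*}
\Bigl| \int_{\R^3_v} |f|(t,x,v) \dr v \Bigr|^2 \leq \mathcal{I}(t,x) \int_{\R^3_v} \langle x-t\widehat{v}\rangle^4 \langle v\rangle^5 |f|^2(t,x,v) \dr v ,
\end{equation*}
after which one multiplies by $\langle t+|x|\rangle^3$ and integrates in $x$, using Fubini on the right.

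To prove the kernel bound, I would distinguish the two regimes dictated by the geometry of the linear flow. In the exterior--type region $|x| \geq 2t$, the triangle inequality yields $|x-t\widehat{v}| \geq |x|-t|\widehat{v}| \geq |x|/2$ for every $v$, hence $\langle x - t\widehat{v}\rangle \gtrsim \langle t+|x|\rangle$, so that
\begin{equation*}
\mathcal{I}(t,x) \, \lesssim \, \frac{1}{\langle t+|x|\rangle^4} \int_{\R^3_v} \frac{\dr v}{\langle v\rangle^5} \lesssim \frac{1}{\langle t+|x|\rangle^3}.
\end{equation*}
In the interior--type region $|x| \leq 2t$ with $t \geq 1$ (when $t \leq 1$ the bound is trivial since $\langle t+|x|\rangle \lesssim 1$ and $\mathcal{I}(t,x) \lesssim 1$), we have $\langle t+|x| \rangle \sim t$, and the main ingredient is Lemma \ref{cdv}. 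Since $v \mapsto y := x-t\widehat{v}$ is a diffeomorphism from $\R^3_v$ onto $\{|y|<t\}$ with Jacobian $t^3 |v^0|^{-5}$ in absolute value, the change of variables gives
\begin{equation*}
\mathcal{I}(t,x) \, = \, \int_{|y|<t} \frac{1}{\langle y \rangle^4 \langle \widecheck{\; \cdot \;}\rangle^5} \cdot \frac{\langle v \rangle^5}{t^3}\, \dr y \, = \, \frac{1}{t^3} \int_{|y|<t} \frac{\dr y}{\langle y\rangle^4} \, \lesssim \, \frac{1}{t^3} \lesssim \frac{1}{\langle t+|x|\rangle^3},
\end{equation*}
the last integral being uniformly convergent. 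Collecting the two cases yields the kernel bound, and then the proposition.

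The computation is routine; the only nontrivial point is the interplay between the change of variables (which costs the weight $\langle v\rangle^5$ and explains its appearance in the statement) and the region $|x| \geq 2t$ (where the weight $\langle x-t\widehat{v}\rangle^4$ is responsible for the decay). I do not anticipate any serious obstacle, only the need to carry out the two--region split cleanly so as to recover $\langle t+|x|\rangle^{-3}$ rather than just $t^{-3}$ or $|x|^{-3}$.
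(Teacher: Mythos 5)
Your proof is correct and follows essentially the same route as the paper's: reduce both estimates to the kernel bound $\int_v \langle x-t\widehat{v}\rangle^{-4}\langle v\rangle^{-5}\,\dr v \lesssim \langle t+|x|\rangle^{-3}$ via H\"older/Cauchy--Schwarz, then prove it by a two-region split and the change of variables $y=x-t\widehat{v}$ of Lemma~\ref{cdv}. One small slip: after the change of variables the domain is $\{|y-x|<t\}$, not $\{|y|<t\}$, but this is harmless since $\int_{\R^3}\langle y\rangle^{-4}\,\dr y$ converges anyway.
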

\begin{proof}
By applying Hölder's inequality in the variable $v$, we reduce the problem to the proof of
\begin{equation}\label{eq:Lemfordecayvelocityaverage00}
\int_{\R^3_v} \frac{\dr v}{\langle x - t\widehat{v} \rangle^4 \, \langle v \rangle^5} \lesssim \frac{1}{\langle t+|x| \rangle^3}.
\end{equation}
If $t \leq 4 \langle x \rangle$, we have $2\langle x - t\widehat{v} \rangle \geq \langle t+|x| \rangle$ and the inequality holds. Otherwise, we use Lemma \ref{cdv} in order to perform the change of variables $y=x-t\widehat{v}$. Since $ t \gtrsim \langle t+|x| \rangle$ in that case, this concludes the proof.
\end{proof}

Applying this result to $\langle x-t\widehat{v}\rangle^a \, \langle v \rangle^{2a} f(t,x,v)$ and using Lemma \ref{gainv}, one can derive improved decay estimates for Vlasov fields near and in the exterior of the light cone $t=|x|$. 
\begin{Cor}\label{Corpointestistrong}
For any $a \in \R_+$, we have
$$ \forall \, (t,x) \in [T,+\infty[ \times \R^3, \qquad \quad \int_{\R^3_v} |f|(t,x,v) \dr v \lesssim \frac{| \max (t-|x|,1) |^a}{\langle t+|x| \rangle^{3+a}}\sup_{(z,v) \in \R^3 \times \R^3} \langle x-t\widehat{v} \rangle^{4+a} \, \langle v \rangle^{5+2a}\big|  f (t,x,v) \big|.$$
\end{Cor}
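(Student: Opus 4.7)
The plan is to follow the hint given in the text: apply Proposition \ref{Prosimpledecay} not to $f$ directly but to the weighted field $\widetilde{f}(t,x,v) := \langle x-t\widehat{v}\rangle^a \, \langle v \rangle^{2a} f(t,x,v)$, and then trade the weight $\langle x-t\widehat v\rangle^a\langle v\rangle^{2a}$ for the desired power of $\max(t-|x|,1)/\langle t+|x|\rangle$ using the last inequality of Lemma \ref{gainv}.

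More precisely, I would first note the pointwise lower bound from Lemma \ref{gainv},
\[
1 \;\lesssim\; \left(\frac{\max(t-|x|,1)}{\langle t+|x|\rangle}\right)^{\!a}\langle v\rangle^{2a}\,\langle x-t\widehat v\rangle^{a},
\]
valid for all $(t,x,v)\in[T,+\infty[\times\R^3_x\times\R^3_v$ and all $a\ge 0$ (just raise the original inequality to the power $a$). Inserting this trivial bound into the velocity integral gives
\[
\int_{\R^3_v}|f|(t,x,v)\,\dr v \;\lesssim\; \frac{\max(t-|x|,1)^{a}}{\langle t+|x|\rangle^{a}}\int_{\R^3_v}\langle x-t\widehat v\rangle^{a}\,\langle v\rangle^{2a}\,|f|(t,x,v)\,\dr v.
\]

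Next I would apply Proposition \ref{Prosimpledecay} to the nonnegative function $\widetilde f:=\langle x-t\widehat v\rangle^{a}\langle v\rangle^{2a}|f|$, which yields
\[
\int_{\R^3_v}\widetilde f(t,x,v)\,\dr v \;\lesssim\; \frac{1}{\langle t+|x|\rangle^{3}}\sup_{(z,v)\in\R^3\times\R^3}\langle x-t\widehat v\rangle^{4+a}\,\langle v\rangle^{5+2a}\,|f|(t,x,v).
\]
Combining the two displayed estimates gives exactly the claim. There is no genuine obstacle here: the only thing to double-check is that the supremum is taken uniformly in $v$ with the weights appearing in the statement, so that applying Proposition \ref{Prosimpledecay} to $\widetilde f$ produces the weight $\langle v\rangle^{5+2a}$ rather than $\langle v\rangle^{5}$ (which it does, since the weight $\langle v\rangle^{2a}$ is built into $\widetilde f$). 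The whole argument is a one-line combination of Proposition \ref{Prosimpledecay} and Lemma \ref{gainv}.
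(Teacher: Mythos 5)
Your argument is correct and is exactly the route the paper takes: the text's own justification of this corollary is precisely "apply Proposition \ref{Prosimpledecay} to $\langle x-t\widehat{v}\rangle^a\,\langle v\rangle^{2a}f$ and use Lemma \ref{gainv}." The only point worth noting is that raising the inequality of Lemma \ref{gainv} to the power $a$ makes the implicit constant depend on $a$, which is harmless since $a$ is fixed in the statement.
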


By requiring more regularity on $f$, one can refine the previous result. In this paper, we will mainly use $L^2_x$ estimates but analogous ones hold in $L^p_x$, $1 \leq p \leq \infty$.
\begin{Pro}\label{Protechfordecay}
For all $t \geq T$, we have 
\begin{align*}
\int_{\R^3_x} \langle t+|x| \rangle^{5} \bigg|\int_{\R^3_v} f(t,x,v) \mathrm{d} v -  \frac{\mathds{1}_{|x|<t}}{t^3}\int_{\R^3_y} \Big[ |v^0|^5 f \Big] \bigg( t,y,\frac{\widecheck{\; x \;}}{t}  \bigg)& \dr y \bigg|^2  \dr x  \\
\lesssim &  \sup_{|\beta| \leq 1} \int_{\R^3_x} \int_{\R^3_v} \langle x-t\widehat{v} \rangle^8 \, \langle v \rangle^{14}\big| \widehat{Z}^\beta f (t,x,v) \big|^2 \mathrm{d} v   \dr x.
\end{align*}
\end{Pro}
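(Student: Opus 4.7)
The argument combines the change of variables $v \mapsto y = x - t\widehat{v}$ provided by Lemma \ref{cdv} with a first-order Taylor expansion in the velocity variable, applied separately on the exterior $\{|x|\geq t\}$ and the interior $\{|x|<t\}$. On the exterior, the indicator in the statement vanishes and we must only control $\int_{v} f\,\mathrm{d}v$. There $\max(t-|x|,1)=1$, so Lemma \ref{gainv} gives $\langle t+|x|\rangle \lesssim \langle v\rangle^{2}\langle x-t\widehat{v}\rangle$; weighted Cauchy--Schwarz in $v$ as in the proof of Proposition \ref{Prosimpledecay}, with weight $\langle x-t\widehat{v}\rangle^{4}\langle v\rangle^{5}$, then converts the factor $\langle t+|x|\rangle^{5}$ into additional $v$- and $(x-t\widehat{v})$-weights, which yields the desired bound already with $|\beta|=0$.

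In the interior, introduce $g(t,y,v):=f(t,y+t\widehat{v},v)$ and $v_{*}:=\widecheck{x/t}$. Since $t\widehat{v}_{*}=x$,
\begin{align*}
 t^{3}\int_{\R^{3}_{v}} f(t,x,v)\,\mathrm{d}v &= \int_{B(x,t)} |v^{0}|^{5}\, g\bigl(t,y,\widecheck{(x-y)/t}\bigr)\, \mathrm{d}y, \\
 \int_{\R^{3}_{y}} [|v^{0}|^{5}f](t,y,v_{*})\,\mathrm{d}y &= |v^{0}_{*}|^{5}\int_{\R^{3}} g(t,y,v_{*})\,\mathrm{d}y,
\end{align*}
where $B(x,t)=\{y:|y-x|<t\}$ is the image of the change of variables. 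The difference equals
\begin{equation*}
 \int_{B(x,t)}\!\bigl[|v^{0}|^{5}g(t,y,v(x,y))-|v^{0}_{*}|^{5}g(t,y,v_{*})\bigr]\,\mathrm{d}y \,-\, |v^{0}_{*}|^{5}\!\int_{\R^{3}\setminus B(x,t)}\!g(t,y,v_{*})\,\mathrm{d}y,
\end{equation*}
with $v(x,y):=\widecheck{(x-y)/t}$; the tail is bounded using that $|y-x|\geq t$ forces $|y|$ large (since $|x|<t$), combined with the weight $\langle y\rangle^{8}$ in the right-hand side.

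For the main term we interpolate the velocity along the path $v(\tau):=\widecheck{(x-\tau y)/t}$, $\tau\in[0,1]$, and use the fundamental theorem of calculus. Direct computation gives $|\partial_{\tau}v(\tau)|\lesssim v^{0}(\tau)|y|/t$; the algebraic identity
\begin{equation*}
 v^{0}\partial_{v^{k}}g(t,y,v)= \bigl[\widehat{\Omega}_{0k} f - t\widehat{v}^{k}\,\T_{0}f - y^{k}\partial_{t}f\bigr](t,y+t\widehat{v},v),
\end{equation*}
which follows from $\widehat{\Omega}_{0k}=t\partial_{x^{k}}+x^{k}\partial_{t}+v^{0}\partial_{v^{k}}$ and a direct computation of $\partial_{v}g$, expresses $\partial_{v}g$ through first-order $\widehat{Z}^{\beta}f$ at the cost of the prefactor $1+t+|y|$ (note $|\T_{0}f|\leq |\partial_{t}f|+|\nabla_{x}f|$). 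Combining these estimates, the integrand of the main term is pointwise bounded by $|y|(t+|y|)t^{-1}|v^{0}(\tau)|^{5}\sup_{|\beta|\leq 1}|\widehat{Z}^{\beta}f|(t,y+t\widehat{v}(\tau),v(\tau))$. For each fixed $\tau\in(0,1]$ we then reverse the change of variables, rewriting the $y$-integral as an integral in $v$ with Jacobian $t^{3}\tau^{-3}|v^{0}|^{-5}$; the $|v^{0}|^{5}$ factors cancel, and the spatial argument of $\widehat{Z}^{\beta}f$ becomes $(x-(1-\tau)t\widehat{v})/\tau$. Minkowski's integral inequality in $\tau$, Cauchy--Schwarz in $v$, and a second application of Lemma \ref{gainv} to trade $\langle t+|x|\rangle$-powers for $\langle v\rangle\langle x-t\widehat{v}\rangle$-powers finally yield the stated $L^{2}_{x,v}$ estimate for $\widehat{Z}^{\leq 1}f$.

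\textbf{Main obstacle.} The chief difficulty is balancing several competing singularities: the factor $\tau^{-3}$ coming from the Jacobian as $\tau\to 0$; the blow-up of $|v^{0}|^{5}$ as $v\to\infty$ (i.e.\ near $\partial B(x,t)$); and the blow-up of $|v^{0}_{*}|^{5}=t^{5}(t^{2}-|x|^{2})^{-5/2}$ near the light cone $|x|=t$. Absorbing all these factors within the weights $\langle x-t\widehat{v}\rangle^{8}$ and $\langle v\rangle^{14}$ on the right-hand side, by exploiting the gain $|y|/t$ produced by the Taylor expansion, requires careful bookkeeping of powers.
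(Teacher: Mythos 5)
Your overall architecture is the same as the paper's: split into $|x|\geq t$ and $|x|<t$, change variables $y=x-t\widehat{v}$, Taylor-expand along $w(\tau)=(x-\tau y)/t$, and treat separately the tail $|y-x|\geq t$. Your order of operations in the main term differs — you change variables $y\mapsto v$ at fixed $x$ (picking up the Jacobian $t^{3}\tau^{-3}|v^{0}|^{-5}$) and only then integrate in $x$, whereas the paper first applies Cauchy--Schwarz in $y$, Fubinis, and changes variables in $x$ via $x=\tau y+t\widehat{v}$, whose Jacobian $t^{3}|v^{0}|^{-5}$ is $\tau$-independent, so no singularity in $\tau$ ever appears. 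Your route does close nonetheless: after substituting $x=\tau z+(1-\tau)t\widehat{v}$ one has $x-t\widehat{v}=\tau(z-t\widehat{v})$ and $\dr x=\tau^{3}\dr z$, and the powers of $\tau$ cancel exactly. The obstacle you flag (the $\tau^{-3}$ and the $|v^{0}|$-powers) is therefore not the real problem.

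The real problem is the prefactor ``$1+t+|y|$''. Your identity for $v^{0}\partial_{v^{k}}g$ is correct, but estimating the term $t\widehat{v}^{k}\T_{0}f$ by $t\bigl(|\partial_{t}f|+|\nabla_{x}f|\bigr)$, as your parenthetical indicates, destroys an essential cancellation and costs a naked factor of $t$. Concretely, the pointwise bound on the integrand then degrades from $\tfrac{|y|\langle y\rangle}{t}\,|v^{0}(\tau)|^{7}\sup_{|\beta|\leq1}|\widehat{Z}^{\beta}f|$ to (at best) $|y|\,|v^{0}(\tau)|^{7}\sup_{|\beta|\leq1}|\widehat{Z}^{\beta}f|$, and the lost $t^{-1}$ cannot be recovered: the only source of $t$-decay in the Cauchy--Schwarz step is the volume factor $\int_{v}\langle u\rangle^{-4}\langle v\rangle^{-5}\dr v\lesssim\tau^{3}t^{-3}$, which is saturated regardless of how many spare $\langle x-t\widehat{v}\rangle$- or $\langle v\rangle$-weights remain on the right-hand side. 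Running your scheme with the lossy prefactor yields $\langle t+|x|\rangle^{5}t^{-6}|M|^{2}\lesssim t^{2}\cdot\mathrm{RHS}$ instead of $\mathrm{RHS}$. The fix is to keep the combination intact: $t\T_{0}=S-(x-t\widehat{v})\cdot\nabla_{x}$, so that $t\T_{0}f(t,y+t\widehat{v},v)=Sf-y\cdot\nabla_{x}f$ evaluated at $(t,y+t\widehat{v},v)$, and the true cost is $\langle y\rangle$, not $1+t+|y|$; this is exactly the bound $|v^{0}\nabla_{v}h|\lesssim\langle y\rangle\sup_{|\beta|\leq1}|\widehat{Z}^{\beta}f|$ contained in Lemma \ref{Lemrelftoh}. (Two smaller inaccuracies: $|\partial_{\tau}v(\tau)|\lesssim|v^{0}(\tau)|^{3}|y|/t$, not $|v^{0}(\tau)|\,|y|/t$, since $|\nabla_{w}\widecheck{w}|\sim\langle\widecheck{w}\rangle^{3}$ — harmless, as the $\langle v\rangle^{14}$ budget absorbs it; and in the tail, $|y-x|\geq t$ with $|x|<t$ does \emph{not} force $|y|$ large, only $|y|\geq t-|x|$, so the correct mechanism is $1\leq 2|y|\,|v^{0}_{*}|^{2}/t$ with $v_{*}=\widecheck{x/t}$, which trades the singularity of $|v^{0}_{*}|^{5}$ near $|x|=t$ against the $\langle y\rangle$-weights.)
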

\begin{proof}
Let $t \geq T$. Recall from Lemma \ref{gainv} that $\langle x \rangle \lesssim \langle x-t \widehat{v} \rangle \, \langle v \rangle^2$ for all $|x| \geq t$. Consequently, using also the Cauchy-Schwarz inequality as well as $v \mapsto \langle v \rangle^{-4} \in L^1(\R^3_v)$,
\begin{align*}
\int_{|x| \geq t}\langle x \rangle^5 \bigg|\int_{\R^3_v} f(t,x,v) \mathrm{d} v \bigg|^2  \dr x  & \lesssim \int_{|x| \geq t} \bigg|\int_{\R^3_v} \langle x-t\widehat{v} \rangle^{\frac{5}{2}} \, \langle v \rangle^5 \, f(t,x,v) \mathrm{d} v \bigg|^2  \dr x \\
& \leq \int_{|x| \geq t} \int_{\R^3_v} \langle x-t\widehat{v} \rangle^5 \, \langle v \rangle^{14} \, |f|^2(t,x,v) \mathrm{d} v   \dr x.
\end{align*}
Consider now the case $|x| <t$ and $t\geq 1$. We perform the change of variables $y=x-t\widehat{v}$ in order to get
$$ t^3 \int_{\R^3_v} f(t,x,v) \mathrm{d} v = \int_{|y-x|<t} \Big[ |v^0|^5 h\Big]\bigg( t,y, \frac{\widecheck{x-y}}{t} \bigg) \mathrm{d} y,$$
where we recall that $\widecheck{\; \color{white} x \color{black} \; }$, the inverse of the relativistic speed operator, is introduced in Definition \ref{Definvrela}. Let $h_0(t,y,w) :=[|v^0|^5 h](t,y,\widecheck{w})= (1-|w|^2)^{-5/2} h(t,y,\widecheck{w})$, so that
$$ t^3 \int_{\R^3_v} f(t,x,v) \mathrm{d} v = \int_{|y-x|<t} \Big[ |v^0|^5 h\Big]\bigg( t,y, \frac{\widecheck{\; x \;}}{t} \bigg) \mathrm{d} y+ \int_{|y-x|<t} \int_{\tau=0}^1 \frac{y}{t} \cdot \nabla_w h_0 \Big( t,y, \frac{ x-\tau y }{t} \Big) \dr \tau \mathrm{d} y .$$
We point out that $|x| < t$ and $|y-x|<t$ implies $|x-\tau y|<t$. We then get from the Cauchy-Schwarz inequality in $y$ that
$$ \int_{|x|<t} t^{5} \bigg|\int_{\R^3_v} f(t,x,v) \mathrm{d} v- \frac{1}{t^3}\int_{\R^3_y} \Big[ |v^0|^5 h\Big]\bigg( t,y, \frac{\widecheck{ \; x \;}}{t} \bigg) \mathrm{d} y \bigg|^2  \dr x  \leq  \mathcal{I}_1+  \mathcal{I}_2 ,$$
where
\begin{align*}
  \mathcal{I}_1  &:= \int_{|x|<t} \frac{1}{t} \int_{|y-x|\geq t} \langle y \rangle^4 \Big| |v^0|^5 h\Big|^2\bigg( t,y, \frac{\widecheck{\; x \;}}{t} \bigg) \mathrm{d} y   \dr x , \\
  \mathcal{I}_2& := \int_{|x| < t} \frac{1}{t} \int_{|y-x|<t} \int_{\tau=0}^1 \frac{|y|^2}{t^2} \langle y \rangle^4 \big|\nabla_w h_0 \big|^2 \Big( t,y, \frac{ x-\tau y }{t} \Big) \dr \tau \mathrm{d} y  \dr x.
 \end{align*}
In order to bound $\mathcal{I}_1$ recall that $|x|<t$ and remark that, for $v=\widecheck{x/t}$ and any $y \in \R$ such that $|y-x| \geq t$,
$$
 1  = |v^0|^2 \left( 1-\frac{|x|^2}{t^2}\right)  \leq |v^0|^2 \frac{|y|(t+|x|)}{t^2} \leq 2 \frac{|y||v^0|^2}{t}.
$$
It allows us to get
$$
\mathcal{I}_1  \leq 4 \int_{|x|<t} \frac{1}{t^3} \int_{|y-x|\geq t}\langle y \rangle^6 \Big| |v^0|^7 h\Big|^2\bigg( t,y, \frac{\widecheck{ \; x \;}}{t} \bigg) \mathrm{d} y   \dr x \leq 4 \int_{\R^3_y} \int_{|x|<t}  \langle y \rangle^6 \Big| |v^0|^7 h\Big|^2\bigg( t,y, \frac{\widecheck{ \; x \;}}{t} \bigg)    \frac{\dr x}{t^3} \dr y .
$$
Thus, performing the reverse change of variables $x=t\widehat{v}$, one obtains
$$
\mathcal{I}_1 \leq  4 \int_{\R^3_y} \int_{\R^3_v} \langle y \rangle^6 |v^0|^9 | h|^2 (t,y,v) \dr v \dr y . 
$$
For $\mathcal{I}_2$, since $|\nabla_w \widecheck{w}| \lesssim |1-|w|^2|^{-3/2}=\langle \widecheck{w} \rangle^3=|v^0(\widecheck{w})|^3$, we have
$$ \left| \nabla_w h_0 \right|(t,y,w) \lesssim \Big| |v^0|^7 h \Big|(t,y,\widehat{w}) +\Big| |v^0|^8 \nabla_v h \Big|(t,y,\widehat{w}) .$$
We then deduce from Fubini's theorem that
$$\mathcal{I}_2 = \int_{\tau=0}^1 \int_{|y|<2t} \int_{\{ x \in \R^3 \, | \, |x|<t, \, |x-y|<t\}} \langle y \rangle^6 \Big| |v^0|^7 h \Big|^2 \bigg( t,y, \frac{\widecheck{ x-\tau y }}{t} \bigg)  +\langle y \rangle^6 \Big| |v^0|^8 \nabla_v h \Big|^2 \bigg( t,y, \frac{\widecheck{ x-\tau y }}{t} \bigg)\frac{\dr x}{t^3} \mathrm{d} y  \dr \tau .$$
Hence, the change of variables $x/t=\tau y/t+\widehat{v}$ yields
\begin{align*}
\mathcal{I}_2 & \leq \int_{\tau=0}^1 \int_{|y|<2t} \int_{\R^3_v} \langle y \rangle^6  |v^0|^9 |h |^2 ( t,y, v )  +\langle y \rangle^6 |v^0|^{9} \big|v^0\nabla_v h \big|^2 ( t,y,v) \dr v \mathrm{d} y  \dr \tau \\
& \leq  \int_{\R^3_y} \int_{\R^3_v} \langle y \rangle^6  |v^0|^9 \Big(|h |^2  + \big|v^0\nabla_v h \big|^2\Big) ( t,y,v) \dr v \mathrm{d} y   .
\end{align*}
To conclude it suffices to use $|v^0 \nabla_v h|(t,y,v)\lesssim \langle y \rangle \, \sup_{|\beta|=1} \big| \widehat{Z}^\beta f \big|(t,y+t\widehat{v},v)$, which will be proved thereafter in Lemma \ref{Lemrelftoh}, and to make, for fixed $v$, the linear change of variables $y=x-t\widehat{v}$.
\end{proof}

We now would like to relate $\widehat{Z}^\beta f$ to weighted derivatives of $h$ for two reasons. First, in order to conclude the proof of the previous proposition. Secondly, if $f$ and its derivatives satisfy \textit{linear scattering}, $\widehat{Z}^\beta f (t,x+t\widehat{v},v)$ converges as $t \to +\infty$ to $h_\infty^\beta (x,v)$, we would like to be able to express $h^\beta$ in terms of derivatives of $h_\infty$, the limit of $h$. If $f=f_1$, it will give us the relation between $\widehat{Z}^\beta f_1$ and the derivatives of $f_\infty$. 
\begin{Lem}\label{Lemrelftoh}
The following relations hold, for all $(t,z,v) \in [T,+\infty[ \times \R^3_z \times \R^3_v$,
$$ \big( \partial_{x^k} f \big)(t,z+t\widehat{v},v) = \partial_{z^k} h(t,z,v), \qquad \big(S f \big) (t,z+t\widehat{v},v)  = S h(t,z,v), \qquad  \big( \widehat{\Omega}_{ij} f \big)(t,z+t\widehat{v},v)  = \widehat{\Omega}_{ij} h(t,z,v),$$
for any $1 \leq i < j \leq 3$ and $ 1 \leq k \leq 3$. For the time derivative and the Lorentz boosts, we have
\begin{align*}
\big(\partial_t f \big)(t,z+t\widehat{v},v) & =\partial_t h(t,z,v) -\widehat{v}\cdot \nabla_z h(t,x,v) ,\\
\big(\widehat{\Omega}_{0k}  f\big)(t,z+t\widehat{v},v) & = v^0 \partial_{v^k} h(t,z,v)-z^k\widehat{v}\cdot \nabla_z h(t,z,v)+z^k\partial_t h(t,z,v)+\widehat{v}^k  t\partial_t  h(t,z,v) .
\end{align*}
\end{Lem}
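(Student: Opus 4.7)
The strategy is a direct application of the chain rule, viewing $h(t,z,v) = f(t,x(t,z,v),v)$ as a composition where the spatial slot is filled by $x(t,z,v)=z+t\widehat{v}$. The elementary identities that will be used repeatedly are
\[
\partial_{z^k} x^\ell = \delta_k^\ell, \qquad \partial_t x^\ell = \widehat{v}^\ell, \qquad \partial_{v^k} x^\ell = t\,\partial_{v^k}\widehat{v}^\ell = \frac{t}{v^0}\bigl(\delta_k^\ell-\widehat{v}^k\widehat{v}^\ell\bigr),
\]
where the last formula follows from $\partial_{v^k}v^0=\widehat{v}^k$.

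The first three identities in the statement are immediate. Differentiating $h$ in $z^k$ gives $\partial_{z^k} h = (\partial_{x^k} f)(t,z+t\widehat{v},v)$ on the nose. Differentiating in $t$ yields $\partial_t h = \partial_t f+\widehat{v}^\ell\partial_{x^\ell} f$, which rearranges to the claimed formula for $(\partial_t f)(t,z+t\widehat{v},v)$. For the scaling, $Sh = t\partial_t h+z^\ell\partial_{z^\ell} h = t\partial_t f+(t\widehat{v}^\ell+z^\ell)\partial_{x^\ell} f = t\partial_t f+x^\ell\partial_{x^\ell} f = (Sf)(t,z+t\widehat{v},v)$.

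For the rotations, the chain rule gives
\[
\partial_{v^j} h = \partial_{v^j} f+\frac{t}{v^0}\bigl(\partial_{x^j} f-\widehat{v}^j\widehat{v}^\ell\partial_{x^\ell} f\bigr),
\]
so that in the combination $v^i\partial_{v^j} h-v^j\partial_{v^i} h$ the $\widehat{v}^j\widehat{v}^\ell$ and $\widehat{v}^i\widehat{v}^\ell$ pieces cancel by antisymmetry, leaving $v^i\partial_{v^j} f-v^j\partial_{v^i} f+t(\widehat{v}^i\partial_{x^j} f-\widehat{v}^j\partial_{x^i} f)$. Adding the purely spatial contribution $z^i\partial_{z^j} h-z^j\partial_{z^i} h = z^i\partial_{x^j} f-z^j\partial_{x^i} f$ and using $x=z+t\widehat{v}$ reconstructs $x^i\partial_{x^j} f-x^j\partial_{x^i} f$, hence $\widehat{\Omega}_{ij} h = (\widehat{\Omega}_{ij} f)(t,z+t\widehat{v},v)$.

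For the Lorentz boosts, I multiply the formula for $\partial_{v^k} h$ by $v^0$ and solve for $v^0\partial_{v^k} f$, obtaining $v^0\partial_{v^k} f = v^0\partial_{v^k} h-t\partial_{z^k} h+t\widehat{v}^k\widehat{v}^\ell\partial_{z^\ell} h$. Assembling $\widehat{\Omega}_{0k} f = t\partial_{x^k} f+x^k\partial_t f+v^0\partial_{v^k} f$ with the previously derived substitutions and replacing $x^k$ by $z^k+t\widehat{v}^k$, the $t\partial_{z^k} h$ contribution cancels against a term coming from $v^0\partial_{v^k} h$, and what remains matches the claimed formula. There is no serious obstacle: the whole lemma is pure bookkeeping, the only point deserving attention being the systematic appearance and cancellation of the rank-one piece $\widehat{v}^j\widehat{v}^\ell$ in $\partial_{v^j}\widehat{v}^\ell$, which disappears by antisymmetry for $\widehat{\Omega}_{ij}$ and by substitution of $x^k$ for $\widehat{\Omega}_{0k}$.
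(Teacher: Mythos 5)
Your proof is correct and follows essentially the same chain-rule bookkeeping as the paper; the only cosmetic difference is that you differentiate $h(t,z,v)=f(t,z+t\widehat v,v)$ and invert, while the paper differentiates $f(t,x,v)=h(t,x-t\widehat v,v)$ directly, which yields the same three substitution identities for $\partial_{x^k}f$, $\partial_t f$ and $v^0\partial_{v^k}f$. The cancellations you highlight (the rank-one piece of $\partial_{v^j}\widehat v^\ell$ vanishing by antisymmetry for $\widehat\Omega_{ij}$ and by substitution for $\widehat\Omega_{0k}$) are exactly what makes the paper's verification go through.
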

\begin{proof}
Since $f(t,x,v)=h(t,x-t\widehat{v},v)$ and $h(t,z,v)=f(t,z+t\widehat{v},v)$, we have, for any $ 1 \leq k \leq 3$,
\begin{align*}
 \partial_{x^k}f(t,x,v)&= \partial_{z^k}h(t,x-t\widehat{v},v), \qquad \qquad \partial_t f(t,x,v) = \big(\partial_t h -\widehat{v} \cdot \nabla_z h\big)(t,z-t\widehat{v},v) , \\
  v^0 \partial_{v^k}f(t,x,v)&= v^0 \partial_{v^k}h(t,x-t\widehat{v},v)-t\partial_{z^k}h(t,x-t\widehat{v},v)+t\widehat{v}_k \big[ \widehat{v} \cdot \nabla_z h \big](t,x-t\widehat{v},v)  .
\end{align*}
It implies the stated relations. For instance,
\begin{align*}
\big( \widehat{\Omega}_{ij} f \big)(t,z+t\widehat{v},v)&=\!\big(z^i+t\widehat{v}^i \big)\partial_{x^j}f(t,z+t\widehat{v},v)-\big(z^j+t\widehat{v}^j\big)\partial_{x^i}f(t,z+t\widehat{v},v)+\!\big[ v^i \partial_{v^j}f-v^j \partial_{v^i}f \big](t,z+t\widehat{v},v) \\
&= z^i \partial_{z^j} h(t,z,v)-z^j \partial_{z^i} h(t,z,v)+v^i \partial_{v^j} h(t,z,v)-v^j \partial_{v^i} h(t,z,v)=\widehat{\Omega}_{ij}h(t,z,v).
\end{align*}
\end{proof}

Motivated by these identities, we introduce the following set of weighted derivatives, which will turn out to be very convenient to use in the context of modified scattering as well.
\begin{Def}
Let $\K^\infty$ be the set composed of the following vector fields, defined on $\R_t \times \R^3_z \times \R^3_v$,
$$  \partial_{z^k}, \quad \widehat{\Omega}_{ij}\!=\!z^i \partial_{z^j}-z^j \partial_{z^i}+v^i \partial_{v^j}-v^j \partial_{v^i}, \quad \widehat{S}\!=\!t\partial_t +z^i \partial_{z^i}+3 , \quad \partial_t^\infty \! := \partial_t-\widehat{v} \cdot \nabla_z , \quad \widehat{\Omega}_{0k}^\infty \! := \! z^k \partial_t^\infty +\widehat{v}^k  t\partial_t+v^0 \partial_{v^k}, $$
where $1 \leq k \leq 3$ and $1 \leq i < j \leq 3$. We will denote by $\widehat{Z}_\infty$ a generic vector field of $\K^\infty$ and we consider an ordering on this set compatible with the one on $\K$, in the sense that the label of $\partial_t^\infty$ (respectively $\partial_{z^k}$, $\widehat{\Omega}_{ij}$, $\widehat{\Omega}_{0k}^\infty$ and $\widehat{S}$) corresponds to the one of $\partial_t$ (respectively $\partial_{x^k}$, $\widehat{\Omega}_{ij}$, $\widehat{\Omega}_{0k}$ and $\widehat{S}$). Note then that, given a multi-index $\kappa$, $\kappa_H$ still corresponds to the number of homogeneous vector fields composing $\widehat{Z}_\infty^\kappa$.
\end{Def}
\begin{Rq}
When applied to a function independent of $t$, such as $f_\infty$, the operators $\partial_t^\infty$, $\widehat{\Omega}_{0k}^\infty$ and $\widehat{S}$ correspond to $-\widehat{v}\cdot \nabla_z$, $z^i \partial_{z^i}+3$ and $-z^k \widehat{v} \cdot \nabla_z +v^0 \partial_{v^k}$.
\end{Rq}

One can derive the next estimate of $J(\widehat{Z}^\beta f_1)$ by applying Proposition \ref{Protechfordecay} to $f=\widehat{v}_\nu \widehat{Z}^\beta f_1$ as well as the uniform bounds \eqref{eq:boundlin} and the conservation of the spatial averages \eqref{eq:consspatf1}. Note further that $\widehat{v}_\nu = \frac{ x_\nu}{t}$ for $v= \frac{\widecheck{\, x \,}}{t}$.
\begin{Cor}\label{Corlinbound}
Recall that $\widehat{v}_0=-1$ and $x_0=-t$. For any $|\beta| \leq N$ and all $t \geq T$,
$$ \int_{\R^3_x} \langle t+|x| \rangle^{5} \bigg|\int_{\R^3_v}\widehat{v}_\nu \widehat{Z}^\beta f_1(t,x,v) \mathrm{d} v -  \frac{\mathds{1}_{|x|<t}}{t^3}\frac{x_\nu}{t}\int_{\R^3_z} \Big[ |v^0|^5 \widehat{Z}^\beta_\infty f_\infty \Big] \bigg( z,\frac{\widecheck{\; x \;}}{t} \bigg) \dr y \bigg|^2  \dr x \lesssim  \epsilon .$$
\end{Cor}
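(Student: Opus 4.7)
The plan is to apply Proposition \ref{Protechfordecay} with $f$ replaced by $\widehat{v}_\nu \widehat{Z}^\beta f_1$, for each $|\beta| \leq N$ and $0 \leq \nu \leq 3$, and then to identify the main term produced by that proposition with the profile appearing in the statement of the Corollary.

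For the right-hand side, note that $|\widehat{v}_\nu| \leq 1$ and $|\widehat{Z}(\widehat{v}_\nu)| \lesssim 1$ for any $\widehat{Z} \in \K$, so a commutator $\widehat{Z}^\kappa$ with $|\kappa| \leq 1$ applied to $\widehat{v}_\nu \widehat{Z}^\beta f_1$ reduces to a linear combination of $\widehat{Z}^{\kappa'} f_1$ with $|\kappa'| \leq N+1$. Inserting the pointwise bound \eqref{eq:boundlin} (which has $\beta_H$ in the favourable direction, and $\kappa'_H \leq N+1$ in the worst case) and performing the translation $y = x - t\widehat{v}$ of unit Jacobian at fixed $v$, one finds
$$ \sup_{|\kappa|\leq 1} \int_{\R^3_x}\!\! \int_{\R^3_v} \langle x-t\widehat{v}\rangle^{8} \langle v\rangle^{14} \big| \widehat{Z}^\kappa ( \widehat{v}_\nu \widehat{Z}^\beta f_1) \big|^2 \dr v\, \dr x \; \lesssim \; \epsilon \int_{\R^3_y}\frac{\dr y}{\langle y \rangle^{14}} \int_{\R^3_v}\frac{\dr v}{\langle v\rangle^{36}} \; \lesssim \; \epsilon, $$
so the bound of Proposition \ref{Protechfordecay} is $\lesssim \epsilon$ with a comfortable margin.

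To extract the profile, observe that at $v = \widecheck{x}/t$ one has $\widehat{v}^i = x^i/t$ and $\widehat{v}_0 = -1 = x_0/t$, hence $\widehat{v}_\nu = x_\nu/t$ may be pulled out of the $y$-integral. It then suffices to show
$$ \int_{\R^3_y} \big[|v^0|^5 \widehat{Z}^\beta f_1\big] \bigg( t, y, \frac{\widecheck{\;x\;}}{t} \bigg) \dr y \;=\; \int_{\R^3_z} \big[|v^0|^5 \widehat{Z}_\infty^\beta f_\infty\big] \bigg( z, \frac{\widecheck{\;x\;}}{t} \bigg) \dr z. $$
Since $h_1(t,z,v) := f_1(t, z + t\widehat{v}, v) = f_\infty(z,v)$ is $t$-independent, iterating Lemma \ref{Lemrelftoh} (together with the remark following the definition of $\K^\infty$, which tells us how $\widehat{Z}_\infty$ acts on $t$-independent operands) yields the pointwise identity $\widehat{Z}^\beta f_1(t,y,v) = \widehat{Z}_\infty^\beta f_\infty(y - t\widehat{v}, v)$. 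Fixing $v = \widecheck{x}/t$ and making the linear change of variable $z = y - t\widehat{v}$ produces the required equality.

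The only point requiring care is the iteration of Lemma \ref{Lemrelftoh} at arbitrary order, but this is essentially bookkeeping: the correspondence $\widehat{Z} \leftrightarrow \widehat{Z}_\infty$ is set up, via matching labels on $\K$ and $\K^\infty$, precisely so that the identification $\widehat{Z} f (t, z + t\widehat{v}, v) = \widehat{Z}_\infty h(t, z, v)$ is stable under composition. No further analytic difficulty appears, and the conclusion follows by taking the supremum over $|\beta| \leq N$ and $0\leq \nu \leq 3$.
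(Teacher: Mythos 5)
Your proposal is correct and follows essentially the same route as the paper: apply Proposition \ref{Protechfordecay} to $\widehat{v}_\nu \widehat{Z}^\beta f_1$, bound the right-hand side via \eqref{eq:boundlin}, and identify the profile using $\widehat{v}_\nu = x_\nu/t$ at $v=\widecheck{x}/t$ together with the correspondence $\widehat{Z}^\beta \leftrightarrow \widehat{Z}^\beta_\infty$ from Lemma \ref{Lemrelftoh}. The only cosmetic difference is that the paper phrases the last step through the conservation of spatial averages \eqref{eq:consspatf1}, whereas you use the equivalent pointwise identity $\widehat{Z}^\beta f_1(t,y,v)=\widehat{Z}^\beta_\infty f_\infty(y-t\widehat{v},v)$ followed by the translation $z=y-t\widehat{v}$.
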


In the context of the Vlasov-Maxwell system, $f$ does not verify linear scattering but converges along correction of the linear characteristics $t \mapsto (t,x+t\widehat{v}+\C_{t,v},v)$ to a limit distribution, say $h_\infty$. However, if $\widehat{Z} \in \K$ is a homogeneous vector field, $\widehat{Z} f$ does not satisfy modified scattering. Instead, we proved in \cite[Theorem~$2.11$]{scat} that
\begin{itemize}
\item there exists $\widehat{Z}^{\mathrm{mod}}$, a logarithmical correction of $\widehat{Z}$, such that modified scattering holds for $\widehat{Z}^{\mathrm{mod}} f$. Moreover, its limit distribution is $\widehat{Z}_\infty h_\infty$.
\item The spatial average of $\widehat{Z}f$ verifies $ \big| \int_{x} \widehat{Z} f(t,x,v) \dr x -\int_{z} \widehat{Z}_\infty h_\infty (z,v) \dr z \big| \lesssim t^{-1}\log^2(t)$. In some sense, although $\widehat{Z}f$ does not behave as a linear solution for large time, its spatial average does.
\end{itemize}
For this reason, the vector fields $\widehat{Z}_\infty$ remain relevant in the nonlinear problem. However, the coordinate system $(t,y,v)=(t,x+t\widehat{v},v)$ is not well adapted anymore in order to study the solutions for large times. 

\subsubsection{Improved estimates for the derivatives}\label{Subsubsecderivlin} It turns out that $J(\partial_{x^\mu} f_1)$ enjoys stronger decay properties than $J(f_1)$. This is captured by the identities
\begin{equation}
\hspace{-6mm} (t^2-r^2) \partial_t = t S-x^i\Omega_{0i}, \qquad \qquad (t^2-r^2)\partial_{x^k} = t\Omega_{0k}-x^k S-x^i \Omega_{ki}, \quad 1 \leq k \leq 3.   \label{eq:transladerivatives}
\end{equation}
As in the previous subsection, we first derive properties holding for a general distribution function $f$. By performing an induction, based on the relations \eqref{eq:transladerivatives} as well as integration by parts in $v$, one can derive the next result.
\begin{Lem}\label{LemgainderivVla}
Let $f :[T,+\infty[ \times \R^3_x \times \R^3_v \to \R$ be a sufficiently regular function. For any muti-index $\kappa$,
$$ \forall \, (t,x) \in [T,+\infty[ \times \R^3, \qquad \langle t-|x| \rangle^{|\kappa|} \bigg| \partial_{t,x}^\kappa \int_{\R^3_v} f(t,x,v) \dr v \bigg| \lesssim \sup_{|\beta| \leq |\kappa|} \int_{\R^3_v} \big| \widehat{Z}^\beta f \big|(t,x,v) \dr v.$$
\end{Lem}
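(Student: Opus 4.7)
I would prove the estimate by induction on $|\kappa|$. The case $|\kappa|=0$ is trivial, and the case $|\kappa|=1$ already contains the main mechanism: when $\langle t-|x|\rangle \leq 2$ one simply uses that $\partial_\mu \int f \dr v = \int \partial_\mu f \dr v$ together with $\partial_\mu \in \K$; when $\langle t-|x|\rangle > 2$, I would apply the identity \eqref{eq:transladerivatives}, for instance $(t^2-r^2)\partial_t F = tSF - x^i \Omega_{0i}F$ with $F := \int f \dr v$. Since the vector fields $S$ and $\Omega_{0i}$ have no $v$-component, $SF = \int Sf \dr v$ and $\Omega_{0i}F = \int \Omega_{0i}f \dr v$, and an integration by parts in $v$ yields
\[
 \int Sf\,\dr v = \int \widehat{S}f\,\dr v - 3\int f\,\dr v,\qquad \int \Omega_{0i}f\,\dr v = \int \widehat{\Omega}_{0i}f\,\dr v + \int \widehat{v}^i f\,\dr v,
\]
so each right-hand side is bounded by $\sup_{|\beta|\leq 1}\int|\widehat{Z}^\beta f|\dr v$. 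Dividing by $|t^2-r^2|=|t-|x||\,(t+r)$ and using $|t-|x||\geq \langle t-|x|\rangle/\sqrt{2}$ in this regime produces the desired $\langle t-|x|\rangle$ weight. The $\partial_{x^k}$ case is entirely analogous.

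For the inductive step, write $\partial_{t,x}^\kappa = \partial \partial_{t,x}^{\kappa'}$ with $|\kappa'|=|\kappa|-1$, and apply \eqref{eq:transladerivatives} to $\partial_{t,x}^{\kappa'} F$ rather than to $F$. The key point is that the commutators $[S,\partial_\mu]=-\partial_\mu$ and $[\Omega_{0i},\partial_\mu]$ are themselves translations, so an induction on $|\kappa'|$ shows that $[S,\partial_{t,x}^{\kappa'}]$ and $[\Omega_{0i},\partial_{t,x}^{\kappa'}]$ are integer combinations of translation operators of order exactly $|\kappa'|$. After moving $S$, $\Omega_{0i}$ past $\partial_{t,x}^{\kappa'}$ and using the $v$-integration identities above, one obtains
\[
 |(t^2-r^2)\partial_{t,x}^\kappa F| \lesssim (t+r)\sup_{|\beta|\leq 1}\big|\partial_{t,x}^{\kappa'}\!\textstyle\int m_\beta\,\widehat{Z}^\beta f\,\dr v\big|,
\]
where $m_\beta \in \{1,\widehat{v}^1,\widehat{v}^2,\widehat{v}^3\}$. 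Applying the induction hypothesis to $m_\beta\widehat{Z}^\beta f$ in place of $f$ and using the elementary bound $|\widehat{Z}^\gamma \widehat{v}^i|\lesssim_\gamma 1$ (so that the $m_\beta$ multiplier can be absorbed in the commuted vector fields), this gives
\[
 \langle t-|x|\rangle^{|\kappa'|}\big|\partial_{t,x}^{\kappa'}\!\textstyle\int m_\beta\widehat{Z}^\beta f\,\dr v\big|\lesssim \sup_{|\gamma|\leq|\kappa|}\int|\widehat{Z}^\gamma f|\dr v.
\]
Combining and again dividing by $|t^2-r^2|$ closes the induction in the regime $\langle t-|x|\rangle > 2$. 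In the complementary regime, the direct commutation $\partial_{t,x}^\kappa \int f \dr v = \int \partial_{t,x}^\kappa f \dr v$ together with $\langle t-|x|\rangle^{|\kappa|}\lesssim 1$ suffices.

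The main obstacle is purely combinatorial: one must track carefully the iterated commutators $[S,\partial_{t,x}^{\kappa'}]$ and $[\Omega_{0i},\partial_{t,x}^{\kappa'}]$ to verify that they remain translations of the same order with scalar (non $(t,x)$-dependent) coefficients, so that no spurious powers of $t+r$ appear beyond the single factor produced by the explicit coefficients $t$ and $x^i$ in \eqref{eq:transladerivatives}. Once this is checked, the rest reduces to a routine integration by parts in $v$ and a straightforward absorption of the bounded factor $\widehat{v}^i$ using the boundedness of $\widehat{Z}^\gamma \widehat{v}^i$.
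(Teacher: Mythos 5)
Your proof is correct and follows the same approach the paper indicates, namely an induction driven by the identities \eqref{eq:transladerivatives} together with integration by parts in $v$ (to convert $S$, $\Omega_{0i}$, $\Omega_{ij}$ acting on velocity averages into the complete lifts $\widehat{S}$, $\widehat{\Omega}_{0i}$, $\widehat{\Omega}_{ij}$ modulo harmless lower-order terms with bounded $\widehat{v}$-coefficients). The bookkeeping of the commutators $[S,\partial_{t,x}^{\kappa'}]$ and $[\Omega_{0i},\partial_{t,x}^{\kappa'}]$ as pure translations of the same order, which you flag as the main obstacle, is indeed the only point requiring care, and your treatment of it is sound.
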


Then, by applying Lemma \ref{gainv}, one can in fact strengthen this last estimate near the light cone.

\begin{Cor}\label{CorgainderivVla}
For any muti-index $\kappa$,
$$ \forall \, (t,x) \in [T,+\infty[ \times \R^3, \qquad \langle t+|x| \rangle^{|\kappa|} \bigg| \partial_{t,x}^\kappa \int_{\R^3_v} f(t,x,v) \dr v \bigg| \lesssim \sup_{|\beta| \leq |\kappa|} \int_{\R^3_v}  \langle x-t\widehat{v} \rangle^{|\kappa|} \, \langle v \rangle^{2|\kappa|} \big| \widehat{Z}^\beta f \big|(t,x,v) \dr v.$$
\end{Cor}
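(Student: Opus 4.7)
The plan is to derive Corollary \ref{CorgainderivVla} directly from Lemma \ref{LemgainderivVla} by trading the weaker left-hand weight $\langle t-|x|\rangle^{|\kappa|}$ for the stronger one $\langle t+|x|\rangle^{|\kappa|}$, at the cost of introducing the $v$-dependent factor $\langle v\rangle^{2|\kappa|}\langle x-t\widehat{v}\rangle^{|\kappa|}$ under the velocity integral. The trade-off is furnished by the pointwise inequality already established in Lemma \ref{gainv}, exactly as the preamble to the corollary suggests.

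First, I would start from the estimate of Lemma \ref{LemgainderivVla}, namely
$$ \langle t-|x|\rangle^{|\kappa|}\bigg|\partial_{t,x}^\kappa \int_{\R^3_v} f(t,x,v)\dr v\bigg| \lesssim \sup_{|\beta|\leq |\kappa|} \int_{\R^3_v} |\widehat{Z}^\beta f|(t,x,v)\dr v, $$
and multiply both sides by $\langle t+|x|\rangle^{|\kappa|}\langle t-|x|\rangle^{-|\kappa|}$. Since this ratio depends only on $(t,x)$, it commutes with the $v$-integral on the right-hand side, so the problem reduces to bounding it pointwise in $v$ by $\langle v\rangle^{2|\kappa|}\langle x-t\widehat{v}\rangle^{|\kappa|}$.

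For that, Lemma \ref{gainv} gives
$$ 1 \lesssim \frac{\max(t-|x|,1)}{\langle t+|x|\rangle}\,\langle v\rangle^2\,\langle x-t\widehat{v}\rangle, $$
and combined with the elementary observation $\max(a,1)\leq \langle a\rangle$ (easily verified by splitting the cases $a\geq 1$ and $a<1$), one obtains pointwise
$$ \langle t+|x|\rangle \lesssim \langle t-|x|\rangle\,\langle v\rangle^2\,\langle x-t\widehat{v}\rangle. $$
Raising this to the $|\kappa|$-th power, inserting under the $v$-integral on the right-hand side and canceling the common $\langle t-|x|\rangle^{|\kappa|}$ factors yields the claimed inequality.

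I do not anticipate any real obstacle: the substantive work lies in Lemma \ref{LemgainderivVla} itself, whose proof rests on the identities \eqref{eq:transladerivatives} together with integration by parts in $v$. The passage to Corollary \ref{CorgainderivVla} is a purely algebraic manipulation of weights powered by the geometric bound of Lemma \ref{gainv}.
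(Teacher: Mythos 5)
Your proof is correct and is exactly the route the paper intends: the corollary is stated right after the remark that "by applying Lemma \ref{gainv}, one can strengthen this last estimate," and your combination of Lemma \ref{LemgainderivVla} with the pointwise bound $\langle t+|x|\rangle \lesssim \langle t-|x|\rangle\,\langle v\rangle^{2}\,\langle x-t\widehat{v}\rangle$ (from Lemma \ref{gainv} and $\max(a,1)\leq\langle a\rangle$) is precisely the intended weight-trading argument.
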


These two results will be sufficient for the purpose of this paper. However, for completeness and in order to ensure that the source term of the commuted asymptotic Maxwell equations, introduced below in Section \ref{SecMaxasymp}, have the correct form, we identify now the leading order term in the asymptotic expansion of $J(\partial_{x^\mu} f_1)$. Note that one cannot directly obtain it through Proposition \ref{Protechfordecay} since the spatial average of $\partial_{x^\mu} f_1$ vanishes. It turns out that it is easier to derive first the asymptotic behavior of the following derivatives of $f_1$,
$$ Y_{\mu \nu}:=\widehat{v}_\mu \partial_{x^\nu}-\widehat{v}_\nu \partial_{x^\mu}, \qquad 0 \leq \mu , \, \nu \leq 3 .$$
The average in $v$ of $Y_{\mu \nu} f$ is the source term of the wave equation satisfied by the cartesian component $F_{\mu \nu}$ of the electromagnetic field if $(f,F)$ is a solution to the Vlasov-Maxwell system. These derivatives enjoy a kind of \textit{null condition} and behave better than $\partial_{x^\mu}f_1$. The large time behavior of these latter quantities will then be obtained by exploiting the relations
\begin{equation}\label{link:Ytod} 
\partial_t= |v^0|^2 \big( \T_0+ \widehat{v}^i Y_{0i} \big), \qquad \quad \qquad \partial_{x^k}=|v^0|^2 \big( -Y_{0k}-\widehat{v}^k \T_0+\widehat{v}^i Y_{ki} \big), \quad 1 \leq k \leq 3 .
\end{equation}

\begin{Lem}\label{LemYderivasymp}
We have, for any $0 \leq \mu < \nu \leq 3$ and all $(t,x) \in [T+\infty[ \times \R^3$,
$$ \bigg|(-1)^{\delta_\mu^0} \,  t \int_{\R^3_v} Y_{\mu \nu} f(t,x,v) \dr v - \int_{\R^3_v} \widehat{\Omega}_{\mu \nu} f(t,x,v) \dr v \bigg| \lesssim \frac{1}{\langle t+|x| \rangle} \sup_{|\beta| \leq 1}  \int_{\R^3_v}\langle v \rangle^{2} \, \langle x-t\widehat{v} \rangle^2 \, \big| \widehat{Z}^\beta f \big|(t,x,v) \dr v.$$
\end{Lem}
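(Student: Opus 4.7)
The approach is to reduce, after integration in $v$, the quantity $(-1)^{\delta^0_\mu} tY_{\mu\nu} - \widehat{\Omega}_{\mu\nu}$ to a first-order $(t,x)$-derivative of the auxiliary function
$$G_k(t,x) := \int_{\R^3_v}(x^k - t\widehat{v}^k) f(t,x,v) \, \dr v, \qquad 1 \leq k \leq 3,$$
and then to invoke Corollary~\ref{CorgainderivVla} to gain the factor $\langle t+|x|\rangle^{-1}$.

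\textbf{Step 1 (algebraic identity).} In the spatial case $\mu=i$, $\nu=j$ with $i<j$, I would write $t\widehat{v}^i = x^i - (x^i - t\widehat{v}^i)$ to obtain
$$tY_{ij} - \widehat{\Omega}_{ij} = -\bigl[(x^i - t\widehat{v}^i)\partial_{x^j} - (x^j - t\widehat{v}^j)\partial_{x^i}\bigr] - \bigl(v^i\partial_{v^j} - v^j\partial_{v^i}\bigr).$$
The $v$-part integrates to zero (divergence in $v$, using $i\neq j$), while the remaining terms integrate --- with $\partial_{x^j}(x^i - t\widehat{v}^i) = \delta^i_j$ and the resulting Kronecker terms cancelling by antisymmetry --- to $\partial_{x^i} G_j - \partial_{x^j} G_i$. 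For $\mu=0$, $\nu=k$ the analogous computation gives $-tY_{0k} - \widehat{\Omega}_{0k} = -(x^k - t\widehat{v}^k)\partial_t - v^0\partial_{v^k}$, and after an integration by parts in $v^k$ (which produces $\int \widehat{v}^k f \, \dr v$, exactly cancelling the term $-\int \widehat{v}^k f \, \dr v$ coming from $\partial_t(x^k - t\widehat{v}^k)=-\widehat{v}^k$) one finds $-\int_{\R^3_v}(tY_{0k}+\widehat{\Omega}_{0k})f \, \dr v = -\partial_t G_k$.

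\textbf{Step 2 (applying the weighted derivative estimate).} Introduce $h_k(t,x,v) := (x^k - t\widehat{v}^k) f(t,x,v)$, so that $G_k = \int h_k \, \dr v$. Corollary~\ref{CorgainderivVla} with $|\kappa|=1$, applied to $h_k$, gives
$$\langle t+|x|\rangle\,|\partial_{t,x} G_k|(t,x) \lesssim \sup_{|\beta|\leq 1}\int_{\R^3_v}\langle x-t\widehat{v}\rangle\,\langle v\rangle^2\,|\widehat{Z}^\beta h_k|(t,x,v)\,\dr v.$$

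\textbf{Step 3 (controlling $\widehat{Z}^\beta h_k$).} A short direct computation shows $|\widehat{Z}(x^k - t\widehat{v}^k)| \lesssim \langle x-t\widehat{v}\rangle$ for every $\widehat{Z}\in\K$: for translations this is bounded by $1$; for $\widehat{\Omega}_{jl}$ and $\widehat{\Omega}_{0l}$ the $v$-derivative part cancels an otherwise dangerous $t\delta^\bullet_\bullet$-term, leaving expressions like $\delta^k_l(x^j - t\widehat{v}^j) - \delta^k_j(x^l - t\widehat{v}^l)$ and $-\widehat{v}^k(x^l - t\widehat{v}^l)$ respectively; and $\widehat{S}(x^k - t\widehat{v}^k) = 4(x^k - t\widehat{v}^k)$. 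By Leibniz one then gets
$$|\widehat{Z}^\beta h_k| \lesssim \langle x - t\widehat{v}\rangle\,\sup_{|\beta'|\leq|\beta|}|\widehat{Z}^{\beta'}f|, \qquad |\beta|\leq 1,$$
which when inserted into the estimate of Step~2 yields exactly the weight $\langle x-t\widehat{v}\rangle^2\langle v\rangle^2$ of the statement.

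The main obstacle is the algebraic reformulation in Step~1: one must recognise that the curl-like combination $\partial_{x^i}G_j - \partial_{x^j}G_i$ (respectively $-\partial_t G_k$) is precisely what survives after integration of the manifestly non-decaying operator $(-1)^{\delta^0_\mu}tY_{\mu\nu}-\widehat{\Omega}_{\mu\nu}$ against $f$, with every spurious Kronecker or boundary term cancelling. Once this is spotted, the gain in $\langle t+|x|\rangle$ is essentially Corollary~\ref{CorgainderivVla} applied to a function whose commutator derivatives automatically pick up an extra $\langle x-t\widehat{v}\rangle$-factor, reflecting the near-invariance of $x-t\widehat{v}$ under $\K$.
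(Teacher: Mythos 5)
Your proof is correct and follows essentially the same route as the paper: the same algebraic identities expressing $(-1)^{\delta^0_\mu}tY_{\mu\nu}-\widehat{\Omega}_{\mu\nu}$ in terms of $(x-t\widehat{v})$-weighted derivatives and a pure $v$-divergence, integration by parts in $v$, Corollary~\ref{CorgainderivVla} applied to $(x^\ell-t\widehat{v}^\ell)f$, and the bound $|\widehat{Z}(x-t\widehat{v})|\lesssim\langle x-t\widehat{v}\rangle$. Your write-up merely makes the paper's sketch explicit by packaging the surviving terms as $\partial_{x^i}G_j-\partial_{x^j}G_i$ and $-\partial_tG_k$.
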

\begin{proof}
Note first the relations
\begin{align*}
 -tY_{0k}&=t\left(\partial_{x^k}+\widehat{v}_k \partial_t \right)=\Omega_{0k}-(x^k-t\widehat{v}^k) \partial_t=-v^0 \partial_{v^k}+\widehat{\Omega}_{0k}-\widehat{v}^k- \partial_t \big[ (x^k-t\widehat{v}^k) \cdot \big],  \\
 tY_{ij}&=t\left(\widehat{v}_i\partial_{x^j}-\widehat{v}_j \partial_{x^i} \right) =-\big(v^i \partial_{v^j}-v^j \partial_{v^i} \big)+ \widehat{\Omega}_{ij}-(x^i-t\widehat{v}^i) \partial_{x^j}+(x^j-t\widehat{v}^j) \partial_{x^i}.
\end{align*}
Then, perform integration by parts in $v$ and apply Corollary \ref{CorgainderivVla} to $(x^\ell-t\widehat{v}^\ell)f$, $1 \leq \ell \leq 3$. It remains to remark that, for all $\widehat{Z} \in \K$, we have $|\widehat{Z} (x-t\widehat{v})|\lesssim \langle x-t\widehat{v} \rangle$.
\end{proof}

We are finally able to describe the asymptotic behavior of the derivatives of the current density $J(f_1)$. We focus on the time decay as Vlasov fields enjoy strong spatial decay in the exterior of the light cone $\{|x| \geq t\}$.
\begin{Cor}\label{CorlinexpanderivativJ}
Let $1 \leq k \leq 3$ and define the operators
\begin{equation}\label{defDop}
 D_t : h \mapsto -\widehat{\Omega}_{0i}^\infty \big( v^iv^0 h \big)-h, \qquad \qquad D_{x^k} : h \mapsto \widehat{\Omega}_{0k}^\infty \big( |v^0|^2 h \big)+\widehat{\Omega}_{ki}\big( v^i v^0 h \big)  .
 \end{equation}
  There holds, for all $t \geq 1$,
\begin{align*}
\int_{\R^3_x} t^{5} \bigg|J \big( \partial_t f_1 \big)_\nu (t,x) -  \frac{\mathds{1}_{|x|<t}}{t^4}\frac{x_\nu}{t}  \int_{\R^3_y}  \Big[ |v^0|^5 D_t f_\infty  \Big] \bigg(  y,\frac{\widecheck{\; x \;}}{t} \bigg) \dr y +\delta_\nu^0\frac{\mathds{1}_{|x|<t}}{t^4} \int_{\R^3_y}  \Big[ |v^0|^5  f_\infty  \Big] \bigg(  y,\frac{\widecheck{\; x \;}}{t} \bigg) \dr y \bigg|^2 \! \dr x  & \lesssim  \epsilon  , \\
\int_{\R^3_x} t^{5} \bigg|J \big( \partial_{x^k} f_1 \big)_\nu (t,x) -  \frac{\mathds{1}_{|x|<t}}{t^4}\frac{x_\nu}{t}  \int_{\R^3_y}  \Big[ |v^0|^5 D_{x^k} f_\infty \Big] \bigg(  y,\frac{\widecheck{\; x \;}}{t} \bigg) \dr y-\delta_\nu^k \frac{\mathds{1}_{|x|<t}}{t^4} \int_{\R^3_y}  \Big[ |v^0|^5  f_\infty  \Big] \bigg(  y,\frac{\widecheck{\; x \;}}{t} \bigg) \dr y \bigg|^2 \! \dr x  & \lesssim  \epsilon .
\end{align*}
\end{Cor}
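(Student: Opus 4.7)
The strategy is to refine the argument giving Corollary \ref{Corlinbound} so as to extract the sub-leading $t^{-4}$ term in the asymptotic expansion of $J(\partial_{x^\mu} f_1)$. Indeed, applying Corollary \ref{Corlinbound} directly to $\widehat{Z}^\beta = \partial_{x^\mu}$ would give a leading $t^{-3}$ profile, but the spatial average $\int \partial_{x^\mu} f_1 \, \dr x \dr v = 0$ forces this leading term to vanish identically, so it is the next term which we must extract. The extra factor $1/t$ is produced by invoking Lemma \ref{LemYderivasymp} after first re-expressing $\partial_{x^\mu} f_1$ in terms of the good $Y$-derivatives.

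Concretely, I would first use $\T_0 f_1 = 0$ together with \eqref{link:Ytod} to rewrite $\partial_t f_1 = v^0 v^i Y_{0i} f_1$ and $\partial_{x^k} f_1 = -|v^0|^2 Y_{0k} f_1 + v^0 v^i Y_{ki} f_1$. Since $Y_{\mu\nu}$ only differentiates in $(t,x)$, the $v$-dependent coefficients can be absorbed and pulled inside the operator, yielding integrals of the form $\int Y_{\mu\nu}(c(v) f_1) \dr v$. Applying Lemma \ref{LemYderivasymp} (with the appropriate sign $(-1)^{\delta_\mu^0}$) then converts each such integral into $\frac{(-1)^{\delta_\mu^0}}{t} \int \widehat{\Omega}_{\mu\nu}(c(v) f_1) \dr v$ modulo errors that are controlled, thanks to \eqref{eq:assumpfinfty}, Lemma \ref{Lemrelftoh} and Corollary \ref{Corpointestistrong}, in the $L^2_x$-norm weighted by $t^5$.

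Next, Lemma \ref{Lemrelftoh} (together with $\partial_t f_\infty = 0$) gives $\widehat{\Omega}_{0k} f_1(t,x,v) = [\widehat{\Omega}_{0k}^\infty f_\infty](x - t\widehat{v}, v)$ and $\widehat{\Omega}_{ki} f_1(t,x,v) = [\widehat{\Omega}_{ki} f_\infty](x - t\widehat{v}, v)$, so that after a Leibniz expansion each integrand takes the form $[\widehat{\Omega}^\infty_{\mu\nu}(c(v) f_\infty)](x - t\widehat{v}, v)$. Proposition \ref{Protechfordecay} then converts each $\int (\cdot)(t,x,v) \dr v$ into its asymptotic profile $\frac{\mathds{1}_{|x|<t}}{t^3}\int_{\R^3_y} [|v^0|^5 (\cdot)](y, \widecheck{x}/t) \dr y$, up to the claimed weighted $L^2_x$-error.

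The final step is to identify the resulting algebraic combinations with the operators $D_t$ and $D_{x^k}$ of \eqref{defDop}. A direct computation using the relations $v^0 \partial_{v^i}\widehat{v}_j = \delta_i^j - \widehat{v}^i \widehat{v}^j$ for $j \geq 1$, $\sum_i \widehat{v}^i v^i = |v|^2/v^0$, and the mass-shell identity $|v^0|^2 - |v|^2 = 1$ yields the identities
\begin{align*}
 -\widehat{\Omega}_{0i}^\infty\bigl( \widehat{v}_\nu v^0 v^i f_\infty \bigr) &= \widehat{v}_\nu D_t f_\infty - \delta_\nu^0 f_\infty, \\
 \widehat{\Omega}_{0k}^\infty\bigl( \widehat{v}_\nu |v^0|^2 f_\infty \bigr) + \widehat{\Omega}_{ki}\bigl( \widehat{v}_\nu v^0 v^i f_\infty \bigr) &= \widehat{v}_\nu D_{x^k} f_\infty + \delta_\nu^k f_\infty,
\end{align*}
and substituting with $\widehat{v}_\nu|_{v = \widecheck{x}/t} = x_\nu/t$ gives the two target formulas. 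I expect the main technical obstacle to be this last algebraic identification: the correction $\delta_\nu^k f_\infty$ in the second identity arises from a delicate cancellation between several Leibniz contributions of $\widehat{\Omega}_{0k}^\infty$ and $\widehat{\Omega}_{ki}$ (summed in $i$), and this cancellation is precisely what dictates the specific form of the operators $D_t$ and $D_{x^k}$ in \eqref{defDop}.
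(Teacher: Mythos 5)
Your proposal is correct and takes essentially the same route as the paper: rewrite $\partial_{x^\lambda} f_1$ via \eqref{link:Ytod} and $\T_0 f_1 = 0$ as $v$-weighted combinations of $Y_{\mu\nu}f_1$, trade $Y_{\mu\nu}$ for $t^{-1}\widehat{\Omega}_{\mu\nu}$ via Lemma \ref{LemYderivasymp}, extract the asymptotic profile via Proposition \ref{Protechfordecay} and Lemma \ref{Lemrelftoh} together with the conservation laws \eqref{eq:boundlin}--\eqref{eq:consspatf1}, and then perform the Leibniz computation identifying the $\delta^k_\nu$ (resp.\ $\delta^0_\nu$) correction, which is exactly where the operators $D_{x^k}$, $D_t$ are read off. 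The only cosmetic difference is that you treat $\partial_t$ and $\partial_{x^k}$ symmetrically through \eqref{link:Ytod}, whereas the paper first uses $\partial_t = -\widehat{v}\cdot\nabla_x + \T_0$ to reduce to the spatial-derivative case before applying the $Y$-decomposition.
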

\begin{proof}
Fix $0 \leq \nu \leq 3$ and $1 \leq k \leq 3$. Since $\partial_t=-\widehat{v} \cdot \nabla_x +\T_0$ and $\T_0(\widehat{v}^k f_1)=0$, it suffices to deal with the case of the spatial derivatives. Next, using the relation \eqref{link:Ytod}, we get
$$ \widehat{v}_\nu \partial_{x^k}f_1=|v^0|^2 \big[ -Y_{0k}-\widehat{v}^k \T_0+\widehat{v}^i Y_{ki} \big](\widehat{v}_\nu f_1)= - Y_{0k} \big(\widehat{v}_\nu |v^0|^2 f_1 \big)+Y_{ki}\big(\widehat{v}_\nu v^i v^0 f_1 \big).$$
Furthermore,
$$  \widehat{\Omega}_{0k} \big(\widehat{v}_\nu |v^0|^2 f_1 \big)+\widehat{\Omega}_{ki}\big(\widehat{v}_\nu v^i v^0 f_1 \big)=  \widehat{v}_\nu  \widehat{\Omega}_{0k} \big( |v^0|^2 f_1 \big)+\widehat{v}_\nu\widehat{\Omega}_{ki}\big( v^i v^0 f_1 \big)  + \delta_\nu^k (1-|\widehat{v}|^2)|v^0|^2 f_1   $$
and $(1-|\widehat{v}|^2)|v^0|^2=1$. Then, apply Lemma \ref{LemYderivasymp} to $f=\widehat{v}_\nu |v^0|^2 f_1$ as well as $f=\widehat{v}_\nu v^iv^0 f_1$ and then Proposition \ref{Protechfordecay} to $\widehat{v}_\nu  \widehat{\Omega}_{0k} \big( |v^0|^2 f_1 \big)$, $\widehat{v}_\nu\widehat{\Omega}_{ik}\big(v^i v^0 f_1 \big) $ as well as $f_1$, which are all solutions to the linear Vlasov equation. It remains to use the conservation laws \eqref{eq:boundlin}--\eqref{eq:consspatf1}.
\end{proof}

\subsection{The Vlasov equation in variables adapted to modified scattering}

Let $F$ be a sufficiently regular electromagnetic field. One of the main step of the proof of Theorem \ref{Theo1} consists in proving that under suitable assumptions on $F$, in particular $t^2 F(t,x+t\widehat{v})\sim \mathbb{F}[f_\infty](v)$, there exists a unique function $f$ satisfying $\T_F(f)=0$ and $f(t,x+t \widehat{v}+ \C_{t,v},v) \to f_\infty (x,v)$ as $t \to + \infty$. For this, it is convenient to work in a more appropriate coordinate system. We then introduce
\begin{equation}\label{defXC} \XX_\C : (t,x,v) \mapsto x+t\widehat{v}+\C_{t,v}, \qquad \qquad \C_{t,v}^k:= - \log(t)\frac{\delta_k^j-\widehat{v}_k \widehat{v}^j}{v^0} \widehat{v}^\mu \mathbb{F}_{\mu j}[f_\infty](v),
\end{equation}
and we remark that $(t,x,v) \mapsto (t, \XX_\C (t,x,v) , v)$ is a diffeomorphism from $\R_+^* \times \R^3_x \times \R^3_v$ to $\R_+^* \times \R^3_{z} \times \R^3_v$. To a distribution function $f: [T,+\infty[ \times \R^3_x \times \R^3_v \to \R$, we associate $g : [T,+\infty[ \times \R^3_{z} \times \R^3_v \to \R$ defined as
\begin{equation}\label{defg}
 \forall \, (t,z ,v) \in[T,+\infty[ \times \R^3_z \times \R^3_v, \qquad g(t,z , v) = f(t,z+t\widehat{v}+ \C_{t,v},v)=f\big(t,\mathrm{X}_\C(t,z,v),v\big).
 \end{equation}
Then, the modified scattering statement for $f$ is equivalent to $g(t,z,v) \to f_\infty (z,v)$. Furthermore, we have
\begin{align}
\partial_t g (t,z,v) & = \left(\partial_t  +\widehat{v}\cdot \nabla_x -\frac{\delta^i_j-\widehat{v}^i\widehat{v}_j}{tv^0} \widehat{v}^\mu {\mathbb{F}_\mu}^{j}[f_\infty](v) \partial_{x^i} \right)f \big(t,\XX_\C(t,z,v),v \big), \label{dtg} \\
\partial_{z^j} g(t,z,v) &= \partial_{x^j}f \big(t,\XX_\C(t,z,v),v \big) , \label{dzg} \\
v^0\partial_{v^j} g(t,z,v) &= \left( v^0\partial_{v^j}+t(\delta_j^i-\widehat{v}^j \widehat{v}^i)\partial_{x^i}+v^0\partial_{v^j}\C^i_{t,v}\partial_{x^i} \right) f \big(t,\XX_\C(t,z,v),v \big) \label{dvg}.
\end{align}
Consequently, $f(t,x,v)$ is solution to the Vlasov equation $\T_F(f)=0$ on $[T,+\infty[$ if and only if $g$ satisfies
$$ \forall \, (t,z,v) \in [T,+\infty[\times \R^3_z \times \R^3_v, \qquad \T_F^\infty(g)(t,z,v)=0,$$
where $\T_F^\infty$ is the Vlasov operator $\T_F$ in the coordinate system $(t,z,v)=(t,\XX_\C(t,x,v),v)$,
\begin{equation}\label{eq:Vlasov}
\T_F^\infty:=\partial_t-\frac{\delta_j^i-\widehat{v}_j \widehat{v}^i}{tv^0} \widehat{v}^\mu \Big( t^2  {F_{\mu}}^j(t,\XX_\C)-{\mathbb{F}_\mu}^{j}[f_\infty](v) \Big)\partial_{z^i}+\frac{\widehat{v}^\mu}{v^0} {F_{\mu}}^j(t,\XX_\C)\left( v^0\partial_{v^j}-v^0 \partial_{v^j} \C^i_{t,v} \partial_{z^i} \right).
\end{equation}
We will then study the equation $\T_F^\infty (g)=0$ with the asymptotic data $g(t,z,v) \to f_\infty (z,v)$. We will further control the derivatives $\widehat{Z}^\beta_\infty g$ up to order $N$. Note that they provide as much informations as the standard derivatives, in the sense that
$$ \sup_{|\beta| \leq p } \langle z \rangle^{\,p-\beta_H} \big| \widehat{Z}^\beta_\infty g \big|\lesssim  \sup_{\kappa_t+|\kappa_z|+|\kappa_t| \leq p} \big(t+\langle z \rangle \big)^{\kappa_t} \langle z \rangle^{|\kappa_z|} \, \langle v \rangle^{|\kappa_v|} \big| \partial_t^{\kappa_t} \partial_z^{\kappa_z} \partial_v^{\kappa_v} g \big| \lesssim \sup_{|\beta| \leq p } \langle z \rangle^{\,p-\beta_H} \big| \widehat{Z}^\beta_\infty g \big|,$$
on $ [T,+\infty[ \times \R^3_z \times \R^3_v$ and for any $p \leq N$. We chose to use them for the reasons presented in Section \ref{subsubseclin} and since it is much easier to relate these derivatives to $\widehat{Z}^\beta f$, which have to be controlled in order to estimate the source terms in the commuted Maxwell equations.
\begin{Lem}\label{Lemrelftog}
For $g(t,z,v)=f(t,z+t\widehat{v}+\C_{t,v},v)$, we have for any $1 \leq k \leq 3$ and $1 \leq i < j \leq 3$,
\begin{align}
 \big(\partial_{x^k} f \big) (t,z+t\widehat{v}+\C_{t,v},v)& = \partial_{z^k} g(t,z,v), \nonumber \\ \nonumber
\big(\partial_t  f \big)(t,z+t\widehat{v}+\C_{t,v},v)  &=\partial_t^\infty g(t,z,v) -\partial_t \C_{t,v}^\ell \partial_{z^\ell}g(t,z,v)  , \\ \nonumber
  \big(S f \big) (t,z+t\widehat{v}+\C_{t,v},v) & = S g(t,z,v)+ \big( \C^\ell_{t,v}-t\partial_t \C_{t,v}^\ell \big)\partial_{z^\ell} g(t,z,v), \\ \nonumber
  \big( \widehat{\Omega}_{ij} f \big)(t,z+t\widehat{v}+\C_{t,v},v) & = \widehat{\Omega}_{ij} g(t,z,v)+\big[\C^i_{t,v} \partial_{z^j}-\C^j_{t,v} \partial_{z^i} \big]g(t,z,v)- \big(v^i \partial_{v^j}-v^j\partial_{v^i} \big)\big( \C_{t,v}^\ell \big) \partial_{z^\ell} g(t,z,v) ,\\ \nonumber
\big(\widehat{\Omega}_{0k}  f \big)(t,z+t\widehat{v}+\C_{t,v},v)&= \widehat{\Omega}_{0k}^{\infty} g(t,z,v)+\C^k_{t,v} \partial_t^\infty g(t,z,v)-v^0 \partial_{v^k}\big( \C_{t,v}^\ell \big) \partial_{z^\ell} g(t,z,v) \\ 
& \quad  -\big(t \widehat{v}^k +z^k+\C^k_{t,v} \big)\partial_t \C_{t,v}^\ell \partial_{z^\ell}g(t,z,v) . \nonumber
\end{align}
\end{Lem}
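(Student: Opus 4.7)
The plan is to invert the chain-rule identities \eqref{dtg}--\eqref{dvg}, already recorded before the statement of the lemma, in order to express the basic partial derivatives of $f$ evaluated along the modified characteristic $x=\XX_\C(t,z,v)$ in terms of the basic partial derivatives of $g$, and then substitute these expressions into the definitions of the vector fields $S$, $\widehat{\Omega}_{ij}$ and $\widehat{\Omega}_{0k}$.

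First I would read off from \eqref{dzg} the identity $(\partial_{x^k} f)(t,\XX_\C,v)=\partial_{z^k} g(t,z,v)$. Then, using $\partial_t\XX_\C^i=\widehat{v}^i+\partial_t\C^i_{t,v}$, I would solve \eqref{dtg} to obtain
\[
 (\partial_t f)(t,\XX_\C,v)=\partial_t g-(\widehat{v}^i+\partial_t\C^i_{t,v})\partial_{z^i} g=\partial_t^\infty g-\partial_t\C^\ell_{t,v}\,\partial_{z^\ell}g,
\]
which is the second relation of the lemma. Similarly, using the elementary identity $v^0\partial_{v^j}\widehat{v}^i=\delta_j^i-\widehat{v}^i\widehat{v}_j$, I would invert \eqref{dvg} to get
\[
 v^0\,(\partial_{v^j} f)(t,\XX_\C,v) = v^0 \partial_{v^j} g-\bigl[t(\delta_j^\ell-\widehat{v}_j\widehat{v}^\ell)+v^0\partial_{v^j}\C^\ell_{t,v}\bigr] \partial_{z^\ell}g.
\]

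All the remaining relations should then follow by inserting these three identities into the definitions of $S$, $\widehat{\Omega}_{ij}$ and $\widehat{\Omega}_{0k}$ evaluated at $x=\XX_\C$. For $Sf=t\partial_t f+(z^\ell+t\widehat{v}^\ell+\C^\ell_{t,v})\partial_{x^\ell}f$, the $\pm t\widehat{v}^\ell\partial_{z^\ell} g$ contributions cancel and one reads off $Sg+(\C^\ell_{t,v}-t\partial_t\C^\ell_{t,v})\partial_{z^\ell}g$. For $\widehat{\Omega}_{ij}$, the terms coming from $v^i\partial_{v^j}f-v^j\partial_{v^i}f$ produce $-t\widehat{v}^i\partial_{z^j}g+t\widehat{v}^j\partial_{z^i}g$, which cancels the analogous contributions from $(z^i+t\widehat{v}^i+\C^i_{t,v})\partial_{z^j}g$ and its symmetric counterpart, leaving $\widehat{\Omega}_{ij}g$ plus the $\C$-corrections advertised in the statement. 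For $\widehat{\Omega}_{0k}$, the $t\partial_{z^k}g$ arising from $t\,(\partial_{x^k}f)(t,\XX_\C,v)$ cancels the $-t\delta_k^\ell\partial_{z^\ell}g$ appearing in $v^0(\partial_{v^k}f)(t,\XX_\C,v)$, while the $t\widehat{v}^\ell\widehat{v}_k\partial_{z^\ell}g$ produced by that same formula combines with $-t\widehat{v}^k\widehat{v}^\ell\partial_{z^\ell}g$ issued from $t\widehat{v}^k\partial_t^\infty g$ to give $t\widehat{v}^k\partial_t g$; together with $z^k\partial_t^\infty g$ and $v^0\partial_{v^k}g$, this reconstitutes $\widehat{\Omega}^\infty_{0k}g$ and the remaining error terms are precisely the ones written in the statement.

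The argument is entirely algebraic and the main (mild) obstacle will simply be bookkeeping: several $\C_{t,v}$-dependent corrections have to be tracked simultaneously and the various cancellations involving $\widehat{v}$ verified carefully. However, no analytical input is required beyond the two identities $\partial_t\XX_\C^i=\widehat{v}^i+\partial_t\C^i_{t,v}$ and $v^0\partial_{v^j}\widehat{v}^i=\delta_j^i-\widehat{v}^i\widehat{v}_j$, together with the definitions of $\partial_t^\infty$ and $\widehat{\Omega}^\infty_{0k}$, so no real difficulty is anticipated.
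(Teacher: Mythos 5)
Your proposal is correct: the inversion of \eqref{dtg}--\eqref{dvg} and the subsequent cancellations for $S$, $\widehat{\Omega}_{ij}$ and $\widehat{\Omega}_{0k}$ all check out, including the slightly delicate recombination $t\widehat{v}^k\partial_t^\infty g+t\widehat{v}_k\widehat{v}^\ell\partial_{z^\ell}g=\widehat{v}^k\,t\partial_t g$ needed to reconstitute $\widehat{\Omega}^\infty_{0k}g$. This is essentially the paper's argument, which merely organizes the same chain-rule computation in two steps by factoring through the intermediate function $h(t,z,v)=g(t,z-\C_{t,v},v)=f(t,z+t\widehat{v},v)$ and invoking Lemma \ref{Lemrelftoh} before shifting by $\C_{t,v}$.
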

\begin{proof}
Recall from Lemma \ref{Lemrelftoh} that if $h(t,z,v):=g(t,z-\C_{t,v},v)$, we have $\big( \widehat{Z} f \big) (t,x+t\widehat{v},v)=\widehat{Z}_\infty h(t,x,v)$ for any $\widehat{Z} \in \K$. It then remains to relate the derivatives of $h$ to the one of $g$.
\end{proof}

\subsection{Key elements of the proof}\label{Subseckeyidea}

We will construct the solution $(f,F)$ by a Picard iteration. For this, we have to study the Cauchy problem for both the Maxwell and the Vlasov equations with asymptotic data. 

\subsubsection{The Maxwell equations with scattering data}\label{subseubsecMaxide}

Given a sufficiently regular distribution function $f:[T,+\infty[ \times \R^3_x \times \R_v^3 \to \R$ satisfying modified scattering to $f_\infty$, we would like to prove a well-posedness result for the asymptotic Cauchy problem
\begin{equation}\label{eq:introkeyelMax}
\nabla^\mu G_{\mu \nu} = J(f)_\nu, \qquad \nabla^\mu {}^* \! G_{\mu \nu} =0, \qquad \qquad \qquad \lim_{r \to + \infty} r \underline{\alpha} (G)(r+u,r\omega)=\underline{\alpha}^{\infty} (u, \omega ).
\end{equation}
The regularity of $J(f)$ and the assumptions on $\underline{\alpha}^\infty$ will allow us to prove that the asymptotic Cauchy problem \eqref{eq:introkeyelMax} is well-posed in $L^\infty([T,+\infty[,H^N(\R^3))$. In view of the assumptions on $f$, our linear analysis suggests
\begin{enumerate}
\item that $J(f) \sim t^3$, so that we should not expect $G$ to decay faster than $t^{-2}$ in, say, the region $t \geq 2r$. This is related with the long range effect of the electromagnetic field, preventing the distribution function to scatter linearly. The problem then is that a naive energy estimate will not provide us enough informations on $G$ in order to close our Picard iteration.
\item Since the modified scattering dynamics of $f$ is captured by $f_\infty$, we expect $J(f)$ to have the same leading order term than the current density $J(f_1)$ of the linear solution, which is given by Corollary \ref{Corlinbound}. 
\item The same should hold for $J(\widehat{Z}^\beta f)$ and $ J(\widehat{Z}^\beta f_1)$, up to order $N-1$. However, since a loss of one derivative is required in the estimates of Corollary \ref{Corlinbound}, we will not be able to derive an asymptotic expansion for the top order derivatives $J(\widehat{Z}^\kappa f)$, $|\kappa|=N$. 
\end{enumerate}

\begin{Rq}
Since $f$ merely enjoys a modified scattering dynamics, the next terms in the asymptotic expansion of $J(f)$ should be polyhomogeneous. In contrast, the expansion of $J(f_1)$ is polynomial.
\end{Rq}
\begin{Rq}
For the forward problem, in order to prove global well-posedness for the small data solutions to the Vlasov-Maxwell system, one also has to deal with a difficulty specific to the top order derivatives. However, the loss of regularity was not a problem when we proved in \cite{scat} that asymptotic completeness hold.
\end{Rq}

Motivated by $1.$ and $2.$, we consider the solution $F^{\mathrm{asymp}}[f_\infty]$ to the asymptotic Maxwell equations,
$$\nabla^\mu F^{\mathrm{asymp}}_{\mu \nu}[f_\infty] = J^{\mathrm{asymp}}_\nu[f_\infty],  \qquad \qquad \nabla^\mu {}^* \! F^{\mathrm{asymp}}_{\mu \nu}[f_\infty] =0,$$
where $J^{\mathrm{asymp}}_\nu[f_\infty]$ is the leading order term of $J(f_1)_\nu$, as given by Corollary \ref{Corlinbound}. In order to uniquely define it, we further assume that at time $t=1$, the divergence free parts of both the electric and magnetic field of $F^{\mathrm{asymp}}[f_\infty]$ vanish. It implies that $F^{\mathrm{asymp}}[f_\infty](1,\cdot)$ is completely determined by 
$J^{\mathrm{asymp}}_0[f_\infty](1,\cdot)$ through a Poisson equation. It turns out that $F^{\mathrm{asymp}}[f_\infty]$ fully captures the asymptotic behavior of $G$.
\begin{itemize}
\item Along the trajectories of free massive particles, we expect 
\begin{equation}\label{eq:ketelconv}
|t^2G(t,x+t\widehat{v})-\mathbb{F}[f_\infty](v)| \lesssim \langle x \rangle^{\frac{3}{2}} \, \langle v \rangle^3 t^{-1/2},
\end{equation}
which is indeed a convergence estimate satisfied by $F^{\mathrm{asymp}}[f_\infty]$. In fact, by allowing the initial time, here set equal to $1$, to go to zero, we have in a certain sense $t^2 F^{\mathrm{asymp}}[f_\infty](t,t\widehat{v})=\mathbb{F}[f_\infty](v)$. We refer to Proposition \ref{Probehavior} for more details.
\item The decay of $G$ in the region where $|x| \geq 2t$, near spatial infinity, is governed by its pure charge part $\overline{F}$ introduced in \eqref{defFoverlin}, which is the tail of its electric field. It turns out that the same property holds for $F^{\mathrm{asymp}}[f_\infty]$ and its pure charge part is equal to the one of $G$.
\item The radiation field $\underline{\alpha}^{\mathrm{asymp}}[f_\infty]$ of $F^{\mathrm{asymp}}[f_\infty]$ strongly decays and verifies the constraint equations \eqref{eq:constr1}--\eqref{eq:constr2}. The derivation of these two equations requires a thorough analysis of the null properties of $F^{\mathrm{asymp}}[f_\infty]$ since certains quantities are at the threshold of integrability. It implies in particular that $\underline{\alpha}^\infty - \underline{\alpha}^{\mathrm{asymp}}[f_\infty]$ is the radiation field of a strongly decaying solution to the vacuum Maxwell equations.
\end{itemize}
These results, proved in Section \ref{SecMaxasymp}, reduces the problem of improving the estimate for $G$ to the analysis of
\begin{equation*}
\nabla^\mu H_{\mu \nu} = J(f)_\nu-J^{\mathrm{asymp}}_\nu[f_\infty], \quad \; \nabla^\mu {}^* \! H_{\mu \nu} =0, \qquad \quad  \lim_{r \to + \infty} r \underline{\alpha} (H)(r+u,r\omega)=\underline{\alpha}^{\infty} (u, \omega )-\underline{\alpha}^{\mathrm{asymp}}[f_\infty](u,\omega).
\end{equation*}
To deal with this new Cauchy problem with scattering data, we construct in Section \ref{SecscattMax} a wave operator for the Maxwell equations with a prescribed strongly decaying source terms, mapping strongly decaying radiation fields to strongly decaying solutions. The improvement of the estimates for $\mathcal{L}_{Z^\gamma}G$, with $|\gamma| \leq N-1$, is similar and rely on the next property. The structure of the asymptotic Maxwell equations is preserved by commutation,
$$\nabla^\mu \mathcal{L}_{Z^\gamma} ( F^{\mathrm{asymp}}[f_\infty] )_{\mu \nu} = J^{\mathrm{asymp}}_\nu \big[ \widehat{Z}^\gamma_\infty f_\infty \big],  \qquad \qquad \nabla^\mu {}^* \! \mathcal{L}_{Z^\gamma} ( F^{\mathrm{asymp}}[f_\infty] )_{\mu \nu} =0,$$
for any $Z^\gamma$ composed by homogeneous vector fields $\Omega_{0k}$, $\Omega_{ij}$ and $S$. In particular, we have
$$ t^2\mathcal{L}_{Z^\gamma} ( F^{\mathrm{asymp}}[f_\infty] )(t,x+t\widehat{v}) = \mathbb{F} \big[ \widehat{Z}^\gamma_\infty f_\infty \big]+O \big( t^{-1} \big).$$

\begin{Rq}
Although $\mathcal{L}_{Z^\gamma}F^{\mathrm{asymp}}[f_\infty] $ and $ F^{\mathrm{asymp}}[\widehat{Z}^\gamma_\infty f_\infty]$ share part of their asymptotic properties, they are not equal (see Proposition \ref{ProcommuFasympZ}). Let us also mention that when $Z^\gamma$ contains a translation $\partial_{x^\lambda}$, a similar commutation formula holds but we will not require it since $J(\widehat{Z}^\gamma f)$ already strongly decays.
\end{Rq}

Finally, we deal with $3.$ and improve the estimate of the top order derivatives in Section \ref{toporder} by exploiting the Glassey-Strauss decomposition of the derivatives of the field. It allows for a gain of regularity reminiscent of the elliptic regularity in the context of the Vlasov-Poisson system. More precisely, we will be able to derive the asymptotic behavior of $\nabla_{t,x} \mathcal{L}_{Z^\gamma}G$, where $|\gamma|=N-1$, by taking advantage of the strong decay properties of $J(\widehat{Z}^\gamma f)-J^{\mathrm{asymp}}[\widehat{Z}^\gamma_\infty f_\infty]$. Fix for instance $1 \leq k \leq 3$ and $0 \leq \mu < \nu \leq 3$. The idea is to exploit that
$$ \Box \, \partial_{x^k} \mathcal{L}_{Z^\gamma}(G)_{\mu \nu} = \partial_{x^\nu} \partial_{x^k} J(\widehat{Z}^\gamma f)_\mu-\partial_{x^\mu} \partial_{x^k} J(\widehat{Z}^\gamma f)_\nu $$
in order to use the representation formula for the wave equation. Then, Glassey-Strauss decomposed $\partial_{x^\lambda}$ as a combination of derivatives tangential to backward light cones and the linear Vlasov operator $\T_0$, which is transverse to light cones. In such a way, one can perform integration by parts twice and express $\partial_{x^k} \mathcal{L}_{Z^\gamma}(G)_{\mu \nu}$ as a functional of $J(\widehat{Z}^\gamma f)$ instead of its derivatives. In order to exploit this decomposition, we would like $\nabla_{\partial_{x^k}} \mathcal{L}_{Z^\gamma} F^{\mathrm{asymp}} [f_\infty]$ to verify a similar property. For this, we identify a singular solution to the linear Vlasov equation giving rise to the current density $J^{\mathrm{asymp}}[\widehat{Z}^\gamma_\infty f_\infty]$. An approximation argument then provides us the corresponding decomposition. 

\subsubsection{The Vlasov equation with asymptotic data}

We consider this time, for an electromagnetic field $F$ behaving as the solution of \eqref{eq:introkeyelMax}, the Cauchy problem
\begin{equation}\label{eq:introkeyelMax0}
\T_F^\infty (g) =0, \qquad \qquad \lim_{t \to + \infty} g(t,x,v) =f_\infty (x,v).
\end{equation}
In order to prove well-posedness and to propagate regularity backward in time for $g$, we will be lead to commute the Vlasov equation. As it is suggested by Proposition \ref{Com}, the structure of the nonlinear terms are conserved by commutation with $\widehat{Z}_\infty \in \K^\infty$. If $\widehat{Z}_\infty \neq \widehat{S}$, we will prove
\begin{equation*}
\big| \T_F^\infty \big( \widehat{Z}_\infty g \big) \big| \lesssim \Big| \frac{\widehat{v}^\mu}{tv^0} \Big( t^2  \mathcal{L}_Z(F)_{\mu j}(t,\XX_\C)-\mathbb{F}_{\mu j}\big[ \widehat{Z}_\infty f_\infty \big](v) \Big) \Big| \big|\nabla_z g \big|+\Big| \frac{\widehat{v}^\mu}{
v^0} \mathcal{L}_Z(F)_{\mu j}(t,\XX_\C) \Big|\sup_{\widehat{Z}_\infty'} \big|\widehat{Z}_\infty' g \big|+\text{better terms,}
\end{equation*}
where we recall $\XX_\C=z+t\widehat{v}+\C_{t,v}$. Now, in view of decay estimates such as \eqref{eq:ketelconv} satisfied by the electromagnetic field for the forward problem, we expect
\begin{align}
\Big| \frac{\widehat{v}^\mu}{tv^0} \Big( t^2  \mathcal{L}_Z(F)_{\mu j}(t,\XX_\C)-\mathbb{F}_{\mu j}\big[ \widehat{Z}_\infty f_\infty \big](v) \Big) \Big|& \lesssim \Lambda \,\langle z \rangle^{\frac{3}{2}}  \, \langle v \rangle^2 t^{-\frac{3}{2}} \log^{\frac{3}{2}}(t), \label{eq:introconvesti} \\
 \Big| \frac{\widehat{v}^\mu}{v^0} \mathcal{L}_Z(F)_{\mu j}(t,\XX_\C) \Big| & \lesssim \Lambda \, \langle t+|z| \rangle^{-1} \, \langle t-|\XX_\C| \rangle^{-1}. \label{eq:introtr}
 \end{align}
 We then identify two problems, which are in fact related.
\begin{enumerate}
\item Although $z$ and $v$ are almost conserved by the flow of $\T_F^\infty$, allowing for the propagation of moments for the solution and its derivatives, the polynomial loss $\langle z \rangle^{\frac{3}{2}}  \, \langle v \rangle^2$ in the right hand side of \eqref{eq:introconvesti} seems too strong in order to close the estimates through a Grönwall type inequality.
\item The decay rate of the electromagnetic field degenerates near the light cone, that is for $t\sim |\XX_\C|$ in \eqref{eq:introtr}.
\end{enumerate}
In order to deal with $2.$, one could expect to exploit the strong decay properties of massive fields near and in the exterior of the light cone, the regions where $t\sim r$ and $r \geq t$. In the most favourable case, where $f_\infty$ and then $g$ are compactly supported, we have $\langle t-|\XX_\C| \rangle \geq \delta t$ on the support of $g$, for a certain $0 < \delta <1$ depending on the support of $f_\infty$ and $\Lambda$. Equivalently, $f(t,x,v)=0$ for, say, $|v| \geq R$ or $|x| \geq R+tR/\langle R \rangle$ and $\langle t-r \rangle \gtrsim t$ on the support of $f(t,\cdot,\cdot)$. Otherwise, these strong decay properties are captured by Lemma \ref{gainv}, at the cost of moments in $z$ and $v$. Then, as for the problem $1.$, we would not be able to close the energy estimates in such a way. Indeed, bounding a given norm of the solution would require to control a stronger one, carrying more powers of $|z|$ and $|v|$. The idea then consists in exploiting the \textit{null structure} of the Lorentz force. Expanding $\widehat{v}^\mu {\mathcal{L}_Z F_\mu}^j$ according to the null frame $(\underline{L},L,e_\theta,e_\varphi)$, we obtain terms having as a factor
\begin{itemize}
\item either a good component $\alpha$, $\rho$ or $\sigma$ of $\mathcal{L}_Z F$, which turn out to decay as $\langle t+r \rangle^{-2}$,
\item or a good null component of the four-momentum vector, $\widehat{v}^{\underline{L}}$, $\widehat{v}^{e_\theta}$ or $\widehat{v}^{e_\varphi}$. They will allow us to take advantage of the $t-r$ decay in the energy estimates.
\end{itemize}
These observations yield an improvement of \eqref{eq:introtr} where the $t-r$ decay has been either transformed into $t+r$ decay or made usable in order to prove boundedness for the solutions. Similar considerations allow to obtain an alternative version of \eqref{eq:ketelconv},
\begin{equation}\label{eq:keyintro2}
\hspace{-5mm} \Big| \frac{\widehat{v}^\mu}{tv^0} \Big( t^2  \mathcal{L}_Z(F)_{\mu j}(t,\XX_\C)-\mathbb{F}_{\mu j}\big[ \widehat{Z}_\infty f_\infty \big](v) \Big) \Big| \big| \nabla_z g \big| \lesssim \Lambda  \bigg( \frac{|\widehat{v}^{\underline{L}} |^{\frac{1}{2}} \log(t)}{t \, \langle t-|\XX_\C| \rangle^{\frac{1}{2}}}+\frac{\log(t)}{t^{\frac{3}{2}}} \bigg) \langle z \rangle \,\big| \nabla_z g \big|.
\end{equation}
Note first that the right hand side does not carry any $v$ weight anymore. Although the time decay is weaker than for \eqref{eq:ketelconv}, exploiting the $t-|\XX_\C|$ decay through the factor $|\widehat{v}^{\underline{L}}|^{1/2}$ will allow us to compensate for this. However, the weight $\langle z \rangle$, even though it is crucially weaker than in \eqref{eq:ketelconv}, still has to be handled. For this, we take advantage of hierarchies in the commuted equations.
\begin{itemize}
\item If $\widehat{Z}_\infty$ is a translation $\partial_t^\infty$ or $\partial_{z^k}$, then $\mathbb{F}[\widehat{Z}_\infty f_\infty]=0$ and $Z=\partial_t$ or $Z=\partial_{x^k}$. Consequently, the derivative of the electromagnetic field has a better behavior, $|\mathcal{L}_Z F| \lesssim \Lambda \, \langle t+r \rangle^{-1} \, \langle t-r \rangle^{-2}$, implying that $|\T_F^\infty (\widehat{Z}_\infty g ) |$ strongly decays. 
\item If $\widehat{Z}_\infty$ is an homogeneous vector field, we remark that the worst error term \eqref{eq:keyintro2} carries the factor $\langle z \rangle \,\big| \nabla_z g \big|$. Since the derivatives $\partial_{z^k}g$ behaves better, it makes the system of the commuted equations \textit{triangular}. More concretely, we should be able to control the quantities $\langle z \rangle \,\big| \nabla_z g \big|$ and $|\widehat{Z}_\infty g|$.  
\end{itemize}
More generally, we will prove boundedness for the $L^2$ norms of quantities of the form $\langle z \rangle^{N_z-\beta_H} \langle v \rangle^{N_v} \, \widehat{Z}_\infty^\beta g$, where we recall that $\beta_H$ is the number of homogeneous vector fields composing $\widehat{Z}_\infty^\beta$.
\begin{Rq}
Considering hierarchies of the form $\langle z \rangle^{N_z-a\beta_H} \langle v \rangle^{N_v} \, \widehat{Z}_\infty^\beta g$, with $a>1$, would make good error terms problematic. This explains why we could not lose more powers of $\langle z \rangle$ in \eqref{eq:keyintro2}.
\end{Rq}

Finally, let us mention that working with large electromagnetic fields forces us to close the energy estimates through Grönwall type inequalities. For this reason, we carefully chose the foliation used to define the energy norms of Vlasov field in \eqref{defnormVlasovL2} below. In particular, it allows for an application of a two variables Gronwall inequality, in $(t,u)$, where $u=t-r$.

\section{Energy and decay estimates}\label{Secenergy}

We fix, for all this section, $T \geq 0$ as well as $h :[T,+\infty[ \times \R^3_x \times \R^3_v \to \R$ and a $2$-form $F$, defined on $[T,+ \infty[ \times \R^3$, both sufficiently regular. We will have to control the flux of the solutions through two different families of hypersurfaces, in addition to the $\{ t= t_{\mathrm{cst}} \}$ ones, that we start by introducing. 

\subsection{Ingoing and outgoing light cones}\label{subsec2} The future scattering state of a smooth electromagnetic field is a tensor field depending on the variables $(u,\omega)\in \R \times \mathbb{S}^2$, given by the limits of $rE(u,\underline{u},\omega)$ and $rB(u,\underline{u},\omega)$ as $\underline{u} \to +\infty$. This motivates the introduction, for any $\underline{u} \geq 0$, of the ingoing light cones
$$ \underline{C}_{\underline{u}}:= \big\{(t,x)  \in [T,+\infty[ \times \mathbb{R}^3  \, \big| \; t+|x|=\underline{u} \big\}, \qquad \qquad \dr \mu_{\underline{C}_{\underline{u}}} =\frac{1}{\sqrt{2}}r^2  \dr \mu_{\mathbb{S}^2} \dr u,$$
where $\dr \mu_{ \underline{C}_{\underline{u}}}$ is the induced volume form of $\underline{C}_{\underline{u}}$, in accordance with the choice of the null vector field $\sqrt{2}^{\, -1}\underline{L}$ as its Lorentzian normal vector.

\begin{center}
\begin{tikzpicture}
\fill[color=gray!35] (3,0)--(7,0)--(7,4)--(0,4)--(0,3)--(3,0);
\draw [-{Straight Barb[angle'=60,scale=3.5]}] (0,-0.3)--(0,4);
\draw [-{Straight Barb[angle'=60,scale=3.5]}] (-0.2,0)--(7,0) ;
\draw (3,0.2)--(3,-0.2);
\draw (-0.18,3)--(0.2,3);
\draw (0,-0.5) node{$r=0$};
\draw (-0.7,0) node{$t=T$};
\draw (3,-0.5) node{$\underline{u}$};
\draw (-0.5,3) node{$\underline{u}$};
\draw (-0.5,3.8) node{$t$};
\draw (6.8,-0.5) node{$r$}; 
\draw[densely dashed] (3,0)--(0,3) node[scale=1.2,below, midway] {$\underline{C}_{\underline{u}}$};
\draw (4.8,2.5) node[ color=black!100, scale=1.2] {$\{t+|x| \geq \underline{u}, \; t \geq T \}$};

      \node[align=center,font=\bfseries, yshift=-2em] (title) 
    at (current bounding box.south)
    {The sets $\underline{C}_{\underline{u}}$.};
\end{tikzpicture}
\end{center}

Controlling the flux of the solutions through truncated outgoing light cones will allow us to take advantage of the null structure of the system. Let, for all $T \leq t_0 \leq s \leq +\infty$ and $u \in \R$,
$$ C_u^{t_0,s}:= \big\{(t,x)  \in [T,+\infty[ \times \mathbb{R}^3 \, \big| \; t-|x|=u , \; t_0 \leq t \leq s \big\},  \qquad C_u^{t_0}:=C_u^{t_0,+\infty},\qquad \qquad \dr \mu_{C_u} =\frac{1}{\sqrt{2}}r^2  \dr \mu_{\mathbb{S}^2}\dr \underline{u},$$
with $\dr \mu_{ C_u}$ the induced volume form of $C_{u}:=C^T_u$, chosen in accordance with the choice of the null vector field $\sqrt{2}^{\,-1}L$ as its normal vector. For a sufficiently regular function $\psi :[T,+\infty[ \times \R^3 \to \R$,
 $$\int_{\underline{C}_{\underline{u}}} \psi \,  \dr \mu_{\underline{C}_{\underline{u}}}\! = \int_{u=2T-\underline{u}}^{\underline{u}} \int_{\mathbb{S}^2_\omega} \psi(u,\underline{u},\omega ) \frac{r^2}{\sqrt{2}} \dr \mu_{\mathbb{S}^2_\omega} \dr \underline{u}, \quad \; \; \int_{C_u^{t_0,s}} \psi \, \dr \mu_{C_u} \! = \int_{\underline{u}=\max ( 2t_0-u, u)}^{2s-u} \int_{\mathbb{S}^2_\omega} \psi(u,\underline{u},\omega ) \frac{r^2}{\sqrt{2}} \dr \mu_{\mathbb{S}^2_\omega} \dr \underline{u} .$$
\begin{Rq} 
The future scattering states are in fact defined on a part of the conformal boundary of the Minkowski space, future null infinity $\mathcal{I}^+$, which corresponds to the future end points of the null rays $t-|x|=u$. It can be viewed as $\underline{C}_{+\infty}$. More precisely, $$(t,r,\omega) \mapsto \big(T(t,r)=\tan^{-1}(t+r)+\tan^{-1}(t-r), R(t,r)=\tan^{-1}(t+r)-\tan^{-1}(t-r), \omega \big) \in \R \times \mathbb{S}^3$$ is a conformal diffeomorphism between Minkowski spacetime and the interior of the triangle $0 \leq R \leq \pi$, $|T|=\pi-R$ of the space $\R \times \mathbb{S}^3$, equipped with the metric $-\dr T^2+\dr R^2+\sin^2( R) \dr \mu_{\mathbb{S}^2}$. Then $$ \mathcal{I}^+ := \{ (T,R,\omega) \in \R \times \mathbb{S}^3 \; / \; 0 < R < \pi, \quad T=\pi-R \}.$$
The past scattering states, obtained as the limit of $rE(u,\underline{u},\omega)$ and $rB(u,\underline{u},\omega)$ as $u \to -\infty$, are defined on past null infinity $\mathcal{I}^-=\{ 0<R < \pi, \; T=R-\pi \}$, which can be viewed as $t-|x|=-\infty$. 

All the timelike geodesics, that is the lines $t \mapsto (t,x+\widehat{v}t)$ with $v \in \R^3_v$, terminate at future timelike infinity $i^+=(0,\pi)$ as $t \to +\infty$ and past timelike infinity $i^-=(0,-\pi)$ as $t \to -\infty$.
\end{Rq}

\begin{center}
\begin{tikzpicture}
\draw [-{Straight Barb[angle'=60,scale=3.5]}] (0,-0.3)--(0,4);
\draw [-{Straight Barb[angle'=60,scale=3.5]}] (-0.2,0)--(7,0) ;
\draw (3,0.2)--(3,-0.2);
\draw (0,-0.5) node{$r=0$};
\draw (-0.7,0) node{$t=T$};
\draw (-0.7,1) node{$t=t_0$};
\draw (3,-0.5) node{$t_0-u$};
\draw (-0.5,3.8) node{$t$};
\draw (-0.7,2) node{$t=u'$};
\draw (6.8,-0.5) node{$r$}; 
\draw[densely dashed] (0,1)--(3,1) ;
\draw (3,1)--(6,4) ;
\draw (0,2)--(2,4) ;
\draw (1.4,2.85) node[scale=1.2]{$C_{u'}^T$};
\draw (5.1,2.55) node[scale=1.2]{$C_u^{t_0}$};

  \node (II)   at (11,1.8)   {};

\path  
  (II) +(90:2.5)  coordinate    (IItop)
       +(-90:2.5) coordinate    (IIbot)
       +(0:2.5)   coordinate    (IIright)
       ;
\draw 
	
      (IIbot) --
          node[midway, above , sloped]    {\footnotesize{$r=0$}}   
          (IItop) --
          node[midway, above, sloped] {$\mathcal{I}^+$,  \hspace{0.8mm} \footnotesize{ $\underline{u}=+\infty$}} 
          (IIright) -- 
          node[midway, below, sloped] {$\mathcal{I}^-$, \hspace{0.8mm} \footnotesize{$ u=-\infty$}}
      (IIbot) -- cycle;
\draw (10.9,-0.8) node[scale=0.8]{$i^-$};
\draw (13.7,1.8) node[scale=0.8]{$i^0$};
\draw (10.9,4.4) node[scale=0.8]{$i^+$}; 
\draw (10.9,4.6) node[scale=0.8]{}; 

\draw[densely dashed] (11.7,0)--node[scale=0.6,midway, above, sloped]{$\underline{u}=\mathrm{cst}$}(11,0.7)--node[scale=0.6,midway, above, sloped]{$u=\mathrm{cst}$}(12.8,2.5);

\node[align=center,font=\bfseries, yshift=-2em] (title) 
    at (current bounding box.south)
    {The sets $C_u^{t_0}$ and the Penrose diagram of the Minkowski space.};
\end{tikzpicture}
\end{center}

\subsection{Energy inequality for Vlasov fields}\label{SubsecenergyVla}

In order to ultimately prove boundedness in $L^2_{z,v}$ for quantities such as $\langle z \rangle^{p} \, \langle v \rangle^q \, \widehat{Z}^\beta_\infty g$, we will first estimate a refined norm of the solutions. In view of the hypersurfaces on which we will control them and even if we will estimate the error terms in the coordinate system $(t,z,v)$, it is more sensible to perform the energy inequalities using the variables $(t,x,v)$. Let, for $t_0 \geq T$ and $u \in \R$,
\begin{equation}\label{defnormVlasovL2}
 \overline{\mathbb{E}} \big[  h  \big] (t_0,u) :=  \int_{|x| \leq t_0 -u}  \int_{\R^3_v}  \big|h \big|^2(t, x,v) \dr v \dr x+ \sqrt{2}  \int_{C_{u}^{t_0}} \int_{\R^3_v} \widehat{v}^{\underline{L}} \big|h \big|^2 \dr v \dr \mu_{C_u},
 \end{equation}
where, more explicitly,
$$\sqrt{2}\int_{C_u^{t_0}} \int_{\R^3_v} v^{\underline{L}} \big|h \big|^2 \dr v \dr \mu_{C_u} = \int_{\underline{u}=\max(u,2t_0-u)}^{+ \infty} \int_{\mathbb{S}^2_\omega} \int_{\R^3_v} v^{\underline{L}}(\omega) \big|h \big|^2\Big( \frac{\underline{u}+u}{2}, \frac{\underline{u}-u}{2}\omega \Big) \dr v  r^2 \dr \mu_{\mathbb{S}^2_\omega} \dr \underline{u}, \qquad r= \frac{\underline{u}-u}{2}.$$
We now make a few remarks concerning the choice of the foliation of $[T,+\infty[ \times \R^3_x \times \R^3_v$.
\begin{itemize}
\item The domain of the first integral is the part of a constant $t$ hypersurface located in $\{ t-|x| \geq u \}$. The one of the second integral is the truncated outgoing null cone $C_u^{t_0}$, the part of $\{t-|x|=u \}$ located in $\{ t \geq t_0 \}$.
\item Controlling the $L^2$ norm of the distribution function on null cones $C_u^{t_0}$ will allow us to exploit the decay in $t-r$ in the energy estimates.
\item We consider such truncations of the constant $t$ hypersurfaces in order to be able to apply Gronwall's inequality when we will perform energy estimates (see Lemma \ref{LemGronwall2variables}). The error terms will decay either as $t^{-\frac{4}{3}}$ or as $\langle u \rangle^{-\frac{4}{3}}$.
\end{itemize}
The next energy inequality is adapted to the study of solutions to the Vlasov equation with asymptotic data.
\begin{Pro}\label{ProenergyL2Vlasov}
We have, for all $t_0 \geq T$ and $u \in \R$,
$$\overline{\mathbb{E}} \big[ h \big] (t_0,u) \leq 2\int_{t=t_0}^{+ \infty} \int_{|x| \leq t-u} \int_{\R^3_v} \big| \T_F  \big( h \big) h\big|(t,x,v) \dr v \dr x \dr t+ \lim_{t \to + \infty} \int_{\R^3_x}  \int_{\R^3_v}  \big|h \big|^2(t, x,v) \dr v \dr x.$$
\end{Pro}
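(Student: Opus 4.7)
The plan is to rewrite $2\T_F(h)h$ as a spacetime divergence after integration in $v$, apply the divergence theorem on the truncated region
\begin{equation*}
\Omega_{t_0,T_f,u} := \{(t,x) \in \R \times \R^3 \, : \, t_0 \leq t \leq T_f, \; |x| \leq t - u \},
\end{equation*}
for a large parameter $T_f \geq t_0$, and then let $T_f \to +\infty$.

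First I would compute the divergence of the spacetime current $V^\mu(t,x) := \int_{\R^3_v} \widehat{v}^\mu h^2 \, \dr v$, where $\widehat{v}^0 := 1$. Expanding $2\T_F(h)h = \partial_t(h^2) + \widehat{v}\cdot\nabla_x(h^2) + \widehat{v}^\mu {F_{\mu}}^j \partial_{v^j}(h^2)$ and integrating in $v$, an integration by parts in $v$ reduces the contribution of the Lorentz force to $-\int_{\R^3_v} \partial_{v^j}(\widehat{v}^\mu {F_{\mu}}^j) h^2 \, \dr v$. Since $\widehat{v}^0 = 1$ is $v$-independent and
\begin{equation*}
\partial_{v^j}(\widehat{v}^i) \, {F_{i}}^j = \frac{\delta^i_j - \widehat{v}^i \widehat{v}^j}{v^0} \, {F_{i}}^j = 0
\end{equation*}
by the antisymmetry of the spatial block $(F_{ij})_{1 \leq i,j \leq 3}$ (both the trace and the $v\otimes v$ contraction vanish), the Lorentz term disappears and one obtains $\partial_\mu V^\mu = 2 \int_{\R^3_v} \T_F(h) h \, \dr v$ pointwise on $\R_+ \times \R^3$.

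Next I would apply the divergence theorem on $\Omega_{t_0,T_f,u}$. Its boundary decomposes into the two flat slices $\{t = t_0,\, |x| \leq t_0 - u\}$ and $\{t = T_f,\, |x| \leq T_f - u\}$, with outward Euclidean normals $-\partial_t$ and $+\partial_t$, contributing $\mp \int_{|x|\leq \cdot}\int_{\R^3_v} h^2 \, \dr v \dr x$, together with the null lateral face $C_u^{t_0,T_f}$. Parameterising the latter by $(t,\omega) \mapsto (t,(t-u)\omega)$, its outward Euclidean unit normal is $\tfrac{1}{\sqrt{2}}(-1,\omega)$ and its Euclidean area element is $\sqrt{2}(t-u)^2 \, \dr t \, \dr \mu_{\mathbb{S}^2}$. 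Combining the identity $1 - \omega \cdot \widehat{v} = 2 \widehat{v}^{\underline{L}}$ with the convention $\dr \mu_{C_u} = \tfrac{r^2}{\sqrt{2}} \, \dr \mu_{\mathbb{S}^2} \dr \underline{u}$ introduced in Section \ref{subsec2} (and $\dr \underline{u} = 2 \dr t$ along the cone), the lateral flux of $V$ simplifies to $-\sqrt{2} \int_{C_u^{t_0,T_f}} \int_{\R^3_v} \widehat{v}^{\underline{L}} h^2 \, \dr v \, \dr \mu_{C_u}$.

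Rearranging the divergence identity and bounding the top-slice contribution by $\int_{\R^3_x} \int_{\R^3_v} h^2(T_f) \, \dr v \dr x$ yields
\begin{equation*}
\int_{|x| \leq t_0 - u} \!\!\! \int_{\R^3_v}\!\! h^2(t_0) \dr v \dr x + \sqrt{2}\!\!\int_{C_u^{t_0,T_f}} \!\!\int_{\R^3_v}\!\! \widehat{v}^{\underline{L}} h^2 \dr v \dr \mu_{C_u} \leq \!\!\int_{\R^3_x}\!\! \int_{\R^3_v}\!\! h^2(T_f) \dr v \dr x + 2\!\! \int_{t_0}^{T_f}\!\!\! \int_{|x|\leq t-u} \!\!\int_{\R^3_v}\!\! |\T_F(h) h| \dr v \dr x \dr t.
\end{equation*}
Sending $T_f \to +\infty$ (monotone convergence on the lateral flux and on the spacetime source integral, and $\limsup$ on the top-slice term, which in the statement is packaged as the $\lim$) produces the stated inequality. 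The only real subtleties in the argument are the cancellation of the Lorentz contribution (standard, from the antisymmetry of $F_{ij}$) and the null geometry bookkeeping needed to obtain both the sign and the $\sqrt{2}$ prefactor of the lateral flux; this last step is where the most care is required.
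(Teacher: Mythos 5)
Your proof is correct and follows essentially the same route as the paper: you apply the Euclidean divergence theorem to the current $V^\mu=\int_{\R^3_v}\widehat{v}^\mu h^2\,\dr v$ on the truncated region $\{t_0\leq t\leq T_f,\ t-|x|\geq u\}$, kill the Lorentz contribution via the $v$-divergence-free property of the force, identify the lateral flux as $-\sqrt{2}\int_{C_u^{t_0,T_f}}\int_v \widehat{v}^{\underline{L}}h^2\,\dr v\,\dr\mu_{C_u}$, and let $T_f\to+\infty$. The extra bookkeeping you supply for the normal and the area element of the cone is consistent with the paper's conventions, so nothing is missing.
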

\begin{proof}
The proof is based on the divergence theorem, applied to the current density of $h^2$, which verifies 
$$\nabla_\mu J \big(h^2 \big)^\mu = \partial_{x^\mu} \int_{\R^3_v} \widehat{v}^\mu h^2 \dr v= \int_{\R^3_v} \T_F \big( h^2 \big) \dr v-\int_{\R^3_v} \partial_{v^j} \big( \, \widehat{v}^\mu {F_{\mu}}^j h^2  \, \big) \dr v=\int_{\R^3_v} \T_F \big( h^2 \big) \dr v.$$
 For any $T \leq t_0 \leq s$ and $u \in \R$, we introduce the domain
$$ D^{t_0,s}_{u} := \big\{ (t,x) \in \R^{1+3} \, \big| \;  t_0 \leq t \leq s , \; t-|x| \geq u \big\}.$$ 
Its boundary is composed by 
\begin{itemize}
\item $\{ (t_0,x) \, | \, x \in \R^3, \; |x| \leq t_0 -u \}$ and $\{ (s,x) \, | \, x \in \R^3, \; |x| \leq s -u \}$. Their unit outward normal vector, for the euclidian metric, are respectively $-\partial_t$ and $\partial_t$. Note that they can be empty if $u$ is large.
\item The truncated null cone $C_u^{t_0,s}$.  Its euclidian unit outward normal vector is $-\sqrt{2}^{\, -1} (\partial_t-\partial_r)=-\sqrt{2}^{ \, -1} \underline{L}$.
\end{itemize}
The euclidian divergence theorem, applied to $J(h^2)$, in $D_u^{t_0,s}$, yields
$$ \int_{|x| \leq u-t_0} J \big( h^2 \big)^0(\tau,x) \dr x+\int_{C_u^{t_0,s}} \sqrt{2} J \big( h^2 \big)^{\underline{L}} \dr \mu_{C_u} = \int_{|x| \leq u-s} J \big( h^2 \big)^0(s,x) \dr x+2\int_{D_u^{t_0,s}} \int_{\R^3_v} \T_F(h) h \, \dr v \dr x \dr t  .$$
Since $J(h^2)^0= \int_v h^2\dr v$ and $J(h^2)^{\underline{L}}= \int_v \widehat{v}^{\underline{L}} \, h^2\dr v$, we get the result by letting $s \to +\infty$.
\end{proof}

\subsection{Energy inequalities for electromagnetic fields}\label{SecenergyMaxweel}

The results of this section will be proved by applying the divergence theorem to the energy-momentum tensor of $F$, contracted with well-chosen vector fields (multipliers).

\begin{Def}\label{Defenergytensor}
The energy-momentum tensor $\mathbb{T}[F]_{\mu \nu}$ of the $2$-form $F$ is defined as 
$$ \mathbb{T}[F]_{\mu \nu}:=F_{\mu \beta} {F_{\nu}}^{\beta}-\frac{1}{4}\eta_{\mu \nu} F_{\xi \lambda} F^{\xi \lambda}.$$
The null components of $\mathbb{T}[F]$ are given by
$$ \mathbb{T}[F]_{L L}=|\alpha(F)|^2, \qquad \mathbb{T}[F]_{\underline{L} \underline{L} }=|\underline{\alpha}(F)|^2, \qquad \mathbb{T}[F]_{L \underline{L}}=|\rho(F)|^2+|\sigma(F)|^2.$$
Moreover, if F a is solution to the Maxwell equations with a magnetic current,
\begin{equation}\label{Maxmagn}
 \nabla^\mu F_{\mu \nu} = J_{\nu}, \qquad \nabla^\mu {}^* \! F_{\mu \nu} = \widetilde{J}_{\nu}, \tag{Mmc}
 \end{equation}
 then
$$  \nabla^{\mu} T[F]_{\mu \nu}=F_{\nu \lambda} J^{\lambda}+{}^* \! F_{\nu \lambda} \widetilde{J}^\lambda.$$
\end{Def}
\begin{Rq}
If $F$ is a solution to the Maxwell equations, the magnetic current source term $\widetilde{J}$ vanishes in \eqref{Maxmagn}. However, in order to solve the vacuum Maxwell equations with data prescribed at future null infinity $\mathcal{I}^+$, we will consider a sequence of approximate solutions $(F^n)_{n \geq 1}$ such that $\nabla^\mu {}^* \! F^n_{\mu \nu} \neq 0$.
\end{Rq}
\subsubsection{Estimates for the forward problem}
We assume, for the next two Propositions, that $F$ is a solution to the Maxwell equations with sufficiently regular source terms $J$,
\begin{equation}\label{eq:Maxeqn}
 \nabla^\mu F_{\mu \nu}=J_\nu, \qquad \qquad \nabla^\mu {}^* \! F_{\mu \nu} =0 .
 \end{equation}
We start by proving energy estimates by using $\partial_t$ as a multiplier.
\begin{Pro}\label{Propartialt}
If $F$ verifies \eqref{eq:Maxeqn} on $[T,+\infty[ \times \R^3$, then
$$ \sup_{t \geq T}\| F(t,\cdot) \|_{L^2_x}^2+ \sup_{\underline{u} \geq T}  \int_{\underline{C}_{\underline{u}}}  |\underline{\alpha} (F)|^2 \dr \mu_{\underline{C}_{\underline{u}}} +\sup_{u \in \R} \int_{C_u^T} | \rho  (F)|^2+| \sigma (F)|^2  \dr \mu_{C_u} \lesssim  \| F(T,\cdot) \|_{L^2_x}^2+\bigg|  \int_{\tau=T}^{+\infty}  \| J (\tau,\cdot) \|_{L^2_x} \dr s \bigg|^2 \!.$$
\end{Pro}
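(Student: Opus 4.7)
The plan is the standard multiplier-Grönwall argument for Maxwell using $\partial_t$. By Definition \ref{Defenergytensor}, $\nabla^\mu \mathbb{T}[F]_{\mu \nu} = F_{\nu\lambda}J^\lambda$, so pointwise $|\nabla^\mu \mathbb{T}[F]_{\mu 0}| \leq |F||J|$. A direct null-frame computation moreover gives $\mathbb{T}[F](\partial_t,\partial_t) = \tfrac{1}{2}|F|^2$, $2\mathbb{T}[F](\partial_t, L) = |\alpha(F)|^2+|\rho(F)|^2+|\sigma(F)|^2$, and $2\mathbb{T}[F](\partial_t,\underline{L}) = |\underline{\alpha}(F)|^2+|\rho(F)|^2+|\sigma(F)|^2$, all non-negative.

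I would apply the Euclidean divergence theorem to $\mathbb{T}[F]_{\mu 0}$ in two families of domains. For $u \in \R$ and $s \geq \max(T,u)$, take $D_u^{T,s} := \{(t,x) : T \leq t \leq s,\ t-|x| \geq u\}$, whose boundary pieces are portions of $\{t=T\}$ (empty if $u > T$), of $\{t=s\}$, and of the truncated outgoing cone $C_u^{T,s}$. The resulting identity controls $\|F(s,\cdot)\|_{L^2(|x|\leq s-u)}^2$ and $\int_{C_u^{T,s}}(|\alpha|^2+|\rho|^2+|\sigma|^2)\dr\mu_{C_u}$ in terms of $\|F(T,\cdot)\|_{L^2(|x|\leq T-u)}^2$ and the spacetime integral of $F_{0\lambda}J^\lambda$ over $D_u^{T,s}$. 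Dually, applying the divergence theorem in $\{(t,x) : T \leq t \leq s,\ t+|x| \leq \underline{u}\}$, for $\underline{u} \geq T$, yields the corresponding flux identity on $\underline{C}_{\underline{u}}$.

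The source term is handled by Cauchy--Schwarz in $x$: at each fixed time $\tau$, the spatial integrand is bounded by $\|F(\tau,\cdot)\|_{L^2_x}\|J(\tau,\cdot)\|_{L^2_x}$. Introducing
\begin{equation*}
\mathcal{E}(s) := \sup_{T \leq \tau \leq s} \|F(\tau,\cdot)\|_{L^2_x}^2 + \sup_{u \in \R} \int_{C_u^{T,s}}(|\alpha|^2+|\rho|^2+|\sigma|^2)\dr\mu_{C_u} + \sup_{\underline{u}\geq T} \int_{\underline{C}_{\underline{u}}\cap \{t \leq s\}}|\underline{\alpha}|^2 \dr\mu_{\underline{C}_{\underline{u}}},
\end{equation*}
the non-negativity of the null fluxes yields $\mathcal{E}(s) \leq \|F(T,\cdot)\|_{L^2_x}^2 + 2\int_T^s \sqrt{\mathcal{E}(\tau)}\,\|J(\tau,\cdot)\|_{L^2_x}\dr\tau$. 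The classical square-root Grönwall lemma---that $y^2(t) \leq A^2 + 2\int_T^t y f$ implies $y(t) \leq A + \int_T^t f$---then gives $\sqrt{\mathcal{E}(s)} \leq \|F(T,\cdot)\|_{L^2_x} + \int_T^s \|J(\tau,\cdot)\|_{L^2_x}\dr\tau$. Letting $s \to +\infty$, squaring, and discarding the non-negative $|\alpha|^2$ term on the outgoing cones recovers the inequality as stated.

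This is a textbook energy estimate and I do not anticipate any genuine obstacle. The only points demanding care are the sign and normalisation conventions on the null hypersurfaces (the paper's choice of $\sqrt{2}^{-1}L$, resp.\ $\sqrt{2}^{-1}\underline{L}$, as null normals fixes the $\sqrt{2}$ factors appearing in the flux measures) and the boundary case $u > T$, in which the initial time-slice contribution vanishes and only strengthens the estimate.
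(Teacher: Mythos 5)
Your overall strategy (energy--momentum tensor, multiplier $\partial_t$, divergence theorem in cone-truncated domains, Cauchy--Schwarz in $x$, square-root Grönwall) is the same as the paper's, and your null-frame identities $2\mathbb{T}[F](\partial_t,L)=|\alpha|^2+|\rho|^2+|\sigma|^2$ and $2\mathbb{T}[F](\partial_t,\underline{L})=|\underline{\alpha}|^2+|\rho|^2+|\sigma|^2$ are correct. The ingoing-cone step and the pure slab estimate are also fine. However, there is a genuine sign gap in the outgoing-cone step.

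For $D_u^{T,s}=\{T\leq t\leq s,\ t-|x|\geq u\}$, the Euclidean outward normal on $C_u^{T,s}$ is $-\tfrac{1}{\sqrt{2}}\underline{L}$ (as the paper records in the proof of Proposition \ref{ProenergyL2Vlasov}), and the resulting identity is
\begin{equation*}
\int_{|x|\leq s-u}\!\mathbb{T}[F]_{00}(s,x)\dr x \;-\; \tfrac{1}{\sqrt{2}}\int_{C_u^{T,s}}\mathbb{T}[F]_{L0}\dr\mu_{C_u} \;=\; \int_{|x|\leq T-u}\!\mathbb{T}[F]_{00}(T,x)\dr x \;-\; \int_{D_u^{T,s}}F_{0\nu}J^\nu\,\dr x\dr\tau.
\end{equation*}
The two non-negative quantities on the left appear with \emph{opposite} signs: the domain $\{t-|x|\geq u\}$ is an expanding spatial region, so the outgoing-cone flux is an \emph{inflow} and cannot be dropped by positivity. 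Hence the single inequality $\mathcal{E}(s)\leq\|F(T)\|_{L^2_x}^2+2\int_T^s\sqrt{\mathcal{E}(\tau)}\,\|J(\tau)\|_{L^2_x}\dr\tau$ does not follow; isolating the flux yields $\tfrac{1}{\sqrt{2}}\int_{C_u^{T,s}}\mathbb{T}[F]_{L0}\leq\tfrac12\|F(s)\|_{L^2_x}^2+\int_T^s\|F\|\|J\|$, so the cone term in $\mathcal{E}(s)$ is bounded by $\sqrt{2}\,\mathcal{E}(s)$ plus lower-order terms, and the Grönwall loop does not close. Note the ingoing-cone domain is not affected: there the $\{t=s\}$ cap and the $\underline{C}_{\underline{u}}$ flux carry the \emph{same} sign.

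The repair is what the paper does: take the complementary region $\{\tau\geq T,\ \tau-|x|\leq u\}$, whose only spatial-slice boundary is $\{(T,x):|x|\geq T-u\}$ and whose lateral boundary is $C_u^T$ with the flux entering with the favourable sign, giving directly $\tfrac{1}{\sqrt{2}}\int_{C_u^T}\mathbb{T}[F]_{L0}\leq\tfrac12\|F(T)\|_{L^2_x}^2+\int_T^\infty\|F\|\|J\|\dr\tau$. Alternatively, your domain is usable in a two-step argument: first Grönwall the slab estimate by itself to obtain $\sup_t\|F(t)\|_{L^2_x}\leq\|F(T)\|_{L^2_x}+\int_T^\infty\|J\|\dr\tau$, and then feed that bound into your cone identity to control the flux a posteriori. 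Either way the conclusion holds, but the single-Grönwall argument as you wrote it does not.
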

\begin{proof}
Apply first the divergence theorem to $\mathbb{T}[F]_{\mu 0}$ in the region $\{ (\tau,x) \in \R_+ \times \R^3 \, | \, T \leq \tau \leq t \}$. As $2\mathbb{T}[F]_{0 0}=|F|^2$ and $\nabla^\mu \mathbb{T}[F]_{\mu 0} = F_{0 \nu} J^{\nu}$ , we get, for all $t \geq T$,
$$ \int_{\R^3_x} |F|^2(t,x) \dr x-\int_{\R^3_x} |F|^2(T,x) \dr x=-2\int_{\tau=0}^t \int_{\R^3_x} F_{0\nu}J^\nu \dr x \dr \tau \leq 2\sup_{T \leq \tau \leq t} \| F(\tau, \cdot) \|_{L^2_x}\int_{\tau=T}^t  \| J(\tau, \cdot) \|_{L^2_x} \dr \tau.$$
Fix now $u \in \R$ and apply the divergence theorem in the region $\{ (\tau,x) \in \R_+ \times \R^3 \, | \,  \tau-|x| \leq u, \; \tau \geq T \}$. As its boundary is $\{(T,x) \, | \, |x| \geq T-u \} \cup C_u^T$, we have
\begin{align*}
 \sqrt{2}\int_{C_u^T} \mathbb{T}[F]_{L 0} \dr \mu_{C_u}  -\int_{ |x| \geq T-u} |F|^2(T,x) \mathrm{d} x& =-2\int_{\tau=T}^{\underline{u}} \int_{|x| \geq \tau-u}  F_{0\nu}J^\nu \dr x \dr \tau \\
 & \leq 2\sup_{\tau \geq T} \| F(\tau, \cdot) \|_{L^2_x}\int_{\tau=T}^{+\infty}  \| J(\tau, \cdot) \|_{L^2_x} \dr \tau.
 \end{align*}
Then we use that $\mathbb{T}[F]_{\underline{L} L}=|\rho (F)|^2+|\sigma (F)|^2$ and $\mathbb{T}[F]_{LL}=|\alpha(F)|^2 \geq 0$. Finally, consider $\underline{u} \geq 0$ and apply the divergence theorem in the region $\{ (\tau,x) \in \R_+ \times \R^3 \, | \,  \tau+|x| \leq \underline{u}, \; \tau \geq T \}$. As its boundary is composed by $\{ (T,x) \, | \,  |x| \leq \underline{u}-T \}$ and $\underline{C}_{\underline{u}}$, we obtain
\begin{align*}
 \sqrt{2}\int_{\underline{C}_{\underline{u}}} \mathbb{T}[F]_{\underline{L} 0} \dr \mu_{\underline{C}_{\underline{u}}}  -\int_{ |x| \leq \underline{u} -T} |F|^2(T,x) \mathrm{d} x& =-2\int_{\tau=T}^{\underline{u}} \int_{|x| \leq \underline{u}-\tau}  F_{0\nu}J^\nu \dr x \dr \tau \\
 & \leq 2\sup_{T \leq \tau \leq \underline{u}} \| F(\tau, \cdot) \|_{L^2_x}\int_{\tau=T}^t  \| J(\tau, \cdot) \|_{L^2_x} \dr \tau.
 \end{align*}
It remains to use $\mathbb{T}[F]_{\underline{L} \underline{L}}=|\underline{\alpha}(F)|^2$ and $\mathbb{T}[F]_{\underline{L} L}=|\rho (F)|^2+|\sigma (F)|^2 \geq 0$.
\end{proof}

Let $K$ be the sum of $\partial_t$ and the Morawetz vector field,
\begin{equation}\label{defKs}
K:= \frac{1}{2} \langle t+r \rangle^{2} L+\frac{1}{2}\langle t-r \rangle^{2}\underline{L},
\end{equation}
which is a conformal Killing vector field of Minkowski spacetime with conformal factor $4t$,
\begin{equation*}
\nabla_\mu K_\nu+\nabla_\nu K_\mu =4t \eta_{\mu \nu}.
\end{equation*}
As $\mathbb{T}[F]$ is symmetric and trace-free, that is ${\mathbb{T}[F]_{\mu}}^\mu=0$, we have
$$ 2\mathbb{T}[F]_{\mu \nu} \nabla^{\mu} K^{\nu} = \mathbb{T}[F]_{\mu \nu}\left( \nabla^{\mu} K^{\nu}+\nabla^{\nu} K^{\mu} \right)=4t\mathbb{T}[F]_{\mu \nu} \eta^{\mu \nu}=4t {\mathbb{T}[F]_{\mu}}^\mu=0 ,$$
so that
\begin{equation}\label{eq:divMora}
\nabla^{\mu} \left( \mathbb{T}[F]_{\mu \nu} K^{\nu} \right)  = \nabla^{\mu} \left(\mathbb{T}[F]_{\mu \nu}\right)K^{\nu}.
\end{equation}
The vector field $K$, used a multiplier, then provides a conservation law for solutions to the vacuum Maxwell equations, that is \eqref{eq:Maxeqn} for $J=0$. More generally, $K$ can be used in order to propagate the weighted norms 
\begin{align}
\mathcal{E}^{K}[F](t) &:=  \int_{\R^3_x} \!\left[ \langle t+r \rangle^{2}|\alpha (F)|^2\!+\langle t-r \rangle^{2} |\underline{\alpha} (F)|^2\!+\big(\langle t+r \rangle^{2}+\langle t-r \rangle^{2} \big)(|\rho (F) |^2+|\sigma(F)|^2)\right](t,x)   \mathrm{d} x , \nonumber \\
 \mathcal{F}^{K}[F]_t^s(u) & := \sqrt{2} \int_{C_u^{t,s}} \langle \underline{u} \rangle^2 |\alpha (F)|^2+\langle u \rangle^2 \big(|\rho (F) |^2+|\sigma(F)|^2 \big) \dr \mu_{C_u} . \label{keva:defenergy}
 \end{align}
 In this paper, the forward problems will not require to control the fluxes through the outgoing null cones.
\begin{Pro}\label{ProMoravac}
Assume that $F$ is solution to \eqref{eq:Maxeqn} on $[T,+\infty[ \times \R^3$. Then, 
$$ \sup_{t \geq T} \mathcal{E}^K[F](t) \leq 2\mathcal{E}^K[F](T)+8\bigg|\int_{\tau=T}^{+\infty} \bigg|\int_{\R^3_x} \langle \tau+|x| \rangle^2 \big| J (\tau,x) \big|^2 \dr x \bigg|^{\frac{1}{2}} \dr \tau \bigg|^2.$$
\end{Pro}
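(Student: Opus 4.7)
The plan is to use the conformal Killing vector field $K$ as a multiplier for the energy-momentum tensor $\mathbb{T}[F]$ and apply the divergence theorem on the slab $\{T \leq \tau \leq t_1\} \times \R^3_x$, in the spirit of Proposition~\ref{Propartialt}. The resulting integral inequality for $\mathcal{E}^K[F]$ will then be closed by a quadratic-type Grönwall argument, where the final constant $8$ will come from a sharp use of Cauchy–Schwarz in the null frame.

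First, I would compute the divergence of the contracted tensor as
\begin{equation*}
\nabla^\mu\big(\mathbb{T}[F]_{\mu\nu} K^\nu\big) = K^\nu \nabla^\mu \mathbb{T}[F]_{\mu\nu} + \mathbb{T}[F]_{\mu\nu}\nabla^\mu K^\nu = F_{\nu\lambda} J^\lambda K^\nu,
\end{equation*}
where the first term is given by Definition~\ref{Defenergytensor} and the second vanishes because $\mathbb{T}[F]$ is symmetric and trace-free (so that, since $\nabla^\mu K^\nu+\nabla^\nu K^\mu = 4t\eta^{\mu\nu}$, one gets $\mathbb{T}[F]_{\mu\nu}\nabla^\mu K^\nu = 2t\,{\mathbb{T}[F]_\mu}^\mu = 0$, as already noted around \eqref{eq:divMora}). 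A direct computation in the null frame via $\partial_t = \tfrac{1}{2}(L+\underline{L})$ shows that $4\,\mathbb{T}[F](\partial_t,K)$ is exactly the density of $\mathcal{E}^K[F]$. The Euclidean divergence theorem then gives
\begin{equation*}
\mathcal{E}^K[F](t_1) = \mathcal{E}^K[F](T) - 4 \int_T^{t_1}\!\int_{\R^3_x} F_{\nu\lambda} J^\lambda K^\nu \, \dr x \, \dr \tau.
\end{equation*}

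The heart of the argument is a sharp pointwise bound on the integrand. Using $F_{L\underline{L}} = -2\rho(F)$, $F_{L e_A} = -\alpha(F)_{e_A}$, $F_{\underline{L} e_A} = -\underline{\alpha}(F)_{e_A}$, one obtains
\begin{equation*}
F_{\nu\lambda} J^\lambda K^\nu = -\langle t+r\rangle^2 \rho(F) J^{\underline{L}} + \langle t-r\rangle^2 \rho(F) J^L - \tfrac{1}{2}\big[\langle t+r\rangle^2 \alpha(F)_{e_A} + \langle t-r\rangle^2 \underline{\alpha}(F)_{e_A}\big] J^{e_A}.
\end{equation*}
The crucial structural observation is that $\underline{\alpha}$, the only component carrying the small weight $\langle t-r\rangle^2$ in the density of $\mathcal{E}^K$, here only appears paired with $\langle t-r\rangle^2$. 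A weighted Cauchy–Schwarz in the null frame, built on the identity $|J|^2 = 2(J^L)^2 + 2(J^{\underline{L}})^2 + (J^{e_\theta})^2 + (J^{e_\varphi})^2$ (with $|J|$ the Cartesian Euclidean norm) and on $\langle t-r\rangle \leq \langle t+r\rangle$, would yield the pointwise bound
\begin{equation*}
\big|F_{\nu\lambda} J^\lambda K^\nu\big|^2 \leq \tfrac{1}{2}\langle t+r\rangle^2 |J|^2 \cdot \mathsf{w}[F],
\end{equation*}
where $\mathsf{w}[F]$ denotes the density of $\mathcal{E}^K[F]$.

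Integrating in $x$ through Cauchy–Schwarz and setting $N(\tau) := (\int_{\R^3_x} \langle\tau+|x|\rangle^2 |J(\tau,x)|^2 \dr x)^{1/2}$, I would deduce $\mathcal{E}^K[F](t_1) \leq \mathcal{E}^K[F](T) + 2\sqrt{2}\int_T^{t_1} N(\tau) \sqrt{\mathcal{E}^K[F](\tau)}\, \dr \tau$. Introducing $A(t_1) := \sup_{T \leq s \leq t_1} \mathcal{E}^K[F](s)$ and $B := \int_T^{+\infty} N(\tau)\, \dr\tau$, taking the supremum in $s$ on both sides produces the quadratic inequality $A(t_1) \leq \mathcal{E}^K[F](T) + 2\sqrt{2}\, B \sqrt{A(t_1)}$, which resolves to $A(t_1) \leq 2\mathcal{E}^K[F](T) + 8B^2$ uniformly in $t_1 \geq T$, as required. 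The main obstacle lies in the sharp pointwise estimate: a naive termwise Cauchy–Schwarz already gives the correct scaling but a constant strictly worse than $\tfrac{1}{\sqrt{2}}$, which would not deliver the exact value $8$ in the final inequality; one must distribute the null weights between $F$ and $J$ in the optimal way so that the right-hand side matches precisely the density of $\mathcal{E}^K[F]$ times $\tfrac{1}{2}\langle t+r\rangle^2|J|^2$.
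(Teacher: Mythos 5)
Your proof is correct and follows essentially the same route as the paper: contracting $\mathbb{T}[F]$ with the multiplier $K$, applying the divergence theorem on the time slab, expanding the bulk term $K^\nu F_{\nu\lambda}J^\lambda$ in the null frame, and closing with Cauchy--Schwarz followed by a quadratic Gr\"onwall absorption. Your weighted null-frame Cauchy--Schwarz (using $|J|^2=2(J^L)^2+2(J^{\underline{L}})^2+|\slashed{J}|^2$ and $\langle t-r\rangle\leq\langle t+r\rangle$) is in fact carried out more sharply than the paper's own terser bound on $|K^\mu F_{\mu\nu}|$, and it is precisely this care that lands exactly on the constants $2$ and $8$.
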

\begin{proof}
Observe that, in view of the null components of $K$ and $\mathbb{T}[F]$, we have with $u=t-r$ and $\underline{u}=t+r$, 
\begin{align}
 2\mathbb{T}[F]_{\underline{L} \nu} K^{\nu}&=\langle u \rangle^{2}|\underline{\alpha} (F)|^2+\langle \underline{u} \rangle^{2} \big(|\rho (F) |^2+|\sigma(F)|^2\big)  , \label{eq:compoK1} \\
2 \mathbb{T}[F]_{L \nu} K^{\nu}&=\langle \underline{u} \rangle^{2}|\alpha (F)|^2+\langle u \rangle^2  \big(|\rho (F) |^2+|\sigma(F)|^2\big). \label{eq:compoK2} 
  \end{align}
Note further that $4 \mathbb{T}[F]_{0 \nu}K^{\nu}=2 \mathbb{T}[F]_{L \nu}K^{\nu}+2 \mathbb{T}[F]_{L \nu}K^{\nu}$ and fix $t \geq T$. By applying the divergence theorem to $ 4\mathbb{T}[F]_{\mu \nu}K^{\nu}$, in the time slab $T \leq \tau \leq t$, we get
$$ \mathcal{E}^K[F](t) = \mathcal{E}^K[F](T)-4\int_{\tau=T}^t \int_{\R^3_x} \big[ K^\mu F_{\mu \nu} J^\nu \big](\tau,x) \dr x \dr \tau.$$
Expanding $K^\mu F_{\mu \nu} $ according to the null frame $(\underline{L},L,e_\theta,e_\varphi)$, we obtain, as $2K=\langle t+r \rangle^2 L+ \langle t-r \rangle^2 \underline{L}$,
\begin{align*}
& \big| K^\mu F_{\mu \nu} \big| \leq  \, \langle t+r \rangle^2 \big|F_{L \nu} \big|+\langle t-r \rangle^2 \big|F_{\underline{L} \nu}  \big| \leq   \langle t+r \rangle^2 \big( \big| \alpha (F ) \big|+\big| \rho (F)\big|\big)  + \langle t-r \rangle^2 \big( \big| \underline{\alpha} (F)\big|+\big| \rho (F)\big|\big)  .
 \end{align*}
 Consequently, the Cauchy-Schwarz inequality in $x$ yields
 $$ \mathcal{E}^K[F](t) \leq \mathcal{E}^K[F](T)+4   \int_{\tau=T}^t \big|\mathcal{E}^K[F](\tau) \big|^{\frac{1}{2}} \bigg| \int_{\R^3_x} \langle \tau+|x| \rangle^2 \big| J(\tau,x) \big|^2 \dr x \bigg|^{\frac{1}{2}} \dr \tau$$
 and it remains to use a Grönwall type inequality.
\end{proof}

\subsubsection{Estimates for the backward problem}

For forward problems, the norm $\mathcal{E}^K[ \cdot ]$ can be strengthened in the exterior of the light cone $\{ |x| \geq t \}$ by a factor $\langle t-r \rangle^a$, $a \geq 0$. For the backward problem, as pointed out by \cite{SchlueLindblad}, it is in the interior region $\{|x| \leq t \}$ that one can use these weights in order to derive improved estimates. It is explained by
\begin{align}
 \hspace{-5mm} \nabla^\mu \big( \mathbb{T}[F]_{\mu \nu} K^\nu  | \max (t-r,1) |^a  \big)&= \nabla^\mu \big( \mathbb{T}[F]_{\mu \nu}  K^\nu \big) | \max (t-r,1) |^a + \mathbb{T}[G]_{L \nu} K^\nu \,  \nabla^L \big( | \max (t-r,1) |^a \big) \nonumber \\
& \leq  \nabla^\mu \big( \mathbb{T}[F]_{\mu \nu} K^\nu \big)  | \max (t-r,1) |^a  = \nabla^\mu \big( \mathbb{T}[F]_{\mu \nu} \big) K^\nu   | \max (t-r,1) |^a ,       \label{eq:pppppp}
\end{align}
which ensues from
$$L(t-r)=e_\theta(t-r)= e_\varphi (t-r)=0, \qquad \qquad 2\nabla^L= - \nabla_{\underline{L}}=-2\nabla_{\partial_u}, \qquad \qquad \mathbb{T}[G]_{L \nu} K^\nu \geq 0 .$$
Hence, when will apply the divergence theorem, the error term arising from the factor $ | \max (t-r,1) |^a$ will have a good sign. For the construction of the modified wave operator, we will crucially use this property in order to deal with the top order derivatives. Consider then, for all $T \leq t \leq s$, $u \in \R$ and any $a \geq 0$, the energy norms and energy fluxes
$$
\mathcal{E}^{K,a}[F](t) := \mathcal{E}^{K} \big[ | \max (t-r,1) |^a \, F \big](t), \qquad \qquad \mathcal{F}^{K,a}[F]_t^s(u) := \big| \max (u,1) \big|^a \, \mathcal{F}^{K}[F]_t^s(u).
$$
Let further the higher order energy norms, for $N \in \mathbb{N}$,
\begin{equation}\label{keva:defenergy2}
 \mathcal{E}^{K,a}_N[F](t) := \sup_{|\gamma| \leq N} \mathcal{E}^{K,a} [ \mathcal{L}_{Z^\gamma}F ](t), \qquad \qquad  \mathcal{F}^{K,a}_N[F]_t^s(u)  :=  \sup_{|\gamma| \leq N} \mathcal{F}^{K,a} [ \mathcal{L}_{Z^\gamma}F ]_t^s(u) .
 \end{equation}
 
\begin{Pro}\label{ProenergyforscatMax}
Let $F$ be a solution to the Maxwell equations with a magnetic current \eqref{Maxmagn}. Then, for all $T \leq t \leq s$ and any $a \geq 0$,
$$ \mathcal{E}^{K,a}[F](t) +\sup_{u \in \R} \mathcal{F}^{K,a}[F]_t^s(u) \leq  2\mathcal{E}^{K,a}[F](s)+8 \int_{\tau=t}^s \int_{\R^3_x} \big| \max (\tau-|x|,1) \big|^{2a}  \Big| F_{\lambda \nu} J^\nu K^\lambda + {}^* \! F_{\lambda \nu} \widetilde{J}^\nu K^\lambda \Big|(\tau , x) \dr x \dr \tau .$$
\end{Pro}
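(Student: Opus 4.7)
The plan is to apply the divergence theorem to the vector field $|\max(t-r,1)|^{2a}\,\mathbb{T}[F]_{\mu\nu}K^\nu$, exactly as in the proofs of Propositions \ref{Propartialt} and \ref{ProMoravac}, but in the regions appropriate for the backward problem: the time slab $\{t \leq \tau \leq s\}$ for the energy part of the estimate, and the truncated domain $D_u^{t,s} := \{t \leq \tau \leq s,\; \tau-|x| \geq u\}$ for each $u \in \R$ for the flux part.

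The main ingredient is the divergence identity
\[
\nabla^\mu\!\bigl(|\max(t-r,1)|^{2a}\,\mathbb{T}[F]_{\mu\nu}K^\nu\bigr) = |\max(t-r,1)|^{2a}\,K^\nu\!\bigl(F_{\nu\lambda}J^\lambda + {}^*\!F_{\nu\lambda}\widetilde{J}^\lambda\bigr) + \bigl(\nabla^\mu|\max(t-r,1)|^{2a}\bigr)\mathbb{T}[F]_{\mu\nu}K^\nu.
\]
The first term on the right-hand side comes from the conformal Killing property of $K$ applied to the symmetric, trace-free tensor $\mathbb{T}[F]$, combined with the source equations \eqref{Maxmagn}, as in \eqref{eq:divMora} and Definition \ref{Defenergytensor}. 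The second term has a \emph{non-positive} sign when $a \geq 0$: since $L(t-r)=0$ while $\underline{L}(t-r)=2$, the spacetime gradient of $|\max(t-r,1)|^{2a}$ is proportional to $-L^\mu$ with non-negative coefficient (vanishing in $\{t-r \leq 1\}$), and $\mathbb{T}[F]_{L\nu}K^\nu \geq 0$ by \eqref{eq:compoK2}. This is the backward analogue of the calculation \eqref{eq:pppppp} already exploited in the paper.

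First, I would apply the divergence theorem in the time slab $\{t \leq \tau \leq s\}$. The boundary contributions at $\tau=s$ and $\tau=t$ reproduce $\mathcal{E}^{K,a}[F](s)$ and $\mathcal{E}^{K,a}[F](t)$, using \eqref{eq:compoK1}--\eqref{eq:compoK2} to identify $4\,\mathbb{T}[F]_{0\nu}K^\nu$ with the full energy density appearing in \eqref{keva:defenergy}. The non-positive bulk contribution from the weight derivative is discarded, and the source term is controlled by the integral appearing on the right-hand side of the claim. This yields the energy part of the bound, namely $\mathcal{E}^{K,a}[F](t) \leq \mathcal{E}^{K,a}[F](s) + 4\int_t^s\int_{\R^3}|\max(\tau-|x|,1)|^{2a}|F_{\lambda\nu}J^\nu K^\lambda + {}^*\!F_{\lambda\nu}\widetilde{J}^\nu K^\lambda|\,\dr x\,\dr\tau$.

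Next, for each fixed $u \in \R$, I would run the same argument in $D_u^{t,s}$. Its boundary is composed of the bottom ball $\{(t,x):|x| \leq t-u\}$ (with outward normal $-\partial_t$), the top ball $\{(s,x):|x| \leq s-u\}$ (with outward normal $\partial_t$), and the truncated outgoing cone $C_u^{t,s}$. With the orientation convention fixed in the second half of the proof of Proposition \ref{Propartialt}, the flux through $C_u^{t,s}$ reproduces $\mathcal{F}^{K,a}[F]_t^s(u)$ (up to the overall numerical factor), since on $C_u$ one has $t-r=u$, the weight collapses to $|\max(u,1)|^{2a}$, and $2\,\mathbb{T}[F]_{L\nu}K^\nu$ is precisely the integrand of $\mathcal{F}^K[F]_t^s(u)$ from \eqref{eq:compoK2}. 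The bottom boundary is a truncation of $\mathcal{E}^{K,a}[F](t)$ and is non-negative, hence can be dropped; the top boundary is majorised by $\mathcal{E}^{K,a}[F](s)$. Taking the supremum in $u$ and summing with the slab estimate of the previous step yields the claimed bound with constants $2$ and $8$. The only subtlety is bookkeeping signs and normalisations on the null hypersurface $C_u$ with the convention $\dr\mu_{C_u}=\frac{1}{\sqrt 2}r^2\,\dr\mu_{\mathbb{S}^2}\,\dr\underline{u}$; but since Proposition \ref{Propartialt} already contains exactly this computation in the unweighted case with multiplier $\partial_t$, the same bookkeeping transfers once $\partial_t$ is replaced by $K$ and the weight $|\max(t-r,1)|^{2a}$ is inserted.
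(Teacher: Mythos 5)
Your proposal is correct and follows essentially the same route as the paper: the multiplier $|\max(t-r,1)|^{2a}K$, the sign observation \eqref{eq:pppppp} for the weight derivative, the divergence theorem in the time slab and in the regions $\{\tau-|x|\geq u,\ t\leq\tau\leq s\}$, and the same bookkeeping via \eqref{eq:compoK1}--\eqref{eq:compoK2} producing the constants $2$ and $8$. No gaps.
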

\begin{proof}
Let $X$ be the multiplier $ | \max (t-|x|,1)|^{2a} K$. Fix further $T \leq t \leq s$. By applying the Lorentzian divergence theorem to $ \mathbb{T}[F]_{\mu \nu} X^\nu$ in the domain $\{(\tau,x) \in \R_+ \times \R^3 \, | \, t \leq \tau \leq s \}$, we get, 
\begin{align*}
 \int_{\R^3_x} \Big[ \mathbb{T}[F]_{0 \nu} X^\nu  \Big](t,x) \dr x-\int_{\R^3_x} \Big[ \mathbb{T}[F]_{0 \nu} X^\nu \Big](s,x) \dr x &=\int_{\tau=t}^s \int_{\R^3_x} \Big[ \nabla^\mu \big( \mathbb{T}[F]_{\mu \nu} X^\nu \big) \Big] (\tau,x)\dr x \dr \tau  \\
 & \leq \int_{\tau=t}^s \int_{\R^3_x}\Big[ \nabla^\mu \big( \mathbb{T}[F]_{\mu \nu} \big) X^\nu  \Big] (\tau,x)\dr x \dr \tau ,
 \end{align*}
 since $\nabla^\mu \big( \mathbb{T}[F]_{\mu \nu} X^\nu \big) \leq \nabla^\mu \big( \mathbb{T}[F]_{\mu \nu} \big) X^\nu $ according to \eqref{eq:pppppp}. Simarly, fix $u \in \R$ and apply the divergence theorem in the region $\{ (\tau , x ) \, | \, \tau - |x| \geq u , \; t \leq \tau \leq s \}$. It yields
 \begin{align*}
\int_{|x| \leq t+u} \Big[\mathbb{T}[F]_{0 \nu} X^\nu \Big](t,x) \dr x+\frac{1}{\sqrt{2}} \int_{C_u^{t,s}} \mathbb{T}[F]_{L\nu} X^\nu  \dr \mu_{C_u}-&\int_{|x| \leq s+u} \Big[\mathbb{T}[F]_{0 \nu} X^\nu \Big](s,x) \dr x \\
 & \qquad \leq \int_{\tau=t}^s \int_{|x| \leq \tau+u} \Big[ \nabla^\mu \big( \mathbb{T}[F]_{\mu \nu} \big) X^\nu \Big] (\tau,x)\dr x \dr \tau  .
 \end{align*}
We then deduce that 
\begin{align*}
 \int_{\R^3_x} \Big[ \mathbb{T}[F]_{0 \nu} X^\nu  \Big](t,x) \dr x+\frac{1}{\sqrt{2}} \sup_{u \in \R} \int_{C_u^{t,s}} \mathbb{T}[F]_{L\nu} X^\nu  \dr \mu_{C_u} &  \leq 2\int_{\R^3_x} \Big| \mathbb{T}[F]_{0 \nu} X^\nu \Big|(s,x) \dr x \\
 & \quad \, +2 \int_{\tau=t}^s \int_{\R^3_x}\Big| \nabla^\mu \big( \mathbb{T}[F]_{\mu \nu} \big) X^\nu  \Big| (\tau,x)\dr x \dr \tau .
 \end{align*}
Recall that $\nabla^\mu  \mathbb{T}[F]_{\mu \nu} = F_{\nu \lambda} J^\lambda+{}^*\! F_{\nu \lambda} \widetilde{J}^\nu$ as well as \eqref{eq:compoK1}--\eqref{eq:compoK2} in order to compute the other integrands.
\end{proof}

\subsection{Pointwise decay estimates for electromagnetic fields}

In the linear case, the cartesian components of $F$ decay as $t^{-1}$ near the light cone since they are solution to the homogeneous wave equation. We explain here how to derive improved estimates in the region $t \sim r$ for the good null components $\alpha(F)$, $\rho (F)$ and $\sigma (F)$ when the energy norm $\mathcal{E}^K_2[\cdot]$ of $F$ is uniformly bounded. For this, we will need results which rely on the relations
\begin{align}\label{eq:nullderiv}
&(t-r)\underline{L}=S-\frac{x^i}{r}\Omega_{0i}, \quad (t+r)L=S+\frac{x^i}{r}\Omega_{0i}, \quad re_{\theta}=-\cos (\varphi) \Omega_{13}-\sin(\varphi)\Omega_{23} ,\quad re_{\varphi}= \Omega_{12} .
\end{align}
Since $t\Omega_{ij}=x^i\Omega_{0j}-x^j\Omega_{0i}$, we can gain time decay through $e_\theta$ and $e_\varphi$ as well. These identities reflect that the solutions enjoy better decay properties in the directions tangential to the light cone, $L$, $e_\theta$ and $e_\varphi$.
\begin{Lem}\label{LemComgoodderiv}
The following properties hold.
\begin{itemize}
\item For any rotation $\Omega \in \{\Omega_{12}, \, \Omega_{13} , \, \Omega_{23} \} $, the operator $\mathcal{L}_{\Omega}$ commutes with the null decomposition,
$$\mathcal{L}_{\Omega}  \big( \underline{\alpha}(G) \big)=\underline{\alpha}  \left( \mathcal{L}_{\Omega} G \right), \quad \;  \mathcal{L}_{\Omega}  \big( \alpha(G) \big)=\alpha  \left( \mathcal{L}_{\Omega} G \right), \quad \; \mathcal{L}_{\Omega}  \big( \rho(G) \big)=\rho  \left( \mathcal{L}_{\Omega} G \right), \quad \; \mathcal{L}_{\Omega} \big( \sigma(G) \big)=\sigma \left( \mathcal{L}_{\Omega} G \right).$$
The operators $\nabla_L$ and $\nabla_{\underline{L}}$ commute as well with the null decomposition, e.g. $\nabla_L \alpha (G)=\alpha (\nabla_L G)$.
\item For any null component $\zeta \in \{ \underline{\alpha}, \alpha ,  \rho , \sigma \}$, we have, for all $(t,x) \in \R_+ \times \R^3$,
$$\langle t-|x| \rangle\left| \nabla_{\underline{L}} \zeta( G) \right|\!(t,x)+\langle t+|x|\rangle \left| \nabla_L \zeta( G) \right|\!(t,x)+\langle x \rangle\left| \slashed{\nabla} \zeta( G) \right|\!(t,x)\lesssim  \sup_{|\gamma| \leq 1} \left| \zeta (\mathcal{L}_{Z^{\gamma}}G)\right|\!(t,x).$$
\end{itemize}
\end{Lem}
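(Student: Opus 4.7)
The plan is to treat the two parts separately.

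For part 1, I would start from Cartan's formula $(\mathcal{L}_Z G)(X,Y) = Z(G(X,Y)) - G([Z,X],Y) - G(X,[Z,Y])$. Any rotation $\Omega\in\{\Omega_{12},\Omega_{13},\Omega_{23}\}$ commutes with $\partial_t$ and $\partial_r$, hence with $L$ and $\underline{L}$, while $[\Omega,e_A]$ stays tangential to the $2$-sphere (it is a bounded combination of $e_\theta$ and $e_\varphi$). Applied to $G(e_A,L)$, this yields $\alpha(\mathcal{L}_\Omega G)_{e_A} = \Omega(\alpha(G)_{e_A}) - \alpha(G)([\Omega,e_A])$, which coincides with the intrinsic spherical Lie derivative $\mathcal{L}_\Omega(\alpha(G))_{e_A}$; the same computation handles $\underline{\alpha}$, $\rho$, $\sigma$. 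For $\nabla_L$ and $\nabla_{\underline{L}}$, a direct computation in $(t,r,\theta,\varphi)$ coordinates (using $\nabla_{\partial_r}\partial_\theta=\tfrac{1}{r}\partial_\theta$, etc.) shows that the entire null frame $(L,\underline{L},e_\theta,e_\varphi)$ is parallel along $L$ and along $\underline{L}$. Because of this, $\nabla_L(G(X,Y))=(\nabla_L G)(X,Y)$ whenever $X,Y$ are taken from the null frame, which is exactly the commutation claim.

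For part 2, the angular bound is immediate from the identities \eqref{eq:nullderiv}: $re_\theta$ and $re_\varphi$ are bounded combinations of rotations, so $\langle x\rangle\,|\slashed{\nabla}\zeta(G)|\lesssim \sum_\Omega|\nabla_\Omega\zeta(G)|$, and part 1 converts each $\nabla_\Omega\zeta(G)$ into $\zeta(\mathcal{L}_\Omega G)$, whose norm is controlled by $\sup_{|\gamma|\leq 1}|\zeta(\mathcal{L}_{Z^\gamma}G)|$ via \eqref{equinorm}. For the $L$ and $\underline{L}$ estimates, the identities $(t+r)L = S+\tfrac{x^i}{r}\Omega_{0i}$ and $(t-r)\underline{L} = S-\tfrac{x^i}{r}\Omega_{0i}$ together with $|x^i/r|\leq 1$ reduce matters to bounding $|\nabla_Z\zeta(G)|$ for $Z\in\{S,\Omega_{0i}\}$.

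The main technical point, and essentially the only step requiring care, is that $\mathcal{L}_S$ and $\mathcal{L}_{\Omega_{0i}}$ do \emph{not} commute with the null decomposition. Applying Cartan's formula again, I would observe that the commutators $[S,L]$, $[S,\underline{L}]$, $[S,e_A]$, $[\Omega_{0i},L]$, $[\Omega_{0i},\underline{L}]$, $[\Omega_{0i},e_A]$ are all bounded combinations of elements of the null frame (a consequence of the conformal Killing nature of $S$ and the Killing nature of $\Omega_{0i}$, combined with the scaling properties of the frame). This yields a relation of the form
\[
\nabla_Z\zeta(G) \;=\; \zeta(\mathcal{L}_Z G) \;+\; \sum_{\zeta'}c_{\zeta,\zeta'}(x)\,\zeta'(G), \qquad |c_{\zeta,\zeta'}|\lesssim 1,
\]
and \eqref{equinorm} allows every term on the right to be absorbed into $\sup_{|\gamma|\leq 1}|\zeta(\mathcal{L}_{Z^\gamma}G)|$. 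Combining with the algebraic identities above gives the weighted estimates where $|t\pm r|\geq 1$; in the complementary regions $|t\pm r|\leq 1$ the weight is trivial and I would fall back on $L,\underline{L} = \partial_t\pm\partial_r$, using that $\partial_{t,x}$-derivatives interact with $(L,\underline{L},e_A)$ through $1/r$-corrections that are harmless where $r\sim t\lesssim 1$.
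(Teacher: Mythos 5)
Your Part~1 is correct and follows the standard route (Cartan's formula for $\mathcal{L}_\Omega$, and the fact that the null frame is parallel along $L$ and $\underline{L}$). The gap is in Part~2, specifically in the step you flag as ``the main technical point.''

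The assertion that the commutators $[\Omega_{0i},L]$, $[\Omega_{0i},\underline{L}]$, $[\Omega_{0i},e_A]$ are bounded combinations of frame vectors is false. From the formulae used in the proof of Proposition~\ref{Propurecharge}, $[\Omega_{0k}, L] = \frac{t-r}{r}\omega_k^{e_B} e_B-\omega_k L$, $[\Omega_{0k},\underline{L}] = -\frac{t+r}{r}\omega_k^{e_B}e_B+\omega_k\underline{L}$, and $[\Omega_{0k},e_A]$ contains $\frac{\omega_k^{e_A}}{2r}\big((t-r)\underline{L}-(t+r)L\big)$; the coefficients $(t\pm r)/r$ are unbounded. (Only the $[S,\cdot]$ commutators are bounded, and $S$ alone does not produce the $t\pm r$ weights.) Consequently, the relation you postulate, $\nabla_Z\zeta(G)=\zeta(\mathcal{L}_Z G)+\sum_{\zeta'}c_{\zeta,\zeta'}(x)\zeta'(G)$ with $|c_{\zeta,\zeta'}|\lesssim 1$, is not produced by this computation for $Z=\Omega_{0i}$. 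Moreover, even granting such a relation, invoking \eqref{equinorm} to absorb the cross-component terms $\zeta'\neq\zeta$ cannot yield the stated inequality: \eqref{equinorm} compares the full pointwise norm $|F|$ with $\sup_\beta|Z^\beta F_{\mu\nu}|$, it does \emph{not} let you dominate one fixed null component by the \emph{same} component of the commuted field. If you settled for the bound $\sup_{|\gamma|\leq 1}|\mathcal{L}_{Z^\gamma}G|$ on the right, the lemma would be trivial and useless for the paper, whose whole mechanism rests on $\alpha,\rho,\sigma$ decaying differently from $\underline{\alpha}$. The content of the lemma is precisely that the right-hand side involves only $\zeta(\mathcal{L}_{Z^\gamma}G)$.

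What your argument is missing is an algebraic cancellation. Comparing $\zeta(\nabla_Z G)$ with $\zeta(\mathcal{L}_Z G)$ via $(\nabla_Z G-\mathcal{L}_Z G)(X,Y)=-G(\nabla_X Z,Y)-G(X,\nabla_Y Z)$, one finds for $Z=\Omega_{0i}$ that the correction \emph{does} mix components, e.g.\ $\alpha(\nabla_{\Omega_{0i}}G)_{e_A}=\alpha(\mathcal{L}_{\Omega_{0i}}G)_{e_A}-\omega_i^{e_A}\rho(G)-\omega_i\alpha(G)_{e_A}-\omega_i^{e_B}\varepsilon_{AB}\sigma(G)$. The point is that in $(t-r)\underline{L}=S-\frac{x^i}{r}\Omega_{0i}$ one contracts with $\frac{x^i}{r}=\omega^i$, and then the offending terms are multiplied by $\omega^i\omega_i^{e_A}=0$, since the radial direction is orthogonal to $e_A$. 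Only the same-component contribution, weighted by $\omega^i\omega_i=1$, survives. This orthogonality (together with the analogous computation for $S$, where $\nabla_X S=X$ so the correction is simply $-2\zeta(G)$, same component) is what makes the estimate close component-by-component, and it is exactly the step your proof skips over.
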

\begin{proof}
These are well-known properties, which are proved for instance in \cite[Appendix~$D$]{massless}.
\end{proof}

In the same spirit, we have the following property.
\begin{Lem}\label{improderiv}
Let $Z^{\gamma} \in \mathbb{K}^{|\gamma|}$ composed by at least one translation $\partial_{x^\lambda}$, that is $\gamma_T \geq 1$. If $F$ is sufficiently regular, we have  for any null component $\zeta \in \{ \underline{\alpha},\alpha,  \rho , \sigma \}$,
$$ \big| \zeta \big( \mathcal{L}_{Z^\gamma} F \big) \big|(t,x) \lesssim  \sup_{|\kappa| \leq |\gamma| }  \frac{\big| \zeta \big( \mathcal{L}_{Z^\kappa} F \big) \big|(t,x)}{\langle t-|x| \rangle}+\frac{\big| \mathcal{L}_{Z^\kappa} F \big|(t,x)}{\langle t+|x| \rangle}.$$ 
\end{Lem}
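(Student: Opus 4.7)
The plan is an induction on $|\gamma|$ which first reduces to the case where the distinguished translation $\partial_{x^\lambda}$ appears as the outermost factor of $\mathcal{L}_{Z^\gamma}$, then estimates $|\zeta(\mathcal{L}_{\partial_{x^\lambda}} G)|$ separately on the interior $\{r \leq t/2\}$ and the exterior $\{r \geq t/2\}$ of the light cone, with $G := \mathcal{L}_{Z^{\gamma'}} F$ and $|\gamma'|=|\gamma|-1$. For the reduction, I commute $\mathcal{L}_{\partial_{x^\lambda}}$ through the remaining Lie derivatives by $\mathcal{L}_Z \mathcal{L}_{\partial_{x^\lambda}} = \mathcal{L}_{\partial_{x^\lambda}}\mathcal{L}_Z + \mathcal{L}_{[Z,\partial_{x^\lambda}]}$. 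For every $Z \in \mathbb{K}$ the bracket $[Z,\partial_{x^\lambda}]$ is either $0$ or another translation, so each commutator is a Lie derivative of $F$ along a product of length $|\gamma|-1$ still containing a translation, hence controlled by the right-hand side via the inductive hypothesis; the base case $|\gamma|=1$ is just $G=F$.

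On $\{r \leq t/2,\, t \geq 1\}$ (the case $t\leq 1$ being trivially handled by the norm equivalence \eqref{equinorm} over a bounded region) the weights $\langle t-r\rangle$ and $\langle t+r\rangle$ are comparable and $t^2-r^2 \gtrsim t^2$. The identity \eqref{eq:transladerivatives} therefore expresses each $\partial_{x^\lambda}$ as a combination of homogeneous elements of $\mathbb{K}$ divided by $t^2-r^2$ with coefficients bounded by $t+r$, giving an overall factor of order $\langle t+r\rangle^{-1}$. Since the Christoffel symbols of $\eta$ vanish in Cartesian coordinates, $\mathcal{L}_{\partial_{x^\lambda}} G_{\mu\nu} = \partial_{x^\lambda} G_{\mu\nu}$, and combined with \eqref{equinorm} this produces
\[ |\zeta(\mathcal{L}_{\partial_{x^\lambda}} G)| \leq |\mathcal{L}_{\partial_{x^\lambda}} G| \lesssim \langle t+r\rangle^{-1} \sup_{|\beta|\leq 1}|\mathcal{L}_{Z^\beta} G|, \]
which feeds directly into the second term of the stated bound.

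On $\{r \geq t/2\}$ one has $\langle x \rangle \sim \langle t+r\rangle$ and the null frame is non-degenerate. I decompose $\partial_{x^\lambda} = a_\lambda^L L + a_\lambda^{\underline{L}} \underline{L} + \slashed{a}_\lambda^A \, e_A$, with coefficients depending only on the angular variables and therefore uniformly bounded. Using again $\mathcal{L}_{\partial_{x^\lambda}} G = \nabla_{\partial_{x^\lambda}} G$ together with the first part of Lemma \ref{LemComgoodderiv}, the task reduces to bounding $|\nabla_L \zeta(G)|$, $|\nabla_{\underline{L}} \zeta(G)|$ and $|\slashed{\nabla} \zeta(G)|$, up to a remainder of size $r^{-1}|G| \lesssim \langle t+r\rangle^{-1}|G|$ coming from the connection coefficients of the null frame. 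The second part of the same lemma then provides decay $\langle t+r\rangle^{-1}$, $\langle t-r\rangle^{-1}$ and $\langle x \rangle^{-1} \sim \langle t+r\rangle^{-1}$ times $\sup_{|\beta|\leq 1}|\zeta(\mathcal{L}_{Z^\beta} G)|$ for the three derivative terms respectively, closing the estimate.

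The main subtlety is exactly this region split: \eqref{eq:transladerivatives} develops a singularity on the light cone $r=t$, whereas the null-frame decomposition of $\partial_{x^\lambda}$ degenerates on the time axis $r=0$, and neither estimate alone yields the full $\langle t-r\rangle^{-1}$ improvement across Minkowski space; together, though, they match exactly the Japanese brackets appearing in the statement.
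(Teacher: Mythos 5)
Your argument is correct in substance and shares its skeleton with the paper's proof --- the reduction to $Z^\gamma=\partial_{x^\lambda}Z^\beta$ via the brackets $[Z,\partial_{x^\lambda}]$, and the use of \eqref{eq:transladerivatives} where $t^2-r^2$ is comparable to $\langle t+r\rangle^2$ --- but it diverges in how the region near and beyond the light cone is handled. The paper never touches the angular covariant derivative: it first extracts the full $\langle t-r\rangle^{-1}$ gain globally from \eqref{eq:transladerivatives} (which settles $t\leq 1$ and $|x|\geq 2t$), and then, on $\{|x|\leq 2t,\,t\geq 1\}$, writes $t\partial_{x^k}=\Omega_{0k}-x^k\partial_t$ so that everything reduces to the $\partial_t$ case, where $2\partial_t=L+\underline{L}$ and Lemma \ref{LemComgoodderiv} apply with \emph{no} remainder, since $\nabla_L$ and $\nabla_{\underline{L}}$ commute exactly with the null decomposition. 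You instead expand $\partial_{x^k}$ in the full null frame on $\{r\geq t/2\}$, which forces you through $\zeta(\nabla_{e_A}G)$ and hence through the $O(r^{-1})$ connection coefficients. That route works, but the inequality you assert for the resulting remainder, $r^{-1}|G|\lesssim\langle t+r\rangle^{-1}|G|$, is false on $\{r\geq t/2\}\cap\{r<1\}$: there $\langle t+r\rangle\sim 1$ while $r^{-1}$ is unbounded. You must excise this set explicitly; it is a bounded region on which $t\leq 2r\leq 2$, so the claim is trivial there by $|\zeta(\mathcal{L}_{\partial_{x^\lambda}}G)|\leq|\mathcal{L}_{\partial_{x^\lambda}}G|\lesssim\sup_{|\kappa|\leq 1}|\mathcal{L}_{Z^\kappa}G|\lesssim\langle t+r\rangle^{-1}\sup_{|\kappa|\leq 1}|\mathcal{L}_{Z^\kappa}G|$, exactly as in your $t\leq 1$ interior case --- but as written the exterior step fails there. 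Once that region is removed, your estimate closes; the trade-off is that the paper's algebraic reduction to $\partial_t$ keeps every term expressed through the \emph{same} null component $\zeta$ with no frame error terms, whereas your frame decomposition is more geometric and self-contained on the exterior but pays for it with the $r^{-1}|G|$ remainder and the extra excision near the spatial origin.
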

\begin{proof}
Let $0 \leq \lambda \leq 3$. As, for any $Z \in \mathbb{K}$, $[Z, \partial_{x^\lambda}]=0$ or there exists $0 \leq \nu \leq 3$ such that $[Z,\partial_{x^\lambda}]=\pm \partial_{x^\nu}$, it suffices to consider the case where $Z^\gamma=\partial_{x^\lambda} Z^\beta$, $|\beta|=|\gamma|-1$. Introduce then $G= \mathcal{L}_{Z^\beta}F$.
\begin{itemize}
\item If $\lambda =0$, we use $\mathcal{L}_{\partial_t}=\nabla_{\partial_t}$, $2\partial_t=L+\underline{L}$ and Lemma \ref{LemComgoodderiv}. 
\item Otherwise $1 \leq \lambda \leq 3$ and we consider $(t,x) \in \R_+ \times \R^3_x$. In view of \eqref{eq:transladerivatives} and since $\partial_{x^\mu} \in \mathbb{K}$, we have
$$ \langle t-|x| \rangle \, \big| \mathcal{L}_{\partial_{x^{\lambda}}}G \big|(t,x)=  \langle t-|x| \rangle \, \big|\nabla_{\partial_{x^\lambda}} G \big|(t,x) \lesssim \sup_{Z \in \mathbb{K}} \big|\nabla_{Z} G \big|(t,x) \lesssim \sup_{|\kappa| \leq 1} \big|\mathcal{L}_{Z^\kappa} G \big|(t,x).$$
It implies the result if $t \leq 1$ or $|x| \geq 2t$. Otherwise $|x| \leq 2t$ and we write $t\partial_{x^k}= \Omega_{0k}-x^k\partial_t$, so that
$$ t\big| \zeta \big(\mathcal{L}_{\partial_{x^{\lambda}}}G \big) \big|(t,x) \leq \big| \zeta \big(\nabla_{\Omega_{0k}}G \big) \big|(t,x)+|x|\big| \zeta \big(\nabla_{\partial_t}G \big) \big|(t,x)\leq \big| \nabla_{\Omega_{0k}}G \big|(t,x)+|x|\big| \zeta \big(\mathcal{L}_{\partial_t}G \big) \big|(t,x).$$
Since the case $\lambda =0$ holds and $t \gtrsim \langle t+|x| \rangle$, this concludes the proof.
\end{itemize}
\end{proof}

In order to derive improved estimates for certain null components of $F$, we will sometimes take advantage of the null structure of the Maxwell equations. For this, we have to express them in the null frame instead of the Cartersian one as it is done in \eqref{VM2}--\eqref{VM3}.
\begin{Lem}
Assume that $F$ is solution to the Maxwell equations with a magnetic current \eqref{Maxmagn}, that is 
\begin{equation*}
 \nabla^\mu F_{\mu \nu} = J_{\nu}, \qquad \qquad \nabla^\mu {}^* \! F_{\mu \nu} = \widetilde{J}_{\nu}.
 \end{equation*}
Then, these equations are equivalent to
\begin{align}
\hspace{-8mm} r^{-1}\nabla_{\underline{L}}(r\alpha(F))_{e_A}-\slashed{\nabla}_{e_A}\rho(F)-\varepsilon^{AB} \slashed{\nabla}_{e_B} \sigma(F)&=J_{e_A}+\varepsilon^{AB}\widetilde{J}_{e_B}, \qquad \qquad A \in \{\theta , \varphi \},\label{null1} \\
\hspace{-8mm} r^{-1} \nabla_{L}(r\underline{\alpha}(F))_{e_A}+\slashed{\nabla}_{e_A}\rho(F)-\varepsilon^{AB} \slashed{\nabla}_{e_B} \sigma(F) &=J_{e_A}-\varepsilon^{AB}\widetilde{J}_{e_B}, \qquad \qquad A \in \{\theta , \varphi \}, \label{null2} \\
\hspace{-8mm} r^{-2}\nabla_{\underline{L}}(r^2 \rho(F))+\slashed{\nabla} \cdot \underline{\alpha}(F) &= J_{\underline{L}}, \label{null3} \\
\hspace{-8mm} r^{-2}\nabla_{\underline{L}}(r^2 \sigma(F))+\slashed{\nabla} \times \underline{\alpha}(F) &= \widetilde{J}_{\underline{L}} , \label{null4} \\
\hspace{-8mm} -r^{-2} \nabla_L(r^2 \rho(F))+\slashed{\nabla} \cdot \alpha(F) &=J_L   , \label{null5} \\
\hspace{-8mm} -r^{-2}\nabla_L(r^2 \sigma(F))-\slashed{\nabla} \times \alpha(F) &=\widetilde{J}_L. \label{null6}
\end{align}
\end{Lem}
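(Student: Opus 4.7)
The plan is to work entirely in the null frame $(\underline{L}, L, e_\theta, e_\varphi)$ and systematically evaluate both equations $\nabla^\mu F_{\mu\nu} = J_\nu$ and $\nabla^\mu {}^*\!F_{\mu\nu} = \widetilde{J}_\nu$ against $\nu \in \{L, \underline{L}, e_A\}$, substituting the null decomposition \eqref{defnullcompoF} of $F$. First I would record the Levi--Civita connection of Minkowski space in this frame. Using the relations in \eqref{eq:nullderiv}, one computes $\nabla_L L = \nabla_{\underline{L}} \underline{L} = \nabla_L \underline{L} = \nabla_{\underline{L}} L = 0$, together with $\nabla_{e_A} L = r^{-1} e_A$, $\nabla_{e_A} \underline{L} = -r^{-1} e_A$, and $\nabla_{e_A} e_B = \slashed{\nabla}_{e_A} e_B - \tfrac{\delta_{AB}}{2r}(L-\underline{L})$, while the natural choice of parallel transport of $(e_\theta, e_\varphi)$ along the null generators gives $\nabla_L e_A = \nabla_{\underline{L}} e_A = 0$.

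Since the inverse metric in the null frame reads $g^{L\underline{L}} = -\tfrac{1}{2}$ and $g^{e_A e_B} = \delta^{AB}$, the divergence admits the expansion
\[
\nabla^\mu F_{\mu\nu} = -\tfrac{1}{2}\nabla_L F_{\underline{L}\nu} - \tfrac{1}{2}\nabla_{\underline{L}} F_{L\nu} + \delta^{AB}\nabla_{e_A} F_{e_B \nu}.
\]
Specialising $\nu = e_A$ and using the connection identities above yields, after collecting $\alpha_{e_B} = F_{e_B L}$ and $\underline{\alpha}_{e_B} = F_{e_B \underline{L}}$, a combination of a transport term along $L$ acting on $\underline{\alpha}$ and spherical derivatives of $\rho, \sigma$; the weight $r^{-1}$ in front of $\nabla_L(r\underline{\alpha})$ in \eqref{null2} arises precisely from $\nabla_{e_A} \underline{L} = -r^{-1} e_A$ by rewriting $\nabla_L \underline{\alpha}_{e_A}$ in terms of $r^{-1}\nabla_L(r\underline{\alpha}_{e_A})$. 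Likewise, $\nu = \underline{L}$ gives \eqref{null3}, where the $r^{-2}$ weight appears because two copies of the $\nabla_{e_A} \underline{L}$ correction contribute, and $\nu = L$ gives \eqref{null5}.

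For \eqref{null1}, \eqref{null4}, \eqref{null6}, the same procedure is applied to $^*\!F$, upon invoking the Hodge duality relations between null components, namely $\alpha({}^*\!F)_{e_A} = \varepsilon^{AB}\alpha(F)_{e_B}$, $\underline{\alpha}({}^*\!F)_{e_A} = -\varepsilon^{AB}\underline{\alpha}(F)_{e_B}$, $\rho({}^*\!F) = \sigma(F)$, $\sigma({}^*\!F) = -\rho(F)$. Substituting these into the three divergence equations for $^*\!F$ and combining with \eqref{null2}, \eqref{null3}, \eqref{null5} produces the remaining three equations after a small rearrangement. Conversely, since the six identities \eqref{null1}--\eqref{null6} determine the four scalar quantities $\nabla^\mu F_{\mu L}, \nabla^\mu F_{\mu \underline{L}}, \nabla^\mu {}^*\!F_{\mu L}, \nabla^\mu {}^*\!F_{\mu \underline{L}}$ and the two $2$-sphere $1$-forms $\nabla^\mu F_{\mu e_A}, \nabla^\mu {}^*\!F_{\mu e_A}$, the equivalence follows.

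No genuinely hard analytical step is involved; the whole lemma is a direct geometric computation, and the only real obstacle is bookkeeping: one must track the $r^{-1}$ and $r^{-2}$ factors stemming from the non-vanishing of $\nabla_{e_A} L$ and $\nabla_{e_A} \underline{L}$, and be scrupulous about the signs of the $\varepsilon^{AB}$ contractions so that the pairing $(J, \widetilde{J})$ appears in the correct combination on the right-hand side of each equation.
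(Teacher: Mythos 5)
The paper does not actually prove this lemma; it merely cites Christodoulou--Klainerman, so your task is really to supply the standard derivation. Your overall strategy --- expanding $\nabla^\mu F_{\mu\nu}$ in the null frame using the Minkowski connection coefficients and closing with Hodge duality --- is the right one, and the connection table you record is correct. But there is a genuine structural gap in how you assign the equations \eqref{null1} and \eqref{null2} to individual divergence equations.

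You assert that specialising $\nu = e_A$ in $\nabla^\mu F_{\mu\nu} = J_\nu$ ``yields a combination of a transport term along $L$ acting on $\underline{\alpha}$ and spherical derivatives of $\rho,\sigma$'', i.e.\ \eqref{null2}. This is not what the computation gives. Carrying out the expansion with the connection coefficients you wrote down, one finds
\begin{equation*}
\nabla^\mu F_{\mu e_A} \;=\; \frac{1}{2r}\nabla_L\big(r\underline{\alpha}(F)\big)_{e_A} + \frac{1}{2r}\nabla_{\underline{L}}\big(r\alpha(F)\big)_{e_A} - \varepsilon^{AB}\slashed{\nabla}_{e_B}\sigma(F) \;=\; J_{e_A},
\end{equation*}
which carries \emph{both} transport terms, each with a factor $\tfrac12$, and \emph{no} $\slashed{\nabla}\rho$ term at all. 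Neither \eqref{null1} nor \eqref{null2} is recoverable from the electric equation alone: the $\slashed{\nabla}_{e_A}\rho$ term can only enter via $\sigma({}^*\!F) = -\rho(F)$ in $\nabla^\mu {}^*\!F_{\mu e_A} = \widetilde{J}_{e_A}$, and \eqref{null1}, \eqref{null2} are precisely the difference and sum of the electric $e_A$-equation with the $\varepsilon^{AB}$-twisted magnetic one. That the right-hand sides read $J_{e_A} \pm \varepsilon^{AB}\widetilde{J}_{e_B}$ is the symptom of this: the left-hand sides must be the matching $\pm$ combinations of the two divergences. Your later sentence ``combining with \eqref{null2}, \eqref{null3}, \eqref{null5}'' suggests you sense this, but as written the derivation of \eqref{null2} is flawed and would, if carried out literally, produce an extra $\nabla_{\underline{L}}(r\alpha)$ term and a missing $\slashed{\nabla}\rho$ term.

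Two smaller remarks. The Hodge-dual identities you quote, $\alpha({}^*\!F)_{e_A} = \varepsilon^{AB}\alpha(F)_{e_B}$ and $\underline{\alpha}({}^*\!F)_{e_A} = -\varepsilon^{AB}\underline{\alpha}(F)_{e_B}$, have both signs opposite to the ones the paper uses and to what the convention ${}^*\!F_{\mu\nu} = \tfrac12 F^{\lambda\sigma}\varepsilon_{\lambda\sigma\mu\nu}$ gives (a direct check yields $\underline{\alpha}({}^*\!F)_{e_A} = \varepsilon^{AB}\underline{\alpha}(F)_{e_B}$ and $\alpha({}^*\!F)_{e_A} = \varepsilon^{BA}\alpha(F)_{e_B}$); with your signs the $\nabla_L(r\underline{\alpha})$ and $\nabla_{\underline{L}}(r\alpha)$ terms land in the wrong equations. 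Finally, the $r^{-1}$ and $r^{-2}$ weights do not come from $\nabla_{e_A}\underline{L}$ as you claim; tracing the calculation, the contribution of $\nabla_{e_A}\underline{L}$ to $\delta^{AB}\nabla_{e_A}F_{e_B\underline{L}}$ is proportional to $\delta^{AB}F_{e_Be_A}$, which vanishes. The weights actually arise from the radial part of $\nabla_{e_A}e_B$, i.e.\ the $-\tfrac{\delta_{AB}}{2r}(L-\underline{L})$ correction, contracted against $F_{L\underline{L}}$ or $F_{Le_C},F_{\underline{L}e_C}$.
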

\begin{Rq}
These equations, which are \cite[$(M_1'')$--$(M''_6)$]{CK} in the vacuum case case $J=\widetilde{J}=0$, suggests that $\alpha (F)$, $\rho (F)$ and $\sigma (F)$ have a better behavior than $F$. Indeed, using \eqref{null1} and \eqref{null3}--\eqref{null4}, one can prove that the bad derivative $\underline{L}$ of these composents decays at least as $r^{-2}$ near the light cone $\{t=r\}$, where $\nabla_{\underline{L}}F$ merely decay as $r^{-1}$.
\end{Rq}

Using part of \eqref{null1}--\eqref{null6}, we are able to deduce improved estimates for the derivative of $r \underline{\alpha}(F)$, $r^2 \rho (F)$ and $r^2 \sigma (F)$ in the $\underline{u}$ direction, which will turn out to be crucial in order to prove that these quantities converge along null rays in the framework of this paper.

\begin{Cor}\label{Corgoodnull}
Assume that $F$ is solution to \eqref{Maxmagn}. Then, we have on $[T,+ \infty[ \times \R^3$,
\begin{align*}
 \left|\nabla_L \big(r \underline{\alpha}(F)\big) \right| &  \lesssim r|J|+r\big| \widetilde{J} \, \big|+\sup_{|\kappa| \leq 1} \left| \rho (\mathcal{L}_{Z^\kappa}F) \right|+\left|\sigma (\mathcal{L}_{Z^\kappa}F) \right| , \\
 \left|\nabla_L \big(r^2 \rho(F)\big) \right|+ \left|\nabla_L \big(r^2 \sigma(F)\big) \right| &  \lesssim r^2|J_L|+r^2\big| \widetilde{J}_L \, \big|+\sup_{|\kappa| \leq 1} r\left| \alpha (\mathcal{L}_{Z^\kappa}F) \right|.
\end{align*}
\end{Cor}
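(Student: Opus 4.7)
My plan is to read the stated bounds directly off the null decomposition of the Maxwell equations with a magnetic current, namely \eqref{null1}--\eqref{null6}, together with the angular gain supplied by Lemma \ref{LemComgoodderiv}.

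For the first inequality, I would isolate $\nabla_L(r\underline{\alpha}(F))_{e_A}$ in equation \eqref{null2}, which yields
\[
\nabla_L(r\underline{\alpha}(F))_{e_A} \;=\; rJ_{e_A} - r\varepsilon^{AB}\widetilde{J}_{e_B} - r\slashed{\nabla}_{e_A}\rho(F) + r\varepsilon^{AB}\slashed{\nabla}_{e_B}\sigma(F).
\]
The first two terms are immediately bounded by $r|J|+r|\widetilde{J}|$. For the two angular derivative terms, I would invoke the bound $\langle x\rangle|\slashed{\nabla}\zeta(F)|\lesssim \sup_{|\kappa|\leq 1}|\zeta(\mathcal{L}_{Z^\kappa}F)|$ from Lemma \ref{LemComgoodderiv}, applied to $\zeta\in\{\rho,\sigma\}$, and use $r\leq \langle x\rangle$ to conclude
\[
r|\slashed{\nabla}\rho(F)|+r|\slashed{\nabla}\sigma(F)|\;\lesssim\; \sup_{|\kappa|\leq 1}\bigl(|\rho(\mathcal{L}_{Z^\kappa}F)|+|\sigma(\mathcal{L}_{Z^\kappa}F)|\bigr).
\]
Summing over $A\in\{\theta,\varphi\}$ and taking pointwise norms gives the first desired estimate.

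For the second inequality, I would use \eqref{null5} and \eqref{null6} to write
\[
\nabla_L(r^2\rho(F)) = r^2\slashed{\nabla}\cdot\alpha(F) - r^2 J_L, \qquad \nabla_L(r^2\sigma(F)) = -r^2\slashed{\nabla}\times\alpha(F) - r^2\widetilde{J}_L.
\]
Here the $J_L$ and $\widetilde{J}_L$ terms give the source contributions. For the angular divergence and curl of $\alpha(F)$, I would again apply Lemma \ref{LemComgoodderiv}, this time to $\zeta=\alpha$: since $r^2|\slashed{\nabla}\alpha(F)|\leq r\langle x\rangle|\slashed{\nabla}\alpha(F)|\lesssim r\sup_{|\kappa|\leq 1}|\alpha(\mathcal{L}_{Z^\kappa}F)|$, the stated bound follows.

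There is no real obstacle here; the statement is essentially an algebraic rearrangement of the null Maxwell equations combined with one standard angular commutation lemma. The only mild point to be careful about is the regime of small $r$, where $r\leq \langle x\rangle$ is used freely so that the $\langle x\rangle$ factor provided by Lemma \ref{LemComgoodderiv} absorbs the $r$ (or $r^2$) weight from the equations; the commutation of $\mathcal{L}_\Omega$ with the null decomposition guaranteed by the first bullet of Lemma \ref{LemComgoodderiv} is what allows the right-hand side to be expressed in terms of null components of $\mathcal{L}_{Z^\kappa}F$ rather than of $\mathcal{L}_{Z^\kappa}F$ itself.
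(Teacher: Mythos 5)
Your proof is correct and follows exactly the route of the paper: rearrange \eqref{null2}, \eqref{null5}, \eqref{null6} and absorb the angular derivatives of $\rho$, $\sigma$, $\alpha$ via Lemma \ref{LemComgoodderiv} together with $r\leq\langle x\rangle$. Nothing to add.
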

\begin{proof}
It suffices to combine \eqref{null2}, \eqref{null5} and \eqref{null6} with Lemma \ref{LemComgoodderiv}.
\end{proof}

We now prove decay estimates for $F$, which rely on Sobolev embeddings as well as \eqref{null1}.
\begin{Pro}\label{decayMaxell}
We have, for all $(t,x) \in [T,+\infty[ \times \R^3$,
\begin{align*}
|\underline{\alpha}(F)|(t,x) & \lesssim \langle t+|x| \rangle^{-1} \, \langle t-|x| \rangle^{-\frac{3}{2}}   \left| \mathcal{E}^K_2 [F](t) \right|^{\frac{1}{2}} ,\\
|\alpha ( F) |(t,x)+ |\rho(F)|(t,x)+ |\sigma (F)|(t,x) & \lesssim  \langle t+|x| \rangle^{-2} \,  \langle t-|x| \rangle^{-\frac{1}{2}}  \left| \mathcal{E}^K_2 [F](t) \right|^{\frac{1}{2}} .
 \end{align*}
If there exists $h:[T,+\infty[ \times \R^3_x \times \R^3_v \to \R$ such that $F$ is solution to
$$ \nabla^\mu F_{\mu \nu} = J(h)_\nu, \qquad \qquad \nabla^\mu {}^*  \! F_{\mu \nu} =0,$$ 
then the best null component $\alpha (F)$ verifies
\begin{align*} 
|\alpha (F)|(t,x) & \lesssim  \langle t+|x| \rangle^{-\frac{5}{2}} \bigg( \,  \left| \mathcal{E}^K_2 [F](t) \right|^{\frac{1}{2}}+ \sup_{|\kappa| \leq 1} \langle t \rangle^{-\frac{1}{2}}\Big\| \langle v \rangle^{\frac{5}{2}} \,  \langle x-t\widehat{v} \rangle^3 \, \widehat{Z}^\kappa h (t,x,v) \Big\|_{L^2(\R^3_x \times \R^3_v)} \, \bigg) .
\end{align*}
\end{Pro}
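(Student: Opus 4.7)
The plan is to combine Sobolev embedding on the $2$-spheres $S_r$ with a one-dimensional Sobolev estimate in the radial variable, exploiting that the rotations $\Omega_{ij}$ commute with the null decomposition (Lemma \ref{LemComgoodderiv}). I would first reduce to the region $r = |x| \geq 1$: for $r \leq 1$ the claimed weights $\langle t\pm r \rangle$ are all comparable to $\langle t \rangle$ on the fixed ball $B_1$, so a direct application of $H^2(B_2) \hookrightarrow L^\infty(B_1)$ to the weighted cartesian components of $F$, together with the energy identity $\|F\|_{H^2(B_2)}^2 \lesssim \mathcal{E}^K_2[F](t) / \langle t\rangle^2$, handles this case.

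For each null component $\zeta \in \{\underline{\alpha}, \alpha, \rho, \sigma\}$ and any weight $\lambda = \lambda(u,\underline{u})$ depending only on the null coordinates, the embedding $H^2(\mathbb{S}^2) \hookrightarrow L^\infty(\mathbb{S}^2)$ on $S_r$, together with the identity $r\, e_A \in \mathrm{span}\{\Omega_{ij}\}$ and Lemma \ref{LemComgoodderiv}, yields
\[
r^2 \lambda^2 \big|\zeta(F)\big|^2(t,x) \lesssim \sum_{|\gamma_\Omega| \leq 2} \int_{S_r} \lambda^2 \big| \zeta \big( \mathcal{L}_{\Omega^{\gamma_\Omega}} F \big) \big|^2 r^2 \dr \mu_{\mathbb{S}^2_\omega}.
\]
I would then upgrade this angular bound to a three-dimensional one by integrating over a unit-length radial interval and invoking $|\phi(r)|^2 \lesssim \int_{|r-r'|\leq 1}(|\phi(r')|^2 + |\partial_r \phi(r')|^2)\dr r'$. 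The radial derivative $\partial_r = \tfrac{1}{2}(L - \underline{L})$ falls either on $\lambda$, producing a bounded factor, or on $\zeta(\mathcal{L}_{\Omega^{\gamma_\Omega}} F)$, which by Lemma \ref{LemComgoodderiv} is controlled by $(\langle t+r \rangle^{-1} + \langle t-r \rangle^{-1}) |\zeta(\mathcal{L}_{Z^\kappa} F)|$ with $|\kappa| \leq |\gamma_\Omega|+1$. Choosing $\lambda = \langle t-r\rangle^{3/2}$ when $\zeta = \underline{\alpha}$, respectively $\lambda = \langle t+r\rangle \langle t-r \rangle^{1/2}$ when $\zeta \in \{\alpha, \rho, \sigma\}$, matches exactly the weights defining $\mathcal{E}^K_2[F](t)$ and yields the first pair of inequalities.

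For the improved bound on $\alpha(F)$, I would exploit the Maxwell null equations \eqref{null5}--\eqref{null6}, which in the present case $\widetilde{J}=0$, $J=J(h)$ read
\[
r^2 \slashed{\nabla} \cdot \alpha(F) = \nabla_L\big( r^2 \rho(F) \big) + r^2 J(h)_L, \qquad r^2 \slashed{\nabla} \times \alpha(F) = -\nabla_L\big( r^2 \sigma(F) \big).
\]
Since $H^1(\mathbb{S}^2) = 0$, the Hodge-type elliptic estimate on $\mathbb{S}^2$ (established in the appendix \ref{secApp}) recovers $\alpha(F)$ on each sphere from its divergence and curl, so that Sobolev embedding allows us to replace the angular argument above by one applied to $\nabla_L(r^2 \rho(F))$, $\nabla_L(r^2\sigma(F))$ and $r^2 J(h)_L$, together with their $\mathcal{L}_{\Omega^{\gamma_\Omega}}$-derivatives. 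The good null derivative $\nabla_L$ acting on $r^2 \rho$ and $r^2 \sigma$ gains, by Lemma \ref{LemComgoodderiv}, one factor of $\langle t+r\rangle^{-1}$ compared to a generic derivative, which is the origin of the additional $\langle t+r\rangle^{-1/2}$ decay once combined with the $\langle t+r\rangle^2$ weight in $\mathcal{E}^K$. The source contribution $r^2 J(h)_L$ carries $\widehat{v}^{\underline{L}}$ through the null frame, which by Lemma \ref{gainv} satisfies $\widehat{v}^{\underline{L}} \lesssim \langle x-t\widehat{v}\rangle\max(t-r,1)\langle t+r\rangle^{-1}$; estimating it via the $L^2$ version of Proposition \ref{Prosimpledecay} applied to $\widehat{v}^{\underline{L}} h$ and one $\widehat{Z}$-derivative thereof reproduces exactly the norm on the right-hand side. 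The main technical obstacle is tracking the precise $r$-weights through the Hodge inversion on $\mathbb{S}^2$ and ensuring that the $\langle t-r \rangle^{1/2}$ factor from the energy is fully converted into $\langle t+r \rangle^{1/2}$ via the good null structure, which is precisely what makes the equations \eqref{null5}--\eqref{null6}, involving only the good derivative $\nabla_L$, essential to this step.
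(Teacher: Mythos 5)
Your overall strategy (rotations commuting with the null decomposition, Sobolev on the spheres, the null structure equations for the improvement on $\alpha$) is the right Christodoulou--Klainerman-type framework, and it is essentially what underlies the results the paper imports: the proof in the text simply cites the weighted Klainerman--Sobolev inequality of Schlue--Lindblad for $\underline{\alpha}$ and \cite[Proposition~3.13]{massless} for the good components, and then only estimates the angular part of the current. However, as written your argument has concrete gaps. First, your weight choices do not match the energy: with $\lambda=\langle t-r\rangle^{3/2}$ (resp. $\lambda=\langle t+r\rangle\langle t-r\rangle^{1/2}$) the integrand $\lambda^2|\zeta|^2$ exceeds the corresponding weight in $\mathcal{E}^K$ by a full factor of $\langle t-r\rangle$, and a radial Sobolev inequality over a \emph{unit} interval cannot absorb it --- the zeroth-order term $\int_I\lambda^2|\zeta|^2\,r^2\dr r$ is then larger than the energy by $\langle t-r\rangle$. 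The extra $\langle t-r\rangle^{-1/2}$ is precisely the nontrivial content of the Klainerman--Sobolev inequality and requires performing the one-dimensional Sobolev over intervals of length $\sim\langle t-r\rangle$ (dyadic in $u$), which is only consistent because $(t-r)\underline{L}=S-\omega^i\Omega_{0i}$ and $(t+r)L=S+\omega^i\Omega_{0i}$ let $\langle t-r\rangle\partial_r$ be traded for commuting fields. Second, in the interior region $1\leq r\ll t$ the sphere Sobolev only produces a factor $r^{-1}$, not $\langle t+r\rangle^{-1}$, so rotations alone cannot give the stated $\langle t+r\rangle^{-1}$ (resp. $\langle t+r\rangle^{-2}$) decay there; one needs the boosts and the scaling (a genuinely three-dimensional Sobolev at scale $t$), and your reduction only disposes of the fixed ball $r\leq 1$.

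For the improved bound on $\alpha$ there is a further problem: the div--curl system \eqref{null5}--\eqref{null6} carries the source $J(h)_L=-2\int_v\widehat{v}^{\underline{L}}h\,\dr v$, whereas the estimate to be proven involves only the \emph{angular} part of the current, controlled by $\slashed{\widehat{v}}$. These are not interchangeable: for radially ingoing particles $\widehat{v}^{\underline{L}}\approx 1$ while $\slashed{\widehat{v}}=0$, and quantitatively $\widehat{v}^{\underline{L}}\lesssim \max(t-r,1)\,\langle x-t\widehat v\rangle\,\langle t+r\rangle^{-1}$ carries an extra factor $\max(t-r,1)$ compared with $|\slashed{\widehat v}|\lesssim\langle x-t\widehat v\rangle\,\langle t+r\rangle^{-1}$. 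Running your Cauchy--Schwarz argument with this source, the prefactor one obtains is $\sup_x |x|^4\max(t-|x|,1)^2\langle t+|x|\rangle^{-5}$, which is not $O(\langle t\rangle^{-1})$ in the intermediate region $1\ll\langle t-r\rangle\ll\langle t+r\rangle$ (where the crude bound from the first part is also insufficient), so the stated norm with the $\langle t\rangle^{-1/2}$ prefactor does not close. The cited derivation instead exploits the transport equation \eqref{null1}, whose source is the angular current $J_{e_A}$. Finally, a small but real mis-citation: Appendix~\ref{secApp} contains only Poisson estimates on $\R^3$, not the Hodge elliptic estimate on $\mathbb{S}^2$ you invoke.
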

\begin{proof}
According to the weighted Klainerman-Sobolev of \cite[Lemma~$3.3$]{SchlueLindblad}, for any sufficiently regular function $\psi : [T,+\infty[ \times \R^3 \to \R$, we have
$$ \forall \, (t,x) \in [T,+\infty[ \times \R^3, \qquad \langle t-|x| \rangle^{\frac{3}{2}} \, \langle t+|x| \rangle \, |\psi|(t,x) \lesssim \sup_{|\gamma| \leq 2} \big\| \big\langle t-| \cdot| \big\rangle \, Z^\gamma \psi (t, \cdot) \big\|_{L^2(\R^3_x)} \, .$$
Applying it to any cartesian component $F_{\mu \nu}$, $0 \leq \mu , \, \nu \leq 3$, of the electromagnetic field, we get
$$ \forall \, (t,x) \in [T,+\infty[ \times \R^3, \qquad \langle t-|x| \rangle^{\frac{3}{2}} \, \langle t+|x| \rangle \, |F|(t,x) \lesssim \left| \mathcal{E}^K_2 [F](t) \right|^{\frac{1}{2}} ,$$
which implies the estimate for $\underline{\alpha} (F)$. In order to derive the better behavior of the good null components of $F$, we apply \cite[Proposition~$3.13$]{massless} (the vacuum case was originally treated in \cite[Theorem~$3.1$-$3.2$]{CK}). It provides the stated estimates for $\rho (F)$ as well as $\sigma (F)$ and the first estimate for $\alpha (F)$ is derived in its proof. It provides as well
\begin{equation}\label{decay:alphageneral}
 |\alpha (F)|(t,x)  \lesssim  \langle t+|x| \rangle^{-\frac{5}{2}} \,\bigg( \, \left| \mathcal{E}^K_2 [F](t) \right|^{\frac{1}{2}}+ \sup_{|\kappa| \leq 1} \big\| |\cdot |^2 \,  \cancel{ \mathcal{L}_{Z^{\kappa}}J}(h) \, (t,\cdot) \big\|_{L^2(\R^3)} \bigg), 
 \end{equation}
where $ \cancel{\mathcal{L}_{Z^{\kappa}}J}(h):= \big( |\mathcal{L}_{Z^{\kappa}}J(h)_{e_\theta}|^2+|\mathcal{L}_{Z^{\kappa}}J(h)_{e_\varphi}|^2 \big)^{1/2}$ is the angular part of $\mathcal{L}_{Z^{\kappa}}J(h)$.
Apply Lemmata \ref{LemCom} and \ref{gainv} in order to get
$$ \sup_{|\kappa| \leq 1}  \big| \cancel{ \mathcal{L}_{Z^{\kappa}}J}(h) \big|(t,x) \lesssim  \sup_{|\beta| \leq 1} \int_{\R^3_v} \Big|\slashed{\widehat{v}} \,  \widehat{Z}^\beta h \Big|(t,x,v) \dr v  \lesssim   \sup_{|\beta| \leq 1} \frac{1}{\langle x \rangle}  \int_{\R^3_v}  \langle x-t \widehat{v} \rangle \big| \widehat{Z}^\beta h \big|(t,x,v) \dr v.$$
By applying the Cauchy-Schwarz inequality in $v$, one obtains
\begin{align*} \sup_{|\kappa| \leq 1} \big\| |\cdot |^2 \,  \cancel{ \mathcal{L}_{Z^{\kappa}}J}(h) \, (t,\cdot) \big\|_{L^2(\R^3)}^2 & \lesssim \sup_{|\kappa| \leq 1} \int_{\R^3_x} |x|^2 \bigg| \int_{\R^3_v} \langle x-t\widehat{v} \rangle \big| \widehat{Z}^\kappa h \big|(t,x,v) \dr v \bigg|^2 \dr x \\
& \leq \sup_{|\kappa| \leq 1} \int_{\R^3_x}  \int_{\R^3_v} \langle v \rangle^5 \, \langle x-t\widehat{v} \rangle^6 \big| \widehat{Z}^\kappa h \big|^2(t,x,v) \dr v  \int_{\R^3_v} \frac{|x|^2 \dr v}{\langle x-t\widehat{v} \rangle^4 \, \langle v \rangle^5 } \dr x .
\end{align*}
It remains to bound the second integral in $v$ by $ \langle t\rangle^{-1} $ using \eqref{eq:Lemfordecayvelocityaverage00}.
\end{proof}

\subsection{Control of the initial data for the Maxwell equations}\label{Subsecinidata}

Let $t_0 \geq 0$ be an initial time, $J$ be a sufficiently regular current defined on $[t_0,+\infty[ \times \R^3$ and $F$ be the unique solution to
\begin{equation}\label{eq:Ffordataaa}
 \nabla^\mu F_{\mu \nu} = J_\nu,  \qquad  \nabla^\mu {}^* \! F_{\mu \nu} =0,   \qquad \qquad  \nabla \times E(t_0,\cdot) = \nabla \times B(t_0,\cdot)=0, 
 \end{equation}
where $E^i:=F_{0i}$ and $B^i:= -\varepsilon^{ijk} F_{jk} $ are the electric and magnetic fields. We assume here that their initial divergence free part, according to Helmholtz decomposition, vanish. Thus, there exists $\phi : \R^3 \to \R$ such that 
 $$B(t_0,\cdot)=0, \qquad \qquad E(t_0,\cdot)=\nabla \phi, \qquad \Delta \phi = -J_0(t_0, \cdot ).$$
 In particular $\nabla \times E(t_0,\cdot) = \nabla \times B(t_0,\cdot)=0$ is equivalent to 
 $$ \nabla_{\partial_{x^i}}F_{0 j} (t_0,\cdot)= \nabla_{\partial_{x^j}} F_{0 i} (t_0,\cdot), \qquad \nabla_{\partial_{x^i}}{}^* \!F_{0 j}(t_0,\cdot)= \nabla_{\partial_{x^j}} {}^*\!F_{0 i} (t_0,\cdot) , \qquad \qquad 1 \leq i, \, j \leq 3.$$
In this subsection, we control $F(t_0,\cdot)$ and its derivatives in two cases. For this, note that the Maxwell equations and the initial assumptions imply
\begin{align*}
\partial_t (\partial_t^{n-1}  B)(t_0,\cdot)&= -\partial_t^{n-1}  ( \nabla_x \times E)(t_0,\cdot)=0 ,
\end{align*}
for any $n \in \mathbb{N}^*$. Thus, $B$ and all its spacetime derivatives initially vanish. In order to control the initial time derivatives of the electric field, we will use the following relation, obtained similarly. For any component $1 \leq i \leq 3$ and multi-index $\beta$,
\begin{align}\label{eq:iniMax2}
\partial_t (\partial_t^{n-1} \partial_x^\beta E^i)(t_0,\cdot)=  -\partial_t^{n-1} \partial_x^\beta J_i(t_0,\cdot).
\end{align}
First, we assume that $J$ is generated by a distribution function.
\begin{Pro}\label{Proinidata0}
Let $N \geq 0$, $f: [t_0,+\infty[ \times \R^3_x \times \R^3_v \to \R$ be a sufficiently regular function and assume that $J=J(f)$. Then, for any $|\gamma| \leq N$,
$$ \int_{\R^3_x} \big|\mathcal{L}_{Z^\gamma}F \big|^2(t_0,x)\dr x \lesssim_{t_0} \sup_{|\beta| \leq N} \int_{\R^3_x}\int_{\R^3_v} \langle x \rangle^3 \, \langle v \rangle^4 \big| \widehat{Z}^\beta f \big|^2(t_0,x,v) \dr v \dr x .$$
\end{Pro}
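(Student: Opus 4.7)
The plan is to bound $\mathcal{L}_{Z^\gamma}F$ pointwise at $t=t_0$ by Cartesian derivatives of $F$ (exploiting that each $Z \in \mathbb{K}$ has polynomial coefficients of degree $\le 1$ in $(t,x)$), to convert time derivatives of $F(t_0,\cdot)$ into spatial derivatives of $(E,B)(t_0,\cdot)$ and time--space derivatives of $J(f)(t_0,\cdot)$ via iterated use of the Maxwell evolution equations, and finally to exploit $B(t_0,\cdot)=0$ together with $E(t_0,\cdot)=\nabla \phi$, $\Delta \phi = -J_0(f)(t_0,\cdot)$, in order to close the estimates by H\"older in $v$ and weighted elliptic estimates.

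More precisely, the equivalence \eqref{equinorm} combined with an induction on $|\gamma|$ yields
$$|\mathcal{L}_{Z^\gamma}F|(t_0,x) \lesssim_{t_0} \sum_{a+|\alpha|\leq |\gamma|} \langle x \rangle^{|\gamma|-a-|\alpha|}\, |\partial_t^a \partial_x^\alpha F|(t_0,x).$$
Iterating $\partial_t E = \nabla_x \times B - J(f)$ and $\partial_t B = -\nabla_x \times E$ expresses $\partial_t^a \partial_x^\alpha F(t_0,\cdot)$ as a linear combination of $\partial_x^{\alpha'} E(t_0,\cdot) = \partial_x^{\alpha'} \nabla \phi$ and $\partial_x^{\alpha'} B(t_0,\cdot) = 0$ with $|\alpha'| \leq a+|\alpha|$, plus source terms $\partial_t^b \partial_x^{\alpha''} J(f)(t_0,\cdot)$ with $b+|\alpha''| \leq a+|\alpha|-1$. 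The latter are handled via $\partial_t^b \partial_x^{\alpha''} J(f) = J(\partial_t^b \partial_x^{\alpha''} f)$ and H\"older in $v$ against the $\R^3_v$-integrable weight $\langle v \rangle^{-4}$, yielding
$$|\partial_t^b \partial_x^{\alpha''} J(f)|^2(t_0,x) \lesssim \int_{\R^3_v} \langle v \rangle^4\, |\partial_t^b \partial_x^{\alpha''} f|^2(t_0,x,v)\, \dr v;$$
the surviving $\langle x\rangle^{|\gamma|-a-|\alpha|}$ weight is then absorbed by choosing a multi-index $\beta$ of length $\leq|\gamma|$ that uses homogeneous vector fields of $\K$ to supply the missing $\langle x\rangle$ powers, up to lower-order corrections controlled $t_0$-uniformly by the weighted elliptic estimates in Section~\ref{secApp}. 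The electric contributions $\partial_x^{\alpha'} \nabla \phi$ with $|\alpha'|\geq 1$ reduce, by elliptic regularity, to a composition of Riesz transforms applied to $\partial_x^{\alpha'-1} J_0(f)(t_0,\cdot)$, and are treated identically.

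The main obstacle is the $|\alpha'|=0$ case (arising in particular in the base case $|\gamma|=0$), where one must estimate $\|\langle x\rangle^k E(t_0,\cdot)\|_{L^2}$ without reducing to a spatial derivative of $J_0(f)$. For $k=0$, Hardy--Littlewood--Sobolev gives $\|E(t_0,\cdot)\|_{L^2}^2 = \int \phi\, J_0(f)(t_0,\cdot)\, \dr x \lesssim \|J_0(f)(t_0,\cdot)\|_{L^{6/5}(\R^3)}^2$, and H\"older with the $\R^3_x$-integrable weight $\langle x \rangle^{-9/2}$ furnishes $\|J_0(f)\|_{L^{6/5}}^2 \lesssim \int \langle x \rangle^3 |J_0(f)|^2 \dr x \lesssim \int \langle x \rangle^3 \langle v \rangle^4 |f|^2 \dr v \dr x$. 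This sharp computation is precisely what pins down the exponents $\langle x \rangle^3$ and $\langle v \rangle^4$ on the right-hand side of the statement; higher values of $k$ are handled by the weighted elliptic estimates of Section~\ref{secApp}, trading the surplus $\langle x \rangle$ powers for homogeneous vector fields of $\K$.
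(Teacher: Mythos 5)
Your pointwise bound on the Lie derivative has the exponent on $\langle x\rangle$ attached to the wrong quantity. Applying one homogeneous vector field (say $\Omega_{0k}=t\partial_{x^k}+x^k\partial_t$) gives $|\mathcal{L}_Z F|\lesssim\langle t,x\rangle\,|\nabla_{t,x}F|+|F|$ --- the weight multiplies a \emph{derivative} of $F$, not $F$ itself --- and by induction
$$|\mathcal{L}_{Z^\gamma}F|(t_0,x) \lesssim_{t_0} \sup_{a+|\alpha| \leq |\gamma|} \langle x\rangle^{a + |\alpha|}\, |\partial_t^a\partial_x^\alpha F|(t_0,x),$$
with exponent $a+|\alpha|$, \emph{not} $|\gamma|-a-|\alpha|$. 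Your version is not merely a looser bound, it is false: for $Z^\gamma=(\Omega_{01})^N$ the dominant contribution at large $|x|$ is of size $|x|^N|\partial^N F|$, whereas your formula caps the weight on the top-order derivative at $\langle x\rangle^0$. The consequence is fatal for the step you highlight as the crux: the $(a,\alpha)=(0,0)$ term in your expansion is $\langle x\rangle^{|\gamma|}|E(t_0,\cdot)|$, and this quantity is generically not even finite --- if the total charge $Q=\int J_0(t_0,\cdot)\neq 0$, then $E(t_0,x)\sim Q|x|^{-2}$ near infinity and $\int\langle x\rangle^{2|\gamma|}|E(t_0,x)|^2\,\dr x=+\infty$ as soon as $|\gamma|\geq 1$. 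Lemma~\ref{Lemdatahigh0} deliberately ties the weight $\langle x\rangle^{2|\kappa|}$ to the number $|\kappa|$ of additional derivatives of $\phi$ so this divergence never arises; there is no surplus $\langle x\rangle$ to ``trade'' for homogeneous vector fields of $\K$ when the weight exceeds the derivative order, and your sketch of how this trade is supposed to work does not survive the correction.

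Once the exponent is flipped, the rest of your outline is sound and tracks the paper closely. Iterating the two Maxwell evolution equations to express $\partial_t^a\partial_x^\alpha F(t_0,\cdot)$ in terms of $\nabla\partial_x^{\alpha'}\phi$ ($|\alpha'|\leq a+|\alpha|$), $\partial_x^{\alpha'}B(t_0,\cdot)=0$, and $\partial_t^b\partial_x^{\alpha''}J(f)(t_0,\cdot)$ ($b+|\alpha''|\leq a+|\alpha|-1$) is correct, and in fact slightly more careful than the paper's one-line appeal to ``all time derivatives of $B$ initially vanish'' (this latter claim already fails at second order: $\partial_t^2B(t_0,\cdot)=\nabla\times J(t_0,\cdot)$, which is not zero in general; your iteration does not need it). The $E$-terms then match Lemma~\ref{Lemdatahigh0} exactly, and the remaining $J$-terms --- which carry one more power of $\langle x\rangle$ than their derivative order --- are absorbed by Lemma~\ref{LemgainderivVla} at $t=t_0$ (where $\langle t_0-|x|\rangle\sim_{t_0}\langle x\rangle$), followed by Cauchy--Schwarz in $v$. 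Your Hardy--Littlewood--Sobolev computation of the unweighted $\|E(t_0,\cdot)\|_{L^2}$ bound is correct and is a valid alternative to Lemma~\ref{Leminidata}, though slightly less sharp (it gives $\langle x\rangle^3$ where the paper's Young-type argument gives $\langle x\rangle^{5/2}$); the exponent $3$ in the statement is simply a comfortable upper bound and is not ``pinned down'' by this step.
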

\begin{proof}
In view of the definition of the Lie derivative and the fact that the elements of $\mathbb{K}$ are translations $\partial_{x^\mu}$ or homogeneous vector fields, we have, for $|\gamma| \leq N$, 
$$ \big|\mathcal{L}_{Z^\gamma}F \big|(t_0,x) \lesssim_{t_0} \sup_{|\xi| \leq |\gamma|}   \, \langle x \rangle^{|\xi|} \big| \nabla_{t,x}^\xi F\big|(t_0,x)=\sup_{|\xi| \leq |\gamma|}   \, \langle x \rangle^{|\xi|} \big| \nabla_{x}^\xi E\big|(t_0,x)+\sup_{n+|\beta| \leq |\gamma|, \, n \geq 1}   \, \langle x \rangle^{n+|\beta|} \big|\nabla_t^n \nabla_{x}^\beta E \big|(t_0,x),$$
since $B$ and all its derivatives initially vanish. As $E(t_0,\cdot)=\nabla \phi$, we estimate the spatial derivatives of the initial electric field by applying Lemma \ref{Lemdatahigh0} to $\phi$. The time derivatives can be controlled using \eqref{eq:iniMax2} as well as Lemma \ref{LemgainderivVla}, which provides
$$ \int_{\R^3_x} \langle x \rangle^{2n+2|\beta|} \bigg| \int_{\R^3_v} \widehat{v} \partial_t^{n-1} \partial_x^\beta f(t_0,x,v) \dr v \bigg|^2 \dr x \lesssim_{t_0} \sup_{|\zeta| \leq n-1+|\beta|} \int_{\R^3_x} \langle x \rangle^{2} \bigg| \int_{\R^3_v} \big| \widehat{Z}^\zeta f \big|(t_0,x,v) \dr v \bigg|^2 \dr x .$$
We finally handle the integrals on the right hand side by applying the Cauchy-Schwarz inequality in $v$.
\end{proof}

In the perspective of studying the asymptotic Maxwell equations, we prove stronger estimates for a compactly supported source term.
\begin{Pro}\label{Proinidata}
Let $N \geq 1$ and assume that $t_0 \leq 3$ and that $J(t_0,\cdot)$ is supported in $\{ |x| \leq 3 \}$. Recall
$$ \overline{F}(t,x) = \overline{\chi}(t-|x|)\frac{Q}{4 \pi |x|^2} \frac{x_i}{|x|} \dr t \wedge \dr x^i, \qquad \qquad Q=-\int_{\R^3_x} J_0(t_0,x)\dr x,$$
the pure charge part of $F$. Then, for any $|\gamma| \leq N-1$ and $|\kappa| \leq 1$,
$$ \forall \, x \in \R^3, \qquad  \langle x \rangle^{3+|\kappa|}  \left|\nabla_{t,x}^\kappa \mathcal{L}_{Z^\gamma}(F-\overline{F}) \right|\!(t_0,x) \lesssim \sup_{|\xi| \leq N}\big\| \nabla_{t,x}^\xi J (t_0,\cdot) \big\|_{L^{\infty}(\R^3_x)}.$$
Moreover, the following $L^2$ estimate holds for any $|\gamma| \leq N$ and $|\kappa| \leq 1$,
$$  \int_{\R^3_x} \langle x \rangle^{\frac{5}{2}+2|\kappa|}  \left|\nabla_{t,x}^\kappa \mathcal{L}_{Z^\gamma}(F-\overline{F}) \right|^2 \! (t_0,x) \dr x \lesssim \sup_{|\xi| \leq N}\big\| \nabla_{t,x}^\xi J (t_0,\cdot) \big\|^2_{L^2(\R^3_x)}.$$
\end{Pro}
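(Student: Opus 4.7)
The plan is to reduce everything to an explicit representation of $F(t_0,\cdot)$ and then exploit the compact support of $J(t_0,\cdot)$ to isolate the Coulomb tail. Since $B(t_0,\cdot)\equiv 0$ and $E(t_0,\cdot)=\nabla\phi$ with $\Delta\phi=-J_0(t_0,\cdot)$, the Newtonian kernel gives
\begin{equation*}
E^i(t_0,x)=-\frac{1}{4\pi}\int_{\R^3_y}\frac{(x-y)^i}{|x-y|^3}\,J_0(t_0,y)\,\dr y,\qquad B(t_0,x)=0.
\end{equation*}
In the far region $|x|\geq 7$, we have $\overline{F}_{0i}(t_0,x)=Qx^i/(4\pi|x|^3)$ and $\mathrm{supp}\,J_0(t_0,\cdot)\subset\{|y|\leq 3\}$, so a first-order Taylor expansion of $y\mapsto (x-y)^i/|x-y|^3$ at $y=0$ yields
\begin{equation*}
\bigl|E^i(t_0,x)-\overline{F}_{0i}(t_0,x)\bigr|\lesssim |x|^{-3}\,\|J_0(t_0,\cdot)\|_{L^1}\lesssim |x|^{-3}\,\|J(t_0,\cdot)\|_{L^\infty},
\end{equation*}
which is exactly the required decay. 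In the near region $|x|\leq 7$, $\overline{F}$ is uniformly bounded (the cutoff $\overline{\chi}$ is smooth) and $E(t_0,\cdot)$ is controlled by Calder\'on--Zygmund and Sobolev estimates on $\phi$, similar to those used in Proposition \ref{Proinidata0} and collected in Section \ref{secApp}.

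For commuted quantities, I would bound pointwise
\begin{equation*}
\bigl|\mathcal{L}_{Z^\gamma}(F-\overline{F})\bigr|(t_0,x)\lesssim\sup_{n+|\beta|\leq|\gamma|}\langle x\rangle^{n+|\beta|}\bigl|\nabla_t^n\nabla_x^\beta(F-\overline{F})\bigr|(t_0,x)
\end{equation*}
using the homogeneous nature of the elements of $\mathbb{K}$. Spatial derivatives are controlled by differentiating the Poisson formula under the integral sign: each $\partial_{x^k}$ falling on the kernel produces one more factor of $|x-y|^{-1}$ in the far region, and the same multipole subtraction argument gives $|\nabla_x^\beta(E-\overline{F}_{0\cdot})|\lesssim\langle x\rangle^{-3-|\beta|}$; in the near region elliptic regularity gives $\|\nabla_x^{\beta+1}\phi\|_{L^\infty(|x|\leq 7)}\lesssim \sup_{|\xi|\leq |\beta|+1}\|\nabla_x^\xi J_0(t_0,\cdot)\|_{L^\infty}$. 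Time derivatives are handled through the Maxwell equations: $\partial_t B(t_0,\cdot)=-\nabla\times E(t_0,\cdot)=0$ and $\partial_t E^i(t_0,\cdot)=-J_i(t_0,\cdot)$ since $B(t_0,\cdot)=0$; iterating \eqref{eq:iniMax2} expresses $\partial_t^n\nabla_x^\beta F(t_0,\cdot)$ in terms of $\partial_t^{n-1}\nabla_x^\beta J(t_0,\cdot)$ and lower order spatial derivatives of $F$, which were treated in the previous step. Since $\partial_t\overline{F}$ is supported in $\{|x|\leq t_0+2\}\subset\{|x|\leq 5\}$ (because $\overline{\chi}'$ has compact support) and is smooth there, $\partial_t\overline{F}$ satisfies the claimed decay trivially and the extra weight $\langle x\rangle^{|\kappa|}$ for $|\kappa|=1$ is absorbed.

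The weighted $L^2$ estimate follows the same two-region strategy: in the exterior, the pointwise bound $|\mathcal{L}_{Z^\gamma}(F-\overline{F})|\lesssim\langle x\rangle^{-3-|\gamma_H|}\|J\|_{L^1}\lesssim\langle x\rangle^{-3}\|J(t_0,\cdot)\|_{L^2}$ (using $\|J\|_{L^1}\lesssim\|J\|_{L^2}$ by compact support) integrated against $\langle x\rangle^{5/2+2|\kappa|}$ yields an integrable tail $\langle x\rangle^{-7/2}$; in the interior, one invokes $L^2$ Calder\'on--Zygmund bounds $\|\nabla_x^{\beta+1}\phi\|_{L^2}\lesssim\|\nabla_x^\beta J_0(t_0,\cdot)\|_{L^2}$. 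The gain of one derivative in passing from the pointwise estimate ($|\gamma|\leq N-1$, $L^\infty$ of $\nabla^N J$) to the $L^2$ one ($|\gamma|\leq N$, $L^2$ of $\nabla^N J$) reflects the Sobolev embedding $W^{2,2}\hookrightarrow L^\infty$ in three dimensions.

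The main obstacle is the transition zone $3\leq|x|\leq 7$, where neither the clean multipole expansion nor purely local elliptic estimates are sharp, and where the derivatives of the cutoff $\overline{\chi}$ in $\overline{F}$ contribute source terms that must be dominated directly; this is a finite region with smooth integrands and is ultimately handled by brute force, using the compact support of $J(t_0,\cdot)$ to pass between $L^p$ norms freely. A secondary technical point is keeping track, when iterating the time-derivative reduction and commuting with boosts $\Omega_{0k}$, of the combinatorial structure so that no more than $N$ derivatives of $J(t_0,\cdot)$ ever appear, which amounts to an induction on $|\gamma|+|\kappa|$.
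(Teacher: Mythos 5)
Your proposal is correct in spirit and rests on the same mathematical content as the paper, but takes a more hands-on route. The paper's proof does not Taylor-expand the Green kernel directly: it observes that the modified potential $\widetilde{\phi}=\phi+\frac{Q}{4\pi r}\overline{\chi}(t_0-r)$ solves $\Delta\widetilde{\phi}=\widetilde{\psi}$ where $\widetilde{\psi}=-J_0(t_0,\cdot)+\frac{Q}{4\pi r}\overline{\chi}''(t_0-r)$ is compactly supported with $\int_{\R^3}\widetilde{\psi}=0$, and then invokes Lemma \ref{Lemdatahigh} wholesale (which internally contains the multipole cancellation that you are re-deriving by hand in Lemma \ref{Leminidata}). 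The payoff of the paper's packaging is that the cutoff $\overline{\chi}$ and its derivatives are absorbed into $\widetilde{\phi}$ from the start, so there is no awkward transition annulus to handle separately: the shell on which $\overline{\chi}'\!,\overline{\chi}''$ live simply contributes a compactly supported piece of $\widetilde{\psi}$, to which the same lemma applies. You correctly flag this transition zone as the weakest point of your argument, and indeed that is exactly what the paper's change of unknown $\phi\rightsquigarrow\widetilde{\phi}$ is designed to avoid. The two approaches buy the same decay (the improved $\langle x\rangle^{-3-|\kappa|}$ rather than $\langle x\rangle^{-2-|\kappa|}$ comes in both cases from the vanishing-mean structure after subtracting the monopole); the paper's is tighter and cleaner, yours is more transparent about where the multipole cancellation happens. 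Two small cautions: the claim that $B$ and all its spacetime derivatives vanish at $t=t_0$ (used implicitly by both you and the paper when reducing to $\nabla^n_t\nabla^\beta_x E$) is only literally true for $n\leq 1$; one has $\partial_t^2 B(t_0,\cdot)=\nabla\times J(t_0,\cdot)$, so for $n\geq 2$ the $B$-derivatives do not drop out but are still controlled by derivatives of $J$ via $\partial_t^n B=-\nabla\times\partial_t^{n-1}E$ and \eqref{eq:iniMax2}; and the "Calder\'on--Zygmund" reference for the near-field $L^\infty$ control is a misnomer — what is actually used is a direct kernel estimate ($|x-y|^{-2}$ is locally integrable), as in the proof of Lemma \ref{Leminidata}.
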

\begin{proof}
Note that $|\partial_{t,x}^\kappa \partial_t \overline{F}_{0i}|(t_0,x) \lesssim |Q| \mathds{1}_{|x| \leq 5}$. Hence, by similar computations as the one performed in the previous proof, we have for any $|\kappa| \leq 1$ and $|\gamma| \leq N$,
\begin{align*}
\langle x\rangle^{|\kappa|}  \left| \nabla_{t,x}^\kappa \mathcal{L}_{Z^\gamma}(F-\overline{F})\right|(t_0,x)  \lesssim  & \sup_{1 \leq i \leq 3} \sup_{|\xi| \leq |\gamma|+1}   \, \langle x \rangle^{|\xi|} \big| \partial_{x}^\xi (F_{0i}-\overline{F}_{0i})\big|(t_0,x) \\
&+\sup_{n+|\beta| \leq |\gamma|+1, \, n \geq 1}   \, \langle x \rangle^{n+|\beta|} \big|\nabla_t^n \nabla_{x}^\beta E \big|(t_0,x)+|Q|\mathds{1}_{|x| \leq 5}.
 \end{align*}
Using \eqref{eq:iniMax2}, we can directly control the time derivatives of $E(t_0,\cdot)$, which turn out to be compactly supported. Next,
$$\sup_{1 \leq i \leq 3} \sup_{|\xi| \leq |\gamma|+1}   \, \langle x \rangle^{|\xi|} \big| \partial_{x}^\xi (F_{0i}-\overline{F}_{0i})\big|(t_0,x) \leq  \sup_{|\xi| \leq |\gamma|+1}   \, \langle x \rangle^{|\xi|} \Big| \nabla_{x}^\xi  \nabla \Big( \phi (x)+\frac{Q}{4\pi|x|} \overline{\chi}(t_0-|x|)  \Big)\Big|+|Q| \mathds{1}_{|x| \leq 5}.$$
If $Q  \neq 0$, one cannot apply Lemma \ref{Lemdatahigh} directly. Instead, remark that $\widetilde{\phi}=\phi+\frac{1}{4\pi r} Q\overline{\chi}(t_0-r)$ verifies
$$ \Delta \widetilde{\phi} = \widetilde{\psi}, \qquad \widetilde{\psi}:= \int_{\R^3_v} f(t_0,\cdot,v)\dr v+\frac{Q}{4\pi r} \chi''(t_0-r), \qquad \int_{\R^3_x}\widetilde{\psi} =0 . $$
As $\chi =1$ on $]-\infty,-2]$ and $\chi =0$ on $[-1,+\infty[$, the result ensues from Lemma \ref{Lemdatahigh}.
\end{proof}

\subsection{Study of the pure charge part of the electromagnetic field}\label{SubsecPurecharge}

The previous Proposition \ref{Proinidata} suggests that $\overline{F}$ plays a crucial role in the analysis of the asymptotic behavior of electromagnetic field of charge $Q \neq 0$. In fact, as noticed by \cite{LS}, the long range effect of the charge is only relevant in the exterior of the light cone $|x| \geq t$. In particular, in the context of the Vlasov-Maxwell system, the asymptotic behavior of $r^2 \rho (F)$ along null rays $r \mapsto (r+u,r\omega)$ will be governed, as $u \to -\infty$, by $r^2\rho(\overline{F})$. We collect here classical properties of $\overline{F}$.
\begin{Pro}\label{Propurecharge}
Let $Q \in \R$. The following properties hold.
\begin{enumerate}
\item The null components of $\overline{F}$ verify, for all $(t,x) \in \R_+ \times \R^3$,
$$ \rho \big( \overline{F} \, \big) (t,x) = \frac{Q}{4\pi |x|^2} \overline{\chi} (t-|x|), \qquad \qquad \underline{\alpha}\big( \overline{F} \, \big) (t,x) =\alpha \big( \overline{F} \, \big) (t,x) =\sigma \big( \overline{F} \, \big) (t,x) =0.$$
\item $\overline{F}$ is a solution to the Maxwell equations $\nabla^\mu \overline{F}_{\mu \nu} = \overline{J}_{\mu \nu}$ and $\nabla^{\mu} {}^* \! \overline{F}_{\mu \nu} =0$, where the source term $\overline{J}$ is compactly supported and given by
$$  \overline{J}_0(t,x):=\frac{Q}{4\pi |x|^2} \overline{\chi}'(t-|x|), \qquad \qquad \overline{J}_i(t,x):= -\frac{Q}{4\pi |x|^2}\frac{x_i}{|x|} \overline{\chi}'(t-|x|), \quad 1 \leq i \leq 3.$$
Its null components are
$$\overline{J}_{\underline{L}}(t,x)=\frac{Q}{2\pi |x|^2} \overline{\chi}'(t-|x|), \qquad \qquad \overline{J}_L(t,x)=\overline{J}_{e_\theta}(t,x)=\overline{J}_{e_\varphi}(t,x)=0.$$
\item The Lie derivatives with respect to the scaling vector field and the rotational derivatives are chargeless,
$$ \mathcal{L}_S \overline{F} (t,x)= (t-|x|)\overline{\chi}'(t-|x|)\frac{Q}{4 \pi |x|^2} \frac{x_i}{|x|} \dr t \wedge \dr x^i, \qquad \qquad \qquad \mathcal{L}_{\Omega_{ij}} \overline{F} =0, \quad 1 \leq i < j \leq 3.$$
\item Let $Z^\gamma \! \in \! \mathbb{K}^{|\gamma|}$. There exists $C_\gamma >0$ such that the null components of $\mathcal{L}_{Z^\gamma} \overline{F}$ verify, $\forall \, (t,x) \in \R_+ \! \times \R^3$,
\begin{align*}
\big|\underline{\alpha} \big(\mathcal{L}_{Z^\gamma} \overline{F} \, \big)\big|(t,x)+ \big|\rho \big(\mathcal{L}_{Z^\gamma} \overline{F} \, \big)\big|(t,x)+\big|\sigma \big(\mathcal{L}_{Z^\gamma} \overline{F} \, \big)\big|(t,x)& \leq C_\gamma \frac{Q }{ \langle t+|x| \rangle^2}\mathds{1}_{|x| \geq t+1} , \\[3pt]
 \big|\alpha \big(\mathcal{L}_{Z^\gamma} \overline{F} \, \big)\big|(t,x) &\leq C_\gamma \frac{ Q \, \langle t-|x| \rangle}{ \langle t+|x| \rangle^3} \mathds{1}_{|x| \geq t+1} .
 \end{align*}
 The ones of $\mathcal{L}_{Z^\gamma}\overline{J}$ satisfy, for any $A \in \{\theta , \varphi \}$ and for all $(t,x) \in \R_+ \times \R^3$,
 $$ \langle t+|x| \rangle^2\big| \mathcal{L}_{Z^\gamma}\big( \overline{J} \, \big)_{\underline{L}} \big|(t,x) + \langle t+|x| \rangle^3\big| \mathcal{L}_{Z^\gamma}\big( \overline{J} \, \big)_{e_A} \big|(t,x) + \langle t+|x| \rangle^4\big| \mathcal{L}_{Z^\gamma}\big( \overline{J} \, \big)_{L} \big|(t,x) \leq C_\gamma Q \mathds{1}_{1 \leq |x|-t \leq 2} .$$ 
\end{enumerate}
\end{Pro}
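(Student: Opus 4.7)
The first three items are direct computations, made transparent by rewriting $\overline{F} = \overline{\chi}(t-r) \frac{Q}{4\pi r^2} \dr t \wedge \dr r$ using $\frac{x_i}{r}\dr x^i = \dr r$. For \textbf{Part 1}, $e_A$ annihilates $t$ and $r$ so $\alpha(\overline{F}) = \underline{\alpha}(\overline{F}) = 0$, the absence of any $\dr x^i \wedge \dr x^j$ piece gives $\sigma(\overline{F}) = 0$, and $(\dr t \wedge \dr r)(\underline{L}, L) = 2$ yields the stated $\rho(\overline{F})$. For \textbf{Part 2}, since only $\overline{F}_{0i}$ is nonzero, $\nabla^\mu \overline{F}_{\mu 0} = -\partial_i \overline{F}_{0i}$ and $\nabla^\mu \overline{F}_{\mu k} = -\partial_t \overline{F}_{0k}$; using $\partial_i(x^i r^{-3}) = 0$ on $\R^3 \setminus \{0\}$ and $\partial_i \overline{\chi}(t-r) = -\overline{\chi}'(t-r) x_i/r$ gives the formulas for $\overline{J}_\mu$. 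The dual equation reduces to $\nabla \times E = 0$ for a radial electric field, which is immediate from the symmetry of $\partial_i E^j$ in $(i,j)$. Contraction with $\underline{L}$, $L$ and $e_A$ yields the null components of $\overline{J}$.

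For \textbf{Part 3}, $\Omega_{ij}$ annihilates $t$, $r$, and the coefficient $\overline{\chi}(t-r)r^{-2}$, and $\mathcal{L}_{\Omega_{ij}}(\dr t \wedge \dr r) = 0$, so $\mathcal{L}_{\Omega_{ij}} \overline{F} = 0$. For the scaling, $S(t-r) = t-r$, $S(r^{-2}) = -2r^{-2}$, and $\mathcal{L}_S(\dr t \wedge \dr r) = 2\dr t \wedge \dr r$; the last two contributions cancel, leaving the stated expression.

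\textbf{Part 4} is proved by induction on $|\gamma|$. The base case is immediate from Part 1: on $\{r \geq t+1\}$ one has $r \geq \langle t+r\rangle/2$, giving $|\rho(\overline{F})| \lesssim |Q|/\langle t+r\rangle^2$ while $\alpha(\overline{F}) = 0$. For the inductive step, Part 3 eliminates rotations and shows that each $S$ factor merely substitutes $\sum_p a_p (t-r)^p \overline{\chi}^{(p)}(t-r)$ for $\overline{\chi}(t-r)$, with support in $\{r-t \geq 1\}$ (or in $\{1 \leq r-t \leq 2\}$ when $p \geq 1$). For translations and boosts, applying $\mathcal{L}_Z$ to the ansatz $\phi(t-r)\Psi(r)\dr t \wedge \dr r + (\text{angular corrections})$ exhibits three elementary rules: each derivative of $r^{-n}$ improves $r$-decay by one power, each derivative of $\overline{\chi}^{(p)}$ further restricts the support, and $\mathcal{L}_Z(\dr r)$ generates an angular $1$-form carrying a built-in $1/r$ weight. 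Since $r$, $t+r$ and $\langle t+r\rangle \vee 1$ are mutually comparable on $\{r \geq t+1\}$, these rules yield the claimed $1/\langle t+r\rangle^2$ decay for $\rho$, $\sigma$, and $\underline{\alpha}$.

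The main obstacle is the extra factor $\langle t-|x|\rangle/\langle t+|x|\rangle$ in the bound for $\alpha(\mathcal{L}_{Z^\gamma}\overline{F})$. I exploit the structural vanishing $\overline{F}(e_A, \cdot) \equiv 0$: the Lie derivative identity gives
$$(\mathcal{L}_Z \overline{F})(e_A, L) = -\overline{F}([Z,e_A], L) - \overline{F}(e_A, [Z,L]) = -2\, b_A(Z)\, \rho(\overline{F}),$$
where $b_A(Z)$ is the $\underline{L}$-coefficient of $[Z,e_A]$ in the null frame. Writing $e_A = c^j(\omega) r^{-1}\partial_{x^j}$ with $\omega_l c^l = 0$ and using the identity $\omega_l \partial_{x^k}(c^l/r) = -c^k/r^2$ (which follows by differentiating $\omega_l c^l = 0$), one verifies $b_A \equiv 0$ for $Z \in \{\partial_t, \partial_{x^k}, S\}$ and $b_A = -c^k(r-t)/(2r^2)$ for the boost $Z = \Omega_{0k}$; this produces the required factor $\langle t-r\rangle$ at first order. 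The inductive step is closed by the standard null-frame commutation formulas for Lie derivatives along conformal Killing fields, whose correction terms respect the null hierarchy on $\{r \geq t+1\}$. The bounds on $\mathcal{L}_{Z^\gamma}\overline{J}$ follow from the same scheme, using that $\overline{J}$ has a purely $\underline{L}$-null component and is supported in $\{1 \leq r-t \leq 2\}$ where all relevant scales are comparable.
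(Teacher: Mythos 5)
Your proof of Parts 1--3 is a direct computation, same as the paper's, with the clean organizing device of rewriting $\overline{F} = \overline{\chi}(t-r)\frac{Q}{4\pi r^2}\,\dr t\wedge\dr r$; this matches the paper's strategy up to notation. Part 4 likewise follows the paper's route: an induction driven by the Lie-derivative identity $(\mathcal{L}_Z F)(X,Y) = Z(F(X,Y)) - F([Z,X],Y) - F(X,[Z,Y])$ and the commutators of $\mathbb{K}$ with the null frame. Your structural observation that, since $\overline{F}(e_A,\cdot)\equiv 0$, the first-order $\alpha$-component reduces to $-2\,b_A(Z)\,\rho(\overline{F})$ with $b_A(Z)$ the $\underline{L}$-coefficient of $[Z,e_A]$, is a nice way to isolate where the extra $\langle t-r\rangle$ factor arises.

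However, your claim that $b_A\equiv 0$ for $Z=\partial_{x^k}$ is false, and in fact contradicts the very identity you quote. From $\omega_l\,\partial_{x^k}(\omega_l^{e_A}) = -\omega_k^{e_A}/r$ (your identity with $c^l/r = \omega_l^{e_A}$) and the expansion $\partial_{x^l} = -\tfrac{\omega_l}{2}\underline{L}+\tfrac{\omega_l}{2}L+\omega_l^{e_B}e_B$, one gets that the $\underline{L}$-coefficient of $[\partial_{x^k},e_A] = \partial_{x^k}(\omega_l^{e_A})\partial_{x^l}$ is $-\tfrac{1}{2}\omega_l\partial_{x^k}(\omega_l^{e_A}) = \tfrac{\omega_k^{e_A}}{2r}\neq 0$. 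Indeed, a direct Cartesian computation yields $\alpha(\mathcal{L}_{\partial_{x^k}}\overline{F})_{e_A} = -\tfrac{Q}{4\pi}\tfrac{\omega_k^{e_A}}{r^3}\overline{\chi}(t-r)$, which is not identically zero. This does not break the conclusion, because the built-in $1/r$ factor in $b_A(\partial_{x^k})$ still gives $\alpha = O(Q/r^3)$, which is bounded by $C\,Q\langle t-r\rangle/\langle t+r\rangle^3$ on $\{r\geq t+1\}$ since $r\gtrsim\langle t+r\rangle$ and $\langle t-r\rangle\geq 1$ there. But the statement as written is a genuine miscalculation; you should replace "$b_A\equiv 0$ for $Z\in\{\partial_t,\partial_{x^k},S\}$" with the correct values ($b_A=0$ for $\partial_t$ and $S$, but $b_A=\omega_k^{e_A}/(2r)$ for $\partial_{x^k}$) and note that the $1/r$ weight, rather than vanishing, is what keeps the bound. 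The rest of the inductive step, appealing to the standard commutators $[Z,X]$ for $Z\in\mathbb{K}$, $X\in\{\underline{L},L,e_\theta,e_\varphi\}$ and the action on $u=t-r$, is the same scheme the paper cites.
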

\begin{proof}
The first part of the statement is given by direct computations. The equations $\nabla^{\mu} {}^* \! \overline{F}_{\mu \nu}=0$, equivalent to $\nabla_{[ \lambda} \overline{F}_{\mu \nu]}=0$, follow from $\overline{F}_{ij}=0$ and from the fact that the electric field associated to $\overline{F}$ is radial, so that $\nabla_i \overline{F}_{ 0j}=\nabla_j \overline{F}_{0i} $. For the other ones, we have
\begin{align*}
 \nabla^{i} \overline{F}_{i0} & = -\frac{Q}{4 \pi} \partial_{x^i} \! \bigg( \frac{x^i}{|x|^3} \overline{\chi}(t-|x|) \! \bigg)\! = -\frac{Q}{4 \pi}\! \left(  \left( \frac{3}{|x|^3}   -3\frac{x_i x^i}{|x|^5} \right) \overline{\chi}(t-|x|)-  \frac{x^ix_i}{|x|^4} \overline{\chi}'(t-|x|) \! \right)\! = \frac{Q}{4 \pi |x|^2} \overline{\chi}'(t-|x|), \\ 
\nabla^{\mu} \overline{F}_{\mu i} & = -\partial_t \overline{F}_{0i} = -\frac{Q}{4\pi} \frac{x^i}{|x|^3} \overline{\chi}'(t-|x|).
\end{align*}
For the third part, recall from \eqref{keva:defLie} the expression of the Lie derivative and use $S(x_i/|x|^3)=-2x_i/|x|^3$, $S(t-|x|)=t-|x|$ as well as $\Omega_{ij}(|x|)=\Omega_{ij}(t)=0$. Finally, the estimates for the null components of the derivatives of $\overline{F}$ and $\overline{J}$ were obtained in \cite[Proposition~$5.3$]{LS} and \cite[Equations~$(3.52a)$--$(3.52c)$]{LS}. They can be derived, through an induction, by exploiting the following properties.
\begin{itemize}
\item For any $Z \in \mathbb{K}$ and any $X, \, Y \in \{ \underline{L}, \, L, \, e_\theta, \, e_\varphi \}$, we have
$$ \mathcal{L}_{ZZ^\gamma} \big(\, \overline{J} \, \big)_X=Z \big( \mathcal{L}_{Z^\gamma} \big( \,\overline{J} \, \big)_X \big)-\mathcal{L}_{Z^\gamma} \big( \, \overline{J} \, \big)_{[Z,X]}, $$
as well as
$$ \mathcal{L}_{ZZ^\gamma}\big(\, \overline{F} \, \big)_{XY}=Z \big( \mathcal{L}_{Z^\gamma} \big(\, \overline{F} \, \big)_{XY} \big)-\mathcal{L}_{Z^\gamma} \big(\, \overline{F} \, \big)_{[Z,X] Y}-\mathcal{L}_{Z^\gamma} \big(\, \overline{F} \, \big)_{X [Z,Y]}.$$
\item The components of the commutation vector fields with respect to the null frame are given by
\begin{align*}
&2\partial_t=L+\underline{L}, \quad 2S=\underline{u}L+u \underline{L}, \quad \partial_{x^k}=-\frac{\omega_k}{2}\underline{L}+\frac{\omega_k}{2}L+\omega_k^{e_A}e_A, \quad \Omega_{0k}=\frac{\omega_k}{2}(\underline{u} L-u \underline{L})+t \omega_k^{e_A} e_A, \\
& \Omega_{ij}=(x^i \omega_j^{e_A}-x^j \omega_i^{e_A})e_A, \quad \; \; \omega_i := \langle \partial_{x^i}, \partial_r \rangle=\frac{x^i}{|x|}, \quad \omega^{e_A}_i:= \langle \partial_{x^i},e_A \rangle, \quad \; \; 1 \leq k \leq 3, \quad 1 \leq i < j \leq 3, 
\end{align*}
where $\omega_i^{e_A}$ is a smooth function of the spherical variables $(\theta, \varphi) \in ]0,\pi[\times ]0,2\pi[$ (see \eqref{kevatalenn:omega} below).
\item The commutator of elements of $\mathbb{K}$ and the null frame are, for $1 \leq i < j \leq 3$, $1 \leq k \leq 3$ and $A \in \{ \theta , \varphi \}$,
\begin{align} 
\nonumber &[\partial_t,L]=[\partial_t, \underline{L}]  = 0, \quad [\partial_t , e_A]=0 , \quad [S,L]  = -L,  \quad [S, \underline{L}]=- \underline{L} , \quad [S,e_A]=-e_A,  \\ \nonumber & [\Omega_{ij}, L]=[\Omega_{ij}, \underline{L}] = 0 , \quad [\Omega_{ij}, e_A]= -e_A(\Omega^B_{ij})e_B-\Omega_{ij}^B[e_A,e_B]^D e_D , \quad \Omega_{ij}^B = r(\omega_i \omega_j^{e_B}-\omega_j \omega_i^{e_B}), \\ \nonumber & [\Omega_{0k}, L] = \frac{t-r}{r} \omega_k^{e_B} e_B-\omega_k L, \quad [\Omega_{0k}, \underline{L}] =- \frac{t+r}{r} \omega^{e_B}_k e_B+\omega_k \underline{L},\\ \nonumber &
 [\Omega_{0k},e_A] = \frac{\omega^{e_A}_k}{2r}((t-r) \underline{L}-(t+r)L)+t \omega_k^{e_B} \slashed{\Gamma}^D_{BA}e_D , \\ \nonumber &  [\partial_{x^k},L]=- [\partial_{x^k},\underline{L}] =\frac{1}{r}(\partial_{x^k}-\frac{x^k}{r}\partial_r)= \frac{1}{r^2} \left( \frac{x^j}{r} \Omega_{kj} \right), \quad [\partial_{x^k},e_A]=\partial_{x^k}(\omega_\ell^{e_A})\partial_{x^\ell},
\end{align}
where $\slashed{\Gamma}_{AB}^D$ are the Christofell symbols in the nonholonomic basis $(e_\theta, e_\varphi)$ of $\mathbb{S}^2$, $\slashed{\nabla}_{e_A}e_B = \slashed{\Gamma}_{AB}^D e_D$. In particular, for any $X \in \mathcal{N}:=\{\underline{L},L,e_\theta,e_\varphi \}$, $[\partial_{x^k},X]=\sum_{Y \in \mathcal{N}} r^{-1}a_Y(\omega)Y$, where $a_Y$ is a smooth function of $(\theta, \varphi)$.
\item For any $1 \leq k \leq 3$ and $1 \leq i < j \leq 3$, we have, with $u=t-|x|$,
 \begin{equation}\label{eqtminusr}
  \partial_t (u)=1, \qquad \partial_{x^k}(u)=-\frac{x^k}{|x|}, \qquad S(u)=u, \qquad \Omega_{ij}(u)=0, \qquad \Omega_{0k}(u)=-\frac{x^k}{|x|}u.
  \end{equation}
\end{itemize}
\end{proof}

We now give a schematic form for $\mathcal{L}_{Z^\gamma} \overline{F}$, which will be useful in order to prove scattering results.
\begin{Pro}\label{Prorad}
Let $Z^\gamma \in \mathbb{K}^{|\gamma|}$ containing $\gamma_T \geq 0$ translations $\partial_{x^\lambda}$. Then, for any cartesian components $0 \leq \mu , \, \nu \leq 3$, we can write $\mathcal{L}_{Z^\gamma} (\overline{F} \,)_{\mu \nu}(t,x)$ as a linear combination of terms of the form
$$ \frac{Q}{|x|^{2+\gamma_T}} P \Big(\frac{x}{|x|},\frac{t}{|x|} \Big) (t-|x|)^p\chi^{(q)}(t-|x|),$$
where $0 \leq p \leq q$ and $P$ is a polynomial. Thus, $r^2\mathcal{L}_{Z^\gamma} (\overline{F} \,)_{\mu \nu}(r+u,r\omega)$ converges as $r \to +\infty$, for all $(u,\omega) \in \R_u \times \mathbb{S}^2_\omega$. If $u \leq -2$, the limit is independent of $u$.
\end{Pro}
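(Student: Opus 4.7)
The plan is to prove the structural identity by induction on $|\gamma|$, and then to read off the convergence statement from the explicit form.

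For the base case $|\gamma|=0$, one reads directly from \eqref{defFoverlin} that $\overline{F}_{0i}=\frac{Q}{4\pi|x|^2}\omega_i\,\overline{\chi}(t-|x|)$ and $\overline{F}_{ij}=0$, which fits the claimed template with $P(\omega,t/|x|)=\omega_i/(4\pi)$, $p=q=0$, $\gamma_T=0$. For the inductive step, I would apply the Leibniz formula \eqref{keva:defLie} for $\mathcal{L}_Z$ to each term of the hypothesis and check closure of the template under each $Z\in\mathbb{K}$:
\begin{itemize}
\item Translations $\partial_t$ and $\partial_{x^k}$: here $\partial_\mu Z^\lambda=0$, so the Lie derivative is a componentwise directional derivative. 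The factors $|x|^{-2-\gamma_T}$, $\omega=x/|x|$ and $t/|x|$ reproduce themselves under differentiation while gaining or keeping the right power of $|x|^{-1}$, and the differentiation of $\chi^{(q)}(t-|x|)$ simply shifts $(p,q)$, using the fact that $\partial_t(t-|x|)=1$ and $\partial_{x^k}(t-|x|)=-\omega_k$.
\item Rotations $\Omega_{ij}$: one uses $\Omega_{ij}(|x|)=\Omega_{ij}(t)=0$ and $\Omega_{ij}(\omega_k)=\delta_{ik}\omega_j-\delta_{jk}\omega_i$ together with the linear coefficients of $\Omega_{ij}$ in $x^\mu$, which add polynomial terms in $\omega$ only.
\item Scaling $S$: use $S(t-|x|)=t-|x|$, $S(|x|^{-n})=-n|x|^{-n}$, $S(\omega)=0=S(t/|x|)$, whence $S((t-|x|)^p\chi^{(q)})=p(t-|x|)^p\chi^{(q)}+(t-|x|)^{p+1}\chi^{(q+1)}$. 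Both outputs preserve the constraint $p\leq q$.
\item Boosts $\Omega_{0k}=t\partial_{x^k}+x^k\partial_t$: use the identities $\Omega_{0k}(t-|x|)=-\omega_k(t-|x|)$, $\Omega_{0k}(\omega_j)=(t/|x|)(\delta_{jk}-\omega_j\omega_k)$, $\Omega_{0k}(|x|^{-n})=-n(t/|x|)\,\omega_k\,|x|^{-n}$ and $\Omega_{0k}(t/|x|)=\omega_k(1-(t/|x|)^2)$, all of which keep the output polynomial in $(\omega,t/|x|)$.
\end{itemize}
In each case the additional terms $\partial_\mu Z^\lambda\,\mathcal{L}_{Z^\gamma}\overline{F}$ arising from \eqref{keva:defLie} are of the same form, and the number of translations in $Z\,Z^\gamma$ matches the exponent $2+\gamma_T$ obtained through the calculation.

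For the limit, set $t=r+u$ and $x=r\omega$, so that $|x|=r$, $t-|x|=u$, $x/|x|=\omega$ and $t/|x|=1+u/r$. A generic term in $r^2\,\mathcal{L}_{Z^\gamma}(\overline{F}\,)_{\mu\nu}(r+u,r\omega)$ reads
$$Q\,r^{-\gamma_T}P\!\left(\omega,1+\tfrac{u}{r}\right)u^p\,\chi^{(q)}(u),$$
which, as $r\to+\infty$, converges to $0$ when $\gamma_T\geq 1$ and to $QP(\omega,1)u^p\chi^{(q)}(u)$ when $\gamma_T=0$. When $u\leq -2$, one has $\overline{\chi}(u)=1$ so $\chi^{(q)}(u)=0$ for $q\geq 1$; the surviving contributions therefore have $q=0$, forcing $p=0$ via $0\leq p\leq q$, and the limit reduces to $QP(\omega,1)$, which no longer depends on $u$.

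The main obstacle is bookkeeping: showing that every one of the (many) terms produced by differentiating a prototype in $(\omega,t/|x|,(t-|x|),\chi^{(q)})$ stays within the prescribed template, and in particular that the interplay between $\Omega_{0k}$ (which mixes $\omega$ and $t/|x|$) and the $\chi^{(q)}$ factors does not destroy the constraint $p\leq q$. This is handled by processing the two sources of new terms separately—the derivative of the prefactor $|x|^{-2-\gamma_T}P(\omega,t/|x|)$, which is absorbed into a new polynomial $\tilde P$, and the derivative of $(t-|x|)^p\chi^{(q)}(t-|x|)$, which either stays within the same $(p,q)$ template (rotations, boosts) or advances to $(p,q+1)$ and $(p+1,q+1)$ (scaling, translations), preserving $p\leq q$ in every case.
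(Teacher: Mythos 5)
Your strategy coincides with the paper's (a terse induction via the Leibniz formula \eqref{keva:defLie} and the identities \eqref{eqtminusr}), and your treatment of the rotations, the scaling and the boosts is correct. The gap is in the translation case: your claim that the factors gain ``the right power of $|x|^{-1}$'' fails when $\partial_t$ or $\partial_{x^k}$ falls on the factor $(t-|x|)^p\chi^{(q)}(t-|x|)$. That derivative produces $\pm\big(p(t-|x|)^{p-1}\chi^{(q)}+(t-|x|)^p\chi^{(q+1)}\big)$ with \emph{no} extra factor of $|x|^{-1}$, while $\gamma_T$ has increased by one, so the exponent $2+\gamma_T$ is not reproduced and the induction does not close. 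This cannot be repaired inside the stated template: already $\mathcal{L}_{\partial_t}(\overline{F}\,)_{0i}=\frac{Q}{4\pi}\,\omega_i\,|x|^{-2}\,\overline{\chi}'(t-|x|)$, and along $t=r+u$, $x=r\omega$ one gets $r^2\mathcal{L}_{\partial_t}(\overline{F}\,)_{0i}\to\frac{Q}{4\pi}\omega_i\overline{\chi}'(u)\neq 0$ for $u\in(-2,-1)$, whereas any combination of terms $|x|^{-3}P(\omega,t/|x|)(t-|x|)^p\chi^{(q)}(t-|x|)$ would give limit $0$. So the displayed template (as stated in the proposition) is itself too strong on the transition shell $1\le|x|-t\le 2$; the extra decay $|x|^{-\gamma_T}$ only holds where $\overline{\chi}$ is locally constant.

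The induction, and both convergence conclusions, do go through if you weaken the template to a linear combination of $\frac{Q}{|x|^{2+k}}P(\omega,t/|x|)(t-|x|)^p\chi^{(q)}(t-|x|)$ with $0\le k\le\gamma_T$, $0\le p\le q$, together with the invariant that $k=\gamma_T$ whenever $q=0$: indeed $k$ increments exactly when a translation hits the prefactor, and a translation hitting the cutoff factor always outputs a term with $q\ge1$, so the invariant is preserved. Then $r^2$ times each term equals $Q\,r^{-k}P(\omega,1+u/r)\,u^p\chi^{(q)}(u)$, which converges as $r\to+\infty$; and for $u\le-2$ all terms with $q\ge1$ vanish, leaving only $q=p=0$, $k=\gamma_T$ terms, whose limit $QP(\omega,1)$ (or $0$ if $\gamma_T\ge1$) is independent of $u$. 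A further small slip: under a translation the pair $(p,q)$ moves to $(p-1,q)$ and $(p,q+1)$, not to $(p,q+1)$ and $(p+1,q+1)$; this is harmless since $p\le q$ is preserved either way.
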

\begin{proof}
The result follows from an induction, that the elements of $\mathbb{K}$ are either translations $\partial_{x^\lambda}$ or homogeneous vector fields as well as \eqref{eqtminusr}.
\end{proof}

In Section \ref{SecMaxasymp}, for simplicity, we will work with a similar but slightly different quantity to $\overline{F}$. The motivation is that its cartesian componenents will be solution to the homogeneous wave equation. The singular electromagnetic field $\frac{Qx_i}{4\pi|x|^3} \dr t \wedge \dr x^i$, generated by a point charge $Q$ located at $x=0$, derives from the potential $A=Q(4\pi  r)^{-1} \dr t$. It turns out that $A$ satisfies the Lorenz gauge $\partial_{x^\mu} A^\mu=0$, so that $\Box A_\lambda =0$ on $\R\times (\R^3\setminus \{ 0 \})$. To deal with our evolution problem and the singularity of the Newton potential, we introduce 
$$\overline{A}(t,x):= \overline{\chi}(t-|x|) A(t,x)=\frac{Q}{4 \pi |x|} \overline{\chi} (t-|x|) \dr t,$$
which is smooth and verifies $\Box \overline{A}_\mu =0$ as well. It motivates the introduction of
$$ \widetilde{F}(t,x) := \mathrm{d} \overline{A}(t,x) =\overline{F}(t,x)+\frac{Qx_i}{4 \pi |x|^2} \overline{\chi}' (t-|x|) \mathrm{d} t \wedge \dr x^i,$$
which, in view of $[\Box,\partial_{x^\lambda}]=0$ and $\Box \overline{A}_\lambda =0$, verifies $\Box \widetilde{F}_{\mu \nu}=0$.

\begin{Pro}\label{Propurechargetilde}
For any multi-index $\gamma$ and $0 \leq \mu, \, \nu \leq 3$, we have $ \Box \, \mathcal{L}_{Z^\gamma} ( \widetilde{F} )_{\mu \nu} =0$. Moreover, 
$$ \forall \, (t,x) \in \R_+ \times \R^3, \qquad \big| \mathcal{L}_{Z^\gamma} \big( \widetilde{F}-\overline{F} \big) \big|(t,x) \leq C_\gamma Q \, \langle t+|x| \rangle^{-1} \, \mathds{1}_{1 \leq |x|-t \leq 2} .$$
\end{Pro}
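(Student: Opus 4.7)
The plan has two independent pieces. For the wave equation $\Box \mathcal{L}_{Z^\gamma}(\widetilde{F})_{\mu\nu} = 0$, I would first check that the potential $\overline{A}_\mu$ itself satisfies $\Box \overline{A}_\mu = 0$ on $\R_+ \times \R^3$. The only nontrivial component is $\overline{A}_0 = \frac{Q}{4\pi|x|}\overline{\chi}(t-|x|)$; since $\overline{\chi}(s) = 0$ for $s \geq -1$, the factor $\overline{\chi}(t-|x|)$ vanishes identically in a neighborhood of $x=0$ for every $t \geq 0$, so smoothness is not an issue. The cancellation $\Box \overline{A}_0 = 0$ will then come from expanding $\Delta(|x|^{-1}\overline{\chi}(t-|x|))$ with the product rule, using $\Delta(|x|^{-1}) = 0$ off the origin and $\Delta |x| = 2/|x|$: the $\overline{\chi}''$ and $\overline{\chi}'$ contributions coming from $-\partial_t^2$ and from $\Delta$ will cancel exactly. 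Since $\widetilde{F} = \dr\overline{A}$, i.e.\ $\widetilde{F}_{\mu\nu} = \partial_\mu \overline{A}_\nu - \partial_\nu \overline{A}_\mu$, the identity $\Box \widetilde{F}_{\mu\nu} = 0$ follows immediately from $[\Box, \partial_\lambda] = 0$.

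To transfer this to $\mathcal{L}_{Z^\gamma}\widetilde{F}$ I plan to induct on $|\gamma|$. The point is that each $Z \in \mathbb{K}$ is affine in $(t,x)$, so the coefficients $\partial_\mu Z^\lambda$ entering \eqref{keva:defLie} are constant, and $\mathcal{L}_Z F_{\mu\nu}$ becomes a constant-coefficient linear combination of $Z(F_{\alpha\beta})$ and of $F_{\alpha\beta}$. On scalars one has $[\Box,Z]=0$ for the ten Killing fields of $\mathbb{K}\setminus\{S\}$ and $[\Box,S]=2\Box$, so if every cartesian component of a $2$-form $F$ is wave-free, the same conclusion holds for every component of $\mathcal{L}_Z F$. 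Iterating yields the first assertion.

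For the pointwise bound I would start from the explicit expression already written in the statement,
\[ (\widetilde{F}-\overline{F})(t,x) \,=\, \frac{Q\,x_i}{4\pi|x|^2}\,\overline{\chi}'(t-|x|)\,\dr t \wedge \dr x^i. \]
Because $\overline{\chi}' \equiv 0$ outside $[-2,-1]$, this $2$-form is supported in $\{1 \leq |x|-t \leq 2\}$, a region on which $|x|\geq 1$ and $|x| \leq t+2$, so $|x| \sim \langle t+|x|\rangle$. For $|\gamma|=0$ the bound is then just $|x|^{-2} \lesssim \langle t+|x|\rangle^{-1}$. For $|\gamma|\geq 1$ I would run the same induction as in the proof of Proposition \ref{Prorad}: iterated Lie derivatives expand $\mathcal{L}_{Z^\gamma}(\widetilde{F}-\overline{F})_{\mu\nu}$ into a finite sum of terms of the form $Q\,|x|^{-2-\gamma_T}\,P(x/|x|,t/|x|)\,(t-|x|)^p\,\overline{\chi}^{(q)}(t-|x|)$ with $P$ polynomial and $1 \leq q$, $0 \leq p \leq q$. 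The crucial point is that $q\geq 1$ throughout, because the seed is $\overline{\chi}'$ and no vector field of $\mathbb{K}$ decreases the order of derivative of $\overline{\chi}$; every such summand therefore stays supported in $\{1 \leq |x|-t \leq 2\}$, where $P$ is bounded, $|(t-|x|)^p|\leq 2^p$, and $|x|^{-2-\gamma_T}\lesssim \langle t+|x|\rangle^{-1}$. The main obstacle is purely bookkeeping — tracking how repeated Killing and scaling derivatives act on $x_i/|x|^k$ and on $\overline{\chi}^{(q)}(t-|x|)$ — but this is exactly the calculation already carried out for $\overline{F}$ in the proof of Proposition \ref{Propurecharge} and brings no new difficulty.
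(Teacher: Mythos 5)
Your argument matches the paper's proof in both parts: for the wave equation you use $\Box\overline{A}_\mu=0$, $\widetilde F=\dr\overline A$, the Lie-derivative formula with constant coefficients $\partial_\mu Z^\lambda$, and $[\Box,Z]=0$ or $[\Box,S]=2\Box$; for the pointwise bound you use the support of $\overline\chi^{(q)}$ for $q\geq 1$ and the observation that no element of $\mathbb{K}$ lowers the order of differentiation of $\overline\chi$, exactly as in the paper's bullet list. The only difference is that the paper already records $\Box\widetilde F_{\mu\nu}=0$ (the $|\gamma|=0$ case) in the paragraph preceding the proposition, so you rederive the base case; that is harmless redundancy, not a gap.
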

\begin{proof}
These results essentially follows from the next properties.
\begin{itemize}
\item For any $Z \in \mathbb{K} \setminus \{ S \}$, $|\Box, Z]=0$ and $[\Box, S]=2 \, \Box$.
\item For any $Z=Z^\lambda \partial_{x^\lambda} \in \mathbb{K}$ and any $2$-form $H$, we have $\mathcal{L}_Z(H)_{\mu \nu}= Z(H_{\mu \nu})+\partial_{x^\mu}(Z^\lambda)H_{\lambda \nu}+\partial_{x^\nu}(Z^\lambda)H_{\mu \lambda }$.
\item $Z^\lambda$ is either a monomial, $\pm x^\mu$, or a constant, $0$ or $1$.
\item We have $|\chi^{(n)} (t-|x|) | \leq \mathds{1}_{1 \leq |x|-t \leq 2}$ for any $n \geq 1$. Hence, $|x| \gtrsim \langle t+|x| \rangle$ on the support of $\chi^{(n)}$.
\end{itemize}
Using $1.$, $2.$ and $3.$, together with an induction, one gets $ \Box \, \mathcal{L}_{Z^\gamma} ( \widetilde{F} )_{\mu \nu} =0$ for any $Z^\gamma \in \mathbb{K}^{\gamma|}$. The pointwise decay estimates ensue from $3.$, $4.$ and \eqref{eqtminusr}.
\end{proof}

\section{Scattering results for the Maxwell equations}\label{SecscattMax}
We fix, for all this section, an initial time $T \geq 0$.
\subsection{Forward evolution of electromagnetic fields}

This section is devoted to general scattering results for the Maxwell equations in Minkowski spacetime. We start by giving a definition of the radiation field of an electromagnetic field which holds in a very general setting. 
\begin{Def}\label{Defrad}
Let $F$ be a $2$-form defined on $[T,+\infty[ \times \R^3$. If there exists a $1$-form $\underline{\alpha}^{\mathcal{I}^+}$, defined on $\R_u \times \mathbb{S}^2_\omega$ and tangential to the $2$-spheres, such that
$$ r\underline{\alpha}(F)(r+u,r\omega) \xrightharpoonup[r \to + \infty]{} \underline{\alpha}^{\mathcal{I}^+}(u,\omega) \qquad \qquad \text{in $\mathcal{D}' \big( \R_u \times \mathbb{S}^2_\omega \big)$,}$$
we will say that $\underline{\alpha}^{\mathcal{I}^+}$ is the radiation field of $F$ along future null infinity $\mathcal{I}^+$.
\end{Def}
\begin{Rq}
The convergence could also be written $r\underline{\alpha}(F)(u,\underline{u},\omega) \xrightharpoonup[]{} \underline{\alpha}^{\mathcal{I}^+}(u,\omega)$, as $\underline{u} \to +\infty$, where it is implied that $r\underline{\alpha}(F)(\cdot,\underline{u},\cdot)$ is extended by $0$ for $u<2T-\underline{u}$ and $u >\underline{u}$.
\end{Rq}

We prove now that any solution $F$ to the Maxwell equations scatters provided that the initial data as well as the source term are sufficiently regular and enjoy enough decay. Then, we relate the derivatives of its radiation field to the ones of $\mathcal{L}_{Z^\gamma} F$. For this, we will use the following quantities depending only on the spherical variables,
\begin{align}\label{kevatalenn:omega}
&\omega_i:= \langle \partial_{x^i}, \partial_r \rangle= \frac{x^i}{|x|} , \qquad \omega^{e_A}_i:=\langle \partial_{x^i}, e_A \rangle, \qquad 1 \leq i \leq 3, \quad A \in \{\theta, \varphi \},\\ \nonumber &\omega^{e_\theta}_1=\cos (\varphi ) \cos (\theta), \quad \omega^{e_\theta}_2=\sin (\varphi) \cos (\theta), \quad \omega^{e_\theta}_3= -\sin (\theta), \quad  \omega^{e_\varphi}_1=-\sin(\varphi), \quad \omega^{e_\varphi}_2=\cos (\varphi), \quad \omega^{e_\varphi}_3=0,
\end{align}
By an abuse of notation, we will sometimes denote the functions $(u,\omega) \mapsto u$, $(u,\omega) \mapsto \omega_i$ and $(u,\omega) \mapsto \omega_i^{e_A}$ by $u$, $\omega_i$ and $\omega_i^{e_A}$. The assumptions of the next result will be satisfied by the electromagnetic field and its derivatives in the context of the study of the solutions to the Vlasov-Maxwell system with a small distribution function.
\begin{Pro}\label{blackboxscat}
Let $F$ be a sufficiently regular solution to the Maxwell equations \eqref{eq:Maxeqn}, with source term $J$. Let $J^\gamma$ be the current such that
$$ \forall \, |\gamma| \leq 1, \qquad  \nabla^\mu \mathcal{L}_{Z^\gamma}(F)_{\mu \nu} = J_\nu^\gamma, \qquad \nabla^\mu {}^* \! \mathcal{L}_{Z^\gamma}(F)_{\mu \nu} =0$$
and assume that there exists $D \geq 0$ such that
$$\sup_{|\gamma| \leq 1} \int_{\R^3_x} \left| \mathcal{L}_{Z^\gamma} F \right|^2(T,x) \dr x+\sup_{|\gamma| \leq 1} \, \sup_{t \geq T} \int_{\R^3_x} \langle t+|x| \rangle^3 |J^\gamma|^2(t,x) \dr x \leq D.$$
Then, $\sup_{t \ge T} \| \mathcal{L}_{Z^\gamma}F(t,\cdot) \|_{L^2_x}^2 \lesssim D$ for any $|\gamma| \leq 1$ and $F$ has a radiation field $\underline{\alpha}^{\mathcal{I}^+}$ along $\mathcal{I}^+$ satisfying
$$ \forall \, U \geq 1, \; \underline{u} \geq 2U, \quad \int_{|u| \leq U} \int_{\mathbb{S}^2_\omega} \left| \big[ r \underline{\alpha}(F) \big](u, \underline{u},\omega)-\underline{\alpha}^{\mathcal{I}^+}\!(u,\omega) \right|^2 \dr \mu_{\mathbb{S}^2_\omega} \dr u \lesssim \frac{UD }{ \langle \underline{u} \rangle^{ \frac{3}{4}}}, \quad \qquad \left\| \underline{\alpha}^{\mathcal{I}^+} \right\|_{L^2(\R_u \times \mathbb{S}^2_\omega)} \lesssim D^{\frac{1}{2}}  .$$
For any $Z \in \mathbb{K}$, there exists a $1$-form $\underline{\alpha}^{\mathcal{I}^+}_{ \, Z}$, defined on $\R_u \times \mathbb{S}^2_\omega$ and tangential to the $2$-spheres, such that
\begin{equation*}
 \big[ r \underline{\alpha}(\mathcal{L}_{Z}F) \big](\cdot , \underline{u},\cdot) \xrightharpoonup[\underline{u} \to + \infty]{} \underline{\alpha}^{\mathcal{I}^+}_{ \, Z} \qquad \text{ in $L^2 \big(\R_u \times \mathbb{S}^2_\omega \big)$}.
\end{equation*}
Moreover, we have for any $1 \leq k \leq 3$ and $1 \leq i < j \leq 3$,
\begin{alignat*}{2}
  \underline{\alpha}^{\mathcal{I}^+}_{\, \partial_t} &= \nabla_u \underline{\alpha}^{\mathcal{I}^+}, \qquad &&\underline{\alpha}^{\mathcal{I}^+}_{\, \partial_{x^k}}=-\omega_k \nabla_u \underline{\alpha}^{\mathcal{I}^+}, \qquad \qquad \underline{\alpha}^{\mathcal{I}^+}_{\, S}=u\nabla_u \underline{\alpha}^{\mathcal{I}^+}+\underline{\alpha}^{\mathcal{I}^+}, \\
\underline{\alpha}^{\mathcal{I}^+}_{\, \Omega_{ij}}& = \mathcal{L}_{\Omega_{ij}} \big( \underline{\alpha}^{\mathcal{I}^+} \big), \qquad && \underline{\alpha}^{\mathcal{I}^+}_{\, \Omega_{0k}}=- \omega_k u\nabla_u  \underline{\alpha}^{\mathcal{I}^+}-2\omega_k \underline{\alpha}^{\mathcal{I}^+}+ \omega^{e_A}_k \slashed{\nabla}_{e_A}  \underline{\alpha}^{\mathcal{I}^+} .
\end{alignat*}
\end{Pro}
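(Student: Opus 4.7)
The plan is to combine the energy inequality of Proposition~\ref{Propartialt} applied to $\mathcal{L}_{Z^\gamma}F$ for $|\gamma|\leq 1$ with the good transport equation for $r\underline{\alpha}(F)$ provided by Corollary~\ref{Corgoodnull}, and then compute each announced limit using the commutator identities between $\mathbb{K}$ and the null frame recalled in the proof of Proposition~\ref{Propurecharge}. First, from the hypothesis I extract $\|J^\gamma(\tau,\cdot)\|_{L^2_x}^2 \leq D\langle \tau\rangle^{-3}$ by factoring $\langle t+|x|\rangle^3$ out and using $\langle t+|x|\rangle\geq \langle \tau \rangle$, so that $\int_T^{+\infty}\|J^\gamma\|_{L^2_x}\dr \tau \lesssim D^{1/2}$. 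Proposition~\ref{Propartialt} then yields $\sup_{t\geq T}\|\mathcal{L}_{Z^\gamma}F(t,\cdot)\|_{L^2_x}^2\lesssim D$ together with the flux bounds $\sup_{\underline{u}}\int_{\underline{C}_{\underline{u}}}|\underline{\alpha}(\mathcal{L}_{Z^\gamma}F)|^2\dr \mu \lesssim D$ and $\sup_{u}\int_{C_u^T}(|\rho|^2+|\sigma|^2)(\mathcal{L}_{Z^\gamma}F)\dr \mu \lesssim D$ for all $|\gamma|\leq 1$.

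For the existence of $\underline{\alpha}^{\mathcal{I}^+}$, I exploit $L=2\partial_{\underline{u}}$: for $U\geq 1$, $\underline{u}_1\geq 2U$, $\underline{u}_2\geq \underline{u}_1$ and $|u|\leq U$,
\[
r\underline{\alpha}(F)(u,\underline{u}_2,\omega)-r\underline{\alpha}(F)(u,\underline{u}_1,\omega)=\tfrac{1}{2}\int_{\underline{u}_1}^{\underline{u}_2}L(r\underline{\alpha}(F))(u,\underline{u}',\omega)\dr \underline{u}'.
\]
A weighted Cauchy-Schwarz with weight $\langle \underline{u}'\rangle^{3/2}$ combined with the pointwise bound $|L(r\underline{\alpha}(F))|^2\lesssim r^2|J|^2+\sup_{|\kappa|\leq 1}(|\rho|^2+|\sigma|^2)(\mathcal{L}_{Z^\kappa}F)$ from Corollary~\ref{Corgoodnull} reduces everything to two spacetime integrals on the slab $\{\underline{u}_1\leq t+|x|\leq \underline{u}_2,\;|t-|x||\leq U\}$. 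The $J$-term is converted to Cartesian coordinates using $\dr x\dr t=\tfrac{1}{2}r^2\dr u\dr\underline{u}\dr \mu_{\mathbb{S}^2}$ and controlled by $\lesssim D\langle \underline{u}_1\rangle^{-2}(\underline{u}_2-\underline{u}_1+U)$ after using $\langle t+|x|\rangle\geq\underline{u}_1$ and the assumption. The $\rho,\sigma$-term exploits that $r\geq \underline{u}'/4$ on this domain to write $\langle \underline{u}'\rangle^{3/2}\leq 16 r^2\langle \underline{u}_1\rangle^{-1/2}$ and then invokes the $C_u^T$-flux bound from Step~1, producing $\lesssim UD\langle \underline{u}_1\rangle^{-1}$. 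On dyadic blocks $\underline{u}_2\leq 2\underline{u}_1$ this gives a Cauchy bound $\lesssim UD\langle \underline{u}_1\rangle^{-1}$, which when iterated on dyadic intervals yields the announced pointwise rate. The $L^2_{\mathrm{loc}}$-limit defines $\underline{\alpha}^{\mathcal{I}^+}$, and the global bound $\|\underline{\alpha}^{\mathcal{I}^+}\|_{L^2(\R_u\times\mathbb{S}^2)}\lesssim D^{1/2}$ follows from the $\underline{C}_{\underline{u}}$-flux of $\underline{\alpha}(F)$ and Fatou's lemma.

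For each $Z\in\mathbb{K}$, the same flux argument applied to $\mathcal{L}_ZF$ provides a uniform-in-$\underline{u}$ bound for $r\underline{\alpha}(\mathcal{L}_ZF)(\cdot,\underline{u},\cdot)$ in $L^2(\R_u\times\mathbb{S}^2)$, so weak limits exist along subsequences. I identify them through the expansion
\[
\underline{\alpha}(\mathcal{L}_ZF)_{e_A} = Z\big(\underline{\alpha}(F)_{e_A}\big)-F\big([Z,e_A],\underline{L}\big)-F\big(e_A,[Z,\underline{L}]\big),
\]
the decompositions of $Z$ and of $[Z,\underline{L}],[Z,e_A]$ in the null frame listed in the proof of Proposition~\ref{Propurecharge}, and the identity $\Omega_{0k}(r)=t\omega_k$. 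After multiplying by $r$, each term becomes a linear combination of $\partial_u$, $u\partial_u$, $\mathcal{L}_{\Omega_{ij}}$ and $\slashed{\nabla}_{e_A}$ applied to $r\underline{\alpha}(F)$ (possibly multiplied by $\omega_k$, $\omega_k^{e_A}$, or the scalar $\underline{u}/r\to 2$), plus correction terms involving the rapidly decaying quantities $r^2\rho(F)$, $r^2\sigma(F)$, $r\alpha(F)$ and $\underline{u}\,L(r\underline{\alpha}(F))$. The principal terms converge in the weak $L^2$ topology to the announced expressions, using the rate from Step~2 for $\underline{\alpha}$ and the identity $t e_A=(t/r)\,re_A$ with $t/r\to 1$ for the angular parts; the correction terms vanish in the weak limit by testing against smooth compactly supported $\phi(u,\omega)$ and invoking the fluxes of Step~1 (for $r^2\rho, r^2\sigma, r\alpha$) or the decay encoded in Step~2 (for $\underline{u}\,L(r\underline{\alpha}(F))$).

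The main obstacle is the boost $Z=\Omega_{0k}$, whose null-frame expression $\omega_k\underline{u}\partial_{\underline{u}}-\omega_k u\partial_u+t\omega_k^{e_B}e_B$ contains the factor $\omega_k\underline{u}\partial_{\underline{u}}$: acting on $r\underline{\alpha}(F)$ this produces $\tfrac{\omega_k\underline{u}}{2}L(r\underline{\alpha}(F))$, an indeterminate $\infty\cdot 0$ quantity as $\underline{u}\to+\infty$. The resolution relies on a careful cancellation: the $\underline{u}/r=(t+r)/r$ weights present in $[\Omega_{0k},\underline{L}]=-\tfrac{t+r}{r}\omega_k^{e_B}e_B+\omega_k\underline{L}$ and in $[\Omega_{0k},e_A]=\tfrac{\omega_k^{e_A}}{2r}((t-r)\underline{L}-(t+r)L)+t\omega_k^{e_B}\slashed{\Gamma}^D_{BA}e_D$ reorganise the problematic contribution into $\underline{u}\,L(r\underline{\alpha}(F))$, which vanishes weakly by Step~2 and Corollary~\ref{Corgoodnull}, together with the good null components $(t+r)\rho(F)$ and $(t+r)\sigma(F)$, whose $\underline{u}$-integrals are controlled by the $C_u^T$-fluxes. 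The surviving finite term $-2\omega_k\underline{\alpha}^{\mathcal{I}^+}$ appears from combining $-\omega_k\underline{\alpha}(F)_{e_A}$ (from $[\Omega_{0k},\underline{L}]$) with $-t\omega_k\underline{\alpha}(F)_{e_A}$ (from $\Omega_{0k}(r\underline{\alpha}(F))=\Omega_{0k}(r)\cdot\underline{\alpha}(F)+r\,\Omega_{0k}\underline{\alpha}(F)$ and $\Omega_{0k}(r)=t\omega_k$), yielding $-\omega_k(t+r)\underline{\alpha}(F)_{e_A}=-\omega_k(\underline{u}/r)\,r\underline{\alpha}(F)_{e_A}\to -2\omega_k\underline{\alpha}^{\mathcal{I}^+}_{e_A}$, which is exactly the conformal weight $1$ of $\underline{\alpha}$ under the boost. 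An analogous but simpler cancellation produces the term $\underline{\alpha}^{\mathcal{I}^+}$ in $\underline{\alpha}^{\mathcal{I}^+}_{S}$.
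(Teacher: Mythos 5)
Your proof is correct and, for the analytic core, follows the same route as the paper: the preliminary reduction $\|J^\gamma(\tau,\cdot)\|_{L^2_x}^2\leq D\langle\tau\rangle^{-3}$ feeding Proposition~\ref{Propartialt}, then the fundamental theorem of calculus in $\underline{u}$ combined with Corollary~\ref{Corgoodnull}, a weighted Cauchy--Schwarz, and the $C_u^T$- and $\underline{C}_{\underline{u}}$-fluxes. Your dyadic organisation of the Cauchy estimate actually yields the slightly stronger rate $UD\langle\underline{u}\rangle^{-1}$ for the squared $L^2$ difference, which of course implies the stated $\langle\underline{u}\rangle^{-3/4}$. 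The one genuine divergence is in the identification of $\underline{\alpha}^{\mathcal{I}^+}_{\,Z}$: the paper stops at establishing the list of weak convergences for $r\underline{\alpha}(F)$, $r^2L\underline{\alpha}(F)$, $r\rho$, $r\sigma$, $|F|$ and the angular derivatives, and then invokes Proposition~B.2 of \cite{scat} to conclude, whereas you carry out the Lie-derivative/commutator bookkeeping explicitly, including the delicate boost case where the $\frac{\omega_k\underline{u}}{2}L(r\underline{\alpha}(F))$ term must be absorbed via Corollary~\ref{Corgoodnull} and the conformal weight $-2\omega_k\underline{\alpha}^{\mathcal{I}^+}$ must be assembled from $\Omega_{0k}(r)=t\omega_k$ and $[\Omega_{0k},\underline{L}]$. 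This makes the argument self-contained at the cost of length; your computation reproduces correctly what the cited external result encapsulates. One presentational point to make explicit: the vanishing of the correction terms $r\rho$, $r\sigma$, $|F|$ and $\underline{u}\,L(r\underline{\alpha}(F))$ follows from the flux bounds only along a subsequence of advanced times (the fluxes control $\underline{u}$-integrals, not pointwise-in-$\underline{u}$ norms), so you should state, as the paper does, that the identification is first performed along a subsequence and then upgraded to full weak convergence by uniqueness of the identified limit; your phrase ``weak limits exist along subsequences'' indicates you have this in mind, but the upgrade step deserves a sentence.
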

\begin{proof}
Working with the null coordinates $(u,\underline{u},\omega)$, where $\omega \in \mathbb{S}^2$, $u=t-r$ and $\underline{u}=t+r$, will be convenient here. Note first that the assumption on the source term and Proposition \ref{Propartialt} provide, for all $|\gamma| \leq 1$,
\begin{align}
  \sup_{t \in \R_+}\| \mathcal{L}_{Z^\gamma}F(t,\cdot) \|_{L^2_x}^2+ \sup_{\underline{u} \in \R_+} \int_{u=2T-\underline{u}}^{\underline{u}}  \int_{\mathbb{S}^2_\omega}  \big|r \underline{\alpha} (\mathcal{L}_{Z^\gamma}F)\big|^2(u,\underline{u},\omega) \dr \mu_{\mathbb{S}^2_\omega} \dr u &  \nonumber \\
  + \sup_{u \in \R} \int_{\underline{u}=2T-u}^{+\infty} \int_{\mathbb{S}^2_\omega}  \big[|r \rho  (\mathcal{L}_{Z^\gamma}F)|^2+|r \sigma (\mathcal{L}_{Z^\gamma}F)|^2 \big](u,\underline{u},\omega) \dr \mu_{\mathbb{S}^2_\omega} \dr \underline{u}  & \lesssim \| \mathcal{L}_{Z^\gamma}F(0,\cdot) \|_{L^2_x}^2+\bigg|\int_{\tau=0}^{+\infty} \frac{ D^{\frac{1}{2}} \, \mathrm{d} \tau}{\langle \tau \rangle^{\frac{3}{2}}} \bigg|^2  \lesssim D. \nonumber
 \end{align}
In the following, in order to avoid technicalities related to the domain of integrations, we extend $F$ and its derivatives by $0$ for $u > \underline{u}$ and $u<2T-\underline{u}$.

 Let us prove now that $F$ admits a radiation field. For this, recall that $L=2\partial_{\underline{u}}$. Then, remark that for a smooth function $\psi :\R_+ \times \R^3_x \to \R$ and for all $0 \leq \underline{u}_1 \leq \underline{u}_2$, $\omega \in \mathbb{S}_\omega$,
$$
 \forall \, |u| \leq \underline{u}_1, \qquad | \psi(u,\underline{u}_2,\omega)-\psi(u,\underline{u}_1,\omega)| \leq 2\int_{\underline{u}=\underline{u}_1}^{\underline{u}_2}|L \psi|(u,\underline{u},\omega) \dr \underline{u}.
 $$
We then obtain, applying Minkowski's integral inequality, that for $U \ge 1$ and for all $2U \leq \underline{u}_1 \leq \underline{u}_2$,
\begin{align*}
\bigg| \int_{|u|\leq U}  \int_{\mathbb{S}^2_\omega}&\left| \big[r \underline{\alpha}(F)\big](u,\underline{u}_1,\omega)-\big[r \underline{\alpha}(F)\big](u,\underline{u}_2,\omega) \right|^2\dr  \mu_{\mathbb{S}^2_\omega} \dr u \bigg|^{\frac{1}{2}} \\
& \qquad \qquad \qquad \quad \lesssim \int_{\underline{u}=\underline{u}_1}^{\underline{u}_2}  \bigg| \int_{|u| \leq U}  \int_{\mathbb{S}^2_\omega}\left|\nabla_L \big[r \underline{\alpha}(F)\big](u,\underline{u},\omega)  \right|^2 \dr \mu_{\mathbb{S}^2_\omega} \dr u \bigg|^{\frac{1}{2}} \! \dr \underline{u} \\
& \qquad \qquad \qquad \quad \lesssim  \frac{1}{\langle \underline{u}_1 \rangle^{\frac{1}{4}}}\bigg| \int_{\underline{u}=\underline{u}_1}^{+\infty}  \int_{|u| \leq U} \int_{\mathbb{S}^2_\omega} \frac{\langle \underline{u}\rangle^{\frac{3}{2}}}{r^2}\left|r \nabla_L \big[r \underline{\alpha}(F)\big](u,\underline{u},\omega)  \right|^2 \dr \mu_{\mathbb{S}^2_\omega} \dr u  \dr \underline{u}  \bigg|^{\frac{1}{2}} .
\end{align*}
The goal now consists in bounding the right hand side by $\langle \underline{u}_1 \rangle^{-\frac{3}{8}}$. For this, recall from Corollary \ref{Corgoodnull} that
\begin{equation}\label{eq:Lunderlinealpha}
 \left|\nabla_L \big(r \underline{\alpha}(F)\big) \right|  \lesssim r|J|+\sup_{|\kappa| \leq 1} \left| \rho (\mathcal{L}_{Z^\kappa}F) \right|+\left|\sigma (\mathcal{L}_{Z^\kappa}F) \right| .
\end{equation}
Consider $|\kappa| \leq 1$ and remark that we have $2r=\underline{u}-u \gtrsim \langle \underline{u} \rangle$ on the domain of integration since $\underline{u} \geq 2U$ and $|u| \leq U$. We then have, using the uniform boundedness of the flux of $\rho(\mathcal{L}_{Z^\kappa} F)$ and $\sigma(\mathcal{L}_{Z^\kappa} F)$ on outgoing null cones,
$$ \int_{\underline{u}=\underline{u}_1}^{+\infty}  \int_{|u| \leq U} \int_{\mathbb{S}^2_\omega} \frac{\langle \underline{u}\rangle^{\frac{3}{2}}}{r^2}\left|\big(\left|r \rho (\mathcal{L}_{Z^\kappa}F) \right|+\left|r\sigma (\mathcal{L}_{Z^\kappa}F) \right|\big)  \right|^2\! (u,\underline{u},\omega) \dr \mu_{\mathbb{S}^2_\omega} \dr u \, \dr \underline{u} \lesssim \int_{|u| \leq U}  \frac{D \,\dr u}{\langle \underline{u}_1 \rangle^{\frac{1}{2}}}\lesssim \frac{U  D }{\langle \underline{u}_1 \rangle^{\frac{1}{2}}} . $$
As the domaine of integration is included in $\{t+|x| \geq \underline{u}_1 \}$ and $ r^2\dr \mu_{\mathbb{S}^2_\omega} \dr u \, \dr \underline{u}=2\dr x \dr t$, we have, in view of the assumptions on the source term,
\begin{align*}
 \int_{\underline{u}=\underline{u}_1}^{+\infty}  \int_{|u| \leq U} \int_{\mathbb{S}^2_\omega}  \frac{\langle \underline{u}\rangle^{\frac{3}{2}}}{r^2}\left| r^2 J \right|^2\! (u,\underline{u},\omega)  \dr \mu_{\mathbb{S}^2_\omega} \dr u \, \dr \underline{u}  \leq \frac{2}{\langle \underline{u}_1 \rangle^{\frac{1}{4}}}\int_{t=\underline{u}_1}^{+\infty}  \frac{1}{\langle t \rangle^{\frac{5}{4}}}\int_{\R^3_x} \langle t+|x| \rangle^{3} |J|^2(t,x)\dr x \dr t \lesssim \frac{D}{\langle \underline{u}_1 \rangle^{\frac{1}{4}}}.
\end{align*} 
The previous estimates imply that $\underline{u} \mapsto \big(r \underline{\alpha}(F)\big)(\cdot,\underline{u},\cdot)$ is Cauchy, and then converges, in $L^2([-U,U] \times \mathbb{S}^2_\omega)$. Furthermore, the rate of convergence is bounded below by $U D \, \langle \underline{u} \rangle^{-3/8}$. 

We proved that for any $Z \in \mathbb{K}$, $\underline{u} \mapsto [ r\underline{\alpha}( \mathcal{L}_Z F) ](\cdot, \underline{u} , \cdot)$ is uniformly bounded in $L^2(\R_u \times \mathbb{S}^2_\omega)$. It implies the existence of a strictly increasing and unbounded sequence $(\underline{u}_n)_{n \geq 0}$ as well as $1$-forms tangential to the $2$-spheres $\underline{\alpha}^{\mathcal{I}^+}_{ \,Z}$, such that 
\begin{equation}\label{eq:Lemstatement}
\forall \, Z \in \mathbb{K}, \qquad \qquad  \big( r \underline{\alpha}(\mathcal{L}_{Z}F) \big)(\cdot , \underline{u}_n,\cdot) \rightharpoonup \underline{\alpha}^{\mathcal{I}^+}_{ \, Z} \qquad \text{as $n \to +\infty$, in $L^2 \big(\R_u \times \mathbb{S}^2_\omega \big)$}.
\end{equation}
We now prove that the quantities $\underline{\alpha}^{\mathcal{I}^+}_{\, Z}$ are uniquely defined, that is that they do depend on the sequence $(\underline{u}_n)_{n \geq 0}$, by relating them to $\underline{\alpha}^{\mathcal{I}^+}$. It will imply the last part of the statement of the Proposition. For this, we fix $U \ge 1$ and we will write $\psi \rightharpoonup \psi^{\mathcal{I}^+}$ if we have the convergence
$$ \psi (\cdot , \underline{u},\cdot )  \rightharpoonup \psi^{\mathcal{I}^+}  \qquad \text{as $\underline{u} \to +\infty$, in $\mathcal{D}' \big( \, ]-U,U \, [  \times \mathbb{S}^2_\omega \big)$}.$$ 
If the convergence only holds for a striclty increasing and divergent sequence of advanced times $\mathbf{s}=(\underline{u}_n)_{n \in \mathbb{N}}$, we will write $\psi \rightharpoonup_{\mathbf{s}} \psi^{\mathcal{I}^+}$. Recall the quantities $\omega_j^A := \langle e_A,\partial_{x^j} \rangle$, which only depend on the variable $\omega \in \mathbb{S}^2$. Let us prove, for any $1 \leq j \leq 3$ and $B \in \{ \theta , \varphi \}$,
\begin{equation}\label{eq:weakconv1toprove}
r\underline{\alpha}(F) \rightharpoonup \underline{\alpha}^{\mathcal{I}^+},  \quad  \frac{r}{2}\underline{L} \big( \underline{\alpha}(F)_{e_B} \big) \rightharpoonup \partial_u \! \left(\underline{\alpha}^{\mathcal{I}^+}_{e_B}\right),  \quad r^2\omega_j^A e_A\big( \underline{\alpha}(F)_{e_B} \big) \rightharpoonup \omega_j^A e_A \! \left( \underline{\alpha}^{\mathcal{I}^+}_{e_B} \right) 
\end{equation}
and that there exists $\mathbf{s}=(\underline{u}_n)_{n \in \mathbb{N}}$, depending possibly on $U$, such that, for any $|\gamma| \leq 1$ and $Z \in \mathbb{K}$,
\begin{equation}\label{eq:weakconv2toprove}
r\underline{\alpha}(\mathcal{L}_Z F) \rightharpoonup_\mathbf{s} \underline{\alpha}^{\mathcal{I}^+ }_{ \, Z} , \quad r^2 L\big( \underline{\alpha}(F)_{e_B}\big) \rightharpoonup_{\mathbf{s}} -\underline{\alpha}^{\mathcal{I}^+}_{e_B},\quad r \rho (\mathcal{L}_{Z^\gamma}F) \rightharpoonup_\mathbf{s} 0, \quad r \sigma (\mathcal{L}_{Z^\gamma}F) \rightharpoonup_\mathbf{s} 0, \quad  |F| \rightharpoonup_\mathbf{s} 0.
\end{equation}
Then, the result would ensue from \cite[Proposition~$B.2$]{scat}. In order to prove these weak convergences and conclude the proof, we will need the following three estimates. As $r \gtrsim \underline{u} \gtrsim \langle t \rangle$ for $\underline{u} \geq 2U$ and $|u| \le U$, we have
\begin{align}
\int_{\underline{u}=2U}^{+ \infty} \int_{|u| \leq U} \int_{\mathbb{S}^2_\omega}\left|F \right|^2(u,\underline{u},\omega ) \dr \mu_{\mathbb{S}^2_\omega} \dr u \dr \underline{u} &\lesssim \int_{\underline{u}=2 U}^{+ \infty} \int_{|u| \leq U} \int_{\mathbb{S}^2_\omega} \frac{1}{|\underline{u}|^2 }  |F|^2(u,\underline{u},\omega ) r^2 \dr \mu_{\mathbb{S}^2_\omega} \dr u \dr \underline{u} , \nonumber \\
& \lesssim \int_{t=2U}^{+\infty} \frac{1}{\langle t \rangle^2} \|F(t,\cdot)\|_{L^2_x}^2  \dr t \lesssim  \int_{t=0}^{+\infty} \frac{D \, \dr t}{\langle t \rangle^2} \lesssim D . \label{eq:forFweak0}
\end{align}
Using the uniform boundedness of $\rho (\mathcal{L}_{Z^\gamma}F)$ and $\sigma (\mathcal{L}_{Z^\gamma}F)$ on outgoing null cones $t-r=u$, we have
\begin{equation}\label{eq:55555555555} 
\forall \, U \geq 1, \qquad \int_{ \underline{u} = U}^{+ \infty} \int_{|u| \leq U} \int_{\mathbb{S}^2_\omega} \left[\big| r \rho (\mathcal{L}_{Z^\gamma}F) \big|^2+\big| r \sigma (\mathcal{L}_{Z^\gamma}F) \big|^2 \right](u,\underline{u},\omega) \dr \mu_{\mathbb{S}^2_\omega} \, \dr u \, \dr \underline{u} \lesssim UD.
\end{equation}
Finally, for all  $t \in \R_+, \, r >0$,
$$ \int_{\mathbb{S}^2_\omega} |J|^2(t,r\omega) \dr \mu_{\mathbb{S}^2_\omega} \leq 2 \int_{s=r}^{+\infty} \! \int_{\mathbb{S}^2_\omega} | J||\nabla_{\partial_r} J|(t,r\omega) \dr \mu_{\mathbb{S}^2_\omega} \dr s \lesssim \! \frac{1}{r^2 \langle t+r \rangle^3} \sup_{|\gamma| \leq 1} \int_{| x| \geq r} \langle t+|x| \rangle^3 |J^\gamma|^2(t,x) \dr x,$$
since $\partial_{x^i}J=J^{\kappa_i}$, if $\kappa_i$ is such that $Z^{\kappa_i}=\partial_{x^i}$. Then, using the assumption on the source terms $J^\gamma$, we get for all $\underline{u} \geq 0$,
\begin{equation}\label{Jweak0}
\int_{|u| \leq U} \int_{\mathbb{S}^2_\omega} r^4 |J|^2(u,\underline{u},\omega ) \dr \mu_{\mathbb{S}^2_\omega} \dr u \lesssim \frac{U D}{ \langle \underline{u} \rangle}, \qquad r^2|J|(\cdot , \underline{u} , \cdot) \xrightharpoonup[\underline{u} \to + \infty]{} 0 \quad \text{in $\mathcal{D}'(\R_u \times \mathbb{S}^2_\omega)$}.
\end{equation}
We now prove the weak convergences \eqref{eq:weakconv1toprove}--\eqref{eq:weakconv2toprove} as follows.
\begin{itemize}
\item We already proved $r \underline{\alpha}(F) \rightharpoonup \underline{\alpha}^{\mathcal{I}^+}$. It is in fact a strong convergence in $L^2([-U,U] \times \mathbb{S}^2_\omega)$.
\item Note that $r\underline{L}=\underline{L}r+1$ and $\underline{L}=2 \partial_u$. As $r \underline{\alpha}(F) \rightharpoonup \underline{\alpha}^{\mathcal{I}^+}$, we have $\underline{\alpha}(F) \rightharpoonup 0$ since $2r= \underline{u}-u \sim \underline{u}$ on compact subsets of $\R_u$, so that $r\underline{L} \big( \underline{\alpha}(F)_{e_B} \big) \rightharpoonup 2\partial_u (\underline{\alpha}^{\mathcal{I}^+}_{e_B})$.
\item Consider $(t,r) \in \R_+^2$, $\psi \in C^{\infty}_c(\R_u \times \mathbb{S}^2)$ and denote by $\vec{v}_i$ the vector field $\omega_i^{e_A}e_A$, which is the projection on the $2$-spheres of $\partial_{x^i}$. Since $(r e_\theta, r e_\varphi)= (\partial_\theta, \partial_{\varphi}/\sin(\theta))$, we have
\begin{align*}
\omega_i^A r^2\big( e_A (\underline{\alpha}(F)_{e_B}) \big)(t,r\omega) \psi(u,\omega)&=  r \psi(u,\omega)  \vec{v}_i \cdot \slashed{\nabla}\big( \underline{\alpha}(F)_{e_B} (t,r\omega)\big), \\
 \omega_i^A  e_A (\underline{\alpha}(F)_{e_B}^{\mathcal{I}^+} \big)(u,\omega) \psi(u,\omega)&= \psi(u,\omega)  \vec{v}_i \cdot \slashed{\nabla}\big( \underline{\alpha}(F)_{e_B}^{\mathcal{I}^+}\big) (u,\omega).
\end{align*}
As $r \underline{\alpha}(F) \rightharpoonup \underline{\alpha}(F)^{\mathcal{I}^+}$, $r^2\omega_j^A e_A\big( \underline{\alpha}(F)_{e_B} \big) \rightharpoonup \omega_j^A e_A \big( \underline{\alpha}^{\mathcal{I}^+}_{e_B} \big)$ ensues from the divergence theorem on $\mathbb{S}^2$.
\item 
From \eqref{eq:forFweak0}--\eqref{eq:55555555555}, we get that there exists $\mathbf{s}'=(\underline{u}'_n)_{n \geq 0}$ such that $\underline{u}'_n \to + \infty$ as $n \to + \infty$ and 
$$\big\| F(\cdot, \underline{u}'_n, \cdot) \big\|^2_{L^2([-U,U] \times \mathbb{S}^2_\omega)}+\big\| r \rho (\mathcal{L}_{Z^\gamma}F) (\cdot, \underline{u}'_n, \cdot) \big\|^2_{L^2([-U,U] \times \mathbb{S}^2_\omega)}+\big\| r \sigma (\mathcal{L}_{Z^\gamma}F) (\cdot, \underline{u}'_n, \cdot) \big\|^2_{L^2([-U,U] \times \mathbb{S}^2_\omega)} \lesssim \frac{D}{ \langle \underline{u}'_n \rangle},$$
for any $|\gamma| \leq 1$. We then deduce $F \rightharpoonup_{\mathbf{s}'} 0$, $r \rho (\mathcal{L}_{Z^\gamma}F)\rightharpoonup_{\mathbf{s}'} 0 $ and $r \rho (\mathcal{L}_{Z^\gamma}F) \rightharpoonup_{\mathbf{s}'} 0$.
\item Remark first that $r^2 L (\underline{\alpha}(F)_{e_B})=rL(r\underline{\alpha}(F)_{e_B})-r\underline{\alpha}(F)_{e_B}$ and $r\underline{\alpha}(F)_{e_B} \rightharpoonup  \underline{\alpha}^{\mathcal{I}^+}$. Then, using \eqref{eq:Lunderlinealpha} and $\nabla_Le_B=0$, we get 
$$ \left| rL\big(r\underline{\alpha}(F)_{e_B} \big) \right| \lesssim r^2|J|+\sum_{|\gamma| \leq 1} r|\rho ( \mathcal{L}_{Z^\gamma} F)|+r|\sigma ( \mathcal{L}_{Z^\gamma} F)| \rightharpoonup_{\mathbf{s}'} 0,$$
where, in the last step, we used \eqref{Jweak0} and the weak convergences that we just proved.
\item Finally, for any $\mathbb{Z} \in \mathbb{K}$, $n \mapsto \big( r\underline{\alpha}( \mathcal{L}_Z F) \big)(\cdot, \underline{u}_n' , \cdot)$ is bounded in $L^2(\R_u \times \mathbb{S}^2_\omega)$. Consequently, there exists a subsequence $\mathbf{s}$ of $\mathbf{s}'=(\underline{u}'_n)_{n \geq 0}$ and $\underline{\alpha}^{\mathcal{I}^+}_{\, Z}$ such that $r \underline{\alpha} ( \mathcal{L}_Z F)\rightharpoonup_{\mathbf{s}} \underline{\alpha}^{\mathcal{I}^+}_{ \, Z}$.
\end{itemize}
\end{proof}

Motivated by the previous result and since our approach is based on vector field methods, we introduce particular weighted derivatives for radiation fields. 
\begin{Def}\label{Defrecurrad}
Let $\underline{\alpha}^{\mathcal{I}^+}$ be a $1$-form, defined on $\R_u \times \mathbb{S}^2$ and tangential to the $2$-spheres. For any multi-index $\gamma$, we define recursively, at least in a weak sense, the $2$-forms $\underline{\alpha}^{\mathcal{I}^+}_{\, \gamma}$ as follows.
\begin{itemize}
\item $\underline{\alpha}^{\mathcal{I}^+}_{\, \gamma}:=\underline{\alpha}^{\mathcal{I}^+}$ if $|\gamma|=0$.
\item If $Z^\gamma = \partial_t Z^\kappa$, then $\underline{\alpha}^{\mathcal{I}^+}_{\, \gamma}:=\nabla_u \underline{\alpha}^{\mathcal{I}^+}_{\, \kappa}$.
\item If $Z^\gamma = \partial_{x^i} Z^\kappa$ with $1 \leq i \leq 3$, then $\underline{\alpha}^{\mathcal{I}^+}_{\, \gamma}:=-\omega_i \nabla_u \underline{\alpha}^{\mathcal{I}^+}_{\, \kappa}$.
\item If $Z^\gamma = S Z^\kappa$, then $\underline{\alpha}^{\mathcal{I}^+}_{\, \gamma}:=u\nabla_u \underline{\alpha}^{\mathcal{I}^+}_{\, \kappa}+\underline{\alpha}^{\mathcal{I}^+}_{\, \kappa}$.
\item If $Z^\gamma = \Omega_{0i} Z^\kappa$ with $1 \leq i \leq 3$, then $\underline{\alpha}^{\mathcal{I}^+}_{ \,\gamma}:=- \omega_i u\nabla_u  \underline{\alpha}^{\mathcal{I}^+}_{\, \kappa}-2\omega_i \underline{\alpha}^{\mathcal{I}^+}_{\, \kappa}+ \omega^{e_A}_i \slashed{\nabla}_{e_A}  \underline{\alpha}^{\mathcal{I}^+}_{\, \kappa}$.
\item If $Z^\gamma = \Omega_{jk} Z^\kappa$ with $1 \leq j < k \leq 3$, then $\underline{\alpha}^{\mathcal{I}^+}_{\, \gamma}:=\mathcal{L}_{\Omega_{jk}} \big( \underline{\alpha}^{\mathcal{I}^+}_{\, \kappa} \big)$.
\end{itemize}
\end{Def}

We now establish an improved scattering result which requires stronger assumptions on the data and the source term. It shows that the conditions we will impose on the radiation fields for the backward problem are consistent. The hypotheses of the next result will be satisfied by $F-F^{\mathrm{asymp}}[f_\infty]$.

\begin{Pro}\label{Thscatforsmooth}
Let $F$ be a solution to the Maxwell equations \eqref{eq:Maxeqn}, with source term $J$. Let $J^\gamma$ be the current such that
$$ \forall \, |\gamma| \leq 3, \qquad  \nabla^\mu \mathcal{L}_{Z^\gamma}(F)_{\mu \nu} = J_\nu^\gamma, \qquad \nabla^\mu {}^* \! \mathcal{L}_{Z^\gamma}(F)_{\mu \nu} =0$$
and assume that there exist $\delta >0$ and $D \geq 0$ such that
$$\mathcal{E}^K_3[F](T)+\sup_{|\gamma| \leq 3} \, \sup_{(t,x) \in [T,+\infty[ \times \R^3} \,  \langle t+|x| \rangle^{7+2\delta} |J^\gamma|^2(t,x)  \leq D.$$
Then, $F$ has a radiation field along future null infinity $\underline{\alpha}^{\mathcal{I}^+}$ and
\begin{equation}\label{rhoinfysigmainfty0}
 \rho^{\mathcal{I}^+}  \! : (u,\omega) \mapsto -\frac{1}{2} \slashed{\nabla} \cdot\int_{s=-\infty}^u  \underline{\alpha}^{\mathcal{I}^+} (s,\omega) \dr s, \qquad \sigma^{\mathcal{I}^+}\! :(u,\omega) \mapsto -\frac{1}{2} \slashed{\nabla} \times \int_{s=-\infty}^u   \underline{\alpha}^{\mathcal{I}^+} (s,\omega) \dr s  
\end{equation}
are well-defined and go to $0$ as $|u| \to +\infty$. Moreover, for all $(t,x) \in [T,+\infty[ \times \R^3$, we have
\begin{spreadlines}{1.8ex}
\begin{align}
  |\alpha(F)|(t,x) & \lesssim \frac{\delta^{-1}\sqrt{D}}{ \langle t+|x|\rangle^{\frac{5}{2}}} ,\label{esti1best}\\ 
\left|r^2 \rho(F)(t,x) - \rho^{\mathcal{I}^+} \! \left(t-|x|,\frac{x}{|x|}\right)  \right| +\left|r^2 \sigma(F)(t,x) - \sigma^{\mathcal{I}^+} \! \left(t-|x|,\frac{x}{|x|}\right)  \right| & \lesssim \frac{\delta^{-1}\sqrt{D}}{ \langle t+|x|\rangle^{\frac{1}{2}}}, \label{esti1goodbis} \\ 
\left|r\underline{\alpha}(F)(t,x) - \underline{\alpha}^{\mathcal{I}^+} \! \left( t-|x|,\frac{x}{|x|} \right) \right| & \lesssim \frac{\delta^{-1}\sqrt{D}}{ \langle t+|x|\rangle \, \langle t-|x|\rangle^{\frac{1}{2}}}, \label{esti2badnull}
\end{align}
\end{spreadlines}
\end{Pro}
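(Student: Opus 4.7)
The strategy consists of four steps. The core ingredient is the uniform control of the energy norm $\mathcal{E}^K_3[F]$ propagated forward using the strong source decay, combined with the null structure of the Maxwell equations to convert this into pointwise convergence at null infinity.

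First, I would propagate the energy uniformly. The pointwise hypothesis $|J^\gamma|^2 \lesssim D\langle t+|x|\rangle^{-7-2\delta}$ for $|\gamma|\leq 3$ integrates to $\int_{\R^3_x}\langle\tau+|x|\rangle^{2}|J^\gamma|^2(\tau,x)\dr x\lesssim D\langle\tau\rangle^{-2-2\delta}$. Applying Proposition \ref{ProMoravac} to each $\mathcal{L}_{Z^\gamma}F$ with $|\gamma|\leq 3$, the $\delta$-integrability in $\tau$ of the square root yields $\mathcal{E}^K_3[F](t)\lesssim D/\delta^2$ for every $t\geq T$. Plugging this into Proposition \ref{decayMaxell} gives the preliminary pointwise decay $|\underline{\alpha}(F)|\lesssim \delta^{-1}\sqrt{D}\langle t+r\rangle^{-1}\langle t-r\rangle^{-3/2}$ and $|\rho(F)|,|\sigma(F)|\lesssim \delta^{-1}\sqrt{D}\langle t+r\rangle^{-2}\langle t-r\rangle^{-1/2}$. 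The improved estimate \eqref{esti1best} on $\alpha$ would follow from the general Klainerman-Sobolev formula used in the proof of Proposition \ref{decayMaxell}, noticing that the angular part of $\mathcal{L}_{Z^\kappa}J$ satisfies $\bigl\||\cdot|^2\cancel{\mathcal{L}_{Z^\kappa}J}\bigr\|_{L^2_x}^2\lesssim D/\delta$ by direct integration of the pointwise decay of $J^\kappa$.

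Next, to construct $\underline{\alpha}^{\mathcal{I}^+}$ and prove \eqref{esti2badnull}, I would integrate the transport equation of Corollary \ref{Corgoodnull} along outgoing null rays. Since $L=2\partial_{\underline{u}}$, the bound $|\nabla_L(r\underline{\alpha})|\lesssim r|J|+\sup_{|\kappa|\leq 1}(|\rho(\mathcal{L}_{Z^\kappa}F)|+|\sigma(\mathcal{L}_{Z^\kappa}F)|)$ combined with Step 1 gives a right-hand side that is integrable in $\underline{u}$ at fixed $u$. This yields the pointwise limit $\underline{\alpha}^{\mathcal{I}^+}(u,\omega):=\lim_{\underline{u}\to+\infty}(r\underline{\alpha})(u,\underline{u},\omega)$ together with the quantitative bound $|r\underline{\alpha}-\underline{\alpha}^{\mathcal{I}^+}|\lesssim \delta^{-1}\sqrt{D}\langle\underline{u}\rangle^{-1}\langle u\rangle^{-1/2}$, i.e.\ \eqref{esti2badnull}. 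As a byproduct, letting $\underline{u}\to+\infty$ with $u$ fixed in the preliminary decay of $r\underline{\alpha}$ gives $|\underline{\alpha}^{\mathcal{I}^+}(u,\omega)|\lesssim \delta^{-1}\sqrt{D}\langle u\rangle^{-3/2}$, hence integrability at $u=-\infty$.

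Finally, to construct $\rho^{\mathcal{I}^+},\sigma^{\mathcal{I}^+}$ and prove \eqref{esti1goodbis}, I would integrate the null Maxwell equations \eqref{null3}--\eqref{null4}, namely $\nabla_{\underline{L}}(r^2\rho)=r^2J_{\underline{L}}-r^2\slashed{\nabla}\cdot\underline{\alpha}$ and analogously for $\sigma$, along incoming null rays at fixed $\underline{u}$ from the boundary $t=T$ up to $u$. The boundary term $r^2\rho(T,\cdot)$ decays as $\langle u\rangle^{-1/2}$ by Step 2; the source integral is negligible thanks to the pointwise decay of $J$; and the main term converges, as $\underline{u}\to+\infty$, to the intrinsic divergence on $\mathbb{S}^2$ of $\int_{-\infty}^u\underline{\alpha}^{\mathcal{I}^+}(s,\cdot)\dr s$. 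The main technical obstacle is to justify this last convergence: I would rewrite $r^2\slashed{\nabla}\cdot\underline{\alpha}$ and $r^2\slashed{\nabla}\times\underline{\alpha}$ via rotational Lie derivatives using Lemma \ref{LemComgoodderiv}, and then apply the radiation field convergence of Step 3 to the commuted fields $\mathcal{L}_{\Omega_{ij}}F$, whose source terms still fulfill the hypotheses of the Proposition. The formula \eqref{rhoinfysigmainfty0} together with the vanishing of $\rho^{\mathcal{I}^+},\sigma^{\mathcal{I}^+}$ as $|u|\to+\infty$ then follow from the integrability of $\underline{\alpha}^{\mathcal{I}^+}$ at $-\infty$ proved in Step 3, and the quantitative rate \eqref{esti1goodbis} comes from combining the rate in \eqref{esti2badnull} with the weight $r^2$ upon integration along the ingoing direction.
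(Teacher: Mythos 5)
Your Steps 1--2 reproduce the paper's argument faithfully: the uniform bound $\mathcal{E}^K_3[F]\lesssim\delta^{-2}D$ from Proposition~\ref{ProMoravac}, the pointwise decay from Proposition~\ref{decayMaxell} and \eqref{decay:alphageneral}, and the construction of $\underline{\alpha}^{\mathcal{I}^+}$ with rate \eqref{esti2badnull} by integrating $\nabla_L(r\underline{\alpha})$ along outgoing rays via Corollary~\ref{Corgoodnull}.

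Step~3 has two issues. First, it is more convoluted than necessary: $\rho^{\mathcal{I}^+}$ and $\sigma^{\mathcal{I}^+}$ together with the rate \eqref{esti1goodbis} are obtained in exactly the same way as $\underline{\alpha}^{\mathcal{I}^+}$, by integrating the \emph{outgoing} bounds $|\nabla_L(r^2\rho)|+|\nabla_L(r^2\sigma)|\lesssim r^2|J_L|+\sup_{|\kappa|\leq 1}r|\alpha(\mathcal{L}_{Z^\kappa}F)|\lesssim \delta^{-1}\sqrt{D}\langle t+|x|\rangle^{-3/2}$ in $\underline{u}$; the ingoing integration you propose is not needed for existence or for \eqref{esti1goodbis}, only for verifying the formula \eqref{rhoinfysigmainfty0}. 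Second, and more seriously, your claim that the commuted fields $\mathcal{L}_{\Omega_{ij}}F$ ``still fulfill the hypotheses of the Proposition'' is false and hides a genuine derivative loss. To run Step~2 on $\mathcal{L}_{\Omega_{ij}}F$ you need, via Corollary~\ref{Corgoodnull}, pointwise decay of $\rho(\mathcal{L}_{Z^\kappa}\mathcal{L}_{\Omega_{ij}}F)$ and $\sigma(\mathcal{L}_{Z^\kappa}\mathcal{L}_{\Omega_{ij}}F)$ for $|\kappa|\leq 1$; Proposition~\ref{decayMaxell} applied to these order-$2$ Lie derivatives requires a uniform bound on $\mathcal{E}^K_2[\mathcal{L}_{Z^\kappa}\mathcal{L}_{\Omega_{ij}}F]$, i.e.\ on $\mathcal{E}^K_4[F]$, which is one derivative more than the hypothesis $\mathcal{E}^K_3[F](T)\leq D$ (and similarly $J^\gamma$ up to $|\gamma|=4$). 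Your pointwise passage to the limit in $r^2\slashed{\nabla}\cdot\underline{\alpha}$ therefore does not close at the stated regularity. The paper sidesteps this by integrating \eqref{null3}--\eqref{null4} from $u$ up to $\underline{u}$ (so that the boundary term vanishes at $r=0$) and then passing to the limit $\underline{u}\to+\infty$ only in $\mathcal{D}'(\R_u\times\mathbb{S}^2)$: the angular derivative $r\slashed{\nabla}\cdot(r\underline{\alpha})$ is moved onto the test function by integration by parts on the sphere, so that the already-established convergences \eqref{eq:rhoetsigma}--\eqref{eq:underlinealpha} of $r\underline{\alpha}$, $r^2\rho$, $r^2\sigma$ suffice, and \eqref{rhoinfysigmainfty0} is obtained first in $\mathcal{D}'$ and upgraded to $C^0$ afterward. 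You should adopt this distributional argument to stay within the available regularity.
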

\begin{proof}
The energy inequality of Proposition \ref{ProMoravac}, applied to $\mathcal{L}_{Z^\gamma}F$ for any $|\gamma | \leq 3$, and the assumptions on the data as well as $J$ give 
$$ \sup_{t \geq T} \mathcal{E}^K_3 \big[ F \big](t) \leq 2 \mathcal{E}^K_3 \big[ F \big](T)+ 8 \sup_{|\gamma|  \leq 3} \bigg|\int_{\tau=T}^{+\infty} \frac{1}{\langle \tau \rangle^{1+\delta}} \bigg|\int_{\R^3_x} \langle \tau+|x| \rangle^{4+2\delta} \big|J^{\gamma} (\tau,x) \big|^2 \dr x \bigg|^{\frac{1}{2}} \dr \tau \bigg|^2 \leq 20 \delta^{-2} D.$$
Now, applying Proposition \ref{decayMaxell} as well as \eqref{decay:alphageneral}, we get, for any $|\gamma| \leq 1$ and for all $(t,x) \in [T,+\infty[ \times \R^3$,
\begin{alignat}{2}\label{eq:decayalphas}
\hspace{-3mm} &|\underline{\alpha}(\mathcal{L}_{Z^{\gamma}}F)|(t,x) \lesssim \delta^{-1}D^{\frac{1}{2}} \, \langle t+|x|\rangle^{-1}\langle t-|x|\rangle^{-\frac{3}{2}}, \qquad  &&|\alpha(\mathcal{L}_{Z^{\gamma}}F)|(t,x)  \lesssim \delta^{-1} D^{\frac{1}{2}} \, \langle t+|x|\rangle^{-\frac{5}{2}}, \\
\hspace{-3mm} & |\rho(\mathcal{L}_{Z^{\gamma}}F)|(t,x) \lesssim \delta^{-1} D^{\frac{1}{2}} \, \langle t+|x|\rangle^{-2}\langle t-|x|\rangle^{-\frac{1}{2}}, \qquad  &&|\sigma(\mathcal{L}_{Z^{\gamma}}F)|(t,x)  \lesssim \delta^{-1}D^{\frac{1}{2}} \, \langle t+|x|\rangle^{-2}\langle t-|x|\rangle^{-\frac{1}{2}}. \label{eq:decayrhosig}
\end{alignat}
Applying Corollary \ref{Corgoodnull}, we then obtain, for all $(t,x) \in [T,+\infty[ \times \R^3$,
\begin{align*}
 \left|\nabla_L \left( r^2\rho(F) \right) \right|(t,x)+ \left|\nabla_L \left( r^2 \sigma(F) \right) \right|(t,x)  \lesssim \frac{\delta^{-1}D^{\frac{1}{2}}}{ \langle t+|x|\rangle^{\frac{3}{2}}}, \qquad \qquad \left|\nabla_L \big( r \underline{\alpha}(F) \big) \right|(t,x) \lesssim \frac{ \delta^{-1}D^{\frac{1}{2}}}{ \langle t+|x|\rangle^2 \, \langle t-|x|\rangle^{\frac{1}{2}}}.
\end{align*}
We use now the null coordinates $u=t-|x|$, $\underline{u}=t+|x|$, $\omega=x/|x|$ and we recall that $L=2\partial_{\underline{u}}$. Fix $(u,\omega) \in \R_u \times \mathbb{S}^2$ and integrate the previous estimates between $|u| \leq \underline{u} \leq \underline{z}$. We get
\begin{align}
 \big|r^2\rho(F)(u,\underline{u},\omega)-r^2\rho(F)(u,\underline{z},\omega) \big|+\big|r^2\sigma(F)(u,\underline{u},\omega)-r^2\sigma(F)(u,\underline{z},\omega) \big| & \lesssim  \delta^{-1}D^{\frac{1}{2}} \,  \langle \underline{u} \rangle^{-\frac{1}{2}}, \label{eq:rhoetsigma} \\
 \big|r\underline{\alpha}(F) (u,\underline{u},\omega)-r \underline{\alpha}(F)(u,\underline{z},\omega) \big| & \lesssim \delta^{-1}D^{\frac{1}{2}} \, \langle \underline{u} \rangle^{-1}\langle u\rangle^{-\frac{1}{2}}. \label{eq:underlinealpha}
\end{align}
This implies the existence of $\underline{\alpha}^{\mathcal{I}^+}$, $\rho^{\mathcal{I}^+}$ and $\sigma^{\mathcal{I}^+}$ as well the stated rate of convergences. Moreover, for fixed $(\underline{u},\omega) \in [T,+\infty[ \times \mathbb{S}^2$, we obtain by integrating the null Maxwell equations \eqref{null3}--\eqref{null4} between $u \geq -\underline{u}+2T$ and $\underline{u}$. As $r=0$ when $t+r=t-r$, we get
$$ \bigg| r^2 \rho(F)(u,\underline{u},\omega)-\int_{s=u}^{\underline{u}} r^2\slashed{\nabla} \cdot \underline{\alpha}(F) (s,\underline{u},\omega ) \dr s \bigg| \lesssim \frac{D^{\frac{1}{2}}}{\langle \underline{u} \rangle^{\frac{3}{2}+\delta}}, \quad \; r^2 \sigma(F)(u,\underline{u},\omega)+\int_{s=u}^{\underline{u}} r^2 \slashed{\nabla} \times  \underline{\alpha}(F) (s,\underline{u},\omega ) \dr s =0.$$
Using $re_\theta = \partial_{\theta }$, $re_\varphi = \sin^{-1}(\theta) \partial_{\varphi}$ together with \eqref{eq:rhoetsigma}--\eqref{eq:underlinealpha}, we then deduce that the relations \eqref{rhoinfysigmainfty0} between $\rho^{\mathcal{I}^+}$, $\sigma^{\mathcal{I}^+}$ and $\underline{\alpha}^{\mathcal{I}^+}$ hold in $\mathcal{D}'(\R_u \times \mathbb{S}^2)$ and then, as $\rho^{\mathcal{I}^+}$ and $\sigma^{\mathcal{I}^+}$ are continous, in $C^0(\R_u \times \mathbb{S}^2)$. Furthermore, \eqref{eq:decayrhosig}--\eqref{eq:rhoetsigma} imply $|\rho^{\mathcal{I}^+}|(u,\omega)+|\sigma^{\mathcal{I}^+}|(u,\omega) \lesssim \langle u \rangle^{-\frac{1}{2}}$, so that these two functions go to $0$ as $|u| \to + \infty$.
\end{proof}

\subsection{Backward evolution from scattering data at future null infinity for the Maxwell equations}\label{SecscatMaxlin}

This section is devoted to an important step of the proof of Theorem \ref{Theo1}. More precisely, we will study the inverse problems of the one addresed by Proposition \ref{Thscatforsmooth}. For this, we start by introducing the functional space of the radiation fields that we will consider here.

\begin{Def}\label{Defscatstae}
Let $\mathcal{S}^K_{\mathcal{I}^+}\!$ be the set of the $1$-forms $\underline{\alpha}^{\mathcal{I}^+}\!$ on $\R_u \! \times \mathbb{S}^2_\omega$ which are tangential to the $2$-spheres and such that the functions 
\begin{equation}\label{rhoinfysigmainfty}
 \rho^{\mathcal{I}^+}  \! : (u,\omega) \mapsto -\frac{1}{2} \slashed{\nabla} \cdot \int_{s=-\infty}^u  \underline{\alpha}^{\mathcal{I}^+} (s,\omega) \dr s, \qquad \sigma^{\mathcal{I}^+}\! :(u,\omega) \mapsto -\frac{1}{2}\slashed{\nabla} \times \int_{s=-\infty}^u  \underline{\alpha}^{\mathcal{I}^+} (s,\omega) \dr s  
\end{equation}
are well defined and, for almost every $\omega \in \mathbb{S}^2$,
\begin{equation}\label{defmoynulle}
\slashed{\nabla} \cdot \int_{\R_u}  \underline{\alpha}^{\mathcal{I}^+} (u,\omega) \dr u = \slashed{\nabla} \times \int_{\R_u}  \underline{\alpha}^{\mathcal{I}^+} (u,\omega) \dr u =0. 
\end{equation}
If $\underline{\alpha}^{\mathcal{I}^+} \in \mathcal{S}_{\mathcal{I}^+}^K$ is sufficiently regular, let for any $N \in \mathbb{N}$ and all $(u,\omega) \in \R_u \times \mathbb{S}^2$ the pointwise norm
$$\big\| \underline{\alpha}^{\mathcal{I}^+} \! \big\|_{N}(u,\omega) := \sup_{\kappa_u+|\kappa_a| \leq N} \langle u \rangle^{\kappa_u+1}\left| \nabla_{\partial_u}^{\kappa_u} \! \slashed{\nabla}^{\kappa_a} \underline{\alpha}^{\mathcal{I}^+}\right|\!(u,\omega)+\langle u \rangle^{\kappa_u}\left| \nabla_{\partial_u}^{\kappa_u} \slashed{\nabla}^{\kappa_a} \rho^{\mathcal{I}^+}\right|\!(u,\omega)+\langle u \rangle^{\kappa_u} \! \left| \nabla_{\partial_u}^{\kappa_u} \slashed{\nabla}^{\kappa_a} \sigma^{\mathcal{I}^+}\right|\!(u,\omega)$$
and, for $a \geq 0$, the higher order energy norm
$$\overline{\mathcal{E}}^{ \, a}_{N}\big[\underline{\alpha}^{\mathcal{I}^+} \big]:= \int_{\R_u} \int_{\mathbb{S}^2_\omega}\langle u \rangle^{2a} \big\| \underline{\alpha}^{\mathcal{I}^+} \big\|^2_{N}(u,\omega) \dr \mu_{\mathbb{S}^2} \dr u .$$
\end{Def}
In order to prove a scattering result with assumptions allowing for the gain of regularity on the source term  discussed in Section \ref{subseubsecMaxide}, we need to associate to any locally integrable current $J$ a sequence of electromagnetic fields. This sequence will be used in order to construct the solution to the Maxwell equations with a prescribed source term and asymptotic data in Proposition \ref{ProscatMax}. 
\begin{Def}\label{DefSJn}
Let $J$ be a four-current density defined on $[T,+\infty[ \times \R^3$. Let, for any integer $n \geq T$, $S(J,n)$ be the unique solution to
$$ \nabla^{\mu } S(J,n)_{\mu \nu} = \chi\big(t /n \big)J_\nu, \qquad \nabla^\mu {}^* \! S(J,n)_{\mu \nu} =0, \qquad \qquad S(J,n)(n,\cdot)=0,$$
where $\chi \in C^\infty(\R,[0,1])$ is a cutoff function such that $\chi (s)=1$ for all $s \leq 1/4$ and $\chi(s)=0$ for all $s \geq 1/2$.
\end{Def}
\begin{Rq}
We introduced $\chi$ to ensure that the derivatives of $S(J,n)$ vanish as well for $t=n$. 
\end{Rq}
We now state the main result of this section. A higher order version holds but we do not prove it in order to avoid technicalities and since it is not required for the proof of Theorem \ref{Theo1}.
\begin{Pro}\label{ProscatMax}
Let $a \geq 0$, $\delta >0$, $\underline{\alpha}^{\mathcal{I}^+} \in \mathcal{S}_{\mathcal{I}^+}^K$ and $J$ be a four-current density defined. Assume
$$ \overline{\mathcal{E}}^{\, a+\delta}_{1}\big[ \underline{\alpha}^{\mathcal{I}^+} \big] < + \infty$$
and that there exists a constant $B_{\mathrm{source}}>0$ such that, for any $n \geq T$, the field $S(J,n)$ generated by the source term $J$ verifies
\begin{equation}\label{eq:condiSJn} 
\forall \, t \in [T,+\infty[, \qquad \mathcal{E}^{K,a}\big[ S(J,n) \big] (t) \lesssim \langle t \rangle^{-\delta} B_{\mathrm{source}}.
\end{equation}
Then, there exists a unique solution $F$ to the Maxwell equations 
$$ \nabla^\mu F_{\mu \nu} = J, \qquad  \qquad \nabla^{\mu} {}^* \! F_{\mu \nu} =0   ,$$
admitting $\underline{\alpha}^{\mathcal{I}^+}$ as radiation field along future null infinity $\mathcal{I}^+$. Moreover, 
$$ \exists \, C_\delta >0, \quad  \forall \, t \geq T, \qquad  \mathcal{E}^{K,a}[ F](t) \leq C_\delta \, \overline{\mathcal{E}}^{\, a+\delta}_{1}\big[ \underline{\alpha}^{\mathcal{I}^+} \big]+B_{\mathrm{source}}.$$
\end{Pro}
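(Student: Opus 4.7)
\textit{Proof proposal.} The plan is to decompose the problem by linearity as $F = G + F^J$, where $G$ is the unique vacuum Maxwell solution admitting $\underline{\alpha}^{\mathcal{I}^+}$ as radiation field, and $F^J$ is the unique solution of the Maxwell equations with source $J$ whose radiation field at $\mathcal{I}^+$ vanishes. Uniqueness of $F$ then reduces to the vacuum case with vanishing radiation field: applying Proposition \ref{ProenergyforscatMax} between a fixed time $t$ and a large time $s$ to the difference of two candidate solutions, which is a vacuum solution, and letting $s \to +\infty$ while exploiting that the vanishing of the radiation field forces the flux on large outgoing null cones $C_u^{t,s}$ to tend to $0$, yields $\mathcal{E}^{K,a}[\cdot](t) = 0$.

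For the construction of $F^J$, I would prove that the sequence $(S(J,n))_{n \geq T}$ from Definition \ref{DefSJn} is Cauchy in the $L^\infty_t$-energy induced by $\mathcal{E}^{K,a}[\cdot]$. Observe first that $S(J,n)(t,\cdot) \equiv 0$ for $t \geq n$, since the cutoff source $\chi(\tau/n)J$ vanishes for $\tau \geq n/2$ and the field evolves forward from zero data at $t=n$. For $T \leq n < m$, set $H_{n,m} := S(J,m) - S(J,n)$; its source $[\chi(\tau/m) - \chi(\tau/n)]J$ is supported in $\{n/4 \leq \tau \leq m/2\}$, and $H_{n,m}(m,\cdot) = 0$. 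Hence $H_{n,m}$ is a vacuum solution on $[T, n/4]$, and Proposition \ref{ProenergyforscatMax} together with the triangle inequality and \eqref{eq:condiSJn} yield $\mathcal{E}^{K,a}[H_{n,m}](t) \leq 2\mathcal{E}^{K,a}[H_{n,m}](n/4) \lesssim \langle n \rangle^{-\delta} B_{\mathrm{source}}$ for all $t \leq n/4$. For $\tau \in [n/4, m]$, the same upper bound is obtained by directly applying the triangle inequality and \eqref{eq:condiSJn} to each term, since $\langle \tau \rangle \gtrsim \langle n \rangle$. The limit $F^J := \lim_n S(J,n)$ therefore exists in the energy space, satisfies the Maxwell equations with full source $J$ (by pointwise passage $\chi(\tau/n) \to 1$), and has vanishing radiation field at $\mathcal{I}^+$ because each $S(J,n)$ is supported in $\{t \leq n\}$.

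For the construction of $G$, I would invoke the vacuum Maxwell scattering map, whose basic existence in $L^\infty_t L^2_x$ is already established by \cite[Theorem~7.6]{scat} for any $L^2_{u,\omega}$ radiation field satisfying the constraint \eqref{defmoynulle}. The task here is to upgrade this existence to the quantitative bound $\mathcal{E}^{K,a}[G](t) \lesssim \overline{\mathcal{E}}^{\, a+\delta}_1[\underline{\alpha}^{\mathcal{I}^+}]$. Approximate $\underline{\alpha}^{\mathcal{I}^+}$ by a sequence $(\underline{\alpha}^{\mathcal{I}^+,n})$ of smooth $1$-forms compactly supported in $u$ and still satisfying \eqref{defmoynulle}; the constraint can be preserved because it only prescribes the zero-frequency modes of $\slashed{\nabla} \cdot$ and $\slashed{\nabla} \times$ of the integrated radiation field, which can be restored by a Hodge-type corrector supported in a bounded $u$-range on $\mathbb{S}^2$. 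For such compactly supported data, the triplet $(\underline{\alpha}^{\mathcal{I}^+,n}, \rho^{\mathcal{I}^+,n}, \sigma^{\mathcal{I}^+,n})$ defined by \eqref{rhoinfysigmainfty} can be viewed as characteristic initial data on a truncated past light cone $\underline{C}_{\underline{u}_n}$ (with $\underline{u}_n$ beyond the support), from which a smooth vacuum solution $G_n$ is propagated backward via the null Maxwell system \eqref{null1}--\eqref{null6} and Corollary \ref{Corgoodnull}. A backward variant of Proposition \ref{ProenergyforscatMax}, in which the flux on $\underline{C}_{\underline{u}_n}$ is expressed in terms of $\nabla_{\partial_u}\underline{\alpha}^{\mathcal{I}^+,n}$ and its angular derivatives, bounds the $\mathcal{E}^{K,a}$ norm of $G_n - G_m$ by $\overline{\mathcal{E}}^{\, a+\delta}_1[\underline{\alpha}^{\mathcal{I}^+,n} - \underline{\alpha}^{\mathcal{I}^+,m}]$, with the extra $\delta$ absorbing the integration against the $K$-multiplier weight. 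Cauchyness then produces the limit $G$ with the stated energy bound, and its radiation field is identified as $\underline{\alpha}^{\mathcal{I}^+}$ by the weak convergence argument of Proposition \ref{blackboxscat}.

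The main obstacle is the last paragraph: prescribing radiation data on the entire $\mathcal{I}^+$ and translating $L^2$-type control of only one derivative of $\underline{\alpha}^{\mathcal{I}^+}$ into $\mathcal{E}^{K,a}$ control of $G$. Both the constraint \eqref{defmoynulle} (to handle zero-frequency modes that cannot be absorbed by the $\langle u \rangle$ weights) and Corollary \ref{Corgoodnull} (to convert $\partial_u$-control of $\underline{\alpha}^{\mathcal{I}^+}$ into $L$-control of $r\underline{\alpha}(G)$ sufficient to identify limits at null infinity) are used in a nontrivial way.
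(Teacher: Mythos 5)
Your decomposition $F = G + F^{J}$ differs from the paper's and the source part $F^{J}$ is handled correctly, but the vacuum part $G$ --- which you yourself flag as the main obstacle --- contains a genuine gap. You propose to solve a characteristic initial value problem on ingoing cones $\underline{C}_{\underline{u}_n}$ for compactly supported approximations of $\underline{\alpha}^{\mathcal{I}^+}$ and pass to the limit, invoking a ``backward variant of Proposition \ref{ProenergyforscatMax}'' whose flux on $\underline{C}_{\underline{u}_n}$ is controlled by $\overline{\mathcal{E}}^{\, a+\delta}_1[\underline{\alpha}^{\mathcal{I}^+}]$. Neither a characteristic well-posedness theory on a single ingoing cone nor the required flux-to-energy conversion is available in the paper, and the latter is exactly the quantitative core of the proposition: the $\delta$-gain has to come from a careful Cauchy--Schwarz in the null variables $(u,\underline{u})$, which your sketch leaves entirely implicit. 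The paper sidesteps the characteristic IVP altogether: it defines a global explicit ansatz $F^{\mathrm{lead}}$ whose null components are $\big(\underline{\alpha}^{\mathcal{I}^+}(u,\omega)/r,\, \rho^{\mathcal{I}^+}(u,\omega)/r^2,\, \sigma^{\mathcal{I}^+}(u,\omega)/r^2\big)$ cut off near the light cone. Because $(\underline{\alpha}^{\mathcal{I}^+},\rho^{\mathcal{I}^+},\sigma^{\mathcal{I}^+})$ satisfy the transport constraints \eqref{rhoinfysigmainfty}, $F^{\mathrm{lead}}$ solves the vacuum Maxwell equations up to a magnetic-current error $(J^{\mathrm{err}},\widetilde{J}^{\mathrm{err}})$ that is one power of $\langle t+r \rangle^{-1}$ better or is supported away from the wave zone (Lemma \ref{LemJJstar}); the approximants are $F^n = F^{\mathrm{lead}} + R^n + S(J,n)$, and Lemma \ref{Lemforenergy} is precisely the flux-to-$\overline{\mathcal{E}}^{\,a+\delta}_1$ estimate your outline was hoping to produce. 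This is where the work lives, and it has no analogue in your plan.

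Your uniqueness sketch is also flawed: for a vacuum difference $F_1-F_2$, Proposition \ref{ProenergyforscatMax} only yields $\mathcal{E}^{K,a}[F_1-F_2](t) \leq 2\,\mathcal{E}^{K,a}[F_1-F_2](s)$, and vanishing of the radiation field does \emph{not} by itself force $\mathcal{E}^{K,a}[F_1-F_2](s) \to 0$ as $s \to +\infty$ --- that conflates late-time decay of the full weighted energy with the limit of $r\underline{\alpha}$ along null rays. The paper instead appeals to the $\partial_t$-part of Theorem \ref{Thscat} (established in the external reference \cite{scat}, so no circularity), under which a vacuum field with vanishing radiation field has zero $\mathcal{S}^{\partial_t}_{\{t=0\}}$-norm and hence vanishes; you could do the same.
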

\begin{Rq}\label{RqappliTh}
In the context of Theorem \ref{Theo1}, if $N$ is the order of regularity of the solution, the construction of $\mathcal{L}_{Z^\gamma}(F-F^{\mathrm{asymp}}[f_\infty])$, with $|\gamma| \leq N-1$, will rely on this result, applied for $a=0$. We will suitably estimate the top order derivatives $\nabla_{t,x} \mathcal{L}_{Z^\gamma}(F-F^{\mathrm{asymp}}[f_\infty])$, $|\gamma| =N-1$, by appealing to Proposition \ref{ProscatMax} for $a =1$.
\end{Rq}
 \subsubsection{Preparatories} Before starting the proof of Proposition \ref{ProscatMax}, we give a sufficient condition on $J$ for \eqref{eq:condiSJn} to hold. In the course of Theorem \ref{Theo1}, we will use it for all but the top order derivatives of the source term $J(f)-J^{\mathrm{asymp}}[f_\infty]$. In view of Remark \ref{RqappliTh}, we focus here on the case $a=0$.
\begin{Pro}\label{CorassumpJforcondi}
Let $\delta >0$, $B>0$ and $J$ be a four-current defined on $[T,+\infty[ \times \R^3$ satisfying
$$ \forall \, t \in [T,+\infty[, \qquad  \int_{\R^3_x} \langle t+|x| \rangle^{4+\delta} \big|  J \big|^2(t,x) \dr x \leq B .$$
Then, there exists an absolute constant $D>0$ such that $S(J,n)$ verifies \eqref{eq:condiSJn} for all $n \geq T$ and for $a=0$ as well as $B_{\mathrm{source}}=D \delta^{-1} B$.
\end{Pro}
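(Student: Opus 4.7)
The strategy is to apply the backward energy inequality of Proposition \ref{ProenergyforscatMax}, which is tailor-made for this situation since $S(J,n)$ vanishes on the ``final'' slice $\{t = n\}$. With $a=0$ and $\widetilde{J} = 0$, the inequality reduces, for $T \leq t \leq n$, to
\begin{equation*}
\mathcal{E}^{K,0}\big[S(J,n)\big](t) + \sup_{u \in \R} \mathcal{F}^{K,0}\big[S(J,n)\big]^n_t(u) \leq 8 \int_{\tau = t}^{n} \int_{\R^3_x} \Big| S(J,n)_{\lambda \nu}\, \chi(\tau/n) J^\nu\, K^\lambda \Big|(\tau,x) \, \dr x \, \dr \tau,
\end{equation*}
the boundary contribution at $\tau=n$ being zero by construction.

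For the right-hand side I would decompose $K$ using the identity $2K = \langle t+r \rangle^2 L + \langle t-r \rangle^2 \underline{L}$, exactly as in the proof of Proposition \ref{ProMoravac}, obtaining the pointwise bound
\begin{equation*}
\big| K^\lambda S(J,n)_{\lambda \nu} J^\nu \big| \lesssim \langle \tau + r \rangle^2 \big(|\alpha(S(J,n))| + |\rho(S(J,n))|\big) |J| + \langle \tau - r \rangle^2 \big(|\underline{\alpha}(S(J,n))| + |\rho(S(J,n))|\big) |J| .
\end{equation*}
Cauchy--Schwarz in $x$ then yields
\begin{equation*}
\int_{\R^3_x} \big| K^\lambda S(J,n)_{\lambda \nu} J^\nu \big|(\tau,x) \, \dr x \lesssim \big| \mathcal{E}^{K,0}[S(J,n)](\tau) \big|^{\frac{1}{2}} \bigg( \int_{\R^3_x} \langle \tau + |x| \rangle^2 |J|^2(\tau,x) \, \dr x \bigg)^{\frac{1}{2}},
\end{equation*}
using that $\langle \tau - r \rangle \leq \langle \tau + r \rangle$ and that $|\chi(\tau/n)| \leq 1$. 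The hypothesis together with $\langle \tau + |x| \rangle^2 = \langle \tau + |x| \rangle^{4+\delta}/\langle \tau + |x| \rangle^{2+\delta} \leq \langle \tau \rangle^{-2-\delta}\langle \tau + |x| \rangle^{4+\delta}$ gives
\begin{equation*}
\int_{\R^3_x} \langle \tau + |x| \rangle^2 |J|^2(\tau,x) \, \dr x \leq \frac{B}{\langle \tau \rangle^{2+\delta}} .
\end{equation*}

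Combining these estimates, the function $E(t) := \mathcal{E}^{K,0}[S(J,n)](t)$ satisfies the backward Gr\"onwall-type inequality
\begin{equation*}
E(t) \leq C \int_{\tau = t}^n E(\tau)^{\frac{1}{2}} \frac{\sqrt{B}}{\langle \tau \rangle^{1 + \delta/2}} \, \dr \tau, \qquad T \leq t \leq n.
\end{equation*}
Setting $M(t) := \sup_{t \leq \tau \leq n} E(\tau)^{1/2}$, this implies $M(t)^2 \leq C M(t) \sqrt{B} \int_t^{+\infty} \langle \tau \rangle^{-1-\delta/2} \dr \tau \leq C' \delta^{-1} M(t)\sqrt{B}\,\langle t \rangle^{-\delta/2}$. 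Dividing by $M(t)$ and squaring yields $E(t) \leq C'' \delta^{-2} B \langle t \rangle^{-\delta}$, which is the claimed bound (absorbing the extra factor of $\delta^{-1}$ into the constant $D$). The only subtle point is the Gr\"onwall manipulation backward in time, which requires standard care when the inhomogeneous term involves $E(\tau)^{1/2}$; this is handled cleanly by reversing time via $s \mapsto n - s$ and taking suprema as above.
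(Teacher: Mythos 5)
Your proof is correct and follows essentially the same route as the paper: the backward energy inequality of Proposition \ref{ProenergyforscatMax} with the vanishing data at $t=n$, the null-frame expansion of $K^\mu S(J,n)_{\mu\nu}$, Cauchy--Schwarz in $x$, and the absorption argument via $\sup_{t\le\tau\le n}\mathcal{E}^K[S(J,n)](\tau)^{1/2}$. The only cosmetic difference is that you track the resulting $\delta^{-2}$ explicitly, whereas the paper leaves the final Gr\"onwall step implicit; in either case one cannot literally absorb a $\delta^{-1}$ into an absolute constant, but this harmless bookkeeping discrepancy is already present in the paper's own statement.
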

\begin{proof}
Let $n \geq T$ and recall that $S(J,n)=0$ on $[n,+\infty[ \times \R^3$. Furthermore, the energy inequality of Proposition \ref{ProenergyforscatMax} provides
$$ \forall \, t \in [T,n], \qquad \mathcal{E}^{K} \big[ S(J,n) \big] (t) \leq 8  \int_{\tau=t}^n \int_{\R^3_x} \big| K^\mu  S(J,n)_{\mu \nu} J^\nu \big|(\tau,x) \dr x \dr \tau.$$
Expanding $K^\mu  S(J,n)_{\mu \nu} $ according to the null frame $(\underline{L},L,e_\theta,e_\varphi)$, we get, as $2K=\langle t+r \rangle^2 L+ \langle t-r \rangle^2 \underline{L}$,
\begin{align*}
& \big| K^\mu  S(J,n)_{\mu \nu}  \big| \leq  \, \langle t+r \rangle^2 \big| S(J,n)_{L \nu} \big|+\langle t-r \rangle^2 \big| S(J,n)_{\underline{L} \nu}  \big| \\
& \qquad \qquad \quad \lesssim  \langle t+r \rangle^2 \big( \big| \alpha \big(  S(J,n) \big)\big|+\big| \rho \big(  S(J,n) \big)\big|\big)  + \langle t-r \rangle^2 \big( \big| \underline{\alpha} \big(  S(J,n) \big)\big|+\big| \rho \big(  S(J,n) \big)\big|\big)  .
 \end{align*}
 The Cauchy-Schwarz inequality in $x$ then provides, for all $T \leq t \leq n$,
 \begin{align*}
  \mathcal{E}^{K} \big[ S(J,n) \big] (t) & \lesssim \int_{\tau=t}^n \big| \mathcal{E}^K\big[S(J,n) \big](\tau) \big|^{\frac{1}{2}} \bigg| \int_{\R^3_x} \langle t+|x| \rangle^2 \big|J \big|^2(\tau,x) \dr x \bigg|^{\frac{1}{2}}\dr \tau \\
  & \lesssim B^{\frac{1}{2}}  \sup_{t \leq \tau \leq n } \big| \mathcal{E}^K\big[S(J,n) \big](\tau) \big|^{\frac{1}{2}} \int_{\tau=t}^n \frac{\dr \tau}{\langle \tau\rangle^{1+\frac{\delta}{2}}} \lesssim \delta^{-1} B^{\frac{1}{2}}  \langle t \rangle^{-\frac{\delta}{2}}  \sup_{t \leq \tau \leq n } \big| \mathcal{E}^K\big[S(J,n) \big](\tau) \big|^{\frac{1}{2}},
  \end{align*}
  which implies the result.
\end{proof}

\subsubsection{Set up of the proof of Proposition \ref{ProscatMax}}

We fix $a \geq 0$, $\delta >0$, $\underline{\alpha}^{\mathcal{I}^+}$ as well as $J$ satisfying the assumptions of Proposition \ref{ProscatMax}. We consider further $\rho^{\mathcal{I}^+}$ and $\sigma^{\mathcal{I}^+}$, given by \eqref{defmoynulle}. We will construct $F$ as the limit of a sequence $(F^n)$ of approximate solutions to the Maxwell equations with source term $J$. All these $2$-forms will have the same leading order term $F^{\mathrm{lead}}$, defined as
\begin{alignat*}{2}
\alpha \big(F^{\mathrm{lead}} \big)(t,r\omega)&=0, \qquad \qquad &&\underline{\alpha}\big(F^{\mathrm{lead}} \big)(t,r\omega)=\frac{1}{r} \underline{\alpha}^{\mathcal{I}^+}(t-r,\omega) \chi \big( \langle t-r \rangle /r \big),  \\
 \rho \big(F^{\mathrm{lead}} \big)(t,r\omega)&=\frac{1}{r^2} \rho^{\mathcal{I}^+}(t-r,\omega) \chi \big( \langle t-r \rangle /r), \qquad \qquad  && \sigma \big(F^{\mathrm{lead}} \big)(t,r\omega)=\frac{1}{r^2} \sigma^{\mathcal{I}^+}(t-r,\omega) \chi \big( \langle t-r \rangle /r \big),
\end{alignat*}
where we recall that $\chi \in C^\infty (\R,[0,1])$ verifies $\chi(s)=1$ for $s \leq 1/4$ and $\chi(s)=0$ for $s \geq 1/2$. 

\subsubsection{Study of the approximate solution}

We start by estimating how close $F^{\mathrm{lead}}$ is to be a solution to the vacuum Maxwell equations. 
\begin{Lem}\label{LemJJstar}
$F^{\mathrm{lead}}$ verifies the Maxwell equations with a magnetic current \eqref{Maxmagn}, with source terms $J^{\mathrm{err}}$ and $\widetilde{J}^{\mathrm{err}}$ satisfying, for all $(t,r\omega) \in \R_+ \times \R^3$,
\begin{align*}
\big|J^{\mathrm{err}}_{\underline{L}} \big|(t,r\omega)+\big|\widetilde{J}^{\mathrm{err}}_{\underline{L}} \big|(t,r\omega) & \lesssim \frac{1}{\langle t+r \rangle^3} \big\| \underline{\alpha}^{\mathcal{I}^+} \big\|_{1}(t-r,\omega) \, \mathds{1}_{\langle t-r \rangle \leq \frac{r}{2}}, \\
 \sup_{A \in \{ \theta , \varphi \}}\big|J^{\mathrm{err}}_{e_A} \big|(t,r\omega)+\big|\widetilde{J}^{\mathrm{err}}_{e_A} \big|(t,r\omega)  & \lesssim \frac{1}{\langle t+r \rangle^3} \big\| \underline{\alpha}^{\mathcal{I}^+} \big\|_{1}(t-r,\omega) \, \mathds{1}_{\langle t-r \rangle \leq \frac{r}{2}}, \\
 \big|J^{\mathrm{err}}_L \big|(t,r\omega)+\big|\widetilde{J}^{\mathrm{err}}_L \big|(t,r\omega) & \lesssim \frac{\langle t-r \rangle}{\langle t+r \rangle^4} \big\| \underline{\alpha}^{\mathcal{I}^+}  \big\|_{1}(t-r,\omega) \, \mathds{1}_{\langle t-r \rangle \leq \frac{r}{2}} .
 \end{align*}
Furthermore, the null components of $F^{\mathrm{lead}}$ verify $\alpha (F^{\mathrm{lead}})=0$ and
\begin{align*}
 \big| \underline{\alpha} \big(F^{\mathrm{lead}}  \big)\big|(t,r\omega) & \lesssim \langle t+r \rangle^{-1} \langle t-r \rangle^{-1}\big\| \underline{\alpha}^{\mathcal{I}^+} \big\|_{0}(t-r,\omega) \, \mathds{1}_{\langle t-r \rangle \leq \frac{r}{2}} , \\
    \big| \rho \big( F^{\mathrm{lead}}  \big)\big|(t,r\omega)+\big| \sigma \big(  F^{\mathrm{lead}}  \big)\big|(t,r\omega) & \lesssim  \langle t+r \rangle^{-2} \big\| \underline{\alpha}^{\mathcal{I}^+} \big\|_{0}(t-r,\omega) \, \mathds{1}_{\langle t-r \rangle \leq \frac{r}{2}} .
    \end{align*}
\end{Lem}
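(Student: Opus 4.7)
The approach is by direct substitution of $F^{\mathrm{lead}}$ into the null-decomposed Maxwell equations \eqref{null1}--\eqref{null6}, reading off $J^{\mathrm{err}}$ and $\widetilde{J}^{\mathrm{err}}$ as the residuals. The cutoff $\chi(\langle t-r \rangle/r)$ is supported where $\langle t-r\rangle \leq r/2$, on which region $r \sim \langle t+r\rangle$; this observation makes the pointwise bounds on the null components of $F^{\mathrm{lead}}$ immediate from the definition, once one recognizes that the norm $\|\underline{\alpha}^{\mathcal{I}^+}\|_0$ already controls $\langle u \rangle |\underline{\alpha}^{\mathcal{I}^+}|$, $|\rho^{\mathcal{I}^+}|$ and $|\sigma^{\mathcal{I}^+}|$ (Definition \ref{Defscatstae}).

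For the $L$-components of the source terms, I would use \eqref{null5}--\eqref{null6}: since $\alpha(F^{\mathrm{lead}})=0$ by definition, we get $J^{\mathrm{err}}_L = -r^{-2}\nabla_L(r^2\rho^{\mathrm{lead}})$, and similarly for $\widetilde{J}^{\mathrm{err}}_L$. The key point is that $L(u) = L(t-r) = 0$, so the derivative acts only on the cutoff through $L(\langle u \rangle/r) = -\langle u \rangle/r^2$. The support of $\chi'$ localizes this to $\langle u \rangle \sim r$, producing a term of order $\|\underline{\alpha}^{\mathcal{I}^+}\|_0 \langle u \rangle /r^4$, consistent with the stated bound. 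The angular components come from \eqref{null1}--\eqref{null2}, which reduce, after using $\alpha^{\mathrm{lead}}=0$, to algebraic combinations of $\slashed{\nabla}\rho^{\mathrm{lead}}$, $\slashed{\nabla}\sigma^{\mathrm{lead}}$ and $r^{-1}\nabla_L(r\underline{\alpha}^{\mathrm{lead}})$. Each contributes an extra $r^{-1}$ through the scaling of the spherical derivative on a sphere of radius $r$ (i.e.\ $e_A = r^{-1}\partial_A$ on a coordinate basis), giving the desired $\langle t+r\rangle^{-3}\|\underline{\alpha}^{\mathcal{I}^+}\|_1$ bound.

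The main obstacle is the $\underline{L}$-components, via \eqref{null3}--\eqref{null4}: one has $J^{\mathrm{err}}_{\underline{L}} = r^{-2}\nabla_{\underline{L}}(r^2\rho^{\mathrm{lead}}) + \slashed{\nabla}\cdot\underline{\alpha}^{\mathrm{lead}}$. A naive estimate would only give $\langle t+r\rangle^{-2}$ decay, since $\nabla_{\underline{L}}(r^2\rho^{\mathrm{lead}})$ contains the leading piece $2\chi(\langle u \rangle/r)\partial_u \rho^{\mathcal{I}^+}$ (using $\underline{L}(u)=2$). The crucial cancellation is provided by the constraint equation \eqref{rhoinfysigmainfty}: $\partial_u \rho^{\mathcal{I}^+} = -\tfrac12 \slashed{\nabla}\cdot\underline{\alpha}^{\mathcal{I}^+}$. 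This exactly offsets the leading term $r^{-2}\chi \, \slashed{\nabla}\cdot\underline{\alpha}^{\mathcal{I}^+}$ of $\slashed{\nabla}\cdot\underline{\alpha}^{\mathrm{lead}}$, so that only subleading pieces survive: derivatives of $\chi(\langle u\rangle/r)$ in the $\underline{L}$-direction (again localized to $\langle u \rangle \sim r$) and derivatives of the $r^{-1}$ factor hitting $\underline{\alpha}^{\mathrm{lead}}$. Each of these is controlled by $\|\underline{\alpha}^{\mathcal{I}^+}\|_1/r^3$, yielding the claimed $\langle t+r\rangle^{-3}$ decay. The identical argument applied to \eqref{null4}, using the analogous constraint $\partial_u\sigma^{\mathcal{I}^+} = -\tfrac12\slashed{\nabla}\times\underline{\alpha}^{\mathcal{I}^+}$, handles $\widetilde{J}^{\mathrm{err}}_{\underline{L}}$. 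This cancellation is precisely the reason one builds the constraints \eqref{defmoynulle} into the definition of $\mathcal{S}^K_{\mathcal{I}^+}$.
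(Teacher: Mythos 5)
Your proposal is correct and follows essentially the same route as the paper: substitute $F^{\mathrm{lead}}$ into the null Maxwell equations \eqref{null1}--\eqref{null6}, read off $J^{\mathrm{err}}$, $\widetilde{J}^{\mathrm{err}}$ as residuals, exploit the cancellation $2\partial_u\rho^{\mathcal{I}^+}+\slashed{\nabla}\cdot\underline{\alpha}^{\mathcal{I}^+}=2\partial_u\sigma^{\mathcal{I}^+}+\slashed{\nabla}\times\underline{\alpha}^{\mathcal{I}^+}=0$ in the $\underline{L}$-equations, and use the support of $\chi$, $\chi'$ (where $\langle t-r\rangle\lesssim r\sim\langle t+r\rangle$) to convert powers of $r$ into powers of $\langle t+r\rangle$. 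The only nitpick is your closing attribution: the cancellation rests on \eqref{rhoinfysigmainfty} (the definition of $\rho^{\mathcal{I}^+}$, $\sigma^{\mathcal{I}^+}$ as primitives), which you cite correctly in the body, rather than on the zero-mean condition \eqref{defmoynulle}.
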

\begin{proof}
We first determine $J^{\mathrm{err}}$ and $\widetilde{J}^{\mathrm{err}}$ using the definition of $F^{\mathrm{lead}}$ and the equations \eqref{Maxmagn} expressed in the null frame. First, since $e_\theta =r^{-1} \partial_\theta$ and $e_\varphi=r^{-1} \sin^{-1}(\theta) \partial_{\varphi}$, we get from \eqref{null1}--\eqref{null2} that, for any $A \in \{\theta , \varphi \}$,
\begin{align*}
 J_{e_A}^{\mathrm{err}}&=-\frac{1}{r^3}\varepsilon^{AB}\slashed{\nabla}_{e_B}\sigma^{\mathcal{I}^+} \! (t-r,\omega) \chi \big(\langle t-r\rangle/r \big)-\frac{\langle t-r \rangle}{2r^3} \underline{\alpha}^{\mathcal{I}^+}_{e_A}(t-r,\omega) \chi ' \big( \langle t-r \rangle /r \big), \\
 \widetilde{J}_{e_A}^{\mathrm{err}} &= \frac{1}{r^3}\varepsilon^{AB}\slashed{\nabla}_{e_B}\rho^{\mathcal{I}^+} \! (t-r,\omega) \chi \big( \langle t-r\rangle/r \big)-\frac{\langle t-r \rangle}{2r^3} \varepsilon^{AB}\underline{\alpha}^{\mathcal{I}^+}_{e_B}(t-r,\omega) \chi ' \big( \langle t-r \rangle /r \big).
 \end{align*}
Next, as $\nabla_{\underline{L}}\rho^{\mathcal{I}^+}+\slashed{\nabla} \cdot  \underline{\alpha}^{\mathcal{I}^+}=\nabla_{\underline{L}}\sigma^{\mathcal{I}^+}+\slashed{\nabla}  \times  \underline{\alpha}^{\mathcal{I}^+}=0$ holds, we have from \eqref{null3}--\eqref{null4}
\begin{align*}
 J_{\underline{L}}^{\mathrm{err}}=\frac{(t-r)r+\langle t-r \rangle^2}{\langle t-r \rangle r^4}\rho^{\mathcal{I}^+} \! (t-r,\omega) \chi'\big(\langle t-r\rangle/r \big), \qquad 
 \widetilde{J}_{\underline{L}}^{\mathrm{err}} = \frac{(t-r)r+\langle t-r \rangle^2}{\langle t-r \rangle r^4}\sigma^{\mathcal{I}^+} \! (t-r,\omega) \chi' \big(\langle t-r\rangle/r \big).
 \end{align*}
Finally, we obtain from \eqref{null5}--\eqref{null6} that
$$ J_{L}^{\mathrm{err}}=\frac{\langle t-r \rangle}{r^4}\rho^{\mathcal{I}^+} \! (t-r,\omega) \chi' \big(\langle t-r\rangle/r \big), \qquad  \widetilde{J}_{L}^{\mathrm{err}} = \frac{\langle t-r \rangle}{r^4} \sigma^{\mathcal{I}^+} \! (t-r,\omega) \chi' \big(\langle t-r\rangle/r \big).$$
Recall further the definition of $F^{\mathrm{lead}}$. We then deduce, as $\langle t-r \rangle \leq r/2$ and then $\langle t+r \rangle \lesssim r$ on the support of $\chi$, that all the stated estimates hold.
\end{proof}
We now bound the energy norm of $F^{\mathrm{lead}}$.
\begin{Cor}\label{CorFlead}
For all $t \in \R_+$, we have $\mathcal{E}^{K,a} \big[ F^{\mathrm{lead}} \big] (t) \lesssim \overline{\mathcal{E}}^{\,a}_{0} \big[ \underline{\alpha}^{\mathcal{I}^+} \big]$.
\end{Cor}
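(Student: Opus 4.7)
The plan is to directly combine the pointwise bounds already derived in Lemma \ref{LemJJstar} with a change of variables in the energy integral. Since $\alpha(F^{\mathrm{lead}})=0$, only the contributions of $\underline{\alpha}(F^{\mathrm{lead}})$, $\rho(F^{\mathrm{lead}})$ and $\sigma(F^{\mathrm{lead}})$ enter the energy norm $\mathcal{E}^{K,a}[F^{\mathrm{lead}}](t)$. These null components are supported in the region $\{\langle t-r\rangle \le r/2\}$, on which $r\sim \langle t+r\rangle$ and $\langle t-r\rangle \lesssim \langle t+r\rangle$; this is the single geometric observation that drives everything.

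First, I would feed the bounds of Lemma \ref{LemJJstar} into the integrand of $\mathcal{E}^{K,a}[F^{\mathrm{lead}}](t)$. The factor $|\max(t-r,1)|^{2a}$ coming from the weight is controlled by $\langle t-r\rangle^{2a}$. For the $\underline{\alpha}$-term, Lemma \ref{LemJJstar} yields $\langle t-r\rangle^2|\underline{\alpha}(F^{\mathrm{lead}})|^2 \lesssim \langle t+r\rangle^{-2}\,\|\underline{\alpha}^{\mathcal{I}^+}\|_0^2(t-r,\omega)$ on the support, and for $\rho,\sigma$ the bound $(\langle t+r\rangle^2+\langle t-r\rangle^2)(|\rho|^2+|\sigma|^2) \lesssim \langle t+r\rangle^{-2}\,\|\underline{\alpha}^{\mathcal{I}^+}\|_0^2(t-r,\omega)$ follows from $\langle t-r\rangle\lesssim\langle t+r\rangle$. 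Thus the total integrand is pointwise majorised by
\[
\langle t-r\rangle^{2a}\,\langle t+r\rangle^{-2}\,\big\|\underline{\alpha}^{\mathcal{I}^+}\big\|_0^2(t-r,\omega)\,\mathds{1}_{\langle t-r\rangle\le r/2}.
\]

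Next, switching to spherical coordinates $x=r\omega$, $\dr x = r^2 \dr r\,\dr\mu_{\mathbb{S}^2}$, the factor $r^2$ is absorbed by the $\langle t+r\rangle^{-2}$ decay on the support since $r\sim\langle t+r\rangle$ there. What remains is
\[
\mathcal{E}^{K,a}[F^{\mathrm{lead}}](t) \lesssim \int_{\mathbb{S}^2_\omega}\int_0^{+\infty} \langle t-r\rangle^{2a}\,\big\|\underline{\alpha}^{\mathcal{I}^+}\big\|_0^2(t-r,\omega)\,\mathds{1}_{\langle t-r\rangle\le r/2}\,\dr r\,\dr\mu_{\mathbb{S}^2_\omega}.
\]
Performing the linear change of variable $u=t-r$ (with $r=t-u\ge 0$) and dropping the indicator yields a bound by $\int_{\R_u}\int_{\mathbb{S}^2}\langle u\rangle^{2a}\|\underline{\alpha}^{\mathcal{I}^+}\|_0^2(u,\omega)\,\dr\mu_{\mathbb{S}^2}\dr u = \overline{\mathcal{E}}^{\,a}_0[\underline{\alpha}^{\mathcal{I}^+}]$, which is independent of $t$. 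This is the desired estimate.

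There is no real obstacle: every step is algebraic manipulation of the pointwise bounds of Lemma \ref{LemJJstar}. The only point requiring mild care is the matching of the weights: the $\langle t-r\rangle^2$ factor in front of $|\underline{\alpha}|^2$ in $\mathcal{E}^K$ cancels the $\langle t-r\rangle^{-2}$ decay from the bound on $|\underline{\alpha}(F^{\mathrm{lead}})|^2$, while the $r^2$ Jacobian cancels the $\langle t+r\rangle^{-2}$ decay. These two cancellations are what make the weighted energy integrable and uniform in $t$.
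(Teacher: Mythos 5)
Your proof is correct and follows essentially the same route as the paper: both feed the pointwise bounds of Lemma \ref{LemJJstar} into the definition of $\mathcal{E}^{K,a}$, use that the $\langle t-r\rangle^2$ (resp. $\langle t+r\rangle^2$) weights cancel against the decay of $\underline{\alpha}(F^{\mathrm{lead}})$ (resp. $\rho$, $\sigma$), bound $r^2\langle t+r\rangle^{-2}\leq 1$, and change variables $u=t-r$ to land on $\overline{\mathcal{E}}^{\,a}_{0}\big[\underline{\alpha}^{\mathcal{I}^+}\big]$. No issues.
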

\begin{proof}
In view of the support of the cutoff function $\chi$, $F^{\mathrm{lead}}(t,r\omega)$ vanishes for $0 \leq t \leq 1$ or $r \leq 2$. According to Lemma \ref{LemJJstar}, we have
$$ \forall \, t \in \R_+, \qquad \mathcal{E}^{K,a} \big[ F^{\mathrm{lead}} \big] (t) \lesssim  \int_{r=0}^{+\infty} \int_{\mathbb{S}^2_\omega} \langle t-r \rangle^{2a} \,  \big\| \underline{\alpha}^{\mathcal{I}^+}  \big\|^2_{0}(t-r,\omega) \dr \mu_{\mathbb{S}^2_\omega} \frac{r^2 \, \dr r}{\langle t+r \rangle^2} \leq \overline{\mathcal{E}}^{\,a}_{0} \big[ \underline{\alpha}^{\mathcal{I}^+} \big].$$
\end{proof}

\subsubsection{Sequence of approximate solutions}
Recall from Definition \ref{DefSJn} the field $S(J,n)$ and consider, for any time $n \in \mathbb{N}^*$ such that $n \geq T$, the $2$-forms $F^n$ and $R^n$ defined as
$$F^n=F^{\mathrm{lead}}+R^n+S(J,n), \qquad \qquad \nabla^\mu R^n_{\mu \nu}=-\chi(t/n) J_\nu^{\mathrm{err}}, \quad \nabla^\mu {}^* \! R^n = -\chi (t/n) \widetilde{J}_\nu^{\mathrm{err}}, \quad R^n(n,\cdot)=0.$$
Since $\chi(s)=0$ for all $s \geq 1/2$, the derivatives of $R^n$ vanish as well at $t=n$. Furthermore, as $\chi(s)=1$ for all $s \leq 1/4$, $F^n$ is a solution to the Maxwell equations with source term $J$ on $[0,n/4]$. Our goal now is to prove that $F^n$ converges to $F$, as $n \to +\infty$, with $F$ satisfying the conclusion of Proposition \ref{ProscatMax}. We start by a result which will allow us to estimate the energy of $F^n$ and to prove that it is a Cauchy sequence.
\begin{Lem}\label{Lemforenergy}
Let $n \in \mathbb{N}^*$, $G$ be a sufficiently regular $2$-form and $D$ be either $J^{\mathrm{err}}$ or $\widetilde{J}^{\mathrm{err}}$. Then, for all $0 \leq t \leq n$,
$$ \int_{\tau=t}^{n}\! \int_{\R^3_x} \! | \max (\tau-|x|,1) |^{2a} \! \left| K^{\mu} G_{\mu \nu} D^\nu\right| \! (\tau,x) \dr x \dr \tau \! \lesssim  \! \frac{\delta^{-1}}{\langle t \rangle^{\frac{\delta}{2}}} \left| \mathcal{E}^{a+\delta}_{1}\big[\underline{\alpha}^{\mathcal{I}^+}  \big] \right|^{\frac{1}{2}} \! \left| \sup_{t \leq \tau \leq n} \! \mathcal{E}^{K,a}[G](\tau)+ \sup_{u \in \R} \mathcal{F}^{K,a}[G]_t^n(u) \right|^{\frac{1}{2}} \!.$$ 
\end{Lem}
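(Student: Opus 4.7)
The plan is to expand $K^{\mu} G_{\mu \nu} D^\nu$ in the null frame $(\underline{L}, L, e_\theta, e_\varphi)$. Since $K = \tfrac{1}{2}\langle\underline{u}\rangle^2 L + \tfrac{1}{2}\langle u\rangle^2\underline{L}$ and the nonvanishing null components of the $2$-form $G$ are $G_{\underline{L}L}=-2\rho(G)$, $G_{Le_A}=\alpha(G)_{e_A}$, $G_{\underline{L}e_A}=\underline{\alpha}(G)_{e_A}$, $G_{e_\theta e_\varphi}=\sigma(G)$, a direct computation gives
$$\big|K^\mu G_{\mu\nu}D^\nu\big| \lesssim \langle\underline{u}\rangle^2 \big(|\rho(G)||D_L|+|\alpha(G)||D_{e_A}|\big) + \langle u\rangle^2\big(|\rho(G)||D_{\underline{L}}|+|\underline{\alpha}(G)||D_{e_A}|\big).$$
The sharp bounds of Lemma \ref{LemJJstar} control each null component of $D$ by $\langle\underline{u}\rangle^{-3}\|\underline{\alpha}^{\mathcal{I}^+}\|_1$ (for $D_{\underline{L}}, D_{e_A}$) or $\langle u\rangle\langle\underline{u}\rangle^{-4}\|\underline{\alpha}^{\mathcal{I}^+}\|_1$ (for $D_L$), all supported in $\{\langle\tau-r\rangle\leq r/2\}$. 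On this region $\langle\underline{u}\rangle\sim r\sim\tau$ and $\langle u\rangle\leq r/2\lesssim\tau$, which is the key geometric fact making the weights cooperate.

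For the two terms carrying $\langle u\rangle^2$, I would apply Cauchy--Schwarz in $x$ at fixed $\tau$, placing $\langle u\rangle |\rho(G)|$ (resp.\ $\langle u\rangle|\underline{\alpha}(G)|$) weighted by $|\max(\tau-r,1)|^a$ in the first factor, so as to recognize $\mathcal{E}^{K,a}[G](\tau)^{1/2}$. The second factor is a purely source-theoretic integral whose spherical/radial integral is, after using $\langle\underline{u}\rangle\sim\tau$ on the support and exchanging excess powers $\langle u\rangle^{2-2\delta}\lesssim \tau^{2-2\delta}$, of the form
$$\tau^{-2-2\delta}\int_{\R_u}\int_{\mathbb{S}^2_\omega}\langle u\rangle^{2(a+\delta)}\big\|\underline{\alpha}^{\mathcal{I}^+}\big\|_1^2(u,\omega)\,\dr\mu_{\mathbb{S}^2}\dr u\;\lesssim\;\tau^{-2-2\delta}\,\overline{\mathcal{E}}^{\,a+\delta}_{1}\big[\underline{\alpha}^{\mathcal{I}^+}\big].$$
Integrating the resulting bound $\tau^{-1-\delta}\mathcal{E}^{K,a}[G]^{1/2}(\overline{\mathcal{E}}^{\,a+\delta}_{1})^{1/2}$ over $\tau\in[t,n]$ produces the required factor $\delta^{-1}\langle t\rangle^{-\delta}$, which is stronger than the stated $\delta^{-1}\langle t\rangle^{-\delta/2}$.

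For the two terms carrying $\langle\underline{u}\rangle^2$, the $\langle\underline{u}\rangle^2$ weight on $\alpha(G)$ is not available on constant-$\tau$ slices, so one must foliate by outgoing null cones. Using the identity $\dr x\,\dr\tau=\tfrac{1}{\sqrt{2}}\dr u\,\dr\mu_{C_u}$, the spacetime integral becomes $\int_u\!\int_{C_u^{t,n}}$. On each cone I would write the integrand as a product of $a(u,\underline{u},\omega)=|\max(u,1)|^a\langle\underline{u}\rangle|\alpha(G)|$ and $b(u,\underline{u},\omega)=|\max(u,1)|^a\langle\underline{u}\rangle|D_{e_A}|$; Cauchy--Schwarz on $C_u^{t,n}$ followed by $\sup_u$ on the $a$-factor yields $\sup_u\mathcal{F}^{K,a}[G]_t^n(u)^{1/2}$. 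Using $|D_{e_A}|^2\lesssim\langle\underline{u}\rangle^{-6}\|\underline{\alpha}^{\mathcal{I}^+}\|_1^2\mathds{1}$ and the fact that on $C_u^{t,n}$ one has $\underline{u}\geq \max(2t-u,4\langle u\rangle+u)\gtrsim\max(t,\langle u\rangle)$, the integral of $b^2$ on each cone evaluates to at most $\max(t,\langle u\rangle)^{-1}\int_{\mathbb{S}^2}\|\underline{\alpha}^{\mathcal{I}^+}\|_1^2|\max(u,1)|^{2a}\dr\mu_{\mathbb{S}^2}$. A final Cauchy--Schwarz in $u$ against the weight $w(u)=|\max(u,1)|^{2a}/\langle u\rangle^{2(a+\delta)}$ recognises $\overline{\mathcal{E}}^{\,a+\delta}_{1}[\underline{\alpha}^{\mathcal{I}^+}]$, while the dual weight integrates to $\int w(u)\max(t,\langle u\rangle)^{-1}\dr u\lesssim \delta^{-1}\langle t\rangle^{-2\delta}$. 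The term $\langle\underline{u}\rangle^2|\rho(G)||D_L|$ is treated identically, its weaker $|D_L|$-bound being compensated by the extra $\langle u\rangle$.

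The main obstacle is the balancing of null-frame weights in the $\alpha(G)$ term: $\langle\underline{u}\rangle^2|\alpha(G)|^2$ lives exactly at the borderline of what the flux $\mathcal{F}^{K,a}$ captures, so the gains $\langle u\rangle^{-2\delta}$ from the final weight $w$ and $\langle\underline{u}\rangle^{-1}\lesssim\max(t,\langle u\rangle)^{-1/2}\cdot\langle\underline{u}\rangle^{-1/2}$ on the cone must be carefully coordinated; both the sharp support condition $\langle u\rangle\leq r/2$ (giving $\langle\underline{u}\rangle\sim\tau$) and the one-derivative-improved pointwise estimates of Lemma \ref{LemJJstar} are essential here. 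The factor $\delta^{-1}$ arises inevitably from integrating $\langle u\rangle^{-1-2\delta}$ at infinity in $u$ (and symmetrically $\tau^{-1-\delta}$ in $\tau$), while the $\langle t\rangle^{-\delta/2}$ factor is produced in both regimes with room to spare.
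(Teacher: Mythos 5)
Your proof is correct and follows essentially the same route as the paper's: null-frame expansion of $K^\mu G_{\mu\nu}D^\nu$, the pointwise bounds of Lemma \ref{LemJJstar}, Cauchy--Schwarz on constant-$\tau$ slices against $\mathcal{E}^{K,a}[G]$ for the terms with favourable weights, and a null-cone foliation with Cauchy--Schwarz against $\mathcal{F}^{K,a}[G]$ for the $\alpha(G)$ term. The only (immaterial) divergence is that you send $\langle\underline{u}\rangle^2|\rho(G)||D_L|$ to the outgoing cones, whereas the paper keeps it on the constant-$\tau$ slices using the $\langle t+r\rangle^2|\rho|^2$ part of $\mathcal{E}^{K,a}$; both work.
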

\begin{proof}
As $2K=\langle t+r\rangle^{2} L+\langle t-r \rangle^{2} \underline{L}$, there holds
\begin{align}
 \hspace{-11mm} \left| K^\mu G_{\mu \nu} D^\nu \right| & = \left| 2\langle t-r \rangle^{2} \rho (G)  D^L-2\langle t+r \rangle^{2} \rho (G) D^{\underline{L}}- \langle t-r \rangle^{2} \underline{\alpha}(G)_{e_A} D^{e_A}-\langle t+r \rangle^{2} \alpha (G) _{e_A} D^{e_A} \right| \nonumber \\
& \leq \langle t-r \rangle^{2} |\rho (G)| |D_{\underline{L}}|+\langle t+r \rangle^{2} |\rho (G)| |D_{L}|+\langle t-r \rangle^{2} |\underline{\alpha} (G)| |\slashed{D}|+\langle t+r \rangle^{2} |\alpha (G)|| \slashed{D}|, \label{eq:expanullframeeee}
\end{align}
where $|\slashed{D}|^2:=|D_{e_\theta}|^2+|D_{e_\varphi}|^2$. According to Lemma \ref{LemJJstar}, we have
\begin{align*}
 \left| K^{\mu} G_{\mu \nu} D^\nu \right| & \lesssim \!  \left( \frac{\langle t-r\rangle^2}{\langle t+r \rangle^3}|\rho (G)|+\frac{\langle t-r \rangle}{\langle t+r \rangle^{2}}|\rho (G)|+\frac{\langle t-r\rangle^2}{\langle t+r \rangle^3} |\underline{\alpha} (G)|+\frac{ 1}{ \langle t+r \rangle} |\alpha (G)| \right)  \big\|  \underline{\alpha}^{\mathcal{I}^+} \! \big\|_{1}.
 \end{align*}
We then deduce, as $|t-r| \leq t+r$, $r \leq t+r$ and $t \leq t+r$, that
 \begin{align*}
\left| K^{\mu} G_{\mu \nu} D^\nu \right|  & \lesssim  \bigg( \frac{\langle t+r \rangle}{\langle t \rangle^{1+\delta}}|\rho (G)|+\frac{\langle t-r\rangle}{\langle t \rangle^{1+\delta}} |\underline{\alpha} (G)|+\frac{ \langle t+r\rangle}{\langle \tau-r \rangle^{\frac{1+\delta}{2}} \, \langle \tau+r \rangle^{\frac{1+\delta}{2}}} |\alpha (G)| \bigg)  \frac{\langle t-r \rangle^{\delta}}{r} \big\|  \underline{\alpha}^{\mathcal{I}^+} \big\|_{1}.
 \end{align*}
Consequently, we have
$$ \int_{\tau=t}^n \int_{\R^3_x} | \max (\tau-|x|,1) |^{2a} \left| K^{\mu} G_{\mu \nu} D^\nu\right| (\tau,x) \dr x \dr \tau \lesssim  \mathbf{I}_{\rho, \underline{\alpha}}(t)+\mathbf{I}_{\alpha}(t), $$
where, using the shorthand $w_{\tau-|x|}:= | \max (\tau-|x|,1) |^{a}$,
\begin{align*}
\mathbf{I}_{\rho, \underline{\alpha}}(t)&:= \int_{\tau=t}^n \frac{1}{\langle \tau \rangle^{1+\delta}} \int_{\R^3_x}  w_{\tau-|x|} \Big( \langle \tau+r \rangle \, |\rho (G)|+\langle \tau-r\rangle \, |\underline{\alpha} (G)| \Big) \frac{\langle \tau-r \rangle^{a+\delta}}{r} \big\|  \underline{\alpha}^{\mathcal{I}^+} \big\|_{1}  \bigg(\tau-r,\frac{x}{|x|} \bigg) \dr x \dr \tau, \\
\mathbf{I}_{\alpha}(t) &:= \int_{\tau=t}^n  \int_{\R^3_x} w_{\tau-|x|} \frac{ \langle \tau+r\rangle}{\langle \tau-r \rangle^{\frac{1+\delta}{2}} \, \langle \tau+r \rangle^{\frac{1+\delta}{2}}} |\alpha (G)| \frac{\langle \tau-r \rangle^{a+\delta}}{r} \big\|  \underline{\alpha}^{\mathcal{I}^+} \big\|_{1}  \bigg(\tau-r,\frac{x}{|x|} \bigg) \dr x \dr \tau .
\end{align*}
Apply the Cauchy-Schwarz inequality in the variable $x$ in order to get
\begin{align*}
\mathbf{I}_{\rho , \underline{\alpha}}(t) & \lesssim \int_{\tau=t}^n \frac{1}{\langle \tau \rangle^{1+\delta}} \left| \mathcal{E}^{K,a}[G](\tau)  \int_{r=0}^{+\infty} \int_{\mathbb{S}^2_\omega} \langle \tau-r \rangle^{2a+2\delta}  \big\|  \underline{\alpha}^{\mathcal{I}^+} \big\|_{1}^2\!(\tau-r,\omega) \dr \mu_{\mathbb{S}^2_\omega} \dr r\right|^{\frac{1}{2}} \! \dr \tau \\
& \lesssim  \left| \sup_{t \leq \tau \leq n}  \mathcal{E}^{K,a}[G](\tau) \; \overline{\mathcal{E}}_{1}^{ \, a+\delta} \big[\underline{\alpha}^{\mathcal{I}^+} \big] \right|^{\frac{1}{2}} \int_{\tau=t}^n \frac{\dr \tau }{\langle \tau \rangle^{1+\delta}}   \leq  \frac{ \sqrt{2}}{\delta \, \langle t \rangle^{\delta}}   \left| \, \overline{\mathcal{E}}_{1}^{ \, a+\delta} \big[\underline{\alpha}^{\mathcal{I}^+} \big] \sup_{t \leq \tau \leq n}  \mathcal{E}^{K,a}[G](\tau) \right|^{\frac{1}{2}} .
\end{align*}
Next, for $\mathbf{I}_{\alpha}(t)$, we perform the change of variables $(u,\underline{u},\omega)=(\tau-|x|,\tau+|x|,x/|x|)$ and we apply the Cauchy-Schwarz inequality. We obtain
\begin{align*}
\mathbf{I}_{\alpha}(t) & \leq  \int_{u=-\infty}^n  \int_{\underline{u}=\max(2t-u,u)}^{2n-u} \int_{\mathbb{S}^2_\omega}  \frac{w_u \, \langle \underline{u} \rangle}{\langle u \rangle^{\frac{1+\delta}{2}} \, \langle \underline{u} \rangle^{\frac{1+\delta}{2}}} |\alpha (G)|(u,\underline{u},\omega) \frac{\langle u \rangle^{a+\delta}}{r} \big\|  \underline{\alpha}^{\mathcal{I}^+} \big\|_{1}(u,\omega)  \, r^2 \dr \mu_{\mathbb{S}^2_\omega} \dr \underline{u} \dr u  .
\end{align*}
Thus, by the Cauchy-Schwarz inequality in $u$, we get $\mathbf{I}_\alpha (t) \leq | \mathbf{I}^1_t \cdot \mathbf{I}^2_t |^{1/2}$, where,
\begin{align*}
\mathbf{I}^1_t & := \int_{u=-\infty}^n  \int_{\underline{u}=\max(2t-u,u)}^{2n-u} \int_{\mathbb{S}^2_\omega} \frac{1}{\langle u \rangle^{1+\delta}} |w_u|^2 \langle \underline{u} \rangle^{2} |\alpha (G)|^2(u,\underline{u},\omega) \, r^2 \dr \mu_{\mathbb{S}^2_\omega} \dr \underline{u} \dr u \\
& =\sqrt{2} \int_{u=-\infty}^n \frac{1}{\langle u \rangle^{1+\delta} } \int_{C_u^{t,n}} |w_u|^2  \langle \underline{u} \rangle^{2} |\alpha |^2 \dr \mu_{C_u}  \dr u  \leq  \, \sup_{u \in \R} \mathcal{F}^{K,a}[G]_t^n (u) \int_{u=-\infty}^{+\infty} \frac{\dr u}{\langle u \rangle^{1+\delta} } \leq \frac{4}{ \delta} \, \sup_{u \in \R} \mathcal{F}^{K,a}[G]_t^n (u)
\end{align*}
and
\begin{align*}
\mathbf{I}^2_t & := \int_{u=-\infty}^n  \int_{\underline{u}=\max(2t-u,u)}^{2n-u} \int_{\mathbb{S}^2_\omega} \frac{1}{\langle \underline{u} \rangle^{1+\delta}} \langle u \rangle^{2a+2\delta} \big\|  \underline{\alpha}^{\mathcal{I}^+} \big\|_{1}^2  (u,\omega)  \, \dr \mu_{\mathbb{S}^2_\omega} \dr \underline{u} \dr u \\
& =   \int_{\underline{u}=t}^{+ \infty} \frac{1}{\langle \underline{u} \rangle^{1+\delta}} \int_{u=2t-\underline{u}}^{\underline{u}} \int_{\mathbb{S}^2_\omega} \langle u \rangle^{2a+2\delta} \big\|  \underline{\alpha}^{\mathcal{I}^+} \big\|_{1}^2  (u,\omega)  \dr \mu_{\mathbb{S}^2_\omega} \dr u \dr \underline{u} \leq \overline{\mathcal{E}}^{\, a+\delta}_{1} \big[\underline{\alpha}^{\mathcal{I}^+} \big] \int_{\underline{u}=t}^{+ \infty} \frac{\dr \underline{u}}{\langle \underline{u} \rangle^{1+\delta}} \lesssim \frac{ \overline{\mathcal{E}}^{\, a+\delta}_{1} \big[\underline{\alpha}^{\mathcal{I}^+} \big]}{ \delta \, \langle t \rangle^{\delta}}  .
\end{align*}
\end{proof}

We are now able to bound the remainder part $(R^n)_{n \in \mathbb{N}^*}$ and to prove that it is a Cauchy sequence.
\begin{Pro}\label{ProRT}
There exists a constant $C$ depending only on $\delta$ such that, for every $n \in \mathbb{N}^*$,
$$ \forall \,  t \in \R_+, \qquad \; \mathcal{E}^{K,a}[R^n](t)+ \sup_{u \in \R} \, \mathcal{F}^{K,a}[R^n]_t^n(u) \leq C \, \langle t \rangle^{-\delta} \, \overline{\mathcal{E}}^{ \, a+\delta}_{1} \big[ \underline{\alpha}^{\mathcal{I}^+} \big].$$
Moreover, for all $1 \leq n_1 \leq n_2$, 
$$ \sup_{0 \leq t \leq n_1} \mathcal{E}^{K,a}[R^{n_1}-R^{n_2}](t) \leq C \, n_1^{-\delta} \, \overline{\mathcal{E}}^{ \, a+\delta}_{1} \big[ \underline{\alpha}^{\mathcal{I}^+} \big].$$
\end{Pro}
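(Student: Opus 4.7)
\textit{Proof plan for Proposition \ref{ProRT}.} The strategy is to apply the energy identity of Proposition \ref{ProenergyforscatMax} directly to $R^n$ and to the difference $R^{n_1}-R^{n_2}$, and then to absorb the resulting error terms using Lemma \ref{Lemforenergy}.

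\textbf{First bound.} Since $R^n$ is a solution of the Maxwell equations with a magnetic current, with source terms $-\chi(\tau/n)J^{\mathrm{err}}$ and $-\chi(\tau/n)\widetilde{J}^{\mathrm{err}}$, and with $R^n(n,\cdot)\!=\!0$, Proposition \ref{ProenergyforscatMax} applied on $[t,n]$ gives
\begin{equation*}
\mathcal{E}^{K,a}[R^n](t)+\sup_{u\in\R}\mathcal{F}^{K,a}[R^n]_t^n(u)\le 8\!\int_t^n\!\!\int_{\R^3}\!|\max(\tau\!-\!|x|,1)|^{2a}\,\bigl|\,K^\lambda R^n_{\lambda\nu}(\chi J^{\mathrm{err}})^\nu\!+\!K^\lambda{}^{\ast\!}R^n_{\lambda\nu}(\chi\widetilde{J}^{\mathrm{err}})^\nu\,\bigr|\,\dr x\,\dr\tau.
\end{equation*}
Since $|\chi|\le 1$, the Lemma \ref{Lemforenergy} bound, applied to $G=R^n$ and $D\in\{J^{\mathrm{err}},\widetilde{J}^{\mathrm{err}}\}$, controls the RHS by $C\delta^{-1}\langle t\rangle^{-\delta/2}\,\bigl|\overline{\mathcal{E}}^{\,a+\delta}_1[\underline{\alpha}^{\mathcal{I}^+}]\bigr|^{1/2}\,Y_n(t)^{1/2}$, where
\begin{equation*}
Y_n(t):=\sup_{\tau\in[t,n]}\mathcal{E}^{K,a}[R^n](\tau)+\sup_{u\in\R}\mathcal{F}^{K,a}[R^n]_t^n(u).
\end{equation*}
Crucially $Y_n$ is a non-increasing function of $t$ (the flux domain enlarges as $t$ decreases, and the sup of the energy over $[t,n]$ grows). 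Taking the supremum over $s\in[t,n]$ in the energy inequality applied at time $s$, and using monotonicity, yields $Y_n(t)\le C\delta^{-1}\langle t\rangle^{-\delta/2}\,\overline{\mathcal{E}}^{\,a+\delta}_1[\underline{\alpha}^{\mathcal{I}^+}]^{1/2}\,Y_n(t)^{1/2}$, whence by absorbing $Y_n(t)^{1/2}$ (Young's inequality), $Y_n(t)\le C\delta^{-2}\langle t\rangle^{-\delta}\,\overline{\mathcal{E}}^{\,a+\delta}_1[\underline{\alpha}^{\mathcal{I}^+}]$, which is the first claim.

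\textbf{Cauchy estimate.} Let $1\le n_1\le n_2$ and set $D^n:=R^{n_1}-R^{n_2}$. It satisfies
\begin{equation*}
\nabla^\mu D^n_{\mu\nu}=\bigl(\chi(\tau/n_2)-\chi(\tau/n_1)\bigr)J^{\mathrm{err}}_\nu,\qquad\nabla^\mu{}^{\ast\!}D^n_{\mu\nu}=\bigl(\chi(\tau/n_2)-\chi(\tau/n_1)\bigr)\widetilde{J}^{\mathrm{err}}_\nu,
\end{equation*}
with terminal datum $D^n(n_1,\cdot)=-R^{n_2}(n_1,\cdot)$, since $R^{n_1}(n_1,\cdot)=0$. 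The two decisive observations are: $(i)$ by the first part of the proposition, $\mathcal{E}^{K,a}[R^{n_2}](n_1)\le C\delta^{-2}\langle n_1\rangle^{-\delta}\overline{\mathcal{E}}^{\,a+\delta}_1[\underline{\alpha}^{\mathcal{I}^+}]$; $(ii)$ the cutoff difference $\chi(\cdot/n_2)-\chi(\cdot/n_1)$ vanishes identically on $[0,n_1/4]$, because $\chi\equiv 1$ on $(-\infty,1/4]$ and both $\tau/n_1,\tau/n_2\le 1/4$ there. Applying Proposition \ref{ProenergyforscatMax} on $[t,n_1]$ to $D^n$ and then Lemma \ref{Lemforenergy} on the reduced interval $[\max(t,n_1/4),n_1]$ (on which the source is supported) yields
\begin{equation*}
Y^D_n(t)\le 2\,\mathcal{E}^{K,a}[R^{n_2}](n_1)+C\delta^{-1}\,\langle\max(t,n_1/4)\rangle^{-\delta/2}\,\overline{\mathcal{E}}^{\,a+\delta}_1[\underline{\alpha}^{\mathcal{I}^+}]^{1/2}\,Y^D_n(t)^{1/2},
\end{equation*}
with $Y^D_n$ defined as before for $D^n$. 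Since $\langle\max(t,n_1/4)\rangle\gtrsim\langle n_1\rangle$, Young's inequality gives $Y^D_n(t)\le C\delta^{-2}\,n_1^{-\delta}\,\overline{\mathcal{E}}^{\,a+\delta}_1[\underline{\alpha}^{\mathcal{I}^+}]$ for every $t\in[0,n_1]$, proving the Cauchy bound.

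\textbf{Main obstacle.} The only delicate point is the self-referential structure of the estimate produced by Lemma \ref{Lemforenergy}: the right-hand side of the energy inequality involves $Y_n(t)^{1/2}$ (i.e. the quantity one is trying to bound). The resolution is the monotonicity of $Y_n$ as a function of $t$ on $[0,n]$, which allows one to first take a supremum on the left-hand side and then absorb the $Y_n(t)^{1/2}$ factor. For the Cauchy step, one must also exploit sharply the support property of $\chi(\cdot/n_2)-\chi(\cdot/n_1)$ to recover a factor of $\langle n_1\rangle^{-\delta/2}$ from the Lemma, rather than only $\langle t\rangle^{-\delta/2}$, which would be insufficient for small $t$.
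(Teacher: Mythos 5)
Your proof is correct and follows essentially the same route as the paper's: the energy inequality of Proposition \ref{ProenergyforscatMax} with terminal datum $R^n(n,\cdot)=0$, absorption of the self-referential right-hand side via Lemma \ref{Lemforenergy} and the monotonicity of $Y_n$ in $t$, and, for the Cauchy step, the vanishing of $\chi(\cdot/n_2)-\chi(\cdot/n_1)$ on $[0,n_1/4]$ combined with the first bound applied to $R^{n_2}$ at $t=n_1$. The only point to spell out is that the magnetic-current term $K^\lambda\,{}^{\ast}\!R^n_{\lambda\nu}\widetilde{J}^{\mathrm{err},\nu}$ requires Lemma \ref{Lemforenergy} applied to $G={}^{\ast}\!R^n$ (not $R^n$), after which one uses $\mathcal{E}^{K,a}[{}^{\ast}\!R^n]=\mathcal{E}^{K,a}[R^n]$ and $\mathcal{F}^{K,a}[{}^{\ast}\!R^n]=\mathcal{F}^{K,a}[R^n]$, which hold because Hodge duality permutes the null components up to sign — exactly as the paper notes.
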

\begin{proof}
Fix $n \in \mathbb{N}^*$ and apply the energy estimate of Proposition \ref{ProenergyforscatMax} to $R^n$, which vanishes as well as its derivatives at $t=n$. We get, for all $t \in [0,n]$,
\begin{align*}
 \mathcal{E}^{K,a}[R^n](t) +\sup_{u \in \R} \,  \mathcal{F}^{K,a}[R^n]_t^n(u) \leq 8  \int_{\tau=t}^n \int_{\R^3_x} | \max (\tau-|x|,1) |^{2a}  \big| K^{\mu} R^n_{\mu \nu} J^{\mathrm{err},\nu} +K^{\mu} {}^* \!R^n_{\mu \nu} \widetilde{J}^{\mathrm{err},\nu } \big|(\tau,x) \dr \tau \dr x. 
\end{align*}
Note now that 
$$\mathcal{E}^{K,a} \big[{}^* \! R^n \big]=\mathcal{E}^{K,a} \big[R^n \big], \qquad \qquad \mathcal{F}^{K,a} \big[{}^* \! R^n \big]=  \mathcal{F}^{K,a} \big[ R^n \big].$$
Indeed, according to \cite[Equation $(3.9)$]{CK} or by direct computations, for any $2$-form $G$,
$$ \rho ( {}^* \! G ) = \sigma (G), \qquad \sigma ({}^* \! G)= -\rho (G)   , \qquad \underline{\alpha} ({}^* \! G)_{e_A}= \varepsilon^{AB} \underline{\alpha} (G)_{e_B}, \qquad \alpha ({}^* \! G)_{e_A}= \varepsilon^{BA} \alpha (G)_{e_B}, \qquad A \in \{ \theta , \varphi \} .$$
Apply Lemma \ref{Lemforenergy} for $G=R^n$ as well as $G={}^*R^n$. It yields
\begin{align*}
\forall \, t \in [T,n], \qquad  \mathcal{E}^{K,a}[R^n](t) +\sup_{u \in \R}\mathcal{F}^{K,a}[R^n]_t^n(u) \lesssim \frac{\delta^{-1}}{\langle t \rangle^{\frac{\delta}{2}}} \Big| \overline{\mathcal{E}}^{\, a+\delta}_{1}\big[\underline{\alpha}^{\mathcal{I}^+} \big] \Big|^{\frac{1}{2}} \left|  \mathcal{E}^{K,a}[R^n](t) +\sup_{u \in \R}\mathcal{F}^{K,a}[R^n]_t^n(u) \right|^{\frac{1}{2}} \! ,
 \end{align*}
from which we obtain the stated estimate for $R^n$ on $[0,n]$, and then on $\R_+$ since $R^n=0$ for $t \geq n$.

Consider now $1 \leq n_1 < n_2$ and introduce $G:=R^{n_2}-R^{n_1}$, which satisfies
$$\nabla^{\mu} G_{\mu \nu}= \left( \chi(t/n_1)-\chi(t/n_2) \right) J_\nu^{\mathrm{err}}, \qquad \nabla^{\mu} {}^* \! G_{\mu \nu}= \left( \chi(t/n_1)-\chi(t/n_2) \right) \widetilde{J}^{\mathrm{err}}_\nu .$$
As $\chi(s)=1$ for all $s \leq 1/4$, the source term vanishes for $t \leq n_1/4$, so that, by Proposition \ref{ProenergyforscatMax},
\begin{align*}
\sup_{ 0 \leq t \leq n_1} \mathcal{E}^{K,a}[G](t)+ \sup_{u \in \R} \mathcal{F}^{K,a}[G]_0^{n_1}&(u) \leq \mathcal{E}^{K,a}[G](n_1) \\
&+ \int_{\tau=\frac{n_1}{4}}^{n_1}\int_{\R^3_x} | \max (\tau-|x|,1) |^{2a}\big| \, \overline{K}_0^{\mu}G_{\mu \nu} J^{\mathrm{err},\nu}+ \overline{K}_0^{\mu} {}^* \! G_{\mu \nu} \widetilde{J}^{\mathrm{err},\nu}\big| (\tau,x) \dr x \dr \tau ,
\end{align*}
As $R^{n_1}(n_1,\cdot)=0$, we have $\mathcal{E}^{K,a}[G](n_1)= \mathcal{E}^{K,a}[R^{n_2}](n_1)\lesssim  n_1^{-\delta} \, \overline{\mathcal{E}}^{\, a+\delta}_{1}[\underline{\alpha}^{\mathcal{I}^+}]$. Moreover, by Lemma \ref{Lemforenergy}, applied to $G$ and ${}^* \! G$, the second term on the right hand side of the previous inequality is bounded above by
$$\left\langle \frac{n_1}{4} \right\rangle^{-\frac{\delta}{2}} \big| \overline{\mathcal{E}}^{\, a+\delta}_{1}\big[\underline{\alpha}^{\mathcal{I}^+}\big] \big|^{\frac{1}{2}} \Big| \sup_{n_1/4 \leq \tau \leq n_1}  \mathcal{E}^{K,a}[G](\tau) + \sup_{u \in \R} \mathcal{F}^{K,a} [G]_{n_1/4}^{n_1}(u) \Big|^{\frac{1}{2}}.$$
It implies the energy estimate for $G=R^{n_1}-R^{n_2}$.
\end{proof}

We finally study the sequence of approximate solutions $(F^n)_{n \geq T}$.
\begin{Pro}\label{ProCauchy}
There exists a constant $C$, depending only on $\delta$, such that the following estimates hold. For any $n \geq T$ and $T \leq n_1 < n_2$, we have
$$\sup_{T \leq  t \leq n} \mathcal{E}^{K,a}[F^n](t) \leq C \, \overline{\mathcal{E}}^{\, a+ \delta}_{1} \big[ \underline{\alpha}^{\mathcal{I}^+} \big]
+ B_{\mathrm{source}}, \qquad \quad \sup_{T \leq t \leq n_1} \mathcal{E}^{K,a}[ F^{n_2}-F^{n_1}](t) \leq \frac{C}{  n_1^\delta} \,  \overline{\mathcal{E}}^{\, a+ \delta}_{1} \big[ \underline{\alpha}^{\mathcal{I}^+} \big]+ \frac{2}{  n_1^\delta} B_{\mathrm{source}}.$$
\end{Pro}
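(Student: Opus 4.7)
The plan is to exploit the additive decomposition $F^n = F^{\mathrm{lead}} + R^n + S(J,n)$ and combine the estimates already established for each component. Since $\mathcal{E}^{K,a}[\cdot]$ is a weighted $L^2$--squared norm, the triangle inequality gives $\mathcal{E}^{K,a}[G_1+G_2+G_3] \leq 3\,(\mathcal{E}^{K,a}[G_1]+\mathcal{E}^{K,a}[G_2]+\mathcal{E}^{K,a}[G_3])$ for any three $2$-forms, and similarly with two terms.

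For the first (uniform) estimate, applying this triangle inequality together with Corollary \ref{CorFlead} for $F^{\mathrm{lead}}$, the first part of Proposition \ref{ProRT} for $R^n$, and the hypothesis \eqref{eq:condiSJn} for $S(J,n)$ yields, for all $T \leq t \leq n$,
$$ \mathcal{E}^{K,a}[F^n](t) \lesssim \overline{\mathcal{E}}^{\,a}_{0}\big[\underline{\alpha}^{\mathcal{I}^+}\big] + \langle t\rangle^{-\delta}\,\overline{\mathcal{E}}^{\,a+\delta}_{1}\big[\underline{\alpha}^{\mathcal{I}^+}\big] + \langle t\rangle^{-\delta}\,B_{\mathrm{source}}. $$
Since $\overline{\mathcal{E}}^{\,a}_{0} \leq \overline{\mathcal{E}}^{\,a+\delta}_{1}$ and $\langle t\rangle^{-\delta}\leq 1$, the claimed bound follows.

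For the Cauchy estimate, decompose $F^{n_2}-F^{n_1} = (R^{n_2}-R^{n_1}) + (S(J,n_2)-S(J,n_1))$. The first summand is already controlled by the second part of Proposition \ref{ProRT}, which delivers the contribution $C\,n_1^{-\delta}\,\overline{\mathcal{E}}^{\,a+\delta}_{1}[\underline{\alpha}^{\mathcal{I}^+}]$. The remaining work is to show
$$ \sup_{T\leq t\leq n_1} \mathcal{E}^{K,a}\big[ S(J,n_2)-S(J,n_1)\big](t) \lesssim n_1^{-\delta}\,B_{\mathrm{source}}. $$
Here I would split the interval at $n_1/4$. For $t \in [n_1/4, n_1]$, one has $\langle t\rangle \gtrsim n_1$, so the triangle inequality combined with \eqref{eq:condiSJn} applied to each of $S(J,n_1)$ and $S(J,n_2)$ already gives the required bound. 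For $t \in [T, n_1/4]$, observe that $\chi(t/n_1)=\chi(t/n_2)=1$ since $t/n_2 \leq t/n_1 \leq 1/4$, so $S(J,n_2)-S(J,n_1)$ is a vacuum Maxwell field on $[T, n_1/4]$; applying the energy inequality of Proposition \ref{ProenergyforscatMax} with $s=n_1/4$ (the source term integral vanishing) reduces the estimate at $t$ to the one at $n_1/4$, which was treated in the previous case.

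Assembling these pieces with the appropriate constants (keeping careful track to obtain the coefficient $2\,n_1^{-\delta}$ in front of $B_{\mathrm{source}}$, which is possible by using the factor $2$ from the energy inequality rather than absorbing it in $C$) yields the Cauchy estimate. There is no genuine analytic obstacle: the only mild subtlety is the case analysis on the cutoff region for the source difference of $S(J,n_2)-S(J,n_1)$, which must be organized so that the vanishing of the source on $[T, n_1/4]$ is actually exploited rather than bypassed by a naive triangle inequality (which would only give $\langle t\rangle^{-\delta}$ and not $n_1^{-\delta}$ near $t=T$).
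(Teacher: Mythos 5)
Your proof is correct and follows the paper's intended approach. The paper's own proof of this proposition is a single sentence asserting that the result ``ensues'' from Corollary~\ref{CorFlead}, Proposition~\ref{ProRT}, and the hypothesis \eqref{eq:condiSJn} on $S(J,n)$; what you have done is carry out the step that sentence silently skips, namely the Cauchy estimate for $S(J,n_2)-S(J,n_1)$ on $[T,n_1/4]$, where \eqref{eq:condiSJn} alone only gives the useless bound $\langle t\rangle^{-\delta}B_{\mathrm{source}}$. Your observation that the source term $(\chi(t/n_2)-\chi(t/n_1))J$ vanishes for $t\leq n_1/4$, so that Proposition~\ref{ProenergyforscatMax} with $s=n_1/4$ reduces the problem to the case $t\geq n_1/4$, is exactly the mechanism the paper itself deploys inside the proof of Proposition~\ref{ProRT} for the $R^{n_2}-R^{n_1}$ difference, and is the right way to fill this gap. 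The only thing I would not belabor is the coefficient $2/n_1^\delta$: the hypothesis \eqref{eq:condiSJn} already involves a $\lesssim$ whose implicit constant is absorbed in $B_{\mathrm{source}}$, and the various factors of $2$ from the squared-norm triangle inequality and the energy inequality do not reproduce the stated $2$ literally, so this coefficient should be read as being up to redefinition of $B_{\mathrm{source}}$, as the paper does throughout.
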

\begin{proof}
As $F^n=F^{\mathrm{lead}}+R^n+S^n$, the two parts of the result ensue from Corollary \ref{CorFlead}, the previous Proposition \ref{ProRT} and the assumptions on $S(J,n)$.
\end{proof}
 
We are now able to conclude this section.

\begin{refproof}[Proof of Proposition \ref{ProscatMax}.] Consider $T_0 \geq T$ and let $(\mathbb{B},\sup_{[T,T_0]} \mathcal{E}^{K,a}[\cdot])$ be the subset of the locally integrable $2$-form $G$ defined on $[T,T_0]\times \R^3_x$ and such that $\sup_{T \leq \tau \leq T_0}\mathcal{E}^{K,a}[G](\tau)<+\infty$. Then,
\begin{itemize}
\item $F^n$ is a solution to the Maxwell equations with source term $J$ on $[T,T_0]$, for all $n \geq 4T_0$. 
\item By Proposition \ref{ProCauchy}, $(F^n)_{n \geq 4T_0}$ is bounded by $C \, \overline{\mathcal{E}}^{\, a+ \delta}_{1} \big[ \underline{\alpha}^{\mathcal{I}^+} \big]+ B_{\mathrm{source}}$ in $\mathbb{B}$. 
\item Still by Proposition \ref{ProCauchy}, $(F^n)_{n \geq 4T_0}$ is a Cauchy sequence in $\mathbb{B}$.
\end{itemize}
Consequently, $(F^n)_{n \geq 4T_0}$ converges in $\mathbb{B}$ to a $2$-form $F^{\infty}_{T_0}$, defined on $[T,T_0]\times \R^3_x$ and such that
\begin{equation*}
 \nabla^{\mu} F^{\infty}_{T_0, \, \mu \nu}=J_\nu, \qquad \nabla^{\mu} {}^* \! F^{\infty}_{T_0, \, \mu \nu}=0  .
\end{equation*}
Since for any $p \in \mathbb{N}$ and any $T_0 \geq T$, $F^{\infty}_{T_0+p}=F^{\infty}_{T_0}$ on $[T,T_0]$, we can define a $2$-form $F$ on $[T,+\infty[ \times \R^3_x$ by $F(t,x):=F^{\infty}_{  \lceil t \rceil}(t,x)$. The field $F$ is then a weak solution to the Maxwell equations, with source term $J$, which satisfies, according to Proposition \ref{ProCauchy},
$$ \sup_{t \geq T}\mathcal{E}^{K,a}[F](t) \leq C \, \overline{\mathcal{E}}^{\, a+\delta}_{1} \big[ \underline{\alpha}^{\mathcal{I}^+} \big]+ B_{\mathrm{source}}, \qquad \; \forall \, n \in \mathbb{N}^*, \quad   \sup_{t \geq T}\mathcal{E}^{K,a}[F-F^n](t) \leq \frac{C}{  n^\delta} \,  \overline{\mathcal{E}}^{\, a+ \delta}_{1} \big[ \underline{\alpha}^{\mathcal{I}^+} \big]+ \frac{2}{  n^\delta} B_{\mathrm{source}} .$$
 Finally, as $F^n = F^{\mathrm{lead}}$ on $[n,+\infty[ \times \R^3$, we get that $\underline{\alpha}^{\mathcal{I}^+}$ is the radiation field of $F$ in the sense of Definition \ref{Defrad}. Finally, the uniqueness of the solution $F$ is given by Theorem \ref{Thscat}, stated thereafter.
\end{refproof}

\subsection{Wave operators for the vacuum Maxwell equations}
We apply the results of this section, for a vanishing source term and an initial time $T=0$, in order to construct wave operators for
\begin{equation}\label{eq:Maxvac}
 \nabla^\mu F_{\mu \nu} =0, \qquad \qquad \nabla^\mu {}^* \! F_{\mu \nu} =0,
\end{equation}
between weighted $L^2$ Hilbert spaces. For this, we refine the results of Proposition \ref{Thscatforsmooth} when the source term $J$ vanishes. Recall the space of radiation fields $\mathcal{S}^K_{\mathcal{I}^+}$, introduced in Definition \ref{Defscatstae}.
\begin{Pro}\label{Provacforw}
Let $F$ be a solution to the vacuum Maxwell equations such that $\mathcal{E}^K_3[F](0)<+\infty$. Then, $F$ has a radiation field along future null infinity $\underline{\alpha}^{\mathcal{I}^+}$ which belongs to $\mathcal{S}_{\mathcal{I}^+}^K$ and such that
$$ \big\| \underline{\alpha}^{\mathcal{I}^+} \big\|^2_{\mathcal{S}^K_{\mathcal{I}^+}} := \int_{\R_u} \int_{\mathbb{S}^2_{\omega}} \langle u \rangle^2 \big|\underline{\alpha}^{\mathcal{I}^+} \big|^2(u,\omega)+4\left(\big|\rho^{\mathcal{I}^+}\big|^2+\big|\sigma^{\mathcal{I}^+}\big|^2\right)(u,\omega) \dr \mu_{\mathbb{S}^2_\omega} \dr u = \mathcal{E}^K[F](0) .$$   
\end{Pro}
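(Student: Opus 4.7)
The existence of the radiation field $\underline{\alpha}^{\mathcal{I}^+}$, the identification of $\rho^{\mathcal{I}^+}$ and $\sigma^{\mathcal{I}^+}$ through \eqref{rhoinfysigmainfty}, and the decay $\rho^{\mathcal{I}^+}(u,\omega),\sigma^{\mathcal{I}^+}(u,\omega)\to 0$ as $|u|\to+\infty$ follow directly from Proposition~\ref{Thscatforsmooth} applied with $J=0$, which is admissible for any $\delta>0$ thanks to the hypothesis $\mathcal{E}^K_3[F](0)<+\infty$. Passing to the limit $u\to+\infty$ in \eqref{rhoinfysigmainfty} then yields the vanishing-mean conditions \eqref{defmoynulle}, so $\underline{\alpha}^{\mathcal{I}^+}\in\mathcal{S}^K_{\mathcal{I}^+}$.

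For the isometric identity, I would apply the divergence theorem to $4\mathbb{T}[F]_{\mu\nu}K^\nu$, which is divergence-free because $K$ is conformal Killing and $J=0$, on the region $\{(\tau,x):\tau\ge 0,\ \tau+|x|\le\underline{u}\}$ whose boundary consists of the disk $\{t=0,\ |x|\le\underline{u}\}$ and the incoming cone $\underline{C}_{\underline{u}}$. Using the null-frame identity $2\mathbb{T}[F]_{\underline{L}\nu}K^\nu=\langle u\rangle^2|\underline{\alpha}(F)|^2+\langle\underline{u}\rangle^2(|\rho(F)|^2+|\sigma(F)|^2)$ and the sign/measure conventions of the proofs of Propositions~\ref{Propartialt} and~\ref{ProMoravac}, this yields the finite-$\underline{u}$ flux identity
\begin{equation*}
\int_{u=-\underline{u}}^{\underline{u}}\int_{\mathbb{S}^2_\omega}\!\!\left[\langle u\rangle^2|r\underline{\alpha}(F)|^2+\frac{\langle\underline{u}\rangle^2}{r^2}\bigl(|r^2\rho(F)|^2+|r^2\sigma(F)|^2\bigr)\right]\!\dr\mu_{\mathbb{S}^2_\omega}\,\dr u\;=\;\int_{|x|\le\underline{u}}4\mathbb{T}[F]_{0\nu}K^\nu(0,x)\,\dr x,
\end{equation*}
with $r=(\underline{u}-u)/2$ along $\underline{C}_{\underline{u}}$. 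The right-hand side increases monotonically to $\mathcal{E}^K[F](0)$ as $\underline{u}\to+\infty$. On the left, Proposition~\ref{Thscatforsmooth} provides the pointwise convergences $r\underline{\alpha}(F)\to\underline{\alpha}^{\mathcal{I}^+}$, $r^2\rho(F)\to\rho^{\mathcal{I}^+}$, $r^2\sigma(F)\to\sigma^{\mathcal{I}^+}$ together with $\langle\underline{u}\rangle^2/r^2\to 4$ for each fixed $(u,\omega)$, so Fatou's lemma yields $\|\underline{\alpha}^{\mathcal{I}^+}\|^2_{\mathcal{S}^K_{\mathcal{I}^+}}\le\mathcal{E}^K[F](0)$.

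The main obstacle is the converse inequality. The pointwise decay estimates \eqref{esti2badnull}--\eqref{esti1goodbis} do not supply a $\underline{u}$-uniform integrable majorant for the flux integrand: the weight $\langle u\rangle^2$ is too strong to be absorbed by the $\langle u\rangle^{-1}$ gain available, so dominated convergence is unavailable directly. I would handle this by a density argument. For smooth, divergence-free, compactly supported initial data, finite propagation speed confines $F$ to a region of the form $\{|t-|x||\le R\}$, so for $\underline{u}>2R$ the flux integrand on the left is supported in the bounded strip $\{|u|\le R\}$, where dominated convergence applies and promotes the Fatou inequality into an equality, establishing the identity $\mathcal{E}^K[F](0)=\|\underline{\alpha}^{\mathcal{I}^+}\|^2_{\mathcal{S}^K_{\mathcal{I}^+}}$ for such data. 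The general case then follows from the density of smooth, compactly supported, constraint-satisfying initial data in the weighted energy space $\{\mathcal{E}^K_3<+\infty\}$ (obtained by truncation of the initial data together with a standard correction to restore $\nabla\cdot E_0=\nabla\cdot B_0=0$), combined with the forward continuity $\|\underline{\alpha}^{\mathcal{I}^+}\|_{\mathcal{S}^K_{\mathcal{I}^+}}\le\mathcal{E}^K[F](0)^{1/2}$ established in the previous paragraph, which ensures that the radiation-field map is continuous in the energy topology.
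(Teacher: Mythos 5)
Your proposal is correct and follows essentially the same route as the paper: invoke Proposition~\ref{Thscatforsmooth} (with $J=0$) for the existence of $\underline{\alpha}^{\mathcal{I}^+}$, $\rho^{\mathcal{I}^+}$, $\sigma^{\mathcal{I}^+}$ and membership in $\mathcal{S}^K_{\mathcal{I}^+}$, apply the divergence theorem with the Morawetz multiplier $K$ in $\{\tau\geq 0,\ \tau+|x|\leq\underline{u}\}$, reduce by density to smooth compactly supported data, use finite propagation speed to confine the flux to the compact strip $\{|u|\leq R\}$, and pass to the limit. The only cosmetic differences are that you make the Fatou ``$\leq$'' inequality explicit and use it to justify the density step via continuity of the radiation-field map in the energy topology (the paper states ``by a density argument'' without elaboration), and you conclude via dominated convergence on the compact strip whereas the paper invokes the explicit rate estimates \eqref{eq:rhoetsigma}--\eqref{eq:underlinealpha}; both close the argument equivalently.
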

\begin{proof}
Remark first that the conclusions of Proposition \ref{Thscatforsmooth} hold. In particular, $\underline{\alpha}^{\mathcal{I}^+} $, $\rho^{\mathcal{I}^+}$ as well as $\sigma^{\mathcal{I}^+}$ exist and $\underline{\alpha}^{\mathcal{I}^+} \! \in \mathcal{S}_{\mathcal{I}^+}^K$. By a density argument, it suffices to prove the conservation law for smooth and compactly supported initial data. Assume then that $F(0,\cdot)$ is supported in $\{|x| \leq R\}$ for a certain $R>0$. Applying the divergence theorem to $\mathbb{T}[F]_{\mu \nu} K^\nu$ in the region $\{(\tau,x) \in \R_+ \times \R^3 \, | \, \tau+|x| \leq \underline{u} \}$, for all $\underline{u} \geq R$, we get
\begin{align*}
\mathbf{F}(\underline{u}):=& \int_{|u| \leq \underline{u}} \int_{\mathbb{S}^2_\omega} \langle u \rangle^2 \, |r \underline{\alpha} (F)|^2(u,\underline{u},\omega)+ \langle \underline{u} \rangle^2 \left[|r \rho (F) |^2+| r \sigma(F)|^2\right](u,\underline{u},\omega) \dr \mu_{\mathbb{S}^2_\omega} \dr u \\
 = &\int_{|x| \leq \underline{u} }   \langle x \rangle^ 2 \,\left[ |\alpha (F)|^2+ |\underline{\alpha} (F)|^2+2(|\rho (F) |^2+|\sigma(F)|^2)\right]\!(0,x)  \,  \mathrm{d} x = \mathcal{E}^K[F](0).
\end{align*}
As $\Box F_{\mu \nu}=0$, we then deduce that $F$, as well as $\underline{\alpha}^{\mathcal{I}^+}$, $\rho^{\mathcal{I}^+}$ and $\sigma^{\mathcal{I}^+}$, are supported in $\{ |u|=|t-r|\leq R \}$. Hence, using that the domain of integration is compact, $\underline{u}=2r+u$ and the estimates \eqref{eq:decayrhosig}, we obtain
$$ \mathbf{F}(\underline{u}) = \int_{|u| \leq R} \int_{\mathbb{S}^2_\omega} 4 \big[|r^2 \rho (F)|^2+|r^2 \sigma (F)|^2\big](u,\underline{u},\omega)+\langle u \rangle^2 |r \underline{\alpha} (F)|^2(u,\underline{u},\omega) \mathrm{d} \mu_{\mathbb{S}^2_\omega} \mathrm{d} u + O(\underline{u}^{-1}). $$
It remains to use \eqref{eq:rhoetsigma}--\eqref{eq:underlinealpha}.

\end{proof}
Note then that $\big(\mathcal{S}^K_{\mathcal{I}^+}, \| \cdot \|_{\mathcal{S}^K_{\mathcal{I}^+}} \big)$ is a Hilbert space. We define now the space of the initial data.
\begin{Def}
Let $\mathcal{S}^K_{\{t=0\}}$ be the set of the $2$-forms $F_0$, defined on $\R^{1+3}$, independent of the time variable $t$ and such that
$$ \|F_0\|_{\mathcal{S}^K_{\{t=0 \}}} := \mathcal{E}^K[F_0] <+\infty,$$
where the norm $\mathcal{E}^K [ \cdot ]$ is defined in \eqref{keva:defenergy}. Then, $\big(\mathcal{S}^K_{\{t=0\}}, \| \cdot \|_{\mathcal{S}^K_{\{t=0\}}} \big)$ is a Hilbert space.
\end{Def}
We now construct an isometry between the initial data and the scattering states of \eqref{eq:Maxvac}. In fact, we already constructed a similar result in \cite{scat}, for different Hilbert spaces. For completeness and since we will require it once in this paper, we recall it here. For this, we introduce energy spaces associated to the multiplier $\partial_t$.
\begin{Def}
Let $\mathcal{S}^{\partial_t}_{\{t=0\}}$ be the set of the $2$-forms $F_0$, defined on $\R^{1+3}$ and independent of $t$, such that
$$ \|F_0\|_{\mathcal{S}^{\partial_t}_{\{t=0 \}}}^2 := \int_{\R^3_x} \big| \alpha(F_0) \big|^2(x)+\big| \underline{\alpha}(F_0) \big|^2(x)+2\big| \rho(F_0) \big|^2(x)+2\big| \sigma(F_0) \big|^2(x) \dr x < +\infty.$$
Let further, for $N \in \mathbb{N}$, $\mathcal{S}^{\partial_t,N}_{\{t=0 \}} \subset \mathcal{S}^{\partial_t}_{\{t=0 \}}$ be the set of the $2$-forms $F_0$ on $\R^{1+3}$ independent of $t$ verifying
$$\|F_0\|^2_{\mathcal{S}^{\partial_t,N}_{\{t=0 \}}} := \sum_{|\gamma| \leq N}  \|\mathcal{L}_{Z^\gamma}(F_0)(0,\cdot) \|^2_{\mathcal{S}^{\partial_t}_{\{t=0 \}}} <+\infty.$$
Consider $\mathcal{S}^{\partial_t}_{\mathcal{I}^+}$, the set of the $1$-forms $\underline{\alpha}^{\mathcal{I}^+}$ on $\R_u \times \mathbb{S}^2$ which are tangential to the $2$-spheres and such that
$$ \big\| \underline{\alpha}^{\mathcal{I}^+} \big\|^2_{\mathcal{S}^{\partial_t}_{ \mathcal{I}^+ }} := \int_{\R_u} \int_{\mathbb{S}^2_\omega} \big| \underline{\alpha}^{\mathcal{I}^+} \big|^2(u,\omega) \dr \mu_{\mathbb{S}^2_\omega} \dr u <+\infty .$$
Finally, for $N \in \mathbb{N}$, $\mathcal{S}^{\partial_t,N}_{\mathcal{I}^+}$ denotes the sets of the $1$-forms $\underline{\alpha}^{\mathcal{I}^+} \in \mathcal{S}^K_{\mathcal{I}^+}$ satisfying
$$ \big\| \underline{\alpha}^{\mathcal{I}^+} \big\|^2_{\mathcal{S}^{\partial_t,N}_{ \mathcal{I}^+ }} :=  \sum_{|\gamma| \leq N} \big\| \underline{\alpha}^{\mathcal{I}^+}_{\, \gamma} \big\|^2_{\mathcal{S}^{\partial_t}_{\mathcal{I}^+}}<+\infty .$$
Then, for any $N \in \mathbb{N}$, $\big(\mathcal{S}^{\partial_t,N}_{\{t=0 \}},\|\cdot \|_{\mathcal{S}^{\partial_t,N}_{\{t=0 \}}} \big)$ and $\big(\mathcal{S}^{\partial_t,N}_{\mathcal{I}^+}, \| \cdot \|_{\mathcal{S}^{\partial_t,N}_{ \mathcal{I}^+ }} \big)$ are Hilbert spaces.
\end{Def}
\begin{Th}\label{Thscat}
The forward map
\begin{array}[t]{lrcl}
&\mathscr{F}^+ : & \mathcal{S}^{\partial_t}_{\{t=0\}} \cap C_c^{\infty}& \longrightarrow  \mathcal{S}^{\partial_t}_{\mathcal{I}^+} \\
 &   & F_0 & \longmapsto  \lim_{\underline{u} \to +\infty} r\underline{\alpha}(F)(u,\underline{u},\omega), 
\end{array}

\vspace{1.5mm}

\noindent where $F$ is the unique solution to the vacuum Maxwell equations \eqref{eq:Maxvac} such that $F(0,\cdot)=F_0$, is well-defined and preserves the norm $\|F_0\|_{\mathcal{S}^K_{\{t=0 \}}} =\| \mathscr{F}^+(F) \|_{\mathcal{S}^K_{\mathcal{I}^+}}$. Moreover,
\begin{itemize}
\item this map can be uniquely extended in a bijective isometry $\mathscr{F}^+ : \mathcal{S}^{\partial_{t}}_{\{t=0\}} \rightarrow \mathcal{S}^{\partial_t}_{\mathcal{I}^+}$.
\item For $N \in \mathbb{N}$, $\mathscr{F}^+ \vert_{\mathcal{S}^{\partial_{t},N}_{\{t=0\}}}$ is a bijective isometry between the spaces of higher regularity $\mathcal{S}^{\partial_{t},N}_{\{t=0\}}$ and $\mathcal{S}^{\partial_{t},N}_{\mathcal{I}^+}$.
\item $\mathscr{F}^+ \vert_{\mathcal{S}^{K}_{\{t=0\}}}\!:\mathcal{S}^{K}_{\{t=0\}}\! \rightarrow \mathcal{S}^{K}_{\mathcal{I}^+}$ is a bijective isometry between Hilbert spaces of more strongly decaying data.
\end{itemize}
\end{Th}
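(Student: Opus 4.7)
The plan is to prove the theorem in three stages: first establish the isometry property for compactly supported smooth data via the divergence theorem, then extend by density to the full Hilbert spaces, and finally obtain surjectivity by inverting the map on a dense subset using the scattering construction of Proposition \ref{ProscatMax}.

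For $F_0 \in \mathcal{S}^{\partial_t}_{\{t=0\}} \cap C_c^{\infty}$, standard Cauchy theory yields a unique smooth global solution $F$, which since it is compactly supported on each time slice falls within the scope of Propositions \ref{Thscatforsmooth} and \ref{Provacforw}. In particular, $F$ possesses a radiation field $\underline{\alpha}^{\mathcal{I}^+}$ and satisfies the improved decay \eqref{eq:decayrhosig} for $\rho(F)$ and $\sigma(F)$. Applying the divergence theorem to $\mathbb{T}[F]_{\mu 0}$ in the region $\{0 \leq \tau,\, \tau + |x| \leq \underline{u}\}$, exactly as in the proof of Proposition \ref{Propartialt} with $J = 0$, and using $2\mathbb{T}[F]_{\underline{L}0} = |\underline{\alpha}(F)|^2 + |\rho(F)|^2 + |\sigma(F)|^2$, yields
$$ 2 \int_{|x| \leq \underline{u}} |F|^2(0,x) \dr x = \int_{|u| \leq \underline{u}} \int_{\mathbb{S}^2_{\omega}} \bigl( |\underline{\alpha}(F)|^2 + |\rho(F)|^2 + |\sigma(F)|^2 \bigr)(u, \underline{u}, \omega) \, r^2 \, \dr \mu_{\mathbb{S}^2_{\omega}} \dr u . $$
As $\underline{u} \to +\infty$, the $\rho(F)$ and $\sigma(F)$ contributions vanish thanks to \eqref{eq:decayrhosig}, while $r^2 |\underline{\alpha}(F)|^2 \to |\underline{\alpha}^{\mathcal{I}^+}|^2$ in $L^1_{\mathrm{loc}}$. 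Since $2 \int |F_0|^2 \dr x = \|F_0\|^2_{\mathcal{S}^{\partial_t}_{\{t=0\}}}$, this establishes the isometry on the dense subclass.

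Density of compactly supported smooth $2$-forms satisfying the Maxwell constraints in $\mathcal{S}^{\partial_t}_{\{t=0\}}$, together with completeness of the target Hilbert space, extends $\mathscr{F}^+$ uniquely to a linear isometry on the full space. For surjectivity I would appeal to Proposition \ref{ProscatMax} applied with $J=0$: any smooth compactly supported $\underline{\alpha}^{\mathcal{I}^+}$ satisfying the constraints \eqref{defmoynulle} is the radiation field of a uniquely determined vacuum Maxwell solution, whose initial datum provides a preimage under $\mathscr{F}^+$. The isometry propagates this onto a dense image. For the higher-regularity version, if $F$ solves the vacuum Maxwell equations then so does $\mathcal{L}_{Z^\gamma} F$ by Lemma \ref{LemComMaxwell}, and its radiation field is precisely $\underline{\alpha}^{\mathcal{I}^+}_{\,\gamma}$ of Definition \ref{Defrecurrad} by Proposition \ref{blackboxscat}; summing the base isometry over $|\gamma| \leq N$ then gives the claim. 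For the $\mathcal{S}^K$ statement, Proposition \ref{Provacforw} already supplies the conservation law on compactly supported smooth data, and surjectivity follows from Proposition \ref{ProscatMax} applied with $a = 0$ and any $\delta > 0$.

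The principal obstacle is the density of the class of smooth, compactly supported radiation fields satisfying the constraints \eqref{defmoynulle} inside $\mathcal{S}^{\partial_t}_{\mathcal{I}^+}$, since those constraints cut out a proper linear subspace. One must show on the one hand that any $\underline{\alpha}^{\mathcal{I}^+}$ arising as the radiation field of an $L^2$-finite-energy solution automatically verifies \eqref{defmoynulle}, by decomposing $\underline{\alpha}^{\mathcal{I}^+}$ via the Hodge decomposition on the $2$-spheres and invoking the decay \eqref{eq:decayrhosig} which forces the primitive in \eqref{rhoinfysigmainfty} to vanish at $u = \pm\infty$; and on the other hand that constraint-satisfying elements of $C_c^\infty(\R_u \times \mathbb{S}^2_\omega)$ are dense in this subspace, which can be achieved via mollification combined with a projection onto the constraint-satisfying subspace.
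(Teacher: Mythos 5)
Your proposal is correct and follows essentially the same architecture as the paper's proof, which cites \cite{scat} for the two $\mathcal{S}^{\partial_t}$ bullets and establishes only the $\mathcal{S}^K$ statement here — exactly via your route: the conservation law of Proposition \ref{Provacforw} on smooth compactly supported data, extension of the isometry by density, and surjectivity from Proposition \ref{ProscatMax} applied with $J=0$ and $T=0$. Your derivation of the $\partial_t$-isometry from the divergence theorem on $\{\tau+|x|\le \underline{u}\}$, with the $\rho$ and $\sigma$ fluxes killed by \eqref{eq:decayrhosig} and the finite $u$-support, is the standard computation and matches Proposition \ref{Provacforw} with the weight removed.

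One correction to the point you single out as the principal obstacle. The constraints \eqref{defmoynulle} do not cut out a \emph{closed} proper subspace of $L^2(\R_u\times\mathbb{S}^2_\omega)$: the functional $\underline{\alpha}^{\mathcal{I}^+}\mapsto \int_{\R_u}\underline{\alpha}^{\mathcal{I}^+}(u,\cdot)\,\dr u$ is unbounded on $L^2_u$, so its kernel is dense and there is no orthogonal "projection onto the constraint-satisfying subspace" to combine with mollification. None is needed, however: given $\underline{\alpha}\in C^\infty_c(\R_u\times\mathbb{S}^2_\omega)$, subtracting a smoothed version of $\frac{1}{2L}\mathds{1}_{[L,3L]}(u)\int_{\R_s}\underline{\alpha}(s,\omega)\,\dr s$ enforces \eqref{defmoynulle} at an $L^2_{u,\omega}$ cost of order $L^{-1/2}$, which gives the density required for surjectivity onto $\mathcal{S}^{\partial_t}_{\mathcal{I}^+}$. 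Note also that $\mathcal{S}^{\partial_t}_{\mathcal{I}^+}$ carries no constraint in its definition, so the first half of your "obstacle" (that radiation fields of finite-energy solutions automatically verify \eqref{defmoynulle}) is only relevant for the $\mathcal{S}^K_{\mathcal{I}^+}$ and $\mathcal{S}^{\partial_t,N}_{\mathcal{I}^+}$ targets, where it is indeed supplied by the vanishing of $\rho^{\mathcal{I}^+}$ and $\sigma^{\mathcal{I}^+}$ as $|u|\to+\infty$ in Proposition \ref{Thscatforsmooth}; for the weighted space $\mathcal{S}^K_{\mathcal{I}^+}$ the tail-spreading trick is unavailable because of the $\langle u\rangle^2$ weight, but there the constraint is already built into the space and one only has to approximate its elements by smooth compactly supported ones, a separate (and routine) truncation at the level of the primitive $\int_{-\infty}^u\underline{\alpha}$.
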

\begin{Rq}
Similar considerations as those presented in Section \ref{Subsecsoncstr} allow one to construct various scattering maps for the vacuum Maxwell equations \eqref{eq:Maxvac}. 
\end{Rq}
\begin{proof}
All but the last part of the statement are given by \cite[Theorem $7.6$ and Corollary $7.15$]{scat}. According to Proposition \ref{Provacforw}, $\mathscr{F}^+\vert_{\mathcal{S}^K_{\{t=0\}} \cap C_c^{\infty}}$ extends to an injective isometry from $\mathcal{S}^K_{\{t=0\}}$ to $\mathcal{S}^K_{\mathcal{I}^+}$. Then, by Proposition \ref{ProscatMax}, applied for $J=0$ and $T=0$, we get that any smooth and compactly supported radiation field is in the image of $\mathscr{F}^+$, implying that the map is onto.
\end{proof}

\section{The asymptotic Maxwell equations}\label{SecMaxasymp}

We fix, for all this section, $f_\infty : \R^3_x \times \R^3_v \to \R$ a sufficiently regular function. Motivated by Corollary \ref{Corlinbound}, we introduce the asymptotic $4$-current density generated by a constant in time distribution function.
\begin{Def}\label{DefJasymp}
For any $h \in L^1( \R^3_z \times \R^3_v , \R)$, we define its asymptotic current $J^{\mathrm{asymp}}[h]$ as 
$$  \forall \, |x|<t, \quad J^{\mathrm{asymp}}_\nu[h](t,x) =\frac{1}{t^3} \frac{x_\nu}{t} \int_{\R^3_z} \big[ |v^0|^5 h \big] \bigg(z, \frac{\widecheck{\; x \;}}{t} \bigg)\dr z, \qquad \qquad \forall \, |x| \geq t>0, \quad J^{\mathrm{asymp}}_\nu[h](t,x)=0. $$
\end{Def}
\begin{Rq}
If $h \in C^0(\R^3_z \times \R^3_v,\R)$ and  $\int_z \langle v \rangle^5 h(z,v) \dr z \to 0$ as $|v| \to +\infty$, then $J^{\mathrm{asymp}}[f_\infty]$ vanishes and is continous at any point of the light cone $t=|x|$.
\end{Rq}
As we shall see, such a current is in fact generated by a singular solution to the linear Vlasov equation. This section is devoted to the study of $F^{\mathrm{asymp}}[f_\infty]$, the unique solution to the asymptotic Maxwell equations associated to $f_\infty$, with initial time $t_0 >0$,
\begin{equation}\label{eq:defasympF}
\begin{cases} \nabla^\mu F^{\mathrm{asymp}}_{\mu \nu}[f_\infty] = J^{\mathrm{asymp}}_\nu[f_\infty],  \\[3pt]  \nabla^\mu {}^* \! F^{\mathrm{asymp}}_{\mu \nu}[f_\infty] =0,  \\[3pt]
\nabla \times E(t_0,\cdot) = \nabla \times B(t_0,\cdot)=0, 
 \end{cases} 
 \end{equation}
where $E(t_0,\cdot)$ and $B(t_0,\cdot)$ are the initial electric and magnetic field,
 $$E^i(t_0,\cdot):=F^{\mathrm{asymp}}_{0i}[f_\infty](t_0,\cdot), \qquad B^i(t_0,\cdot):= -\varepsilon^{ijk} F^{\mathrm{asymp}}_{jk}[f_\infty] (t_0, \cdot).$$
Recall from Section \ref{Subsecinidata} that the initial assumptions imply that $B(t_0,\cdot)=0$ and $E(t_0,\cdot)=\nabla \phi$, where $\phi$ is solution to the Poisson equation $\Delta \phi =-J_0^{\mathrm{asymp}}[f_\infty](t_0, \cdot )$. 

On the top of the estimates derived in this section, we will control the asymptotic electromagnetic field $\mathbb{F}[f_\infty]$ and the correction coefficients. In the next sections, we will always take $t_0=1$. The wider range $0 < t_0 \leq 2$ will however be allowed in this section in order to prove the following statement, which is reminiscent of the case of the Vlasov-Poisson system \cite{scattPoiss}. 
\begin{Pro}\label{Probehavior}
Let $(t,v) \! \in \R_+^* \! \times \R^3_v$. Then, $F^{\mathrm{asymp}}[f_\infty](t,t\widehat{v}) \to t^{-2}\mathbb{F}[f_\infty](v)$ as the initial time $t_0 \to 0$. 
\end{Pro}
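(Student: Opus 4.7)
The proof will exploit the self-similarity of the asymptotic current under Minkowski dilations. A direct computation from Definition \ref{DefJasymp} shows that $J^{\mathrm{asymp}}_\nu[f_\infty](\lambda t, \lambda x) = \lambda^{-3} J^{\mathrm{asymp}}_\nu[f_\infty](t,x)$ for every $\lambda > 0$ and $|x| < t$, since the integrand $\int_z [|v^0|^5 f_\infty](z, \widecheck{x/t}) \dr z$ depends only on $x/t$, and the prefactor $x_\nu/t^4$ is homogeneous of degree $-3$. Together with the Minkowski invariance of the Maxwell operator, this scaling suggests to compare the different solutions $F^{\mathrm{asymp}}[f_\infty, t_0]$ of \eqref{eq:defasympF}---the initial time $t_0$ being now made explicit in the notation---as $t_0$ is varied.

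Fix $\lambda > 0$ and consider the rescaled $2$-form $F^\lambda(t,x) := \lambda^2 F^{\mathrm{asymp}}[f_\infty, t_0](\lambda t, \lambda x)$. A direct check then gives that $F^\lambda$ solves the asymptotic Maxwell equations with source $J^{\mathrm{asymp}}[f_\infty]$ on $[t_0/\lambda, +\infty[ \, \times \, \R^3$. I would then verify that the curl-free conditions at the new initial time $t_0/\lambda$ are preserved: the magnetic field of $F^\lambda$ vanishes identically at $t = t_0/\lambda$, while its electric field equals $\nabla \phi^\lambda$, where $\phi^\lambda(x) := \lambda \phi(\lambda x)$ and $\phi$ is the Newton potential associated with $F^{\mathrm{asymp}}[f_\infty, t_0]$. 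Crucially, $\phi^\lambda$ satisfies $\Delta \phi^\lambda(x) = -J^{\mathrm{asymp}}_0[f_\infty](t_0/\lambda, x)$, which is exactly the elliptic equation defining the initial data of $F^{\mathrm{asymp}}[f_\infty, t_0/\lambda]$. Uniqueness of the Cauchy problem then yields the scaling identity $F^{\mathrm{asymp}}[f_\infty, t_0/\lambda] = F^\lambda$.

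Specializing to $t_0 = 1$ and $\lambda = 1/t_0'$, this identity evaluated at $(t, t\widehat{v})$ reads
\begin{equation*}
F^{\mathrm{asymp}}[f_\infty, t_0'](t, t\widehat{v}) \; = \; \frac{s^2}{t^2} \, F^{\mathrm{asymp}}[f_\infty, 1](s, s\widehat{v}), \qquad s := t/t_0',
\end{equation*}
and $s \to + \infty$ as $t_0' \to 0^+$. The proposition thus reduces to showing that $s^2 F^{\mathrm{asymp}}[f_\infty, 1](s, s\widehat{v}) \to \mathbb{F}[f_\infty](v)$ as $s \to +\infty$, which is precisely the convergence of $F^{\mathrm{asymp}}[f_\infty]$ to its asymptotic profile along the timelike ray of velocity $\widehat{v}$ announced in the remark preceding Proposition \ref{Probehavior}, and established during the main analysis of $F^{\mathrm{asymp}}[f_\infty]$ carried out in the body of Section \ref{SecMaxasymp} (in fact with exact equality for $s \geq 4 \langle v \rangle^2$).

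The main bookkeeping difficulty lies in Step~2: checking that the initial data rescale correctly and that $\phi^\lambda$ is the unique decaying solution of the rescaled elliptic problem. Once this is settled, the rest of the argument is purely a dilation symmetry combined with the large-time asymptotics of $F^{\mathrm{asymp}}[f_\infty]$, both of which are available within Section \ref{SecMaxasymp}.
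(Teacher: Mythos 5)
Your argument is correct, but it is a genuinely different route from the paper's. The paper proves Proposition \ref{Probehavior} directly from the decomposition $F^{\mathrm{asymp}}[f_\infty]=F^{\mathrm{hom}}+F^{\mathrm{inh}}$ of Definition \ref{Defdecompasymp}: for fixed $(t,x)$ with $|x|<t$, the remainder $F^{\mathrm{rem}}(t,x;t_0)$ of Proposition \ref{Proinhom} is supported in $\{t-|x|\leq 2t_0\}$ and hence vanishes once $t_0$ is small, while $F^{\mathrm{hom}}(t,x)\to 0$ because the initial datum $J^{\mathrm{asymp}}[f_\infty](t_0,\cdot)$ concentrates to a Dirac mass as $t_0\to 0$. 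You instead exploit the degree $-3$ homogeneity of $J^{\mathrm{asymp}}[f_\infty]$ and the conformal weight of the Maxwell operator to derive the exact scaling identity $F^{\mathrm{asymp}}[f_\infty,t_0'](t,t\widehat{v})=(s^2/t^2)\,F^{\mathrm{asymp}}[f_\infty,1](s,s\widehat{v})$ with $s=t/t_0'$, thereby converting the limit $t_0'\to 0$ at fixed $t$ into the large-time limit $s\to+\infty$ at fixed initial time, which is then settled by the convergence estimate \eqref{eq:corconvasymp}. Your verification of the rescaled initial conditions (vanishing magnetic field, $E^\lambda=\nabla\phi^\lambda$ with $\Delta\phi^\lambda=-J_0^{\mathrm{asymp}}[f_\infty](t_0/\lambda,\cdot)$) is the right bookkeeping, and uniqueness of the asymptotic Cauchy problem closes the identification. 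What your approach buys is a clean structural explanation of why the $t_0\to 0$ limit must coincide with the timelike-infinity profile, without re-examining the supports of $F^{\mathrm{rem}}$ and $F^{\mathrm{hom}}$; what it costs is that it still rests on the same large-time analysis of Section \ref{SecMaxasymp} (and is therefore not more elementary). One small imprecision: the exact equality $s^2F^{\mathrm{asymp}}[f_\infty,1](s,s\widehat{v})=\mathbb{F}[f_\infty](v)$ for $s\geq 4\langle v\rangle^2$ holds only for the inhomogeneous part $F^{\mathrm{inh}}$; the homogeneous part contributes an $O(s^{-3}\langle v\rangle^4)$ error, so you should quote the convergence \eqref{eq:corconvasymp} rather than an identity — this does not affect the conclusion.
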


\subsection{Singular solution to the linear Vlasov equation}\label{Subsecfsing}

Motivated by Corollary \ref{Corlinbound}, we would like to interpret the leading order term $J^{\mathrm{asymp}}_0[f_\infty]$ in the asymptotic expansion of $J(f_1)_0=-\int_v f_1 \dr v$ as the velocity average of a (singular) solution to the linear Vlasov equation. This formalism will lead to considerable simplifications, for the commutation of \eqref{eq:defasympF} and in particular for the derivation of the Glassey-Strauss decomposition of $\nabla F^{\mathrm{asymp}}[f_\infty]$. We assume here that $f_\infty \in H^1(\R^3_z \times \R^3_v)$ goes fast enough to $0$ as $|z|+|v| \to \infty$.
\begin{Def}\label{Defsingsolu}
Let $f^{\mathrm{sing}}[f_\infty] \in \mathcal{D}'\big(\R_+^* \times \R^3_x \times \R^3_v\big)$ be the distribution
$$ f^{\mathrm{sing}}[f_\infty] (t,x,v) := \int_{\R^3_z} f_\infty(z,v) \dr z \, \delta (x-t\widehat{v}),$$
where $\delta$ is the Dirac delta distribution in $\R^3$. We define its velocity averages as
$$ \int_{\R^3_v} w(v) f^{\mathrm{sing}}[f_\infty] \dr v (t,x) := \frac{\mathds{1}_{t>|x|}}{t^3} \int_{\R^3_z} \Big[w(v) |v^0|^5 f_\infty \Big] \bigg( z,\frac{\widecheck{\; x \;}}{t} \bigg) \dr z,$$ 
for any sufficiently regular function $w : \R^3_v \to \R$.
\end{Def}
\begin{Rq}
The velocity average with the weight function $w(v)=\widehat{v}_\nu$ is equal to $J^{\mathrm{asymp}}_\nu[f_\infty]$.
\end{Rq}
We now investigate certain basic properties of this distribution, which explain in particular that we abusively use the term velocity average of $f^{\mathrm{sing}}[f_\infty]$. Note first that if $(\psi_n)_{n \in \mathbb{N}^*}$ is a mollifier in $\R^3$ and $$h_n(t,x,v):=\int_{\R^3_z} f_\infty(z,v) \dr z \, \psi_n (x-t\widehat{v}),$$ then $(h_n)_{n \in \mathbb{N}^*}$ converges to $f^{\mathrm{sing}}[f_\infty]$ in $\mathcal{D}'\big(\R_+^* \times \R^3_x \times \R^3_v\big)$.
\begin{Lem}\label{Lembasicpropsing}
The distribution $f^{\mathrm{sing}}[f_\infty]$ verifies
$$ f^{\mathrm{sing}}[f_\infty] (t,x,v) = \frac{\mathds{1}_{t>|x|}}{t^3} \int_{\R^3_z}  |v^0|^5 f_\infty  ( z,v) \dr z \, \delta \bigg( v= \frac{\widecheck{\; x \;}}{t} \bigg)   $$ 
and is a singular solution to the linear Vlasov equation, $\T_0(f^{\mathrm{sing}}[f_\infty])=0$. Furthermore,
$$ \int_{\R^3_v}w h_n \dr v    \xrightarrow[n \to + \infty]{} \int_{\R^3_v}w f^{\mathrm{sing}}[f_\infty] \dr v  \qquad \text{in $L^\infty_{\mathrm{loc}} \big(\R_+^*, L^2(\R^3_x) \big)$,}$$
for any $w \in C^1(\R^3_v ,\R)$ such that $w|v^0|^5 \int_z f_\infty (z,\cdot) \dr z \in H^1( \R^3_v)$.
\end{Lem}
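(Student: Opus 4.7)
All three assertions follow from the change of variables underlying Lemma~\ref{cdv}, combined with elementary manipulations of distributions. I would prove them in order.

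For the alternative formula, I would test both expressions against an arbitrary $\varphi \in C^\infty_c(\R_+^* \times \R^3_x \times \R^3_v)$. The definition of $f^{\mathrm{sing}}[f_\infty]$ yields
$$\langle f^{\mathrm{sing}}[f_\infty], \varphi \rangle = \int_0^{+\infty}\!\!\int_{\R^3_v} \Big(\!\int_{\R^3_z} \! f_\infty(z,v)\dr z\Big) \varphi(t, t\widehat{v}, v)\dr v\, \dr t,$$
and at each fixed $t>0$ the substitution $v \mapsto x := t\widehat{v}$, a diffeomorphism $\R^3_v \to \{|x|<t\}$ with Jacobian $t^3|v^0|^{-5}$ by Lemma~\ref{cdv}, identifies this with the pairing obtained by integrating $\varphi$ against the alternative expression for $f^{\mathrm{sing}}[f_\infty]$. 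The image of this substitution accounts for the indicator function in the formula.

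For $\T_0 f^{\mathrm{sing}}[f_\infty]=0$ I proceed by duality. Since $\T_0=\partial_t+\widehat{v}\cdot\nabla_x$ has constant coefficients in $(t,x)$, its formal adjoint is $-\T_0$, and the decisive observation is that
$$(\T_0\varphi)(t,t\widehat{v},v) = \tfrac{d}{dt}\bigl[\varphi(t,t\widehat{v},v)\bigr].$$
Integrating by parts in $t$ (the boundary terms at $0^+$ and $+\infty$ vanish since $\varphi \in C^\infty_c(\R_+^*\times\R^6)$) gives $\langle \T_0 f^{\mathrm{sing}}[f_\infty], \varphi\rangle = 0$.

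For the convergence of velocity averages I would set $g(v):= w(v)\int_{\R^3_z}f_\infty(z,v)\dr z$ and perform the same change of variables $u = t\widehat{v}$, rewriting
$$\int_{\R^3_v} w\, h_n \dr v\,(t,x) = (G_t * \psi_n)(x), \qquad G_t(u):= \frac{\mathds{1}_{|u|<t}}{t^3}\bigl|v^0(\widecheck{u/t})\bigr|^5\, g(\widecheck{u/t}),$$
where the convolution is in the $x$ variable and $G_t(x)$ coincides pointwise with $\int_{\R^3_v} w\, f^{\mathrm{sing}}[f_\infty]\dr v\,(t,x)$. The reverse substitution computes
$$\|G_t\|_{L^2_x}^2 = t^{-3}\int_{\R^3_v} |v^0|^5 |g(v)|^2 \dr v,$$
which is finite and uniformly bounded on any compact subset of $\R_+^*$: the hypothesis $|v^0|^5 g \in H^1(\R^3_v)$ combined with the Sobolev embedding $H^1(\R^3) \hookrightarrow L^6(\R^3)$ and the integrability of $v \mapsto |v^0|^{-15/2}$ on $\R^3$ give, via H\"older, that $|v^0|^{5/2} g \in L^2(\R^3_v)$. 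The convergence $G_t * \psi_n \to G_t$ in $L^2_x$ is then the standard mollifier statement; the only real obstacle is its \emph{uniformity} in $t$ on a compact interval $K \subset \R_+^*$, which I would obtain from continuity of the map $t \mapsto G_t \in L^2(\R^3_x)$ (verified by applying the same substitution to $\|G_{t_1}-G_{t_2}\|_{L^2_x}$) combined with an equicontinuity argument for the family of convolution operators $\{\,\cdot * \psi_n\}_{n\geq 1}$ acting on the compact set $\{G_t : t \in K\} \subset L^2(\R^3_x)$.
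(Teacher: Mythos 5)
Your proof is correct and follows essentially the same route as the paper: the alternative formula is exactly the composition rule for the Dirac delta obtained from the change of variables of Lemma~\ref{cdv}, and the convergence of the velocity averages is the approximate-identity argument (your verification that $|v^0|^{5/2}g\in L^2$ and the uniformity in $t$ via the dilation structure $G_t=t^{-3}H(\cdot/t)$ supply details the paper leaves implicit). The only variation is for $\T_0 f^{\mathrm{sing}}[f_\infty]=0$: the paper deduces it from $\T_0(h_n)=0$ and continuity of $\T_0$ on distributions, whereas you compute the pairing directly using $(\T_0\varphi)(t,t\widehat{v},v)=\tfrac{d}{dt}[\varphi(t,t\widehat{v},v)]$; both are valid and of comparable length.
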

\begin{proof}
The alternative expression for $f^{\mathrm{sing}}[f_\infty]$ follows from the composition rule for the Dirac delta function as well as Lemma \ref{cdv}. As $\T_0(v)=\T_0(x-t\widehat{v})=0$, we have $\T_0(h_n)=0$, which implies that $f^{\mathrm{sing}}[f_\infty]$ is solution to the linear Vlasov equation in the space of the distributions. Finally, the convergence of the velocity averages holds since $(\psi_n)_{n \geq 1}$ is an approximation to the identity.
\end{proof}
We are now interested in computing $\widehat{Z} f^{\mathrm{sing}}[f_\infty]$.
\begin{Lem}\label{Lemderivsing}
For any homogeneous vector field $\widehat{Z} \in \widehat{\mathbb{P}}_S \setminus \{ \partial_t, \partial_{x^1}, \partial_{x^2}, \partial_{x^3} \}$, we have
$$ \widehat{Z} f^{\mathrm{sing}}[f_\infty] = f^{\mathrm{sing}} \big[ \widehat{Z}_\infty f_\infty \big].$$
\end{Lem}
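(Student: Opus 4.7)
The plan is to regularize $f^{\mathrm{sing}}[f_\infty]$, apply $\widehat{Z}$ by the chain rule, and pass to the distributional limit. Let $F_\infty(v):=\int_{\R^3_z}f_\infty(z,v)\dr z$ and, with $(\psi_n)_{n\geq 1}$ a mollifier on $\R^3$, set $h_n(t,x,v):=F_\infty(v)\,\psi_n(x-t\widehat{v})$, so that $h_n\to f^{\mathrm{sing}}[f_\infty]$ in $\mathcal{D}'(\R_+^*\times\R^3_x\times\R^3_v)$ by Lemma~\ref{Lembasicpropsing} (or the direct definition). Writing $y:=x-t\widehat{v}$, the chain rule gives
\[
\widehat{Z}h_n = F_\infty(v)\,c_{\widehat{Z}}\,\psi_n(y) + [\widehat{Z}_{v}F_\infty](v)\,\psi_n(y) + F_\infty(v)\,\widehat{Z}(y^j)\,(\partial_{y^j}\psi_n)(y),
\]
where $\widehat{Z}_v$ denotes the pure $v$-part of $\widehat{Z}$ and $c_{\widehat{Z}}\in\R$ is the constant part.

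The first key computation is to evaluate $\widehat{Z}(y^j)$ on $y^k=x^k-t\widehat{v}^k$. Using $\partial_{v^j}\widehat{v}^k=(v^0)^{-1}(\delta^{jk}-\widehat{v}^j\widehat{v}^k)$, a short calculation yields
\[
\widehat{\Omega}_{ij}(y^k)=y^i\delta^{kj}-y^j\delta^{ki},\qquad \widehat{\Omega}_{0k}(y^j)=-\widehat{v}^j\,y^k,\qquad \widehat{S}(y^j)=y^j.
\]
Combined with the distributional identity $y^\ell\,\partial_{y^j}\delta(y)=-\delta^{\ell j}\,\delta(y)$, taking $n\to+\infty$ in $\mathcal{D}'$ gives
\begin{align*}
\widehat{\Omega}_{ij}\,f^{\mathrm{sing}}[f_\infty] &= \bigl(v^i\partial_{v^j}F_\infty-v^j\partial_{v^i}F_\infty\bigr)(v)\,\delta(x-t\widehat{v}),\\
\widehat{\Omega}_{0k}\,f^{\mathrm{sing}}[f_\infty] &= \bigl(v^0\partial_{v^k}F_\infty+\widehat{v}^k F_\infty\bigr)(v)\,\delta(x-t\widehat{v}),\\
\widehat{S}\,f^{\mathrm{sing}}[f_\infty] &= 0,
\end{align*}
since the two $3F_\infty\delta$ contributions in the $\widehat{S}$ case cancel.

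The second step is to compute $f^{\mathrm{sing}}[\widehat{Z}_\infty f_\infty]$ directly from Definition~\ref{Defsingsolu} and verify that each case matches. For $\widehat{\Omega}_{ij}$ (which coincides with $\widehat{Z}_\infty$), the $z$-derivative part $z^i\partial_{z^j}-z^j\partial_{z^i}$ vanishes after integration by parts in $z$, leaving exactly the $v$-rotation of $F_\infty$. For $\widehat{Z}=\widehat{\Omega}_{0k}$, recall that $\widehat{\Omega}^{\infty}_{0k}$ acts on the $t$-independent $f_\infty$ as $-z^k\widehat{v}^j\partial_{z^j}+v^0\partial_{v^k}$; integration by parts in $z$ converts $-\int_z z^k\widehat{v}^j\partial_{z^j}f_\infty\dr z$ into $\widehat{v}^k F_\infty(v)$, precisely reproducing the lower-order term that arose from $y^k\partial_{y^j}\delta$. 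For $\widehat{S}$, $\int_z(z^i\partial_{z^i}+3)f_\infty\dr z=0$ by the same mechanism. Comparing with the limits above yields the identity.

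The main subtlety is not computational but rather of a careful justification that the distributional limit of $y^\ell(\partial_{y^j}\psi_n)(y)$ is $-\delta^{\ell j}\delta(y)$ in a strong enough topology to multiply by the locally bounded factor $F_\infty(v)\widehat{v}^j$ and then test against $C_c^\infty$ in all variables; this is routine under the regularity/decay assumption on $f_\infty$, but is the only point where one must verify that the weights $v^0,\widehat{v}^k,|v^0|^5$ occurring in the alternative representation of $f^{\mathrm{sing}}$ do not spoil the convergence. The translations $\partial_t,\partial_{x^k}$ are excluded from the statement because $\widehat{Z}(y^j)$ is not proportional to $y$ in those cases, so the limit contains genuine derivatives of $\delta(y)$ that are not absorbed by a corresponding $z$-integration by parts, consistent with $\T_0 f^{\mathrm{sing}}[f_\infty]=0$ forcing $\partial_t f^{\mathrm{sing}}=-\widehat{v}\cdot\nabla_x f^{\mathrm{sing}}\neq 0$.
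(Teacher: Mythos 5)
Your proof is correct and follows essentially the same route as the paper: mollify $f^{\mathrm{sing}}[f_\infty]$, compute the action of $\widehat{\Omega}_{ij}$, $\widehat{\Omega}_{0k}$, $\widehat{S}$ on $y=x-t\widehat{v}$ (your formulas $\widehat{\Omega}_{ij}(y^k)=y^i\delta^{kj}-y^j\delta^{ki}$, $\widehat{\Omega}_{0k}(y^j)=-\widehat{v}^jy^k$, $\widehat{S}(y^j)=y^j+$ the constant $3$ agree with the paper's), pass to the distributional limit using that $y^\ell\partial_{y^j}\delta(y)=-\delta^{\ell j}\delta(y)$ — which is exactly the duality/integration-by-parts computation the paper performs against compactly supported test functions — and then match the resulting $v$-derivatives of $\int_z f_\infty\,\dr z$ with $f^{\mathrm{sing}}[\widehat{Z}_\infty f_\infty]$ via integration by parts in $z$. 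The subtlety you flag at the end is precisely what the paper's explicit pairing $\langle\cdot\,|\,\cdot\rangle_{L^2_{t,x,v}}$ takes care of.
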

\begin{proof}
Let $\psi \in C^1(\R^3,\R)$, $1 \leq i < j \leq 3$ and $1 \leq k \leq 3$. Then, we have
\begin{align*}
  \widehat{\Omega}_{0k} \big( \psi(x-t\widehat{v}) \big)&=-(x^k-t\widehat{v}^k)\widehat{v} \cdot \nabla \psi (x-t\widehat{v}) , \qquad \qquad \widehat{\Omega}_{ij}\big( \psi(x-t\widehat{v}) \big)= \big[ \Omega_{ij} \psi \big](x-t\widehat{v})  , \\
  S \big( \psi(x-t\widehat{v}) \big)& = (x-t\widehat{v}) \cdot \nabla \psi (x-t\widehat{v}).
  \end{align*}
  Consider now $\phi \in H^1(\R_+^* \times \R^3_x \times \R^3_v)$ a compactly supported function and recall that the Dirac delta function belongs to $H^{-1}$. We denote by $\langle \cdot | \cdot \rangle_{L^2_{t,x,v}}$ the inner product of $L^2(\R_+^* \times \R^3_x \times \R^3_v)$ and by $\langle \cdot | \cdot \rangle$ the duality bracket for $H^1(\R_+^* \times \R^3_x \times \R^3_v)$. Performing integration by parts in $x$, we have
  $$ \Big\langle  \widehat{\Omega}_{0k} \big( \psi_n(x-t\widehat{v}) \big)  \,  \Big| \,   \phi \Big\rangle_{L^2_{t,x,v}}= \Big\langle \psi_n(x-t\widehat{v}) \, \Big| \, (x^k-t\widehat{v}^k)\widehat{v} \cdot \nabla_x  \phi +\widehat{v}^k \phi   \Big\rangle_{L^2_{t,x,v}}  \xrightarrow[n \to + \infty]{} \Big\langle \delta (x-t\widehat{v}) \, \Big| \, \widehat{v}^k \phi   \Big\rangle,$$
since $x^k-t\widehat{v}^k$ vanishes on the support of $\delta (x-t\widehat{v})$. Next, by integration by parts in $z$, we have 
\begin{equation}\label{eq:eq:model}
  \widehat{\Omega}_{0k} \int_{\R^3_z} f_\infty (z,v) \dr z = \int_{\R^3_z}v^0 \partial_{v^k} f_\infty (z,v ) \dr z =\int_{\R^3_z}\widehat{\Omega}^\infty_{0k} f_\infty (z,v ) \dr z-\int_{\R^3_z}\widehat{v}^k f_\infty (z,v ) \dr z,
  \end{equation}
for almost all $(t,v) \in \R \times \R^3_v$. Consequently, by continuity of the distributional partial derivatives, we obtain
$$ \widehat{\Omega}_{0k} f^{\mathrm{sing}}[f_\infty](t,x,v)=\int_{\R^3_z}\widehat{\Omega}^\infty_{0k} f_\infty (z,v ) \dr z \, \delta (x-t\widehat{v})=f^{\mathrm{sing}} \big[ \widehat{\Omega}^\infty_{0k} f_\infty \big](t,x,v).$$
 Similarly, there holds
\begin{align*}
\Big\langle  \widehat{\Omega}_{ij} \big( \psi_n(x-t\widehat{v}) \big)  \,  \Big| \,   \phi \Big\rangle_{L^2_{t,x,v}}&= \Big\langle \psi_n(x-t\widehat{v}) \, \Big| \, (x^i-t\widehat{v}^i)\partial_{x^j}\phi -(x^j-t\widehat{v}^j)\partial_{x^i}\phi   \Big\rangle_{L^2_{t,x,v}}  \xrightarrow[n \to + \infty]{} 0, \\
\Big\langle  S \big( \psi_n(x-t\widehat{v}) \big)  \,  \Big| \,   \phi \Big\rangle_{L^2_{t,x,v}}&= \Big\langle \psi_n(x-t\widehat{v}) \, \Big| \, -(x-t\widehat{v}) \cdot \nabla_x \phi  -3 \phi \Big\rangle_{L^2_{t,x,v}}  \xrightarrow[n \to + \infty]{} \Big\langle \delta (x-t\widehat{v}) \, \Big| \, -3 \phi   \Big\rangle.
\end{align*}
Then, the result ensues from $\widehat{S}=S+3$ as well as, by integration by parts in $z$,
\begin{equation}\label{eq:model2}
 \widehat{\Omega}_{ij} \int_{\R^3_z} f_\infty (z,v) \dr z = \int_{\R^3_z}\widehat{\Omega}_{ij} f_\infty (z,v ) \dr z, \qquad S \int_{\R^3_z} f_\infty (z,v) \dr z=0 = \int_{\R^3_z} \widehat{S} f_\infty (z,v) \dr z.
 \end{equation}
\end{proof}
The cases of the translations are slightly more complicated. It turns out that that, as in Section \ref{Subsubsecderivlin}, it is in fact easier to deal first with $Y_{\mu \nu}=\widehat{v}_\mu \partial_{x^\nu}-\widehat{v}_\nu \partial_{x^\mu}$.
\begin{Lem}
For any $1 \leq k \leq 3$ and $1 \leq i < j \leq 3$, there holds
$$ Y_{0k} f^{\mathrm{sing}}[f_\infty]\! =  \frac{1}{t} \! \int_{\R^3_z} \! f_\infty(z,v) \dr z \, v^0 \partial_{v^k} \! \Big[ \delta (x-t\widehat{v}) \Big], \quad  Y_{ij} f^{\mathrm{sing}}[f_\infty]\! = \! \frac{-1}{t} \! \int_{\R^3_z} \! f_\infty(z,v) \dr z \, \big(v^i \partial_{v^j}-v^j \partial_{v^i} \big) \Big[ \delta (x-t\widehat{v}) \Big].$$
\end{Lem}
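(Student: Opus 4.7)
The strategy is to compute $Y_{0k}$ and $Y_{ij}$ applied to $\delta(x-t\widehat{v})$ directly, since these operators only differentiate in $(t,x)$ and therefore commute with the factor $\int_z f_\infty(z,v)\,\dr z$ which depends only on $v$. All distributional manipulations below can be rigorously justified by working with the mollified approximation $h_n(t,x,v)=\int_z f_\infty(z,v)\dr z\,\psi_n(x-t\widehat{v})$ introduced before Lemma \ref{Lembasicpropsing} and passing to the limit, so I will write them formally.

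The key computational input is that $\delta(x-t\widehat{v})$ is annihilated by the free transport operator, which was already observed in Lemma \ref{Lembasicpropsing}. Equivalently, at the level of the Dirac mass we have
\[
 \partial_t \big[ \delta(x-t\widehat{v}) \big] = - \widehat{v}^j \partial_{x^j} \big[ \delta(x-t\widehat{v}) \big].
\]
Plugging this into $Y_{0k}=\widehat{v}_0\partial_{x^k}-\widehat{v}_k\partial_t=-\partial_{x^k}-\widehat{v}_k\partial_t$ gives
\[
 Y_{0k}\big[ \delta(x-t\widehat{v})\big] = -\big(\delta^j_k-\widehat{v}_k\widehat{v}^j\big) \partial_{x^j}\big[ \delta(x-t\widehat{v})\big],
\]
and similarly, as $\widehat{v}^i\widehat{v}_j-\widehat{v}^j\widehat{v}_i=0$, one directly reads off $Y_{ij}\big[\delta(x-t\widehat{v})\big]=(\widehat{v}^i\partial_{x^j}-\widehat{v}^j\partial_{x^i})\delta(x-t\widehat{v})$.

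Next, I convert the spatial derivatives of the delta function into velocity derivatives. Using $\partial_{v^k}\widehat{v}^j = (\delta^j_k-\widehat{v}^j\widehat{v}_k)/v^0$, the chain rule applied to $\delta(x-t\widehat{v})$ gives
\[
 v^0 \partial_{v^k}\big[\delta(x-t\widehat{v})\big] = -t\big(\delta^j_k-\widehat{v}_k\widehat{v}^j\big)\partial_{x^j}\big[\delta(x-t\widehat{v})\big].
\]
Comparing with the formula for $Y_{0k}[\delta(x-t\widehat{v})]$ above yields the first claimed identity after multiplication by $\int_z f_\infty(z,v)\dr z$. For the second identity, contracting the previous display with $v^i$ and antisymmetrising in $(i,j)$ produces exactly $-t(\widehat{v}^i\partial_{x^j}-\widehat{v}^j\partial_{x^i})\delta(x-t\widehat{v})$, which matches $-t\,Y_{ij}[\delta(x-t\widehat{v})]$.

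The only subtle point is the justification of these manipulations in the sense of distributions: one has to verify that $v^0\partial_{v^k}[\delta(x-t\widehat{v})]$ and $(v^i\partial_{v^j}-v^j\partial_{v^i})[\delta(x-t\widehat{v})]$ make sense as distributions in $\R^*_+\times\R^3_x\times\R^3_v$ once paired with the factor $\int_z f_\infty(z,v)\dr z$, which will be the main obstacle to make rigorous. I would handle it by the same mollifier argument as in Lemma \ref{Lembasicpropsing}: for $h_n$ all the identities above hold in the classical sense, and since $\int_z f_\infty(z,\cdot)\dr z$ is smooth enough, the $v$-derivatives commute with the limit $n\to+\infty$ in $\mathcal D'$, yielding the stated formulas.
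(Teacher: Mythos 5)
Your proof is correct, and it takes a genuinely different computational route from the paper's. The paper decomposes $-tY_{0k}$ and $tY_{ij}$ algebraically as combinations of $v^0\partial_{v^k}$, $v^i\partial_{v^j}-v^j\partial_{v^i}$, the complete lifts $\widehat{\Omega}_{0k}$, $\widehat{\Omega}_{ij}$, and multiplication by $x^\ell-t\widehat{v}^\ell$; it then reuses the mollifier computations from the proof of the previous lemma (namely $\widehat{\Omega}_{0k}[\delta(x-t\widehat{v})]=\widehat{v}^k\delta(x-t\widehat{v})$, $\widehat{\Omega}_{ij}[\delta(x-t\widehat{v})]=0$) together with the vanishing of $(x^\ell-t\widehat{v}^\ell)$ on the support of the delta. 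You instead pass both $Y_{\mu\nu}[\delta(x-t\widehat{v})]$ and $v^0\partial_{v^k}[\delta(x-t\widehat{v})]$ through the same bottleneck, namely explicit $x$-derivatives of $\delta(x-t\widehat{v})$: the free transport relation $\partial_t\delta=-\widehat{v}^j\partial_{x^j}\delta$ handles the time derivative in $Y_{0k}$, and the chain rule $v^0\partial_{v^k}[\delta(x-t\widehat{v})]=-t(\delta^j_k-\widehat{v}^j\widehat{v}_k)\partial_{x^j}[\delta(x-t\widehat{v})]$ converts velocity derivatives into spatial ones, after which the match with $Y_{\mu\nu}[\delta]$ is immediate (using symmetry of $\widehat{v}_k\widehat{v}^j$ for $Y_{0k}$, antisymmetry for $Y_{ij}$). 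Your route is self-contained and slightly more elementary, not relying on the preceding computation of $\widehat{\Omega}[\delta]$; the paper's route is more uniform with the surrounding section, which is organized around the complete lifts and the coordinate system $(t,z,v)$. Both are justified rigorously by the same mollifier limit, which you correctly identify as the only subtle point.
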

\begin{proof}
Recall the relations
\begin{align*}
 -tY_{0k}&=t\left(\partial_{x^k}+\widehat{v}_k \partial_t \right)=\Omega_{0k}-(x^k-t\widehat{v}^k) \partial_t=-v^0 \partial_{v^k}+\widehat{\Omega}_{0k}-\widehat{v}^k- \partial_t \big[ (x^k-t\widehat{v}^k) \cdot \big],  \\
 tY_{ij}&=t\left(\widehat{v}_i\partial_{x^j}-\widehat{v}_j \partial_{x^i} \right) =-\big(v^i \partial_{v^j}-v^j \partial_{v^i} \big)+ \widehat{\Omega}_{ij}-(x^i-t\widehat{v}^i) \partial_{x^j}+(x^j-t\widehat{v}^j) \partial_{x^i}.
\end{align*}
Next, we claim that
$$Y_{\mu \nu} f^{\mathrm{sing}}[f_\infty]=\int_{\R^3_z} f_\infty (z,v) \dr z Y_{\mu \nu}(\delta (x-t\widehat{v})), \qquad \qquad \widehat{\Omega}_{0k}\big[\delta (x-t\widehat{v}) \big]-\widehat{v}^k\delta (x-t\widehat{v})=\widehat{\Omega}_{ij}\big[\delta (x-t\widehat{v}) \big]=0.$$
The first equality is straightforward since the spatial average of $f_\infty$ is independent of $(t,x)$. The other ones were derived during the proof of Lemma \ref{Lemderivsing}. The result then ensues from
$$ \Big\langle  \partial_{x^\nu} \big[(x^\ell-t\widehat{v}^\ell) \psi_n(x-t\widehat{v}) \big]  \,  \Big| \,   \phi \Big\rangle_{L^2_{t,x,v}}= \Big\langle \psi_n(x-t\widehat{v}) \, \Big| \, (x^\ell-t\widehat{v}^\ell)\partial_{x^\nu}  \phi  \Big\rangle_{L^2_{t,x,v}}  \xrightarrow[n \to + \infty]{} 0,$$
which holds for any $0 \leq \nu \leq 3$, $\ell \neq \nu$ and all compactly supported function $\phi \in H^1(\R_+^* \times \R^3_x \times \R^3_v)$.
\end{proof}
Using the relations
$$ \partial_t= |v^0|^2 \big( \T_0+ \widehat{v}^i Y_{0i} \big), \qquad \qquad \partial_{x^k}=|v^0|^2 \big( -Y_{0k}-\widehat{v}^k \T_0+\widehat{v}^i Y_{ki} \big) $$
as well as $\T_0 f^{\mathrm{sing}}[f_\infty]=0 $, we finally obtain the following result.
\begin{Lem}\label{Lemsingderiv}
For any $1 \leq k \leq 3$, there holds
\begin{align*}
\partial_{t} f^{\mathrm{sing}}[f_\infty](t,x,v)& =  \frac{1}{t}\int_{\R^3_z}|v^0|^2 f_\infty(z,v) \dr z \, v^i \partial_{v^i} \Big[ \delta (x-t\widehat{v}) \Big], \\
\partial_{x^k} f^{\mathrm{sing}}[f_\infty](t,x,v) &=   -\frac{1}{t}\int_{\R^3_z} |v^0|^2f_\infty(z,v) \dr z \, \big(v^0\partial_{v^k}-\widehat{v}^iv_i\partial_{v^k}+\widehat{v}^i v_k \partial_{v^i} \big) \Big[ \delta (x-t\widehat{v}) \Big].
\end{align*}
\end{Lem}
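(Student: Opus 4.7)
The idea is to use the identities \eqref{link:Ytod} expressing the translations $\partial_t$ and $\partial_{x^k}$ in terms of the free transport operator $\T_0$ and the ``null'' derivatives $Y_{\mu \nu}$, and to apply them to the distribution $f^{\mathrm{sing}}[f_\infty]$. Concretely, I would start from
$$ \partial_t f^{\mathrm{sing}}[f_\infty] = |v^0|^2 \T_0 f^{\mathrm{sing}}[f_\infty] + |v^0|^2 \widehat{v}^i Y_{0i} f^{\mathrm{sing}}[f_\infty],$$
and the analogous formula for $\partial_{x^k}$. Since Lemma \ref{Lembasicpropsing} establishes that $\T_0 f^{\mathrm{sing}}[f_\infty]=0$ in the sense of distributions, the transport term drops out, reducing the problem to the evaluation of $Y_{0i} f^{\mathrm{sing}}[f_\infty]$ and $Y_{ki} f^{\mathrm{sing}}[f_\infty]$.

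These are provided by the preceding lemma. For the time derivative, substituting $Y_{0i} f^{\mathrm{sing}}[f_\infty] = t^{-1} \int f_\infty(z,v)\, \dr z \, v^0 \partial_{v^i}[\delta(x-t \widehat{v})]$, using $\widehat{v}^i v^0 = v^i$ to collapse $|v^0|^2 \widehat{v}^i v^0 \partial_{v^i}[\delta]$ into $|v^0|^2 v^i \partial_{v^i}[\delta]$, and finally moving the $z$-independent factor $|v^0|^2$ under the integral in $z$, directly yields the stated expression. For the spatial derivative, combining $-Y_{0k}$ with $\widehat{v}^i Y_{ki}$ and invoking the antisymmetry $Y_{ki}=-Y_{ik}$ (so that the formula of the previous lemma, originally stated for $i<j$, extends to every pair of indices) produces the operator $v^0 \partial_{v^k} - \widehat{v}^i v_i \partial_{v^k} + \widehat{v}^i v_k \partial_{v^i}$ acting on $\delta(x-t\widehat{v})$, after once again pulling $|v^0|^2$ under the $z$-integral and using $v_i=v^i$, $v_k=v^k$ for spatial indices.

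The argument is essentially bookkeeping, and I do not anticipate a substantive obstacle. The only point deserving attention is that all operations are interpreted distributionally: the expressions $\T_0 f^{\mathrm{sing}}[f_\infty]=0$, $Y_{\mu\nu} f^{\mathrm{sing}}[f_\infty]$, and products such as $|v^0|^2 Y_{\mu \nu} f^{\mathrm{sing}}[f_\infty]$ or $\widehat{v}^i Y_{\mu \nu} f^{\mathrm{sing}}[f_\infty]$ make sense because the multipliers are smooth functions of $v$ alone, hence freely multiply the distribution $\partial_{v^i}[\delta(x-t\widehat{v})]$ and commute with the integration in $z$.
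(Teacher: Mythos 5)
Your proposal is correct and is exactly the paper's argument: the lemma is stated immediately after the remark that one applies the relations \eqref{link:Ytod} to $f^{\mathrm{sing}}[f_\infty]$, uses $\T_0 f^{\mathrm{sing}}[f_\infty]=0$ from Lemma~\ref{Lembasicpropsing}, and substitutes the formulas for $Y_{0k}f^{\mathrm{sing}}[f_\infty]$ and $Y_{ij}f^{\mathrm{sing}}[f_\infty]$ from the preceding lemma. Your bookkeeping (including the extension of the $Y_{ij}$ formula to all index pairs by antisymmetry and the passage of the $z$-independent multiplier $|v^0|^2$ under the $z$-integral) is accurate.
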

In order to consistently define averages in $v$ for these quantities, we compute, by performing integration by parts in $z$ as in \eqref{eq:eq:model}--\eqref{eq:model2}, 
\begin{align*}
\int_{\R^3_z}\partial_{v^i}\big( v^i|v^0|^2 f_\infty \big) \dr z &= \int_{\R^3_z}\widehat{\Omega}^\infty_{0i}\big( v^iv^0 f_\infty \big) \dr z, \\
 \int_{\R^3_z}\partial_{v^k} \big( |v^0|^3f_\infty \big)-\partial_{v^k}\big(  \widehat{v}^iv_i |v^0|^2 f_\infty \big)+ \partial_{v^i}  \big(  \widehat{v}^iv_k |v^0|^2 f_\infty \big) \dr z & = \int_{\R^3_z} \widehat{\Omega}^\infty_{0k}\big( |v^0|^2 f_\infty \big)+ \widehat{\Omega}_{ki} \big( v^i v^0 f_\infty \big) \dr z.
 \end{align*}
 Since we will in particular be interested in their $4$-current density, we further remark that, for $1 \leq \ell \leq 3$,
 $$ |v^0|^2v^i \partial_{v^i}  \big( \widehat{v}^\ell \big) = \widehat{v}^\ell , \qquad  |v^0|^2\big(v^0\partial_{v^k}-\widehat{v}^iv_i\partial_{v^k}+\widehat{v}^i v_k \partial_{v^i} \big)(\widehat{v}^\ell)=   \delta_k^\ell $$
 and we recall the operators $ D_{x^k} : h \mapsto \widehat{\Omega}_{0k}^\infty \big( |v^0|^2 h \big)+\widehat{\Omega}_{ki}\big( v^i v^0 h \big)$ as well as $D_t : h \mapsto -\widehat{\Omega}_{0i}^\infty \big( v^iv^0 h \big)-h$.
 \begin{Def}\label{Defcurrentderiv}
 We define the $4$-current densities
 $$ J \big( \partial_{t} f^{\mathrm{sing}}[f_\infty] \big)_\nu(t,x) = \frac{\mathds{1}_{t >|x|}}{t^4}\int_{\R^3_z}  \Big[ \widehat{v}_\nu |v^0|^5 D_t f_\infty  \Big]\bigg(z, \frac{\widecheck{ \; x \;}}{t} \bigg) \dr z  - \delta_\nu^0 \, \frac{\mathds{1}_{t >|x|}}{t^4}\int_{\R^3_z}\Big[ |v^0|^5 f_\infty \Big] \bigg(z, \frac{\widecheck{ \; x \;}}{t} \bigg) \dr z $$
 as well as, for $1 \leq k \leq 3$,
 $$ J \big( \partial_{x^k} f^{\mathrm{sing}}[f_\infty] \big)_\nu(t,x) = \frac{\mathds{1}_{t >|x|}}{t^4}\int_{\R^3_z} \Big[ \widehat{v}_\nu |v^0|^5 D_{x^k} f_\infty  \Big] \bigg( z, \frac{\widecheck{ \; x \;}}{t} \bigg) \dr z+\delta_\nu^k \, \frac{\mathds{1}_{t >|x|}}{t^4}\int_{\R^3_z}\Big[ |v^0|^5 f_\infty \Big] \bigg(z, \frac{\widecheck{ \; x \;}}{t} \bigg) \dr z . $$
 \end{Def}
 \begin{Rq}
 As $\widehat{v}_\nu=x_\nu/t$ if $v=\widecheck{x/t}$, the first term on the right hand side of both equations can be rewritten using the functional $t^{-1}J^{\mathrm{asymp}}_\nu [ \cdot ]$.
 \end{Rq}
 \begin{Rq}\label{Rqmollif}
 Since $(\psi_n)_{n \geq 1}$ is a mollifier, we have, for any $\widehat{Z} \in \K$ and all $t \in \R_+^*$,
 $$ J \big( \widehat{Z} h_n \big)_\nu \xrightarrow[n \to + \infty]{} J \big( \widehat{Z} f^{\mathrm{sing}}[f_\infty] \big)_\nu  \qquad \qquad \text{in $ L^2(\R^3_x) $}.$$
 \end{Rq}

Thus, according to Corollaries \ref{Corlinbound} and \ref{CorlinexpanderivativJ}, for any $\widehat{Z} \in \K$, $J(\widehat{Z} f^{\mathrm{sing}}[f_\infty])$ is the leading order term in the asymptotic expansion of $J(f_1)$.

\subsection{Commutation of the system} The results of this section suggest that the derivatives of $F^{\mathrm{asymp}}[f_\infty]$ are a good approximation, near timelike infinity, that is along timelike straight lines $t \mapsto (t,t\widehat{v})$, of the derivatives of the electromagnetic field $\mathcal{L}_{Z^\gamma}F$ that we will construct during the proof of Theorem \ref{Theo1}. 

We start by computing the derivatives of the current density $J^{\mathrm{asymp}}[f_\infty]$.
\begin{Lem}\label{Lemfirstasymp0}
Let $h : \R^3_x \times  \R^3_v \to \R$ be a sufficiently regular function. Then, we have
\begin{alignat*}{2}
 \mathcal{L}_{\Omega_{0k}} \big( J^{\mathrm{asymp}}[h] \big)&=J^{\mathrm{asymp}} \big[ \widehat{\Omega}_{0k}^\infty h \big] , \qquad \qquad \qquad \qquad &&1 \leq k \leq 3, \\ \mathcal{L}_{\Omega_{ij}} \big( J^{\mathrm{asymp}}[h] \big)&=J^{\mathrm{asymp}} \big[ \widehat{\Omega}_{ij} h \big] , \qquad \qquad \qquad \qquad &&1 \leq i < j \leq 3, \\
  \mathcal{L}_{S} ( J^{\mathrm{asymp}}[h] )&=J^{\mathrm{asymp}} \big[\widehat{S}h \big]-2J^{\mathrm{asymp}}[h].&& 
 \end{alignat*}
 For the translations $\partial_{x^\lambda}$, $0 \leq \lambda \leq 3$, there holds
$$
  \mathcal{L}_{\partial_{x^\lambda}} \big( J^{\mathrm{asymp}}[h] \big)_\nu = t^{-1}J^{\mathrm{asymp}}_\nu \big[D_{x^\lambda} h \big]-(-1)^{\delta_0^\lambda} \, \delta_\nu^\lambda \,t^{-1} J^{\mathrm{asymp}}_0[h],
  $$
  where the operators $D_{x^\lambda}$ are defined in \eqref{defDop}. Furthermore, $J^{\mathrm{asymp}}[h]$ is divergence free.
 \end{Lem}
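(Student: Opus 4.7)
The plan is to interpret $J^{\mathrm{asymp}}[h]$ as the four-current density $J(f^{\mathrm{sing}}[h])$ generated by the singular solution to the linear Vlasov equation introduced in Section \ref{Subsecfsing}, and then lift all the commutation identities of Lemma \ref{LemCom} from the smooth setting to this singular one via the mollifier argument of Remark \ref{Rqmollif}. More precisely, introducing the smooth approximations
\[
h_n(t,x,v) := \int_{\R^3_z} h(z,v)\,\dr z\, \psi_n(x-t\widehat{v}),
\]
the identities $J^{\mathrm{asymp}}_\nu[h] = \int \widehat{v}_\nu f^{\mathrm{sing}}[h]\dr v$ and $J(h_n) \to J(f^{\mathrm{sing}}[h])$ in $L^2_{\mathrm{loc}}$ reduce every statement to passing to the limit in an identity already known for the smooth Vlasov field $h_n$.

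For the homogeneous vector fields, I first apply Lemma \ref{LemCom} to $h_n$, obtaining $\mathcal{L}_{\widehat{\Omega}_{ij}}(J(h_n)) = J(\widehat{\Omega}_{ij} h_n)$, $\mathcal{L}_{\Omega_{0k}}(J(h_n)) = J(\widehat{\Omega}_{0k} h_n)$, and $\mathcal{L}_S(J(h_n)) = J(S h_n) + J(h_n)$. Next, Lemma \ref{Lemderivsing} identifies the distributional limits: $\widehat{Z} f^{\mathrm{sing}}[h] = f^{\mathrm{sing}}[\widehat{Z}_\infty h]$ for any homogeneous $\widehat{Z} \in \K \setminus \{\partial_t,\partial_{x^k}\}$, together with $\widehat{S}=S+3$ absorbing the extra factor $3$ into the difference between $\mathcal{L}_S J = J(Sh_n)+J(h_n)$ and $J(\widehat{S}_\infty h) = J(S h) + 3J(h)$ --- yielding the stated $-2J^{\mathrm{asymp}}[h]$ correction after taking the $n \to \infty$ limit on both sides. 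The translation case is the main obstacle: since $\partial_{x^\lambda} f^{\mathrm{sing}}[h]$ is no longer of the form $f^{\mathrm{sing}}[\,\cdot\,]$, I instead invoke Lemma \ref{Lemsingderiv}, which writes $\partial_{t,x} f^{\mathrm{sing}}[h]$ as a $v$-derivative of $\delta(x-t\widehat{v})$, and then integrate by parts in $v$ against $\widehat{v}_\nu$ to produce $J(\partial_{x^\lambda} f^{\mathrm{sing}}[h])$ in the explicit form given in Definition \ref{Defcurrentderiv}; the extra factor $1/t$ in front of $D_{x^\lambda}$ arises from the scaling $\delta(x-t\widehat{v}) = t^{-3} \delta(v - \widecheck{x/t})\cdot |v^0|^5$, while the inhomogeneous term $-(-1)^{\delta_0^\lambda}\delta_\nu^\lambda t^{-1} J^{\mathrm{asymp}}_0[h]$ is the contribution of the Kronecker symbols $(1-|\widehat{v}|^2)|v^0|^2 = 1$ and $\widehat{v}^\mu\widehat{v}_\mu = -1$ arising when $v^0\partial_{v^k}$ or $v^i\partial_{v^i}$ hits the $\widehat{v}_\nu$ weight.

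For the divergence-free property, the cleanest route is to note that $\T_0 f^{\mathrm{sing}}[h] = 0$ in $\mathcal{D}'$ by Lemma \ref{Lembasicpropsing}, so that at the level of the smooth approximations
\[
\nabla^\mu J(h_n)_\mu = \partial_{x^\mu} \int \widehat{v}^\mu h_n \dr v = \int \T_0(h_n) \dr v \xrightarrow[n \to \infty]{} \int \T_0(f^{\mathrm{sing}}[h]) \dr v = 0
\]
in $\mathcal{D}'(\R_+^* \times \R^3_x)$, which combined with the convergence of $J(h_n) \to J^{\mathrm{asymp}}[h]$ gives $\nabla^\mu J^{\mathrm{asymp}}[h]_\mu = 0$. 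Alternatively, one can differentiate the explicit formula for $J^{\mathrm{asymp}}[h]$ in $\{|x|<t\}$ directly and check cancellation, using $\partial_t(\widecheck{x/t}) = -t^{-2}\widecheck{x/t}\cdot (\cdots)$ and $\partial_{x^k}(\widecheck{x/t})$ to match terms; but the Vlasov-equation interpretation makes it immediate and also handles the jump across $\{|x|=t\}$ in a distributional sense provided $h$ decays fast enough in $v$ for $\int_z |v^0|^5 h(z,v)\dr z$ to vanish as $|v|\to 1^-$ in the $\widecheck{\cdot}$ variable.
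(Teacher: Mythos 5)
Your proof is essentially the paper's: mollify $f^{\mathrm{sing}}[h]$ by $h_n$, apply Lemma~\ref{LemCom} to $h_n$, let $n\to\infty$ via Remark~\ref{Rqmollif}, and identify the limiting objects through Lemmata~\ref{Lemderivsing} and~\ref{Lemsingderiv}. One cosmetic slip: $\widehat{v}^\mu\widehat{v}_\mu$ equals $-|v^0|^{-2}$, not $-1$, though this does not affect the argument since the identity actually producing the Kronecker terms is $|v^0|^2(1-|\widehat{v}|^2)=1$, as recorded in the discussion preceding Definition~\ref{Defcurrentderiv}.
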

 \begin{Rq}
Note that $J^{\mathrm{asymp}}[\partial^\infty_t h]=J^{\mathrm{asymp}}[\partial_{x^k} h]=0$ reflects that $\int_v \partial_{x^\lambda} f_1 \dr v$ decays faster than $t^{-3}$. 
 \end{Rq}
 \begin{proof}
 We apply, for any $n \in \mathbb{N}^*$, Lemma \ref{LemCom} in order to relate $\mathcal{L}_Z J(h_n)$ to $J(\widehat{Z}h_n)$. Recall further that $J(h_n)$ is divergence free. The result is obtained, in view of Remark \ref{Rqmollif}, by letting $n \to +\infty$ and by applying Lemmata \ref{Lemderivsing} and \ref{Lemsingderiv}.
 \end{proof}
 
We are now able to state the higher order commutation formula for both the Maxwell and the asymptotic Maxwell equations. In order to simplify the presentation, we will only consider operators of the form $Z^\gamma = \partial_{t,x}^\kappa Z^\beta$, where $Z^\beta$ is only composed by homogeneous vector fields, $|\beta|=\beta_H$. This reduction is possible in view of the next identity.
\begin{Lem}
Let $Z^\gamma \in \mathbb{K}^{|\gamma|}$ containng $\gamma_T$ translation $\partial_{x^\nu}$. There exists $C^\gamma_{\kappa, \beta} \in \mathbb{Z}$ such that
$$ Z^\gamma = \sum_{|\kappa|=\gamma_T} \sum_{|\beta| \leq |\gamma|-\gamma_T} C^\gamma_{\kappa , \beta} \, \partial_{t,x}^\kappa Z^\beta .$$
\end{Lem}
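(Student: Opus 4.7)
The plan is to proceed by induction on $|\gamma|$, and the engine of the argument is the fact that for every homogeneous vector field $Z \in \{S, \Omega_{ij}, \Omega_{0k}\} \subset \mathbb{K}$ and every translation $\partial_{x^\mu}$ with $0 \leq \mu \leq 3$, the commutator $[Z, \partial_{x^\mu}]$ is itself a translation (or vanishes): explicitly,
\[
 [S, \partial_{x^\mu}] = -\partial_{x^\mu}, \quad [\Omega_{ij}, \partial_{x^k}] = \delta_j^k \partial_{x^i} - \delta_i^k \partial_{x^j}, \quad [\Omega_{ij}, \partial_t] = 0, \quad [\Omega_{0k}, \partial_t] = -\partial_{x^k}, \quad [\Omega_{0k}, \partial_{x^i}] = -\delta_k^i \partial_t.
\]
These are the Minkowski-Killing relations and read off directly from the definitions in $\mathbb{K}$. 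The crucial structural point is that each such commutator is a \emph{single} translation, with no homogeneous component, which is what will make the bookkeeping of $\gamma_T$ work.

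I would first establish the auxiliary claim that for any homogeneous $Z \in \mathbb{K}$ and any multi-index $\kappa$ there exist integers $\widetilde{C}^{Z,\kappa}_{\sigma}$ such that
\[
 Z \, \partial_{t,x}^\kappa \; = \; \partial_{t,x}^\kappa \, Z \; + \sum_{|\sigma|=|\kappa|} \widetilde{C}^{Z,\kappa}_{\sigma} \, \partial_{t,x}^\sigma.
\]
This is proved by an internal induction on $|\kappa|$: writing $\partial_{t,x}^\kappa = \partial_{x^\mu} \partial_{t,x}^{\kappa^-}$ and using $Z \partial_{x^\mu} = \partial_{x^\mu} Z + [Z,\partial_{x^\mu}]$, one applies the inductive hypothesis to commute $Z$ past $\partial_{t,x}^{\kappa^-}$, while the correction $[Z,\partial_{x^\mu}] \partial_{t,x}^{\kappa^-}$ is already a product of $|\kappa|$ translations. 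No term of mixed shape is ever produced, precisely because $[Z,\partial_{x^\mu}]$ contains no homogeneous factor.

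With the sub-claim in hand, the main induction on $|\gamma|$ is routine. The cases $|\gamma| \leq 1$ are immediate. For the step, decompose $Z^\gamma = Z \cdot Z^{\gamma'}$ with $|\gamma'|=|\gamma|-1$ and expand $Z^{\gamma'} = \sum C'_{\kappa',\beta'} \partial_{t,x}^{\kappa'} Z^{\beta'}$ with $|\kappa'| = \gamma'_T$ and $|\beta'| \leq |\gamma|-1-\gamma'_T$. If $Z$ is a translation, then $\gamma_T = \gamma'_T+1$ and prepending $Z$ to each $\partial_{t,x}^{\kappa'}$ directly yields the desired form. If $Z$ is homogeneous, then $\gamma_T = \gamma'_T$ and the sub-claim gives
\[
 Z \, \partial_{t,x}^{\kappa'} Z^{\beta'} \; = \; \partial_{t,x}^{\kappa'} \, Z Z^{\beta'} \; + \sum_{|\sigma|=\gamma_T} \widetilde{C}^{Z,\kappa'}_{\sigma} \, \partial_{t,x}^{\sigma} Z^{\beta'},
\]
whose first summand has translation count $\gamma_T$ and homogeneous length $|\beta'|+1 \leq |\gamma|-\gamma_T$, and whose correction terms have translation count $\gamma_T$ and homogeneous length $|\beta'| \leq |\gamma|-\gamma_T-1$; both lie within the advertised range. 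The only conceptual hurdle is verifying that the translation count is genuinely preserved at every step, and this is guaranteed by the elementary commutation table above: since each $[Z,\partial_{x^\mu}]$ collapses one (homogeneous, translation) pair into one translation, the total number of translations stays $\gamma_T$ while the number of homogeneous factors can only decrease. This is essentially a miniature Poincaré--Birkhoff--Witt rearrangement, and there is no deeper obstruction.
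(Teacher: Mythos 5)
Your proof is correct and rests on exactly the same observation as the paper's (one-line) argument: the commutator of any $Z\in\mathbb{K}$ with a translation $\partial_{x^\mu}$ is either zero or $\pm\partial_{x^\nu}$, so translations can be moved to the front by a PBW-type rearrangement without changing their count. Your version merely spells out the commutation table and the induction that the paper leaves implicit.
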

\begin{proof}
It suffices to use that, for all $0 \leq \mu \leq 3$ and $Z \in \mathbb{K}$, we have $[\partial_{x^\mu},Z]=0$ or there exists $0 \leq \nu \leq 3$ such that $[\partial_{x^\mu},Z]=\pm \partial_{x^\nu}$.
\end{proof}
\begin{Pro}\label{ProComMax}
Let $f : [T,+\infty[ \times \R^3_x \times \R^3_v \to \R$ and $F$ be a solution to the Maxwell equations
$$ \nabla^\mu F_{\mu \nu} = J(f)_\nu, \qquad \qquad \nabla^\mu {}^* \! F_{\mu \nu}=0 .$$
Assume that they are both sufficiently regular and consider $Z^\gamma \in \mathbb{K}^{|\gamma|}$.
\begin{itemize}
\item If $Z^\gamma$ is only composed by homogeneous vector fields $\Omega_{0k}$, $\Omega_{ij}$ and $S$, then
\begin{alignat*}{2}
 \nabla^\mu \mathcal{L}_{Z^\gamma}(F)_{\mu \nu}&=J \big( \widehat{Z}^\gamma f \big)_\nu , \qquad && \qquad \qquad \, \nabla^\mu {}^* \! \mathcal{L}_{Z^\gamma}(F)_{\mu \nu} =0, \\
\nabla^\mu \mathcal{L}_{Z^\gamma} \big( F^{\mathrm{asymp}}[f_\infty] \big)_{\mu \nu}&=J^{\mathrm{asymp}}_\nu \big[ \widehat{Z}^\gamma_\infty f_\infty \big]  , \qquad \qquad \qquad && \nabla^\mu {}^* \! \mathcal{L}_{Z^\gamma}\big(F^{\mathrm{asymp}}[f_\infty] \big)_{\mu \nu} =0.
\end{alignat*}
 Moreover, if $Z^\gamma$ contains the scaling vector field $S$, then $J^{\mathrm{asymp}} \big[ \widehat{Z}^\gamma_\infty f_\infty \big]=0$.
\item If $Z^\gamma = \partial_{t,x}^\kappa Z^\beta$, with $|\kappa|=\gamma_T \geq 1$, then
$$ \nabla^\mu \mathcal{L}_{Z^\gamma}(F)_{\mu \nu}= J \big( \partial_{t,x}^\kappa \widehat{Z}^\beta f \big)_\nu, \qquad \qquad \nabla^\mu {}^* \! \mathcal{L}_{Z^\gamma}(F)_{\mu \nu} =0,$$
and there exists $C^{\gamma,\lambda}_{\kappa,\beta,\nu} \in \mathbb{N}$ such that
\begin{align*}
 \nabla^\mu \mathcal{L}_{Z^\gamma} \big( F^{\mathrm{asymp}} [f_\infty] \big)_{\mu \nu}&=\frac{1}{t^{|\kappa|}}J^{\mathrm{asymp}}_\nu \big[ D_{t,x}^\kappa \widehat{Z}^\beta_\infty f \big]+\sum_{0 \leq \lambda\leq 3} \sum_{|\xi|<p} \sum_{|\beta| \leq |\gamma|-|\xi|} C^{\gamma,\lambda}_{\kappa,\beta , \nu} \, \frac{1}{t^{|\kappa|}} J^{\mathrm{asymp}}_\lambda \big[ D_{t,x}^\xi \widehat{Z}^\beta_\infty f \big] , \\
 \nabla^\mu {}^* \! \mathcal{L}_{Z^\gamma}\big(F^{\mathrm{asymp}}[f_\infty] \big)_{\mu \nu} &=0.
 \end{align*}
\end{itemize}
\end{Pro}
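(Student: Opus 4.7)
The Maxwell commutation formulas of both bullets are immediate consequences of Proposition \ref{Com}: the terms involving $\widehat{v}^\mu \mathcal{L}_{Z^\gamma}(F)_{\mu}{}^j \partial_{v^j}$ appearing there originate from the Lorentz force in $\T_F$ and do not play a role when one commutes only the divergence equations. It therefore remains to establish the asymptotic Maxwell formulas, for which I would proceed by induction on $|\gamma|$, the base case $|\gamma|=0$ being nothing but the definition of $F^{\mathrm{asymp}}[f_\infty]$ recalled in \eqref{eq:defasympF}.

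For the first bullet, writing $Z^\gamma = Z \cdot Z^{\gamma'}$ with $Z$ homogeneous, the step is to combine the inductive hypothesis with Lemma \ref{LemComMaxwell} (applied to $G=\mathcal{L}_{Z^{\gamma'}}F^{\mathrm{asymp}}[f_\infty]$) and Lemma \ref{Lemfirstasymp0}. For $Z \in \{\Omega_{0k}, \Omega_{ij}\}$ both lemmas transfer the vector field cleanly inside $J^{\mathrm{asymp}}$. For $Z=S$ the crucial cancellation is that the $+2J$ correction appearing in Lemma \ref{LemComMaxwell} exactly cancels the $-2J^{\mathrm{asymp}}$ correction in Lemma \ref{Lemfirstasymp0}, so one recovers $J^{\mathrm{asymp}}[\widehat{S}\widehat{Z}^{\gamma'}_\infty f_\infty]_\nu$ with the convenient convention $\widehat{S}=S+3$. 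The dual equation follows by the same procedure with vanishing source term. To check the vanishing claim when $S$ appears inside $Z^\gamma$, I would argue by a second induction on the number of factors to the right of the rightmost $S$: when $S$ is rightmost, the $t$-independence of $f_\infty$ (which is preserved by all operators in $\K^\infty$ applied to $f_\infty$) reduces $\widehat{S}(\widehat{Z}^{\gamma''}_\infty f_\infty)$ to $(z^i\partial_{z^i}+3)\widehat{Z}^{\gamma''}_\infty f_\infty$, and integration by parts in $z$ gives $\int_{\R^3_z}|v^0|^5 \widehat{S}(\widehat{Z}^{\gamma''}_\infty f_\infty) \dr z=0$, hence $J^{\mathrm{asymp}}[\widehat{Z}^\gamma_\infty f_\infty]=0$. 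Each additional homogeneous vector field applied on the left then reproduces, by Lemma \ref{Lemfirstasymp0}, an expression of the same form (possibly modulo a $-2J^{\mathrm{asymp}}$ contribution already known to vanish by induction), closing the argument.

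For the second bullet, where $Z^\gamma = \partial_{t,x}^\kappa Z^\beta$ with $Z^\beta$ homogeneous, I would first apply the homogeneous formula just established to reduce to $\partial_{t,x}^\kappa J^{\mathrm{asymp}}_\nu[\widehat{Z}^\beta_\infty f_\infty]$, using that translations have vanishing deformation tensors and hence commute with $\nabla^\mu$ in Cartesian coordinates. Then one iterates the translation formula of Lemma \ref{Lemfirstasymp0}: each $\partial_{x^\lambda}$ acting on $t^{-p}J^{\mathrm{asymp}}_\mu[g]$ produces three types of contribution — a principal term $t^{-p-1}J^{\mathrm{asymp}}_\mu[D_{x^\lambda}g]$, a $\delta^\lambda_\mu$-remainder $\mp t^{-p-1}\delta^\lambda_\mu J^{\mathrm{asymp}}_0[g]$ coming from the differentiation of the $x_\mu/t$ factor in $J^{\mathrm{asymp}}$, and, in the case $\lambda=0$, a commutator remainder $\partial_t(t^{-p})J^{\mathrm{asymp}}_\mu[g] = -p t^{-p-1}J^{\mathrm{asymp}}_\mu[g]$. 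After $|\kappa|$ iterations the principal contributions collect into $t^{-|\kappa|} J^{\mathrm{asymp}}_\nu[D^\kappa_{t,x}\widehat{Z}^\beta_\infty f_\infty]$, while the other two types of contributions generate exactly the sum over $|\xi|<|\kappa|$ in the stated formula.

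The main obstacle I anticipate is the bookkeeping in this last step: one must verify that every remainder accumulates precisely $|\kappa|$ powers of $t^{-1}$, and that the lower-order operators $D_{t,x}^\xi$ with $|\xi|<|\kappa|$ are the only admissible ones on the right-hand side. This follows at each step from the identities $\partial_t(t^{-p})=-p t^{-1}\cdot t^{-p}$ (which preserves the total power count) and $\partial_{x^i}(t^{-p})=0$, together with a careful distinction between which differentiation falls on $t^{-p}$, on the $x_\mu/t$ prefactor, and on the integral. The combinatorial coefficients $C^{\gamma,\lambda}_{\kappa,\beta,\nu}$ produced by this scheme can be kept implicit, as their precise values are not needed elsewhere in the paper.
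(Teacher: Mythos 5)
Your proof is correct and follows essentially the same approach as the paper: iterate Lemmata \ref{LemComMaxwell}, \ref{LemCom} and \ref{Lemfirstasymp0}, with the crucial cancellation of the $+2J$ correction (scaling commutator in Lemma \ref{LemComMaxwell}) against the $-2J^{\mathrm{asymp}}$ correction in Lemma \ref{Lemfirstasymp0}. For the vanishing claim when $S$ appears in $Z^\gamma$, the paper takes a shorter route than your second induction: it observes that $[S,\widehat{\Omega}^\infty_{0k}]=[S,\widehat{\Omega}_{ij}]=0$, so that $\widehat{Z}^\gamma_\infty$ can always be rewritten as $\widehat{S}\widehat{Z}^\beta_\infty$, and then concludes in one line by integration by parts in $z$; your version accomplishes the same thing, though the phrase ``$S$ is rightmost'' sits uneasily with the expression $\widehat{S}(\widehat{Z}^{\gamma''}_\infty f_\infty)$ you write for the base case, which places $\widehat{S}$ leftmost --- once that convention is straightened out the argument goes through.
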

\begin{Rq}
In view of Corollary \ref{Corlinbound}, if $Z^\gamma$ is only composed by homogeneous vector fields, it suggests that the asymptotic behavior of $\mathcal{L}_{Z^\gamma}F$ is captured by $\mathcal{L}_{Z^\gamma}F^{\mathrm{asymp}}[f_\infty]$ if $f$ satisfies modified scattering to the density function $f_\infty$.

In fact, the same observation holds when $Z^\gamma$ is composed by $p \geq 1$ translations but we will not require it for the proof of Theorem \ref{Theo1}. One can easily check it for the case $p=1$ by computing the coefficients $C^{\gamma,\lambda}_{\kappa,\beta , \nu}$ using Lemma \ref{Lemfirstasymp0} and by comparing the commutation formulas with Corollary \ref{CorlinexpanderivativJ}. 
\end{Rq}
\begin{proof}
These relations are obtained by iterating Lemmata \ref{LemComMaxwell}, \ref{LemCom} and \ref{Lemfirstasymp0}. If $Z^\gamma_\infty$ is only composed by homogeneous vector field and contains at least once $S$, there exists $|\beta|=|\gamma|-1$ such that $\widehat{Z}_\infty^\gamma=\widehat{S} \widehat{Z}^\beta_\infty$. Indeed, $[S,\widehat{\Omega}_{0k}^\infty]=[S,\widehat{\Omega}_{ij}]=0$. Then, by integration by parts in $z$,
$$ \int_{\R^3_z}  \widehat{S} \widehat{Z}^\beta_\infty f_\infty (z,\cdot) \dr z= \int_{\R^3_z}  z^i \partial_{z^i} \widehat{Z}^\beta_\infty f_\infty (z,\cdot) \dr z+3\int_{\R^3_z}  \widehat{Z}^\beta_\infty f_\infty (z,\cdot) \dr z =0.$$

\end{proof}

\subsection{Estimates for the solution to the asymptotic Maxwell equations}

This subsection is devoted to the study of the large time behavior of $F^{\mathrm{asymp}}[f_\infty]$ and its derivatives. For this, it will be convenient to introduce, for any $N_0 \in \mathbb{N}$,
\begin{equation*}
\hspace{-5mm} Q:= \int_{\R^3_z} \int_{\R^3_v} f_\infty(z,v) \dr v \dr z , \qquad \overline{\mathbb{E}}_{N_0}[f_\infty]:= \sup_{|\kappa_z|+|\kappa_v| \leq N_0}  \, \sup_{ \R^3_z \times \R^3_v}  \langle x \rangle^{4+|\kappa_x|} \, \langle v \rangle^{7+3|\kappa_v|} \big|  \partial_z^{\kappa_z} \partial_v^{\kappa_v}f_\infty \big|(z,v) .
 \end{equation*}
One needs first to estimate the initial data $F^{\mathrm{asymp}}[f_\infty](t_0,\cdot)$.
\begin{Pro}\label{Proinidataasymp}
Recall that $\overline{F}(t,x)= \overline{\chi}(t-|x|)\frac{Qx_i}{4\pi|x|^3} \dr t \wedge \dr x^i$. Then, for any $|\gamma| \leq N_0$, 
 $$ \forall \, |\kappa| \leq 1, \qquad \sup_{\R^3_x} \, \langle x \rangle^{3+|\kappa|}  \left|\nabla_{t,x}^\kappa \mathcal{L}_{Z^\gamma}\big( F^{\mathrm{asymp}}[f_\infty]-\overline{F}) \right|\!(t_0,x) \lesssim_{t_0}  \overline{\mathbb{E}}_{N_0+1}[f_\infty].$$
 For any $|\gamma| \leq N_0+1$, we have
 $$ \forall \, |\kappa| \leq 1, \qquad \bigg|\int_{\R^3_x}\langle x \rangle^{\frac{5}{2}+2|\kappa|}  \left|\nabla_{t,x}^\kappa \mathcal{L}_{Z^\gamma}\big( F^{\mathrm{asymp}}[f_\infty]-\overline{F}) \right|^2\!(t_0,x) \dr x \bigg|^{\frac{1}{2}} \lesssim_{t_0}  \overline{\mathbb{E}}_{N_0+1}[f_\infty].$$
\end{Pro}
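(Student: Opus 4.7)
The plan is to reduce both assertions to Proposition \ref{Proinidata} applied directly to $F=F^{\mathrm{asymp}}[f_\infty]$. First I would verify its hypotheses at $t=t_0$: the curl-free conditions on $E(t_0,\cdot)$ and $B(t_0,\cdot)$ are built into \eqref{eq:defasympF}; the source $J^{\mathrm{asymp}}[f_\infty](t_0,\cdot)$ is supported in $\{|x|\leq t_0\}\subset\{|x|\leq 3\}$ since $t_0\leq 2$; and the total charge coincides with $Q$. Indeed, using the change of variables $v=\widecheck{x/t_0}$ from Lemma \ref{cdv},
\begin{equation*}
 -\int_{\R^3_x} J^{\mathrm{asymp}}_0[f_\infty](t_0,x)\dr x = \int_{|x|<t_0}\!\int_{\R^3_z} \frac{\big[|v^0|^5 f_\infty\big]\big(z,\widecheck{\; x\;}/t_0\big)}{t_0^4} \dr z \dr x = \int_{\R^3_z}\!\int_{\R^3_v} f_\infty(z,v) \dr v \dr z = Q ,
\end{equation*}
so that the pure charge $\overline{F}$ extracted by Proposition \ref{Proinidata} is exactly the one defined in Section \ref{SubsecPurecharge}.

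Invoking Proposition \ref{Proinidata} with $N=N_0+1$ then reduces both the pointwise and the $L^2$ statements to the bounds
\begin{equation*}
 \sup_{|\xi|\leq N_0+1}\big\|\nabla^{\xi}_{t,x}J^{\mathrm{asymp}}[f_\infty](t_0,\cdot)\big\|_{L^\infty(\R^3_x)}+\sup_{|\xi|\leq N_0+1}\big\|\nabla^{\xi}_{t,x}J^{\mathrm{asymp}}[f_\infty](t_0,\cdot)\big\|_{L^2(\R^3_x)}\lesssim_{t_0} \overline{\mathbb{E}}_{N_0+1}[f_\infty].
\end{equation*}
To establish them I would iterate Lemma \ref{Lemfirstasymp0}, which asserts that each translation derivative $\partial_{x^\lambda}$ applied to $J^{\mathrm{asymp}}[h]$ generates a factor $t^{-1}$ and replaces $h$ by $D_{x^\lambda}h$, up to an additive lower-order term proportional to $J^{\mathrm{asymp}}_0[h]$. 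Since $D_{x^\lambda}$ multiplies $h$ by factors $|v^0|^2$ and monomials in $(z,v)$ of low order while producing one derivative in $(z,v)$, iterating $|\xi|$ times expresses $\nabla^\xi_{t,x}J^{\mathrm{asymp}}[f_\infty](t_0,\cdot)$ as a finite linear combination of terms of the form $t_0^{-|\xi|} J^{\mathrm{asymp}}\big[z^a v^b \partial^{\kappa_z}_z\partial^{\kappa_v}_v f_\infty\big](t_0,\cdot)$ with $|a|+|\kappa_z|\leq |\xi|$, $|b|\leq 3|\xi|$, and $|\kappa_z|+|\kappa_v|\leq|\xi|\leq N_0+1$.

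To finish I would bound each such contribution: it is supported in $\{|x|\leq t_0\}$ by Definition \ref{DefJasymp}, hence the $L^2$ bound will follow from an $L^\infty$ bound and compactness of the support. The assumption $\overline{\mathbb{E}}_{N_0+1}[f_\infty]<+\infty$ gives the pointwise estimate
\begin{equation*}
 \big|z^a v^b\partial^{\kappa_z}_z\partial^{\kappa_v}_v f_\infty\big|(z,v)\lesssim \overline{\mathbb{E}}_{N_0+1}[f_\infty]\, \langle z\rangle^{|a|-4-|\kappa_z|}\,\langle v\rangle^{|b|-7-3|\kappa_v|},
\end{equation*}
which, integrated in $z$ against $\langle z\rangle^{-4}$ (remaining convergent since $|a|-|\kappa_z|\leq |\xi|$ and a net power $\leq -4$ is retained), multiplied by $|v^0(\widecheck{x/t_0})|^5$ and then evaluated at $v=\widecheck{x/t_0}$, produces a bounded function of $x\in\{|x|<t_0\}$ because the combined $|v^0|$-exponent $5+|b|-7-3|\kappa_v|\leq -2$ is non-positive. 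The one step that requires real care is the combinatorial tracking of how iterated $D_{x^\lambda}$ operators distribute $z$- and $v$-weights across derivatives of $f_\infty$; no genuine obstacle arises because compactness of the support at $t=t_0$ removes the need to control any spatial decay, and the polynomial growth in $v$ generated at each step is comfortably dominated by the velocity decay built into $\overline{\mathbb{E}}_{N_0+1}[f_\infty]$.
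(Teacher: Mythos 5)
Your proof is correct and follows essentially the same route as the paper: identify the compactly supported source $J^{\mathrm{asymp}}[f_\infty](t_0,\cdot)$ and its total charge $Q$, control $\nabla_{t,x}^\xi J^{\mathrm{asymp}}[f_\infty](t_0,\cdot)$ for $|\xi|\leq N_0+1$ by iterating the commutation formula of Lemma \ref{Lemfirstasymp0} together with the pointwise bound $\langle x\rangle^p\big|J^{\mathrm{asymp}}[h](t_0,x)\big|\lesssim_{t_0}\overline{\mathbb{E}}_0[h]\,\mathds{1}_{|x|<t_0}$, and then invoke Proposition \ref{Proinidata} with $N=N_0+1$. The only slip is the factor $t_0^4$ in your charge computation, which should be $t_0^3$ (the Jacobian of $x=t_0\widehat{v}$ is $t_0^3\langle v\rangle^{-5}$ by Lemma \ref{cdv}), so that the total charge is exactly $Q$ and the pure charge part extracted by Proposition \ref{Proinidata} coincides with $\overline{F}$.
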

\begin{proof}
Recall that $J^{\mathrm{asymp}}[f_\infty](t_0, \cdot)$ is supported in $\{|x| \leq t_0 \}$. Remark that
$$ -\int_{\R^3_x} J^{\mathrm{asymp}}_0(t_0,x) \dr x=\int_{|x|<t_0} \int_{\R^3_z} \bigg\langle \frac{\widecheck{\;x \;}}{t_0} \bigg\rangle^5 f_\infty \bigg( z,\frac{\widecheck{\;x\;}}{t_0} \bigg) \frac{\dr x}{t_0^3} = Q,$$
where we used Lemma \ref{cdv} in order to perform the change of variables $x=t_0\widehat{v}$. Note also the bound
$$ \langle x \rangle^p\left| J^{\mathrm{asymp}} [h](t_0, x) \right| \leq \mathds{1}_{|x| <t_0}\frac{\langle t_0 \rangle^p}{t_0^3} \int_{\R^3_z} \frac{\dr z}{\langle z \rangle^4} \, \sup_{(z,v) \in \R^3_z \times \R^3_v} \langle z \rangle^4 \, \langle v \rangle^5 |h(z,v)| \lesssim_{t_0}  \overline{\mathbb{E}}_0[h] \mathds{1}_{|x| <t_0}, \qquad \quad p \in \mathbb{N},$$
which holds for any function $h \in L^\infty (\R^3_z \times \R^3_v)$ decaying fast enough. The result is now a direct consequence of Propositions \ref{Proinidata} and \ref{ProComMax}.
\end{proof}

We now state our main result concerning $F^{\mathrm{asymp}}[f_\infty]$. For this, we recall $\mathbb{F}[\cdot]$ introduced in \eqref{kev:defasympelec}--\eqref{kev:defasympmag} (see also Definition \ref{DefFasympelectromagn} below).
\begin{Pro}\label{Proasymp}
Let $N_0 \in \mathbb{N}$, $Z^\gamma \in \mathbb{K}^{|\gamma|}$ be a differential operator of order $|\gamma| \leq N_0$ and assume $1 \leq t_0 \leq 2$.
\begin{itemize}
\item Denoting by $0 \leq \gamma_T \leq |\gamma|$ the number of translations $\partial_{x^\lambda}$ composing $Z^\gamma$, we have
$$ \forall \, (t,x) \in [t_0,+\infty[ \times \R^3, \qquad \left| \mathcal{L}_{Z^\gamma} F^{\mathrm{asymp}}[f_\infty] \right|(t,x) \lesssim \langle t+|x| \rangle^{-1} \, \langle t-|x| \rangle^{-1-\gamma_T} \, \overline{\mathbb{E}}_{N_0+1}[f_\infty] .$$
\item For all $(t,x) \in [t_0,+\infty[ \times \R^3$ such that $|x| <t$, there holds
$$ \left| \mathcal{L}_{Z^\gamma}\big( F^{\mathrm{asymp}}[f_\infty] \big)(t,x) - \frac{1}{t^2}\mathbb{F}\big[ \widehat{Z}_\infty^\gamma f_\infty \big] \bigg( \frac{\widecheck{ \; x \;}}{t} \bigg)  \right| \lesssim \frac{\overline{E}_{N_0+1}\big[|v^0|^2f_\infty \big]}{\langle t+|x| \rangle \, \langle t-|x| \rangle^2}.$$
Finally, in the exterior of the light cone, there holds
$$ \forall \, |x| \geq t \geq t_0, \qquad  \left| \mathcal{L}_{Z^\gamma}\big( F^{\mathrm{asymp}}[f_\infty] \big)\right|(t,x) \lesssim  \frac{Q}{\langle t+|x| \rangle^2} \, \mathds{1}_{|x|-t \geq 1} + \frac{\overline{E}_{N_0+1}[f_\infty]}{\langle t+|x| \rangle \, \langle t-|x| \rangle^2}  .$$
\end{itemize}
\end{Pro}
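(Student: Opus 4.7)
My plan is to first use the commutation identities of Proposition \ref{ProComMax} to reduce matters to the case $\gamma_T=0$. The commuted source then carries an extra factor $t^{-\gamma_T}$, which inside the cone $\{|x|<t\}$ ultimately accounts for the improvement from $\langle t-|x|\rangle^{-1}$ to $\langle t-|x|\rangle^{-1-\gamma_T}$ in the first estimate, via the self-similar structure of $J^{\mathrm{asymp}}$ and the fact that spatial derivatives of the retarded field gain decay in $\langle t-|x|\rangle$ when the source is smooth and self-similar in the interior. In the homogeneous case, I would set $G_\gamma:=\mathcal{L}_{Z^\gamma}F^{\mathrm{asymp}}[f_\infty]$ and split $G_\gamma=\mathcal{L}_{Z^\gamma}\overline{F}+H_\gamma$, where $H_\gamma$ is chargeless; by integration by parts the total charge $\int\widehat{Z}^\gamma_\infty f_\infty\,\dr z\dr v$ vanishes as soon as $|\gamma|\geq 1$, so this splitting is only genuinely needed for $|\gamma|=0$, in which case it produces exactly the $Q\langle t+|x|\rangle^{-2}\mathds{1}_{|x|-t\geq 1}$ contribution to the third estimate through Proposition \ref{Propurecharge}.

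For the chargeless part of the exterior bound, $H_\gamma$ satisfies the vacuum Maxwell system in $\{|x|\geq t\}$ because $J^{\mathrm{asymp}}$ is supported in $\{|x|<t\}$. By the domain of dependence and Proposition \ref{Proinidataasymp}, its values in this region are determined by $H_\gamma(t_0,\cdot)$ restricted to the ball of radius $t-t_0$ centered at $x$, on which the pointwise size is bounded by $\overline{\mathbb{E}}_{N_0+1}[f_\infty]\langle y\rangle^{-4}$. A Kirchhoff-type representation for the wave equation then yields the $\langle t+|x|\rangle^{-1}\langle t-|x|\rangle^{-2}$ decay, completing the third estimate.

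The global pointwise decay of the first assertion follows on $\{|x|<t\}$ by combining the same Kirchhoff-type representation applied to the cartesian components of the commuted field with the pointwise bound $|J^{\mathrm{asymp}}[\widehat{Z}^\gamma_\infty f_\infty]|(t,x)\lesssim t^{-3}\overline{\mathbb{E}}_{|\gamma|}[f_\infty]\mathds{1}_{|x|<t}$, integrated over the past null cone truncated at $t=t_0$, plus the initial-data contribution from Proposition \ref{Proinidataasymp}. The extra $\langle t-|x|\rangle^{-\gamma_T}$ weight when $\gamma_T\geq 1$ then comes from the factor $t^{-\gamma_T}$ in the commuted source (Proposition \ref{ProComMax}) together with the bound $t\gtrsim \langle t-|x|\rangle$ on the integration domain of the representation formula, once the correct decomposition of the retarded kernel is used.

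The heart of the argument is the interior profile of the second assertion. I would introduce the ansatz $\mathfrak{F}_\gamma(t,x):=t^{-2}\mathbb{F}[\widehat{Z}^\gamma_\infty f_\infty](\widecheck{x/t})$ on $\{|x|<t\}$ and verify directly, from the definitions \eqref{kev:defasympelec}--\eqref{kev:defasympmag}, that it is an exact solution of the asymptotic Maxwell equations in the interior of the cone, namely $\nabla^\mu \mathfrak{F}_{\gamma,\mu\nu}=J^{\mathrm{asymp}}_\nu[\widehat{Z}^\gamma_\infty f_\infty]$ and $\nabla^\mu{}^*\!\mathfrak{F}_{\gamma,\mu\nu}=0$. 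After the change of variable $y=\widecheck{x/t}$ this reduces to an algebraic identity between the explicit integrals defining $\mathbb{F}$ and the moments $\mathbf{Q}^k$, $\mathbf{Q}^{ij}$ of $\widehat{Z}^\gamma_\infty f_\infty$. Granted this identity, the difference $G_\gamma-\mathfrak{F}_\gamma$ solves the vacuum Maxwell system in the interior cone with initial data at $t=t_0$ controlled by Proposition \ref{Proinidataasymp} and by an explicit bound on $\mathfrak{F}_\gamma(t_0,\cdot)$, and with boundary values on $\{t=|x|\}$ of order $t^{-2}$; the target improvement to $\langle t+|x|\rangle^{-1}\langle t-|x|\rangle^{-2}$ follows from a finite-speed-of-propagation argument exploiting the distance $\langle t-|x|\rangle$ to the cone. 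The main obstacle will be the rigorous justification of the divergence identities for $\mathfrak{F}_\gamma$, which is singular at the light cone, and the correct handling of the distributional jumps of both $\mathfrak{F}_\gamma$ and $J^{\mathrm{asymp}}$ there; I plan to overcome this by approximating $\widehat{Z}^\gamma_\infty f_\infty$ by a sequence of smooth functions supported in $\{|v|\leq R\}$, checking the identity in that setting, and passing to the limit.
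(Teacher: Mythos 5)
Your overall architecture (wave-equation representation, splitting off the pure-charge part, Kirchhoff for the data contribution, a self-similar profile in the interior) is close in spirit to the paper's, but three steps do not go through as written. First, the claim that the pure-charge splitting is only needed for $|\gamma|=0$ because $\int \widehat{Z}^\gamma_\infty f_\infty=0$ for $|\gamma|\geq 1$ is incorrect: the vanishing of the charge of the commuted \emph{source} does not kill $\mathcal{L}_{Z^\gamma}\overline{F}$. By Proposition \ref{Propurecharge}, for $Z^\gamma$ containing a Lorentz boost the Lie derivative produces terms $\partial_\mu(Z^\lambda)\overline{F}_{\lambda\nu}$ with constant coefficients, so $\mathcal{L}_{Z^\gamma}\overline{F}$ is still of size $Q\langle t+|x|\rangle^{-2}$ on all of $\{|x|\geq t+1\}$. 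Your field $H_\gamma$ therefore keeps an $O(Qr^{-2})$ tail, its data decays only like $\langle x\rangle^{-2}$, and the Kirchhoff argument cannot produce $\langle t+|x|\rangle^{-1}\langle t-|x|\rangle^{-2}$ in the exterior; the $Q$-term in the third estimate is needed for \emph{every} $\gamma$, and one must subtract $\mathcal{L}_{Z^\gamma}\overline{F}$ (in fact $\mathcal{L}_{Z^\gamma}\widetilde{F}$, whose components solve the free wave equation) before applying Lemma \ref{LemforKirchpoint}. Second, the mechanism you give for the $\langle t-|x|\rangle^{-\gamma_T}$ gain --- the factor $t^{-\gamma_T}$ in the commuted source --- acts only on the retarded part. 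The homogeneous part is determined by data decaying like $\langle x\rangle^{-3}$ regardless of $\gamma_T$ (Proposition \ref{Proinidataasymp}), so Kirchhoff yields only $\langle t-|x|\rangle^{-2}$, which misses the target for $\gamma_T\geq 2$. The gain must instead be extracted a posteriori on the assembled field from the algebraic identities \eqref{eq:transladerivatives}, i.e.\ Lemma \ref{improderiv}.

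The most serious concern is the interior profile. You propose to verify that $\mathfrak{F}_\gamma(t,x)=t^{-2}\mathbb{F}[\widehat{Z}^\gamma_\infty f_\infty](\widecheck{x/t})$ solves the asymptotic Maxwell system in $\{|x|<t\}$ and then treat $G_\gamma-\mathfrak{F}_\gamma$ as a vacuum field there. Even granting the singular divergence identities, the interior of the light cone is not a domain of determinacy for data at $t=t_0$: the backward light cone of a point $(t,x)$ with $|x|<t$ leaves $\{|y|<\tau\}$ at $\tau=(t+|x|)/2$, so the difference at $(t,x)$ receives contributions from its trace on $\{\tau=|y|\}$, which is only $O(\tau^{-2})$. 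Converting that characteristic boundary contribution into $\langle t+|x|\rangle^{-1}\langle t-|x|\rangle^{-2}$ is not a finite-speed-of-propagation statement, and this is where the argument would stall. The paper sidesteps both obstacles at once: by the Glassey--Strauss decomposition (Proposition \ref{GSasymp}) the retarded part is $F^{\gamma,\mathrm{T}}-F^{\gamma,\mathrm{sph}}$, and the substitution $y'=t^{-1}(y-x)$ shows that $t^2F^{\gamma,\mathrm{T}}(t,x)$ \emph{equals} $\mathbb{F}[\widehat{Z}^\gamma_\infty f_\infty](\widecheck{x/t})$ as soon as $t-|x|>2t_0$, with a remainder supported in $\{t-|x|\leq 2t_0\}$ (Proposition \ref{Proinhom}); no PDE is verified for the profile and no interior boundary-value problem arises. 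I recommend replacing your ansatz-and-compare step by this exact identity.
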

\begin{Rq}\label{Rqscalingbetterdecay}
If $Z^\gamma$ is only composed by homogeneous vector fields, $S$, $\Omega_{0k}$ and $\Omega_{ij}$, and contains at least once the scaling vector field $S$, then we have the improved estimate
$$ \forall \, (t,x) \in \R_+ \times \R^3, \qquad \left| \mathcal{L}_{Z^\gamma} F^{\mathrm{asymp}}[f_\infty] \right|(t,x) \lesssim \langle t+|x| \rangle^{-1} \, \langle t-|x| \rangle^{-2}  \, \overline{\mathbb{E}}_{N_0+1}[f_\infty] .$$
If $\widehat{Z}^\gamma_\infty$ contains at least one translation ($\partial_t^\infty$ or $\partial_{z^k}$) or $\widehat{S}$, then $\mathbb{F}\big[ \widehat{Z}_\infty^\gamma f_\infty \big]=0$. If $\gamma_T \geq 1$, we could prove a more precise statement implying in particular that there exists $v \mapsto G^\gamma(v)$ such that $\mathcal{L}_{Z^\gamma}\big( F^{\mathrm{asymp}}[f_\infty] \big)(t,t\widehat{v})\sim t^{-2-\gamma_T} G^\gamma(v)$.
\end{Rq}
\begin{Rq}
$L^2$ estimates could be proved for $\mathcal{L}_{Z^\gamma} F^{\mathrm{asymp}}[f_\infty]$ up to order $N+1$. 
\end{Rq}

Since we would like to derive very precise informations on the asymptotic behavior of $F^{\mathrm{asymp}}[f_\infty]$, in particular along timelike straight lines $t \mapsto (t,x+t\widehat{v})$, we will exploit the wave equation satisfied by each component $F^{\mathrm{asymp}}_{\mu \nu}[f_\infty]$ in order to work with representation formula. 
\begin{Def}\label{Defdecompasymp}
For any $Z^\gamma \in \mathbb{K}^{|\gamma|}$, with $|\gamma| \leq N_0$, we consider the decomposition 
$$\mathcal{L}_{Z^\gamma}F^{\mathrm{asymp}}[f_\infty]=F^{\gamma,\mathrm{hom}}+F^{\gamma,\mathrm{inh}},$$
 where, for any $0 \leq \mu < \nu \leq 3$,
$$\Box F_{\mu\nu}^{\gamma,\mathrm{hom}} =0,  \qquad F^{\gamma, \mathrm{hom}}_{\mu \nu} (t_0, \cdot)= \mathcal{L}_{Z^\gamma}(F^{\mathrm{asymp}}[f_\infty])_{\mu \nu} (t_0, \cdot), \quad \; \partial_t F^{\gamma , \mathrm{hom}}_{\mu \nu} (t_0, \cdot) = \partial_t\mathcal{L}_{Z^\gamma}(F^{\mathrm{asymp}}[f_\infty])_{\mu \nu} (t_0, \cdot)$$
and
$$\Box F_{\mu\nu}^{\gamma,\mathrm{inh}} =\partial_{x^\nu} J^{\gamma}_\mu- \partial_{x^\mu}J^{\gamma}_\nu , \qquad \qquad F^{\gamma, \mathrm{inh}}_{\mu \nu} (t_0, \cdot)= 0, \qquad \partial_t F^{\gamma , \mathrm{hom}}_{\mu \nu} (t_0, \cdot) = 0,$$
where the source term $J^{\gamma}_\nu :=\nabla^{\lambda} \mathcal{L}_{Z^\gamma}(F^{\mathrm{asymp}} [f_\infty])_{\lambda \nu}$ is given by the commutation formula of Proposition \ref{ProComMax}. 
\end{Def}
\begin{Rq}
$F^{\gamma,\mathrm{hom}}$ is well-defined for $|\gamma| = N_0+1$ and we will see that it is the case of $F^{\gamma,\mathrm{inh}}$ as well.
\end{Rq}

\subsubsection{Control of the source term} 

We will naturally be lead to estimate several times the asymptotic electromagnetic current. We start by controlling it pointwise.

\begin{Lem}\label{LemasympJpoint}
Let $h : \R^3_x \times  \R^3_v \to \R$ be a sufficiently regular function. Then, for all $(t,x) \in \R_+^* \times \R^3$,
$$ \big| J^{\mathrm{asymp}}_{\underline{L}} \big|(t,x) \leq \frac{2}{t^3} \overline{\mathbb{E}}_0[h] \, \mathds{1}_{t >|x|}, \qquad \big| J^{\mathrm{asymp}}_{L} \big|(t,x) \leq \frac{t-|x|}{t^4} \overline{\mathbb{E}}_0[h] \, \mathds{1}_{t >|x|}, \qquad \big| J^{\mathrm{asymp}}_{e_A} \big|(t,x) =0, $$
where $A \in \{ \theta , \, \varphi \}$. Moreover, we have improved estimates near the light cone,
$$ \forall (t,x) \in \R_+^* \times \R^3, \qquad \big| J^{\mathrm{asymp}} \big|(t,x) \leq \frac{(t-|x|)^p}{t^{3+p}} \overline{\mathbb{E}}_0\big[ |v^0|^{2p}h \big] \, \mathds{1}_{t >|x|}, \qquad \qquad p \in \R_+.$$
\end{Lem}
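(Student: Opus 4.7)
The starting observation is that all four components of $J^{\mathrm{asymp}}_\nu[h]$ factor through the same scalar: setting
\[
A(t,x) := \frac{\mathds{1}_{t>|x|}}{t^3}\int_{\R^3_z}\bigl[|v^0|^5 h\bigr]\!\bigg(z,\frac{\widecheck{\;x\;}}{t}\bigg)\dr z,
\]
we have $J^{\mathrm{asymp}}_\nu[h](t,x) = \frac{x_\nu}{t} A(t,x) = \widehat{v}_\nu A(t,x)$, where $v = \widecheck{x/t}$ so that $\widehat{v} = x/t$. The plan is therefore to (i) read off the null components of $\widehat{v}_\nu$, which carry all of the geometric information, and (ii) bound the scalar $A$ pointwise using the decay assumptions encoded in $\overline{\mathbb{E}}_0$.

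For step (i), with $v = \widecheck{x/t}$ I have $\widehat{v}^0 = 1$ and $\widehat{v}^i = x^i/t$, so a direct computation in the null frame gives
\[
\widehat{v}^L = \frac{t+|x|}{2t}, \qquad \widehat{v}^{\underline{L}} = \frac{t-|x|}{2t}, \qquad \widehat{v}^{e_A} = \frac{x^i}{t}\omega_i^{e_A} = \frac{|x|}{t}\langle \partial_r, e_A\rangle = 0,
\]
and lowering indices via $V_L = -2V^{\underline{L}}$, $V_{\underline{L}} = -2V^L$, $V_{e_A} = V^{e_A}$ yields $|\widehat{v}_{\underline{L}}| = (t+|x|)/t \leq 2$, $|\widehat{v}_L| = (t-|x|)/t$ on $\{t>|x|\}$ and $\widehat{v}_{e_A}=0$. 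The vanishing of $J^{\mathrm{asymp}}_{e_A}$ is then immediate.

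For step (ii), since $|h|(z,v) \leq \overline{\mathbb{E}}_0[h]\,\langle z\rangle^{-4}\langle v\rangle^{-7}$ and $\int_{\R^3_z}\langle z\rangle^{-4}\dr z < \infty$, evaluating at $v=\widecheck{x/t}$ and using that $\langle v\rangle^{-2} = (t^2-|x|^2)/t^2 \leq 1$ gives
\[
|A(t,x)| \lesssim \frac{\overline{\mathbb{E}}_0[h]}{t^3}\,\langle v\rangle^{5-7} \leq \frac{\overline{\mathbb{E}}_0[h]}{t^3}\,\mathds{1}_{t>|x|}.
\]
Combining with step (i) yields at once the bounds on $J^{\mathrm{asymp}}_{\underline{L}}$ and $J^{\mathrm{asymp}}_L$, the latter picking up the extra factor $(t-|x|)/t$ from $\widehat{v}_L$.

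The improved bound near the light cone is obtained by trading the weight $|v^0|^{2p}$ for a power of $(t-|x|)/t$. Replacing $h$ by $|v^0|^{2p}h$ in the same pointwise estimate gives $|A(t,x)| \lesssim \overline{\mathbb{E}}_0\bigl[|v^0|^{2p}h\bigr] t^{-3}\langle v\rangle^{-2-2p}$, and the elementary inequality
\[
\langle v\rangle^{-2} = \frac{(t-|x|)(t+|x|)}{t^2} \leq 2\,\frac{t-|x|}{t}
\]
on $\{t>|x|\}$ yields $\langle v\rangle^{-2p}\lesssim (t-|x|)^p/t^p$, hence $|J^{\mathrm{asymp}}|\lesssim (t-|x|)^p t^{-3-p}\,\overline{\mathbb{E}}_0[|v^0|^{2p}h]$. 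There is no serious obstacle here; the only care needed is keeping track of the null-frame signs and checking that the elementary inequality $\langle v\rangle^{-2} \lesssim (t-|x|)/t$ on the support of the indicator is used to propagate the $t-|x|$ gain into arbitrary powers through the weight $|v^0|^{2p}$.
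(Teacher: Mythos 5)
Your proof is correct and follows essentially the same route as the paper: both factor $J^{\mathrm{asymp}}_\nu$ as a scalar times the covector $x_\nu/t$ (which you identify as $\widehat{v}_\nu$ for $v=\widecheck{x/t}$ and the paper writes as $\Gamma=-\dr t+\tfrac{x_i}{t}\dr x^i$), read off its null components, and obtain the $(t-|x|)^p$ gain from $|v^0|^{-2}=\tfrac{t^2-|x|^2}{t^2}\leq 2\tfrac{t-|x|}{t}$ applied to the weight $|v^0|^{2p}$. Your use of $\lesssim$ where the lemma states explicit constants is harmless, since the paper's own proof is no more precise on that point.
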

\begin{proof}
As $J^{\mathrm{asymp}}[h](t,x)=0$ for $|x| \geq t$, we fix $t>|x|$. Then, remark that
$$ J^{\mathrm{asymp}}[h](t,x) =\frac{1}{t^3} \int_{\R^3_z} \Big[ |v^0|^5 h \Big] \bigg( z,\frac{\widecheck{\; x \;}}{t} \bigg) \dr z \, \Gamma (t,x), \qquad \Gamma (t,x):= -\dr t +\frac{x_i}{t} \dr x^i.$$
The first three inequalities then follow from
$$  \Gamma_{\underline{L}}(t,x)=-\frac{t+|x|}{t}, \qquad  \qquad \Gamma_L(t,x)=-\frac{t-|x|}{t}, \qquad \qquad \Gamma_{e_A}(t,x)=0, \quad A \in \{ \theta , \, \varphi \}.$$ 
For the last one, use that $2|v^0|^2=2t^2 (t^2-|x|^2)^{-1} \geq t(t-|x|)^{-1}$ if $v=\frac{\widecheck{\;x\;}}{t}$.
\end{proof}

We will also be interested in the conservation of the charge.

\begin{Lem}\label{Lemchargeasymp}
Let $h : \R^3_x \times  \R^3_v \to \R$ be a sufficiently regular function. Then, for all $\tau >0$,
\begin{align*}
 \int_{\R^3_x} J_0^{\mathrm{asymp}}[h](\tau,x) \dr x &=\int_{u=0}^{2\tau} \int_{\mathbb{S}_\omega^2} J_{\underline{L}}^{\mathrm{asymp}}[h]\Big(\frac{\tau+u}{2}, \frac{\tau-u}{2} \omega \Big) \frac{r^2}{2} \dr \mu_{\mathbb{S}^2_\omega} \dr u = -\int_{\R^3_x} \int_{\R^3_v} h(x,v) \dr v \dr x.
\end{align*}
\end{Lem}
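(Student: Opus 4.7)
The plan is to combine a direct change of variables with the divergence-free property of $J^{\mathrm{asymp}}[h]$ established in Lemma~\ref{Lemfirstasymp0}. For the rightmost equality, the explicit formula gives $J_0^{\mathrm{asymp}}[h](\tau,x) = -\mathds{1}_{|x|<\tau}\,\tau^{-3}\int_{\R^3_z} [|v^0|^5 h]\bigl(z, \widecheck{x/\tau}\,\bigr) \dr z$ (since $x_0 = -\tau$); the substitution $v = \widecheck{x/\tau}$, equivalently $x=\tau\widehat v$, which is a diffeomorphism from $\R^3_v$ onto $\{|x|<\tau\}$ with Jacobian $\tau^3|v^0|^{-5}$ by Lemma~\ref{cdv}, cancels the $|v^0|^5$ factor exactly and produces $-\int\int h\,\dr v\,\dr z$.

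For the leftmost equality, I would apply the Euclidean divergence theorem to $(J^{\mathrm{asymp}}[h])^\mu$ on the region $\Omega_\epsilon := \{(t,x) : \epsilon \leq t,\ t+|x|\leq \tau\}$ with $\epsilon \in (0, \tau/2)$, whose boundary decomposes as the flat disk $B_\epsilon := \{t=\epsilon,\ |x|\leq\tau-\epsilon\}$ and the truncated ingoing cone $\underline C_\tau^\epsilon := \{t+|x|=\tau,\ \epsilon \leq t\leq \tau\}$ (the apex reducing to a single point). With outward Euclidean unit normals $-\partial_t$ and $\tfrac{1}{\sqrt 2}(\partial_t+\partial_r)$, and surface measures $\dr x$ and $\sqrt 2\, r^2\dr\mu_{\mathbb{S}^2}\dr t$ respectively, and using $-J^0 + J^r = -(J_0 - J_r) = -J_{\underline L}$ since $\underline L = \partial_t - \partial_r$, the divergence theorem yields
\begin{equation*}
\int_{B_\epsilon} J_0^{\mathrm{asymp}}[h](\epsilon, x) \dr x = \int_\epsilon^\tau \int_{\mathbb{S}^2_\omega} J_{\underline L}^{\mathrm{asymp}}[h]\bigl(t,(\tau-t)\omega\bigr)(\tau-t)^2 \dr\mu_{\mathbb{S}^2_\omega}\dr t.
\end{equation*}
For small $\epsilon$, the support $\{|x|<\epsilon\}$ of $J^{\mathrm{asymp}}[h](\epsilon,\cdot)$ lies inside $B_\epsilon$, so the left-hand side coincides with $\int_{\R^3_x} J_0^{\mathrm{asymp}}[h](\epsilon, x) \dr x = -\int\int h$ by the previous paragraph. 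Changing variables to $u = 2t-\tau$ (whence $r = (\tau-u)/2$ and $\dr t = \dr u/2$) casts the right-hand side in the form written in the statement; the integrand vanishes automatically outside $\{u > 0\}$ since $J_{\underline L}^{\mathrm{asymp}}[h]$ is supported in $\{t > |x|\}$, so the lower limit $u=0$ is correct and the value of the integral does not depend on the choice of upper limit once it is at least $\tau$.

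There is no genuine obstacle. The only mild subtlety is the singularity of $J^{\mathrm{asymp}}[h]$ at $t = 0$, circumvented by staying in $\Omega_\epsilon$ with $\epsilon > 0$; since the flux through $B_\epsilon$ is independent of $\epsilon$ (the support of $J^{\mathrm{asymp}}[h](\epsilon,\cdot)$ always fits inside $B_\epsilon$) and $J_{\underline L}^{\mathrm{asymp}}[h]$ vanishes on $\underline C_\tau$ for $t \leq \tau/2$, both sides of the divergence identity are $\epsilon$-independent once $\epsilon < \tau/2$, and no limiting argument in $\epsilon$ needs to be carried out.
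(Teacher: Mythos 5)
Your proof is correct and relies on the same two ingredients as the paper: the change of variables $x = \tau\widehat v$ (Lemma~\ref{cdv}) for the rightmost equality, and the divergence theorem together with $\nabla^\mu J^{\mathrm{asymp}}_\mu[h]=0$ (Lemma~\ref{Lemfirstasymp0}) for the leftmost. The one substantive difference is the choice of spacetime region: you integrate over $\{\epsilon\le t,\ t+|x|\le\tau\}$, with the ingoing cone $\underline C_\tau$ as the top boundary and a disk at $t=\epsilon$ at the bottom, whereas the paper integrates over $\{\tau\le t,\ t+|x|\le 2\tau\}$, with the disk at $t=\tau$ on the bottom and the cone $\underline C_{2\tau}$ on top. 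The paper's choice makes the first equality of the chain direct, while yours requires closing the chain through the charge $-\iint h$ computed at the two different times $\epsilon$ and $\tau$ (which you do correctly). Your choice also reveals that the statement as printed is internally inconsistent: with the paper's region the upper limit $u=2\tau$ is correct but the argument should read $\bigl(\frac{2\tau+u}{2},\frac{2\tau-u}{2}\omega\bigr)$, whereas with yours the printed argument $\bigl(\frac{\tau+u}{2},\frac{\tau-u}{2}\omega\bigr)$ is correct but the upper limit should be $u=\tau$. Your remark that the upper limit can be pushed past $\tau$ with impunity is not quite right as written, since for $u>\tau$ the expression $J_{\underline L}^{\mathrm{asymp}}\bigl(\frac{\tau+u}{2},\frac{\tau-u}{2}\omega\bigr)$ involves a negative radius and is undefined (and, interpreted as evaluation at $|r|(-\omega)$, picks up the wrong null direction); but this is really a flaw in the statement, not in your argument. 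Finally, a small notational slip: the flux density on the cone is $J\cdot n=\tfrac{1}{\sqrt2}(J^0+J^r)=\tfrac{1}{\sqrt2}(-J_0+J_r)=-\tfrac{1}{\sqrt2}J_{\underline L}$; you wrote $-J^0+J^r$ rather than $J^0+J^r$, though the subsequent identities and the final sign come out right.
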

\begin{proof}
Let $t>0$ and recall that $J^{\mathrm{asymp}}[f_\infty](t, \cdot)$ is supported in $\{|x| \leq t \}$. Remark that
$$ -\int_{\R^3_x} J^{\mathrm{asymp}}_0[h](t,x) \dr x=\int_{|x|<t_0} \int_{\R^3_z} \bigg\langle \frac{\widecheck{ \; x \; }}{t} \bigg\rangle^5 h \bigg( z,\frac{\widecheck{ \; x \;}}{t} \bigg) \frac{\dr x}{t^3} = \int_{\R^3_v} \int_{\R^3_z} h(z,v) \dr v \dr x,$$
where we used Lemma \ref{cdv} in order to perform the change of variables $x=t\widehat{v}$. Fix now $\tau > 0$ and recall from Lemma \ref{Lemfirstasymp0} that $J^{\mathrm{asymp}}[h]$ is divergence free. Consequently, the divergence theorem, applied in the region $\{ t+|x| \leq 2\tau, \, t\geq \tau \}$, provides
$$ \int_{|x| \leq \tau} J^{\mathrm{asymp}}_0[h](\tau,x) \dr x - \int_{u=0}^{2\tau}  \int_{\mathbb{S}_\omega^2} J_{\underline{L}}^{\mathrm{asymp}}[h]\Big(\frac{\tau+u}{2}, \frac{\tau-u}{2} \omega \Big) \frac{r^2}{2} \dr \mu_{\mathbb{S}^2_\omega} \dr u =0.$$
It remains to use again that $J^{\mathrm{asymp}}_0[h](\tau,x)=0$ for all $|x| \geq \tau$.
\end{proof}

\subsubsection{Estimates for integrals on spheres or cones} We are interested here in controlling quantities appearing in the representation formula for a solution to the wave equation with a source term and initial data decaying fast enough.

\begin{Lem}\label{estiWeiYang}
For any function $h \in L^\infty(\R^3)$ and all $(t,x) \in \R_+^* \times \R^3$,
\begin{equation*}
\int_{\mathbb{S}^2_\omega} |h|(x+t \omega) \dr \mu_{\mathbb{S}^2_\omega}  \leq \left\{ 
	\begin{array}{ll}
        12\pi \, \langle t+|x| \rangle^{-1}\, \langle t-|x|\rangle^{-2} \sup_{y \in \R^3} \, \langle y \rangle^{3} |h(y)| , \\
        8\pi  \, t^{-1} \langle t+|x| \rangle^{-1}\, \langle t-|x|\rangle^{-2} \sup_{y \in \R^3} \, \langle y \rangle^{4} |h(y)|.
    \end{array} 
\right. 
\end{equation*}
\end{Lem}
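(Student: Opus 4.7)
The plan is to reduce the spherical integral to a one-dimensional integral on $[\,||x|-t|,\,|x|+t\,]$ via a geometric change of variables. First I dispose of the case $x = 0$: then $|x+t\omega| = t$ is constant on $\mathbb{S}^2_\omega$, and both inequalities are immediate since $\int_{\mathbb{S}^2_\omega}\dr\mu_{\mathbb{S}^2_\omega} = 4\pi$. For $x \neq 0$, I parametrize $\mathbb{S}^2_\omega$ in spherical coordinates whose polar axis points in the direction $x/|x|$, so that the angle $\theta$ between $\omega$ and $x$ satisfies $r^2 := |x+t\omega|^2 = |x|^2 + 2t|x|\cos\theta + t^2$. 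Differentiating gives $\sin\theta\,\dr\theta = (t|x|)^{-1} r\,\dr r$, and $r$ traces out $[\,||x|-t|,\,|x|+t\,]$ as $\theta$ varies. After bounding $|h(y)|$ by $\sup_{y \in \R^3}\langle y\rangle^p|h(y)|\cdot\langle r\rangle^{-p}$ with $p = 3$ or $p = 4$, integration in the azimuthal variable produces
\[
\int_{\mathbb{S}^2_\omega} |h|(x+t\omega)\,\dr\mu_{\mathbb{S}^2_\omega} \leq \sup_{y \in \R^3} \langle y \rangle^p |h(y)| \cdot \frac{2\pi}{t|x|}\int_{||x|-t|}^{|x|+t}\frac{r\,\dr r}{\langle r\rangle^p}.
\]

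The next step is to evaluate the radial integrals in closed form. Using $\frac{\dr}{\dr r}\langle r\rangle^{-1} = -r\langle r\rangle^{-3}$ and $\frac{\dr}{\dr r}\langle r\rangle^{-2} = -2r\langle r\rangle^{-4}$, with $a := ||x|-t|$ and $b := |x|+t$, I obtain the exact formulas
\[
\int_a^b\frac{r\,\dr r}{\langle r\rangle^3} = \frac{\langle b\rangle^2-\langle a\rangle^2}{(\langle a\rangle+\langle b\rangle)\langle a\rangle\langle b\rangle}, \qquad \int_a^b\frac{r\,\dr r}{\langle r\rangle^4} = \frac{\langle b\rangle^2-\langle a\rangle^2}{2\langle a\rangle^2\langle b\rangle^2}.
\]
The crucial algebraic identity is $\langle b\rangle^2 - \langle a\rangle^2 = b^2 - a^2 = 4t|x|$, which precisely cancels the factor $t|x|$ in the prefactor. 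Substituting $\langle a\rangle = \langle t-|x|\rangle$ and $\langle b\rangle = \langle t+|x|\rangle$, this yields the clean identities
\[
\int_{\mathbb{S}^2_\omega}\frac{\dr\mu_{\mathbb{S}^2_\omega}}{\langle x+t\omega\rangle^3} \leq \frac{8\pi}{(\langle t-|x|\rangle+\langle t+|x|\rangle)\,\langle t-|x|\rangle\,\langle t+|x|\rangle}, \qquad \int_{\mathbb{S}^2_\omega}\frac{\dr\mu_{\mathbb{S}^2_\omega}}{\langle x+t\omega\rangle^4} = \frac{4\pi}{\langle t-|x|\rangle^2\,\langle t+|x|\rangle^2}.
\]

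The final step is straightforward book-keeping on the weights. For the first inequality, I use $\langle t-|x|\rangle + \langle t+|x|\rangle \geq \langle t+|x|\rangle$ and then exchange one weight using $\langle t+|x|\rangle \geq \langle t-|x|\rangle$, which yields the upper bound $8\pi\,\langle t+|x|\rangle^{-1}\langle t-|x|\rangle^{-2} \leq 12\pi\,\langle t+|x|\rangle^{-1}\langle t-|x|\rangle^{-2}$. For the second inequality, the trivial bound $t \leq \langle t+|x|\rangle$ converts one factor $\langle t+|x|\rangle^{-1}$ into $t^{-1}$ up to a multiplicative factor of $2$, producing the stated $8\pi\,t^{-1}\langle t+|x|\rangle^{-1}\langle t-|x|\rangle^{-2}$. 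There is no genuine obstacle here: the whole argument hinges on the geometric coincidence that $b^2 - a^2 = 4t|x|$ exactly matches the Jacobian of the change of variables.
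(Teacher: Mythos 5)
Your proof is correct, and it checks out line by line: the change of variables $r=|x+t\omega|$ gives $\int_{\mathbb{S}^2_\omega}f(|x+t\omega|)\dr\mu_{\mathbb{S}^2_\omega}=\frac{2\pi}{t|x|}\int_{||x|-t|}^{|x|+t}f(r)\,r\,\dr r$, the antiderivatives and the identity $b^2-a^2=4t|x|$ are exact, and the resulting constants ($8\pi$ and $4\pi$ before the final weight trades) are in fact sharper than the $12\pi$ and $8\pi$ stated in the lemma. The route is genuinely different from the paper's, which does not compute anything: it first uses the pointwise bound $|x+t\omega|\geq|t-|x||$ to peel off one factor $\langle t-|x|\rangle^{-1}$ (trading one power of the weight on $h$), and then cites an external estimate, \cite[Lemma~4.1]{WeiYang}, applied to $y\mapsto\langle y\rangle h(y)$, to produce the remaining $\langle t+|x|\rangle^{-1}\langle t-|x|\rangle^{-1}$ (resp.\ $t^{-1}\langle t+|x|\rangle^{-1}\langle t-|x|\rangle^{-1}$) decay. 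Your argument essentially reproves that external lemma from scratch, which buys a self-contained statement with explicit closed-form spherical integrals of $\langle x+t\omega\rangle^{-3}$ and $\langle x+t\omega\rangle^{-4}$ — these are reusable elsewhere — at the cost of a slightly longer write-up. One cosmetic remark: since $h$ is only assumed to be in $L^\infty$, the restriction of $|h|$ to the sphere $\{x+t\omega\}$ is strictly speaking only defined up to the convention that one works with the everywhere-defined bound $\sup_y\langle y\rangle^p|h(y)|$; this is the same implicit convention the paper uses, so it is not a gap.
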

\begin{proof}
For all $(t,x) \in \R_+ \times \R^3$, we have
\begin{equation}\label{eq:boundintr}
 \forall \, \omega \in \mathbb{S}^2, \qquad |x+t\omega| \geq |t-|x||,
 \end{equation}
 so that
\begin{equation*}
\forall \, (t,x) \in \R_+^* \times \R^3, \qquad \int_{\mathbb{S}^2_\omega} |h|(x+t \omega) \dr \mu_{\mathbb{S}^2_\omega} \leq \frac{1}{\langle t-|x| \rangle}\int_{\mathbb{S}^2_\omega} |\langle \cdot \rangle \, h|(x+t \omega) \dr \mu_{\mathbb{S}^2_\omega} .
\end{equation*}
Then, apply \cite[Lemma~$4.1$]{WeiYang} to the function $y \mapsto \langle y \rangle \, h(y)$.
\end{proof}

We are now able to estimate solutions to the free wave equation. 

\begin{Lem}\label{LemforKirchpoint}
Let $K_0 \geq 0$ and $\phi : \R_+ \times \R^3 \to \R$ be a solution to $\Box \phi =0$ such that $\langle x \rangle^3 |\phi (0,x)|+\langle x \rangle^4 |\nabla_{t,x} \phi|(0,x) \leq K_0$. Then,
$$\forall \, (t,x) \in \R_+ \times \R^3, \qquad  |\phi|(t,x) \leq 3 K_0 \, \langle t+|x| \rangle^{-1} \, \langle t-|x| \rangle^{-2}. $$
\end{Lem}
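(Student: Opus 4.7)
The plan is to represent $\phi$ via Kirchhoff's formula for the free wave equation and then reduce all pointwise bounds to spherical-mean estimates already established in Lemma \ref{estiWeiYang}. More precisely, set $f := \phi(0,\cdot)$ and $g := \partial_t \phi(0,\cdot)$. Then, for every $(t,x) \in \R_+^* \times \R^3$, Kirchhoff's formula reads
\begin{equation*}
\phi(t,x) = \frac{t}{4\pi}\int_{\mathbb{S}^2_\omega} g(x+t\omega)\,\dr\mu_{\mathbb{S}^2_\omega} + \frac{1}{4\pi}\int_{\mathbb{S}^2_\omega} f(x+t\omega)\,\dr\mu_{\mathbb{S}^2_\omega} + \frac{t}{4\pi}\int_{\mathbb{S}^2_\omega} \omega\cdot \nabla f(x+t\omega)\,\dr\mu_{\mathbb{S}^2_\omega},
\end{equation*}
obtained by differentiating $t \mapsto t M_t[f](x)$, where $M_t[h](x) := (4\pi)^{-1}\int_{\mathbb{S}^2}h(x+t\omega)\,\dr\mu_{\mathbb{S}^2}$.

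Next, I would invoke the decay of the initial data: $|f(y)| \leq K_0 \langle y\rangle^{-3}$ while $|g(y)|$ and $|\nabla f(y)|$ are both bounded by $K_0 \langle y \rangle^{-4}$. Feeding these into the two bounds of Lemma \ref{estiWeiYang}, one gets
\begin{align*}
\int_{\mathbb{S}^2_\omega} |f|(x+t\omega)\,\dr\mu_{\mathbb{S}^2_\omega} & \leq 12\pi\,K_0\,\langle t+|x|\rangle^{-1}\langle t-|x|\rangle^{-2}, \\
\int_{\mathbb{S}^2_\omega} \bigl(|g|+|\nabla f|\bigr)(x+t\omega)\,\dr\mu_{\mathbb{S}^2_\omega} & \leq 16\pi\,K_0\,t^{-1}\langle t+|x|\rangle^{-1}\langle t-|x|\rangle^{-2},
\end{align*}
where the factor $t^{-1}$ in the second bound cancels exactly the factor $t$ arising from Kirchhoff's formula. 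Combining these three contributions yields the desired $\langle t+|x|\rangle^{-1}\langle t-|x|\rangle^{-2}$ decay with an explicit absolute constant.

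The only subtle point is obtaining the sharp prefactor $3K_0$ announced in the lemma, rather than the naive $O(K_0)$ sum of the three terms. The route I would follow to optimize the constant is to deal separately with the regime $t \geq 1$, where the above computation gives the correct decay rate and only the numerical constant has to be tightened (for instance by exploiting that $\omega \cdot \nabla f$ is a tangential-to-sphere derivative in certain regions, or by an integration by parts in $t$ to trade $t \int |\nabla f|$ against $\int |f|$ plus boundary terms), and $t \leq 1$, where $\langle t+|x|\rangle \sim \langle t-|x|\rangle \sim \langle x\rangle$ and the desired bound follows from the pointwise decay of $f$ and a short time Taylor expansion of $\phi$ controlled by energy-type arguments. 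This constant-chasing is the only nontrivial step; the decay structure itself is an immediate consequence of Lemma \ref{estiWeiYang} applied to Kirchhoff's representation.
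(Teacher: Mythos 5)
Your proof is exactly the paper's: Kirchhoff's formula combined with the two spherical-mean bounds of Lemma \ref{estiWeiYang}, with the factor $t$ in Kirchhoff's representation cancelled by the $t^{-1}$ in the second bound. The constant-chasing you flag as the only nontrivial step is unnecessary: the paper's proof is precisely this two-line combination (which in fact yields a prefactor larger than $3$), and the lemma is only ever invoked up to implicit constants, so the stated constant is immaterial.
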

\begin{proof}
The result follows from Lemma \ref{estiWeiYang} and Kirchoff's formula
\begin{equation}\label{Kirchof:eq}
 4 \pi \phi(t,x) =  \int_{\mathbb{S}^2_\omega} \phi(0,x+t\omega)\dr \mu_{\mathbb{S}^2_\omega}+t \int_{\mathbb{S}^2_\omega}\omega \cdot \nabla_x \phi(0,x+t\omega)+ \partial_t \phi(0,x+t\omega) \dr \mu_{\mathbb{S}^2_\omega}.
\end{equation}
\end{proof}

An analogous $L^2$ estimate hold.
\begin{Lem}\label{LemforKirchL2}
Let $\phi : \R_+ \times \R^3 \to \R$ be a sufficiently regular solution to $\Box \phi =0$. Then,
$$ \forall \, t \in \R_+, \qquad \int_{\R^3_x} \langle t-|x| \rangle^{\frac{5}{2}} |\phi (t,x)|^2 \dr x \lesssim \int_{\R^3_x} \langle x \rangle^{\frac{5}{2}} |\phi(0,x) |^2+\langle x \rangle^{\frac{9}{2}} |\nabla_{t,x}\phi(0,x) |^2 \dr x.$$
\end{Lem}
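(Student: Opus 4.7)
The plan is to combine Kirchhoff's representation formula \eqref{Kirchof:eq} with Minkowski's integral inequality. Setting $\psi_0 := \phi(0,\cdot)$ and $\psi_1 := \partial_t \phi(0,\cdot)$, Kirchhoff yields the pointwise bound
$$|\phi(t,x)| \le \frac{1}{4\pi}\int_{\mathbb{S}^2_\omega}\bigl(|\psi_0| + t|\nabla\psi_0|+t|\psi_1|\bigr)(x+t\omega)\,\dr\mu_\omega.$$
Applying Minkowski's integral inequality in $\omega$ to the $L^2_x$-norm weighted by $\langle t-|x|\rangle^{5/4}$, then changing variables $y=x+t\omega$ in each inner integral, and using the triangle bound $\langle t-|y-t\omega|\rangle\le \langle y\rangle$, handles the zero-order part of the Kirchhoff formula and produces the contribution $\int \langle x\rangle^{5/2}|\phi(0)|^2 \dr x$ on the right-hand side.

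The main obstacle is the treatment of the contributions of $\nabla\psi_0$ and $\psi_1$: the factor $t$ appearing alongside them in Kirchhoff's formula cannot be absorbed into a pointwise weight comparison, and in particular the naive inequality $t\langle t-|y-t\omega|\rangle^{5/4}\lesssim \langle y\rangle^{9/4}$ fails pointwise (e.g.\ for $y=0$, $t\gg 1$). To bypass this, I plan to reinterpret $\phi = \partial_t v + u$, where $u,v$ are the unique free-wave solutions with vanishing initial position and initial velocities $\psi_1,\psi_0$ respectively. Since $u(0,\cdot)=v(0,\cdot)=0$, commutation with the Lorentz boost $\Omega_{0k}=t\partial_{x^k}+x^k\partial_t$ gives
$$(\Omega_{0k} u)(0,x) = x^k\psi_1(x),\qquad \partial_t(\Omega_{0k} u)(0,x)=0,$$
and analogously for $v$; in particular, $\Omega_{0k}u$ is itself a free-wave solution, of the same type as in the first case but with the weighted initial position $x^k\psi_1$. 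The identity $\Omega_{0k} u= t\partial_{x^k}u + x^k\partial_t u$ then allows one to recover $t\partial_{x^k}u$ (and symmetrically $t\partial_t v$) from $\Omega_{0k}u$ and the $L^2$-controlled quantity $x^k\partial_t u$, the factor of $x^k$ precisely accounting for the extra power of $\langle x\rangle$ in the weight, i.e.\ for the exponent $\langle x\rangle^{9/2}$ on $|\nabla_{t,x}\phi(0,\cdot)|^2$ in the right-hand side.

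The last technical step not yet verified is the estimate for wave solutions with pure position data $\tilde\psi$: their Kirchhoff representation $M_{\tilde\psi}+t\partial_t M_{\tilde\psi}$ contains again a factor $t$ on the second term. I expect this to be resolved by the surface identity $t\partial_t M_{\tilde\psi}(x,t) = \frac{1}{4\pi t}\int_{B(x,t)}\Delta \tilde\psi(y)\dr y$ combined with an integration by parts, which rewrites $t\partial_t M_{\tilde \psi}$ as a surface average of $\tilde\psi$ and $(y-x)\cdot\nabla\tilde\psi$; the weighted $L^2_x$-norm of these surface averages is then controlled by the same Minkowski scheme as for $M_{\psi_0}$. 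An alternative, more robust route would be to close the argument via the energy estimate of Proposition~\ref{ProMoravac} applied to $\Omega_{0k}\phi$, combined with a Hardy-type elliptic inequality analogous to those used in Proposition~\ref{decayMaxell}, to transfer the control of weighted derivatives back onto $\phi$ itself with the weight $\langle t-|x|\rangle^{5/2}$.
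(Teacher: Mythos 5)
Your diagnosis of the first obstacle is correct: after Minkowski's inequality and the change of variables $y=x+t\omega$, the first-order Kirchhoff term would require the pointwise bound $t\,\langle t-|y-t\omega|\rangle^{5/4}\lesssim\langle y\rangle^{9/4}$, which indeed fails near $y=0$, $t\gg 1$. The gap is that your proposed repair runs into exactly the same obstruction. The boost decomposition $\Omega_{0k}u=t\partial_{x^k}u+x^k\partial_t u$ and the identity $t\,\omega\cdot\nabla\tilde\psi(x+t\omega)=(y\cdot\nabla\tilde\psi)(x+t\omega)-x\cdot\nabla\tilde\psi(x+t\omega)$ merely trade the explicit factor $t$ for a factor $|x|=|y-t\omega|$, and $|y-t\omega|$ is of size $t$ precisely in the regime that caused the failure, namely $|y|\lesssim 1$. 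So after the change of variables the Minkowski scheme again demands $\langle t-|y-t\omega|\rangle^{5/4}\,|y-t\omega|\lesssim\langle y\rangle^{9/4}$, which is false at $y=0$. The fallback via Proposition~\ref{ProMoravac} is also not a proof: it is stated for Maxwell two-forms, it would naturally give the weight $\langle t-|x|\rangle^2$ on first derivatives rather than $\langle t-|x|\rangle^{5/2}$ on $\phi$ itself, and the Hardy-type inequality that is supposed to bridge the two is never produced.

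The missing idea is to replace Minkowski with Cauchy--Schwarz in the angular variable, so that the gain from averaging over $\omega$ can be exploited. Concretely, on the first-order Kirchhoff term one writes
$$\left|t\int_{\mathbb{S}^2_\omega}(\omega\cdot\nabla\phi_0+\partial_t\phi_0)(x+t\omega)\,\dr\mu_{\mathbb{S}^2_\omega}\right|^2\leq\int_{\mathbb{S}^2_\omega}\langle x+t\omega\rangle^{\frac92}\,|\nabla_{t,x}\phi(0,x+t\omega)|^2\,\dr\mu_{\mathbb{S}^2_\omega}\cdot\int_{\mathbb{S}^2_\omega}\frac{t^2\,\dr\mu_{\mathbb{S}^2_\omega}}{\langle x+t\omega\rangle^{\frac92}},$$
and after multiplying by $\langle t-|x|\rangle^{\frac52}$ and using $\langle t-|x|\rangle\leq\langle x+t\omega\rangle$ (inequality \eqref{eq:boundintr}), the second factor becomes $\int_\omega t^2\langle t-|x|\rangle^2\langle x+t\omega\rangle^{-4}\dr\mu_\omega$. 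Lemma \ref{estiWeiYang}, applied to $h=\langle\cdot\rangle^{-4}$, shows this angular integral is $\lesssim t/\langle t+|x|\rangle\leq1$ — the extra factor $t^{-1}$ produced by averaging over the sphere is exactly the gain your pointwise bound cannot see. Integrating in $x$, applying Fubini, and changing variables $y=x+t\omega$ in the surviving factor then completes the proof.
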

\begin{proof}
By Kirchoff's formula \eqref{Kirchof:eq}, \eqref{eq:boundintr} and the Cauchy-Schwarz inequality,
$$ \langle t-|x| \rangle^{\frac{5}{2}} |\phi (t,x) |^2 \! \lesssim \!\int_{\mathbb{S}^2_\omega}\langle x+t\omega \rangle^{\frac{5}{2}} | \phi (0,x+t\omega)|^2 \mathrm{d}\mu_{\mathbb{S}^2_\omega} + \int_{\mathbb{S}^2_\omega}\langle x+t\omega\rangle^{\frac{9}{2}}|\nabla_{t,x} \phi (0,x+t\omega)|^2  \mathrm{d}\mu_{\mathbb{S}^2_\omega} \int_{\mathbb{S}^2_\omega} \! \frac{\langle t-|x|\rangle^2 \, t^2\mathrm{d}\mu_{\mathbb{S}^2_\omega}}{\langle x+t\omega \rangle^4} .$$
By Lemma \ref{estiWeiYang}, the second factor of the last term in the right hand side is bounded by $8\pi$. It then remains to integrate over $\R^3_x$, to apply Fubini's theorem and to perform the change of variables $y(x)=x+t\omega$.
\end{proof}

In order to deal with solutions to the inhomogeneous wave equation, we will use the next result.

\begin{Lem}\label{Lemint}
Consider, for any $(t,x) \in \R_+ \times \R^3$, $a \geq 4$ and $b \geq 7/2$,
$$\mathcal{K}^1_{a}(t,x) := \int_{|y-x| \leq t} \frac{ \dr y}{\langle t-|y-x|+|y| \rangle^a  \, |y-x|},  \quad \qquad \mathcal{K}^2_{b}(t,x) := \int_{|y-x| \leq t}\mathds{1}_{t-|y-x| \geq |y|} \frac{\langle t-|y-x|-|y| \rangle^{\frac{1}{2}} \, \dr y}{\langle t-|y-x| \rangle^b  \, |y-x|^2}.$$
Then, we have
$$ \mathcal{K}^1_a(t,x) \lesssim \langle t+|x| \rangle^{-1} \, \langle t-|x| \rangle^{-a+3}, \qquad \qquad \mathcal{K}^{2}_b(t,x) \lesssim   \langle t \rangle^{-2} \langle t-|x|\rangle^{-b+\frac{7}{2}}  \, \mathds{1}_{t \geq |x|}.$$
\end{Lem}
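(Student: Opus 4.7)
The plan is to exploit the axial symmetry of both integrands (they depend only on the lengths $|y|$ and $|y-x|$) in order to reduce these three-dimensional integrals to essentially one-dimensional ones bounded by elementary calculus. For $\mathcal{K}^1_a$ I would use polar coordinates with origin at $x$ and apply the classical identity
\[
\int_{\mathbb{S}^2}g\bigl(|x+\rho\omega|\bigr)\,\dr \mu_{\mathbb{S}^2}
=\frac{2\pi}{\rho|x|}\int_{||x|-\rho|}^{|x|+\rho} s\,g(s)\,\dr s \qquad (\rho,|x|>0),
\]
coming from the co-area formula (or from slicing $\mathbb{S}^2$ by the level sets of $\omega\mapsto|x+\rho\omega|$). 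For $\mathcal{K}^2_b$ I would pass directly to bipolar coordinates $(r_1,r_2):=(|y-x|,|y|)$, for which $\dr y = \tfrac{2\pi r_1 r_2}{|x|}\,\dr r_1\,\dr r_2$ on the admissible region $\{|r_1-r_2|\leq |x|\leq r_1+r_2\}$ (this follows by expressing the angle between $y$ and $x$ through $\cos\theta=(|x|^2+r_2^2-r_1^2)/(2|x|r_2)$). In both cases the remaining scalar integrals will be estimated by splitting according to the relative sizes of $|x|$ and $t$.

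For $\mathcal{K}^1_a$, the spherical identity gives $\mathcal{K}^1_a=\tfrac{2\pi}{|x|}\int_0^t\!\int_{||x|-\rho|}^{|x|+\rho}s\langle t-\rho+s\rangle^{-a}\,\dr s\,\dr\rho$; the change of variable $u=t-\rho+s$ (at fixed $s$) followed by explicit evaluation of the inner integral yields, in the interior $t\geq|x|\geq 1$,
\[
\mathcal{K}^1_a=\frac{\pi}{4|x|}\int_{t-|x|}^{t+|x|}\frac{(u+\tau)(3u-\tau)}{\langle u\rangle^a}\,\dr u, \qquad \tau:=t-|x|,
\]
with an analogous formula $(u-\tau)(3u+\tau)$, $\tau:=|x|-t$, in the exterior. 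Writing $u=\tau+w$, $w\in[0,2\min(t,|x|)]$, the numerator becomes $4\tau^2+8\tau w+3w^2$; the two dual bounds $(\tau+w)^{-a}\leq\tau^{-a}$ and $(\tau+w)^{-a}\leq w^{-a}$, together with the elementary estimate $\int_0^{W}\tau^{2}\,(\tau+w)^{-a}\,\dr w\lesssim\min(W\tau^{2-a},\tau^{3-a})$, produce the two bounds $\mathcal{K}^1_a\lesssim\tau^{3-a}/|x|$ and $\mathcal{K}^1_a\lesssim\tau^{2-a}$. A case distinction based on the three regimes $|x|\geq t$, $|x|\sim t$, and $|x|\ll t$ (with the degenerate range $|x|\leq 1$ handled directly via $t-|y-x|+|y|\geq t-|x|-1$) shows that each regime admits one of these two estimates as comparable to the claim $\langle t+|x|\rangle^{-1}\langle t-|x|\rangle^{-a+3}$.

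For $\mathcal{K}^2_b$, the indicator already enforces $|x|\leq t$ via $|y|\geq|x|-|y-x|$, and the bipolar change of variables yields
\[
\mathcal{K}^2_b=\frac{2\pi}{|x|}\iint_D\frac{\langle t-r_1-r_2\rangle^{1/2}\,r_2}{\langle t-r_1\rangle^b\,r_1}\,\dr r_1\,\dr r_2,
\qquad D=\{r_1+r_2\leq t,\ |r_1-r_2|\leq |x|\leq r_1+r_2\}.
\]
I would then introduce $\tau:=t-r_1-r_2\in[0,t-|x|]$ and $\sigma:=t-r_1$; the domain becomes $\sigma\in[(t+\tau-|x|)/2,(t+\tau+|x|)/2]$, an interval of length $|x|$, on which the integrand reads $\langle\tau\rangle^{1/2}(\sigma-\tau)/[\langle\sigma\rangle^b(t-\sigma)]$. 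Using $\langle\sigma\rangle\gtrsim\langle(t-|x|+\tau)/2\rangle$ and the explicit formula
\[
\int\frac{\sigma-\tau}{t-\sigma}\,\dr\sigma=-|x|+(t-\tau)\log\frac{t-\tau+|x|}{t-\tau-|x|}
\]
reduces the problem to a $\tau$-integral; after the substitution $\tau=(t-|x|)-\epsilon$ and splitting $\epsilon\leq|x|$ vs $\epsilon\geq|x|$, the $\langle\tau\rangle^{1/2}$ weight absorbs the logarithmic factor and produces the target $\langle t\rangle^{-2}\langle t-|x|\rangle^{-b+7/2}$.

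The main obstacle will be producing the correct factor $\langle t+|x|\rangle^{-1}$ in $\mathcal{K}^1_a$ (and, correspondingly, $\langle t\rangle^{-2}$ in $\mathcal{K}^2_b$) rather than only the weaker $|x|^{-1}$ or $|x|^{-2}$: neither of the two regime-specific bounds for $\mathcal{K}^1_a$ alone suffices, and one really has to verify separately in the three ranges $|x|\geq t$, $|x|\sim t$, and $|x|\ll t$ that the final dominating term matches. For $\mathcal{K}^2_b$, the subtle point is that a naive $\sigma$-integration produces a $\log\bigl((t+|x|)/|x|\bigr)$ factor that must be absorbed using the auxiliary weight $\langle t-|y-x|-|y|\rangle^{1/2}$, which provides precisely the additional $\langle t-|x|\rangle^{1/2}$-decay needed to close the estimate.
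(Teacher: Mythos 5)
Your treatment of $\mathcal{K}^1_a$ is sound and in the same spirit as the paper's: both reduce to bipolar coordinates $(|y-x|,|y|)$ and the identity you invoke is equivalent to the one the paper cites from Glassey. You go one step further and evaluate the inner integral in closed form, which is a pleasant simplification, and the case split $|x|\gtrless t/2$ together with your two intermediate bounds does close the argument (one should also verify the degenerate ranges $|x|\le 1$ and $|t-|x||\le 1$ separately, but those are elementary, as you indicate).

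Your argument for $\mathcal{K}^2_b$ has a genuine gap. After passing to the variables $(\tau,\sigma)=(t-r_1-r_2,t-r_1)$ you bound $\langle\sigma\rangle^{-b}$ by $\langle(t-|x|+\tau)/2\rangle^{-b}$ \emph{before} performing the $\sigma$-integral. This throws away the decay of $\langle\sigma\rangle^{-b}$ precisely where the remaining kernel is singular: the endpoint $\sigma\to t$ corresponds to $|y-x|\to 0$, where $\langle\sigma\rangle\sim\langle t\rangle$ is much larger than $\langle t-|x|\rangle$. Concretely, take $b=\tfrac72$, $|x|=t-\sqrt t$ with $t$ large. Your pulled-out bound produces, after integrating your explicit formula against $\langle\tau\rangle^{1/2}$, a quantity of order $t^{-1}\log t$, whereas the claimed bound is $\langle t\rangle^{-2}\langle t-|x|\rangle^{0}\sim t^{-2}$; a direct estimate of $\mathcal{K}^2_{7/2}$ in this regime confirms the $t^{-2}$ answer, so your intermediate inequality is false by a factor $\sim t\log t$. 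The $\langle\tau\rangle^{1/2}$ weight cannot absorb this: when $\epsilon=t-|x|-\tau$ is small, $\langle\tau\rangle\sim\langle t-|x|\rangle$ is not small, so it provides no gain against the log. The paper avoids this entirely by splitting the $y$-integral at $|y-x|=t/2$: on the near-vertex piece $|y-x|\le t/2$ one has $\langle t-|y-x|\rangle\gtrsim\langle t\rangle$, so the weight $\langle t-|y-x|\rangle^{-b}\lesssim t^{-b}$ crushes the singular kernel $|y-x|^{-2}$, and only on $|y-x|\ge t/2$ does one pass to bipolar coordinates — where the denominator $t-\tau\gtrsim t$ is now bounded away from zero and no logarithm appears. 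To salvage your approach you would need to keep $\langle\sigma\rangle^{-b}$ inside the $\sigma$-integral and split the $\sigma$-range (equivalently, the $r_1$-range) near the singular endpoint; at that point you would essentially be reproducing the paper's splitting.
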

\begin{proof}
Fix $(t,x) \in \R_+ \times \R^3$ and remark that
\begin{equation*}
t-|y-x|+|y| \geq t-|y|-|x|+|y|=t-|x|, \qquad t-|y-x|+|y| \geq |y|\geq |x|-|y-x| \geq |x|-t
\end{equation*}
on the domain of integration, so that $\mathcal{K}^p_{a}(t,x) \leq \langle t-|x| \rangle^{-a+4} \, \mathcal{K}^p_{4}(t,x)$. It then suffices to treat the case $a=4$ and, by continuity, $x \neq 0$. According to \cite[Lemma~$6.5.2$]{Glassey},
$$
  \mathcal{K}^{1}_{4}(t,x) =   \frac{2 \pi}{|x|} \int_{\tau=0}^t \int_{\lambda=||x|-t+\tau|}^{|x|+t-\tau}  \frac{\lambda \, \dr \lambda \dr \tau }{\langle \tau+\lambda \rangle^4} \leq  \frac{2 \pi}{|x|} \int_{\tau=0}^{t} \int_{\lambda=||x|-t+\tau|}^{|x|+t-\tau}  \frac{ \dr \lambda \dr \tau }{\langle \tau+\lambda \rangle^3}  .
$$
We perform the change of variables $\underline{u}=\tau+\lambda$ and $u=\tau-\lambda$. On the domain of integration, we have $||x|-t| \leq \underline{u}\leq t+|x|$ and $ u \leq ||x|-t|$. Since $2\tau \geq 0$, we have further $u\geq -\underline{u}$, so that
$$
 \mathcal{K}^{1}_{4}(t,x)    \leq \frac{ \pi}{|x|} \int_{\underline{u}=||x|-t|}^{t+|x|} \int_{u= -\underline{u}}^{||x|-t|}  \frac{\dr u \dr \underline{u}}{\langle \underline{u} \rangle^3}   \leq \frac{2\pi}{|x|} \int_{\underline{u}=||x|-t|}^{t+|x|}  \frac{\dr \underline{u}}{\langle \underline{u} \rangle^2} \leq \frac{4 \pi(1+t+|x|-1-|t-|x||)}{|x|(1+t+|x|)(1+|t-|x||)} .
 $$ 
Since $t+|x|-|t-|x|| =2 \min (t,|x|)$, this implies the result for $\mathcal{K}^{1}_{4}(t,x)$. Next, since $t-|y-x|-|y| \leq t-|x|$, we have $\mathcal{K}^{2}_{b}(t,x) =0$ if $|x| \geq t$. Otherwise $t >|x|$ and as previously, it suffices to treat the case $b=7/2$. We have
$$  \mathcal{K}^{2}_{\frac{7}{2}}(t,x) \leq \langle t-|x| \rangle^{\frac{1}{2}} \, \mathbf{K}_{[0,\frac{t}{2}]}+ \langle t-|x| \rangle^{\frac{1}{2}} \, \mathbf{K}_{[\frac{t}{2},t]}, \qquad \qquad \mathbf{K}_I:= \int_{|y-x| \in I }\mathds{1}_{t-|y-x| \geq |y|} \frac{ \, \dr y}{\langle t-|y-x| \rangle^{\frac{7}{2}}  \, |y-x|^2}.$$
On the domain of integration of $\mathbf{K}_{[0,\frac{t}{2}]}$, we have $t-|y-x| \geq t/2$, so that
$$
\langle t-|x| \rangle^{\frac{1}{2}} \, \mathbf{K}_{[0,\frac{t}{2}]} \leq \frac{\langle t-|x| \rangle^{\frac{1}{2}} }{2 \langle t \rangle^{\frac{7}{2}}} \int_{r=0}^{\frac{t}{2}} \dr r = \frac{t \, \langle t-|x| \rangle^{\frac{1}{2}} }{4 \langle t \rangle^{\frac{7}{2}}} \leq \frac{1}{4} \langle t\rangle^{-2}.
$$
Applying again \cite[Lemma~$6.5.2$]{Glassey}, we have
$$ \mathbf{K}_{[\frac{t}{2},t]} = \frac{2 \pi}{|x|}\int_{\tau=0}^{\frac{t}{2}} \int_{\lambda=||x|-t+\tau|}^{|x|+t-\tau} \mathds{1}_{\tau \geq \lambda} \frac{ \lambda \, \dr \lambda \dr \tau}{ \langle \tau \rangle^{\frac{7}{2}}(t-\tau)} \leq \frac{4 \pi}{|x|t}\int_{\tau=0}^{\frac{t}{2}}  \frac{  (\tau-\min(\tau,||x|-t+\tau|)) \dr \tau}{ \langle  \tau \rangle^{\frac{5}{2}}}.$$
Note that $ 0 \leq 2\tau \leq t-|x|$ implies $||x|-t+\tau| \geq \tau$. Since $t \geq |x|$, we then have, for all $0 \leq \tau \leq t/2$,
$$ \tau-\min(\tau,||x|-t+\tau|) \leq \min ( t-|x|,2\tau-t+|x|) \leq \min ( t-|x|,|x|).$$
We then deduce
$$ \langle t-|x| \rangle^{\frac{1}{2}} \, \mathbf{K}_{[\frac{t}{2},t]} \lesssim \frac{ \langle t-|x| \rangle^{\frac{1}{2}} \,  \min(t-|x|,|x|)}{|x|t}\int_{\tau=\frac{t-|x|}{2}}^{\frac{t}{2}}  \frac{   \dr \tau}{ \langle  \tau \rangle^{\frac{5}{2}}} \lesssim \frac{\langle t-|x| \rangle^{\frac{1}{2}} \,\min(t-|x|,|x|)}{|x|\langle t \rangle \,  \langle t-|x| \rangle^{\frac{3}{2}}} \lesssim \frac{1}{\langle t \rangle^2}.$$
\end{proof}

\subsubsection{Study of the homogeneous term}\label{subsubsechompar}

In view of the initial decay of $F^{\gamma, \mathrm{hom}}$, an adaptation of Lemma \ref{LemforKirchpoint} would give $\langle t+|x|\rangle |F^{\gamma,\mathrm{hom}}|(t,x) \lesssim  \langle t-|x| \rangle^{-1}$ (see for instance \cite[Proposition~$2.21$]{scat}). However,
\begin{itemize}
\item this estimate merely provides the decay rate $\langle u \rangle^{-1}$ for the radiation field of $F^{\gamma,\mathrm{hom}}$. In fact, we will prove that it decays at least as $\langle u \rangle^{-2}$.
\item Moreover, it does not allow us to prove that $F^{\gamma,\mathrm{hom}}$ does not contribute to the asymptotic electromagnetic field $\mathbb{F}[f_\infty]$. This can only hold if $t^2F^{\gamma,\mathrm{hom}}$ goes to $0$ along timelike straight lines.
\end{itemize}
Our goal is to improve this naive estimate. The idea consists in isolating the leading order term in the asymptotic expansion of $F^{\gamma,\mathrm{hom}}(t_0,\cdot)$, which is $\frac{Qx_i}{4\pi|x|^3} \dr t \wedge \dr x^i$ when $|\gamma|=0$. We address these issues by exploiting the solution to the homogeneous wave equation $\widetilde{F}$, introduced in Section \ref{SubsecPurecharge} and estimated in Proposition \ref{Propurechargetilde}. Indeed,
\begin{itemize}
\item $F^{\gamma,\mathrm{hom}}-\mathcal{L}_{Z^\gamma} \widetilde{F}$ enjoys stronger decay properties than $F^{\gamma,\mathrm{hom}}$.
\item $r|\mathcal{L}_{Z^\gamma}  \widetilde{F}(r+u,r\omega)| \lesssim \epsilon \, \mathds{1}_{-1 \leq u \leq -2}+O(r^{-1})$, so that the radiation field of $\mathcal{L}_{Z^\gamma}  \widetilde{F}$, which can in fact be explicitly computed, is then compactly supported.
\item $\mathcal{L}_{Z^\gamma}  \widetilde{F}$ vanishes in the interior of the light cone, so that $t^2 \mathcal{L}_{Z^\gamma}  \widetilde{F}(t,x+t\widehat{v})=0$ for $t$ large enough. 
\end{itemize}
Let us then prove stronger decay estimates for $F^{\gamma, \mathrm{hom}}$.
\begin{Pro}\label{Prohompart}
We have, for any $|\gamma| \leq N_0$ and all $(t,x) \in [t_0,+ \infty [ \times \R^3$,
$$ \left| F^{\gamma, \mathrm{hom}} \right|(t,x) \lesssim_{t_0}   \langle t+|x| \rangle^{-2} \, \mathds{1}_{|x|-t \geq 1} \, Q+ \langle t+|x| \rangle^{-1} \, \langle t-|x| \rangle^{-2}  \, \overline{E}_{N_0+1}[f_\infty] .$$
\end{Pro}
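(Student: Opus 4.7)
The plan is to peel off the long-range charge contribution from the Cauchy data by using the auxiliary $2$-form $\mathcal{L}_{Z^\gamma}\widetilde F$ constructed in Proposition~\ref{Propurechargetilde}, whose cartesian components already solve the free wave equation. Setting
$$G^\gamma := F^{\gamma,\mathrm{hom}}-\mathcal{L}_{Z^\gamma}\widetilde F,$$
one has $\Box G^\gamma_{\mu\nu}=0$, so Lemma~\ref{LemforKirchpoint} will deliver the desired decay provided that the Cauchy data of $G^\gamma$ at $t_0$ satisfy $\langle x\rangle^{3+|\kappa|}|\nabla_{t,x}^\kappa G^\gamma(t_0,\cdot)|\lesssim K_0$ for $|\kappa|\le 1$, with a constant $K_0$ of the correct size.

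First I would split
$$G^\gamma(t_0,\cdot)=\mathcal{L}_{Z^\gamma}\bigl(F^{\mathrm{asymp}}[f_\infty]-\overline F\bigr)(t_0,\cdot)+\mathcal{L}_{Z^\gamma}\bigl(\overline F-\widetilde F\bigr)(t_0,\cdot)$$
(and likewise for $\partial_t G^\gamma(t_0,\cdot)$, which is read off from the commuted Maxwell system). Proposition~\ref{Proinidataasymp} bounds the first summand and its $\nabla_{t,x}$ by $\overline E_{N_0+1}[f_\infty]\langle x\rangle^{-3-|\kappa|}$, matching the hypothesis of Lemma~\ref{LemforKirchpoint} exactly; this is precisely why that proposition is stated at the extra regularity level $|\kappa|\le 1$. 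Proposition~\ref{Propurechargetilde} shows that $\mathcal{L}_{Z^\gamma}(\overline F-\widetilde F)$ is supported in the bounded annulus $\{1\le|x|-t_0\le 2\}$ with pointwise size $\lesssim |Q|\langle x\rangle^{-1}$; a direct computation from $\widetilde F-\overline F=\tfrac{Qx_i}{4\pi|x|^2}\overline\chi'(t-|x|)\dr t\wedge \dr x^i$ then keeps all $\nabla_{t,x}$-derivatives supported in the same annulus, so arbitrary polynomial decay in $x$ is automatic. Lemma~\ref{LemforKirchpoint} applied componentwise yields
$$|G^\gamma|(t,x)\lesssim_{t_0}\bigl(\overline E_{N_0+1}[f_\infty]+|Q|\bigr)\langle t+|x|\rangle^{-1}\langle t-|x|\rangle^{-2},$$
and since $|Q|\lesssim\overline E_0[f_\infty]\le\overline E_{N_0+1}[f_\infty]$ by direct estimation of the integral defining $Q$, this is exactly the second term in the target.

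It remains to control $\mathcal{L}_{Z^\gamma}\widetilde F$ itself. Writing $\widetilde F=\overline F+(\widetilde F-\overline F)$, part~4 of Proposition~\ref{Propurecharge} gives precisely the charged contribution $Q\langle t+|x|\rangle^{-2}\mathds{1}_{|x|-t\ge 1}$, which becomes the first term of the estimate. Proposition~\ref{Propurechargetilde} then contributes an error $Q\langle t+|x|\rangle^{-1}\mathds{1}_{1\le|x|-t\le 2}$; but on this support $\langle t-|x|\rangle\le 3$, so the error is absorbed into $\overline E_{N_0+1}[f_\infty]\langle t+|x|\rangle^{-1}\langle t-|x|\rangle^{-2}$. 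Summing the two bounds closes the proof. I do not anticipate a serious obstacle: the one delicate bookkeeping step is checking that the compactly supported piece of the Cauchy data truly carries no net charge after differentiation, which is exactly the content of Proposition~\ref{Propurechargetilde}; everything else is a direct application of Kirchhoff's formula via Lemma~\ref{LemforKirchpoint}.
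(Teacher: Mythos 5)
Your proof is correct and follows essentially the same route as the paper: subtract $\mathcal{L}_{Z^\gamma}\widetilde F$ to land in the kernel of $\Box$, bound the initial data via Proposition~\ref{Proinidataasymp} together with the compactly supported piece $\mathcal{L}_{Z^\gamma}(\overline F-\widetilde F)$ controlled by Proposition~\ref{Propurechargetilde}, apply Lemma~\ref{LemforKirchpoint}, and then re-add $\mathcal{L}_{Z^\gamma}\widetilde F=\mathcal{L}_{Z^\gamma}\overline F+\mathcal{L}_{Z^\gamma}(\widetilde F-\overline F)$ using Propositions~\ref{Propurecharge} and~\ref{Propurechargetilde}. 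The only cosmetic difference is that you make the two-term split of the Cauchy data explicit where the paper does it implicitly, and you spell out the observation $|Q|\lesssim\overline{\mathbb{E}}_0[f_\infty]$ which the paper leaves as a one-line remark.
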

\begin{Rq}\label{Rqhompart}
We will use once, in Section \ref{Subsecnullproasymp} that, if $|\gamma| \leq N_0-1$ and $Z \in \mathbb{K}$, 
$$ \forall \, t \geq |x|+2, \qquad  \left|\mathcal{L}_{Z}  F^{\gamma, \mathrm{hom}} \right|(t,x) \lesssim_{t_0}   \langle t+|x| \rangle^{-1} \, \langle t-|x| \rangle^{-2}  \, \overline{E}_{N_0+1}[f_\infty] .$$
\end{Rq}
\begin{proof}
Note that $\Box(F^{\gamma , \mathrm{hom}}_{\mu \nu}-\mathcal{L}_{Z^\gamma}(\widetilde{F})_{\mu \nu})=0$ by Proposition \ref{Propurecharge}. Next, according to Proposition \ref{Proinidataasymp},
$$ \sup_{|\kappa| \leq N_0+1-|\gamma|} \, \sup_{x \in \R^3} \langle x \rangle^{3+|\kappa|} \big|\nabla_{t,x}^\kappa \big( F^{\gamma , \mathrm{hom}}_{\mu \nu}-\mathcal{L}_{Z^\gamma}(\widetilde{F})_{\mu \nu} \big) \big|(t_0,x) \lesssim_{t_0}  Q+\overline{\mathbb{E}}_{N_0+1}[f_\infty]  \lesssim_{t_0} \overline{\mathbb{E}}_{N_0+1}[f_\infty],$$
where the higher order time derivatives are controlled by exploiting the homogeneous wave equation. Applying Lemma \ref{LemforKirchpoint}, allowing us to estimate solutions to $\Box \phi =0$, we get
\begin{equation}\label{eq:estihomFtilde}
\forall \, (t,x) \in [t_0,+ \infty [ \times \R^3, \qquad \big| F^{\gamma, \mathrm{hom}}-\mathcal{L}_{Z^\gamma}(\widetilde{F}) \big|(t,x) \lesssim_{t_0}    \langle t+|x| \rangle^{-1} \, \langle t-|x| \rangle^{-2}  \, \overline{E}_{N_0+1}[f_\infty] .
\end{equation}
It remains to apply Propositions \ref{Propurecharge} and \ref{Propurechargetilde}. For the statement of Remark \ref{Rqhompart}, use that for any $2$-form $H$ and any $Z  \in \mathbb{K}$,
$$ |\mathcal{L}_{Z}(H)_{\mu \nu}| \lesssim \sup_{0 \leq \mu < \nu \leq 3}|Z(H)_{\mu \nu}|+|H_{\mu \nu}|, \qquad [\Box, Z]=2\delta_{Z}^S \Box, \qquad |Z H_{\mu \nu}| \lesssim \langle x \rangle |\nabla_{t,x} H|$$
and apply Lemma \ref{LemforKirchpoint} to $\mathcal{L}_Z(F^{\gamma , \mathrm{hom}}-\mathcal{L}_{Z^\gamma}(\widetilde{F}))_{\mu \nu}$. The result ensues from Propositions \ref{Propurecharge} and \ref{Propurechargetilde}.
\end{proof}

We need a refined estimate in order to prove Remark \ref{Rqscalingbetterdecay}.
\begin{Pro}\label{ProhompartS}
Let $Z^\gamma \in \mathbb{K}^{|\gamma|}$, $|\gamma| \leq N_0$, composed only by homogenous vector fields and containing at least once the scaling $S$. Then,
$$ \forall \, (t,x) \in [t_0,+ \infty [ \times \R^3, \qquad  \left| F^{\gamma, \mathrm{hom}} \right|(t,x) \lesssim_{t_0}    \langle t+|x| \rangle^{-1} \, \langle t-|x| \rangle^{-2}  \, \overline{E}_{N_0+1}[f_\infty] .$$
\end{Pro}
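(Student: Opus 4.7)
The strategy refines the proof of Proposition \ref{Prohompart} by exploiting that the presence of the scaling $S$ in $Z^\gamma$ eliminates the long-range contribution of the charge. As in that proof, I would decompose $F^{\gamma,\mathrm{hom}} = \big(F^{\gamma,\mathrm{hom}} - \mathcal{L}_{Z^\gamma}\widetilde{F}\big) + \mathcal{L}_{Z^\gamma}\widetilde{F}$. The first bracket solves the homogeneous wave equation (since each cartesian component of $\widetilde{F}$ does, by Proposition \ref{Propurechargetilde}) with initial data whose weighted $L^\infty_x$ norms are controlled by $\overline{E}_{N_0+1}[f_\infty]$ thanks to Proposition \ref{Proinidataasymp}. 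Lemma \ref{LemforKirchpoint} then yields, just as in \eqref{eq:estihomFtilde},
\[
 \big|F^{\gamma,\mathrm{hom}} - \mathcal{L}_{Z^\gamma}\widetilde{F}\big|(t,x) \,\lesssim_{t_0}\, \langle t+|x|\rangle^{-1}\langle t-|x|\rangle^{-2}\,\overline{E}_{N_0+1}[f_\infty].
\]

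The heart of the argument is to establish the same bound for $\mathcal{L}_{Z^\gamma}\widetilde{F}$. Since $[S,\Omega_{0k}] = [S,\Omega_{ij}] = 0$, the Lie derivatives $\mathcal{L}_S$, $\mathcal{L}_{\Omega_{0k}}$ and $\mathcal{L}_{\Omega_{ij}}$ pairwise commute, so I may reorder and write $\mathcal{L}_{Z^\gamma} = \mathcal{L}_{Z^\beta}\mathcal{L}_S$ for some product $Z^\beta$ of homogeneous vector fields of length $|\gamma|-1$. Splitting $\mathcal{L}_{Z^\gamma}\widetilde{F} = \mathcal{L}_{Z^\beta}(\mathcal{L}_S \overline{F}) + \mathcal{L}_{Z^\gamma}(\widetilde{F}-\overline{F})$, part $3$ of Proposition \ref{Propurecharge} shows that $\mathcal{L}_S \overline{F}$ is supported in the narrow strip $\{1 \leq |x|-t \leq 2\}$, and Proposition \ref{Propurechargetilde} shows the same for $\widetilde{F}-\overline{F}$. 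Since a Lie derivative cannot enlarge the support of a smooth tensor field, both $\mathcal{L}_{Z^\beta}(\mathcal{L}_S \overline{F})$ and $\mathcal{L}_{Z^\gamma}(\widetilde{F}-\overline{F})$ remain supported in $\{1 \leq |x|-t \leq 2\}$.

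On this strip $\langle t-|x|\rangle \leq 3$, so $\mathds{1}_{1\leq|x|-t\leq 2} \lesssim \langle t-|x|\rangle^{-2}$, and consequently every bound of the form $\langle t+|x|\rangle^{-2}\mathds{1}_{1\leq|x|-t\leq 2}$ or $\langle t+|x|\rangle^{-1}\mathds{1}_{1\leq|x|-t\leq 2}$ is dominated by $\langle t+|x|\rangle^{-1}\langle t-|x|\rangle^{-2}$. Feeding this into the general pointwise estimate of part $4$ of Proposition \ref{Propurecharge}, namely $|\mathcal{L}_{Z^\gamma}\overline{F}| \lesssim C_\gamma Q \langle t+|x|\rangle^{-2}$ on $\{|x|\geq t+1\}$, together with the bound $|\mathcal{L}_{Z^\gamma}(\widetilde{F}-\overline{F})| \lesssim C_\gamma Q \langle t+|x|\rangle^{-1}\mathds{1}_{1\leq|x|-t\leq 2}$ from Proposition \ref{Propurechargetilde}, yields
\[
 |\mathcal{L}_{Z^\gamma}\widetilde{F}|(t,x) \,\lesssim\, Q\,\langle t+|x|\rangle^{-1}\langle t-|x|\rangle^{-2}.
\]
Since $|Q| \lesssim \overline{E}_0[f_\infty] \leq \overline{E}_{N_0+1}[f_\infty]$, combining the two contributions concludes the proof. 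The only new conceptual input beyond the proof of Proposition \ref{Prohompart} is the observation that $\mathcal{L}_S$ annihilates the unbounded-support part of $\overline{F}$, reducing the analysis to a narrow strip where $\langle t-|x|\rangle$ decay is automatic; the rest is bookkeeping from already-established estimates, and no genuine obstacle should arise.
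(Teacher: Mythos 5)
Your proof is correct and follows essentially the same route as the paper's: write $F^{\gamma,\mathrm{hom}}=\big(F^{\gamma,\mathrm{hom}}-\mathcal{L}_{Z^\gamma}\widetilde{F}\big)+\mathcal{L}_{Z^\gamma}\widetilde{F}$, reuse the bound \eqref{eq:estihomFtilde} for the first piece, and observe that because $S$ commutes with the other homogeneous vector fields, $\mathcal{L}_{Z^\gamma}\widetilde{F}$ is supported in the strip $\{1\le|x|-t\le 2\}$ where the $\langle t-|x|\rangle^{-2}$ decay is automatic, then conclude with Propositions \ref{Propurecharge} and \ref{Propurechargetilde}. One small wording caveat: the phrase ``the Lie derivatives $\mathcal{L}_S$, $\mathcal{L}_{\Omega_{0k}}$ and $\mathcal{L}_{\Omega_{ij}}$ pairwise commute'' is not literally true, since the boosts and rotations fail to commute among themselves; but you only need that $\mathcal{L}_S$ commutes with each of the others, which is exactly what $[S,\Omega_{\lambda k}]=0$ gives and is all that your reordering $\mathcal{L}_{Z^\gamma}=\mathcal{L}_{Z^\beta}\mathcal{L}_S$ uses, so the argument is sound.
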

\begin{proof}
Since $[S, \Omega_{\lambda k}]=0$ for any $0 \leq \lambda < k \leq 3$, $\mathcal{L}_{Z^\gamma}\widetilde{F}$ is in that case supported in $\{|t-|x|| \leq 2\}$ according to Propositions \ref{Propurecharge} and \ref{Propurechargetilde}. Thus, $|\mathcal{L}_{Z^\gamma}\widetilde{F}|(t,x) \lesssim Q \langle t+|x| \rangle^{-1} \mathds{1}_{|t-|x|| \leq 2}$ and it remains to use \eqref{eq:estihomFtilde}.
\end{proof}

Finally, we apply Proposition \ref{Proinidataasymp} together with Lemma \ref{LemforKirchL2} in order to derive an $L^2$ estimate.
\begin{Pro}\label{ProestihomL2}
For any $|\gamma| \leq N_0+1$,
$$ \forall \, t \in \R_+, \qquad \bigg|\int_{\R^3_x} \langle t-|x| \rangle^{\frac{5}{2}} \big| F^{\gamma, \mathrm{hom}}-\mathcal{L}_{Z^\gamma} \widetilde{F} \big|^2(t,x) \dr x \bigg|^{\frac{1}{2}} \lesssim \overline{\mathbb{E}}_{N_0+1} [f_\infty].$$
\end{Pro}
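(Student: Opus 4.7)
The plan is to reduce this weighted $L^2$ estimate to a wave-equation propagation statement plus a control of the initial data at $t=t_0$, where we have already done the hard work.

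First I would note the crucial structural fact that each cartesian component of the difference $G_{\mu\nu} := F^{\gamma, \mathrm{hom}}_{\mu\nu} - \mathcal{L}_{Z^\gamma}(\widetilde{F})_{\mu\nu}$ solves the homogeneous wave equation. Indeed, $\Box F^{\gamma, \mathrm{hom}}_{\mu\nu} = 0$ holds by Definition \ref{Defdecompasymp}, and $\Box \mathcal{L}_{Z^\gamma}(\widetilde{F})_{\mu\nu} = 0$ is precisely the content of Proposition \ref{Propurechargetilde}. Thus $G_{\mu\nu}$ extends from $[t_0,+\infty[$ to a solution of the free wave equation on all of $\R_t \times \R^3_x$, and the estimate for $t \in \R_+$ makes sense as an estimate on this extension.

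Next, I would invoke Lemma \ref{LemforKirchL2}, applied component-wise (with initial time translated from $0$ to $t_0$; since $1 \leq t_0 \leq 2$ this only modifies the implied constant). This reduces the problem to controlling
\[
\int_{\R^3_x} \langle x \rangle^{5/2} \big|G_{\mu\nu}\big|^2(t_0,x) \dr x + \int_{\R^3_x} \langle x \rangle^{9/2} \big|\nabla_{t,x} G_{\mu\nu}\big|^2(t_0,x) \dr x,
\]
for each pair $0 \leq \mu < \nu \leq 3$.

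Then I would split the initial data as $G(t_0,\cdot) = \mathcal{L}_{Z^\gamma}\big(F^{\mathrm{asymp}}[f_\infty] - \overline{F}\big)(t_0,\cdot) + \mathcal{L}_{Z^\gamma}\big(\overline{F} - \widetilde{F}\big)(t_0,\cdot)$. The first piece is directly controlled, for $|\gamma| \leq N_0+1$ and $|\kappa| \leq 1$, by the weighted $L^2$ bound in Proposition \ref{Proinidataasymp}, giving the right hand side $\overline{\mathbb{E}}_{N_0+1}[f_\infty]$. For the second piece, Propositions \ref{Propurecharge} and \ref{Propurechargetilde} together imply that $\mathcal{L}_{Z^\gamma}(\widetilde{F} - \overline{F})$ and its first derivatives are $L^\infty$-bounded by $C_\gamma \, Q \, \langle t_0+|x|\rangle^{-1}$ and supported in the compact annulus $\{1 \leq |x|-t_0 \leq 2\}$, so the corresponding weighted $L^2$ contribution is $O(Q) \lesssim \overline{\mathbb{E}}_{N_0+1}[f_\infty]$ trivially.

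The only mild technical point is the translation from $t=0$ to $t=t_0$ in Lemma \ref{LemforKirchL2}, which I would handle by observing that Kirchhoff's formula is time-translation invariant, so the lemma applies verbatim with $0$ replaced by any fixed $t_0 \in [1,2]$; equivalently, one could propagate the data back to $t=0$ using finite speed of propagation and an $L^2$ energy estimate. No step here is subtle; the real work has already been absorbed into Proposition \ref{Proinidataasymp}.
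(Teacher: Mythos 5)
Your proposal is correct and follows exactly the route the paper intends: the paper's (one-line) proof is precisely "apply Proposition \ref{Proinidataasymp} together with Lemma \ref{LemforKirchL2}", and you fill in the same details — $\Box\big(F^{\gamma,\mathrm{hom}}_{\mu\nu}-\mathcal{L}_{Z^\gamma}(\widetilde{F})_{\mu\nu}\big)=0$, the time-translated Kirchhoff $L^2$ lemma, and the splitting of the data at $t_0$ into $\mathcal{L}_{Z^\gamma}(F^{\mathrm{asymp}}[f_\infty]-\overline{F}\,)$ plus the compactly supported, $O(Q)$ piece $\mathcal{L}_{Z^\gamma}(\overline{F}-\widetilde{F})$. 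This mirrors the argument already used for the pointwise analogue in Proposition \ref{Prohompart}, so nothing further is needed.
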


\subsubsection{Naive estimate of the inhomogeneous term} We start by a computation which will allow us to rewrite the source term of the wave equation satisfied by $F^{\gamma, \mathrm{inh}}$.
\begin{Lem}\label{Lemsource}
Let $h : \R_z \times \R^3_v \to \R$ a sufficiently regular function and $1 \leq i < j \leq 3$, $1 \leq k \leq 3$. Then,
\begin{align*}
 \partial_{x^j} J^{\mathrm{asymp}}_i \big[ h \big]- \partial_{x^i}J^{\mathrm{asymp}}_j \big[ h \big] &= \frac{1}{t^4}  \int_{\R^3_v}\Big[ |v^0|^5  \widehat{\Omega}_{ij} h \Big] \bigg( z,\frac{\widecheck{\; x \;}}{t} \bigg) \dr z, \\
  \partial_{x^k} J^{\mathrm{asymp}}_0 \big[ h \big]- \partial_{t}J^{\mathrm{asymp}}_k \big[ h \big] &= -\frac{1}{t^4}  \int_{\R^3_v} \Big[ |v^0|^5 \widehat{\Omega}_{0k}^\infty h \Big] \bigg( z,\frac{\widecheck{\; x \;}}{t} \bigg) \dr z.
  \end{align*}
\end{Lem}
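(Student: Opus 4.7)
The cleanest route is to identify $J^{\mathrm{asymp}}[h]$ with the four-current density of the singular solution $f^{\mathrm{sing}}[h]$ introduced in Definition \ref{Defsingsolu}, and then exploit the simple relations
\[
\widehat{v}_\mu \partial_{x^\nu}-\widehat{v}_\nu\partial_{x^\mu}=Y_{\mu\nu}, \qquad 0\leq \mu,\nu\leq 3,
\]
to rewrite the left-hand sides. Since $\partial_{x^\lambda}$ has constant coefficients, its Lie derivative on $1$-forms reduces to the cartesian partial derivative, so $\partial_{x^j}J^{\mathrm{asymp}}_i[h]=\int_v\widehat{v}_i\,\partial_{x^j}f^{\mathrm{sing}}[h]\,\dr v$ (this commutation of $\partial_x$ with the velocity integration is valid after regularisation by the mollifier $h_n$ from Section \ref{Subsecfsing}, then passing to the limit as in Remark \ref{Rqmollif}). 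Consequently,
\begin{align*}
\partial_{x^j}J^{\mathrm{asymp}}_i[h]-\partial_{x^i}J^{\mathrm{asymp}}_j[h]&=\int_{\R^3_v} Y_{ij}f^{\mathrm{sing}}[h]\,\dr v,\\
\partial_{x^k}J^{\mathrm{asymp}}_0[h]-\partial_{t}J^{\mathrm{asymp}}_k[h]&=\int_{\R^3_v} Y_{0k}f^{\mathrm{sing}}[h]\,\dr v.
\end{align*}

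The next step is to insert the explicit distributional expressions of $Y_{ij}f^{\mathrm{sing}}[h]$ and $Y_{0k}f^{\mathrm{sing}}[h]$ computed in the lemma preceding Lemma \ref{Lemsingderiv}, and perform a distributional integration by parts in $v$ against the Dirac delta $\delta(x-t\widehat{v})$. For the spatial rotation, this transforms $v^i\partial_{v^j}-v^j\partial_{v^i}$ acting on $\delta$ into the same operator acting on $h$; then I write
\[
v^i\partial_{v^j}h-v^j\partial_{v^i}h=\widehat{\Omega}_{ij}h-\big(z^i\partial_{z^j}-z^j\partial_{z^i}\big)h,
\]
and integration by parts in $z$ kills the angular $z$-derivatives since $i\neq j$. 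For the boost direction, the same procedure produces
\[
\partial_{v^k}(v^0 h)=\widehat{v}^k h + v^0\partial_{v^k}h=\widehat{v}^k h+\widehat{\Omega}_{0k}^{\infty}h+z^k\widehat{v}\cdot\nabla_z h,
\]
and integration by parts in $z$ yields $\int_z z^k\widehat{v}\cdot\nabla_z h\,\dr z=-\int_z\widehat{v}^k h\,\dr z$, leaving only $\int_z\widehat{\Omega}_{0k}^{\infty}h\,\dr z$, with the prefactor $-t^{-1}$ accounting for the minus sign and the $t^{-1}$ that appears in the formulas for $Y_{\mu\nu}f^{\mathrm{sing}}[h]$.

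Finally, the remaining $v$-integral is evaluated using the defining relation for the velocity averages of $f^{\mathrm{sing}}$: for any smooth $\psi(v)$ decaying fast enough,
\[
\int_{\R^3_v}\psi(v)\,\delta(x-t\widehat{v})\,\dr v=\frac{\mathds{1}_{t>|x|}}{t^3}\,\big[|v^0|^5\psi\big]\!\left(\frac{\widecheck{\;x\;}}{t}\right),
\]
which follows from Lemma \ref{cdv}. Applied with $\psi(v)=\int_z\widehat{\Omega}_{ij}h(z,v)\,\dr z$ and $\psi(v)=\int_z\widehat{\Omega}_{0k}^{\infty}h(z,v)\,\dr z$ respectively, this produces exactly the $t^{-4}$ prefactor and the desired integrands on the right-hand sides. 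The only subtlety to watch is that all the integrations by parts (both in $v$ against $\delta(x-t\widehat{v})$ and in $z$) be interpreted either distributionally or at the level of the smooth approximants $h_n$, which causes no difficulty since $h$ is assumed sufficiently regular.
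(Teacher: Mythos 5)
Your proof is correct, and every ingredient you invoke (the identification of $J^{\mathrm{asymp}}[h]$ with the four-current of $f^{\mathrm{sing}}[h]$, the distributional formulas for $Y_{ij}f^{\mathrm{sing}}$ and $Y_{0k}f^{\mathrm{sing}}$, the mollification argument, and Lemma \ref{cdv}) is available in the paper before this lemma, so the route is legitimate. It is, however, a genuinely different argument from the one the paper uses. The paper's proof is a short algebraic manipulation: since $J^{\mathrm{asymp}}_\ell[h]=-\frac{x_\ell}{t}J^{\mathrm{asymp}}_0[h]$, the two antisymmetrised derivative combinations are exactly $-t^{-1}\,\Omega_{ij}\big(J^{\mathrm{asymp}}_0[h]\big)$ and $t^{-1}\,\mathcal{L}_{\Omega_{0k}}\big(J^{\mathrm{asymp}}[h]\big)_0$, after which the conclusion follows at once from the already-established commutation formula $\mathcal{L}_{\Omega_{ij}}(J^{\mathrm{asymp}}[h])=J^{\mathrm{asymp}}[\widehat{\Omega}_{ij}h]$, $\mathcal{L}_{\Omega_{0k}}(J^{\mathrm{asymp}}[h])=J^{\mathrm{asymp}}[\widehat{\Omega}^\infty_{0k}h]$ of Lemma \ref{Lemfirstasymp0}. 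Your version instead re-derives the needed piece of that commutation formula by hand at the level of $\delta(x-t\widehat{v})$, via the $Y_{\mu\nu}$ computation followed by two integrations by parts (in $v$ against the delta, then in $z$). What the paper's route buys is brevity and reuse of a lemma needed elsewhere anyway; what yours buys is transparency about where the identity comes from — it is literally the velocity average of $Y_{\mu\nu}$ applied to the singular linear Vlasov solution, mirroring the computation of the source of $\Box F_{\mu\nu}$ for the genuine system in Section \ref{Subsubsecderivlin}. Your sign bookkeeping is also right: the rotation $v^i\partial_{v^j}-v^j\partial_{v^i}$ is divergence-free, whereas $v^0\partial_{v^k}$ is not, whence $\int_{\R^3_v}\phi\,v^0\partial_{v^k}\delta\,\dr v=-\int_{\R^3_v}\delta\,\partial_{v^k}(v^0\phi)\,\dr v$, and the term $\widehat{v}^k\phi$ so produced is precisely what combines with the $z$-integration by parts to reconstitute $\widehat{\Omega}^\infty_{0k}$.
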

\begin{proof}
Recall that $J^{\mathrm{asymp}}_\ell [h](t,x)=-\frac{x_\ell}{t} J^{\mathrm{asymp}}_0 [h](t,x)$. 
\begin{itemize}
\item As $\partial_{x^i}(x_j /t)=\partial_{x^j}(x_j /t)=0$ and $\Omega_{ij}=x^i \partial_{x^j}-x^j \partial_{x^i}$,
$$ \partial_{x^j} J^{\mathrm{asymp}}_i \big[ h \big]- \partial_{x^i}J^{\mathrm{asymp}}_j \big[ h \big] = - t^{-1} \, \Omega_{i j}  \big( J^{\mathrm{asymp}}_0 [h] \big).$$
\item Since $\partial_t (x_k/t)=-x_k/t^2$ and $\Omega_{0k}=t \partial_{x^k}+x^k \partial_t$, one gets
$$ \partial_{x^k} J^{\mathrm{asymp}}_0 \big[ h \big]- \partial_{t} J^{\mathrm{asymp}}_k \big[ h \big] = \frac{1}{t}\Omega_{0k} \big( J^{\mathrm{asymp}}_0 \big[ h \big] \big) -\frac{ x_k }{t^2}J^{\mathrm{asymp}}_0 \big[ h \big]=  \frac{1}{t}\Omega_{0k} \big( J^{\mathrm{asymp}}_0 \big[ h \big] \big) +\frac{ 1 }{t}J^{\mathrm{asymp}}_k \big[ h \big].    $$
\end{itemize}
The result follows from Lemma \ref{Lemfirstasymp0} and $\mathcal{L}_Z(J)_0=Z(J_0)+\partial_t Z^\mu J_\mu$, for any $1$-form $J$ and vector field $Z$.
\end{proof}

We are now able to derive a general estimate, which will allow us, together with Proposition \ref{Prohompart}, to prove the first part of Proposition \ref{Proasymp}.
\begin{Pro}\label{Proinhpart0}
Let $Z^\gamma \in \mathbb{K}^{|\gamma|}$ with $|\gamma| \leq N_0$. We have,
$$ \forall \, (t,x) \in [t_0 , +\infty[ \times \R^3, \qquad |F^{\gamma, \mathrm{inh}}|(t,x) \lesssim_{t_0} \langle t+|x| \rangle^{-1} \, \langle t-|x| \rangle^{-1-\gamma_T} \overline{\mathbb{E}}_{N_0+1}[f_\infty].$$
If $Z^\gamma$ is only composed by homogeneous vector fields and contains at least once the scaling $S$, then $F^{\gamma, \mathrm{inh}}=0$.
\end{Pro}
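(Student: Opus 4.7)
The plan is to apply the retarded potential representation formula for the inhomogeneous wave equation with zero initial data at $t_0$,
\[
F^{\gamma,\mathrm{inh}}_{\mu\nu}(t,x) \;=\; \frac{1}{4\pi} \int_{|y-x|\leq t-t_0} \frac{(\partial_{x^\nu} J^\gamma_\mu - \partial_{x^\mu} J^\gamma_\nu)(t-|y-x|,y)}{|y-x|}\,\dr y,
\]
so as to reduce the proof to a pointwise control of the source along the backward light cone, followed by an integral estimate controlled by Lemma~\ref{Lemint}. The schematic target is to bound the source at $(s,y)$ with $s=t-|y-x|$ by $s^{-4-\gamma_T}\mathds{1}_{|y|<s}\,\overline{\mathbb{E}}_{N_0+1}[f_\infty]$, since the elementary inequality $s\geq \tfrac{1}{2}(s+|y|)$ (valid on the support $\{|y|<s\}$) will then reduce the integral to $\overline{\mathbb{E}}_{N_0+1}[f_\infty]\,\mathcal{K}^1_{4+\gamma_T}(t,x)$, whose estimate by $\langle t+|x|\rangle^{-1}\langle t-|x|\rangle^{-1-\gamma_T}$ is exactly the content of Lemma~\ref{Lemint}.

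To control the source, I use the commutation identity of Proposition~\ref{ProComMax}, which writes $J^\gamma_\nu$ as a finite linear combination of terms of the form $t^{-\gamma_T} J^{\mathrm{asymp}}_\lambda[D^\xi_{t,x}\widehat{Z}^\beta_\infty f_\infty]$ with $|\xi|+|\beta|\leq|\gamma|$. For the ``main'' term (where $\lambda=\nu$), Lemma~\ref{Lemsource} lets me replace the curl $\partial_{x^\nu}J^{\mathrm{asymp}}_\mu-\partial_{x^\mu}J^{\mathrm{asymp}}_\nu$ by $\pm t^{-4}\mathds{1}_{|x|<t}\int_z[|v^0|^5\widehat{\Omega}_{\mu\nu}(\cdot)](z,\widecheck{x/t})\dr z$ without creating any additional $|v^0|$ factor, at the cost of one extra $\widehat{Z}_\infty$-derivative on $f_\infty$, absorbed into the $\overline{\mathbb{E}}_{N_0+1}$ norm. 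For the remaining mismatched terms in the sum, where Lemma~\ref{Lemsource} does not apply, I bound each individual spatial/temporal derivative of $J^{\mathrm{asymp}}_\lambda$ via Lemma~\ref{LemasympJpoint} with an appropriate exponent $p$, exploiting that the $\partial_{t,x}\widecheck{x/t}$ jacobian factors contribute only a $t^{-1}$ loss together with extra $|v^0|$ weights that are absorbed against the faster $v$-decay of $v$-derivatives of $f_\infty$ encoded in the $\langle v\rangle^{7+3|\kappa_v|}$ weights of $\overline{\mathbb{E}}_{N_0+1}$. The extra $t^{-1}$ loss when $\partial_t$ hits the prefactor $t^{-\gamma_T}$ is absorbed using the bare bound $|J^{\mathrm{asymp}}|\lesssim t^{-3}$ from Lemma~\ref{LemasympJpoint}.

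The vanishing statement when $Z^\gamma$ is made only of homogeneous vector fields and contains at least one scaling $S$ is then immediate from Proposition~\ref{ProComMax}: in that case $J^{\mathrm{asymp}}[\widehat{Z}^\gamma_\infty f_\infty]=0$, so $J^\gamma\equiv 0$, the source of the wave equation vanishes, and $F^{\gamma,\mathrm{inh}}$ is the unique solution of a homogeneous wave equation with vanishing initial data.

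The main obstacle will be the careful derivation of the uniform bound $s^{-4-\gamma_T}\mathds{1}_{|y|<s}$ on the source in the case $\gamma_T\geq 1$: each application of the operators $D_t$ or $D_{x^k}$ brings up to two extra $|v^0|$-factors, which translate, when evaluated at $v=\widecheck{x/t}$, into potential singularities of order $(t-|x|)^{-1}$ accumulating near the light cone. Overcoming this requires a careful bookkeeping between the $|v^0|^{2p}$ weights allowed in Lemma~\ref{LemasympJpoint} and the $v$-decay of the iterated $z$- and $v$-derivatives of $f_\infty$, and ultimately explains the need to require one more derivative ($N_0+1$ rather than $N_0$) on $f_\infty$ in the norm $\overline{\mathbb{E}}_{N_0+1}[f_\infty]$.
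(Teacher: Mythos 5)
Your proposal is correct and follows essentially the same route as the paper: both bound $\Box F^{\gamma,\mathrm{inh}}$ pointwise by $\langle t+|x|\rangle^{-4-\gamma_T}\,\overline{\mathbb{E}}_{N_0+1}[f_\infty]$ on its support $\{|x|<t\}$ using the commutation formula of Proposition~\ref{ProComMax} together with Lemma~\ref{Lemsource}, and then conclude via the retarded-potential representation and the kernel estimate $\mathcal{K}^1_{4+\gamma_T}$ of Lemma~\ref{Lemint}, with the vanishing statement following from $\Box F^{\gamma,\mathrm{inh}}=0$. The $|v^0|$-bookkeeping you flag as the main obstacle does close, precisely because the $\langle v\rangle^{7+3|\kappa_v|}$ weights in $\overline{\mathbb{E}}_{N_0+1}$ dominate the $|v^0|^{5+2(\gamma_T+1)}$ factors produced by the iterated operators $D_{t,x}$.
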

\begin{proof}
Assume first that $\gamma_T=0$. Combining the commutation formula of Proposition \ref{ProComMax} with the previous Lemma \ref{Lemsource}, one has
$$ \left| \Box F^{\gamma , \mathrm{inh}} \right|(t,x) \lesssim_{t_0} \frac{1}{t^{4}} \sup_{|\beta| = |\gamma|+1} \int_{\R^3_v}\Big[ |v^0|^5  \widehat{Z}_{\infty}^\beta h \Big] \bigg( z,\frac{\widecheck{\; x \;}}{t} \bigg) \dr z \lesssim_{t_0} \frac{1}{\langle t+|x|\rangle^{4}} \overline{\mathbb{E}}_{|\gamma|+1}[f_\infty] ,$$
for all $t \in [t_0,+\infty [$ and $|x| < t$. Since $\Box F^{\gamma,\mathrm{inh}}$ is supported in the interior of the light cone $|x| \leq t$, the estimate is then provided by Lemma \ref{Lemint}. The last part of the lemma is clear since $\Box F^{\gamma , \mathrm{inh}}=0$ in that case according to Proposition \ref{ProComMax}. If $\gamma_T \geq 1$, then we have similarly
$$ \left| \Box F^{\gamma , \mathrm{inh}} \right|(t,x) \lesssim_{t_0} \frac{1}{t^{4+\gamma_T}} \sup_{|\kappa| = \gamma_T+1} \, \sup_{|\beta| \leq |\gamma| - \gamma_T} \int_{\R^3_v}\Big[ |v^0|^5 D^\kappa_{t,x} \widehat{Z}_{\infty}^\beta h \Big] \bigg( z,\frac{\widecheck{\; x \;}}{t} \bigg) \dr z \lesssim_{t_0} \frac{1}{\langle t+|x|\rangle^{4+\gamma_T}} \overline{\mathbb{E}}_{|\gamma|+1}[f_\infty] $$
and we apply again Lemma \ref{Lemint}.
\end{proof}

\subsubsection{Gain of regularity} We now prove a more precise estimate for $F^{\gamma, \mathrm{inh}}$ in the case where either $|\gamma|=0$ or $Z^\gamma$ is only composed by Lorentz boosts $\Omega_{0k}$ and rotational vector fields $\Omega_{ij}$. The representation formula for the solution to the wave equation provides an explicit expression for $F^{\gamma , \mathrm{inh}}$. According to the commutation formula of Proposition \ref{ProComMax} and Lemma \ref{Lemsource}, we have, for all $(t,x) \in [t_0,+\infty[ \times \R^3$,
$$ F^{\gamma , \mathrm{inh}}_{\mu \nu} (t,x)= \mathfrak{s}_{\mu \nu}\int_{|y-x| \leq t-t_0}  \int_{\R^3_z} \Big[|v^0|^5 \Omega^\infty_{\mu \nu} \widehat{Z}_\infty^\gamma f_\infty \Big] \bigg( z,\frac{\widecheck{\; \; \; y \; \;\;}}{t-|y-x|} \bigg) \dr z \frac{\dr y}{(t-|y-x|)^4|y-x|}    , $$
for any $0 \leq \mu < \nu \leq 3$, where $\mathfrak{s}_{\mu \nu}=1$ if $\mu \neq 0$ and $\mathfrak{s}_{0 \nu}=-1$. This formula is not satisfying since we would like to express $F^{\gamma,\mathrm{inh}}$ as a functional of $\widehat{Z}^\gamma_\infty f_\infty$ instead of a functional of its derivatives. Our strategy relies on the Glassey-Strauss decomposition of the electromagnetic field, \cite[Theorem~$3$]{GlStrauss}. In order to fit into the framework of Glassey-Strauss, we introduce the following integral kernels.
\begin{Def}\label{DefKernel} 
Let, for all $(\omega,v) \in \mathbb{S}^2 \times \R^3_v$, $\mathbf{w}^{\mathrm{T}}_{\mu\nu}(\omega,v)$ be the antisymmetric valued matrix defined by
\begin{equation*}
\mathbf{w}^{\mathrm{T}}_{0k}(\omega,v)=-\mathbf{w}^{\mathrm{T}}_{k0}(\omega,v):= \frac{\omega_k+\widehat{v}_k}{|v^0|^2(1+\omega \cdot \widehat{v})^2}, \qquad \mathbf{w}^{\mathrm{T}}_{ij}(\omega,v)  := \frac{\omega_i\widehat{v}_j-\omega_j\widehat{v}_i}{|v^0|^2(1+\omega \cdot \widehat{v})^2}, \qquad 1 \leq i, \, j, \, k\leq 3,
\end{equation*}
where $\omega_i:=x_i/|x|$ if $x \in \R^3 \setminus \{ 0 \}$ satisfies $\omega=x/|x|$. Let further, for any $1 \leq \ell \leq 3$,
$$   \mathbf{a}^\ell_{0k}(\omega, z)=- \mathbf{a}^\ell_{k0}(\omega, z) := \delta^\ell_k-\frac{\omega_k+\widehat{v}_k}{1+\omega \cdot z}\widehat{v}^\ell, \qquad \mathbf{a}^\ell_{ij}(\omega, v) := \widehat{v}_j \, \delta^\ell_i-\widehat{v}_i \, \delta^\ell_j+\frac{\widehat{v}_i \omega_j- \widehat{v}_j\omega_i}{1+\omega \cdot \widehat{v}} \widehat{v}^\ell, \qquad \mathbf{a}^\ell_{00}(\omega, v):=0.$$
\end{Def}
\begin{Rq}
For any $\mu \in \llbracket 0 , 3 \rrbracket$, $\mathbf{w}^{\mathrm{T}}_{\mu\mu}=0$ since $\mathbf{w}^{\mathrm{T}}$ is antisymmetric. Note also $1+\omega \cdot \widehat{v} =2v^L>0$.
\end{Rq}
The only informations that we will need on these functions are contained in the next result.
\begin{Lem}\label{Lemkernel}
For all $(\omega, v) \in \mathbb{S}^2 \times \R^3_v$ and any $0 \leq \mu, \, \nu \leq 3$, $|\mathbf{w}^{\mathrm{T}}_{\mu\nu}(\omega,v)| \leq 4\langle v \rangle$ and $|\mathbf{a}^\ell_{\mu \nu}(\omega,v)| \leq 4\langle v \rangle$.
\end{Lem}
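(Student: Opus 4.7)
The plan is to reduce the lemma to three elementary algebraic identities on the unit sphere which exhibit the cancellation between the vanishing numerators and the singular denominator $(1+\omega\cdot\widehat{v})^{-2}$ near the direction $\omega=-\widehat{v}/|\widehat{v}|$ where this denominator blows up. More precisely, I would first record that, using the mass-shell relation $|v^0|^2-|v|^2=1$,
\begin{equation*}
1+\omega\cdot\widehat{v}\,\geq\, 1-|\widehat{v}|\,=\,\frac{1}{|v^0|(|v^0|+|v|)}\,\geq\,\frac{1}{2\langle v\rangle^2},
\end{equation*}
so that $(1+\omega\cdot\widehat{v})^{-1}\leq 2\langle v\rangle^2$. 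Then, expanding $|\omega+\widehat{v}|^2=1+2\omega\cdot\widehat{v}+|\widehat{v}|^2$ and using $|\widehat{v}|^2=1-|v^0|^{-2}$, one obtains the first cancellation identity
\begin{equation*}
|\omega+\widehat{v}|^2\,=\,2(1+\omega\cdot\widehat{v})-|v^0|^{-2}\,\leq\, 2(1+\omega\cdot\widehat{v}),
\end{equation*}
and the Lagrange identity $|\omega\times\widehat{v}|^2=|\widehat{v}|^2-(\omega\cdot\widehat{v})^2=|\omega+\widehat{v}|^2-(1+\omega\cdot\widehat{v})^2$ yields the analogous bound $|\omega\times\widehat{v}|^2\leq|\omega+\widehat{v}|^2\leq 2(1+\omega\cdot\widehat{v})$.

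With these three inequalities in hand the proof is a direct substitution. For the boost block of $\mathbf{w}^{\mathrm{T}}$, I would simply estimate $|\omega_k+\widehat{v}_k|\leq|\omega+\widehat{v}|$ and combine the bounds above:
\begin{equation*}
|\mathbf{w}^{\mathrm{T}}_{0k}|^2\,\leq\,\frac{|\omega+\widehat{v}|^2}{|v^0|^4(1+\omega\cdot\widehat{v})^4}\,\leq\, \frac{2}{|v^0|^4(1+\omega\cdot\widehat{v})^3}\,\leq\,\frac{2}{|v^0|^4}\cdot\bigl(2|v^0|^2\bigr)^{3}\,=\,16\,\langle v\rangle^2,
\end{equation*}
which is the desired $4\langle v\rangle$ bound. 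For the rotational block $\mathbf{w}^{\mathrm{T}}_{ij}$, the estimate $|\omega_i\widehat{v}_j-\omega_j\widehat{v}_i|\leq|\omega\times\widehat{v}|$ and the same computation give the identical bound $16\langle v\rangle^2$ for the square. For $\mathbf{a}^{\ell}_{0k}$ and $\mathbf{a}^{\ell}_{ij}$, the leading constant terms $\delta^\ell_k$, $\widehat{v}_j\delta^\ell_i-\widehat{v}_i\delta^\ell_j$ contribute at most $2$, while the singular term is controlled by
\begin{equation*}
\frac{|\omega+\widehat{v}|}{1+\omega\cdot\widehat{v}}\,\leq\,\sqrt{\frac{2}{1+\omega\cdot\widehat{v}}}\,\leq\, 2\langle v\rangle, \qquad \frac{|\omega\times\widehat{v}|}{1+\omega\cdot\widehat{v}}\,\leq\, 2\langle v\rangle,
\end{equation*}
both times using $|\widehat{v}|\leq 1$; summing gives $|\mathbf{a}^\ell_{\mu\nu}|\leq 2+2\langle v\rangle\leq 4\langle v\rangle$, with the case $\mathbf{a}^\ell_{00}=0$ being trivial.

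The only point that requires care, and is really the heart of the proof, is recognising the two cancellation identities $|\omega+\widehat{v}|^2\leq 2(1+\omega\cdot\widehat{v})$ and $|\omega\times\widehat{v}|^2\leq 2(1+\omega\cdot\widehat{v})$: without them a naive estimate of each factor would only yield a bound of order $\langle v\rangle^2$ or worse, which would later propagate into the Glassey--Strauss representation formula and spoil the decay rates obtained in the subsequent subsection. Everything else is bookkeeping.
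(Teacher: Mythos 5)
Your proof is correct, and it is genuinely different from what appears in the paper: the paper's proof simply cites the first two inequalities of Corollary~5.5 in~\cite{scat}, while you supply the elementary argument from scratch. The three identities you isolate — $(1+\omega\cdot\widehat{v})^{-1}\leq 2\langle v\rangle^2$, the cancellation $|\omega+\widehat{v}|^2 = 2(1+\omega\cdot\widehat{v})-|v^0|^{-2}\leq 2(1+\omega\cdot\widehat{v})$, and the Lagrange identity giving $|\omega\times\widehat{v}|^2\leq 2(1+\omega\cdot\widehat{v})$ — are indeed exactly what is needed, and your subsequent substitutions are arithmetically correct (in particular $|\mathbf{a}^\ell_{0k}|\leq 1+2\langle v\rangle$ and $|\mathbf{a}^\ell_{ij}|\leq 2+2\langle v\rangle$, both bounded by $4\langle v\rangle$ since $\langle v\rangle\geq 1$). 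Your closing remark about why the cancellation identities are essential, rather than naive term-by-term bounds, is also accurate: without them the bound would degrade to $\mathcal{O}(\langle v\rangle^2)$ and would not be compatible with the moment bookkeeping in the Glassey--Strauss part of Section~\ref{toporder}. What your approach buys is a self-contained Lemma~\ref{Lemkernel} with explicit constants, at the cost of slightly more in-line computation; what the citation buys is brevity, since the same kernels appear verbatim in~\cite{scat}.
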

\begin{proof}
We refer to the first two inequalities of \cite[Corollary~$5.5$]{scat}.
\end{proof}
In order to lighten the presentation of the next result, we will use the convention that $\widehat{Z}^\gamma_\infty f_\infty(z,\widecheck{u})=0$ if $|u| \geq 1$.
\begin{Pro}\label{GSasymp}
Let $Z^\gamma \in \mathbb{K}^{|\gamma|}$, $|\gamma| \leq N_0$, be only composed by homogeneous vector fields, $\gamma_T=0$. Then, 
$$ \forall \, (t,x) \in [t_0,+\infty[ \times \R^3, \qquad F^{\gamma , \mathrm{inh}}_{\mu \nu}(t,x;t_0) = F^{\gamma , \mathrm{T}}_{\mu \nu}(t,x;t_0)-F^{\gamma , \mathrm{sph}}_{\mu \nu}(t,x;t_0), $$
where, for any $0 \leq \mu , \, \nu \leq 3$, 
\begin{itemize}
\item $F^{\gamma,\mathrm{T}}=F^{\gamma, \mathrm{T},[0,t-t_0]}$ and, for any $I \subset [0,t]$,
\begin{align*}
& F^{\gamma , \mathrm{T},I}_{\mu \nu}(t,x;t_0)\\
  & \qquad \; : = \int_{|y-x| \in I}\mathbf{w}^{\mathrm{T}}_{\mu \nu}\bigg(\frac{y-x}{|y-x|}, \frac{\widecheck{ \qquad y \qquad}}{t-|y-x|} \bigg) \int_{\R^3_z} \Big[  |v^0|^5 \widehat{Z}_\infty^\gamma f_\infty \Big] \bigg(z, \frac{\widecheck{ \qquad y \qquad}}{t-|y-x|} \bigg) \dr z\frac{\dr y}{(t-|y-x|)^3|y-x|^2} .
 \end{align*}
\item $F^{\gamma , \mathrm{sph}}_{\mu \nu}(t,x;t_0)$ is given as an integral over $\mathbb{S}^2$,
$$ F^{\gamma , \mathrm{sph}}_{\mu \nu}(t,x;t_0)=\frac{t-t_0}{t_0^3}\int_{\mathbb{S}^2_\sigma} \sigma_k \, \mathbf{a}^k_{\mu \nu} \left(\sigma , \frac{\widecheck{ \; x+(t-t_0)\sigma \; }}{t_0} \right) \int_{\R^3_z}  \Big[|v^0|^5\widehat{Z}^\gamma_\infty f_\infty \Big] \bigg(z, \frac{\widecheck{ \; x+(t-t_0)\sigma \; }}{t_0} \bigg) \dr z \dr \mu_{\mathbb{S}^2_\sigma}.$$
 \end{itemize}
\end{Pro}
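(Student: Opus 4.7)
The plan is to prove the identity in two stages: first regularize by approximating the singular transport solution driving $J^{\mathrm{asymp}}[\widehat{Z}^\gamma_\infty f_\infty]$ by smooth free transport solutions, then carry out a Glassey--Strauss style integration by parts on each approximation, and finally pass to the limit. The starting point is the retarded representation formula for the scalar wave equation $\Box F^{\gamma,\mathrm{inh}}_{\mu\nu} = \partial_{x^\nu} J^\gamma_\mu - \partial_{x^\mu} J^\gamma_\nu$ with vanishing data at $t_0$; combined with Lemma \ref{Lemsource}, which absorbs the spatial derivative into $\widehat{\Omega}^\infty_{\mu\nu}$ by exploiting the homogeneous structure of $J^{\mathrm{asymp}}$, it yields
\begin{equation*}
F^{\gamma,\mathrm{inh}}_{\mu\nu}(t,x) = \frac{\mathfrak{s}_{\mu\nu}}{4\pi}\int_{|y-x|\leq t-t_0}\!\!\frac{1}{|y-x|(t-|y-x|)^4}\!\int_{\R^3_z}\!\!\bigl[|v^0|^5 \widehat{\Omega}^\infty_{\mu\nu} \widehat{Z}^\gamma_\infty f_\infty\bigr]\!\Bigl(z,\frac{\widecheck{\,y\,}}{t-|y-x|}\Bigr)\,\mathrm{d}z\,\mathrm{d}y.
\end{equation*}
The task then reduces to transferring the derivative $\widehat{\Omega}^\infty_{\mu\nu}$ off $\widehat{Z}^\gamma_\infty f_\infty$, which will generate a volume term with a reorganized kernel together with a boundary contribution on the initial sphere $|y-x|=t-t_0$.

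Following the singular solution formalism of Section \ref{Subsecfsing}, I would introduce the smooth free transport solutions $h_n(t,x,v) := A^\gamma(v)\,\psi_n(x - t\widehat{v})$, where $A^\gamma(v) := \int_{\R^3_z}\widehat{Z}^\gamma_\infty f_\infty(z,v)\,\mathrm{d}z$ and $\psi_n$ is a standard mollifier on $\R^3$. By Lemma \ref{Lembasicpropsing}, the associated currents $J(h_n)$ converge to $J^{\mathrm{asymp}}[\widehat{Z}^\gamma_\infty f_\infty]$ in $L^\infty_{\mathrm{loc}}(\R_+^*,L^2(\R^3_x))$, so the scalar wave fields $F^n_{\mu\nu}$ solving the analogous wave equations with zero data at $t_0$ converge pointwise to $F^{\gamma,\mathrm{inh}}_{\mu\nu}$. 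The $z$-derivatives hidden inside $\widehat{\Omega}^\infty_{\mu\nu}$ are eliminated immediately by integration by parts in $z$ (the boundary terms vanish by the decay of $f_\infty$ assumed in the hypotheses), reducing the matter to the pure $v$-derivative parts $v^0\partial_{v^k}$ contained in $\widehat{\Omega}^\infty_{0k}$ and $v^i\partial_{v^j}-v^j\partial_{v^i}$ contained in $\widehat{\Omega}^\infty_{ij}$.

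For each smooth $h_n$ one parametrizes the past light cone as $y = x + r\sigma$ with $(\sigma,r)\in\mathbb{S}^2\times[0,t-t_0]$, and converts the residual $\partial_{v^k}$ derivatives into radial $\partial_r$ derivatives via the chain rule along the curve $r \mapsto v_r := \widecheck{(x+r\sigma)/(t-r)}$, whose Jacobian is proportional to $(1 + \sigma\cdot\widehat{v}_r)^{-1}$. A single integration by parts in $r$ then produces the volume term with kernel $\mathbf{w}^T_{\mu\nu}$ of Definition \ref{DefKernel}, together with one boundary contribution at $r = t-t_0$ (the apex $r = 0$ is killed by the $r^2$ factor in the volume form); the latter, once the $z$-integration is unfolded, rearranges precisely into the spherical integral with kernel $\mathbf{a}^k_{\mu\nu}$ evaluated at $v = \widecheck{(x+(t-t_0)\sigma)/t_0}$. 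Passing to the limit $n\to\infty$ uses the uniform boundedness of the kernels (Lemma \ref{Lemkernel}) together with the Jacobian identity of Lemma \ref{cdv}, which is what converts the velocity averages of $h_n$ into the singular $(t-|y-x|)^{-3}$ factor appearing in the statement.

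The main obstacle lies in the algebraic verification inside the previous paragraph: showing that after the combined $z$- and $r$-integrations by parts the result is precisely $\mathbf{w}^T_{\mu\nu}$ and $\mathbf{a}^k_{\mu\nu}$ of Definition \ref{DefKernel}, rather than an equivalent but less symmetric form. The singular denominator $(1+\omega\cdot\widehat{v})^2$ in $\mathbf{w}^T$ originates from the Jacobian of $r \mapsto v_r$, and a careful use of the identities $1+\omega\cdot\widehat{v} = 2v^L$, $\widehat{v}_0 = -1$ and $|v^0|^2(1-|\widehat{v}|^2) = 1$, together with the antisymmetry of $\widehat{\Omega}^\infty_{\mu\nu}$ in $(\mu,\nu)$, will be needed to match the precise combinations appearing in the definition. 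The harmlessness of the apparent singularity at $1+\omega\cdot\widehat{v} = 0$ is then guaranteed by Lemma \ref{Lemkernel}, which bounds both kernels by $\langle v\rangle$ uniformly in $\sigma$.
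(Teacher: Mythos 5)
Your regularization-plus-limit framework is exactly the paper's: the mollified free-transport solutions $h_n(t,x,v)=\int_z\widehat{Z}^\gamma_\infty f_\infty(z,v)\,\dr z\;\psi_n(x-t\widehat{v})$ are the same approximants used in the paper, and the passage to the limit via Lemma \ref{Lembasicpropsing} is identical. Where you diverge is the middle step: the paper does not rederive the decomposition by hand but simply applies the Glassey--Strauss representation theorem (\cite[Theorem~3]{GlStrauss}, restated as \cite[Proposition~5.3]{scat}) to each smooth $H^n$, obtaining $H^{n,\mathrm{inh}}=-H^{\mathrm{sph}}+H^{\mathrm{T}}+H^{S}$ with the kernels $\mathbf{w}^{\mathrm{T}}_{\mu\nu}$ and $\mathbf{a}^k_{\mu\nu}$ already in the exact form of Definition \ref{DefKernel}, and with the transversal term $H^S$ vanishing identically because $\T_0 h_n=0$. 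The ``algebraic verification'' you defer at the end is precisely the content of that cited theorem, so your route amounts to reproving Glassey--Strauss in this special case rather than invoking it; the citation is the cheaper and safer path, since the kernels must come out in exactly the stated normalization for the rest of Section \ref{SecMaxasymp} (and the estimates of Lemma \ref{Lemkernel}) to apply.

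Two wrinkles in your sketch would need care if you insisted on carrying it out. First, the $z$-integration by parts does not fully ``eliminate'' the $z$-derivative part of $\widehat{\Omega}^\infty_{0k}$: since $\widehat{\Omega}^\infty_{0k}$ acts on a $t$-independent function as $-z^k\widehat{v}\cdot\nabla_z+v^0\partial_{v^k}$, integrating the first piece by parts leaves a residual $\widehat{v}^k\int_z f_\infty\,\dr z$ (compare the identities around \eqref{eq:eq:model}), which must be tracked to recover the precise combination in $\mathbf{w}^{\mathrm{T}}_{0k}$. Second, a single integration by parts in $r$ along $r\mapsto v_r=\widecheck{(x+r\sigma)/(t-r)}$ can only absorb the component of $v^0\partial_{v^k}$ in the direction $\dr v_r/\dr r$; the transverse components require additional integrations by parts in $\sigma$ on $\mathbb{S}^2$ (which is how the tangential-derivative terms of the Glassey--Strauss splitting arise). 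Neither issue is fatal, but both are exactly the bookkeeping the paper avoids by citation.
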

\begin{Rq}
As a consequence, $F^{\gamma, \mathrm{inh}}$ is well-defined for $|\gamma| = N_0+1$ as well.
\end{Rq}
\begin{proof}
Let $|\gamma| \leq N_0$, $(\psi_n)_{n \geq 1}$ be a mollifier and $H^n$ be a solution to
$$\nabla^\mu H^n_{\mu \nu} = J(h_n)_\nu, \quad \nabla^\mu {}^* \! H^n_{\mu \nu} =0, \qquad \qquad h_n(t,x,v):=\int_{\R^3_z} \widehat{Z}^\gamma_\infty f_\infty(z,v) \dr z \, \psi_n (x-t\widehat{v}).$$
Let further $H^{n,\mathrm{inh}}$ be the unique solution to
$$ \Box H^{n,\mathrm{inh}}_{\mu \nu} = \Box H^n_{\mu \nu}= \partial_{x^\nu} J(h_n)_\mu- \partial_{x^\mu} J(h_n)_\nu, \qquad \qquad \qquad H^{n,\mathrm{inh}}_{\mu \nu}(t_0,\cdot)=\partial_t H^{n,\mathrm{inh}}_{\mu \nu}(t_0,\cdot)=0.$$
 According to \cite[Theorem~$3$]{GlStrauss} (see also \cite[Proposition~$5.3$]{scat}), we have the decomposition
$$H^{n,\mathrm{inh}}_{\mu \nu}=-H^{\mathrm{sph}}_{\mu \nu}+H^{\mathrm{T}}_{\mu \nu}+H^S_{\mu \nu},
$$
where
\begin{itemize}
\item $H^{\mathrm{sph}}_{\mu \nu}$ is purely determined by the initial data since
\begin{align*}
 H^{\mathrm{sph}}_{\mu \nu}(t,x) &:= (t-t_0)\int_{\mathbb{S}^2_\sigma} \int_{\R^3_v}\sigma_k \, \mathbf{a}^k_{\mu \nu} \left(\sigma , v \right)   h_n(t_0,x+(t-t_0)\sigma,v) \dr v \dr \mu_{\mathbb{S}^2_\sigma} .
\end{align*} 
\item $H^{\mathrm{T}}_{\mu \nu}$ is given by
$$ H^{\mathrm{T}}_{\mu \nu}(t,x) := \int_{|y-x| \leq t-t_0} \int_{\R^3_v}\mathbf{w}^{\mathrm{T}}_{\mu \nu}\bigg(\frac{y-x}{|y-x|}, v \bigg)  h_n(t-|t-x|,y,v) \dr v\frac{\dr y}{|y-x|^2}.$$
\item $H^S_{\mu \nu}$ is a functional of $\T_0(h_n)$. Since $h_n$ is solution to the linear Vlasov equation, this term vanishes.
\end{itemize}
By Lemma \ref{Lembasicpropsing}, $H^{\mathrm{sph}}_{\mu \nu}$ and $H^{\mathrm{T}}_{\mu \nu}$ converge respectively, as $n \to \infty$, to $F^{\gamma,\mathrm{sph}}_{\mu \nu}(\cdot,\cdot \, ;t_0)$ and $F^{\gamma,\mathrm{T}}_{\mu \nu}(\cdot,\cdot \, ;t_0)$.
\end{proof}

We are now able to derive the following estimates.
\begin{Pro}\label{estimatesinh}
Let $Z^\gamma \in \mathbb{K}^{|\gamma|}$, with $|\gamma| \leq N_0+1$, only composed by homogeneous vector fields. Then,
\begin{align*}
 \forall \, (t,x) \in [t_0, +\infty[ \times \R^3, \qquad \qquad  \left| F^{\gamma , \mathrm{T}} \right|(t,x;t_0) &\lesssim  t^{-2} \, \mathds{1}_{t >|x|} \, \overline{\mathbb{E}}_{0}\big[\widehat{Z}^\gamma_\infty f_\infty \big], \\
  \left| F^{\gamma, \mathrm{sph}} \right|(t,x;t_0) &\lesssim \frac{1}{t_0 \, t} \, \mathds{1}_{0 \leq t-|x| \leq 2t_0} \, \overline{\mathbb{E}}_{0}\big[\widehat{Z}^\gamma_\infty f_\infty \big].
  \end{align*}
\end{Pro}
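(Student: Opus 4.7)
Both bounds will follow from the explicit integral representations of Proposition \ref{GSasymp}, combined with the kernel bounds $|\mathbf{w}^{\mathrm{T}}_{\mu\nu}|, |\mathbf{a}^\ell_{\mu\nu}| \leq 4\langle v \rangle$ from Lemma \ref{Lemkernel} and the pointwise decay $|\widehat{Z}^\gamma_\infty f_\infty|(z,v) \lesssim \overline{\mathbb{E}}_0[\widehat{Z}^\gamma_\infty f_\infty] \langle z \rangle^{-4} \langle v \rangle^{-7}$ coming from the definition of $\overline{\mathbb{E}}_0$. After the $z$ integration and after absorbing the factor $|v^0|^5 \langle v \rangle^{-7} \cdot \langle v \rangle = \langle v \rangle^{-1} = \sqrt{1-|\widehat{v}|^2}$, one obtains, in both cases, an integrand whose singular factor $\widecheck{\;\cdot\;}$ is exactly compensated by the quantity $\langle v \rangle^{-1}$, which will in turn provide the crucial geometric cancellations at the boundary of the integration domain.

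\textbf{Bound on $F^{\gamma,\mathrm{T}}$.} The above estimates reduce the problem to controlling
\[
\int_{|y-x| \leq t-t_0} \frac{\langle v(y) \rangle^{-1}}{(t-|y-x|)^3\, |y-x|^2}\,\dr y, \qquad v(y)=\frac{\widecheck{\;y\;}}{\,t-|y-x|\,},
\]
with $\langle v(y) \rangle^{-1} = \sqrt{(t-|y-x|)^2 - |y|^2}/(t-|y-x|)$, which is only real (and the integrand only nonzero) when $(t-|y-x|)^2 > |y|^2$. Passing to polar coordinates $y = x+r\omega$ and substituting $s = t-r$, the triangle inequality $|x+(t-s)\omega| \geq ||x|-(t-s)|$ shows that the support condition $s > |x+(t-s)\omega|$ forces $s > (t+|x|)/2$, and hence also $|x| < t$. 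The trivial bound $\sqrt{s^2 - |x+(t-s)\omega|^2} \leq s$ then gives an integrand $\lesssim s^{-3}$, and $\int_{(t+|x|)/2}^{t} s^{-3} \dr s \lesssim \langle t + |x|\rangle^{-2} \lesssim t^{-2}\mathds{1}_{t > |x|}$, as claimed.

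\textbf{Bound on $F^{\gamma, \mathrm{sph}}$.} Analogous manipulations reduce the bound to estimating
\[
\frac{t-t_0}{t_0^3}\int_{\mathbb{S}^2_\sigma} \frac{\sqrt{t_0^2 - |x+(t-t_0)\sigma|^2}}{t_0}\, \mathds{1}_{|x+(t-t_0)\sigma|<t_0}\, \dr \mu_{\mathbb{S}^2_\sigma}.
\]
The support condition forces $||x|-(t-t_0)| \leq t_0$, equivalently $0 \leq t-|x| \leq 2t_0$. Changing variables $y = x + (t-t_0)\sigma$ (so $\dr y = (t-t_0)^2 \dr \mu_{\mathbb{S}^2_\sigma}$ on the sphere of radius $t-t_0$ centred at $x$) rewrites the integral as a surface integral on the spherical cap $S_{t-t_0}(x) \cap \overline{B_{t_0}(0)}$. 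Parametrising this cap by the polar angle $\theta$ from $-x/|x|$ and applying the cosine law $|y|^2 = |x|^2 - 2(t-t_0)|x|\cos\theta + (t-t_0)^2$, one obtains the explicit area formula $\pi(t-t_0)(t_0^2 - (|x|-(t-t_0))^2)/|x|$. Bounding $\sqrt{t_0^2 - |y|^2} \leq t_0$ on the cap, and using $|x| \sim t$ on the support $t-2t_0 \leq |x| \leq t$ (the small-$t$ case being handled by the absolute pointwise bound), one assembles the estimate $|F^{\gamma, \mathrm{sph}}| \lesssim \overline{\mathbb{E}}_0[\widehat{Z}^\gamma_\infty f_\infty]/(t_0 t)$.

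\textbf{Main obstacle.} The delicate step is the $1/t$ gain in the bound for $F^{\gamma, \mathrm{sph}}$: it requires the explicit computation of the spherical cap area and the observation that this area scales like $t_0^3/|x|$, rather than the naïve $t_0^2$, because of the transversality between the two spheres $S_{t-t_0}(x)$ and $S_{t_0}(0)$ along their intersection curve. For $F^{\gamma, \mathrm{T}}$, the corresponding subtlety is identifying the effective support $s > (t+|x|)/2$ in order to extract the decay $\langle t+|x| \rangle^{-2}$ and in particular the indicator $\mathds{1}_{t > |x|}$.
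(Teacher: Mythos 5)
Your treatment of $F^{\gamma,\mathrm{sph}}$ is essentially sound: the explicit spherical-cap area formula $\pi(t-t_0)\big(t_0^2-(|x|-(t-t_0))^2\big)/|x|$ is correct and delivers the required $1/(t_0 t)$ once $|x|\sim t$ is exploited on the support and the near-initial-time region is handled by the crude bound. This is a legitimate alternative to the paper's invocation of the angular integral estimate (Lemma~\ref{estiWeiYang}), though it amounts to the same computation.

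The argument for $F^{\gamma,\mathrm{T}}$ has a genuine gap. First, the support analysis is off: from $s>|y|\geq \big||x|-(t-s)\big|$ one only deduces $s>(t-|x|)/2$ (together with $t>|x|$), not $s>(t+|x|)/2$; the stronger bound would require $|x+(t-s)\omega|\geq |x|+(t-s)$, which holds only at the single point $\omega=x/|x|$. Second, and more fundamentally, once you replace $\sqrt{s^2-|y|^2}\leq s$ and absorb the angular integral into a constant, you obtain $\int_{(t-|x|)/2}^t s^{-3}\,\dr s\lesssim \langle t-|x|\rangle^{-2}$, which degenerates as $|x|\to t$ and does not yield $t^{-2}$ uniformly on $\{|x|<t\}$. (Take $t_0=1$, $|x|=t-2$: your bound gives $O(1)$, while $t^{-2}$ is required.) The missing gain comes precisely from \emph{not} discarding the factor $\langle v\rangle^{-1}=\sqrt{s^2-|y|^2}/s$: the paper keeps $\langle v\rangle^{-1}\leq \sqrt{2}\,(s-|y|)^{1/2}s^{-1/2}$, so that the integrand becomes $(s-|y|)^{1/2}/(s^{7/2}|y-x|^2)$ and the estimate is reduced (after the rescaling $y\mapsto y/t_0$) to the kernel bound $\mathcal{K}^2_{7/2}(\tau,\xi)\lesssim\langle\tau\rangle^{-2}\mathds{1}_{\tau\geq|\xi|}$ of Lemma~\ref{Lemint}, whose proof itself requires a nontrivial decomposition into regions $|y-x|\leq t/2$ and $|y-x|\geq t/2$ and the use of $\min(t-|x|,|x|)$. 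Your ``geometric cancellation'' observation points in the right direction, but it has to be carried through rather than discarded at the last step.
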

\begin{proof}
Recall from Proposition \ref{GSasymp} the definition of $F^{\gamma , \mathrm{T}}$ and $F^{\gamma , \mathrm{sph}}$ and remark that they vanish in the exterior of the light cone. Indeed, if $|x| \geq t$, then the domain of integration is included in $\{(\tau,y) \in \R_+ \times \R^3 \; | \; |y| \geq \tau \}$, where the integrands vanish. We then fix $t \geq t_0$ and $|x|<t$. According to Lemma \ref{Lemkernel} and by support considerations,
$$
 \left| F^{\gamma, \mathrm{sph}} \right|(t,x;t_0) \lesssim \frac{t-t_0}{t_0^3}\int_{\mathbb{S}^2_\sigma} \mathds{1}_{\big|\frac{x}{t_0}+\frac{t-t_0}{t_0}\sigma \big| <1} \, \dr \mu_{\mathbb{S}^2_\sigma}  \sup_{\R^3_z \times \R^3_v} \, \langle z \rangle^4 \langle v \rangle^6  \big|\widehat{Z}^\gamma_\infty f_\infty |(z,v).
$$
Since the right hand side vanishes if $|t-t_0-|x|| \geq t_0$, we get using the estimate of Lemma \ref{estiWeiYang} for integrals over $\mathbb{S}^2$,
$$
\frac{t-t_0}{t_0^3}\int_{\mathbb{S}^2_\sigma} \mathds{1}_{\big|\frac{x}{t_0}+\frac{t-t_0}{t_0}\sigma \big| <1}  \dr \mu_{\mathbb{S}^2_\omega} \lesssim 
        \frac{t-t_0}{t_0^3}  \frac{t_0}{t-t_0} \left\langle \frac{t-t_0+|x|}{t_0} \right\rangle^{-1} \mathds{1}_{t-|x| \leq 2t_0}  \lesssim \frac{1}{t_0 \, t}\mathds{1}_{t-|x| \leq 2t_0}.
        $$
Using again Lemma \ref{Lemkernel} as well as $\sqrt{2}v^0=\sqrt{2}t (t^2-|x|^2)^{-1/2} \geq \sqrt{t}(t-|x|)^{-1/2}$ if $\widehat{v}=x/t$, there holds
$$
\left| F^{\gamma , \mathrm{T}} \right|(t,x;t_0)  \lesssim \mathcal{K}(t,x;t_0)  \sup_{\R^3_z \times \R^3_v} \,  \langle z \rangle^4 \langle v \rangle^7 \big|\widehat{Z}^\gamma_\infty f_\infty |(z,v) ,
$$     
with
$$  \mathcal{K}(t,x;t_0):=\int_{|y-x| \leq t-t_0} \mathds{1}_{ t-|y-x|>|y|} \frac{  ( t-|y-x| -|y| )^{\frac{1}{2}} \, \dr y }{(t-|y-x|)^{\frac{7}{2}} |y-x|^2} .$$
Performing the change of variables $y'=y/t_0$, we get
$$ \mathcal{K}(t,x;t_0) \lesssim  \frac{1}{t_0^2}\int_{\big|y'-\frac{x}{t_0}\big| \leq \frac{t-t_0}{t_0} }\mathds{1}_{ \frac{t}{t_0}-|y'-\frac{x}{t_0}|>|y'|} \frac{\big(1+\frac{t-t_0}{t_0}-\big|y'- \frac{x}{t_0} \big|-|y'|\big)^{1/2}}{ \big(1+\frac{t-t_0}{t_0}-\big|y'- \frac{x}{t_0} \big|\big)^{7/2}}\frac{\dr y'}{\big| y'- \frac{x}{t_0} \big|^2}\lesssim \frac{1}{t_0^2} \mathcal{K}^{2}_{7/2} \!\left( \frac{t-t_0}{t_0}, \frac{x}{t_0} \right) ,$$
where $\mathcal{K}^{2}_{7/2}$ is defined and estimated in Lemma \ref{Lemint}. To conclude the proof, note that for $t \geq t_0$, we have $t_0 \langle (t-t_0)/t_0 \rangle \gtrsim t$.
\end{proof}

\subsubsection{Asymptotic electromagnetic field}

We are now ready to determine the asymptotic expansion of $F^{\gamma,\mathrm{inh}}$. To write the leading order term in a synthetic way, we introduce the following quantity, which turns out to be an alternative expression for \eqref{kev:defasympelec}--\eqref{kev:defasympmag}.

\begin{Def}\label{DefFasympelectromagn}
We define, for any sufficiently regular function $h : \R^3_z \times \R^3_v \to \R$,
$$ \mathbb{F}_{\mu \nu}\big[ h \big]:v \mapsto \int_{\substack{|y| \leq 1 \\ |y+\widehat{v}| <1-|y|  }} \mathbf{w}^T_{\mu \nu} \! \bigg( \frac{y}{|y|}, \frac{\widecheck{  y+\widehat{v}  }}{1-|y|}\bigg) \int_{\R^3_z} \Big[ |v^0|^5 h \Big] \! \bigg( z,\frac{\widecheck{  y+\widehat{v}  }}{1-|y|} \bigg) \dr z\frac{\dr y}{(1-|y|)^3 |y|^2}.$$
In fact, by assuming more regularity on $h$, we have
$$ \mathbb{F}_{\mu \nu}\big[ h \big](v) = \mathfrak{s}_{\mu\nu} \!\int_{\substack{|y| \leq 1 \\ |y+\widehat{v}| <1-|y|  }} \! \int_{\R^3_z} \Big[ |v^0|^5 \widehat{\Omega}^\infty_{\mu \nu}h \Big] \! \bigg( z,\frac{\widecheck{  y+\widehat{v}  }}{1-|y|} \bigg) \dr z \frac{\dr y}{(1-|y|)^4 |y|}, \quad \mathfrak{s}_{\mu \nu}\! = \left\{ 
	\begin{array}{ll}
        -1 \!& \mbox{if $\mu=0$, $1 \leq \nu \leq 3$,} \\
        1  \!& \mbox{if $1 \leq \mu < \nu \leq 3$}.
    \end{array} 
\right.$$
\end{Def}
\begin{Rq}\label{Rqdomaint}
It is important to observe that the domain of integration is included in $\{0 \leq |y| \leq \frac{1+|\widehat{v}|}{2}\}$, so that $1-|y| \geq \frac{1}{4 \langle v \rangle^2 }$. Indeed, if $|y| \geq \frac{1+|\widehat{v}|}{2}$, we have
$$ |y+\widehat{v}| \geq |y|-1+1-|\widehat{v}| \geq \frac{1-|\widehat{v}|}{2} \geq 1-|y|.$$
\end{Rq}

\begin{Pro}\label{Proinhom}
Let $Z^{\gamma} \in \mathbb{K}^{|\gamma|} $, $|\gamma| \leq N+1$, composed only by homogeneous vector fields. Then,
\begin{itemize}
\item for all $(t,x) \in [t_0,+\infty[ \times \R^3$ such that $|x| \geq t$, $F^{\gamma , \mathrm{inh}}(t,x)=0$.
\item There exists a remainder $2$-form $F^{\gamma,\mathrm{rem}}$ such that, for all $(t,x) \in [t_0,+\infty[ \times \R^3$ verifying $|x| <  t$, 
$$F^{\gamma , \mathrm{inh}}(t,x;t_0)= \frac{1}{t^2}\mathbb{F}\big[ \widehat{Z}^\gamma_\infty f_\infty \big]\! \left( \frac{\widecheck{ \; x \;}}{t} \right)-F^{\gamma , \mathrm{rem}}_{\mu \nu}(t,x;t_0).$$
Furthermore, the remainder verifies
$$  \left| F^{\gamma , \mathrm{rem}}_{\mu \nu}(t,x;t_0) \right| \lesssim \frac{1}{t_0 \, t}  \mathds{1}_{t-|x| \leq 2t_0} \, \overline{\mathbb{E}}_0\big[ \widehat{Z}^\gamma_\infty f_\infty \big].$$
\item For all $v \in \R^3_v$, we have $\big| \mathbb{F} \big[\widehat{Z}_\infty^\gamma f_\infty \big](v)\big| \lesssim  \overline{\mathbb{E}}_0\big[ \widehat{Z}^\gamma_\infty f_\infty \big]$.
\end{itemize}
\end{Pro}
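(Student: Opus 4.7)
Proposition \ref{GSasymp} writes $F^{\gamma,\mathrm{inh}} = F^{\gamma,\mathrm{T}} - F^{\gamma,\mathrm{sph}}$. The first bullet is immediate: in $F^{\gamma,\mathrm{T}}$ the convention that $\widehat{Z}^\gamma_\infty f_\infty(z,\widecheck{u})=0$ for $|u|\geq 1$ forces $|y| < t-|y-x|$, which by the triangle inequality rules out $|x|\geq t$; in $F^{\gamma,\mathrm{sph}}$ the constraint $|x+(t-t_0)\sigma|<t_0$ is also incompatible with $|x|\geq t$.

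For the second bullet my plan is to apply the linear change of variables $y' = (y-x)/t$ to $F^{\gamma,\mathrm{T}}$. Writing $\widehat{v} = x/t$, the identities $y/(t-|y-x|) = (y'+\widehat{v})/(1-|y'|)$, $|y-x|=t|y'|$, $\dr y = t^3 \dr y'$ yield
\begin{equation*}
F^{\gamma,\mathrm{T}}_{\mu\nu}(t,x;t_0) = \frac{1}{t^2}\!\!\int_{\substack{|y'| \leq 1-t_0/t \\ |y'+\widehat{v}| < 1-|y'|}}\!\!\! \mathbf{w}^\mathrm{T}_{\mu\nu}\!\left(\tfrac{y'}{|y'|}, \widecheck{\tfrac{y'+\widehat{v}}{1-|y'|}}\right)\!\! \int_{\R^3_z}\!\! |v^0|^5 \widehat{Z}^\gamma_\infty f_\infty\!\left(z, \widecheck{\tfrac{y'+\widehat{v}}{1-|y'|}}\right)\!\dr z\, \frac{\dr y'}{(1-|y'|)^3|y'|^2},
\end{equation*}
the second domain constraint coming again from the convention. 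This is exactly $t^{-2}\mathbb{F}_{\mu\nu}[\widehat{Z}^\gamma_\infty f_\infty](\widecheck{x/t})$ except that the defining integral in Definition \ref{DefFasympelectromagn} has domain $\{|y'|\leq 1\}\cap\{|y'+\widehat{v}|<1-|y'|\}$. I therefore set
\begin{equation*}
F^{\gamma,\mathrm{rem}}_{\mu\nu}(t,x;t_0) := \frac{1}{t^2}\mathbb{F}_{\mu\nu}[\widehat{Z}^\gamma_\infty f_\infty](\widecheck{x/t}) - F^{\gamma,\mathrm{inh}}_{\mu\nu}(t,x;t_0) = F^{\gamma,\mathrm{sph}}_{\mu\nu}(t,x;t_0) + \frac{1}{t^2}\mathcal{I}_{\mathcal{R}}(t,x),
\end{equation*}
where $\mathcal{I}_{\mathcal{R}}$ denotes the integral over the missing set $\mathcal{R}:=\{1-t_0/t < |y'|\leq 1,\ |y'+\widehat{v}|<1-|y'|\}$. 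By Remark \ref{Rqdomaint}, the second condition forces $|y'|\leq (1+|\widehat{v}|)/2$, which combined with the first forces $t-|x|<2t_0$; hence $F^{\gamma,\mathrm{rem}}$ is supported in $\{t-|x|\leq 2t_0\}$, as is $F^{\gamma,\mathrm{sph}}$ by Proposition \ref{estimatesinh}.

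On this support the $F^{\gamma,\mathrm{sph}}$ contribution is controlled directly by Proposition \ref{estimatesinh}; for the $\mathcal{R}$-piece I would combine Lemma \ref{Lemkernel} with the pointwise bound $|\widehat{Z}^\gamma_\infty f_\infty|(z,v)\leq \overline{\mathbb{E}}_0\langle z\rangle^{-4}\langle v\rangle^{-7}$ to obtain $|\mathbf{w}^{\mathrm{T}}||v^0|^5|\widehat{Z}^\gamma_\infty f_\infty| \lesssim \overline{\mathbb{E}}_0\langle z\rangle^{-4}\sqrt{1-|u|^2}$ with $u=(y'+\widehat{v})/(1-|y'|)$. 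Reverting to the $y$-variable and parametrizing by $\tau = t-|y-x|$, the angular integration over the sphere around $x$ of radius $t-\tau$ is an explicit spherical cap evaluating to $\tfrac{4\pi}{3}\sqrt{2(t-\tau)|x|}(1-c^\ast)^{3/2}$ with $1-c^\ast = (t-|x|)(2\tau-(t-|x|))/(2(t-\tau)|x|)$ on the nonempty range $\tau\in[(t-|x|)/2,t_0]$. Splitting this $\tau$-integral at the crossover $\tau\sim t-|x|$ and using $t-\tau\sim |x|\sim t$ in the principal regime $t\gtrsim t_0$ would yield the desired $\lesssim (t_0 t)^{-1}\overline{\mathbb{E}}_0$.

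The third bullet is then an evaluation of the second at a well-chosen point. Given $v\in\R^3_v$, take $t:=2t_0/(1-|\widehat{v}|)$ and $x:=t\widehat{v}$, so that $t-|x|=2t_0$ and $\widecheck{x/t}=v$; both $F^{\gamma,\mathrm{sph}}$ and $\mathcal{I}_{\mathcal{R}}$ vanish at this point, hence $t^{-2}\mathbb{F}[\widehat{Z}^\gamma_\infty f_\infty](v) = F^{\gamma,\mathrm{inh}}(t,x;t_0) = F^{\gamma,\mathrm{T}}(t,x;t_0)$, and the uniform bound $|F^{\gamma,\mathrm{T}}|\lesssim t^{-2}\overline{\mathbb{E}}_0$ of Proposition \ref{estimatesinh} yields the claim uniformly in $v$. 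The main technical hurdle is the $\mathcal{I}_{\mathcal{R}}$ bound in the second bullet: although $|\mathcal{R}|\sim (t_0/t)^3$, the integrand carries the singular weight $(1-|y'|)^{-4}$ precisely where $1-|y'|\lesssim t_0/t$, so the gain of $t_0 t$ over Proposition \ref{estimatesinh}'s crude $t^2$ depends crucially on the cancellations provided by the kernel factor $\sqrt{1-|u|^2}$ and the spherical cap factor $(1-c^\ast)^{3/2}$.
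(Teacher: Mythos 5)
Your change of variables $y'=(y-x)/t$ and the resulting identification $F^{\gamma,\mathrm{T}}$ with the $\mathbb{F}$-integral (minus the missing shell $\mathcal{R}$) is exactly the paper's, and your first bullet and your evaluation-at-a-well-chosen-point argument for the third bullet are also essentially the paper's. However, the bound on $\mathcal{I}_{\mathcal{R}}$ that you flag as the ``main technical hurdle'' is a genuine gap in your write-up, and — more to the point — it is unnecessary. The paper inverts the order of your last two bullets. Once you know $F^{\gamma,\mathrm{rem}}(t,x;t_0)=0$ for $t-|x|>2t_0$ (which you derive from support considerations alone, without any estimate), the third bullet follows as you say from Proposition~\ref{estimatesinh} applied at a point $x=t\widehat v$ with $t-|t\widehat v|\geq 2t_0$. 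Armed with $\|\mathbb{F}[\widehat Z^\gamma_\infty f_\infty]\|_{L^\infty_v}\lesssim\overline{\mathbb{E}}_0$, the remainder estimate on the support $\{t-|x|\leq 2t_0\}$ is then simply the triangle inequality applied to the definition $F^{\gamma,\mathrm{rem}}=t^{-2}\mathbb{F}-F^{\gamma,\mathrm{inh}}$:
\begin{equation*}
t\,\big|F^{\gamma,\mathrm{rem}}\big|(t,x;t_0)\;\leq\; t\,\big|F^{\gamma,\mathrm{inh}}\big|(t,x;t_0)+t^{-1}\big\|\mathbb{F}\big[\widehat Z^\gamma_\infty f_\infty\big]\big\|_{L^\infty_v}\;\lesssim\; t_0^{-1}\,\overline{\mathbb{E}}_0\big[\widehat Z^\gamma_\infty f_\infty\big],
\end{equation*}
using $|F^{\gamma,\mathrm{inh}}|\leq|F^{\gamma,\mathrm{T}}|+|F^{\gamma,\mathrm{sph}}|\lesssim(t_0t)^{-1}\overline{\mathbb{E}}_0$ from Proposition~\ref{estimatesinh} together with $t\geq t_0$. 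So the spherical-cap computation and the cancellation structure of the kernel that you were worried about never need to be touched; your own third-bullet argument already furnishes the missing ingredient once moved earlier in the proof.
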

\begin{proof}
Let $0 \leq \mu < \nu \leq 3$ and $(t,x) \in [t_0,+\infty[ \times \R^3$. The result for the case $|x| \geq t$ follows from 
$$F^{\gamma , \mathrm{inh}}(t_0,\cdot \, ;t_0)= \partial_t F^{\gamma , \mathrm{inh}}(t_0,\cdot \, ;t_0)=0, \qquad \qquad \forall \, |x'| \geq t' >0, \quad \Box F^{\gamma , \mathrm{inh}}(t',x' ;t_0)=0. $$ 
Assume now that $|x|<t$ and let, with the shorthand $t_y:=t-|t-x|$,
$$ F^{\gamma , \mathrm{rem}}_{\mu \nu}(t,x;t_0)  := \int_{t-t_0 <|y-x| \leq t} \mathbf{w}^{\mathrm{T}}_{\mu \nu}\bigg(\frac{y-x}{|y-x|}, \frac{\widecheck{ \; y \;}}{t_y} \bigg) \int_{\R^3_z} \Big[  |v^0|^5 \widehat{Z}_\infty^\gamma f_\infty \Big] \bigg(z, \frac{\widecheck{\; y \;}}{t_y} \bigg) \dr z\frac{\dr y}{t_y^3|y-x|^2} +F^{\gamma,\mathrm{sph}}_{\mu \nu}(t,x;t_0) .
$$
Then, the stated decomposition of $F^{\gamma, \mathrm{inh}}$ follows from Proposition \ref{GSasymp} as well as the change of variables $y'=t^{-1}(y-x)$. Next, remark that if $2t_0 < t-|x|$, then $F^{\gamma , \mathrm{rem}}_{\mu \nu}(t,x;t_0)=0$. Indeed, in that case, 
$$ |y| \geq |y-x|-|x| \geq t-t_0-|x| >t_0 \geq t-|y-x|   , \qquad \qquad F^{\gamma,\mathrm{sph}}_{\mu \nu}(t,x;t_0)=0  .$$
For the pointwise estimate, fix $v \in \R^3_v$ and remark that $t-|t\widehat{v}|> 2$ for $t \geq 4\langle v \rangle^{2}$. Hence,
$$\forall \, t \geq 4 \langle v \rangle^2, \qquad \qquad F^{\gamma , \mathrm{rem}}(t,t\widehat{v};1)=0,  \qquad t^2 F^{\gamma , \mathrm{inh}}(t,t\widehat{v};1)=\mathbb{F}[\Omega_v^\gamma f_\infty](v) .$$
It remains to apply Proposition \ref{estimatesinh} in order to get, for all $ t \geq 4 \langle v \rangle^2$,
$$ |t^2 F^{\gamma , \mathrm{inh}}(t,t\widehat{v};1)|=|t^2 F^{\gamma , \mathrm{T}}(t,t\widehat{v};1)| \lesssim  \overline{\mathbb{E}}_0\big[ \widehat{Z}^\gamma_\infty f_\infty \big].$$
Finally, we derive the estimate for the remainder on its support using Proposition \ref{estimatesinh},
$$t|F^{\gamma , \mathrm{rem}}|(t,x;t_0)\lesssim t|F^{\gamma , \mathrm{inh}}|(t,x)+t^{-1} \big\| \mathbb{F} \big[ \widehat{Z}^\gamma_\infty f_\infty \big] \big\|_{L^\infty_v} \lesssim t_0^{-1}\overline{\mathbb{E}}_0 \big[  \widehat{Z}^\gamma_\infty f_\infty \big].$$
\end{proof}
Note that for all $(t,x) \in \R^*_+ \times \R^3$ such that $|x|<t$, we have $F^{\gamma , \mathrm{rem}}_{\mu \nu}(t,x;t_0)\to 0$, as $t_0 \to 0$. Furthermore, $J^{\mathrm{asymp}}[f_\infty](t_0,\cdot)$ converges to the Dirac $\delta$ function as $t_0 \to 0$. Consequently, for all $|x|<t$, $F^{\gamma,\mathrm{hom}}(t,x) \to 0$ as $t_0 \to 0$, so that Proposition \ref{Probehavior} holds.

We assume, for the rest of this section, that $1 \leq t_0 \leq 2$.

\subsubsection{Proof of Proposition \ref{Proasymp}} Let $Z^\gamma \in \mathbb{K}^{|\gamma|}$ with $|\gamma| \leq N_0$.
\begin{itemize}
\item We have $| \mathcal{L}_{Z^\gamma} F^{\mathrm{asymp}}[f_\infty] |(t,x) \lesssim \langle t+|x| \rangle^{-1} \, \langle t-|x| \rangle^{-1} \, \overline{\mathbb{E}}_{N_0+1}[f_\infty]$ according to Proposition \ref{Prohompart} and \ref{Proinhpart0}. The improved estimate, in the case $\gamma_T \geq 1$, then follows from Lemma \ref{improderiv}.
\item If $Z^\gamma$ is only composed by homogeneous vector fields and contains the scaling $S$, the estimate stated in Remark \ref{Rqscalingbetterdecay} follows from Propositions \ref{ProhompartS} and \ref{Proinhpart0}.
\item Finally, for the last part of the statement, remark first that by integration by parts in $z$, $\mathbb{F}\big[ \widehat{Z}_\infty^\gamma f_\infty \big]=0$ if $\widehat{Z}_\infty^\gamma$ contains at least one translation or $\widehat{S}$. Thus, there is nothing more to prove in these cases. Otherwise $Z^\gamma$ is only composed by boosts and rotations $\Omega_{\lambda k}$ and one gets the result by combining Propositions \ref{Prohompart} and \ref{Proinhom}.
\end{itemize}

\subsection{Estimates for the asymptotic electromagnetic field and the correction coefficients}

In order to study algebraic relatons between $\mathbb{F}[\widehat{Z}^\gamma_\infty f_\infty]$, $|\gamma| \leq N_0+1$, we will make use of
\begin{equation}\label{eq:corconvasymp}
\forall \, (t,v) \in [t_0,+\infty[ \times \R^3_v, \qquad  \big|t^2 \mathcal{L}_{Z^\gamma}\big(F^{\mathrm{asymp}}[f_\infty] \big)(t,t\widehat{v})-\mathbb{F} \big[ \widehat{Z}^\gamma_\infty f_\infty \big](v) \big| \lesssim_{t_0} \langle t \rangle^{-\frac{1}{2}} \, \langle v \rangle^3 \, \overline{\mathbb{E}}_{N_0+1}[f_\infty],
\end{equation}
for any $|\gamma| \leq N_0$. If $t-t|\widehat{v}| \geq 2t_0$, we use $2t-2t|\widehat{v}| \geq t\,\langle v \rangle^{-2}$ as well as the precise estimate for $\mathcal{L}_{Z^\gamma}F^{\mathrm{asymp}}[f_\infty]$, for the interior of the light cone, given by Proposition \ref{Proasymp}. Otherwise $t-t|\widehat{v}| \leq 4$, so that $t\,\langle v \rangle^{-2} \leq 8$ and
we apply Propositions \ref{Proasymp} and \ref{Proinhom} in order to get
$$\big|t^2 \mathcal{L}_{Z^\gamma}\big(F^{\mathrm{asymp}}[f_\infty] \big)(t,t\widehat{v})-\mathbb{F}^\gamma [f_\infty](v) \big| \leq t^2\big| \mathcal{L}_{Z^\gamma}F^{\mathrm{asymp}}[f_\infty] \big|(t,t\widehat{v})+\big|\mathbb{F} \big[ \widehat{Z}^\gamma_\infty f_\infty \big] \big|(v)\lesssim \langle t \rangle \, \overline{\mathbb{E}}_{N_0+1}[f_\infty].$$

We are now able to relate $\mathbb{F}[\widehat{Z}^\gamma_\infty f_\infty]$ to derivatives of $\mathbb{F}[f_\infty]$. In fact, in the context of the Vlasov-Maxwell system, $\mathbb{F}[f_\infty]$ will only appear through the asymptotic Lorentz force $\widehat{v}^\mu {\mathbb{F}_{\mu}}^j[f_\infty] \partial_{x^j}$. For this reason, we also study this quantity.
\begin{Pro}\label{Proasympelec}
Let $h \in L^\infty( \R^3_z \times \R^3_v , \R)$ such that $\overline{\mathbb{E}}_{1}[ h] <+\infty$. The asymptotic electromagnetic field $\mathbb{F}[h]$ verifies the following properties.
\begin{enumerate}
\item Algebraic relations between the derivatives of $\mathbb{F}[h]$ and $\mathbb{F}[\widehat{Z}_\infty f_\infty]$, for $\widehat{Z}_\infty \in \K^\infty$. There holds
\begin{alignat*}{2}
\mathbb{F}_{\mu \nu}\big[ \widehat{\Omega}_{0k}^\infty h \big]&:=v^0\partial_{v^k}\mathbb{F}_{\mu \nu}[h]-2\widehat{v}^k \, \mathbb{F}_{\mu \nu}[h]+\delta_\mu^0 \, \mathbb{F}_{k \nu}[h]+\delta_\mu^k \, \mathbb{F}_{0 \nu}[h]+\delta_\nu^0 \, \mathbb{F}_{\mu k}[h]+\delta_\nu^k \, \mathbb{F}_{\mu 0}[h], \quad &&1 \leq k \leq 3, \\
\mathbb{F}_{\mu \nu}\big[ \widehat{\Omega}_{jk} h \big]&:=\big(v^j \partial_{v^k}-v^k \partial_{v^j}\big)\mathbb{F}_{\mu \nu}[h]+\delta_\mu^j \, \mathbb{F}_{k \nu}[h]- \delta_\mu^k \, \mathbb{F}_{j \nu}[h]+\delta_\nu^j \, \mathbb{F}_{\mu k}[h]- \delta_\nu^k \, \mathbb{F}_{\mu j}[h] , \quad \;  1 \leq && \, j<k \leq 3.
\end{alignat*}
Otherwise $\widehat{Z}_\infty \in \{ \partial_t^\infty, \, \partial_{z^1}, \, \partial_{z^2}, \, \partial_{z^3}, \, S \}$ and $\mathbb{F}\big[ \widehat{Z}_\infty h \big]=0$.
\item Algebraic relations between the derivatives of $\widehat{v}^\mu\mathbb{F}_{\mu i}[h]$ and $\widehat{v}^\mu \mathbb{F}_{\mu \ell}[\widehat{Z}_\infty f_\infty]$, for $\widehat{Z}_\infty \in \K^\infty$ and $1 \leq i \leq 3$,
\begin{alignat*}{2}
\frac{\widehat{v}^\mu}{v^0}\mathbb{F}_{\mu i}\big[ \widehat{\Omega}^\infty_{0k} h \big](v)&:=v^0\partial_{v^k} \! \left( \frac{\widehat{v}^\mu}{v^0} \mathbb{F}_{\mu i}[h](v) \right)-\delta^k_i \, \widehat{v}^\ell  \frac{\widehat{v}^\mu}{v^0} \mathbb{F}_{\mu \ell}[h](v) , \\
\frac{\widehat{v}^\mu}{v^0}\mathbb{F}_{\mu i}\big[ \widehat{\Omega}_{ik} h \big](v) &:= \big(v^j \partial_{v^k}-v^k \partial_{v^j}\big)\! \left(\frac{\widehat{v}^\mu}{v^0} \mathbb{F}_{\mu i}[h](v) \! \right)+\delta_i^j \, \frac{\widehat{v}^\mu}{v^0} \mathbb{F}_{\mu k}[h](v)- \delta_i^k \, \frac{\widehat{v}^\mu}{v^0} \mathbb{F}_{\mu j}[h](v).
\end{alignat*}
\item Uniform bound for $\mathbb{F}[h]$,
$$ \forall \, v \in \R^3_v, \qquad \big| \mathbb{F} [h] \big|(v) \lesssim  \sup_{(z,p) \in \R^3 \times \R^3} \langle z \rangle^4 \, \langle p  \rangle^7 \big| h (z,p) \big|.$$ 
\end{enumerate}
\end{Pro}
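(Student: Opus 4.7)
The three parts have essentially independent proofs, linked by the integral representation of $\mathbb{F}$ in Definition \ref{DefFasympelectromagn} and the asymptotic results of the preceding subsections.

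Part (3) is the simplest: the uniform bound is a mild generalization of the pointwise estimate already obtained for $\widehat{Z}^\gamma_\infty f_\infty$ in Proposition \ref{Proinhom}. I plan to observe that the proof of the bound $|F^{\gamma,\mathrm{T}}|(t,t\widehat{v};1) \lesssim t^{-2}\overline{\mathbb{E}}_0[\widehat{Z}^\gamma_\infty f_\infty]$ in Proposition \ref{estimatesinh} uses only the $\overline{\mathbb{E}}_0[\cdot]$-weights and, by Proposition \ref{Proinhom}, the identity $t^2 F^{0,\mathrm{T}}(t,t\widehat{v};1)=\mathbb{F}[h](v)$ holds exactly for $t\geq 4\langle v\rangle^2$. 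Rerunning that estimate with a generic $h$ in place of $\widehat{Z}^\gamma_\infty f_\infty$ yields Part (3).

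For Part (1), the strategy combines Proposition \ref{Proasymp} with the pointwise identity, valid for any $C^1$ $2$-form $F$,
\begin{equation*}
\mathcal{L}_{\Omega_{0k}}(F)_{\mu\nu}(t,t\widehat{v}) = v^0\partial_{v^k}[F_{\mu\nu}(t,t\widehat{v})] + \widehat{v}^k t\partial_t[F_{\mu\nu}(t,t\widehat{v})] + \delta_\mu^0 F_{k\nu} + \delta_\mu^k F_{0\nu} + \delta_\nu^0 F_{\mu k} + \delta_\nu^k F_{\mu 0}
\end{equation*}
(the last four terms evaluated at $(t,t\widehat{v})$), obtained from the definition of the Lie derivative together with the chain rule expressions $v^0\partial_{v^k}[F(t,t\widehat{v})] = t\partial_{x^k}F(t,t\widehat{v}) - t\widehat{v}^k \widehat{v}^j\partial_{x^j}F(t,t\widehat{v})$ and $\partial_t[F(t,t\widehat{v})] = \partial_t F(t,t\widehat{v}) + \widehat{v}^j\partial_{x^j}F(t,t\widehat{v})$, and its analog for $\Omega_{jk}$ lacking the $t\partial_t$-term. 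Multiplying by $t^2$ and rewriting $t^2\widehat{v}^k t\partial_t[F(t,t\widehat{v})] = \widehat{v}^k t\partial_t[t^2 F(t,t\widehat{v})] - 2\widehat{v}^k t^2 F(t,t\widehat{v})$, I apply this to $F = F^{\mathrm{asymp}}[h]$ and let $t\to\infty$. By Proposition \ref{Proasymp}, the left-hand side converges to $\mathbb{F}_{\mu\nu}[\widehat{\Omega}^\infty_{0k}h](v)$; on the right, the $v^0\partial_{v^k}$ and $-2\widehat{v}^k$ pieces converge respectively to $v^0\partial_{v^k}\mathbb{F}_{\mu\nu}[h]$ and $-2\widehat{v}^k\mathbb{F}_{\mu\nu}[h]$, and the Kronecker-delta terms give the stated boundary contributions. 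The remaining $\widehat{v}^k t\partial_t[t^2 F^{\mathrm{asymp}}[h](t,t\widehat{v})]$ vanishes in the limit: by Proposition \ref{Proinhom}, $t^2 F^{0,\mathrm{inh}}(t,t\widehat{v};1)=\mathbb{F}[h](v)$ is exactly constant in $t$ for $t\geq 4\langle v\rangle^2$, while the homogeneous piece is controlled using Proposition \ref{Prohompart} together with Remark \ref{Rqhompart} (applied to $\partial_t F^{0,\mathrm{hom}}$, itself a homogeneous wave solution with good initial data), giving $|t\partial_t[t^2 F^{0,\mathrm{hom}}(t,t\widehat{v})]| = O(\langle v\rangle^4/t)$. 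For $\widehat{Z}_\infty \in \{\partial_t^\infty, \partial_{z^k}, \widehat{S}\}$, the vanishing $\mathbb{F}[\widehat{Z}_\infty h]=0$ is immediate from Definition \ref{DefFasympelectromagn}: the $\int_z\cdot\, dz$-integration kills $\partial_{z^k}$ and $\partial_t^\infty = -\widehat{v}\cdot\nabla_z$ (via integration by parts in $z$), while $\widehat{S} = z^i\partial_{z^i}+3$ on time-independent functions integrates to zero by the scaling symmetry of the measure $dz$ in dimension three. Part (2) then follows from Part (1) by contracting with $\widehat{v}^\mu/v^0$ and invoking $v^0\partial_{v^k}(\widehat{v}^\mu/v^0) = \delta_k^\mu/v^0 - 2\widehat{v}^\mu\widehat{v}^k/v^0$ for $\mu\geq 1$, $v^0\partial_{v^k}(\widehat{v}^0/v^0) = -\widehat{v}^k/v^0$, together with the antisymmetry-based identity $\widehat{v}^\mu\widehat{v}^\nu\mathbb{F}_{\mu\nu}[h]=0$; the latter yields $\widehat{v}^\mu\mathbb{F}_{\mu 0}[h]/v^0 = -\widehat{v}^\ell\widehat{v}^\mu\mathbb{F}_{\mu\ell}[h]/v^0$, producing precisely the single boundary term appearing in the claim.

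The \emph{main obstacle} is justifying that $\widehat{v}^k t\partial_t[t^2 F^{\mathrm{asymp}}[h](t,t\widehat{v})]$ vanishes in the limit $t\to\infty$. The inhomogeneous piece is immediate from Proposition \ref{Proinhom}, but the homogeneous piece forces a decay estimate for $\partial_t F^{0,\mathrm{hom}}$ well inside the light cone, which is obtained by commuting $\partial_t$ with the wave equation $\Box F^{0,\mathrm{hom}}=0$ and invoking Proposition \ref{Proinidataasymp} at one higher derivative order to control the resulting initial data. Once this decay is secured, the remainder of the argument is algebraic.
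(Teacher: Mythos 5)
Your proof is correct and follows essentially the same route as the paper: the Lie--derivative identity along $t\mapsto(t,t\widehat{v})$ for Part (1), the contraction with $\widehat{v}^\mu/v^0$ together with the antisymmetry identity $\widehat{v}^\mu\widehat{v}^\nu\mathbb{F}_{\mu\nu}[h]=0$ for Part (2), and the bound of Proposition \ref{Proinhom} for Part (3) are exactly the paper's ingredients. The one place where you work harder than necessary is the term you single out as the main obstacle. Since $(SG_{\mu\nu})(t,t\widehat{v})=t\partial_t\big[G_{\mu\nu}(t,t\widehat{v})\big]$ and $\mathcal{L}_{S}(G)_{\mu\nu}=S(G_{\mu\nu})+2G_{\mu\nu}$, your quantity $t\partial_t\big[t^2F^{\mathrm{asymp}}[h]_{\mu\nu}(t,t\widehat{v})\big]$ is \emph{identically} equal to $t^2\mathcal{L}_{S}\big(F^{\mathrm{asymp}}[h]\big)_{\mu\nu}(t,t\widehat{v})$, which converges to $\mathbb{F}_{\mu\nu}\big[\widehat{S}h\big]=0$ by the convergence estimate \eqref{eq:corconvasymp} applied with $Z^\gamma=S$; this is how the paper dispatches it in one line, with no separate treatment of the homogeneous part. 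Your direct estimate of $t\partial_t\big[t^2F^{0,\mathrm{hom}}(t,t\widehat{v})\big]$ via Proposition \ref{Prohompart} and Remark \ref{Rqhompart} is nevertheless valid (the exact power of $\langle v\rangle$ is immaterial for a fixed-$v$ limit). Do note that both your argument and the paper's require controlling one commuted derivative of $F^{\mathrm{asymp}}[h]$, hence two derivatives of $h$, whereas the hypothesis is only $\overline{\mathbb{E}}_{1}[h]<+\infty$; you should therefore record the paper's preliminary reduction, by density, to the case $\overline{\mathbb{E}}_{2}[h]<+\infty$.
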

\begin{proof}
By a density argument, we can reduce the proof to the case $\overline{\mathbb{E}}_{2}[ h] <+\infty$. For any sufficiently regular $2$-form $G$, we have
\begin{align*}
 v^0 \partial_{v^k}\!\left(G_{\mu \nu}(t,t\widehat{v})\right) & = t\left(\delta_k^i-\widehat{v}^k \widehat{v}^i \right) \partial_{x^i} \! \left( G_{\mu \nu} \right)(t,t\widehat{v})   =  \left(\Omega_{0k} G_{\mu \nu} \right)\!(t,t\widehat{v})-t\widehat{v}^k\partial_t \! \left(G_{\mu \nu} \right)\!(t,t\widehat{v})-t\widehat{v}^k\widehat{v}^i\partial_{x^i} \! \left( G_{\mu \nu} \right)\!(t,t\widehat{v}) \\
 &= \left(\Omega_{0k} G_{\mu \nu} \right)(t,t\widehat{v})-\widehat{v}^k \left( SG_{\mu \nu} \right)(t,t\widehat{v})
 \end{align*}
as well as
$$
\mathcal{L}_{S}(G)_{\mu \nu} = S \left(  G_{\mu \nu}\right)+2G_{\mu \nu}, \qquad \mathcal{L}_{\Omega_{0k}}(G)_{\mu \nu} = \Omega_{0k} \left(  G_{\mu \nu} \right)+\delta_\mu^0  G_{k \nu}+ \delta_\mu^k G_{0 \nu}+\delta_\nu^0  G_{\mu k}+ \delta_\nu^k G_{\mu 0} .
$$
Applying these relations for $G= F^{\mathrm{asymp}}[h]$ and using the convergence estimate \eqref{eq:corconvasymp} for $f_\infty=h$, we get the first identity as $\mathbb{F}[\widehat{S}h]=0$. The case of the angular derivatives can be treated similarly since
$$ (v^j \partial_{v^k}-v^k \partial_{v^j})\left(G_{\mu \nu}(t,t\widehat{v})\right)  =  \left(\Omega_{jk} G_{\mu \nu} \right)(t,t\widehat{v}), \qquad \mathcal{L}_{\Omega_{jk}}(G)_{\mu \nu} = \Omega_{jk} \left(  G_{\mu \nu} \right)+\delta_\mu^j  G_{k \nu}- \delta_\mu^k G_{j \nu}+\delta_\nu^j  G_{\mu k}- \delta_\nu^k G_{\mu j}.$$
Next, since $\widehat{v}^\mu /v^0=v^\mu/|v^0|^2$, we have
\begin{align*}
v^0\partial_{v^k}\! \left(\frac{\widehat{v}^\mu}{v^0} \mathbb{F}_{\mu i}[h](v) \! \right) & =\widehat{v}^\mu \partial_{v^k}\mathbb{F}_{\mu i}[h](v)-2\widehat{v}^k\frac{\widehat{v}^\mu}{v^0} \mathbb{F}_{\mu i}[h](v)+\frac{\widehat{v}^k}{v^0}\mathbb{F}_{0 i}[h](v)+\frac{1}{v^0}\mathbb{F}_{k i}[h](v) , \\
(v^j \partial_{v^k}-v^k \partial_{v^j})\! \left(\frac{\widehat{v}^\mu}{v^0} \mathbb{F}_{\mu i}[h](v) \! \right) & =\frac{\widehat{v}^\mu}{v^0} (v^j \partial_{v^k}-v^k \partial_{v^j})\mathbb{F}_{\mu i}[h](v)+\frac{\widehat{v}^j}{v^0}\mathbb{F}_{k i}[h](v)-\frac{\widehat{v}^k}{v^0}\mathbb{F}_{j i}[h](v).
\end{align*}
The relation for the asymptotic Lorentz force then ensues from the first part of the statement as well as $\widehat{v}^\mu G_{\mu 0}=-\widehat{v}^\ell \widehat{v}^\mu G_{\mu \ell}$, which holds for any antisymmetric matrix $G_{\mu \nu}$. Finally, the pointwise estimates are given by Proposition \ref{Proinhom}.
\end{proof}

We then deduce, using the expression of the Lie derivative \eqref{keva:defLie}, the following identity.
\begin{Cor}\label{CorLieasympelec}
View as the $2$-form $(t,x) \mapsto \mathbb{F}[h](x)$, the asymptotic electromagnetic field verifies
$$ \mathcal{L}_{\Omega_{ij}} \mathbb{F}[h]=\mathbb{F} \big[ \widehat{\Omega}_{ij}h \big], \qquad \qquad 1 \leq i < j \leq 3.$$
\end{Cor}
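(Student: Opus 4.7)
The strategy is to unfold the Lie derivative definition \eqref{keva:defLie} on the left-hand side and then recognize the result as the algebraic identity of Proposition \ref{Proasympelec}, part~$(1)$, with $v$ relabelled as~$x$.

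First I would write out the cartesian components of $\Omega_{ij}$, namely $\Omega_{ij}^\lambda = x^i \delta_j^\lambda - x^j \delta_i^\lambda$, and observe that $\Omega_{ij}$ has no $\partial_t$ component and no spatial derivative along $t$ either, so that all the ``time coordinate'' terms disappear. Consequently
\[
\partial_\mu \big( \Omega_{ij}^\lambda \big) = \delta_\mu^i \delta_j^\lambda - \delta_\mu^j \delta_i^\lambda, \qquad 0 \leq \mu, \, \lambda \leq 3 .
\]
Since $\mathbb{F}[h]$ depends only on the spatial variable $x$ (viewed as the velocity variable $v$ of $\mathbb{F}[h](\cdot)$) and not on $t$, the formula \eqref{keva:defLie} gives
\begin{align*}
\mathcal{L}_{\Omega_{ij}} \mathbb{F}[h]_{\mu \nu}(x) & = \Omega_{ij}\big( \mathbb{F}[h]_{\mu \nu} \big)(x) + \partial_\mu ( \Omega_{ij}^\lambda) \, \mathbb{F}[h]_{\lambda \nu}(x) + \partial_\nu ( \Omega_{ij}^\lambda) \, \mathbb{F}[h]_{\mu \lambda}(x) \\
& = \big( x^i \partial_{x^j} - x^j \partial_{x^i} \big) \mathbb{F}_{\mu \nu}[h](x) + \delta_\mu^i \mathbb{F}_{j\nu}[h](x) - \delta_\mu^j \mathbb{F}_{i \nu}[h](x) + \delta_\nu^i \mathbb{F}_{\mu j}[h](x) - \delta_\nu^j \mathbb{F}_{\mu i}[h](x).
\end{align*}

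The plan then is to compare this with the identity from Proposition \ref{Proasympelec} part $(1)$ (for the angular homogeneous vector field), which reads
\[
\mathbb{F}_{\mu \nu}\big[ \widehat{\Omega}_{ij} h \big](v) = \big( v^i \partial_{v^j} - v^j \partial_{v^i} \big) \mathbb{F}_{\mu \nu}[h](v) + \delta_\mu^i \mathbb{F}_{j\nu}[h](v) - \delta_\mu^j \mathbb{F}_{i \nu}[h](v) + \delta_\nu^i \mathbb{F}_{\mu j}[h](v) - \delta_\nu^j \mathbb{F}_{\mu i}[h](v) .
\]
Substituting $v = x$, the two right-hand sides coincide term-by-term, which yields the stated identity. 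Since Proposition \ref{Proasympelec} is already established and the Lie derivative formula is purely algebraic, there is no genuine obstacle here; the only care needed is in bookkeeping the indices (in particular noticing that the inhomogeneous pieces $\partial_\mu(\Omega_{ij}^\lambda) \mathbb{F}[h]_{\lambda \nu}$ generated by the Lie derivative precisely reproduce the Kronecker-delta terms coming from the action of $\widehat{\Omega}_{ij}$ on the integrand defining $\mathbb{F}[h]$). The proof is thus a two-line identification once the Lie derivative is expanded.
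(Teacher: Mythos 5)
Your proof is correct and follows exactly the route the paper intends: the paper derives this corollary by expanding the Lie derivative via \eqref{keva:defLie} and matching the resulting Kronecker-delta terms against the algebraic identity for $\mathbb{F}[\widehat{\Omega}_{ij}h]$ from Proposition \ref{Proasympelec}, which is precisely your computation with $v$ relabelled as $x$. No gaps.
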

Let us mention, even if it will not be useful in this article, that as for the asymptotic Lorentz force, the structure of the modified coefficients $\C_{t,v}^i$ is also preserved by differentiation (see \cite[Lemma~$6.33$]{scat}).

We end this subsection by proving a convergence result for $t^2\mathcal{L}_{Z^\gamma}F^{\mathrm{asymp}}[f_\infty]$ along $t \mapsto (t,x+t\widehat{v})$. For this, we will make use of the following lemma, which will allow us to transform $t-r$ decay into time decay along timelike straight lines.  
\begin{Lem}\label{Lemxvt}
Let $(x,v) \in \R^3_x \times \R^3_v$. Then,
$$ \forall \, 0 \leq t \leq 16 \, \langle x \rangle \, \langle v \rangle^2, \quad 1 \leq 16 \frac{\langle x \rangle \, \langle v \rangle^2}{\langle t \rangle}, \qquad \qquad \forall \, t \geq  16 \,\langle x \rangle|v^0|^2, \qquad t-|x+t\widehat{v}| \geq \frac{t}{4 \langle v \rangle^2} \geq 2t_0.$$
\end{Lem}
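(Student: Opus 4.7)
Both inequalities rest on the single quantitative bound
\[
1 - |\widehat{v}| \;=\; \frac{\langle v \rangle - |v|}{\langle v \rangle} \;=\; \frac{1}{\langle v \rangle(\langle v \rangle + |v|)} \;\geq\; \frac{1}{2\langle v \rangle^2},
\]
which encodes the gap between the relativistic speed of a massive particle and the speed of light. There is no real obstacle; this is an elementary computation that I will split into the two stated regimes.

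In the small-time regime $0 \leq t \leq 16\langle x \rangle\langle v \rangle^2$, the first inequality follows from the crude estimate $\langle t\rangle \leq 1+t$ combined with the trivial bound $\langle x\rangle\langle v\rangle^2 \geq 1$: one obtains $\langle t \rangle \leq 1 + 16\langle x\rangle \langle v\rangle^2 \leq 17\langle x\rangle \langle v\rangle^2$, which gives the claim up to a harmless adjustment of the absolute constant.

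In the large-time regime $t \geq 16\langle x\rangle \langle v\rangle^2$, I apply the triangle inequality $|x+t\widehat{v}| \leq |x| + t|\widehat{v}|$ together with the displayed bound above to get
\[
t - |x+t\widehat{v}| \;\geq\; t(1-|\widehat{v}|) - |x| \;\geq\; \frac{t}{2\langle v\rangle^2} - \langle x\rangle.
\]
The hypothesis $t \geq 16\langle x\rangle \langle v\rangle^2$ gives $\langle x\rangle \leq t/(16\langle v\rangle^2) \leq \tfrac{1}{4} \cdot t/(4\langle v\rangle^2)$, which is small enough to be absorbed into half of the main term, leaving $t/(4\langle v\rangle^2)$ as the required lower bound. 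The final inequality $t/(4\langle v\rangle^2) \geq 2t_0$ is then immediate from $t \geq 16 \langle x\rangle \langle v\rangle^2 \geq 16 \langle v\rangle^2 \geq 8 t_0 \langle v\rangle^2$, using $t_0 \leq 2$.
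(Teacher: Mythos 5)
Your argument is correct and takes exactly the paper's route: the whole lemma reduces to $1-|\widehat{v}| = \frac{1}{\langle v\rangle(\langle v\rangle + |v|)} \geq \frac{1}{2\langle v\rangle^2}$, and for the second inequality the paper substitutes $|x| \leq \tfrac{1}{2}(1-|\widehat{v}|)t$ directly where you instead absorb $\langle x\rangle \leq t/(16\langle v\rangle^2)$ at the end; these are the same computation phrased differently. You are also right to flag the constant in the first inequality: your estimate $\langle t\rangle \leq 17\langle x\rangle\langle v\rangle^2$ is what the method gives, and the stated constant $16$ in fact fails at $(t,x,v)=(16,0,0)$ where $\langle t\rangle=\sqrt{257}>16$; this is harmless because the lemma is only ever invoked through $\lesssim$ estimates, as in Corollary \ref{Corconvasymp}.
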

\begin{proof} The first inequality is straightforward. For the second one, use $1 \leq t_0 \leq 2$ as well as
\begin{equation*}
 \forall \, t \geq 4|x|\, \langle v \rangle^2, \qquad t \geq \frac{2|x|}{1-|\widehat{v}|},  \qquad  t-|x+t\widehat{v}| \geq t-\frac{1-|\widehat{v}|}{2}t-|\widehat{v}|t = t-\frac{1+|\widehat{v}|}{2}t \geq \frac{t}{4 \langle v \rangle^2}.
\end{equation*}
\end{proof}
\begin{Cor}\label{Corconvasymp}
Let $|\gamma| \leq N_0$. Then, for all $(t,x,v) \in [t_0,+\infty[ \times \R^3_x \times \R^3_v$,
$$  \big|t^2 \mathcal{L}_{Z^\gamma}\big(F^{\mathrm{asymp}}[f_\infty] \big)(t,x+t\widehat{v})-\mathbb{F} \big[ \widehat{Z}^\gamma_\infty f_\infty](v) \big| \lesssim \langle t \rangle^{-\frac{1}{2}} \, \langle x \rangle^{\frac{3}{2}} \, \langle v \rangle^3 \, \overline{\mathbb{E}}_{N_0+1}[f_\infty].$$
\end{Cor}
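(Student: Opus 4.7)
The idea is to dichotomise according to whether the distance from the light cone is large or small relative to $\langle v \rangle^{-2}$, using the threshold $t \sim 16\langle x \rangle \langle v \rangle^2$ provided by Lemma \ref{Lemxvt}. When $t \leq 16 \langle x \rangle \langle v \rangle^2$, the conclusion follows from the triangle inequality: the first assertion of Proposition \ref{Proasymp} combined with $t+|x+t\widehat{v}| \geq t$ gives $t^2|\mathcal{L}_{Z^\gamma}F^{\mathrm{asymp}}[f_\infty]|(t,x+t\widehat{v}) \lesssim \langle t \rangle\,\overline{\mathbb{E}}_{N_0+1}[f_\infty]$, whereas Proposition \ref{Proinhom} bounds $|\mathbb{F}[\widehat{Z}^\gamma_\infty f_\infty]|(v)$ by $\overline{\mathbb{E}}_{N_0}[f_\infty]$. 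The elementary inequality $\langle t \rangle^{3/2} \lesssim \langle x \rangle^{3/2}\langle v \rangle^3$, which holds in this regime, converts both of these into the required $\langle t \rangle^{-1/2}\langle x \rangle^{3/2}\langle v \rangle^3$-bound.

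In the complementary regime $t \geq 16\langle x \rangle\langle v \rangle^2$, the point $y := x+t\widehat{v}$ lies deep in the interior of the light cone, with $t-|y| \geq t/(4\langle v \rangle^2) \geq 2t_0$ by Lemma \ref{Lemxvt}. I would use the splitting $\mathcal{L}_{Z^\gamma}F^{\mathrm{asymp}}[f_\infty] = F^{\gamma,\mathrm{hom}} + F^{\gamma,\mathrm{inh}}$ of Definition \ref{Defdecompasymp}. Proposition \ref{Prohompart} controls the homogeneous part by $t^2 |F^{\gamma,\mathrm{hom}}|(t,y) \lesssim \langle v \rangle^4\langle t\rangle^{-1}\overline{\mathbb{E}}_{N_0+1}[f_\infty]$, which is acceptable since the regime condition yields $\langle v \rangle \leq \langle t \rangle^{1/2}\langle x \rangle^{-1/2} \leq \langle t\rangle^{1/2}\langle x\rangle^{3/2}$. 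For the inhomogeneous part, $t-|y|>2t_0$ makes the remainder of Proposition \ref{Proinhom} vanish, giving the exact identity
\[
  t^2 F^{\gamma,\mathrm{inh}}(t,y;t_0) = \mathbb{F}\big[\widehat{Z}^\gamma_\infty f_\infty\big]\big(\widecheck{\widehat{v}+x/t}\,\big).
\]
It then remains to estimate $|\mathbb{F}[\widehat{Z}^\gamma_\infty f_\infty](\widecheck{\widehat{v}+x/t}) - \mathbb{F}[\widehat{Z}^\gamma_\infty f_\infty](v)|$. I would parameterise the segment $s \mapsto \widecheck{\widehat{v}+sx/t}$, $s \in [0,1]$, and apply the fundamental theorem of calculus. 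The regime hypothesis $|x|/t \leq 1/(16\langle v\rangle^2)$ ensures $1-|\widehat{v}+sx/t|^2 \geq (2\langle v\rangle^2)^{-1}$ uniformly on the path, hence $|D\widecheck{\,\cdot\,}| \lesssim \langle v\rangle^3$ and $\langle \widecheck{\widehat{v}+sx/t}\,\rangle \asymp \langle v\rangle$. Combined with a pointwise bound $|\nabla_v\mathbb{F}[\widehat{Z}^\gamma_\infty f_\infty]|(w) \lesssim \overline{\mathbb{E}}_{N_0+1}[f_\infty]/\langle w\rangle$, this yields a difference of size $|x|\langle v\rangle^2/t \cdot \overline{\mathbb{E}}_{N_0+1}[f_\infty]$, which fits inside the target bound since $\langle t\rangle^{1/2}\langle x\rangle^{1/2}\langle v\rangle \geq 1$.

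The main subtlety lies in this final comparison: one must both keep the curve $s \mapsto \widecheck{\widehat{v}+sx/t}$ in a region where $1-|w|^2$ does not collapse (which the regime threshold handles) and extract pointwise control on $\nabla_v\mathbb{F}[\widehat{Z}^\gamma_\infty f_\infty]$. The latter is not a direct consequence of the pointwise estimate of Proposition \ref{Proinhom}; instead I would solve for $v^0\partial_{v^k}\mathbb{F}[h]$ in the first algebraic identity of Proposition \ref{Proasympelec} and conclude $|\nabla_v\mathbb{F}[h]|(w) \lesssim \overline{\mathbb{E}}_1[h]/\langle w\rangle$ applied to $h = \widehat{Z}^\gamma_\infty f_\infty$. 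This costs exactly one extra order of derivatives of $f_\infty$, which is why the hypothesis involves $\overline{\mathbb{E}}_{N_0+1}$ rather than $\overline{\mathbb{E}}_{N_0}$, in line with Proposition \ref{Proasymp}.
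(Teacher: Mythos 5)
Your argument is correct and follows essentially the same route as the paper: the same dichotomy at $t\sim 16\langle x\rangle\langle v\rangle^{2}$ from Lemma \ref{Lemxvt}, the same crude triangle-inequality bound near the cone, and in the deep interior the same comparison of $\mathbb{F}[\widehat{Z}^\gamma_\infty f_\infty]$ at $\widecheck{(x+t\widehat{v})/t}$ and at $v$ along the segment $\tau\mapsto \tau x+t\widehat{v}$, with $v^0\nabla_v\mathbb{F}$ controlled by the algebraic identities of Proposition \ref{Proasympelec} (where you re-derive the interior convergence estimate from the hom/inh splitting, the paper simply quotes the second bullet of Proposition \ref{Proasymp}). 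The one point to add is the case where $Z^\gamma$ contains a translation or $S$: Proposition \ref{Proinhom} does not then supply the exact identity for $F^{\gamma,\mathrm{inh}}$, but $\mathbb{F}[\widehat{Z}^\gamma_\infty f_\infty]=0$ and the improved decay $\langle t-|x|\rangle^{-1-\gamma_T}$ of Proposition \ref{Proasymp} closes the estimate directly, as the paper notes at the outset.
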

\begin{Rq}
By loosing more powers of $|x|$ and $|v|$, we could improve the rate of convergence to $t^{-1}$.
\end{Rq}
\begin{proof}
If $Z^\gamma$ is composed by at least one translation $\partial_{x^\lambda}$, so that $\gamma_T \geq 1$, or the scaling $S$, the result follows from $\mathbb{F} \big[ \widehat{Z}^\gamma_\infty f_\infty]=0$, the decay estimates provided by Proposition \ref{Proasymp} and the previous Lemma \ref{Lemxvt}. Otherwise, $Z^\gamma$ is only composed by boosts and rotations $\Omega_{\lambda k}$. 
\begin{itemize}
\item If $0 \leq t \leq 16 \langle x \rangle \, \langle v \rangle^2$, we perform the crude estimate 
$$ \big|t^2 \mathcal{L}_{Z^\gamma}\big(F^{\mathrm{asymp}}[f_\infty] \big)(t,x+t\widehat{v})-\mathbb{F} \big[ \widehat{Z}_\infty^\gamma f_\infty \big](v) \big| \lesssim  (t+1)\overline{\mathbb{E}}_{N_0+1}[f_\infty],$$
obtained by controlling the first term through the first pointwise decay estimate of Proposition \ref{Proasymp} and the second one by Proposition \ref{Proinhom}. It remains to apply Lemma \ref{Lemxvt}. 
\item Otherwise, using again Lemma \ref{Lemxvt}, we have $t-|x+t\widehat{v}| \geq \frac{t}{4 \langle v \rangle^2} \geq 2t_0$, so that the precise estimate for $\mathcal{L}_{Z^\gamma}F^{\mathrm{asymp}}[f_\infty]$, for the interior of the light cone, given by Proposition \ref{Proasymp} implies
$$ \bigg|t^2 \mathcal{L}_{Z^\gamma}\big( F^{\mathrm{asymp}}[f_\infty] \big)(t,x+t\widehat{v}) - \mathbb{F}\big[ \Omega_v^\gamma f_\infty \big] \bigg( \frac{\widecheck{  x+t\widehat{v} }}{t} \bigg)  \bigg| \lesssim \frac{\overline{E}_{N_0+1}[f_\infty] \, \langle v \rangle^3}{t^{\frac{1}{2}}}.$$
Note now that for all $|x|<t$,
\begin{equation*}
\frac{\widecheck{ \; x \;}}{t} = \frac{x}{\sqrt{t^2-|x|^2}}, \qquad \left\langle \frac{\widecheck{ \; x \;}}{t} \right\rangle = \frac{t}{\sqrt{t^2-|x|^2}}, 
\end{equation*}
so that, for any $\psi \in H^1_{\mathrm{loc}}(\R^3_v,\R)$ and $1 \leq \ell \leq 3$,
$$
\partial_{x^\ell} \bigg( \psi \bigg( \frac{\widecheck{\; x \;}}{t} \bigg) \bigg) =\frac{1}{\sqrt{t^2-|x|^2}} \big[ \partial_{v^\ell} \psi \big] \bigg( \frac{\widecheck{\; x \;}}{t} \bigg) + \frac{x^\ell x}{\sqrt{t^2-|x|^2}^{\,3}} \cdot \big[ \nabla_v \psi \big] \bigg( \frac{\widecheck{\; x \;}}{t} \bigg). $$
We then deduce
$$ \bigg| \nabla_x \bigg[ \mathbb{F}\big[ \widehat{Z}_\infty^\gamma f_\infty \big] \bigg( \frac{\widecheck{\; x \;}}{t} \bigg) \bigg] \bigg| \lesssim \frac{1}{t} \left( 1+\frac{|x|^2}{t^2-|x|^2} \right) \bigg[v^0 \nabla_v \mathbb{F}\big[  \widehat{Z}_\infty^\gamma f_\infty \big]  \bigg]\bigg( \frac{\widecheck{\; x \;}}{t} \bigg) \leq  \frac{1}{t-|x|}  \bigg[v^0 \nabla_v \mathbb{F}\big[  \widehat{Z}_\infty^\gamma f_\infty \big]  \bigg]\bigg( \frac{\widecheck{\; x \;}}{t} \bigg) .$$
Applying the divergence theorem and using the algebraic relations between the derivatives of $\mathbb{F}[f_\infty]$ provided by Proposition \ref{Proasympelec}, we get
\begin{equation}\label{kevatalenn:Fasymp}
\bigg| \mathbb{F}\big[ \widehat{Z}_\infty^\gamma f_\infty \big] \bigg( \frac{\widecheck{  x+t\widehat{v} }}{t} \bigg)-\mathbb{F}\big[ \widehat{Z}_\infty^\gamma f_\infty \big] (v)\bigg| \leq \frac{|x|}{t} \sup_{|\xi| \leq |\gamma|+1} \sup_{\tau \in [0,1]} \frac{1}{t-|\tau x+t\widehat{v}|}  \Big| \mathbb{F} \big[\widehat{Z}_\infty^\xi f_\infty \big] \Big| \bigg( \frac{\widecheck{  \tau x+t\widehat{v} }}{t} \bigg).
\end{equation}
Since $\|\mathbb{F}[\widehat{Z}_\infty^\xi f_\infty]\|_{L^\infty_v} \lesssim \overline{\mathbb{E}}_{N_0+1}[f_\infty]$ by Proposition \ref{Proasympelec}, it remains to remark 
$$t-|\tau x+t\widehat{v}| \geq \tau(t-| x+t\widehat{v}|)+(1-\tau)(t-t|\widehat{v}|)\gtrsim \tau t \, \langle v \rangle^{-2}+(1-\tau)t \, \langle v \rangle^{-2} =t \, \langle v \rangle^{-2}.$$
\end{itemize}
\end{proof}

In fact, in the nonlinear problem, we will only be interested in the rate of convergence of the Lorentz force, which turns out to be stronger (see Proposition \ref{ProconvasympLor} below).

\subsection{Study of the null properties of $F^{\mathrm{asymp}}[f_\infty]$ and of its radiation field}\label{Subsecnullproasymp} In view of the weak decay estimates verified by $F^{\mathrm{asymp}}[f_\infty]$ and by comparison with a generic solution to the vacuum Maxwell equations initially decaying as $r^{-2}$, one should, from a quick look, expect
\begin{itemize}
\item $F^{\mathrm{asymp}}[f_\infty]$ to admit a radiation field along $\mathcal{I}^+$ decaying merely as $\langle u \rangle^{-1}$.
\item Moreover, it is not clear if $r^2 \rho (F^{\mathrm{asymp}}[f_\infty])$ and $r^2 \sigma (F^{\mathrm{asymp}}[f_\infty])$ converge or not along null rays $r \mapsto (r+u,r\omega)$.
\end{itemize} 
In fact, a more thorough analysis will allow us to obtain stronger results than expected. We first establish improved estimates for the good null components of the solution to the asymptotic Maxwell equations \eqref{eq:defasympF}. Recall from \eqref{defFoverlin} the definition of $\overline{F}$.

\begin{Pro}\label{Pronullcompoasymp}
Let $|\gamma| \leq N_0$ and $|\xi| \leq N_0-1$. Then, for all $(t,x) \in [t_0+\infty[ \times \R^3$,
$$\big| \underline{\alpha} \big(\mathcal{L}_{Z^{\gamma}} F^{\mathrm{asymp}}[f_\infty]\big)\big|(t,x)  \lesssim \frac{\overline{\mathbb{E}}_{N_0+1}[f_\infty] }{\langle t+|x| \rangle^2}+\frac{\overline{\mathbb{E}}_{N_0+1}[f_\infty] }{\langle t+|x| \rangle \, \langle t-|x| \rangle^2}.
$$
and
$$
\big|\alpha \big(\mathcal{L}_{Z^{\xi}} F^{\mathrm{asymp}}[f_\infty]\big)\big|(t,x)+\big|\rho \big(\mathcal{L}_{Z^{\xi}}  F^{\mathrm{asymp}}[f_\infty]\big)\big|(t,x)+\big|\sigma \big(\mathcal{L}_{Z^{\xi}}  F^{\mathrm{asymp}}[f_\infty]\big)\big| (t,x)  \lesssim \frac{\overline{\mathbb{E}}_{N_0+1}[f_\infty] }{\langle t+|x| \rangle^2}
$$
Moreover, we have improved estimates in the exterior of the light cone,
$$ \forall \, |x| \geq t, \qquad \Big|\rho \Big(\mathcal{L}_{Z^{\xi}} \big( F^{\mathrm{asymp}}[f_\infty]-\overline{F} \,\big)\Big)\Big|(t,x)+\Big|\sigma \Big(\mathcal{L}_{Z^{\xi}} \big( F^{\mathrm{asymp}}[f_\infty]-\overline{F} \, \big)\Big)\Big|(t,x)  \lesssim \frac{\overline{\mathbb{E}}_{N_0+1}[f_\infty] }{\langle t+|x| \rangle^2 \, \langle t-|x| \rangle} .$$
Finally, if $|\beta| \leq N_0-2$, the best null component verifies
$$ \forall \, (t,x) \in [t_0+\infty[ \times \R^3 , \qquad \big|\alpha \big(\mathcal{L}_{Z^{\beta}}  F^{\mathrm{asymp}}[f_\infty]\big) \big|(t,x) \lesssim  \frac{\langle t-|x| \rangle +\log (\langle t \rangle ) }{\langle t+|x| \rangle^3 }   \, \overline{\mathbb{E}}_{N_0+1}[f_\infty].$$
\end{Pro}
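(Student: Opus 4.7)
The plan is to exploit the decomposition $\mathcal{L}_{Z^\gamma}F^{\mathrm{asymp}}[f_\infty] = F^{\gamma,\mathrm{hom}} + F^{\gamma,\mathrm{inh}}$ from Definition \ref{Defdecompasymp}, together with the null Maxwell equations satisfied by the Lie-commuted field, whose source is $J^{\mathrm{asymp}}[\widehat{Z}^\gamma_\infty f_\infty]$ by Proposition \ref{ProComMax}. The key structural inputs are that $J^{\mathrm{asymp}}_{e_A}=0$ and $|J^{\mathrm{asymp}}_L|\lesssim (t-r)/t^4$ on the interior by Lemma \ref{LemasympJpoint}, that the bulk part $F^{\gamma,T}$ of $F^{\gamma,\mathrm{inh}}$ satisfies $|F^{\gamma,T}|\lesssim t^{-2}\mathds{1}_{t>|x|}$ by Proposition \ref{estimatesinh}, and that the remaining part of $F^{\gamma,\mathrm{inh}}$ is localized near the light cone.

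For the $\underline{\alpha}$ bound I would combine the pointwise estimate of Proposition \ref{Prohompart} with Propositions \ref{estimatesinh} and \ref{Proinhom}. In the interior, the bulk Glassey--Strauss term $F^{\gamma,T}$ contributes the $\langle t+|x|\rangle^{-2}$ piece, the spherical and remainder terms (supported in $\{t-|x|\leq 2t_0\}$ where $\langle t-|x|\rangle\lesssim 1$) are absorbed into $\langle t+|x|\rangle^{-1}\langle t-|x|\rangle^{-2}$, and the bound on $F^{\gamma,\mathrm{hom}}$ already has that shape. In the exterior, $F^{\gamma,\mathrm{inh}}$ vanishes and the pure charge tail of Proposition \ref{Propurecharge} together with $Q\lesssim \overline{\mathbb{E}}_{N_0+1}[f_\infty]$ contributes to the $\langle t+|x|\rangle^{-2}$ term. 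Since $|\underline{\alpha}(G)|\leq |G|$ the claim follows.

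For the good null components $\alpha, \rho, \sigma$ at order $|\xi|\leq N_0-1$, I would apply Corollary \ref{Corgoodnull} to the Maxwell solution $\mathcal{L}_{Z^\xi}F^{\mathrm{asymp}}[f_\infty]$. Integrating $\nabla_L(r^2\rho)$ and $\nabla_L(r^2\sigma)$ in $\underline{u}$ along constant-$u$ lines from $t=t_0$, using Lemma \ref{LemasympJpoint} to bound $r^2|J^{\mathrm{asymp}}_L|$ and Proposition \ref{Proasymp} for the $r|\alpha(\mathcal{L}_{ZZ^\xi}F^{\mathrm{asymp}})|$ term on the right-hand side, together with the initial data control from Proposition \ref{Proinidataasymp}, yields $r^2(|\rho|+|\sigma|)\lesssim \overline{\mathbb{E}}_{N_0+1}$. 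The base $\alpha$ bound follows by integrating $\nabla_{\underline{L}}(r\alpha)_{e_A}$ via equation \eqref{null1}, using $J^{\mathrm{asymp}}_{e_A}=0$ and the $\rho,\sigma$ bounds just obtained. For the exterior improvement of $\rho-\rho(\overline{F})$ and $\sigma-\sigma(\overline{F})$, since $F^{\xi,\mathrm{inh}}=0$ there one reduces to analysing $F^{\xi,\mathrm{hom}}-\mathcal{L}_{Z^\xi}\overline{F} = (F^{\xi,\mathrm{hom}}-\mathcal{L}_{Z^\xi}\widetilde{F}) + \mathcal{L}_{Z^\xi}(\widetilde{F}-\overline{F})$; the second piece is supported in $\{1\leq |x|-t\leq 2\}$ by Proposition \ref{Propurechargetilde}, and the first, being a homogeneous wave solution with data decaying like $\langle x\rangle^{-3}$ by Proposition \ref{Proinidataasymp}, admits the additional $\langle t-|x|\rangle^{-1}$ factor by Kirchhoff's formula combined with the improved angular integration of Lemma \ref{estiWeiYang}.

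The main obstacle is the refined $\alpha$ estimate with logarithmic correction at order $|\beta|\leq N_0-2$. I would integrate $\nabla_{\underline{L}}(r\alpha)_{e_A}=r\slashed{\nabla}_{e_A}\rho+r\varepsilon^{AB}\slashed{\nabla}_{e_B}\sigma$ in $u$ at fixed $\underline{u}$, starting from $t=t_0$. Using Lemma \ref{LemComgoodderiv} the integrand is bounded by $\sup_{|\kappa|\leq 1}(|\rho|+|\sigma|)(\mathcal{L}_{Z^\kappa Z^\beta}F^{\mathrm{asymp}})\lesssim \langle t+|x|\rangle^{-2}$ from the previous step, which is precisely why the loss of two derivatives is needed. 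A direct integration only produces $|\alpha|\lesssim \langle t+|x|\rangle^{-2}$; the sharper $(\langle t-|x|\rangle+\log\langle t\rangle)/\langle t+|x|\rangle^3$ bound requires splitting the $u$-integration into a bulk region, where the bound is gained from the actual length $\langle t-|x|\rangle$ of the relevant segment, and a near-light-cone region, where the borderline integrability of $\langle t+|x|\rangle^{-2}$ against $du$ with $u$ varying along outgoing rays produces the logarithm, combined with control of the initial data term $r_0|\alpha|(t_0,r_0\omega)$ by the Proposition \ref{Proinidataasymp} estimate minus the pure charge part (which only contributes a compactly supported tail). Closing this step carefully is where most of the work lies.
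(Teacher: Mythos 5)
Your overall architecture (the decomposition of Definition \ref{Defdecompasymp}, the null Maxwell equations, the vanishing of $J^{\mathrm{asymp}}_{e_A}$, and the integration along null directions) is the right one, and your treatment of $\underline{\alpha}$ matches the paper's. But the central step — the $\langle t+|x|\rangle^{-2}$ bound for $\rho$ and $\sigma$ at order $N_0-1$ — does not close as you set it up. You propose to integrate $\nabla_L(r^2\rho)$ via Corollary \ref{Corgoodnull} in $\underline{u}$ along outgoing rays of constant $u$ starting at $t=t_0$. The right-hand side of Corollary \ref{Corgoodnull} contains $\sup_{|\kappa|\leq 1} r\,|\alpha(\mathcal{L}_{Z^\kappa}\mathcal{L}_{Z^\xi}F^{\mathrm{asymp}})|$, and the only a priori input, Proposition \ref{Proasymp}, gives $|\mathcal{L}_{Z^\gamma}F^{\mathrm{asymp}}|\lesssim\langle t+r\rangle^{-1}\langle t-r\rangle^{-1}$, hence $r|\alpha|\lesssim\langle u\rangle^{-1}$. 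Integrated over an outgoing ray of length $\sim\underline{u}$ this produces $r^2|\rho|\lesssim\underline{u}\,\langle u\rangle^{-1}$, i.e.\ only $|\rho|\lesssim\langle t+r\rangle^{-1}\langle t-r\rangle^{-1}$ near the light cone — no gain at all; the refined interior statement of Proposition \ref{Proasymp} does not help since the $t^{-2}\mathbb{F}[\cdot]$ term contributes $\int\langle\underline{u}'\rangle^{-1}d\underline{u}'$, which diverges logarithmically, and trying to first improve $\alpha$ runs into a circularity. The paper integrates in the \emph{opposite} null direction: it uses \eqref{null1} and \eqref{null3}--\eqref{null4}, which express $\nabla_{\underline{L}}$ of $\alpha$, $\rho$, $\sigma$ in terms of \emph{angular} derivatives of the good components and of $\underline{\alpha}$; by Lemma \ref{LemComgoodderiv} these angular derivatives gain a factor $r^{-1}$, so the integrand is $\lesssim\langle\underline{u}\rangle^{-1}|\mathcal{L}_{Z}(F^{\mathrm{asymp}}-\overline{F})|\lesssim\langle\underline{u}\rangle^{-2}\langle u\rangle^{-2}+\langle\underline{u}\rangle^{-3}\mathds{1}_{t>|x|}$, which is integrable in $u$ at fixed $\underline{u}$ with total $\langle\underline{u}\rangle^{-2}$. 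This choice of integration direction is the crux of the proof and cannot be swapped.

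The exterior improvement has a related gap. Your componentwise argument for $F^{\xi,\mathrm{hom}}-\mathcal{L}_{Z^\xi}\widetilde F$ (Kirchhoff plus Lemma \ref{estiWeiYang}) only yields $\langle t+r\rangle^{-1}\langle t-r\rangle^{-2}$, and $\mathcal{L}_{Z^\xi}(\widetilde F-\overline F)$ contributes $Q\langle t+r\rangle^{-1}$ on the shell $1\leq r-t\leq 2$; near the light cone both are far weaker than the claimed $\langle t+r\rangle^{-2}\langle t-r\rangle^{-1}$, and no component-blind wave estimate can see this gain. In the paper it again falls out of the $u$-integration: for $u=t-r<0$ one only integrates over $u'\leq u$, and $\int_{-\infty}^{u}\langle\underline{u}\rangle^{-2}\langle u'\rangle^{-2}\,du'\lesssim\langle\underline{u}\rangle^{-2}\langle u\rangle^{-1}$. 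This matters downstream: in the refined $\alpha$ estimate the logarithm arises precisely from integrating the exterior factor $\langle t+r\rangle^{-3}\langle u\rangle^{-1}$ (coming from this improvement fed into $\slashed{\nabla}\rho,\slashed{\nabla}\sigma$ in \eqref{null1}) over $u\in[2t_0-\underline{u},0]$, not, as you suggest, from a borderline $\langle t+|x|\rangle^{-2}$ integration along outgoing rays. Your outline of the last step is therefore structurally right but rests on the two estimates above, so the proposal as written does not go through.
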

\begin{proof}
Fix $(t,x) \in [t_0 , + \infty[ \times \R^3$ and remark that the result is a consequence of Proposition \ref{Proasymp} if $|x| \leq t/2+1$. Indeed, in this case, $\overline{F}(t,x)=0$ and $\langle t-|x| \rangle \gtrsim \langle t+|x| \rangle$. Assume then that $|x| \geq t/2+1$ and remark
\begin{align}
\hspace{-4mm} |\mathcal{L}_{Z^\gamma}\overline{F}|(t,x) & \lesssim Q|x|^{-2-\gamma_T} \mathds{1}_{|x| \geq t+1} , \label{eq:overlineF00} \\
\hspace{-4mm} \big| \mathcal{L}_{Z^\gamma} \big(F^{\mathrm{asymp}}[f_\infty] -\overline{F} \, \big) \big|(t,x)& \lesssim \langle t-|x| \rangle^{-2}\mathds{1}_{t >|x|}  \, \overline{\mathbb{E}}_{N_0+1}[f_\infty]+ \langle t+|x| \rangle^{-1} \, \langle t-|x| \rangle^{-2} \, \overline{\mathbb{E}}_{N_0+1}[f_\infty]. \nonumber
\end{align}
Indeed, the first estimate is given by Proposition \ref{Prorad} and the second one by \eqref{eq:estihomFtilde}, Proposition \ref{Propurechargetilde} as well as Proposition \ref{estimatesinh}. It implies the estimate for the null component $\underline{\alpha}$. Consider now $|\xi| \leq N_0-1$ and let us denote by $\zeta$ any of the null components $\alpha$, $\rho$ or $\sigma$. Then, using
\begin{itemize}
\item The null Maxwell equations \eqref{null1} and \eqref{null3}--\eqref{null4}, 
\item Lemma \ref{LemComgoodderiv}, allowing us to extract $r$ decay from the angular derivatives $\slashed{\nabla}$,
\item Lemma \ref{LemasympJpoint} and Proposition \ref{Propurecharge}, giving that the source term of the Maxwell equations verified by $\mathcal{L}_{Z^\xi} F^{\mathrm{asymp}}[f_\infty]$ (respectively $\mathcal{L}_{Z^\xi} \overline{F}$) decays as $ t^{-3}\overline{E}_{N_0-1}[f_\infty] \mathds{1}_{t >|x|} $ (respectively as $Q|x|^{-2} \mathds{1}_{|t-|x|| \leq 2}$),
\end{itemize}
we get, as $|x| \geq t/2+1$, so that $|x| \gtrsim \langle t+|x| \rangle$,
\begin{align*}
 \Big|\nabla_{\underline{L}}\zeta \Big( \mathcal{L}_{Z^\xi} \! \big(F^{\mathrm{asymp}}[f_\infty]-\overline{F} \big) \Big) \Big|(t,x) &\lesssim \frac{\overline{E}_{N_0-1}[f_\infty]}{\langle t+|x| \rangle^3}\mathds{1}_{t >|x|}+\frac{Q \, \mathds{1}_{|t-|x|| \leq 2}}{\langle t+|x| \rangle^2} \! + \! \sum_{|\beta| \leq N_0}\frac{ \big| \mathcal{L}_{Z^\beta} \! \big(F^{\mathrm{asymp}}[f_\infty]-\overline{F} \big) \big|(t,x)}{|x|},\\
 & \lesssim \frac{\overline{\mathbb{E}}_{N_0+1}[f_\infty] }{\langle t+|x| \rangle^{3} }\mathds{1}_{t >|x|}  + \frac{\overline{\mathbb{E}}_{N_0+1}[f_\infty] }{\langle t+|x| \rangle^{2} \, \langle t-|x| \rangle^{2}}. 
\end{align*}
Consider now
\begin{equation}\label{defphifornull}
 \phi (u,\underline{u}) := \zeta \Big( \mathcal{L}_{Z^\xi}  \big(F^{\mathrm{asymp}}[f_\infty]-\overline{F} \big)\Big)  \left( \frac{\underline{u}+u}{2}, \frac{\underline{u}-u}{2} \omega \right), \qquad \qquad |u| \leq \underline{u}, 
 \end{equation}
and remark that, in order to integrate in $u$, along a null ray of constant $t+r$, 
$$\left| \nabla_{\partial_u} \phi \right|(u,\underline{u}) = \frac{1}{2}\left| \nabla_{\underline{L}} \zeta \Big( \mathcal{L}_{Z^\xi} \big(F^{\mathrm{asymp}}[f_\infty]-\overline{F} \big) \Big) \right|\left( \frac{\underline{u}+u}{2}, \frac{\underline{u}-u}{2} \omega \right) \lesssim  \frac{\overline{\mathbb{E}}_{N_0+1}[f_\infty] }{\langle \underline{u} \rangle^3} \mathds{1}_{u >0} + \frac{\overline{\mathbb{E}}_{N_0+1}[f_\infty] }{\langle \underline{u} \rangle^{2} \, \langle u \rangle^{2}}.$$
Fix $t \geq t_0$, $r>0$, $\omega \in \mathbb{S}^2$ and write
$$  \left| \zeta \Big( \mathcal{L}_{Z^\xi} \big(F^{\mathrm{asymp}}[f_\infty]-\overline{F} \big) \Big) \right| (t,r\omega)=|\phi|(t-r,t+r)= \bigg|\phi(2t_0-t-r,t+r)+\int_{u=2t_0-t-r}^{t-r} \nabla_{\partial_u} \phi(u,t+r) \dr u \bigg|,$$
where the initial term is controlled by Proposition \ref{Proinidataasymp} as
$$ |\phi(2t_0-t-r,t+r) |= \left| \zeta \Big( \mathcal{L}_{Z^\xi} \big(F^{\mathrm{asymp}}[f_\infty]-\overline{F} \big) \Big) \right|\big(t_0,(t+r-t_0)\omega \big) \lesssim \langle t+r \rangle^{-3} \, \overline{\mathbb{E}}_{N_0+1}[f_\infty].$$  
For the integral, we have 
\begin{align*}
 \int_{u=2t_0-t-r}^{t-r} |\nabla_{\partial_u} \phi(u,t+r)| \dr u & \lesssim  \frac{\overline{\mathbb{E}}_{N_0+1}[f_\infty] }{\langle t+r \rangle^3} \int_{u=0}^{\max(0,t-r)} \dr u+  \frac{\overline{\mathbb{E}}_{N_0+1}[f_\infty] }{\langle t+r \rangle^2} \int_{u=2t_0-t-r}^{t-r} \frac{\dr u}{\langle u \rangle^2} \\[2pt]
 & \lesssim \left\{ 
	\begin{array}{ll}
        \langle t+r \rangle^{-2} \, \overline{\mathbb{E}}_{N_0+1}[f_\infty], \qquad & \text{if $t \geq r$}, \\
        \langle t+r \rangle^{-2}  \langle t-r \rangle^{-1}\, \overline{\mathbb{E}}_{N_0+1}[f_\infty], \qquad & \text{if $t \leq r$}.
    \end{array} 
\right.
 \end{align*}
Combining these last computations with \eqref{eq:overlineF00} yields all the stated estimates for the good null components but the last one on $\alpha$. Consider then $|\beta| \leq N_0-2$ and, as before, we use the null Maxwell equation \eqref{null1}, Lemma \ref{LemComgoodderiv} as well as Lemma \ref{LemasympJpoint} and Proposition \ref{Propurecharge}. The differences allowing to a better estimate, compared to the case of $\rho$ and $\sigma$, are that
\begin{itemize}
\item in \eqref{null1}, the bad derivative $\underline{L}$ of the null component $\alpha$ is expressed in terms of good derivatives $\slashed{\nabla}$ of the good components $\alpha$, $\rho$ and $\sigma$ as well as the angular components of the electromagnetic current.
\item Moreover, the angular components of $J^{\mathrm{asymp}}[h]$ vanish and the angular component of the source term of the Maxwell equations satisfied by $\mathcal{L}_{Z^\beta} \overline{F}$ decays as  $Q|x|^{-3} \mathds{1}_{1 \leq |x|-t \leq 2}$.
\end{itemize}
Using the estimates that we have just proved and $|x| \geq t/2+1$, we get
\begin{align*}
 \Big|\nabla_{\underline{L}}  \alpha \Big( \mathcal{L}_{Z^\beta} \! \big(F^{\mathrm{asymp}}[f_\infty]-\overline{F} \, \big) \Big) \Big|(t,x) &\lesssim \frac{Q}{|x|^3}\mathds{1}_{|t-|x|| \leq 2}+ \sum_{|\xi| \leq N_0-1} \sum_{\zeta \in \{ \alpha ,\rho , \sigma \}}\frac{\big|\zeta \big(\mathcal{L}_{Z^{\xi}} ( F^{\mathrm{asymp}}[f_\infty]-\overline{F} \, )\big)\big|(t,x)}{|x|},\\
 & \lesssim \frac{\overline{\mathbb{E}}_{N_0+1}[f_\infty] }{\langle t+|x| \rangle^{3} } \mathds{1}_{t > |x|} + \frac{\overline{\mathbb{E}}_{N_0+1}[f_\infty] }{\langle t+|x| \rangle^{3} \, \langle t-|x| \rangle}. 
\end{align*}
It allows us to improve the estimate for $\nabla_{\partial_u} \phi$, in the case $\xi=\beta$ and $\zeta=\alpha$, where the function $\phi$ is introduced in \eqref{defphifornull}. As a consequence, we have
\begin{align*}
 \int_{u=2t_0-t-r}^{t-r} |\nabla_{\partial_u} \phi(u,t+r)| \dr u & \lesssim  \frac{\overline{\mathbb{E}}_{N_0+1}[f_\infty] }{\langle t+r \rangle^3} \int_{u=0}^{\max(0,t-r)} \dr u+  \frac{\overline{\mathbb{E}}_{N_0+1}[f_\infty] }{\langle t+r \rangle^3} \int_{u=2t_0-t-r}^{t-r} \frac{\dr u}{\langle u \rangle} \\[2pt]
 & \lesssim \left\{ 
	\begin{array}{ll}
        \langle t+r \rangle^{-3} \, (\langle t-r \rangle +\log(\langle t+r \rangle)) \, \overline{\mathbb{E}}_{N_0+1}[f_\infty], \qquad & \text{if $t \geq r$}, \\
        \langle t+r \rangle^{-3}  \log(\langle t+r \rangle)\, \overline{\mathbb{E}}_{N_0+1}[f_\infty], \qquad & \text{if $t \leq r$},
    \end{array} 
\right.
 \end{align*}
from which we deduce an improved estimate for $\phi$. Finally, in order to conclude the proof, recall from Proposition \ref{Propurecharge} that $|\alpha (\mathcal{L}_{Z^\beta} \overline{F})|(t,x) \lesssim Q\langle t+r \rangle^{-3} \, \langle t-r \rangle$.
\end{proof}

These estimates, together with Lemma \ref{improderiv}, allow us to control the weighted energy of $\nabla_{t,x} F^{\mathrm{asymp}}[f_\infty]$.
\begin{Cor}\label{CorweightedcontrolFasymp}
Let $|\gamma| \leq N-1$. If $\gamma_T \geq 1$, we have $\mathcal{E}^K \big[ \mathcal{L}_{Z^\gamma} F^{\mathrm{asymp}}[f_\infty] \big](t) \lesssim \overline{\mathbb{E}}_{N_0+1}[f_\infty]$ for all $t \geq t_0$.
\end{Cor}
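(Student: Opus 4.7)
The plan is to combine the sharp null-component decay estimates of Proposition \ref{Pronullcompoasymp} with the gain of a factor $\langle t-|x|\rangle^{-1}$ afforded by Lemma \ref{improderiv}, which applies precisely because $\gamma_T \geq 1$. The motivation is that the raw pointwise bound on $\rho$, $\sigma$ or $\alpha$ from Proposition \ref{Pronullcompoasymp}, of order $\langle t+|x|\rangle^{-2}$, is not square-integrable against the weight $\langle t+r\rangle^{2}$ appearing in $\mathcal{E}^K$; the assumption $\gamma_T \geq 1$ is what turns this non-integrable behavior into an integrable one by extracting extra $t-r$ decay.

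Concretely, I would apply Lemma \ref{improderiv} to each null component $\zeta \in \{ \underline{\alpha}, \alpha, \rho, \sigma \}$ of $\mathcal{L}_{Z^\gamma} F^{\mathrm{asymp}}[f_\infty]$ to get
$$
\big|\zeta(\mathcal{L}_{Z^\gamma} F^{\mathrm{asymp}}[f_\infty])\big|(t,x) \lesssim \sup_{|\kappa| \leq |\gamma|}\bigg( \frac{|\zeta(\mathcal{L}_{Z^\kappa} F^{\mathrm{asymp}}[f_\infty])|(t,x)}{\langle t-|x|\rangle} + \frac{|\mathcal{L}_{Z^\kappa} F^{\mathrm{asymp}}[f_\infty]|(t,x)}{\langle t+|x|\rangle} \bigg).
$$
In the first term I would insert the null-component bounds of Proposition \ref{Pronullcompoasymp}: for $\zeta \in \{\alpha,\rho,\sigma\}$ this contributes $\overline{\mathbb{E}}_{N_0+1}[f_\infty] \, \langle t+|x|\rangle^{-2}\langle t-|x|\rangle^{-1}$, and for $\zeta = \underline{\alpha}$ we pick up an additional term $\overline{\mathbb{E}}_{N_0+1}[f_\infty]\,\langle t+|x|\rangle^{-1}\langle t-|x|\rangle^{-3}$. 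In the second term I would use the overall bound of Proposition \ref{Proasymp}, namely $|\mathcal{L}_{Z^\kappa} F^{\mathrm{asymp}}[f_\infty]| \lesssim \overline{\mathbb{E}}_{N_0+1}[f_\infty]\,\langle t+|x|\rangle^{-1}\langle t-|x|\rangle^{-1}$, producing again a contribution of $\langle t+|x|\rangle^{-2}\langle t-|x|\rangle^{-1}$ after division by $\langle t+|x|\rangle$.

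It then remains to integrate these pointwise estimates against the four weights $\langle t+r\rangle^2 |\alpha|^2$, $\langle t-r\rangle^2 |\underline{\alpha}|^2$ and $(\langle t+r\rangle^2+\langle t-r\rangle^2)(|\rho|^2+|\sigma|^2)$ defining $\mathcal{E}^K$. This reduces to the verification that integrals of the shape
$$
\int_{0}^{+\infty} \frac{r^2 \, \dr r}{\langle t+r\rangle^a \, \langle t-r\rangle^b}
$$
are bounded uniformly in $t$ for the relevant pairs $(a,b)$, namely $(4,0)$, $(2,2)$, $(3,2)$ and $(2,4)$; each such integral is finite, as one sees by splitting the range into $r \leq t/2$, $r \geq 2t$, and a neighborhood of $r=t$ where the change of variables $u = t-r$ reduces the estimate to the convergence of $\int \langle u\rangle^{-b} \dr u$. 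The only delicate point is the behaviour near the light cone, and this is precisely where the gain from Lemma \ref{improderiv}, conditional on $\gamma_T \geq 1$, is indispensable; without it, the contribution of $\rho$ or $\sigma$ to $\int \langle t+r\rangle^2|\zeta|^2 \dr x$ would diverge logarithmically.
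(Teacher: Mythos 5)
Your proof is correct and is exactly the argument the paper has in mind: the corollary is stated immediately after Proposition \ref{Pronullcompoasymp} with the remark that ``these estimates, together with Lemma \ref{improderiv}, allow us to control the weighted energy,'' and you have filled in precisely the bookkeeping that the paper elides, with the right exponent pairs in the final radial integrals. The only imprecision is a side remark: without the extra $\langle t-r\rangle^{-1}$ gained from Lemma \ref{improderiv}, the contribution of $\rho,\sigma$ to $\int \langle t+r\rangle^{2}|\cdot|^{2}\,\dr x$ diverges \emph{linearly} in $r$, not logarithmically; this does not affect the argument since you never use that remark.
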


We now prove a result which will be useful in order to rewrite the spherical divergence of $\underline{\alpha} (F^{\mathrm{asymp}}[f_\infty])$.
\begin{Lem}\label{Appendixdivergence}
Recall that $\Omega_{ij}$ are vector fields on $\mathbb{S}^2$. For any $1$-form $\beta :  T\mathbb{S}^2  \to  \R$ on the sphere $\mathbb{S}^2$,
$$ \slashed{\nabla} \cdot \beta = \sum_{1 \leq i < j \leq 3} \mathcal{L}_{\Omega_{ij}}(\beta)_{\Omega_{ij}}.$$
\end{Lem}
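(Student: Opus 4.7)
The plan is to reduce the identity to two algebraic facts about the rotational Killing fields on the round sphere: that $\sum_{i<j}\Omega_{ij}\otimes\Omega_{ij}$ coincides with the inverse round metric on $\mathbb{S}^2$, and that $\sum_{i<j}|\Omega_{ij}|^2$ is a constant function on $\mathbb{S}^2$.

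First I would expand the Lie derivative of the $1$-form $\beta$ along a vector field $X$ tangential to $\mathbb{S}^2$ in terms of the intrinsic covariant derivative $\slashed{\nabla}$ on the sphere,
\begin{equation*}
(\mathcal{L}_X\beta)_B \;=\; X^A\,\slashed{\nabla}_A\beta_B + \beta_A\,\slashed{\nabla}_BX^A,
\end{equation*}
so that evaluating at $Y=X$ yields
\begin{equation*}
\mathcal{L}_X(\beta)_X \;=\; X^AX^B\,\slashed{\nabla}_A\beta_B + \beta_A\,X^B\slashed{\nabla}_BX^A .
\end{equation*}
Since the restrictions of the $\Omega_{ij}$ to $\mathbb{S}^2$ are Killing for the round metric, the Killing equation $\slashed{\nabla}_A\Omega_{ij,B}+\slashed{\nabla}_B\Omega_{ij,A}=0$ allows me to rewrite the second term on the right-hand side as $-\tfrac{1}{2}\beta_A\slashed{\nabla}^A|\Omega_{ij}|^2$.

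Next I would establish the tensorial identity
\begin{equation*}
\sum_{1\leq i<j\leq 3}\Omega_{ij}^A\,\Omega_{ij}^B \;=\; \slashed{g}^{AB},
\end{equation*}
which I would prove by the purely algebraic computation
\begin{equation*}
\sum_{i<j}(\Omega_{ij}\cdot Y)(\Omega_{ij}\cdot Z) \;=\; |x|^{2}(Y\cdot Z)-(x\cdot Y)(x\cdot Z),\qquad Y,Z\in\R^3,
\end{equation*}
by expanding $(x^iY^j-x^jY^i)(x^iZ^j-x^jZ^i)$ and resumming; restricting to the unit sphere and to tangent vectors $Y,Z$ (for which $x\cdot Y=x\cdot Z=0$) gives precisely $Y\cdot Z$. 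Summing the Lie-derivative identity over $i<j$, symmetrising the first term via $X^AX^B\slashed{\nabla}_A\beta_B=\tfrac{1}{2}X^AX^B(\slashed{\nabla}_A\beta_B+\slashed{\nabla}_B\beta_A)$, and invoking the tensorial identity contracts the resulting symmetric two-tensor with $\slashed{g}^{AB}$, producing exactly $\slashed{\nabla}\cdot\beta$.

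The last step is to show that the residual contribution $-\tfrac{1}{2}\beta_A\slashed{\nabla}^A\!\sum_{i<j}|\Omega_{ij}|^2$ vanishes. Here a direct computation gives $|\Omega_{ij}|^2=(x^i)^2+(x^j)^2$ on $\mathbb{S}^2$, whence
\begin{equation*}
\sum_{1\leq i<j\leq 3}|\Omega_{ij}|^2 \;=\; 2\,|x|^2 \;=\; 2
\end{equation*}
is constant, so its intrinsic gradient is zero. This completes the identification of $\sum_{i<j}\mathcal{L}_{\Omega_{ij}}(\beta)_{\Omega_{ij}}$ with $\slashed{\nabla}\cdot\beta$. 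The only real obstacle is the symmetric-tensor identity $\sum_{i<j}\Omega_{ij}^A\Omega_{ij}^B=\slashed{g}^{AB}$; everything else is a short manipulation. If preferred, one may instead observe that the $\Omega_{ij}$ form a basis of $\mathfrak{so}(3)$ and that $\sum_{i<j}\Omega_{ij}\otimes\Omega_{ij}$ is the Casimir element, which is $\mathrm{SO}(3)$-invariant; by transitivity of $\mathrm{SO}(3)$ on $\mathbb{S}^2$ it is then enough to verify the identity at a single point such as the north pole, where only $\Omega_{13}$ and $\Omega_{23}$ survive and directly span the tangent space orthonormally.
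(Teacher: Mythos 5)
Your proof is correct, but it takes a genuinely different route from the paper. The paper's argument is a direct computation in spherical coordinates $(\theta,\varphi)$: it writes out $\Omega_{12},\Omega_{23},\Omega_{31}$ explicitly in the basis $(\partial_\theta,\partial_\varphi)$, uses that $\mathcal{L}_{\Omega_{ij}}(\beta)_{\Omega_{ij}}=\Omega_{ij}\big(\beta_{\Omega_{ij}}\big)$ since $[\Omega_{ij},\Omega_{ij}]=0$, expands each term in $\beta_{e_\theta},\beta_{e_\varphi}$, and checks that the three contributions resum to the coordinate formula $\partial_\theta(\beta_{e_\theta})+\cot(\theta)\beta_{e_\theta}+\sin^{-1}(\theta)\partial_\varphi(\beta_{e_\varphi})$ for $\slashed{\nabla}\cdot\beta$. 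You instead work intrinsically: the Cartan formula for $\mathcal{L}_X\beta$ plus the Killing equation isolates $\Omega_{ij}^A\Omega_{ij}^B\slashed{\nabla}_A\beta_B$ up to the gradient of $|\Omega_{ij}|^2$, and the two algebraic identities $\sum_{i<j}\Omega_{ij}^A\Omega_{ij}^B=\slashed{g}^{AB}$ (your polarization computation, or equivalently the Casimir of $\mathfrak{so}(3)$ checked at the north pole) and $\sum_{i<j}|\Omega_{ij}|^2=2$ finish the proof. Your version is coordinate-free, avoids the degeneration of spherical coordinates, and generalizes verbatim to $\mathbb{S}^{n-1}\subset\R^n$; the paper's version is more elementary and reuses the explicit coordinate expressions for the rotations already recorded elsewhere in the text. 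One small point worth making explicit in your write-up: the Euclidean and intrinsic norms of $\Omega_{ij}$ agree because $\Omega_{ij}$ is tangent to the sphere and the round metric is the induced one, and only the \emph{sum} $\sum_{i<j}|\Omega_{ij}|^2$ is constant, not the individual summands — which you do use correctly.
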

\begin{proof}
In a well-chosen spherical coordinate system $(\theta , \varphi)$, we have
\begin{equation*}
\Omega_{12} = \partial_{\varphi}, \qquad \Omega_{23} = - \sin \varphi \, \partial_{\theta}-\cot \theta \cos \varphi \, \partial_{\varphi} , \qquad \Omega_{31} =  \cos \varphi \, \partial_{\theta}-\cot \theta \sin \varphi \, \partial_{\varphi}  .
 \end{equation*}
 Recall that,
 \begin{equation}\label{eq:divspheric}
  \forall \, (\theta , \varphi)\in \, ]0, \pi[ \times ]0, 2 \pi [ \,, \qquad  \slashed{\nabla} \cdot \beta (\theta,\varphi)= \partial_{\theta} \big( \beta_{e_\theta} \big)+\cot (\theta) \beta_\theta+\frac{1}{\sin (\theta )} \partial_{\varphi} \big( \beta_{e_\varphi} \big).
  \end{equation}
 Remark now that
 $$ \forall \, 1 \leq i < j \leq 3 , \qquad \mathcal{L}_{\Omega_{ij}}(\beta)_{\Omega_{ij}}= \Omega_{ij} \big( \beta_{ \, \Omega_{ij}} \big)-\beta_{[\Omega_{ij},\Omega_{ij}]}= \Omega_{ij} \big( \beta_{\, \Omega_{ij}} \big).$$
As $(e_\theta , e_\varphi)=(\partial_\theta, \partial_\varphi / \sin \theta  )$, we have further
\begin{align*}
 \Omega_{12} \big( \beta_{\Omega_{12}} \big) \! & = \sin (\theta) \partial_{\varphi} \big( \beta_{e_\varphi} \big), \\
\Omega_{23} \big( \beta_{\Omega_{23}} \big) \! & = \sin^2 ( \varphi ) \partial_{\theta} \big( \beta_{e_\theta} \big)+\sin \varphi\cos \varphi \, \partial_{\theta} \big( \cos \theta \, \beta_{e_\varphi} \big)+\cot \theta \cos \varphi \, \partial_\varphi \big( \sin \varphi \, \beta_{e_\theta} \big)+\cos \theta \cos \varphi \, \partial_\varphi \big( \cos \varphi \, \beta_{e_\varphi} \big), \\
  \Omega_{13} \big( \beta_{\Omega_{13}} \big)\!  & = \cos^2 (\varphi) \partial_{\theta} \big( \beta_{e_\theta} \big) -\cos \varphi \sin \varphi \, \partial_{\theta} \big( \cos \theta \, \beta_{e_\varphi} \big) - \cot \theta \sin \varphi \, \partial_\varphi \big( \cos \varphi \, \beta_{e_\theta} \big)+\cos \theta \sin \varphi \, \partial_\varphi \big( \sin \varphi \, \beta_{e_\varphi} \big).
  \end{align*}
Finally, one can check that the sum of the last three terms is equal to the right hand side of \eqref{eq:divspheric}.
\end{proof}

We are now able to prove convergence results for the null components of $F^{\mathrm{asymp}}[f_\infty]$.

\begin{Pro}\label{Proradasymp}
Let $|\gamma| \leq N_0+1$. Then, the following properties hold.
\begin{enumerate}
\item $\mathcal{L}_{Z^\gamma}F^{\mathrm{asymp}}[f_\infty]$ has a radiation field $\underline{\alpha}^{\mathrm{asymp}}_\gamma [f_\infty]$ along future null infinity $\mathcal{I}^+$ and
\begin{align*}
\int_{\R_u} \int_{\mathbb{S}^2_\omega} \langle u \rangle^{\frac{5}{2}} \left| \underline{\alpha}^{\mathrm{asymp}}_\gamma [f_\infty] \right|^2\!(u,\omega) \dr \mu_{\mathbb{S}^2_\omega} \dr u & \lesssim \overline{\mathbb{E}}_{N_0+1}\big[ f_\infty \big] , \qquad |\gamma| \leq N_0+1, \\
  \sup_{(u,\omega) \in \R_u \times \mathbb{S}^2_\omega} \langle u \rangle^2 \left| \underline{\alpha}^{\mathrm{asymp}}_\gamma [f_\infty] \right|(u,\omega) & \lesssim \overline{\mathbb{E}}_{N_0+1}\big[  f_\infty \big] , \qquad |\gamma| \leq N_0.
\end{align*}
Moreover, for any $|\xi| \leq N_0-2$, we have, for all $\underline{u} \geq t_0$,
\begin{align*}
\sup_{(u,\omega) \in \R_u \times \mathbb{S}^2_\omega} \left| \big( r \underline{\alpha}(\mathcal{L}_{Z^\xi} F^{\mathrm{asymp}}[f_\infty]) \big)(u, \underline{u},\omega)-\underline{\alpha}^{\mathrm{asymp}}_\xi[f_\infty](u,\omega) \right| & \lesssim \overline{\mathbb{E}}_{N_0+1}\big[  f_\infty \big] \, \langle \underline{u} \rangle^{- 1}.
\end{align*}
\item For any $|\beta| \leq N_0-3$, there exists $\rho^{\mathrm{asymp}}_\beta [f_\infty], \, \sigma^{\mathrm{asymp}}_\beta [f_\infty] \in C^{0}\!\cap L^\infty(\R_u \times \mathbb{S}^2_\omega)$ such that, for all $\underline{u} \geq t_0$, 
\begin{align*}
\forall \, |u| \leq \underline{u}, \, \omega \in \mathbb{S}^2\!,  \qquad \! &  \left| \big( r^2 \rho(\mathcal{L}_{Z^\beta} F^{\mathrm{asymp}}[f_\infty]) \big)(u, \underline{u},\omega)-\rho^{\mathrm{asymp}}_\beta[f_\infty](u,\omega) \right| \! \lesssim \frac{\langle u \rangle + \log(\langle \underline{u} \rangle )}{ \langle \underline{u} \rangle} \, \overline{\mathbb{E}}_{N_0+1}\big[  f_\infty \big], \\ 
\! &\left| \big( r^2 \sigma(\mathcal{L}_{Z^\beta} F^{\mathrm{asymp}}[f_\infty]) \big)(u, \underline{u},\omega)-\sigma^{\mathrm{asymp}}_\beta [f_\infty](u,\omega) \right| \! \lesssim \frac{\langle u \rangle + \log(\langle \underline{u} \rangle) }{ \langle \underline{u} \rangle} \, \overline{\mathbb{E}}_{N_0+1}\big[  f_\infty \big].
\end{align*}
\item If $|\gamma| =0$, we drop the subscript $\gamma$ on $\underline{\alpha}^{\mathrm{asymp}}_\gamma[f_\infty]$, $\rho^{\mathrm{asymp}}_\gamma[f_\infty]$ and $\sigma^{\mathrm{asymp}}_\gamma[f_\infty]$. Their regularity are
$$ \underline{\alpha}^{\mathrm{asymp}}[f_\infty] \in H^{N_0+1}(\R_u \times \mathbb{S}^2_\omega), \qquad  \rho^{\mathrm{asymp}}[f_\infty] , \, \sigma^{\mathrm{asymp}}[f_\infty] \in  H^{N_0}_{\mathrm{loc}} \cap C^{N_0-1}_b(\R_u \times \mathbb{S}^2_\omega).$$
Moreover, if $|\gamma| \geq 1$, $\rho^{\mathrm{asymp}}_\gamma[f_\infty]$ (respectively $\sigma^{\mathrm{asymp}}_\gamma[f_\infty]$) can be computed in terms of derivatives of $\underline{\alpha}^{\mathrm{asymp}}[f_\infty]$ and $\rho^{\mathrm{asymp}}[f_\infty]$ (respectively $\sigma^{\mathrm{asymp}}[f_\infty]$).
\item For any $|\gamma| \leq N_0-3$, 
$$ 2\partial_u \rho^{\mathrm{asymp}}_\gamma [f_\infty] = -\slashed{\nabla} \cdot \underline{\alpha}^{\mathrm{asymp}}_\gamma [f_\infty], \qquad \qquad 2\partial_u \sigma^{\mathrm{asymp}}_\gamma [f_\infty] = -\slashed{\nabla} \times \underline{\alpha}^{\mathrm{asymp}}_\gamma [f_\infty] .$$
\item The following asymptotic constraint equations hold. For all $\omega \in \mathbb{S}^2_\omega$,
\begin{align*}
    \int_{\R_u}  \slashed{\nabla} \cdot \underline{\alpha}^{\mathrm{asymp}} [f_\infty] (u,\omega) \dr u =& \frac{1}{2 \pi} \int_{\R^3_x} \int_{\R^3_v} f_\infty(x,v) - f_\infty(x,|v| \omega)\dr v \dr x \\ &- \int_{\tau=0}^{+\infty}  \frac{2 \tau^2}{\langle \tau \rangle + \tau} \, \slashed{\nabla} \cdot  \underline{\alpha} \big( \mathbb{F} [ f_\infty ] \big) (\tau \omega) \frac{\dr \tau}{\langle \tau \rangle^3} ,\\[3pt] 
     \int_{\R_u} \slashed{\nabla} \times \underline{\alpha}^{\mathrm{asymp}}[f_\infty] (u,\omega) \dr u =& - \int_{\tau=0}^{+\infty}  \frac{2 \tau^2}{\langle \tau \rangle + \tau} \, \slashed{\nabla} \times  \underline{\alpha} \big( \mathbb{F} [ f_\infty ] \big) (\tau \omega) \frac{\dr \tau}{\langle \tau \rangle^3} .
   \end{align*}
   \item The functions $\rho^{\mathrm{asymp}}[f_\infty]$ and $\sigma^{\mathrm{asymp}}[f_\infty]$ admit limit as $u \to \pm \infty$. For all $\omega \in \mathbb{S}^2_\omega$,
$$
 \lim_{u \to -\infty} \rho^{\mathrm{asymp}}[f_\infty](u,\omega) =\frac{Q}{4\pi}, \qquad \qquad \lim_{u \to -\infty} \sigma^{\mathrm{asymp}}[f_\infty](u,\omega) =0.
 $$
 The limit as $u \to +\infty$ can then be computed using $4.$ and $5.$.
\end{enumerate}
\end{Pro}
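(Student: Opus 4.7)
The plan is to establish the seven claims by combining the pointwise decay estimates for the null components of $\mathcal{L}_{Z^\gamma}F^{\mathrm{asymp}}[f_\infty]$ from Proposition \ref{Pronullcompoasymp} with the null Maxwell equations \eqref{null1}--\eqref{null6}, in particular Corollary \ref{Corgoodnull}. For parts $(1)$ and $(2)$, I would integrate along outgoing null rays $\underline{u} \mapsto (u,\underline{u},\omega)$: since $J^{\mathrm{asymp}}_{e_A}=0$ by Lemma \ref{LemasympJpoint}, and the good null components $\rho$, $\sigma$ of $\mathcal{L}_{Z^\kappa}F^{\mathrm{asymp}}$ decay as $\langle t+r\rangle^{-2}$ by Proposition \ref{Pronullcompoasymp}, Corollary \ref{Corgoodnull} yields $|\nabla_L(r\underline{\alpha}(\mathcal{L}_{Z^\gamma}F^{\mathrm{asymp}}))|\lesssim \langle\underline{u}\rangle^{-2}\overline{\mathbb{E}}_{N_0+1}[f_\infty]$, providing the Cauchy property in $\underline{u}$ and hence the radiation field $\underline{\alpha}^{\mathrm{asymp}}_\gamma[f_\infty]$. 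The weighted $L^2$ and $L^\infty$ bounds in $(1)$ follow directly from the pointwise estimate on $\underline{\alpha}$ in Proposition \ref{Pronullcompoasymp}. For part $(2)$, the analogous integration of $\nabla_L(r^2\rho)$ and $\nabla_L(r^2\sigma)$, together with the improved estimate on $\alpha(\mathcal{L}_{Z^{\beta'}}F^{\mathrm{asymp}})$ for $|\beta'|\leq N_0-2$ that carries precisely the factor $\langle u\rangle+\log\langle t+r\rangle$, yields the stated convergence rate.

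Parts $(3)$, $(4)$, and $(6)$ are structural. Part $(3)$ follows by applying $(1)$--$(2)$ to the higher-order Lie derivatives together with Lemma \ref{LemComgoodderiv} (Killing derivatives commute with the null decomposition) and the algebraic relations of Proposition \ref{blackboxscat} between $\underline{\alpha}^{\mathcal{I}^+}_{\,Z}$ and $\underline{\alpha}^{\mathcal{I}^+}$. Part $(4)$ is obtained by passing to the limit $\underline{u}\to\infty$ in the null Maxwell equations \eqref{null3}--\eqref{null4} applied to $\mathcal{L}_{Z^\gamma}F^{\mathrm{asymp}}[f_\infty]$, after recognising that $r^2\slashed{\nabla}\cdot\underline{\alpha}$ converges to $\slashed{\nabla}^{\mathbb{S}^2}\cdot\underline{\alpha}^{\mathrm{asymp}}_\gamma$ and that the source term $r^2 J^{\mathrm{asymp}}_{\underline{L}}$ vanishes pointwise in $u$ by Lemma \ref{LemasympJpoint} combined with the rapid decay of $f_\infty$. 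For part $(6)$, in the region $|x|\geq t$ the inhomogeneous piece $F^{\gamma,\mathrm{inh}}$ vanishes (Proposition \ref{Proinhom}) while $F^{\gamma,\mathrm{hom}}$ is well-approximated by $\mathcal{L}_{Z^\gamma}\overline{F}$, thanks to the analysis of Section \ref{subsubsechompar}; combined with $r^2\rho(\overline{F})=Q/(4\pi)$ and $\sigma(\overline{F})=0$ from Proposition \ref{Propurecharge}, together with the chargelessness of $\mathcal{L}_{Z^\gamma}\overline{F}$ for $|\gamma|\geq 1$ stated there, this yields the claimed limits.

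The main obstacle is part $(5)$, the two asymptotic constraint equations. My approach is to integrate the null Maxwell equation \eqref{null3} in $u \in (2t_0-\underline{u},\underline{u})$ at fixed $\underline{u}$, giving
\begin{equation*}
\int_{2t_0-\underline{u}}^{\underline{u}} r^2\slashed{\nabla}\cdot\underline{\alpha}\big(F^{\mathrm{asymp}}\big)(u,\underline{u},\omega) \dr u = \int_{2t_0-\underline{u}}^{\underline{u}} r^2 J^{\mathrm{asymp}}_{\underline{L}}[f_\infty](u,\underline{u},\omega) \dr u + 2(\underline{u}-t_0)^2\rho\big(F^{\mathrm{asymp}}\big)\big(t_0,(\underline{u}-t_0)\omega\big),
\end{equation*}
the boundary contribution at $u=\underline{u}$ being zero since $r=0$. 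The last term tends to $Q/(2\pi)$ as $\underline{u}\to\infty$ by part $(6)$. For the $J^{\mathrm{asymp}}_{\underline{L}}$-integral I perform the change of variables $\tau = r/\sqrt{u\underline{u}} = |\widecheck{x/t}|$: using the inversions $t=\underline{u}\langle\tau\rangle/(\tau+\langle\tau\rangle)$, $r=\underline{u}\tau/(\tau+\langle\tau\rangle)$ and the Jacobian $\dr u = -2\underline{u}(\langle\tau\rangle(\tau+\langle\tau\rangle)^2)^{-1}\dr\tau$, the factors $r^2$, $(t+r)/t^4$ and $|v^0|^5=\langle\tau\rangle^5$ telescope to produce exactly $2\tau^2\int f_\infty(z,\tau\omega)\,\dr z\,\dr\tau$, independent of $\underline{u}$, which integrates to $-\frac{1}{2\pi}\iint f_\infty(x,|v|\omega)\,\dr v\,\dr x$ after spherical integration in $v$. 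The subtle point is that $\int r^2\slashed{\nabla}\cdot\underline{\alpha}(F^{\mathrm{asymp}})\,\dr u$ does \emph{not} converge to $\int \slashed{\nabla}\cdot\underline{\alpha}^{\mathrm{asymp}}\,\dr u$: the convergence $r^2\slashed{\nabla}\cdot\underline{\alpha}(F^{\mathrm{asymp}})\to\slashed{\nabla}\cdot\underline{\alpha}^{\mathrm{asymp}}$ fails uniformly in $u$ because of the $t^{-2}\mathbb{F}[f_\infty](\widecheck{x/t})$ leading contribution to $F^{\mathrm{asymp}}$ inside the light cone (Proposition \ref{Proasymp}). To extract the defect, I would split $F^{\mathrm{asymp}}=t^{-2}\mathbb{F}[f_\infty](\widecheck{x/t})\mathds{1}_{|x|<t}+F^{\mathrm{res}}$, apply $r^2\slashed{\nabla}\cdot\underline{\alpha}$ to the $\mathbb{F}$-piece and run the same $\tau$-substitution; the computation then produces the weight $\frac{2\tau^2}{\langle\tau\rangle+\tau}\langle\tau\rangle^{-3}=2|\widehat{v}|^2(1-|\widehat{v}|)$ recorded in the remark after Proposition \ref{Proinhom} and yields precisely the $\mathbb{F}[f_\infty]$ integral appearing in the constraint. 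The equation for $\sigma^{\mathrm{asymp}}$ follows identically from \eqref{null4}, with $\widetilde{J}=0$ so that only the $\mathbb{F}$-contribution survives on the right-hand side.
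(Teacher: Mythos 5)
Your treatment of parts (2)--(6) follows essentially the same route as the paper. In particular, for part (5) your identification of the failure of dominated convergence for $\int r\slashed{\nabla}\cdot r\underline{\alpha}\,\dr u$ and the extraction of the $\mathbb{F}[f_\infty]$ defect via the $\tau$-substitution is exactly the mechanism used: the paper isolates the culprit as the Glassey--Strauss piece $F^{\kappa_{ij},\mathrm{T}}$, which coincides with your $t^{-2}\mathbb{F}$-term for $u\geq 4$, and rewrites $\slashed{\nabla}\cdot$ through rotational Lie derivatives (Lemma \ref{Appendixdivergence}) so that the commutation with the decomposition and the passage to the limit on the residual piece are legitimate; the $\sigma$-equation is then reduced to the $\rho$-equation for the Hodge dual rather than treated separately through \eqref{null4}, but that is cosmetic.

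The genuine gap is in part (1), where you assert that the weighted $L^2$ and $L^\infty$ bounds on $\underline{\alpha}^{\mathrm{asymp}}_\gamma[f_\infty]$ ``follow directly from the pointwise estimate on $\underline{\alpha}$ in Proposition \ref{Pronullcompoasymp}.'' Two problems. First, that pointwise estimate is only available for $|\gamma|\leq N_0$, whereas the $L^2$ bound is claimed up to $|\gamma|=N_0+1$; at top order no pointwise control of $\underline{\alpha}(\mathcal{L}_{Z^\gamma}F^{\mathrm{asymp}}[f_\infty])$ exists, and one must instead combine the weighted $L^2$ estimate for the homogeneous piece $F^{\gamma,\mathrm{hom}}-\mathcal{L}_{Z^\gamma}\widetilde{F}$ (Proposition \ref{ProestihomL2}) with the pointwise decay of $F^{\gamma,\mathrm{inh}}$ (Proposition \ref{Proinhom}), integrated over slabs of advanced time. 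Second, and more structurally: a bound on $r\underline{\alpha}(\mathcal{L}_{Z^\gamma}F^{\mathrm{asymp}}[f_\infty])(u,\underline{u},\omega)$ transfers to a bound on the radiation field only where you have pointwise convergence, and your own convergence argument via Corollary \ref{Corgoodnull} delivers this only for $|\xi|\leq N_0-2$, since it consumes up to two extra derivatives through \eqref{null1} and Lemma \ref{LemComgoodderiv}. For $N_0-1\leq|\gamma|\leq N_0+1$ the radiation field is defined only as a weak limit by Proposition \ref{blackboxscat}, so you need a weak-$*$ compactness step (Banach--Alaoglu along a sequence of advanced times, the uniform boundedness principle, and uniqueness of the limit in $\mathcal{D}'(\R_u\times\mathbb{S}^2_\omega)$) to pass the uniform bounds on $r\underline{\alpha}$ to $\underline{\alpha}^{\mathrm{asymp}}_\gamma[f_\infty]$. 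Without this the top-order bounds in (1) --- which are the ones fed into the energy norms of the scattering construction in Section \ref{SecscattMax} --- are not established.
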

\begin{Rq}
One can use the constraint equations to derive similar relations for $\underline{\alpha}^{\mathrm{asymp}}_\gamma [f_\infty]$.
\end{Rq}
\begin{proof}
We focus first on the points $1$ and $2$. Let $|\gamma| \leq N_0+1$ and $J^\gamma$ be the source term, computed in Proposition \ref{ProComMax}, in the Maxwell equations satisfied by $ \mathcal{L}_{Z^\gamma}F^{\mathrm{asymp}}[f_\infty]$. Then, according to Proposition \ref{Proinidataasymp} and Lemma \ref{LemasympJpoint},
$$ \big\| \mathcal{L}_{Z^\gamma} F^{\mathrm{asymp}} (t_0 , \cdot ) \big\|_{L^2(\R^3_x)} + \sup_{t \geq t_0} \, \langle t \rangle^{\frac{3}{2}} \big\|  J^\gamma(t,\cdot) \big\|_{L^2(\R^3_x)} \lesssim \overline{\mathbb{E}}_{N_0+1}[f_\infty] .$$
We then deduce from Proposition \ref{blackboxscat} the existence of the radiation field $\underline{\alpha}^{\mathrm{asymp}}_\gamma [f_\infty]$ for $ \mathcal{L}_{Z^\gamma}F^{\mathrm{asymp}}[f_\infty]$. Fix $|\xi| \leq N_0-2$ and recall from Lemma \ref{LemasympJpoint} that $J^{\mathrm{asymp}}_{e_A}[ \cdot ]=0$ for any angular component $A \in \{ \theta , \, \varphi \}$. Hence, applying first Corollary \ref{Corgoodnull} and then the previous Proposition \ref{Pronullcompoasymp}, we get 
$$ \left|\nabla_L \big( r \underline{\alpha} (\mathcal{L}_{Z^\xi} F^{\mathrm{asymp}}[f_\infty] ) \big) \right|(t,x) \lesssim \sum_{|\kappa| \leq N_0-1}\! \Big(\big|\rho (\mathcal{L}_{Z^\kappa} F^{\mathrm{asymp}}[f_\infty] )\big|+\big|\sigma (\mathcal{L}_{Z^\kappa} F^{\mathrm{asymp}}[f_\infty] )\big|\Big)(t,x) \lesssim \frac{\overline{\mathbb{E}}_{N_0+1}[f_\infty]}{\langle t+|x| \rangle^2} $$
as well as
\begin{align*}
&\left|\nabla_L \big( r^2 \rho (\mathcal{L}_{Z^\beta} F^{\mathrm{asymp}}[f_\infty] ) \big) \right|(t,x)+\left|\nabla_L \big( r^2 \sigma (\mathcal{L}_{Z^\beta} F^{\mathrm{asymp}}[f_\infty] ) \big) \right|(t,x) \\
& \qquad \qquad \qquad \lesssim |x|^2|J^\beta_L|(t,x)+\sum_{|\kappa| \leq N_0-2} |x| \big| \alpha (\mathcal{L}_{Z^\kappa} F^{\mathrm{asymp}}[f_\infty] ) \big|(t,x) \lesssim  \frac{\langle t-|x| \rangle+\log(\langle t+|x|\rangle)}{\langle t+|x| \rangle^2} \overline{\mathbb{E}}_{N_0+1}[f_\infty].
\end{align*}
Recall that $L=\partial_t+\partial_r=2\partial_{\underline{u}}$ in the null coordinate system $(u,\underline{u},\theta , \varphi)$.
\begin{itemize}
\item The uniform convergence for $r  \underline{\alpha} (\mathcal{L}_{Z^\xi} F^{\mathrm{asymp}}[f_\infty] )$ is then obtained by integrating in $\tau$ along $(\tau+u,\tau\omega)$. In other words, by integrating in $\underline{u}$, along $t-r=u$, for fixed $\omega \in \mathbb{S}^2$.
\item The same procedure provides the existence of the limits $\rho^{\mathrm{asymp}}_\beta [f_\infty]$ and $\sigma^{\mathrm{asymp}}_\beta [f_\infty]$, as well as the stated rate of convergence. Moreover, these two functions belong to $L^\infty_{u,\omega}$ since $ r^2 \rho (\mathcal{L}_{Z^\beta} F^{\mathrm{asymp}}[f_\infty] ) $ and $r^2 \sigma (\mathcal{L}_{Z^\beta} F^{\mathrm{asymp}}[f_\infty] ) $ are uniformly bounded according to Proposition \ref{Pronullcompoasymp}.
\end{itemize}
It remains us to justify the bounds for $\underline{\alpha}_\gamma^{\mathrm{asymp}}[f_\infty]$. We start with the pointwise one, so that we assume $|\gamma| \leq N_0$. According to Proposition \ref{Pronullcompoasymp}, we have, for any $U >0$ and all $r \geq U^2$,
$$ \sup_{|u| \leq U} \sup_{\omega \in \mathbb{S}^2_\omega} \langle u \rangle^2   \big|r\underline{\alpha} (\mathcal{L}_{Z^\gamma} F^{\mathrm{asymp}}[f_\infty] )  \big|(r+u,r \omega) \lesssim \sup_{|u| \leq U} \big(\langle u \rangle^2 \, \langle r \rangle^{-1}+1  \big) \overline{\mathbb{E}}_{N_0+1}[f_\infty] \leq 2 \overline{\mathbb{E}}_{N_0+1}[f_\infty].$$
If $|\gamma| \leq N_0-2$, it implies the result. Otherwise, by Banach-Alaoglu theorem and the uniqueness of the limit in $\mathcal{D}'(\R_u \times \mathbb{S}^2_\omega)$, there exists an increasing sequence $(r_n)_{n \geq 0}$ of integers such that 
   $$ (u,\omega) \! \mapsto \langle u \rangle^2  r\underline{\alpha} (\mathcal{L}_{Z^\gamma} F^{\mathrm{asymp}}[f_\infty] )  (r_n+u,r_n \omega)  \xrightharpoonup[n \to + \infty]{*} (u,\omega) \! \mapsto \langle u \rangle^2  \underline{\alpha}^{\mathrm{asymp}}_\gamma [f_\infty]   (u, \omega), \quad  \text{in $L^\infty([-U,U] \times \mathbb{S}^2_\omega)$,}$$
 and, by the uniform boundedness principle, the weak-$^*$ limit is bounded by $ \overline{\mathbb{E}}_{N_0+1}[f_\infty]$. We proceed similarly for the $L^2_{u,\omega}$ bound. Let $|\gamma| \leq N_0+1$ and recall that $\mathcal{L}_{Z^\gamma}F^{\mathrm{asymp}}[f_\infty]=F^{\gamma,\mathrm{hom}}+F^{\gamma, \mathrm{inh}}$. Combining the pointwise decay estimate for $F^{\gamma,\mathrm{inh}}$, given by Proposition \ref{Proinhom}, and the $L^2_x$ estimate for $F^{\gamma , \mathrm{hom}}$, provided by Proposition \ref{ProestihomL2}, one has, for any $U >0$ and all $n \geq U^2$,
 $$ \int_{t=n}^{n+2U}\int_{|t-|x|| \leq U} \langle t-|x| \rangle^{\frac{5}{2}} \left| \mathcal{L}_{Z^\gamma}F^{\mathrm{asymp}}[f_\infty] \right|^2(t,x) \dr x \dr t \lesssim \overline{\mathbb{E}}_{N_0+1}[f_\infty] \int_{t=n}^{n+2U} \frac{\langle U \rangle^{\frac{7}{2}}}{  \langle t \rangle^2}+1  \dr t  \lesssim U \overline{\mathbb{E}}_{N_0+1}[f_\infty].$$
 Consequently, there exists a sequence $(\underline{u}_n)_{n \geq U^2}$ of advanced times, such that, for all $n \geq U^2$,
 $$  2n+U \leq \underline{u}_n \leq 2n+3U, \qquad \int_{|u|\leq U} \int_{\mathbb{S}^2_\omega} \langle u \rangle^{\frac{5}{2}} \left| r^2 \mathcal{L}_{Z^\gamma}F^{\mathrm{asymp}}[f_\infty] \right|^2 \left(\frac{\underline{u}_n+u}{2}, \frac{\underline{u}_n-u}{2}\omega \right) \dr \mu_{\mathbb{S}^2_\omega} \dr u \lesssim \overline{\mathbb{E}}_{N_0+1}[f_\infty].$$
The bound on $\underline{\alpha}^{\mathrm{asymp}}_\gamma[f_\infty]$ then follows from Banach-Alaoglu theorem, applied this time in $L^2([-U,U] \times \mathbb{S}^2)$, and the uniform boundedness principle.

We now deal with the last four parts of the statement. Fix $|\gamma| \leq N_0-3$, introduce $G=\mathcal{L}_{Z^\gamma}F^{\mathrm{asymp}}[f_\infty]$ in orter to lighten the notations and let us use the null cordinates $(u,\underline{u},\omega)$, where $u=t-r$ and $\underline{u}=t+r$. The null Maxwell equations \eqref{null3}--\eqref{null4} provide
\begin{equation}\label{eq:pourlasuite}
\underline{L} \big( r^2  \rho ( G) \big)  + r\slashed{\nabla} \cdot r\underline{\alpha}( G)  = r^2 J^\gamma_{\underline{L}} , \qquad \qquad   \underline{L} \big( r^2 \sigma ( G) \big) + r  \slashed{\nabla} \times r\underline{\alpha}( G)  =0.
\end{equation}
In view of the commutation formula of Proposition \ref{ProComMax} and Lemma \ref{LemasympJpoint}, we have
\begin{equation*}
\forall \, \omega \in \mathbb{S}^2_\omega, \qquad  \big| r^2 J^\gamma_{\underline{L}}\big|(u,\underline{u},\omega) \lesssim  \overline{\mathbb{E}}_{N_0-3}[f_\infty] \, \langle \underline{u} \rangle^{-1} .
\end{equation*} 
As $(re_\theta,re_\varphi)=(\partial_\theta,\partial_\varphi/\sin (\theta))$ and $\underline{L}=2\partial_u$, we have in $\mathcal{D}'(\R_u \times \mathbb{S}^2_\omega )$,
\begin{alignat*}{2}
 \underline{L} \big( r^2  \rho ( G) \big)(\cdot, \underline{u} , \cdot) &\xrightharpoonup[\underline{u} \to +\infty]{} 2 \partial_u \rho^{\mathrm{asymp}}_\gamma [f_\infty], \qquad \qquad  \qquad \quad \underline{L} \big( r^2  \sigma ( G) \big)(\cdot, \underline{u} , \cdot) &&\xrightharpoonup[\underline{u} \to +\infty]{} 2 \partial_u \sigma^{\mathrm{asymp}}_\gamma [f_\infty], \\
 r\slashed{\nabla} \cdot r\underline{\alpha}( G) (\cdot, \underline{u} , \cdot) &\xrightharpoonup[\underline{u} \to +\infty]{} \slashed{\nabla} \cdot \underline{\alpha}^{\mathrm{asymp}}_\gamma [f_\infty] , \qquad  \qquad \qquad  \;  r \slashed{\nabla} \times r\underline{\alpha}( G) (\cdot, \underline{u} , \cdot) &&\xrightharpoonup[\underline{u} \to +\infty]{} \slashed{\nabla} \times \underline{\alpha}^{\mathrm{asymp}}_\gamma [f_\infty].
\end{alignat*}
We then deduce that, in $\mathcal{D}'(\R_u \times \mathbb{S}^2_\omega )$ and for any $|\gamma| \leq N_0-3$,
\begin{equation}\label{eq:asympequationnull}
 2 \partial_u \rho^{\mathrm{asymp}}_\gamma [f_\infty] +\slashed{\nabla} \cdot \underline{\alpha}^{\mathrm{asymp}}_\gamma [f_\infty] =0, \qquad \qquad  2 \partial_u \sigma^{\mathrm{asymp}}_\gamma [f_\infty]+\slashed{\nabla} \times \underline{\alpha}^{\mathrm{asymp}}_\gamma [f_\infty]=0.
\end{equation}
We are now able to deduce several informations from these two equations.
\begin{itemize}
\item Recall from Proposition \ref{blackboxscat} that $\underline{\alpha}^{\mathrm{asymp}}_\gamma [f_\infty]$ can be expressed in terms of derivatives of order less than $|\gamma|$ of $\underline{\alpha}^{\mathrm{asymp}}[f_\infty]$, so that $\underline{\alpha}^{\mathrm{asymp}}[f_\infty] \in H^{N_0+1}(\R_u \times \mathbb{S}^2_\omega)$.
\item Let $u \leq -2$ and $\omega \in \mathbb{S}^2_\omega$. Recall from Proposition \ref{Prorad} that $r^2 \rho (\mathcal{L}_{Z^\gamma}\overline{F})(r+u,r\omega)$ and $r^2 \sigma (\mathcal{L}_{Z^\gamma}\overline{F})(r+u,r\omega)$ converge, as $r \to + \infty$, and that the limit is independent of $u \leq -2$. In the particular case $|\gamma|=0$, these two quantities are constants, respectively equal to $Q/4\pi$ and $0$. Consequently, the improved estimates for the exterior of the light cone given by Proposition \ref{Pronullcompoasymp} imply
$$ \rho^{\mathrm{asymp}}_\gamma  [f_\infty](u,\omega) = \Phi^\rho_\gamma(\omega)+O_{u \to -\infty}\big(|u|^{-1} \big), 
 \qquad  \qquad \sigma^{\mathrm{asymp}}_\gamma  [f_\infty](u,\omega) = \Phi^\sigma_\gamma(\omega)+O_{u \to -\infty}\big(|u|^{-1} \big),$$
 where $\Phi^{\rho}_\gamma$ and $\Phi^\sigma_\gamma$ are smooth function on $\mathbb{S}^2$.
 \item We then get, from \eqref{eq:asympequationnull} for the particular case $|\gamma|=0$, that for all $(u,\omega) \in \R_u \times \mathbb{S}^2_\omega$,
\begin{align*}
 2\rho^{\mathrm{asymp}}  [f_\infty](u,\omega)&=\frac{Q}{2 \pi}-\int_{\tau=-\infty}^u \slashed{\nabla} \cdot \underline{\alpha}^{\mathrm{asymp}} [f_\infty] (\tau,\omega) \dr \tau, \\
  2 \sigma^{\mathrm{asymp}} [f_\infty](u,\omega)&=-\int_{\tau= -\infty}^u \slashed{\nabla}\times \underline{\alpha}^{\mathrm{asymp}} [f_\infty] (\tau,\omega) \dr \tau.
  \end{align*}
In view of the $H^{N_0+1}(\R_u \times \mathbb{S}^2_\omega)$ regularity of $\underline{\alpha}^{\mathrm{asymp}}[f_\infty]$ as well as its decay properties, we obtain from these two equations that $\rho^{\mathrm{asymp}} [f_\infty] , \, \sigma^{\mathrm{asymp}} [f_\infty] \in H^{N_0}_{\mathrm{loc}} \cap C^{N_0-1}_b(\R_u \times \mathbb{S}^2_\omega)$.
\item We also get from \eqref{eq:asympequationnull} that $\rho^{\mathrm{asymp}}_\gamma  [f_\infty](u,\omega)$ and $\sigma^{\mathrm{asymp}}_\gamma  [f_\infty](u,\omega)$ both converge, as $u \to +\infty$.
\end{itemize}
In order to conclude the proof, it then remains us to prove the constraint equations verified by the spherical divergence and the spherical curl of $\underline{\alpha}^{\mathrm{asymp}} [f_\infty]$. From now on, $G=F^{\mathrm{asymp}}[f_\infty]$. Fix further $\underline{u} \geq 2$, $\omega \in \mathbb{S}^2_\omega$ and integrate \eqref{eq:pourlasuite} in $u$, between $-\underline{u}+4$ and $\underline{u}$. As $r=0$ for $u=\underline{u}$ and $(t,r)=(2,\underline{u}-2)$ for $u=\underline{u}-4$, we get
\begin{align}
 &-2(\underline{u}-2)^2 \rho (G) \big(2,(\underline{u}-2)\omega \big)+\int_{u=-\underline{u}+4}^{\underline{u}} \Big( r \slashed{\nabla} \cdot r \underline{\alpha} (G) \Big) (u,\underline{u},\omega) \dr u =  \int_{u=-\underline{u}+4}^{\underline{u}} \big( r^2 J^{\mathrm{asymp}}_{\underline{L}}[f_\infty] \big) (u,\underline{u},\omega) \dr u , \nonumber \\
& -2(\underline{u}-2)^2 \sigma (G) \big(2,(\underline{u}-2)\omega \big)+\int_{u=-\underline{u}+4}^{\underline{u}} \Big( r \slashed{\nabla} \times r \underline{\alpha} (G)\Big) (u,\underline{u},\omega) \dr u =0. \label{eq:constraintbis}
 \end{align}
The main difficulty is that the assumptions of the dominated convergence theorem are not satisfied for the integrals involving $\underline{\alpha}(G)$. In fact, we will see that the conclusion of this theorem does not hold neither. Using the pointwise estimates of Proposition \ref{Pronullcompoasymp} for the exterior of the light cone, one has
$$ \lim_{r \to + \infty} r^2 \rho (G) (2,r\omega) = \lim_{r \to +\infty} r^2 \rho (\overline{F})(2,r\omega)=\frac{Q}{4\pi}, \qquad \lim_{r \to + \infty} r^2 \sigma (G) (2,r\omega) = \lim_{r \to +\infty} r^2 \sigma (\overline{F})(2,r\omega)=0.$$
Next, applying Lemma \ref{Lemchargeasymp} to the function $h(z,v):= f_\infty (z,|v|\omega)$, we get
\begin{align*}
 \int_{u=-\underline{u}+4}^{\underline{u}} \big( r^2 J^{\mathrm{asymp}}_{\underline{L}}[f_\infty] \big) (u,\underline{u},\omega) \dr u &= \frac{2}{4\pi}  \int_{u=-\underline{u}+4}^{\underline{u}}\int_{\mathbb{S}^2_{\bar{\omega}}} J^{\mathrm{asymp}}_{\underline{L}}[h] \left( \frac{\underline{u}+u}{2},\frac{\underline{u}-u}{2}\bar{\omega} \right) \frac{r^2}{2} \dr \mu_{\mathbb{S}^2_{\bar{\omega}}} \dr u \\
 & = -\frac{1}{2\pi}  \int_{\R^3_z} \int_{\R^3_v}  f_\infty (z,|v|\omega) \dr z \dr v .
 \end{align*}
It remains us to treat one term in both equations of \eqref{eq:constraintbis}. We start with the one in the equation satisfied by $\rho(G)$ and we will deal with the other term by duality. For this, rewrite the spherical divergence by applying Lemma \ref{Appendixdivergence} and by using that $\mathcal{L}_{\Omega_{ij}}$ commute with the null decomposition (see Lemma \ref{LemComgoodderiv}). For all $t \geq 2$, $r >0$ and $\omega \in \mathbb{S}^2_\omega$, 
 \begin{align}
  \nonumber  \Big( r \slashed{\nabla} \cdot r \underline{\alpha} (G) \Big) (u,\underline{u},\omega) &=  \sum_{1 \leq i < j \leq 3} r\underline{\alpha} (\mathcal{L}_{\Omega_{ij}} G)_{\Omega_{ij}^{\mathrm{resc}}}(u,\underline{u},\omega)   , \qquad \Omega_{ij}^{\mathrm{resc}} = \omega^i \partial_{x^j}-\omega^j \partial_{x^i} =(\omega^i \omega_j^A-\omega^j \omega_i^A)e_A  , \\
    \slashed{\nabla} \cdot \underline{\alpha}^{\mathrm{asymp}}[f_\infty] (u,\omega) & =  \sum_{1 \leq i < j \leq 3} \mathcal{L}_{\Omega_{ij}} \big(\underline{\alpha}^{\mathrm{asymp}}[f_\infty] \big)_{\Omega_{ij}}(u,\omega)   , \qquad \Omega_{ij}  =(\omega^i \omega_j^A-\omega^j \omega_i^A)e_A . \label{eq:alphanulldiv}
\end{align}
 Fix now $1 \leq i < j \leq 3$ and let $|\kappa_{ij}|=1$ such that $\Omega_{ij}=Z^{\kappa_{ij}}$. Recall from Definition \ref{Defdecompasymp} the decomposition $\mathcal{L}_{\Omega_{ij}}G=\mathcal{L}_{\Omega_{ij}}F^{\mathrm{asymp}}[f_\infty]=F^{\kappa_{ij},\mathrm{inh}}+F^{\kappa_{ij},\mathrm{hom}}$ and from Proposition \ref{GSasymp} $F^{\kappa_{ij},\mathrm{inh}}=F^{\kappa_{ij},\mathrm{T}}-F^{\kappa_{ij},\mathrm{sph}}$. Then, 
 $$ \underline{\alpha} \big(\mathcal{L}_{\Omega_{ij}}G \big)=\underline{\alpha} \big(F^{\kappa_{ij},\mathrm{T}})+\underline{\alpha}(F^{\kappa_{ij},\mathrm{sph}}+F^{\kappa_{ij}, \mathrm{hom}} \big).$$
As $|F^{\kappa_{ij} ,T}|(t,x) \lesssim \overline{\mathbb{E}}_{N_0+1}[f_\infty] \, \langle t+|x| \rangle^{-2}$ according to Proposition \ref{estimatesinh}, we get
$$ \lim_{r \to +\infty} r \underline{\alpha} \big(F^{\kappa_{ij},\mathrm{sph}}+F^{ \kappa_{ij},\mathrm{hom}} \big)(r+u,r\omega)= \underline{\alpha}^{\mathrm{asymp}}_{\kappa_{ij}}[f_\infty](u,\omega)= \mathcal{L}_{\Omega_{ij}}\underline{\alpha}^{\mathrm{asymp}}[f_\infty](u,\omega),$$
where we used Proposition \ref{blackboxscat} in the last step. Recall from Proposition \ref{Propurecharge} that $\mathcal{L}_{\Omega_{ij}} \overline{F}=0$. Hence, by the pointwise decay estimates given by \eqref{eq:estihomFtilde} and Proposition \ref{estimatesinh},
$$ r \big| \underline{\alpha} \big( F^{\kappa_{ij},\mathrm{sph}}+F^{ \kappa_{ij},\mathrm{hom}} \big) \big|(r+u,r\omega) \lesssim r \big|F^{\kappa_{ij},\mathrm{sph}} \big|(r+u,r\omega)+r\big|F^{\kappa_{ij}, \mathrm{hom}}- \mathcal{L}_{Z^{\kappa_{ij}}}\overline{F}\big|(r+u,r\omega) \lesssim \overline{\mathbb{E}}_{N_0+1}[f_\infty] \, \langle u \rangle^{-2}.$$
Applying the dominated convergence theorem as well as \eqref{eq:alphanulldiv}, we then get
$$ \lim_{\underline{u} \to + \infty} \sum_{1 \leq i < j \leq 3} \int_{u=-\underline{u}+4}^{\underline{u}}  r\underline{\alpha} \big( F^{\kappa_{ij},\mathrm{sph}}+F^{ \kappa_{ij},\mathrm{hom}} \big)_{\Omega_{ij}^{\mathrm{resc}}}(u,\underline{u},\omega) \dr u =  \int_{\R_u} \slashed{\nabla} \cdot \underline{\alpha}^{\mathrm{asymp}}[f_\infty] (u,\omega) \dr u.$$
It remains us to prove that
\begin{equation}\label{eq:evitarfin}
\lim_{ \underline{u} \to + \infty} \sum_{1 \leq i < j \leq 3} \int_{u=-\underline{u}+4}^{\underline{u}}  r \underline{\alpha} \big(F^{\kappa_{ij}, \mathrm{T}} \big)_{\Omega_{ij}^{\mathrm{resc}}}(u,\underline{u},\omega) \dr u = \int_{\tau=0}^{+\infty}  \frac{2 \tau^2}{\langle \tau \rangle + \tau} \, \slashed{\nabla} \cdot  \underline{\alpha} \big( \mathbb{F} [ f_\infty ] \big) (\tau \omega) \frac{\dr \tau}{\langle \tau \rangle^3}.
\end{equation}
Note first that we can restrict the domain of integration of the integral in the left hand side to $\{ 4 \leq u \leq \underline{u} \}$, since the remainder converge to $0$. Indeed, we have $|F^{\kappa_{ij} ,T}|(t,x) \lesssim \overline{\mathbb{E}}_{N_0+1}[f_\infty] \, \langle t+|x| \rangle^{-2} \mathds{1}_{t >|x|}$ by Proposition \ref{estimatesinh}. Consequently, as $1 \leq t_0 \leq 2$, we have from Proposition \ref{Proinhom} that
$$ \forall \, \omega \in \mathbb{S}^2_\omega, \; 4 \leq u \leq \underline{u}, \qquad  F^{\kappa_{ij}, \mathrm{T}}_{\mu \nu}(u,\underline{u},\omega)= \frac{4}{(\underline{u}+u)^2} \mathbb{F}_{\mu \nu} \big[ \, \widehat{\Omega}_{ij} f_\infty \big] \bigg( \frac{\widecheck{ \, (\underline{u}-u)\omega \,}}{\underline{u}+u} \bigg),$$
where we used that $u=t-r$ and $\underline{u}=t+r$. Moreover, in view of Corollary \ref{CorLieasympelec}, Lemma \ref{LemComgoodderiv} and Lemma \ref{Appendixdivergence},
$$ \underline{\alpha} \big( \mathbb{F}\big[\widehat{\Omega}_{ij} f_\infty \big] \big) = \underline{\alpha} \big( \mathcal{L}_{\Omega_{ij}} \mathbb{F}[ f_\infty ] \big)= \mathcal{L}_{\Omega_{ij}}\underline{\alpha} \big( \mathbb{F} [ f_\infty ] \big), \qquad \qquad \sum_{1 \leq i < j \leq 3} \underline{\alpha} \left( \mathbb{F}\big[\widehat{\Omega}_{ij} f_\infty \big] \right)_{\Omega_{ij}^{\mathrm{resc}}} = r \slashed{\nabla} \cdot \underline{\alpha} \big( \mathbb{F}[f_\infty] \big).$$
We then deduce that, using the notation $[r h](y)=|y|h(y)$, 
$$ \sum_{1 \leq i < j \leq 3}  \int_{u=4}^{\underline{u}}  r \underline{\alpha} (F^{\kappa_{ij}, \mathrm{T}})_{\Omega_{ij}^{\mathrm{resc}}}(u,\underline{u},\omega) \dr u = 2\int_{u=4}^{\underline{u}}  \frac{\underline{u}-u}{(\underline{u}+u)^2}\Big[ r \slashed{\nabla} \cdot  \underline{\alpha}  \big( \mathbb{F} [ f_\infty ] \big) \Big] \!\bigg( \frac{\widecheck{ \, (\underline{u}-u)\omega \,}}{\underline{u}+u} \bigg) \dr u .$$
As, on the domain of integration, $t^2 F^{\kappa_{ij},T}(t,x)$ is a function of $x/t$, we perform the contractor change of variables $p=u/\underline{u}$. It allows us to integrate over a subset of $\{t+r=1\}$ instead of $\{t+r=\underline{u}\}$ and to obtain 
\begin{equation*}
\lim_{ \underline{u} \to + \infty} \sum_{1 \leq i < j \leq 3} \int_{u=-\underline{u}+4}^{\underline{u}}  r \underline{\alpha} \big(F^{\kappa_{ij}, \mathrm{T}} \big)_{\Omega_{ij}^{\mathrm{resc}}}(u,\underline{u},\omega) \dr u = 2\int_{p=0}^{1}  \frac{1-p}{(1+p)^2} \Big[ r \slashed{\nabla} \cdot  \underline{\alpha}  \big( \mathbb{F} [ f_\infty ] \big) \Big] \!\bigg( \frac{\widecheck{ \, (1-p)\omega \,}}{1+p} \bigg) \dr \tau.
\end{equation*}
We finally obtain \eqref{eq:evitarfin} by performing the following two change of variables,
$$q=\frac{1-p}{1+p}, \quad \dr q = \frac{-2}{(1+p)^2} \dr p, \quad 1-p=\frac{2q}{1+q}, \qquad \qquad \tau= \frac{q}{\sqrt{1-q^2}}, \quad \frac{\dr \tau}{\langle \tau \rangle^3} = \dr q, \quad \frac{2q}{1+q}=\frac{2\tau}{\langle \tau \rangle+\tau} .$$
In order to deal with the spherical curl of $\underline{\alpha} (G)$, we relate it to the divergence of its Hodge dual. According to \cite[Equations~$(3.9)$--$(3.9')$]{CK}, for any $2$-form $H_{\mu \nu}$, we have
\begin{equation}\label{Hodgedualalpha}
{}^* \! \underline{\alpha}(H)_{e_A}:= {\varepsilon_A}^B \underline{\alpha} (H)_{e_B}, \qquad {}^* \! \underline{\alpha}(H)_{e_A}= \underline{\alpha} \big( {}^* \! H \big)_{e_A}, \qquad \qquad A \in \{\theta , \varphi \}. 
\end{equation}
It allows us to deduce that
\begin{equation}\label{Hodgedualalpha2}
\slashed{\nabla} \times \underline{\alpha}(G) = \varepsilon^{AB} \slashed{\nabla}_{e_A} \underline{\alpha} (G)_{e_B} = \slashed{\nabla}^{e_A} \underline{\alpha} \big( {}^* \! G \big)_{e_A} = \slashed{\nabla} \cdot \underline{\alpha} \big( {}^* \! G \big)
 \end{equation}
and we can apply the previous proof to ${}^*G={}^*F^{\mathrm{asymp}}[f_\infty]$. Indeed, the properties used on $ F^{\kappa_{ij},\mathrm{T}}$, $F^{\kappa_{ij}, \mathrm{sph}}$, $F^{\kappa_{ij}, \mathrm{hom}}$ and $\mathcal{L}_{\Omega_{ij}}\overline{F}$ hold as well for their Hodge dual. Consequently, using \eqref{Hodgedualalpha}--\eqref{Hodgedualalpha2}, we obtain
\begin{align*}
 \lim_{\underline{u} \to + \infty} \sum_{1 \leq i < j \leq 3} \int_{u=-\underline{u}+4}^{\underline{u}} r \underline{\alpha} \big({}^* \! F^{\kappa_{ij},\mathrm{sph}}\!+ {}^* \!F^{ \kappa_{ij},\mathrm{hom}} \big)_{\Omega_{ij}^{\mathrm{resc}}}(u,\underline{u},\omega) \dr u &= \int_{\R_u} \slashed{\nabla} \cdot {}^* \underline{\alpha}^{\mathrm{asymp}}[f_\infty] (u,\omega) \dr u , \\
\lim_{ \underline{u} \to + \infty}  \sum_{1 \leq i < j \leq 3} \int_{u=-\underline{u}+4}^{\underline{u}}  r \underline{\alpha} \big( {}^* \! F^{\kappa_{ij}, \mathrm{T}} \big)_{\Omega_{ij}^{\mathrm{resc}}}(u,\underline{u},\omega) \dr u &=  \int_{\tau=0}^{+\infty}  \frac{2 \tau^2}{\langle \tau \rangle + \tau} \, \slashed{\nabla} \cdot  \underline{\alpha} \big( {}^* \mathbb{F} [ f_\infty ] \big) (\tau \omega) \frac{\dr \tau}{\langle \tau \rangle^3}
\end{align*}
and
$$ \slashed{\nabla} \cdot {}^* \underline{\alpha}^{\mathrm{asymp}}[f_\infty]= \slashed{\nabla} \times \underline{\alpha}^{\mathrm{asymp}}[f_\infty] , \qquad \qquad \slashed{\nabla} \cdot \underline{\alpha} \big( {}^*  \mathbb{F} [ f_\infty] \big)  = \slashed{\nabla} \times \underline{\alpha} \big(   \mathbb{F} [ f_\infty] \big)  .$$
\end{proof}

We now improve the convergence estimate of Corollary \ref{Corconvasymp} for the Lorentz force $\widehat{v}^\mu F^{\mathrm{asymp}}_{\mu j}[f_\infty]$. It will allow us to slightly relax the velocity decay for $f_\infty$ and to simplify the analysis of Section \ref{SecVlasov}. For this, our proof will require to exploit improved decay estimates for the null components of $F^{\mathrm{hom}}$ in the interior of the light cone. 
\begin{Lem}\label{LemnullFhom}
For any $|\gamma| \leq N_0-1$, we have, for all $t \geq |x|+2$,
$$|\underline{\alpha} \big( F^{\gamma, \mathrm{hom}} \big) \big|(t,x) \lesssim \frac{\overline{\mathbb{E}}_{N_0+1}[f_\infty]}{t \, \langle t-|x| \rangle^2 }, \quad \qquad  \big( |\alpha \big( F^{\gamma, \mathrm{hom}} \big) \big|+|\rho \big( F^{\gamma, \mathrm{hom}} \big) \big|+|\sigma \big( F^{\gamma, \mathrm{hom}} \big) \big| \big) (t,x) \lesssim \frac{\overline{\mathbb{E}}_{N_0+1}[f_\infty]}{t^{2} \, \langle t-|x| \rangle }.$$
\end{Lem}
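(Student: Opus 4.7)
The plan is to treat the two estimates separately, since the $\underline{\alpha}$-bound is a clean consequence of Kirchhoff's formula while the improved bound on $\alpha,\rho,\sigma$ requires extracting the Maxwell null structure from $F^{\gamma,\mathrm{hom}}$, which on its own only satisfies the free wave equation component-wise.

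For the bound on $\underline{\alpha}$, I would adapt the strategy used in Proposition \ref{Prohompart}. Set $\phi := F^{\gamma,\mathrm{hom}} - \mathcal{L}_{Z^\gamma}\widetilde{F}$. Each Cartesian component of $\phi$ satisfies $\Box\phi_{\mu\nu}=0$ by Proposition \ref{Propurechargetilde}. Combining the weighted initial data estimate from Proposition \ref{Proinidataasymp} (extended in $|\kappa|$ up to $N_0+1-|\gamma|$ by exchanging time derivatives for spatial ones via the wave equation, as in the proof of Proposition \ref{Prohompart}) with Lemma \ref{LemforKirchpoint} yields $|\phi|(t,x)\lesssim \overline{\mathbb{E}}_{N_0+1}[f_\infty]/(\langle t+|x|\rangle\langle t-|x|\rangle^2)$. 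Propositions \ref{Propurecharge} and \ref{Propurechargetilde} show that $\mathcal{L}_{Z^\gamma}\widetilde{F}$ is supported in $\{|x|\geq t+1\}\cup\{1\leq|x|-t\leq 2\}$, so in the region $t\geq|x|+2$ we have $F^{\gamma,\mathrm{hom}}=\phi$. The bound $|\underline{\alpha}(F)|\leq|F|$ together with $\langle t+|x|\rangle\sim t$ completes the first estimate.

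For the bound on the good null components, I would use the decomposition $F^{\gamma,\mathrm{hom}}=\mathcal{L}_{Z^\gamma}F^{\mathrm{asymp}}[f_\infty]-F^{\gamma,\mathrm{inh}}$ (valid for $|\gamma|\leq N_0-1$, since $|\gamma|+1\leq N_0$ allows applying Proposition \ref{Pronullcompoasymp}). The first term obeys $|\zeta(\mathcal{L}_{Z^\gamma}F^{\mathrm{asymp}})|\lesssim \overline{\mathbb{E}}_{N_0+1}[f_\infty]/\langle t+|x|\rangle^2$ for $\zeta\in\{\alpha,\rho,\sigma\}$. The plan is then to upgrade this to the target $\lesssim 1/(t^2\langle t-|x|\rangle)$ by integrating the null Maxwell equations \eqref{null5}--\eqref{null6} satisfied by $\mathcal{L}_{Z^\gamma}F^{\mathrm{asymp}}$ along $L$ starting from the light cone, exploiting both the smallness of $J^\gamma_L\lesssim \langle t-|x|\rangle/\langle t+|x|\rangle^4$ (Lemma \ref{LemasympJpoint}) and the sharp $\alpha$-bound from Proposition \ref{Pronullcompoasymp}, in much the same spirit as the derivation of the improved $\alpha$-estimate there. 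The contribution of $F^{\gamma,\mathrm{inh}}$ is then handled by the Glassey-Strauss representation of Proposition \ref{GSasymp}: the sphere piece $F^{\gamma,\mathrm{sph}}$ is supported in $\{t-|x|\leq 2t_0\}$ where $\langle t-|x|\rangle$ is bounded, so its contribution fits inside $1/(t^2\langle t-|x|\rangle)$; the telescoping Glassey-Strauss piece $F^{\gamma,T}$ is estimated by the asymptotic identification $F^{\gamma,\mathrm{inh}}(t,x)=t^{-2}\mathbb{F}[\widehat{Z}^\gamma_\infty f_\infty](\widecheck{x/t})-F^{\gamma,\mathrm{rem}}$ from Proposition \ref{Proinhom}, matched against the interior convergence estimate of Proposition \ref{Proasymp} for $\mathcal{L}_{Z^\gamma}F^{\mathrm{asymp}}$ to the same profile.

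The principal obstacle will be the $\rho$ and $\sigma$ components, where Proposition \ref{Pronullcompoasymp} gives only the coarser rate $\langle t+|x|\rangle^{-2}$ (no $\alpha$-like improvement). Closing the gap requires revisiting its proof with a more precise integration: from the null Maxwell equations \eqref{null5}--\eqref{null6}, $\nabla_L(r^2\rho(\mathcal{L}_{Z^\gamma}F^{\mathrm{asymp}}))$ is controlled by $r^2|J^\gamma_L|+r\,|\slashed{\nabla}\cdot\alpha(\mathcal{L}_{Z^\gamma}F^{\mathrm{asymp}})|$, and applying Lemma \ref{LemComgoodderiv} to convert $\slashed{\nabla}$ into a commutator gain lets us reinject the improved $\alpha$-bound; integrating along the $L$-direction starting from the interior initial data on $\{t=t_0\}$ (controlled by Proposition \ref{Proinidataasymp}) should produce the needed $\langle t-|x|\rangle^{-1}$ gain for $\rho,\sigma$ of $\mathcal{L}_{Z^\gamma}F^{\mathrm{asymp}}$ in the interior. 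Subtracting $\zeta(F^{\gamma,\mathrm{inh}})$ via the decomposition above, together with the first bound of the lemma used to cover the regime $\langle t-|x|\rangle\gtrsim\sqrt{t}$ where the wave-equation estimate $1/(t\langle t-|x|\rangle^2)$ already beats the target, will then yield the stated bound.
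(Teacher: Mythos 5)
Your treatment of the $\underline{\alpha}$-bound is correct and coincides with the paper's: since $\mathcal{L}_{Z^\gamma}\widetilde{F}$ vanishes on $\{t\geq |x|+2\}$, the estimate \eqref{eq:estihomFtilde} (i.e.\ Proposition \ref{Prohompart}) already gives $|F^{\gamma,\mathrm{hom}}|\lesssim \overline{\mathbb{E}}_{N_0+1}[f_\infty]\,\langle t+|x|\rangle^{-1}\langle t-|x|\rangle^{-2}$ there, which is the first inequality.

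For the good components, however, your plan has a genuine gap. The upgrade you propose for $\rho(\mathcal{L}_{Z^\gamma}F^{\mathrm{asymp}}[f_\infty])$ and $\sigma(\mathcal{L}_{Z^\gamma}F^{\mathrm{asymp}}[f_\infty])$ to $t^{-2}\langle t-|x|\rangle^{-1}$ in the interior is false: by Proposition \ref{Proasymp}, $t^{2}\mathcal{L}_{Z^\gamma}F^{\mathrm{asymp}}[f_\infty](t,t\widehat{v})\to \mathbb{F}[\widehat{Z}^\gamma_\infty f_\infty](v)$, whose $\rho$-component has no reason to vanish, while $\langle t-|t\widehat{v}|\rangle\sim t\,\langle v\rangle^{-2}\to\infty$; so $\rho(\mathcal{L}_{Z^\gamma}F^{\mathrm{asymp}})$ genuinely sits at size $t^{-2}$ with no gain in $t-|x|$. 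The target bound can only hold for the \emph{difference} $F^{\gamma,\mathrm{hom}}=\mathcal{L}_{Z^\gamma}F^{\mathrm{asymp}}-F^{\gamma,\mathrm{inh}}$, after the $t^{-2}\mathbb{F}$ profiles cancel, and estimating the two pieces separately (or re-integrating the null equations for $\mathcal{L}_{Z^\gamma}F^{\mathrm{asymp}}$ alone) cannot produce it. Your fallback is also arithmetically off: $\frac{1}{t\langle t-|x|\rangle^{2}}\leq\frac{1}{t^{2}\langle t-|x|\rangle}$ exactly when $\langle t-|x|\rangle\geq t$, not $\geq\sqrt{t}$, so the regime $2\leq t-|x|\ll t$ is left uncovered.

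The missing idea is the following. Since $\Box F^{\gamma,\mathrm{hom}}_{\mu\nu}=0$, the divergence $\nabla^\mu F^{\gamma,\mathrm{hom}}_{\mu\nu}$ also solves the homogeneous wave equation, and its Cauchy data at $t_0$ equal $J^\gamma_\nu(t_0,\cdot)$ (and $\partial_t J^\gamma_\nu(t_0,\cdot)$), which are supported in $\{|x|\leq t_0\leq 2\}$. By the strong Huygens principle, $\nabla^\mu F^{\gamma,\mathrm{hom}}_{\mu\nu}=0$ on $\{t\geq |x|+2\}$, i.e.\ $F^{\gamma,\mathrm{hom}}$ is a \emph{vacuum} Maxwell field in that region. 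One may then use the source-free equations \eqref{null1}, \eqref{null3}--\eqref{null4} directly for $F^{\gamma,\mathrm{hom}}$: for $|x|\gtrsim t$ they give $|\nabla_{\underline{L}}\zeta(F^{\gamma,\mathrm{hom}})|\lesssim |x|^{-1}\sup_{|\xi|\leq 1}|\mathcal{L}_{Z^\xi}F^{\gamma,\mathrm{hom}}|\lesssim \langle t+|x|\rangle^{-2}\langle t-|x|\rangle^{-2}$ (using Remark \ref{Rqhompart}), and integrating in $u$ along constant $\underline{u}=t+|x|$ from the axis $r=0$, where Proposition \ref{Prohompart} gives $|F^{\gamma,\mathrm{hom}}|(t+|x|,0)\lesssim\langle t+|x|\rangle^{-3}$, yields the stated $t^{-2}\langle t-|x|\rangle^{-1}$ decay; the complementary regime $t\geq 2|x|$ is covered by Proposition \ref{Prohompart} alone. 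Without the Huygens step, the null equations for $F^{\gamma,\mathrm{hom}}$ carry the source $J^\gamma$ with the wrong sign structure and the argument does not close.
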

\begin{proof}
For the the null component $\underline{\alpha}$, it suffices to apply Proposition \ref{Prohompart}. The key idea in order to improve the estimates satisfied by the components $\alpha$, $\rho$ and $\sigma$ consists in exploiting certain of the Maxwell equations, written in null coordinates. The problem is that $F^{\gamma, \mathrm{hom}}$ is not solution solution to the Maxwell equations. However, one can prove that $F^{\gamma, \mathrm{hom}}$ is solution to the vacuum Maxwell equations in the interior of a light cone. Fix then $|\gamma| \leq N_0-1$ and remark that, since $\Box F^{\gamma , \mathrm{hom}}_{\mu \nu}=0$,
$$ \Box \,  \nabla^\mu F^{\gamma , \mathrm{hom}}_{\mu \nu} =0, \qquad \Box \,  \nabla^\mu {}^* \! F^{\gamma, \mathrm{hom}}_{\mu \nu} = 0, \qquad \nu \in \llbracket 0 , 3 \rrbracket.$$
By definition, we have further, for any $ \nu \in \llbracket 0 , 3 \rrbracket$,
\begin{alignat*}{2}
& \nabla^\mu F^{\gamma , \mathrm{hom}}_{\mu \nu}(t_0,\cdot) = \nabla^\mu \mathcal{L}_{Z^\gamma}( F)_{\mu \nu}(t_0, \cdot)=J^\gamma_\nu (t_0,\cdot), \qquad  && \; \; \;  \nabla^\mu {}^* \! F^{\gamma , \mathrm{hom}}_{\mu \nu}(t_0,\cdot) = \nabla^\mu {}^* \! \mathcal{L}_{Z^\gamma}( F)_{\mu \nu}(t_0, \cdot)=0 , \\
 \partial_t & \nabla^\mu F^{\gamma , \mathrm{hom}}_{\mu \nu}(t_0,\cdot) = \partial_t \nabla^\mu \mathcal{L}_{Z^\gamma}( F)_{\mu \nu}(t_0, \cdot)= \partial_t J^\gamma_\nu (t_0,\cdot), \qquad  && \partial_t \nabla^\mu {}^* \! F^{\gamma , \mathrm{hom}}_{\mu \nu}(t_0,\cdot) = \partial_t \nabla^\mu {}^* \! \mathcal{L}_{Z^\gamma}( F)_{\mu \nu}(t_0, \cdot)=0.
\end{alignat*}
It implies that $\nabla^\mu {}^* \! F^{\gamma, \mathrm{hom}}_{\mu \nu}=0$ on $[t_0,+ \infty [ \times \R^3$. Exploiting the wave equation, we get that all the derivatives $\partial_{t,x}^\kappa$, up to second order $|\kappa| \le 2$, of $\nabla^\mu  F^{\gamma, \mathrm{hom}}_{\mu \nu}(t_0, \cdot)$ are supported in $\{ |x| \leq t_0 \}$ since this property holds true for $J^\gamma (t_0,\cdot)$. As $t_0 \leq 2$, we get $\nabla^\mu  F^{\gamma, \mathrm{hom}}_{\mu \nu}=0$ on $\{ t \geq |x|+2 \}$ by Huygens' principle. 

Fix now $t \geq |x|+2$. If $t \geq 2|x|$, $t-|x| \geq t/2$ and it suffices to apply Proposition \ref{Prohompart}. Otherwise $t \leq 2|x|$, so that $|x| \gtrsim 1+t+|x|$ and let $\zeta \in \{ \alpha, \rho , \sigma \}$. Using the Maxwell equation \eqref{null1}, \eqref{null3} or \eqref{null4} and then the decay estimates given by Proposition \ref{Prohompart} as well as Remark \ref{Rqhompart},
$$ \big| \nabla_{\underline{L}} \zeta \big( F^{\gamma, \mathrm{hom}} \big) |(t,x) \lesssim |x|^{-1} \sup_{|\xi| \leq 1}  \big| \mathcal{L}_{Z^\xi} F^{\gamma, \mathrm{hom}} \big|(t,x) \lesssim \langle t+|x| \rangle^{-2} \langle t-|x| \rangle^{-2} \overline{\mathbb{E}}_{N_0+1}[f_\infty] .$$
In the same spirit as in the proof of Proposition \ref{Pronullcompoasymp}, consider $\omega=x/x$ and, for all $2 \leq u \leq \underline{u}$,
$$ \phi(u,\underline{u}):= \zeta \big( F^{\gamma, \mathrm{hom}} \big) \Big( \frac{\underline{u}+u}{2}, \frac{\underline{u}-u}{2}\omega \Big), \qquad |\phi|(u,\underline{u}) \lesssim \frac{\overline{\mathbb{E}}_{N_0+1}[f_\infty]}{\langle \underline{u} \rangle \, \langle u \rangle^{2} }, \qquad |\partial_u \phi|(u,\underline{u}) \lesssim \frac{\overline{\mathbb{E}}_{N_0+1}[f_\infty]}{\langle \underline{u} \rangle^{2} \langle u \rangle^{2} }.$$
Integrating in $u$, along the segment $u \mapsto (u, t+|x|)$ with $t-|x| \leq u \leq t+|x|$, yields
$$ \big|\zeta \big( F^{\gamma, \mathrm{hom}} \big) \big|(t,x) =|\phi|(t-|x|,t+|x|) \leq \big| \zeta \big( F^{\gamma, \mathrm{hom}} \big) \big|(t+|x|,0)+\langle t+|x| \rangle^{-2} \int_{u=t-|x|}^{t+|x|} \frac{\overline{\mathbb{E}}_{N_0+1}[f_\infty]}{ \langle u \rangle^{2} } \dr u,$$
which implies the result.
\end{proof}

We now investigate the null structure of the Lorentz force.

\begin{Lem}\label{nullstruct}
Let $H$ be a $2$-form defined on $\R_+ \times \R^3$. Then, for any $0 \leq \nu \leq 3$ and all $(t,x) \in \R_+ \times \R^3$,
$$ \left| \widehat{v}^\mu H_{\mu \nu}(t,x) \right| \lesssim \left( |\alpha (H)| +  |\rho(H)|+|\sigma (H)| \right)(t,x)+\big(\widehat{v}^{\underline{L}}(x)+|\widehat{\slashed{v}}(x)| \big) |\underline{\alpha} (H)|(t,x)  .$$
\end{Lem}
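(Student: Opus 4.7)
The plan is to work entirely in the null frame $(L,\underline{L},e_\theta,e_\varphi)$. First I would note that since the $\partial_{t,x^k}$ are linear combinations of $L,\underline{L},e_\theta,e_\varphi$ with coefficients bounded uniformly by $1$ (explicitly, $2\partial_t = L+\underline{L}$ and $\partial_{x^k} = -\tfrac{\omega_k}{2}\underline{L}+\tfrac{\omega_k}{2}L+\omega_k^{e_A}e_A$, as recalled in the proof of Proposition~\ref{Propurecharge}), it suffices to control each of the four quantities $\widehat{v}^\mu H_{\mu X}$ for $X\in\{L,\underline{L},e_\theta,e_\varphi\}$.

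Next I would expand $\widehat{v}^\mu$ itself in the null frame, writing
$\widehat{v}^\mu H_{\mu X} = \widehat{v}^L H_{LX}+\widehat{v}^{\underline{L}} H_{\underline{L} X}+\widehat{v}^{e_\theta} H_{e_\theta X}+\widehat{v}^{e_\varphi} H_{e_\varphi X}$,
and substitute the null decomposition \eqref{defnullcompoF} together with antisymmetry of $H$. Taking $X=L,\underline{L},e_A$ in turn yields
\begin{align*}
\widehat{v}^\mu H_{\mu L} &= 2\widehat{v}^{\underline{L}}\rho(H)+\widehat{v}^{e_A}\alpha(H)_{e_A},\\
\widehat{v}^\mu H_{\mu \underline{L}} &= -2\widehat{v}^{L}\rho(H)+\widehat{v}^{e_A}\underline{\alpha}(H)_{e_A},\\
\widehat{v}^\mu H_{\mu e_A} &= -\widehat{v}^{L}\alpha(H)_{e_A}-\widehat{v}^{\underline{L}}\underline{\alpha}(H)_{e_A}\pm \widehat{v}^{e_B}\sigma(H),
\end{align*}
where I used $H_{LL}=H_{\underline{L}\underline{L}}=H_{e_Ae_A}=0$, $H_{\underline{L} L}=2\rho(H)$, $H_{Le_A}=-\alpha(H)_{e_A}$, $H_{\underline{L} e_A}=-\underline{\alpha}(H)_{e_A}$, $H_{e_\theta e_\varphi}=\sigma(H)$.

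Finally, using the bounds $|\widehat{v}^L|\leq 1$ and $|\widehat{v}^{\underline{L}}|\leq 1$ (which follow from the mass-shell relation, see Lemma~\ref{gainv}), the factors of $\widehat{v}^L$ and $\widehat{v}^{\underline{L}}$ multiplying $\rho(H)$ or $\alpha(H)$ or $\sigma(H)$ can be absorbed into the implicit constant. Crucially, in all three expressions the component $\underline{\alpha}(H)$ only appears multiplied by $\widehat{v}^{\underline{L}}$ or $|\widehat{\slashed{v}}|$, never by $\widehat{v}^L$; this is the whole point of the lemma and is what allows one to exploit later the gain of Lemma~\ref{gainv}. Combining the three estimates, together with the bounded null-frame expansion of $\partial_{x^\nu}$, yields the stated inequality. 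No real obstacle is expected — this is a purely algebraic computation in the null frame.
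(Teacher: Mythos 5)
Your proof is correct and follows essentially the same null-frame expansion as the paper's. The only cosmetic difference is that the paper first reduces the case $\nu=0$ to $\nu\in\{1,2,3\}$ via the $2$-form identity $\widehat{v}^\mu H_{\mu 0}=-\widehat{v}^\mu\widehat{v}^j H_{\mu j}$ whereas you handle $\partial_t=\tfrac{1}{2}(L+\underline{L})$ directly as part of the change of basis; the crux in both variants is the same observation you emphasize, that $\underline{\alpha}(H)$ only ever pairs with $\widehat{v}^{\underline{L}}$ or $\widehat{\slashed{v}}$.
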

\begin{proof}
Since $H$ is a $2$-form and $\widehat{v}^0=1$, we have $\widehat{v}^\mu H_{\mu 0}=-\widehat{v}^\mu \widehat{v}^j  H_{\mu j} $ and it suffices to treat the case $\nu \in \llbracket 1,3 \rrbracket$. In order to lighten the notations, we drop in this proof the dependence in $(t,x)$ of the quantities that we will consider. We start by expanding $\widehat{v}^\mu H_{\mu j}$, for $1 \leq j \leq 3$, according to the null frame $(\underline{L},L,e_\theta, e_\varphi)$. We get
\begin{align}
 \left| \widehat{v}^\mu H_{\mu j} \right| &= \left|\widehat{v}^L H_{L j}+\widehat{v}^{\underline{L}}H_{\underline{L} j}+\widehat{v}^{e_\theta} H_{e_\theta j}+\widehat{v}^{e_\varphi} H_{e_\varphi j} \right| .
\end{align}
As $\widehat{v}^L \leq 1$ and $2\partial_{x^j}= \omega_j L-\omega_j \underline{L}+2\omega_j^{e_A} e_A$, with $\omega_j(x):=x_j/|x|$ and $|\omega_j^{e_A}| \leq 1$, we obtain
\begin{equation}\label{eq:toremember00}
\left|\widehat{v}^\mu H_{\mu j}\right| \lesssim |\alpha( H)|+\left( \widehat{v}^{\underline{L}}+|\omega_j| \right)|\rho (H)|+|\slashed{v}||\sigma (H)|+ \big( \widehat{v}^{\underline{L}}+|\widehat{\slashed{v}}||\omega_j| \big)| \underline{\alpha}(H)|.
\end{equation}
It remains to use $\widehat{v}^{\underline{L}}+|\omega_j| +|\widehat{\slashed{v}}| \lesssim 1$.
\end{proof}
\begin{Rq}\label{Rqimprofullnullstruct}
We will not need it in this article, but $(\delta_i^j-\widehat{v}^j \widehat{v}_i)\widehat{v}^\mu H_{\mu j}(t,x)$ enjoys better null properties than $\widehat{v}^\mu H_{\mu \nu}$. For this use \eqref{eq:toremember00} and observe that
\begin{align*}
|(\delta_i^j-\widehat{v}^j \widehat{v}_i)\omega_j|&=|\omega_i-\widehat{v}_i+\widehat{v}_i(1-\widehat{v}^j \omega_j)| \leq |\omega_i-\widehat{v}_i|+2 \widehat{v}^{\underline{L}}  \leq |\omega-\widehat{v}|+2 \widehat{v}^{\underline{L}} \\
 & \leq \left( |\omega|^2+|\widehat{v}|^2-2 \widehat{v} \cdot \omega \right)^{\frac{1}{2}}+2\widehat{v}^{\underline{L}} \leq \sqrt{2} \left|1-\widehat{v} \cdot \omega \right|^{\frac{1}{2}}+2\widehat{v}^{\underline{L}} = \sqrt{2} \left|v^{\underline{L}} \right|^{\frac{1}{2}}+2\widehat{v}^{\underline{L}} \leq (2+\sqrt{2})|\widehat{v}^{\underline{L}}|^{\frac{1}{2}}.
\end{align*}
\end{Rq}

We end this section by a convergence estimate for the Lorentz force.
\begin{Pro}\label{ProconvasympLor}
Let $|\gamma| \leq N_0-1$ and $1 \leq j \leq 3$. Then, for all $(t,x,v) \in [2,+\infty[ \times \R^3_x \times \R^3_v$,
$$  \Big|t^2 \widehat{v}^\mu \mathcal{L}_{Z^\gamma}\big(F^{\mathrm{asymp}}[f_\infty] \big)_{\mu j}(t,x+t\widehat{v})-\widehat{v}^\mu \mathbb{F}_{\mu j}  \big[ \widehat{Z}^\gamma_\infty f_\infty \big](v) \Big| \lesssim \bigg( \frac{ |\widehat{v}^{\underline{L}}(x+t\widehat{v}) |^{\frac{1}{2}} }{ \langle t-|x+t\widehat{v}| \rangle}  +\frac{1}{t} \bigg) \langle x \rangle \, \langle v \rangle \, \overline{\mathbb{E}}_{N_0+1}[f_\infty].$$
Note that the right hand side is bounded by $t^{-\frac{1}{2}}\, \langle x \rangle^{\frac{3}{2}} \, \langle v \rangle \, \overline{\mathbb{E}}_{N_0+1}[f_\infty]$ according to Lemma \ref{gainv}.
\end{Pro}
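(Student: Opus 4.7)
The plan is to refine the approach of Corollary \ref{Corconvasymp} by exploiting both the null structure of $\widehat{v}^\mu H_{\mu j}$ given by Lemma \ref{nullstruct} and the improved null decay of the homogeneous piece $F^{\gamma,\mathrm{hom}}$ provided by Lemma \ref{LemnullFhom}. First I would dispose of the case in which $\widehat{Z}_\infty^\gamma$ contains a translation ($\partial_t^\infty$ or $\partial_{z^k}$) or the scaling $\widehat{S}$: there $\mathbb{F}[\widehat{Z}_\infty^\gamma f_\infty]=0$ and $\mathcal{L}_{Z^\gamma}F^{\mathrm{asymp}}[f_\infty]$ has improved pointwise decay by Proposition \ref{Proasymp} and Remark \ref{Rqscalingbetterdecay}, so Lemma \ref{nullstruct} combined with the mass-shell bound $|\widehat{v}^{\underline{L}}|^{1/2}\langle v \rangle \geq 1/2$ from Lemma \ref{gainv} yields the estimate directly.

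For the remaining case where $Z^\gamma$ is composed only of boosts and rotations, I would split according to Lemma \ref{Lemxvt}. In the main regime $t \geq 16\langle x\rangle\langle v\rangle^2$ one has $t - |\tilde{x}| \geq t/(4\langle v\rangle^2) \geq 2t_0$ (with $\tilde{x} := x+t\widehat{v}$), so Proposition \ref{Proinhom} gives the exact identity $t^2 F^{\gamma,\mathrm{inh}}(t,\tilde{x}) = \mathbb{F}[\widehat{Z}_\infty^\gamma f_\infty](\widecheck{\tilde{x}/t})$. Writing $\mathcal{L}_{Z^\gamma}F^{\mathrm{asymp}}[f_\infty] = F^{\gamma,\mathrm{hom}} + F^{\gamma,\mathrm{inh}}$, the quantity to bound splits into a homogeneous piece and a velocity-difference piece. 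For $t^2 \widehat{v}^\mu F^{\gamma,\mathrm{hom}}_{\mu j}(t,\tilde{x})$, Lemma \ref{nullstruct} with Lemma \ref{LemnullFhom} gives contributions of order $\overline{\mathbb{E}}/\langle t-|\tilde{x}|\rangle$ from the good null components and, after using $\widehat{v}^{\underline{L}}+|\widehat{\slashed{v}}| \lesssim \langle t-|\tilde{x}|\rangle\langle x \rangle/\langle t+|\tilde{x}|\rangle$ from Lemma \ref{gainv}, $\langle x \rangle\overline{\mathbb{E}}/\langle t-|\tilde{x}|\rangle$ from $\underline{\alpha}$; the mass-shell bound then absorbs these into $|\widehat{v}^{\underline{L}}|^{1/2}\langle x\rangle \langle v\rangle \overline{\mathbb{E}}/\langle t-|\tilde{x}|\rangle$. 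For the velocity-difference piece $\widehat{v}^\mu(\mathbb{F}_{\mu j}(\widecheck{\tilde{x}/t})-\mathbb{F}_{\mu j}(v))$, I would argue as for \eqref{kevatalenn:Fasymp}: the algebraic relations of Proposition \ref{Proasympelec} combined with $t-|\tau x+t\widehat{v}| \gtrsim t/\langle v\rangle^2$ along the path yield a bound $\lesssim \langle v\rangle^2|x|/t^2\,\overline{\mathbb{E}}$ which reduces to $\overline{\mathbb{E}}/t \leq \langle x\rangle\langle v\rangle \overline{\mathbb{E}}/t$ since $\langle v\rangle^2/t \leq 1/(16\langle x\rangle)$ here. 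In the complementary regime $t \leq 16\langle x\rangle\langle v\rangle^2$, one estimates $t^2 \widehat{v}^\mu \mathcal{L}_{Z^\gamma}F^{\mathrm{asymp}}_{\mu j}$ and $\widehat{v}^\mu \mathbb{F}_{\mu j}(v)$ separately by the same null-structure reasoning applied to Propositions \ref{Pronullcompoasymp}, \ref{Proasymp} and \ref{Proinhom}, with a further sub-division according to whether $t-|\tilde{x}| \leq 4$ (in which the mass-shell bound directly absorbs the crude $\overline{\mathbb{E}}$ into $\langle x\rangle\overline{\mathbb{E}}/\langle t-|\tilde{x}|\rangle$) or $t-|\tilde{x}|>4$ (in which Proposition \ref{Proinhom} again gives the identity and a refined version of the path argument handles the velocity difference).

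The main obstacle is the velocity-difference bound: a naive mean value argument applied to $v \mapsto \widehat{v}^\mu \mathbb{F}_{\mu j}(v)$ loses three powers of $\langle v\rangle$ because the Jacobian of $\widecheck{\ }$ is of order $\langle v\rangle^3$ near timelike infinity. The resolution, as in the proof of Corollary \ref{Corconvasymp}, is to integrate $\nabla_w\mathbb{F}(w)$ along an affine path in the $\widehat{v}$-variable from $\widehat{v}$ to $\widehat{v}+x/t$ rather than directly in the $v$-variable, using the algebraic relations of Proposition \ref{Proasympelec} so that this derivative is controlled by $\overline{\mathbb{E}}/\langle w\rangle$; the weight $(t-|\tau x+t\widehat{v}|)^{-1}$ arising from the change of argument is then absorbed by the regime constraint.
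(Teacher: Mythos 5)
Your overall strategy coincides with the paper's: exploit the null structure of the Lorentz force through Lemma \ref{nullstruct}, split $\mathcal{L}_{Z^\gamma}F^{\mathrm{asymp}}[f_\infty]=F^{\gamma,\mathrm{hom}}+F^{\gamma,\mathrm{inh}}$, use the exact identity $t^2F^{\gamma,\mathrm{inh}}(t,x+t\widehat{v})=\mathbb{F}[\widehat{Z}^\gamma_\infty f_\infty](\widecheck{(x+t\widehat{v})/t}\,)$ of Proposition \ref{Proinhom} away from the light cone together with the interior null estimates of Lemma \ref{LemnullFhom}, and control the velocity difference by the path argument behind \eqref{kevatalenn:Fasymp}. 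Your case split ($t\gtrless 16\langle x\rangle\langle v\rangle^2$ via Lemma \ref{Lemxvt} instead of $|x+t\widehat{v}|\gtrless t-2$) is a workable variant: in the complementary regime one has $t-|x+t\widehat{v}|\le t(1-|\widehat{v}|)+|x|\le t\langle v\rangle^{-2}+|x|\lesssim\langle x\rangle$, which is exactly what allows the crude $O(\overline{\mathbb{E}}_{N_0+1}[f_\infty])$ contributions to be absorbed into $\langle x\rangle\langle v\rangle|\widehat{v}^{\underline{L}}|^{1/2}\langle t-|x+t\widehat{v}|\rangle^{-1}\overline{\mathbb{E}}_{N_0+1}[f_\infty]$; you should state this inequality explicitly, since without it ``estimate the two terms separately'' is not justified on a region that reaches far into the interior of the cone. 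Restricting the main regime also lets you replace the paper's interpolation step for the velocity difference by a direct bound, which is fine.

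The step that fails as written is your treatment of the translation and scaling cases. There you feed only the uniform decay $|\mathcal{L}_{Z^\gamma}F^{\mathrm{asymp}}[f_\infty]|\lesssim\langle t+r\rangle^{-1}\langle t-r\rangle^{-2}\,\overline{\mathbb{E}}_{N_0+1}[f_\infty]$ of Proposition \ref{Proasymp} and Remark \ref{Rqscalingbetterdecay} into Lemma \ref{nullstruct}. The components $\alpha,\rho,\sigma$ enter that lemma with coefficient $1$, so they contribute $t^2\langle t+r\rangle^{-1}\langle t-r\rangle^{-2}\approx t\,\langle t-|x+t\widehat{v}|\rangle^{-2}$, and since the best available bound on $t/\langle t-|x+t\widehat{v}|\rangle$ is $\lesssim\langle v\rangle^2\langle x\rangle$ (Lemma \ref{gainv}), the mass-shell inequality only yields $\langle x\rangle\langle v\rangle^{3}|\widehat{v}^{\underline{L}}|^{1/2}\langle t-|x+t\widehat{v}|\rangle^{-1}$ --- two powers of $\langle v\rangle$ worse than the target, and the $\langle x\rangle\langle v\rangle/t$ branch does not rescue it either. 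For $\gamma_T\ge 1$ the cure is the one the paper uses: the component-wise improvement $|\alpha|+|\rho|+|\sigma|\lesssim\langle t+r\rangle^{-2}\langle t-r\rangle^{-1}\overline{\mathbb{E}}_{N_0+1}[f_\infty]$, obtained from Proposition \ref{Pronullcompoasymp} upgraded by Lemma \ref{improderiv}, makes the good-component contribution $\lesssim\langle t-|x+t\widehat{v}|\rangle^{-1}$, which a single mass-shell factor absorbs. When $Z^\gamma$ contains $S$ but no translation, Lemma \ref{improderiv} is unavailable and even the contribution $t^2\langle t+r\rangle^{-2}\lesssim 1$ from Proposition \ref{Pronullcompoasymp} is not absorbable deep inside the cone when $x$ is small and $t\gg\langle v\rangle^2$; that case cannot be short-circuited and must go through the same interior/exterior split as the boosts and rotations, using $F^{\gamma,\mathrm{inh}}=0$ and Lemma \ref{LemnullFhom} in the interior.
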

\begin{proof}
Start by applying Lemma \ref{nullstruct} to $G:=\mathcal{L}_{Z^\gamma}(F^{\mathrm{asymp}}[f_\infty])$ in order to get
$$ t^2 \big| \widehat{v}^\mu G_{\mu j}\big|(t,x+t\widehat{v}) \lesssim t^2 \!\big( |\alpha (G)| \!+\!  |\rho(G)| \! + \!|\sigma (G)| \big)(t,x+t\widehat{v})+t^2\big(\widehat{v}^{\underline{L}}(x+t\widehat{v})+|\widehat{\slashed{v}}(x+t\widehat{v})| \big) |\underline{\alpha} (G)|(t,x+t \widehat{v}) .$$
If $Z^\gamma$ contains at least one translation $\partial_{x^\lambda}$, that is $\gamma_T \geq 1$, the pointwise decay estimates for the null components of $G$, given by Proposition \ref{Pronullcompoasymp} and Lemma \ref{improderiv}, provide
$$ t^2 \big| \widehat{v}^\mu G_{\mu j}\big|(t,x+t\widehat{v}) \lesssim \overline{\mathbb{E}}_{N_0+1}[f_\infty] \left[ \langle t-|x+t\widehat{v}| \rangle^{-1}+t \, \langle t-|x+t\widehat{v}| \rangle^{-2}  \big(\widehat{v}^{\underline{L}}(x+t\widehat{v})+|\widehat{\slashed{v}}(x+t\widehat{v})| \big) \right].$$
It remains to use $\mathbb{F} \big[ \widehat{Z}^\gamma_\infty f_\infty \big] =0$ and to apply Lemma \ref{gainv}, which gives
\begin{equation}\label{eq:gainvv}
1 \lesssim \langle v \rangle \big| \widehat{v}^{\underline{L}} \big|^{1/2}, \qquad t \big( \widehat{v}^{\underline{L}}(x+t\widehat{v})+|\widehat{\slashed{v}}(x+t\widehat{v})| \big) \lesssim \langle x \rangle \, \max \big(1 , t- |x-t\widehat{v} | \big).
\end{equation}
Assume now $Z^\gamma$ is only composed by homogeneous vector fields. We treat first the case $|x+t\widehat{v}| \geq t-2$. Then, as $\big|\mathbb{F} \big[ \widehat{Z}^\gamma_\infty f_\infty \big] \big|(v) \lesssim   \overline{\mathbb{E}}_{N_0+1}[f_\infty]$ by Proposition \ref{Proasympelec}, the decay estimates of Proposition \ref{Pronullcompoasymp} yield
$$
t^2 \big| \widehat{v}^\mu G_{\mu j}\big|(t,x+t\widehat{v})+\big|\widehat{v}^\mu \mathbb{F}_{\mu j} \big[ \widehat{Z}^\gamma_\infty f_\infty \big](v) \big| \lesssim \overline{\mathbb{E}}_{N_0+1}[f_\infty] \left[1+t \, \langle t-|x+t \widehat{v} | \rangle^{-1}  \big(\widehat{v}^{\underline{L}}(x+t\widehat{v})+|\widehat{\slashed{v}}(x+t\widehat{v})| \big) \right] .$$
The result follows from \eqref{eq:gainvv} as well as $|x| \geq |x+t\widehat{v}|-t \geq -2$. Finally, if $|x+t\widehat{v}| \leq t-2$, since $G=F^{\gamma, \mathrm{hom}}+F^{\gamma, \mathrm{inh}}$, we have, according to Proposition \ref{Proinhom},
$$ t^2 \widehat{v}^\mu G_{\mu j}(t,x+t\widehat{v})-\widehat{v}^\mu \mathbb{F}_{\mu j}\big[ \widehat{Z}^\gamma_\infty f_\infty \big](v)=t^2 \widehat{v}^\mu F^{\gamma, \mathrm{hom}}_{\mu j}(t,x+t\widehat{v})+\widehat{v}^\mu \mathbb{F}_{\mu j}\big[ \widehat{Z}^\gamma_\infty f_\infty \big] \bigg( \frac{\widecheck{ x+t\widehat{v}}}{t} \bigg)-\widehat{v}^\mu \mathbb{F}_{\mu j}\big[ \widehat{Z}^\gamma_\infty f_\infty \big](v) .$$
We can bound the last two terms on the right hand side by $\langle x \rangle \langle v \rangle \, t^{-1} \overline{\mathbb{E}}_{N_0+1}[f_\infty]$ by interpolation. Indeed, they are both controlled by $\overline{\mathbb{E}}_{N_0+1}[f_\infty]$ and their difference is bounded by $\langle x \rangle \, t^{-1} \min(1,\langle v \rangle^2 t^{-1})  \overline{\mathbb{E}}_{N_0+1}[f_\infty]$ according to \eqref{kevatalenn:Fasymp}. It then remains us to control sufficiently well $t^2 \widehat{v}^\mu F^{\gamma, \mathrm{hom}}_{ \mu j} (t,x+t \widehat{v})$. For this, we apply Lemma \ref{nullstruct} in order to get
\begin{align*}
 t^2 \big| \widehat{v}^\mu F^{\gamma , \mathrm{hom}}_{\mu j}\big|(t,x+t\widehat{v}) & \lesssim t^2 \!\big( |\alpha (F^{\gamma , \mathrm{hom}})| \!+\!  |\rho(F^{\gamma , \mathrm{hom}})| \! + \!|\sigma (F^{\gamma , \mathrm{hom}})| \big)(t,x+t\widehat{v}) \\
 & \quad +t^2\big(\widehat{v}^{\underline{L}}(x+t\widehat{v})+|\widehat{\slashed{v}}(x+t\widehat{v})| \big) \big|\underline{\alpha} (F^{\gamma , \mathrm{hom}}) \big|(t,x+t \widehat{v}) .
 \end{align*}
We then obtain from Lemma \ref{LemnullFhom} and \eqref{eq:gainvv} that
$$ t^2 \big| \widehat{v}^\mu F^{\gamma , \mathrm{hom}}_{\mu j}\big|(t,x+t\widehat{v}) \lesssim \overline{\mathbb{E}}_{N_0+1}[f_\infty] \, \langle t-|x+t\widehat{v}| \rangle^{-1} \langle x \rangle \lesssim  \big| v^{\underline{L}}(x+t\widehat{v}) \big|^{\frac{1}{2}} \langle t-|x+t\widehat{v}| \rangle^{-1} \langle x \rangle \, \langle v \rangle \, \overline{\mathbb{E}}_{N_0+1}[f_\infty] .$$
\end{proof}
\begin{Rq}
Let us mention that a refined estimate could provide an even stronger convergence estimate for $t^2(\delta_i^j-\widehat{v}_i \widehat{v}^j) \widehat{v}^\mu \mathcal{L}_{Z^\gamma}\big(F^{\mathrm{asymp}}[f_\infty] \big)_{\mu j}(t,x+t\widehat{v})$. In particular, exploiting Remark \ref{Rqimprofullnullstruct}, we could prove an estimate uniform in $v$. For this, it would also be important to prove that $\langle v \rangle^2 \, |\alpha (\mathbb{F}[f_\infty])|(v) \lesssim \overline{\mathbb{E}}_{N_0+1}[f_\infty]$ by exploiting the strong decay estimate satisfied by $\alpha (F^{\mathrm{asymp}}[f_\infty])$.
\end{Rq}

\section{Set up of the Picard iteration and strategy of the proof}\label{SecPicard} 
We fix, for the rest of this paper, an initial time $T \geq 2$ and two constants $\Lambda \geq 1$, $\epsilon >0$ such that $\Lambda \geq \epsilon$. We al so consider $N \geq 8$, as well as a distribution function $f_\infty : \R^3_z \times \R^3_v \to \R$ and a radiation field $\underline{\alpha}^\infty$ satisfying the assumptions of Theorem \ref{Theo1}. We assume further 
\begin{equation}\label{eq:extracondi}
\overline{\mathbb{E}}_{N+1}[f_\infty]= \sup_{|\kappa_z|+|\kappa_v| \leq N+1}  \, \sup_{ \R^3_z \times \R^3_v}  \langle z \rangle^{4+|\kappa_z|} \, \langle v \rangle^{7+3|\kappa_v|} \big|  \partial_z^{\kappa_z} \partial_v^{\kappa_v}f_\infty \big|(z,v)\leq \sqrt{\epsilon},
\end{equation}
so that the result of Section \ref{SecMaxasymp} holds for the order of regularity $N_0=N$. The field $F^{\mathrm{asymp}}[f_\infty]$ will denote the solution to \eqref{eq:defasympF} with initial time $t_0=1$. This extra assumption is automatically verified if $N = 8$ but requires more decay in $v$ for $f_\infty$ when $N \geq 9$. It is only significant when $N$ is large. It will allow us to work with a much more convenient functional space and to simplify two parts of the proof. We detail how to deal with the general case in Section \ref{Secweakerassump}.

For convenience, and since a smallness assumption will be required on this quantity, we further impose $\epsilon \log^{-1}(T) \leq 1$. In particular, we then have according to Proposition \ref{Proasympelec} that $|\C_{t,v}| \lesssim \sqrt{\epsilon} \log(t) \lesssim \log^2(t)$, where $\C_{t,v}$ is introduced in \eqref{defXC}.

\subsection{Sequence of approximate solutions}

The proof of Theorem \ref{Theo1} is based on a Picard iteration. More precisely, we will prove that the following sequence $(f_n,F^n)_{n \geq 1}$ is well-defined.
\begin{itemize}
\item We consider first
$$  f_1(t,x,v):=f_\infty(t,x-t\widehat{v},v),$$
so that $f_1$ is the solution of the linear Vlasov equation $\T_{0}(f_1)=\widehat{v}^\mu \partial_{x^\mu}f_1 =0$ satisfying $f_1(0,\cdot  , \cdot)=f_\infty$. 
\item Assume that we have constructed the distribution function $f_n :[T,+\infty[ \times \R^3_x \times \R^3_v \to \R_+$, for $n \in \mathbb{N}^*$. We define $F^{n}$ as the unique solution to
\begin{equation}\label{Cauchy1}
 \nabla^\mu F^{n}_{\mu \nu} = J(f_n)_\nu, \qquad \nabla^\mu {}^* \! F^{n}_{\mu \nu} =0, \qquad \qquad \lim_{r \to + \infty} r\underline{\alpha}(F^{n})(r+u,r\omega)=\underline{\alpha}^{\infty}(u,\omega) .
 \end{equation}
\item Suppose that we have constructed the electromagnetic field $F^n$, defined on $[T,+\infty[ \times \R^3$, for $n \in \mathbb{N}^*$. Then, the distribution function $f_{n+1}$ is defined as the unique function satisfying
\begin{equation}\label{Cauchy2}
 \T_{F^n} (f_{n+1}) =0, \qquad  \qquad \lim_{t \to +\infty} f_{n+1}\big(t,x+t\widehat{v}+\C_{t,v},v \big)=f_\infty (z,v).
 \end{equation}
\end{itemize}
For this, we will have to justify that \eqref{Cauchy1} and \eqref{Cauchy2} both admit a unique classical solution. Then, we will prove that $(f_n,F^n)_{n \geq 1}$ is a Cauchy sequence in a well-chosen Banach space.
\begin{Rq}
$(f_1,F^1)$ is a solution to the linearised system \eqref{VM1lin}--\eqref{VM3lin} around the trivial solution.
\end{Rq}

\subsection{Functional spaces}

For $n \geq 1$, the function $f_n$ will belong to the following set.
\begin{Def}\label{DeffuncspaVla}
Let $D\geq 1$ and $\mathbb{V}^{D,\epsilon}_N$ be the set of the distribution functions $f : [T,+\infty[ \times \R^3_x \times \R^3_v \to \R$ such that $g(t,z,v):=f(t,x+t\widehat{v}+\C_{t,v},v)$ verifies the bound
$$\mathbf{E}_N^{7,13}[g]:=  \sup_{|\beta| \leq N} \sup_{t \geq T} \int_{\R^3_z} \int_{\R^3_v} \langle z \rangle^{14+2N-2\beta_H}\, \langle v \rangle^{26}  \big| \widehat{Z}^\beta_\infty g (t,z,v) \big|^2 \dr z \dr v  \leq D \epsilon .$$
\end{Def}

For the electromagnetic fields $F^n$, we will make use of the norms introduced in \eqref{keva:defenergy2}. 
\begin{Def}\label{DefsetMax}
Let $D \geq 1$ and $\mathbb{M}_N^{D,\Lambda}$ be the set of the $2$-forms $F$ defined on $[T,+\infty[ \times \R^3$ and verifying
$$ \sup_{t \geq T} \mathcal{E}^K_{N-1} \big[ F-F^{\mathrm{asymp}}[f_\infty] \big](t)+ 
 \, \sup_{|\gamma| = N-1} \, \sup_{t \geq T} \mathcal{E}^{K,1} \big[ \nabla_{t,x} \mathcal{L}_{Z^\gamma} ( F-F^{\mathrm{asymp}}[f_\infty] ) \big](t)  \leq D \Lambda .$$
\end{Def}

We now derive the properties that we will use for the electromagnetic fields $F \in \mathbb{M}^{D,\Lambda}_N$. For this, we will make use of the following result, suggesting that the velocity average of a distribution function satisfying modified scattering still decays as $t^{-3}$.
\begin{Lem}\label{decayparticucase}
There exists an absolute constant $\delta>0$ such that, if $\sqrt{\epsilon} T^{-1} \log(T) \leq \delta$, then
$$ \forall \, (t,x) \in [T,+\infty[ \times \R^3, \qquad  \int_{\R^3_v} \frac{\dr v}{ \langle x-t\widehat{v}-\C_{t,v} \rangle^{3} \, \langle v \rangle^5}  \lesssim \frac{\log(t)}{\langle t+|x|\rangle^3}.$$
\end{Lem}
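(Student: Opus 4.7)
The proof reduces to the standard unweighted decay estimate via a change of variables adapted to the modified trajectories, combined with a pointwise estimate in the far region. The first step is quantitative control on the perturbation $\C_{t,v}$. Using Proposition \ref{Proasympelec}, the pointwise bound $|\mathbb{F}[f_\infty]|(v) \lesssim \sqrt{\epsilon}$ together with the identity expressing $v^0\partial_{v^k}\mathbb{F}[f_\infty]$ in terms of $\mathbb{F}[\widehat{\Omega}^\infty_{0k} f_\infty]$ (also bounded by $\sqrt{\epsilon}$, thanks to the extra assumption \eqref{eq:extracondi}) give, after inspecting the structure of \eqref{defXC},
$$|\C_{t,v}| \lesssim \sqrt{\epsilon}\log(t)/v^0, \qquad |\partial_v \C_{t,v}| \lesssim \sqrt{\epsilon}\log(t)/(v^0)^2.$$
The hypothesis $\sqrt{\epsilon}T^{-1}\log(T) \leq \delta$, together with the monotonicity of $\log(t)/t$ for $t\geq e$ (the initial regime $T \in [2,e]$ being handled directly), yields $\sqrt{\epsilon}\log(t)/t \leq C\delta$ uniformly in $t \geq T$.

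Next, split $\R^3_x$ into $|x| \geq 2t$ and $|x| \leq 2t$. In the exterior regime, $|\C_{t,v}| \leq C\delta t \leq t/2$ combined with $|t\widehat{v}| \leq t$ yields $|x - t\widehat{v} - \C_{t,v}| \geq |x|/4 \gtrsim \langle t+|x|\rangle$, so
$$\int_{\R^3_v} \frac{\dr v}{\langle x - t\widehat{v} - \C_{t,v}\rangle^3 \langle v\rangle^5} \lesssim \langle t+|x|\rangle^{-3}\int_{\R^3_v} \frac{\dr v}{\langle v\rangle^5} \lesssim \frac{\log(t)}{\langle t+|x|\rangle^3}.$$
In the interior regime $|x| \leq 2t$, where $\langle t+|x|\rangle \sim t$, I perform the change of variables $y = x - t\widehat{v} - \C_{t,v}$. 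The Jacobian matrix $\partial_v y = -t\partial_v\widehat{v} - \partial_v \C_{t,v}$ factors as $-t\partial_v\widehat{v}(I+M)$ with $\|M\|_{\mathrm{op}} \leq C\delta v^0$ (since $(t\partial_v\widehat{v})^{-1}$ has operator norm $(v^0)^3/t$). Restricting to $\{v^0 \leq 1/(2C\delta)\}$, the determinant is comparable to $-t^3/|v^0|^5$, so the map is a local diffeomorphism; global injectivity on this region follows from the degree argument applied to the small $C^1$-perturbation of $v\mapsto x-t\widehat{v}$. Since $\langle v\rangle^5 = |v^0|^5$ cancels the Jacobian and the image lies in $\{|y-x| \lesssim t\} \subset \{|y| \lesssim t\}$,
$$\int_{v^0 \leq 1/(2C\delta)} \frac{\dr v}{\langle y\rangle^3 \langle v\rangle^5} \lesssim \frac{1}{t^3}\int_{|y| \lesssim t} \frac{\dr y}{\langle y\rangle^3} \lesssim \frac{\log(t)}{t^3}.$$
The complementary region $v^0 \geq 1/(2C\delta)$ corresponds in the $w=\widehat{v}$ coordinates (for which $\dr v/\langle v\rangle^5 = \dr w$) to the thin shell $\{R_0 \leq |w|<1\}$ of thickness $\sim \delta^2$, on which $|\C(w)/t| \leq C\delta\sqrt{1-|w|^2} \leq 2C\delta^2$. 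Writing $y = x - tw - \C(w)$ and using the rescaled variable $u = tw$, a layer-cake estimate combined with the split $|x-u|\geq 4\delta^2 t$ versus $|x-u|\leq 4\delta^2 t$ bounds this contribution by $C\log(1/\delta)/t^3 \lesssim \log(t)/t^3$.

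\textbf{Main obstacle.} The perturbation is \emph{not} uniformly small in the sense needed for a global diffeomorphism: although $|\C_{t,v}|$ is small, $\|\partial_v \C_{t,v}\|$ dominates $\|t\partial_v \widehat{v}\|$ in the radial direction once $v^0 \gtrsim 1/\delta$. The circumvention is geometric: the structure of \eqref{defXC} makes $\C_{t,v}$ almost orthogonal to $\widehat{v}$ (its radial component is of order $1/(v^0)^3$), so the shell near $|w|=1$ where the change of variables degenerates is precisely where $|\C(w)/t|$ becomes $O(\delta^2)$, small enough that the estimates can be closed without needing the change of variables on that shell.
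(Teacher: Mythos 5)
Your overall architecture (exterior region by a direct lower bound on $|x-t\widehat{v}-\C_{t,v}|$, interior region by the change of variables $y=x-t\widehat{v}-\C_{t,v}$ with Jacobian comparable to $t^3\langle v\rangle^{-5}$) is the same as the paper's, and your ``main obstacle'' paragraph correctly puts the finger on the one delicate point: the naive operator-norm comparison of $t^{-1}\partial_v\C_{t,v}$ with $\partial_v\widehat{v}$ fails in the radial direction once $v^0\gtrsim t/(\sqrt{\epsilon}\log t)$, and the paper's one-line justification of $|\det\partial_v(\widehat{v}+t^{-1}\C_{t,v})|\geq\langle v\rangle^{-5}/2$ glosses over this. (The global bound does hold: writing $A=\partial_v\widehat{v}$, $B=t^{-1}\partial_v\C_{t,v}$, one has $A^{-1}B=v^0B+|v^0|^3\widehat{v}\otimes(B^T\widehat{v})$, and the matrix determinant lemma reduces everything to $|v^0|^3\,\widehat{v}^TB\widehat{v}$, which is $O(\sqrt{\epsilon}\log(t)\,t^{-1}|v^0|^{-1})$ precisely because the radial component $\widehat{v}\cdot\C_{t,v}$ is $O(\sqrt{\epsilon}\log(t)|v^0|^{-3})$ --- the cancellation you identified.)

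The gap is in your step for the complementary region $v^0\geq 1/(2C\delta)$. After passing to $u=tw$, the near part $\{|x-u|\leq 4C\delta^2 t\}\cap\{tR_0\leq|u|<t\}$ is a set of volume $\sim(\delta^2 t)^3$ on which you have no pointwise control of $\langle x-u-\C(u/t)\rangle^{-3}$ beyond the trivial bound $1$; the resulting contribution to the original integral is only $t^{-3}\cdot C(\delta^2 t)^3=C\delta^6$, a constant independent of $t$, which is nowhere near $\log(t)/t^3$. To do better you must control the measure of the sub-level sets of $u\mapsto|x-u-\C(u/t)|$, i.e., the non-degeneracy and bounded multiplicity of $w\mapsto w+t^{-1}\C_{t,\widecheck{w}}$ on that shell --- but the radial $w$-derivative of $\C$ there is of size $\sqrt{\epsilon}\log(t)\,v^0$, which is unbounded as $|w|\to 1$, so this map is \emph{not} a small perturbation of the identity and the ``layer-cake'' step is exactly the Jacobian question your split was designed to avoid. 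Two ways to repair it: (i) prove the Jacobian lower bound globally via the rank-one computation above and run the change of variables on all of $\R^3_v$ as the paper does; or (ii) keep your split but move the threshold to $v^0\sim t/(\sqrt{\epsilon}\log t)$. Below that threshold the operator-norm argument is valid; above it, $|\C_{t,v}|\lesssim\sqrt{\epsilon}\log(t)/v^0\lesssim\epsilon\log^2(t)/t\lesssim\log^3(t)/t\lesssim 1$ (using the standing normalisation $\epsilon\leq\log T$ of Section \ref{SecPicard}), so $\langle x-t\widehat{v}-\C_{t,v}\rangle\gtrsim\langle x-t\widehat{v}\rangle$ and the unperturbed estimate \eqref{eq:Lemfordecayvelocityaverage00} finishes that region with no change of variables at all.
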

\begin{proof}
Let us justify that we can perform the change of variables $y=x-t\psi_t(v)$, where $\psi_t(v):=\widehat{v}+t^{-1}\C_{t,v}$.
Recall now from Lemma \ref{cdv} that the Jacobian determinant of $v \mapsto \widehat{v}$ is equal to $-\langle v \rangle^{-5}$. Moreover, as $|\mathbb{F}_{\mu \nu}[f_\infty](v)|+|v^0 \partial_{v^k} \mathbb{F}_{\mu \nu}[f_\infty](v) | \lesssim \sqrt{\epsilon}$ according to Proposition \ref{Proasympelec}, we have
$$ |\C_{t,v}| \lesssim \sqrt{\epsilon} \log(t) \langle v \rangle^{-1} , \qquad \qquad | \partial_{v^k} \C^i_{t,v} | = \log(t) \Big| \partial_{v^k} \Big( \, \frac{\delta^j_i-\widehat{v}_i \widehat{v}^j}{v^0}  {\mathbb{F}_{\mu j}} [f_\infty](v) \Big) \Big| \lesssim \sqrt{\epsilon} \log(t) \langle v \rangle^{-2} .$$
The Jacobian determinant of $v \mapsto t^{-1}\C_{t,v}$ is then bounded by $\epsilon^{\frac{3}{2}} t^{-3} \log^3(t) \langle v \rangle^{-6}$. Hence, there exists $\delta >0$ such that, if $\sqrt{\epsilon} T^{-1} \log(T) \leq \delta$, then 
$$ \forall \, t \geq T, \qquad |\det \psi_t (v)| \geq \langle v \rangle^{-5}/2, \qquad |\psi_t(v)|\leq 2 . $$
 We then deduce that
$$ \forall \, t \geq T, \qquad t^3 \,\int_{\R^3_v} \frac{\dr v}{ \langle x-t\widehat{v}-\C_{t,v} \rangle^{3} \, \langle v \rangle^5} \leq \int_{|y-x|\leq 2t} \frac{\dr y}{ \langle y \rangle^3} \leq 4\log(t).$$
It concludes the proof if $|x| \leq 3t$. Otherwise use that we have $|x-t\widehat{v}-\C_{t,v}| \leq |x|-t-\sqrt{\epsilon} \log(t) \leq |x|/3$.
\end{proof}

\begin{Pro}\label{PropropMaxfield}
Let $F \in \mathbb{M}_N^{D,\epsilon}$. Then, the following pointwise decay estimates hold for any $|\gamma| \leq N-3$,
\begin{alignat}{2}
 & \hspace{-2mm} \forall \, (t,x) \in [T,+\infty[ \times \R^3, \qquad \quad \qquad &&\left| \mathcal{L}_{Z^\gamma}F \right| (t,x)  \lesssim  \sqrt{\Lambda} \langle t+|x| \rangle^{-1} \langle t-|x| \rangle^{-1}, \label{eq:Mpoint} \\[3pt]
 & \hspace{-2mm} \forall \, (t,x) \in [T,+\infty[ \times \R^3, \quad \qquad \qquad &&\big(|\alpha (\mathcal{L}_{Z^{\gamma}} F)|+|\rho (\mathcal{L}_{Z^{\gamma}} F)|+|\sigma (\mathcal{L}_{Z^{\gamma}} F)| \big)(t,x)  \lesssim  \sqrt{\Lambda} \, \langle t+|x| \rangle^{-2} , \label{eq:Mpointnull} 
 \end{alignat}
 as well as, for all $(t,x,v) \in [T, + \infty[ \times \R^3_x \times \R^3_v$ and any $1 \leq j \leq 3$,
\begin{equation}
\left| t^2\widehat{v}^\mu \mathcal{L}_{Z^\gamma} (F)_{\mu j}(t,x+t\widehat{v})- \widehat{v}^\mu  \mathbb{F}_{\mu j}\big[ \widehat{Z}_\infty^\gamma f_\infty \big] (v)\right| \lesssim \sqrt{\Lambda} \bigg( \, \frac{ |v^{\underline{L}}(x+t\widehat{v}) |^{\frac{1}{2}}}{ \langle t-|x+t\widehat{v}| \rangle^{\frac{1}{2}}} +\frac{1}{t} \bigg) \langle x \rangle \, \langle v \rangle . \label{eq:Mpointconv}
\end{equation}
Moreover, we have for any $N-3 \leq |\xi| \leq N-1$ and $N-2 \leq |\gamma| \leq N-1$,
\begin{alignat}{2}
& \hspace{-7mm} \forall \, t \geq T, \qquad  &&\int_{\R^3_x} \max (t-|x|,1)\, \langle t-|x| \rangle^2 \big| \nabla_{t,x} \mathcal{L}_{Z^\xi} F \big|^2(t,x) \mathrm{d} x  \lesssim  \Lambda \log^2(t), \label{eq:ML2}  \\
& \hspace{-7mm} \forall \, t \geq T, \qquad  &&  \int_{\R^3_x} \int_{\R^3_v}  \left| t^2\widehat{v}^\mu \mathcal{L}_{Z^\gamma} (F)_{\mu j}(t,x) - \widehat{v}^\mu \mathbb{F}_{\mu j}\big[ \widehat{Z}_\infty^\gamma f_\infty \big] (v)  \right|^2 \frac{\dr v \mathrm{d} x}{\langle x-t\widehat{v} -\C_{t,v} \rangle^{7} \langle v \rangle^{13}}  \lesssim  \Lambda \frac{\log^9(t)}{t} .  \label{eq:ML2conv}
\end{alignat}
\end{Pro}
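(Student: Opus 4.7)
The plan is to split $F = G + F^{\mathrm{asymp}}[f_\infty]$, where $G := F - F^{\mathrm{asymp}}[f_\infty]$ is by definition of $\mathbb{M}_N^{D,\Lambda}$ controlled by
$$\sup_{t \geq T}\mathcal{E}^K_{N-1}[G](t) + \sup_{|\gamma|=N-1}\sup_{t \geq T}\mathcal{E}^{K,1}[\nabla_{t,x}\mathcal{L}_{Z^\gamma}G](t) \leq D\Lambda.$$
Each piece is estimated separately: $G$ through the vector-field/Klainerman–Sobolev machinery developed in Section~\ref{Secenergy}, and $F^{\mathrm{asymp}}[f_\infty]$ through the analysis of Section~\ref{SecMaxasymp}, where the bound $\overline{\mathbb{E}}_{N+1}[f_\infty] \leq \sqrt{\epsilon} \leq \sqrt{\Lambda}$ from \eqref{eq:extracondi} feeds in as the effective smallness parameter.

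\textbf{Pointwise bounds \eqref{eq:Mpoint}--\eqref{eq:Mpointconv}.} For $|\gamma| \leq N-3$ we have $|\gamma|+2 \leq N-1$, so Proposition~\ref{decayMaxell} applied to $\mathcal{L}_{Z^\gamma}G$ yields the sharp pointwise control $|\underline{\alpha}(\mathcal{L}_{Z^\gamma}G)| \lesssim \sqrt{\Lambda}\,\langle t+r\rangle^{-1}\langle t-r\rangle^{-3/2}$ and the improved bound $\langle t+r\rangle^{-2}\langle t-r\rangle^{-1/2}$ on the good null components. Combining this with the bounds of Proposition~\ref{Proasymp} on $\mathcal{L}_{Z^\gamma}F^{\mathrm{asymp}}[f_\infty]$ and of Proposition~\ref{Pronullcompoasymp} on its good null components produces \eqref{eq:Mpoint}--\eqref{eq:Mpointnull}. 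For the convergence estimate \eqref{eq:Mpointconv}, I would write
$$t^2\widehat{v}^\mu \mathcal{L}_{Z^\gamma}F_{\mu j}(t,x+t\widehat{v}) - \widehat{v}^\mu \mathbb{F}_{\mu j}[\widehat{Z}^\gamma_\infty f_\infty](v) = t^2\widehat{v}^\mu \mathcal{L}_{Z^\gamma}G_{\mu j}(t,x+t\widehat{v}) + R_{\mathrm{asymp}},$$
bound $R_{\mathrm{asymp}}$ directly by Proposition~\ref{ProconvasympLor}, and handle the $G$-piece by applying Lemma~\ref{nullstruct} (which exposes the good null components of $G$ and the null-weight factor $\widehat{v}^{\underline L} + |\widehat{\slashed v}|$ on $\underline\alpha(G)$) together with the pointwise estimates just established and Lemma~\ref{gainv} (which converts that null weight into spatial decay $\langle x\rangle\langle v\rangle/t$).

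\textbf{$L^2$ estimate \eqref{eq:ML2}.} At top order $|\xi| = N-1$, the bound for $\nabla_{t,x}\mathcal{L}_{Z^\xi}G$ follows directly from $\mathcal{E}^{K,1}[\nabla_{t,x}\mathcal{L}_{Z^\xi}G] \leq D\Lambda$, since $\max(t-r,1) \leq \max(t-r,1)^2$ and the null-component weights in $\mathcal{E}^{K,1}$ dominate $\langle t-r\rangle^2$ times the cartesian norm. For $N-3 \leq |\xi| \leq N-2$, $\mathcal{E}^K_{N-1}[G]$ gives the $\langle t-r\rangle^2$-weighted integral of $|\nabla_{t,x}\mathcal{L}_{Z^\xi}G|^2$ and the extra factor $\max(t-r,1)$ is inserted by multiplying one of the two factors by the pointwise bound \eqref{eq:Mpoint}. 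The piece $\nabla_{t,x}\mathcal{L}_{Z^\xi}F^{\mathrm{asymp}}[f_\infty]$ carries at least one translation, so by Corollary~\ref{CorweightedcontrolFasymp} and the refined null-component estimates of Proposition~\ref{Pronullcompoasymp} the charge tail is killed; the loss $\log^2(t)$ emerges from the logarithmic growth in the best null component of $F^{\mathrm{asymp}}[f_\infty]$ integrated over $\R^3$.

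\textbf{$L^2$ convergence \eqref{eq:ML2conv}.} This is the main obstacle. The first step is the change of variables $y = x - t\widehat{v} - \C_{t,v}$, whose Jacobian is controlled via Lemma~\ref{decayparticucase} for $T$ large enough; this converts the weight $\langle x-t\widehat{v}-\C_{t,v}\rangle^{-7}\langle v\rangle^{-13}$ into the tractable weight $\langle y\rangle^{-7}\langle v\rangle^{-13+5} = \langle y\rangle^{-7}\langle v\rangle^{-8}$. Splitting the integrand as in the pointwise case, the asymptotic part is bounded by squaring Proposition~\ref{ProconvasympLor} and integrating in $(y,v)$. For the $G$-part, Lemma~\ref{nullstruct} replaces $t^2\widehat{v}^\mu \mathcal{L}_{Z^\gamma}G_{\mu j}$ by a linear combination of (i) the good null components of $\mathcal{L}_{Z^\gamma}G$ times $t^2$, which are integrated against \eqref{eq:ML2} after using the spatial weight to absorb the $t^2$-growth in the favourable region, and (ii) $\underline\alpha(\mathcal{L}_{Z^\gamma}G)$ times $\widehat{v}^{\underline L} + |\widehat{\slashed v}|$, which by Lemma~\ref{gainv} contributes the extra factor $\langle y\rangle\langle v\rangle^2/t$, killing one power of $t$ and leaving an integral that closes by \eqref{eq:ML2}. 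The technical heart of the argument is the precise tracking of logarithmic losses produced by the derivatives of $\C_{t,v}$ (each contributing a factor $\log t$) and by the near-light-cone logarithms of Proposition~\ref{Pronullcompoasymp}, which accumulate to produce the final factor $\log^9(t)/t$.
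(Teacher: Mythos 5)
Your overall strategy is the one the paper uses: decompose $F=G+F^{\mathrm{asymp}}[f_\infty]$ with $G$ controlled by the $\mathbb{M}^{D,\Lambda}_N$ norms, get \eqref{eq:Mpoint}--\eqref{eq:Mpointnull} from Proposition~\ref{decayMaxell} plus Propositions~\ref{Proasymp} and \ref{Pronullcompoasymp}, get \eqref{eq:Mpointconv} from Lemma~\ref{nullstruct}, Lemma~\ref{gainv} and Proposition~\ref{ProconvasympLor}, and get \eqref{eq:ML2conv} by converting the weight via Lemma~\ref{gainv} and Lemma~\ref{decayparticucase} before invoking the energies. Those parts are correct and match the paper.

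There is, however, a genuine gap in your treatment of \eqref{eq:ML2} for $N-3\leq|\xi|\leq N-2$. You propose to insert the extra factor $\max(t-|x|,1)$ ``by multiplying one of the two factors by the pointwise bound \eqref{eq:Mpoint}''. This cannot work for two reasons. First, $\nabla_{t,x}\mathcal{L}_{Z^\xi}F$ is a derivative of order up to $N-1$, while \eqref{eq:Mpoint} is only available for $|\gamma|\leq N-3$, so the pointwise bound you want to invoke is not at your disposal at these orders. Second, even if it were, multiplying by a \emph{decaying} pointwise bound produces extra decay, not the \emph{growing} factor $\max(t-|x|,1)$ (which can be as large as $t$); you need a genuine gain of $t-r$ decay on $\nabla_{t,x}\mathcal{L}_{Z^\xi}G$ itself. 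The correct mechanism is Lemma~\ref{improderiv}: since $\nabla_{t,x}$ is a translation, each null component of $\nabla_{t,x}\mathcal{L}_{Z^\xi}G$ gains a factor $\langle t-|x|\rangle^{-1}$ relative to the undifferentiated field, which yields $\int\langle t-|x|\rangle^{4}|\nabla_{t,x}\mathcal{L}_{Z^\xi}G|^2\,\dr x\lesssim\mathcal{E}^K_{N-1}[G]\leq D\Lambda$, and $\max(t-|x|,1)\,\langle t-|x|\rangle^2\leq\langle t-|x|\rangle^4$ closes the estimate with no logarithm at all from $G$. (A minor related point: the $\log^2(t)$ loss does not come from the best null component of $F^{\mathrm{asymp}}[f_\infty]$; it arises because the borderline integral $\int\langle t+|x|\rangle^{-2}\langle t-|x|\rangle^{-1}\,\dr x$ produced by the asymptotic part with the weight $\langle t-|x|\rangle^3$ must be regularized by a factor $\log^{-2}(1+\langle t-|x|\rangle)$.)
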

\begin{proof}
We recall, since we will often use it here, that $\epsilon \leq \Lambda$ and $\Lambda \geq 1$. By applying Proposition \ref{decayMaxell} to $H:=\mathcal{L}_{Z^\gamma}(F-F^{\mathrm{asymp}}[f_\infty])$, for $|\gamma| \leq N-3$, we obtain that 
\begin{align*}
|\underline{\alpha}(H)|(t,x) & \lesssim \langle t+|x| \rangle^{-1} \, \langle t-|x| \rangle^{-\frac{3}{2}}   \left| \mathcal{E}^K_{N-1}\big[F-F^{\mathrm{asymp}}[f_\infty] \big](t) \right|^{\frac{1}{2}} ,\\
|\alpha ( H) |(t,x)+ |\rho(H)|(t,x)+ |\sigma (H)|(t,x) & \lesssim  \langle t+|x| \rangle^{-2} \,  \langle t-|x| \rangle^{-\frac{1}{2}}  \left| \mathcal{E}^K_{N-1}\big[F-F^{\mathrm{asymp}}[f_\infty] \big](t) \right|^{\frac{1}{2}} ,
 \end{align*}
for all $(t,x) \in [T,+ \infty[ \times \R^3$. Together with Proposition \ref{Pronullcompoasymp}, where the the null components of $\mathcal{L}_{Z^\gamma}F^{\mathrm{asymp}}[f_\infty]$ are estimated, it yields \eqref{eq:Mpoint}--\eqref{eq:Mpointnull}. Recall now from Lemma \ref{gainv} that $\langle t+|x| \rangle ( \widehat{v}^{\underline{L}}+|\slashed{\widehat{v}}|)\lesssim \langle t- |x| \rangle \, \langle x-t \widehat{v} \rangle$ and $1 \lesssim \langle v \rangle | \widehat{v}^{\underline{L}} |^{1/2}$. By Lemma \ref{nullstruct}, we then get
$$ t^2 \big| \widehat{v}^\mu H_{\mu j} \big|(t,x) \lesssim \big| v^{\underline{L}}(x) \big|^{\frac{1}{2}} \langle v \rangle \,t^2 \big( |\alpha ( H) |+ |\rho(H)|+ |\sigma (H)|\big)(t,x) +\big| v^{\underline{L}}(x) \big|^{\frac{1}{2}} \langle x-t\widehat{v} \rangle \,  \langle v \rangle \frac{t^2\langle t-|x| \rangle }{\langle t+|x| \rangle} |\underline{\alpha}(H)|(t,x)  .$$
Using the estimates for the null components of $H$, we then deduce that
$$ t^2 \big| \widehat{v}^\mu H_{\mu j} \big|(t,x+t\widehat{v}) \lesssim  \sqrt{\Lambda} \big| v^{\underline{L}}(x+t\widehat{v}) \big|^{\frac{1}{2}} \langle x \rangle \, \langle v \rangle \,\langle t-|x-t\widehat{v}| \rangle^{-\frac{1}{2}}  ,$$
so that \eqref{eq:Mpointconv} follows from the corresponding estimate for $\mathcal{L}_{Z^\gamma} F^{\mathrm{asymp}}[f_\infty]$, given by Proposition \ref{ProconvasympLor}, and $\epsilon \leq \Lambda$. Next we fix $N-3 \leq |\xi| \leq N-1$ and we remark that Proposition \ref{Proasymp} implies 
$$\int_{\R^3_x}  \frac{ \langle t-|x| \rangle^3}{\log^2(1+\langle t-|x| \rangle) } \big| \nabla_{t,x} \mathcal{L}_{Z^\xi} F^{\mathrm{asymp}}[f_\infty] \big|^2(t,x) \mathrm{d} x \lesssim \int_{\R^3_x} \frac{ \epsilon \,  \mathrm{d} x }{ \langle t+|x| \rangle^2 \langle t-|x| \rangle \, \log^2(1+\langle t-|x| \rangle)} \lesssim \epsilon .$$
If $|\xi|=N-1$, we then obtain \eqref{eq:ML2} by using the $L^2_x$ estimate for the top order derivatives of $F \in \mathbb{M}^{D,\Lambda}_N$. Otherwise, applying Lemma \ref{improderiv}, in order to gain $t-r$ decay through $\nabla_{t,x}$, we get
\begin{equation}\label{eq:technicalloworder}
 \int_{\R^3_x}  \langle t-|x| \rangle^{4} \big| \nabla_{t,x} \mathcal{L}_{Z^\xi} \big(F-F^{\mathrm{asymp}}[f_\infty] \big) \big|^2(t,x) \mathrm{d} x  \lesssim \mathcal{E}^K_{N-1} \big[ F-F^{\mathrm{asymp}}[f_\infty] \big](t) \leq D \Lambda .
 \end{equation}
Finally, using first Proposition \ref{ProconvasympLor} and then $|\C_{t,v}| \lesssim \sqrt{\epsilon} \log(t)\lesssim \log^2(t)$ as well as the change of variables $y=x-t\widehat{v} - \C_{t,v}$,
\begin{align*}
& \int_{\R^3_x} \int_{\R^3_v}  \left| t^2\widehat{v}^\mu \mathcal{L}_{Z^\gamma} (F^{\mathrm{asymp}}[f_\infty])_{\mu j}(t,x) - \widehat{v}^\mu \mathbb{F}_{\mu j}\big[ \widehat{Z}_\infty^\gamma f_\infty \big] (v)  \right|^2 \frac{\dr v \mathrm{d} x}{\langle x-t\widehat{v}-\C_{t,v} \rangle^{7} \langle v \rangle^{13}} \\
&  \qquad \qquad \qquad \qquad  \lesssim \int_{\R^3_x} \int_{\R^3_v}   \frac{\epsilon \, \langle x-t\widehat{v} \rangle^3 \dr v \mathrm{d} x}{t \, \langle x-t\widehat{v}-\C_{t,v} \rangle^7 \langle v \rangle^{11}}  \lesssim \frac{\epsilon \, \langle  \log^2(t) \rangle^3 }{t} \int_{\R^3_x} \int_{\R^3_v}   \frac{\dr v \mathrm{d} x}{ \langle x-t\widehat{v}-\C_{t,v} \rangle^4 \langle v \rangle^{11}} \lesssim \frac{ \epsilon \log^6(t)}{ t}.
\end{align*}
Let $G:= F- F^{\mathrm{asymp}}[f_\infty]$ and remark that Lemma \ref{gainv} implies
$$ \int_{\R^3_x} \int_{\R^3_v}   \frac{|t^2  \mathcal{L}_{Z^\gamma} G  |^2(t,x) \dr v \mathrm{d} x}{\langle x-t\widehat{v}-\C_{t,v} \rangle^{7} \langle v \rangle^{13}} \lesssim  \int_{\R^3_x} \int_{\R^3_v} \big| \max (t-|x|,1)\big|^4 \left|  \mathcal{L}_{Z^\gamma} G  \right|^2(t,x) \frac{\langle x-t \widehat{v} \rangle^4 \,  \dr v \mathrm{d} x}{\langle x-t\widehat{v}-\C_{t,v} \rangle^{7} \langle v \rangle^{5}}  .$$
Using again $|\C_{t,v}| \lesssim \log^2(t)$ and applying Lemma \ref{decayparticucase}, one obtains
 $$ \int_{\R^3_x} \int_{\R^3_v}   \frac{|t^2  \mathcal{L}_{Z^\gamma} G  |^2(t,x) \dr v \mathrm{d} x}{\langle x-t\widehat{v}-\C_{t,v} \rangle^{7} \langle v \rangle^{13}} \lesssim  \log^9(t) \int_{\R^3_x}  \big| \max(t-|x|,1) \big|^4 \left|  \mathcal{L}_{Z^\gamma} G  \right|^2(t,x) \frac{\dr x}{\langle t+|x| \rangle^3} .$$
If $|\gamma| \leq N-1$, the right hand side is bounded by
$$\log^9(t) t^{-1} \mathcal{E}^K_{N-1}\big[F-F^{\mathrm{asymp}}[f_\infty] \big](t) \leq D \Lambda \log^9(t) t^{-1}.$$  
Otherwise, there exists $|\kappa|=N-1$ such that $Z^\gamma=ZZ^\kappa$. Then $|\mathcal{L}_{Z^\gamma}G| \lesssim \langle t+r \rangle |\nabla_{t,x}  \mathcal{L}_{Z^\kappa}G|$ and the right hand side of the previous inequality is this time bounded above by
$$  \log^9(t) t^{-1} \mathcal{E}^{K,1} \big[ \nabla_{t,x} \mathcal{L}_{Z^\kappa} \big(F-F^{\mathrm{asymp}}[f_\infty] \big) \big](t) \leq D \Lambda \log^9(t) t^{-1}.$$  
\end{proof}

Let us finally prove an inequality which will be used several times in the next section.
\begin{Lem}\label{gainvadapted}
For all $(t,z,v) \in [T,+\infty[ \times \R^3_z \times \R^3_v$, we have
$$ \langle t+|\XX_\C| \rangle \lesssim \log^2(t) \langle t-|\XX_\C| \rangle \, \langle v \rangle^2 \, \langle z \rangle .$$
\end{Lem}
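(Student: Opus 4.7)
The plan is to apply Lemma \ref{gainv} with the spatial variable $x$ replaced by $\XX_\C = z+t\widehat{v}+\C_{t,v}$, and then use that the correction $\C_{t,v}$ is controlled by $\log^2(t)$ (a uniform bound established earlier in the section under the standing assumption $\epsilon \log^{-1}(T) \leq 1$, via Proposition \ref{Proasympelec}).

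More precisely, I would first rewrite the last inequality of Lemma \ref{gainv} in the equivalent form
\begin{equation*}
\langle t+|x| \rangle \lesssim \max(t-|x|,1) \, \langle v \rangle^2 \, \langle x-t\widehat{v} \rangle \leq \langle t-|x| \rangle \, \langle v \rangle^2 \, \langle x-t\widehat{v} \rangle,
\end{equation*}
which holds for every $(t,x,v) \in \R_+ \times \R^3_x \times \R^3_v$. Specializing to $x = \XX_\C$, the factor $\langle x - t\widehat{v} \rangle$ becomes $\langle z + \C_{t,v} \rangle$, and the triangle inequality combined with $|\C_{t,v}| \lesssim \sqrt{\epsilon}\log(t) \lesssim \log^2(t)$ gives $\langle z+\C_{t,v} \rangle \leq \langle z \rangle + |\C_{t,v}| \lesssim \log^2(t) \langle z \rangle$. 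Substituting yields the desired inequality.

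There is no real obstacle here: the statement is essentially a coordinate change of Lemma \ref{gainv}, and the only nontrivial input is the already established bound on $|\C_{t,v}|$.
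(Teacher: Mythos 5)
Your proof is correct and is essentially identical to the paper's: the paper likewise applies Lemma \ref{gainv} at $x=\XX_\C$ and uses $|\C_{t,v}|\lesssim \log^2(t)$ to absorb the correction into $\log^2(t)\langle z\rangle$. Nothing is missing.
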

\begin{proof}
Apply Lemma \ref{gainv} for $x=\XX_\C(t,z,v)$ and use $|\C_{t,v}|\lesssim  \log^2(t)$, so that $\langle z-\C_{t,v} \rangle \lesssim \langle z \rangle \, \log^2(t)$.
\end{proof}
\subsection{The Picard iteration and strategy of the proof}

The main part of the remainder of this article will be devoted to the proof of the following proposition.

\begin{Pro}\label{Picard}
There exists $D \geq 1$ and $B>0$, both depending only $N$, and an absolute constant $\overline{\varepsilon} >0$, such that the following properties hold. If $B_\Lambda \epsilon \, \log^{-1}(T) \leq \overline{\varepsilon}$, where $B_\Lambda := \exp \big( \exp \big( B \sqrt{\Lambda} \big) \big)$, then
\begin{enumerate}
\item $F^1$ is well defined and belongs to $\mathbb{M}_N^{D,\Lambda}$.
\item For any $F \in \mathbb{M}_N^{D,\Lambda}$, there exists a unique function $f : [T,+\infty[ \times \R^3_x \times \R^3_v \to \R$ such that 
$$\T_F(f)=0, \qquad \lim_{t \to + \infty} f(t,x+t\widehat{v}+\C_{t,v},v) = f_\infty (x,v), \qquad f \in \mathbb{V}^{B_\Lambda,\epsilon}_{N}.$$ 
Moreover, there exists a unique classical solution $F^{\mathrm{new}}$ to the Maxwell equations with asymptotic data
\begin{equation}\label{kevatalenn:trede}
\nabla^\mu G_{\mu \nu} = J(f)_\nu, \qquad \nabla^\mu {}^* \! G_{\mu \nu} =0, \qquad \lim_{r \to + \infty} r \underline{\alpha} (G)(r+u,r\omega)=\underline{\alpha}^{\infty} (u, \omega ),
\end{equation}
and $F^{\mathrm{new}} \in \mathbb{M}^{D,\Lambda}_N$.
\end{enumerate}
\end{Pro}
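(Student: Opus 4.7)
The plan is to prove the two parts simultaneously by implementing the strategy described in Section \ref{Subseckeyidea}: decompose any solution to the Maxwell equations arising from a modified-scattering distribution function as $F^{\mathrm{asymp}}[f_\infty]$ plus a remainder with strong decay, then solve Vlasov backward in the adapted variables $(t,z,v)$ via energy estimates. For part $1.$, I start from the linear Vlasov field $f_1(t,x,v)=f_\infty(x-t\widehat v,v)$, for which all the bounds of Section \ref{SubseclinVla} hold with $\sqrt\epsilon$ on the right-hand side, and I seek $F^1$ under the form $F^1=F^{\mathrm{asymp}}[f_\infty]+H^1$. The remainder $H^1$ should satisfy the Maxwell equations with source $J(f_1)-J^{\mathrm{asymp}}[f_\infty]$ and radiation field $\underline\alpha^\infty-\underline\alpha^{\mathrm{asymp}}[f_\infty]$; Corollaries \ref{Corlinbound} and \ref{CorlinexpanderivativJ} (applied to derivatives using Proposition \ref{ProComMax}) give $L^2_x$ bounds of size $t^{-5/2}\sqrt\epsilon$ for this difference up to order $N-1$, while Proposition \ref{Proradasymp} together with the hypothesis \eqref{eq:assumpunderalphainfty} on $\underline\alpha^\infty$ places the difference of radiation fields in the weighted space needed by Proposition \ref{ProscatMax}. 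I then invoke Propositions \ref{CorassumpJforcondi} and \ref{ProscatMax} with $a=0$ to construct $\mathcal L_{Z^\gamma}H^1$ for $|\gamma|\leq N-1$ and with $a=1$ to control the top-order weighted norm $\mathcal E^{K,1}[\nabla_{t,x}\mathcal L_{Z^\gamma}H^1]$ for $|\gamma|=N-1$ (following Remark \ref{RqappliTh}). Adding the bounds from Section \ref{SecMaxasymp} on $\mathcal L_{Z^\gamma}F^{\mathrm{asymp}}[f_\infty]$, we get $F^1\in\mathbb M_N^{D,\Lambda}$ for a constant $D$ depending only on $N$.

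For part $2.$, given $F\in\mathbb M_N^{D,\Lambda}$, I first solve the Vlasov equation backward from $\mathcal I^+$. Passing to $g(t,z,v):=f(t,\XX_\C(t,z,v),v)$, this amounts to solving $\T_F^\infty(g)=0$ with $g(t,z,v)\to f_\infty(z,v)$ as $t\to+\infty$. The key estimates on $F$ are the pointwise decay \eqref{eq:Mpoint}--\eqref{eq:Mpointnull}, the convergence estimate \eqref{eq:Mpointconv}, and the weighted $L^2$ bound \eqref{eq:ML2conv}. I commute the equation with $\widehat Z^\beta_\infty\in\widehat{\mathbb K}^\infty$, obtain a triangular system thanks to the hierarchy $\langle z\rangle^{N_z-\beta_H}$ (translations producing $\partial_{z^k}g$ come with the $v$-decay needed to absorb the critical error $\eqref{eq:keyintro2}$), and close an iterated energy inequality using Proposition \ref{ProenergyL2Vlasov} on the foliation $(t_0,u)$ together with a two-variable Grönwall lemma. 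The exponential dependence $B_\Lambda=\exp(\exp(B\sqrt\Lambda))$ comes from two successive Grönwall applications: one for each commutation level, with a constant proportional to $\sqrt\Lambda$ per level, and another absorbing the logarithmic loss $\log^9(t)$ in \eqref{eq:ML2conv}. Uniqueness is obtained by running the same estimate on the difference of two candidate solutions in a weaker norm. A density argument and standard local-in-time theory (possibly with a cutoff in $t$) gives existence.

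Once $f\in\mathbb V_N^{B_\Lambda,\epsilon}$ is constructed, I repeat the Maxwell construction of part $1.$ with $f$ in place of $f_1$: decompose $F^{\mathrm{new}}=F^{\mathrm{asymp}}[f_\infty]+H^{\mathrm{new}}$. The crucial point is that $J(f)-J^{\mathrm{asymp}}[f_\infty]$ still enjoys the $L^2_x$ decay $t^{-5/2}\sqrt\epsilon$ needed to apply Proposition \ref{ProscatMax}; this follows because the $L^2_{z,v}$ control of $g$ given by $\mathbb V_N^{B_\Lambda,\epsilon}$, combined with Proposition \ref{Protechfordecay} applied to $\widehat v_\nu\widehat Z^\beta f$ (via Lemma \ref{Lemrelftog} relating derivatives in $(x,v)$ and $(z,v)$), yields the same kind of asymptotic expansion as in Corollary \ref{Corlinbound}, with $f_\infty$ in the leading term. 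For the top-order derivatives $|\gamma|=N-1$ one gains one derivative via Glassey-Strauss (as sketched in Section \ref{Subseckeyidea}, \textit{Gain of regularity}), which is made available for the asymptotic field by the singular Vlasov formalism of Section \ref{Subsecfsing} and the explicit representation in Proposition \ref{GSasymp}. Uniqueness for \eqref{kevatalenn:trede} within finite-energy fields follows from Theorem \ref{Thscat}, applied to the difference of two solutions (whose source terms cancel).

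The main obstacle is the Vlasov step in part $2.$: the Lorentz force is critical in the sense that its pointwise decay $\langle t-|x|\rangle^{-1}\langle t+|x|\rangle^{-1}$ is not integrable on timelike trajectories, so boundedness of $g$ cannot be closed by direct integration of \eqref{eq:introtr}. The whole closure relies on using the convergence estimate \eqref{eq:Mpointconv} only via its refined form \eqref{eq:keyintro2}, which exchanges velocity weights for the good factor $|\widehat v^{\underline L}|^{1/2}\langle t-|\XX_\C|\rangle^{-1/2}$; this is precisely integrable on the outgoing cones $C_u^{t_0}$ appearing in $\overline{\mathbb E}[\cdot]$, whose flux term in Proposition \ref{ProenergyL2Vlasov} is crucial to absorb the borderline loss. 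The combination of this null-structure gain with the hierarchy in $\beta_H$ and the two-variable Grönwall inequality (in $(t_0,u)$) is the heart of the argument, and is where the exponential-in-$\sqrt\Lambda$ smallness condition on $\epsilon\log^{-1}(T)$ is consumed.
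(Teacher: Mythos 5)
Your proposal follows essentially the same route as the paper: the decomposition of the field as $F^{\mathrm{asymp}}[f_\infty]$ plus a strongly decaying remainder constructed via Proposition \ref{ProscatMax} (with $a=0$ at lower order, $a=1$ plus the Glassey--Strauss gain of regularity at top order), and the backward Vlasov solve in $(t,z,v)$ closed by the hierarchised commuted energy estimates, the null-structure gain on the outgoing cones $C_u^{t_0}$, and the two-variable Grönwall lemma. The only inaccuracy is minor: the double exponential $B_\Lambda=\exp(\exp(B\sqrt\Lambda))$ arises from a single application of the two-variable Grönwall inequality in $(t,u)$ (the $u$-integration multiplies the exponent by $e^{\|\psi_1\|_{L^1}}$), not from stacking one Grönwall per commutation level.
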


This result allows us to uniquely define the sequence $(f_n , F_n)_{n \geq 1} \in  ( \mathbb{V}^{B_\Lambda,\epsilon}_{N} \times \mathbb{M}^{D,\Lambda}_N \, )^{\mathbb{N}^*}$. Section \ref{SecVlasov} will be devoted to the analysis of the Vlasov equation and Section \ref{SecMax} to the asymptotic Cauchy problem \eqref{kevatalenn:trede} as well as the construction of $F^1$. Then, in Section \ref{SecCauchy}, we prove that $(f_n , F_n)_{n \geq 1}$ is a Cauchy sequence in $L^\infty \big( [T,+ \infty[ , L^2(\R^3_x \times \R^3_v) \times L^2(\R^3_x) \big)$, allowing us to conclude the proof of Theorem \ref{Theo1} if the additional assumption \eqref{eq:extracondi} holds. Finally, in Section \ref{Secweakerassump}, we treat the general case. In this more general setting, $\mathbb{M}^{D,\Lambda}_N$ is not necessarily well-defined so we need to consider a slightly different functional space. Two steps are then slightly more technical, the proof of Proposition \ref{PropropMaxfield} and part of the analysis of \eqref{kevatalenn:trede}.

\section{The Vlasov equation with asymptotic data}\label{SecVlasov}
We fix, for all this section as well as the next one, Section \ref{SecMax}, $F \in \mathbb{M}_N^{D,\Lambda}$. When there is no risk of confusion, we will simply write $\XX_\C$ instead of $\XX_\C(t,z,v)=z+t\widehat{v}+\C_{t,v}$, where the correction term is given in \eqref{defXC}.
\subsection{Preparatories}

We start by proving results which will be useful in order to state the first and the higher order commutation formula. We will then be able to propagate regularity on the solution. In order to simplify the presentation of certain computations, we introduce the following notation.

\begin{Def}\label{DefDelta}
Let $|\gamma| \leq N$, $1 \leq i \leq 3$ and consider
$$ \Delta_{Z^\gamma }^i(t,z,v):= \frac{\widehat{v}^\mu}{v^0} \Big( t^2 \mathcal{L}_{Z^{\gamma}}(F)_{\mu j}\big(t,\XX_\C(t,z,v)\big)-  \mathbb{F}_{\mu j}\big[ \widehat{Z}_\infty^\gamma f_\infty \big](v) \Big).$$
When $|\gamma|=0$, we simply write $\Delta^i$.
\end{Def}
\begin{Rq}\label{DeltatranslaS}
Since $\mathbb{F}_{\mu \nu}[\widehat{Z}_\infty^\xi f_\infty](v) =0$ if the multi-index $\xi$ is such that $Z^\xi$ contains at least one translation $\partial_{x^\lambda}$ or the scaling vector field $S$, we have, for any $|\gamma| \leq N-1$ and $\lambda \in \llbracket 0 , 3 \rrbracket$,
$$ \Delta_{\partial_{x^\lambda} Z^\gamma}^i (t,z,v) = t^2\frac{\widehat{v}^\mu}{v^0}    {\mathcal{L}_{\partial_{x^\lambda} Z^\gamma}(F)_{\mu}}^j\big(t,\XX_\C\big), \qquad \qquad \Delta_{S Z^\gamma}^i (t,z,v) = t^2\frac{\widehat{v}^\mu}{v^0}   {\mathcal{L}_{S Z^\gamma}(F)_{\mu}}^j\big(t,\XX_\C\big).$$
\end{Rq}

The following results will be useful in order to commute the Vlasov equation. 

\begin{Lem}\label{LemCom1}
Let $G$ be a sufficiently regular $2$-form defined on $[T,+\infty[ \times \R^3$. Then, denoting $\XX_\C(t,z,v)$ by $\XX_\C$, we have for all $(t,z,v) \in [T,+ \infty[ \times \R^3_z \times \R^3_v$,
\begin{align*}
 \partial_{z^k} \big[ v^\mu G_{\mu \ell} (t,\XX_\C)  \big] &=v^\mu \mathcal{L}_{\partial_{x^k} }(G)_{\mu \ell} (t,\XX_\C) , \\
  \partial_t^\infty \big[ v^\mu G_{\mu \ell} (t,\XX_\C)  \big]& =v^\mu \mathcal{L}_{\partial_{t} }(G)_{\mu \ell} (t,\XX_\C)+ \partial_t \big( \C_{t,v}^n \big) v^\mu \mathcal{L}_{\partial_{x^n}}(G)_{\mu \ell}(t,\XX_\C), \\
  S \big[ v^\mu G_{\mu \ell} (t,\XX_\C)  \big] &= \Big[ v^\mu \mathcal{L}_{S }(G)_{\mu \ell}-2v^\mu G_{\mu \ell}  +t\partial_t \big( \C_{t,v}^n \big) v^\mu \mathcal{L}_{\partial_{x^n}}(G)_{\mu \ell}\Big](t,\XX_\C), \\
  \widehat{\Omega}_{ij}  \big[ v^\mu G_{\mu \ell} (t,\XX_\C)  \big] &= \left[v^\mu \mathcal{L}_{\Omega_{ij} }(G)_{\mu \ell} -\delta_\ell^i  v^\mu G_{\mu j}+ \delta_\ell^j v^\mu G_{\mu i}\right] \!(t,\XX_\C) \\
 &  \quad + \left[\C^j_{t,v} v^\mu\mathcal{L}_{\partial_{x^i}}(G)_{\mu \ell}-\C^i_{t,v} v^\mu\mathcal{L}_{\partial_{x^j}}(G)_{\mu \ell}  +\widehat{\Omega}_{ij}\big( \C^n_{t,v} \big) v^\mu \mathcal{L}_{\partial_{x^n}}(G)_{\mu \ell}\right]\! (t,\XX_\C),\\
  \widehat{\Omega}_{0k}^\infty  \big[ v^\mu G_{\mu \ell} (t,\XX_\C)  \big] &= \left[ v^\mu \mathcal{L}_{\Omega_{0k} }(G)_{\mu \ell} +\delta_\ell^k \widehat{v}^n v^\mu G_{\mu n}-\C^k_{t,v} v^\mu \mathcal{L}_{\partial_t}(G)_{\mu \ell} +\widehat{\Omega}_{0k}^\infty \big( \C^n_{t,v} \big)v^\mu \mathcal{L}_{\partial_{x^n}}(G)_{\mu \ell} \right] \!(t,\XX_\C),
\end{align*}
for any $1 \leq i < j \leq 3$ and $1 \leq k, \, \ell \leq 3$. 
\end{Lem}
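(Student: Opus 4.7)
The proof is a direct computation resting on two ingredients: the chain rule applied to the map $(t,z,v)\mapsto(t,\XX_\C(t,z,v))$, and the coordinate expression of the Lie derivative, $\mathcal{L}_Z(G)_{\mu\nu}=Z^\lambda\partial_\lambda G_{\mu\nu}+G_{\lambda\nu}\partial_\mu Z^\lambda+G_{\mu\lambda}\partial_\nu Z^\lambda$, which reduces to $\nabla_{\partial_{x^\lambda}} G$ for the translations. I plan to treat each vector field of $\K^\infty$ one at a time, using the same three-step scheme: (i) compute the action of $\widehat{Z}_\infty$ on $v^\mu$, keeping in mind $\partial_{v^k} v^0=\widehat v^k$; (ii) compute $\widehat{Z}_\infty(\XX_\C^n)$ and $\widehat{Z}_\infty(t)$; (iii) expand via the chain rule and recognize the combination $Z^\lambda\partial_\lambda G_{\mu\ell}$ at the point $(t,\XX_\C)$, then trade it against $\mathcal{L}_Z(G)_{\mu\ell}$ plus algebraic remainders.

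The translations $\partial_{z^k}$ and $\partial_t^\infty$ are immediate: from $\partial_{z^k}\XX_\C^n=\delta^n_k$ and $\partial_t^\infty\XX_\C^n=\partial_t\C^n_{t,v}$ (using that $\partial_t^\infty=\partial_t-\widehat v\!\cdot\!\nabla_z$ was designed precisely to kill the free-transport part of $\partial_t\XX_\C$), and since $\mathcal{L}_{\partial_{x^\lambda}}=\nabla_{\partial_{x^\lambda}}$, the chain rule yields the two claimed formulas with no rearrangement. For the scaling $S$, I would use $S(\XX_\C^n)=\XX_\C^n+(t\partial_t\!-\!1)\C^n_{t,v}$ and the tensorial identity $\mathcal{L}_S(G)_{\mu\ell}=S(G_{\mu\ell})+2G_{\mu\ell}$ to extract the $-2v^\mu G_{\mu\ell}$ correction, the remaining $\C$-derivatives being packaged into the translation Lie derivatives. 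For the rotations $\widehat{\Omega}_{ij}$, the action on $v^\mu$ produces exactly $v^i\delta^\mu_j-v^j\delta^\mu_i$, which, combined with $\widehat{\Omega}_{ij}(\XX_\C^n)=\XX_\C^i\delta^n_j-\XX_\C^j\delta^n_i-\C^i_{t,v}\delta^n_j+\C^j_{t,v}\delta^n_i+\widehat{\Omega}_{ij}(\C^n_{t,v})$, produces after contraction precisely the terms $-\delta^i_\ell v^\mu G_{\mu j}+\delta^j_\ell v^\mu G_{\mu i}$ that arise from the formula for $\mathcal{L}_{\Omega_{ij}}(G)_{\mu\ell}$, the residual $\C^i,\C^j,\widehat{\Omega}_{ij}(\C^n)$ terms assembling into the stated shape.

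The main technical step, and the one I expect to contain the only subtlety, is $\widehat{\Omega}_{0k}^\infty$: the three summands $z^k\partial_t^\infty$, $\widehat v^k t\partial_t$, $v^0\partial_{v^k}$ each produce $\C$-corrections, but they combine very nicely. First, one checks $\widehat{\Omega}_{0k}^\infty(t)=z^k+t\widehat v^k=\XX_\C^k-\C^k_{t,v}$ and
\[
\widehat{\Omega}_{0k}^\infty(\XX_\C^n)=t\delta^n_k+(\XX_\C^k-\C^k_{t,v})\partial_t\C^n_{t,v}+v^0\partial_{v^k}\C^n_{t,v}=t\delta^n_k+\widehat{\Omega}_{0k}^\infty(\C^n_{t,v}),
\]
so that the $\C$-corrections regroup as a single $\widehat{\Omega}_{0k}^\infty(\C^n_{t,v})$ coefficient in front of $v^\mu\mathcal{L}_{\partial_{x^n}}(G)_{\mu\ell}$. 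The $\XX_\C^k\partial_t+t\partial_{x^k}$ piece assembles into $\Omega_{0k}(G_{\mu\ell})|_{x=\XX_\C}$, while the $-\C^k_{t,v}\partial_t G_{\mu\ell}$ piece is the announced $-\C^k_{t,v}v^\mu\mathcal{L}_{\partial_t}(G)_{\mu\ell}$. The delicate point is then to absorb $\widehat{\Omega}_{0k}^\infty(v^\mu)G_{\mu\ell}=v^k G_{0\ell}+v^0 G_{k\ell}$ into $v^\mu\mathcal{L}_{\Omega_{0k}}(G)_{\mu\ell}$, which differs from $v^\mu\Omega_{0k}(G_{\mu\ell})$ by exactly the four boundary terms $v^k G_{0\ell}+v^0 G_{k\ell}+\delta^k_\ell v^\mu G_{\mu0}$ (the last one from $\partial_\ell\Omega_{0k}^\lambda=\delta^0_\lambda\delta^k_\ell$). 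The surviving $\delta^k_\ell v^\mu G_{\mu 0}$ is rewritten using the antisymmetry identity $v^\mu G_{\mu 0}=-\widehat v^n v^\mu G_{\mu n}$ (itself a consequence of $v^\mu v^\nu G_{\mu\nu}=0$ after dividing by $v^0$), producing exactly the $+\delta^k_\ell\,\widehat v^n v^\mu G_{\mu n}$ term in the stated formula. This final rearrangement is the main obstacle—nothing deep, but one must correctly handle the interplay between the Lie-derivative boundary terms on $\Omega_{0k}$ and the Leibniz terms generated by the $v$-differentiation in $\widehat{\Omega}_{0k}^\infty$.
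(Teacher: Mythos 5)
Your proposal is correct and follows essentially the same route as the paper: the paper obtains the chain-rule step by applying Lemma \ref{Lemrelftog} to the $v$-independent function $f(t,x,v)=G_{\mu\nu}(t,x)$, whereas you carry out the same computation directly on $\XX_\C$, and both arguments then conclude with the coordinate expression of the Lie derivative together with the antisymmetry identity $v^\mu G_{\mu 0}=-\widehat{v}^{\,n}v^\mu G_{\mu n}$. All of your intermediate identities (the values of $S(\XX_\C^n)$, $\widehat{\Omega}_{ij}(\XX_\C^n)$, $\widehat{\Omega}^\infty_{0k}(t)$, $\widehat{\Omega}^\infty_{0k}(\XX_\C^n)$, and the boundary terms of $\mathcal{L}_{\Omega_{0k}}$) check out. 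One remark so you do not mistake it for an error on your side: for the scaling, your (correct) formula $S(\XX_\C^n)=\XX_\C^n+(t\partial_t-1)\C^n_{t,v}$ produces the coefficient $t\partial_t\C^n_{t,v}-\C^n_{t,v}$ in front of $v^\mu\mathcal{L}_{\partial_{x^n}}(G)_{\mu\ell}$, exactly as in the paper's own proof (and in Lemma \ref{Lemrelftog}); the printed statement of the lemma drops the $-\C^n_{t,v}$ part, a harmless slip since both terms belong to the same schematic class $P_{1,q}(\C)$ used later in Proposition \ref{Comgen}.
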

\begin{proof}
Since $\partial_{z^i} \XX_\C^j(t,z,v) =\delta_{i}^j$ and $\mathcal{L}_{\partial_{x^\nu}}=\nabla_{\partial_{x^\nu}}$, we have
$$ \partial_{z^k} \big[ G_{\mu \ell} \big(t,\XX_\C\big) \big] = \big[ \partial_{x^k} G_{\mu \ell} \big] \big(t,\XX_\C \big)= \mathcal{L}_{\partial_{x^k}}(G)_{\mu \ell} \big(t,\XX_\C \big).$$
We get similarly the relation for $\partial_t^\infty = \partial_t-\widehat{v} \cdot \nabla_z$ since
$$ \partial_{t} \big[ G_{\mu \ell} \big(t,\XX_\C\big) \big] = \big[ \partial_{t} G_{\mu \ell} \big] \big(t,\XX_\C \big)+\widehat{v}^n \big[ \partial_{x^n} G_{\mu \ell} \big] \big(t,\XX_\C \big)+\partial_t \C^n_{t,v}\big[ \partial_{x^n} G_{\mu \ell} \big] \big(t,\XX_\C \big) .$$
Applying the relations of Lemma \ref{Lemrelftog} to the function $f(t,x,v)=G_{\mu \nu}(t,x)$, we get
\begin{align*}
S \big[ G_{\mu \ell} (t,\XX_\C ) \big] \! &= \! \big[ S G_{\mu \ell} \big] (t,\XX_\C )-\big( \C^\ell_{t,v}-t\partial_t \C_{t,v}^n \big)\big[\partial_{x^n}  G_{\mu \ell} \big] (t,\XX_\C ) , \\
\widehat{\Omega}_{ij} \big[ G_{\mu \ell} (t,\XX_\C ) \big] \! &= \!\big[ \Omega_{ij} G_{\mu \ell} \big] (t,\XX_\C )-\C^i_{t,v} \big[ \partial_{x^j} G_{\mu \ell} \big](t,\XX_\C)+\C^j_{t,v} \big[ \partial_{x^i} G_{\mu \ell} \big](t,\XX_\C) + \widehat{\Omega}_{ij}\big( \C_{t,v}^n \big) \big[\partial_{x^n}  G_{\mu \ell} \big](t,\XX_\C )  , \\
\widehat{\Omega}_{0k}^\infty \big[ G_{\mu \ell} (t,\XX_\C ) \big] \!&= \! \big[ \Omega_{0k} G_{\mu \ell} \big] (t,\XX_\C )-\C^k_{t,v} \big[ \partial_t G_{\mu \ell} \big] (t,\XX_\C )+\widehat{\Omega}_{0k}^\infty \big( \C_{t,v}^n \big)\big[ \partial_{x^n} G_{\mu \ell} \big] (t,\XX_\C ).
\end{align*}
To conclude the proof for the case of the scaling vector field $S$, we use
$$
\mathcal{L}_{S}(G)_{\mu \ell} = S \! \left(  G_{\mu \ell}\right)+2G_{\mu \ell}, \qquad  \qquad S(v^\mu)=0.
$$
For the rotations, remark that
$$\mathcal{L}_{\Omega_{ij}}(G)_{\mu \ell} = \Omega_{ij} \!\left( G_{\mu \ell} \right)+\delta_\mu^i  G_{j \ell}- \delta_\mu^j G_{i \ell}+\delta_\ell^i  G_{\mu j}- \delta_\ell^j G_{\mu i}, \qquad \big( v^i \partial_{v^j}-v^j \partial_{v^i} \big) (v^\mu) G_{\mu \ell}=v^i G_{j \ell}-v^j G_{i \ell}.$$
Finally, for the Lorentz boosts, use
$$
 \mathcal{L}_{\Omega_{0k}}(G)_{\mu \ell} = \Omega_{0k} \! \left(  G_{\mu \ell} \right)+\delta_\mu^0  G_{k \ell}+ \delta_\mu^k G_{0 \ell}+\delta_\ell^0  G_{\mu k}+ \delta_\ell^k G_{\mu 0} , \qquad v^0 \partial_{v^k} (v^\mu) G_{\mu \ell}= v^k G_{0\ell}+v^0G_{k\ell}
$$
and $\widehat{v}^\mu G_{\mu 0}=-\widehat{v}^\mu G_{\mu n}\widehat{v}^n$ since $G$ is a $2$-form and $\widehat{v}^0=1$. 
\end{proof}

\begin{Lem}\label{LemCom2}
Let $|\gamma| \leq N-1$, $1 \leq i < j \leq 3$ and $1 \leq k, \, \ell \leq 3$. We have, for all $(t,z,v) \in [T,+ \infty[ \times \R^3_z \times \R^3_v$, 
\begin{align*}
 \partial_{z^k} \Big[ t^{-1} \Delta^\ell_{Z^\gamma}(t,z,v)  \Big] &=t^{-1} \Delta^\ell_{\partial_{x^k}Z^\gamma}(t,z,v) ,  \\
 \partial_{t}^\infty \Big[ t^{-1} \Delta^\ell_{Z^\gamma}(t,z,v)  \Big] &=t^{-1} \Delta^\ell_{\partial_{t}Z^\gamma}(t,z,v)-t^{-2}\Delta^\ell_{Z^\gamma}(t,z,v) + 2\frac{\widehat{v}^\mu}{v^0}  \mathcal{L}_{Z^{\gamma}}(F)_{\mu \ell} (t,\XX_\C ) \\
 & \quad +t\partial_t \big( \C_{t,v}^n \big) t^{-2} \Delta^\ell_{\partial_{x^n} Z^\gamma}(t,z,v) ,  \\
  S \Big[ t^{-1} \Delta^\ell_{Z^\gamma}(t,z,v)  \Big] &=t^{-1} \Delta^\ell_{SZ^\gamma}(t,z,v)-t^{-1} \Delta^\ell_{Z^\gamma}(t,z,v) +t\partial_t \big( \C_{t,v}^n \big) t^{-1} \Delta^\ell_{\partial_{x^n} Z^\gamma}(t,z,v), \\
  \widehat{\Omega}_{ij}  \Big[ t^{-1} \Delta^\ell_{Z^\gamma}(t,z,v)  \Big] &= t^{-1}\Delta^\ell_{ \Omega_{ij} Z^\gamma}(t,z,v)-\delta_n^i  t^{-1} \Delta^j_{Z^\gamma}(t,z,v)+ \delta_n^j t^{-1} \Delta^i_{Z^\gamma}(t,z,v) \\
  & \quad  +\C^j_{t,v} t^{-1} \Delta^\ell_{ \partial_{x^i}Z^\gamma}(t,z,v) - \C^i_{t,v} t^{-1} \Delta^\ell_{ \partial_{x^j}Z^\gamma}(t,z,v) +\widehat{\Omega}_{ij}\big( \C^n_{t,v} \big)t^{-1} \Delta^\ell_{ \partial_{x^n}Z^\gamma}(t,z,v),\\
  \widehat{\Omega}_{0k}^\infty  \Big[ t^{-1}  \Delta^\ell_{Z^\gamma}(t,z,v)  \Big] &=t^{-1} \Delta^\ell_{ \Omega_{0k}Z^\gamma}(t,z,v)-\widehat{v}^kt^{-1} \Delta^\ell_{Z^\gamma}(t,z,v)+\delta_\ell^k \widehat{v}_n t^{-1} \Delta^n_{Z^\gamma}(t,z,v)+z^k t^{-2} \Delta^\ell_{ Z^\gamma}(t,z,v) \\
  & \quad -\C^k_{t,v}t^{-1} \Delta^\ell_{ \partial_t Z^\gamma}(t,z,v)+\widehat{\Omega}_{0k}^\infty \big( \C^n_{t,v} \big)t^{-1} \Delta^\ell_{ \partial_{x^n}Z^\gamma}(t,z,v) .
\end{align*}
\end{Lem}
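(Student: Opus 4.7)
The starting point is the decomposition
$$ t^{-1}\Delta^\ell_{Z^\gamma}(t,z,v) \;=\; t\,\frac{\widehat{v}^\mu}{v^0}\,\mathcal{L}_{Z^\gamma}(F)_{\mu\ell}\bigl(t,\XX_\C\bigr) \;-\; t^{-1}\Phi^\gamma_\ell(v),\qquad \Phi^\gamma_\ell(v):=\frac{\widehat{v}^\mu}{v^0}\,\mathbb{F}_{\mu\ell}\bigl[\widehat{Z}^\gamma_\infty f_\infty\bigr](v), $$
and the plan is to apply each $\widehat{Z}_\infty\in\K^\infty$ separately to the ``field piece'' and the ``asymptotic piece''. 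For the field piece we use the Leibniz rule to split off the scalar prefactor $t/|v^0|^2$ and then invoke Lemma \ref{LemCom1} with $G:=\mathcal{L}_{Z^\gamma}(F)$; for the asymptotic piece we differentiate $t^{-1}\Phi^\gamma_\ell$ directly, using the algebraic identities of part~$2$ of Proposition \ref{Proasympelec} to re-express $v^0\partial_{v^k}\Phi^\gamma_\ell$ and $(v^j\partial_{v^k}-v^k\partial_{v^j})\Phi^\gamma_\ell$ in terms of $\Phi^{\Omega_{\lambda k}\gamma}_\ell$ plus lower-order $\Phi^\gamma_\bullet$. The fundamental observation, used everywhere, is Remark \ref{DeltatranslaS}: whenever $\widehat{Z}^\xi_\infty$ contains a translation or $\widehat{S}$, one has $\Phi^\xi_\ell=0$, so any pure-field expression $t^p(\widehat{v}^\mu/v^0)\mathcal{L}_{Z^\xi}(F)_{\mu\ell}(t,\XX_\C)$ collapses to $t^{p-2}\Delta^\ell_{Z^\xi}$, allowing us to recover the $\Delta$-notation on the right-hand side.

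The cases $\partial_{z^k}$, $\partial_t^\infty$ and $S$ present no real difficulty. $\partial_{z^k}$ is immediate, since $\partial_{z^k}$ annihilates $t$, $\widehat{v}^\mu/v^0$ and $\Phi^\gamma_\ell$, and Lemma \ref{LemCom1} directly produces $v^\mu\mathcal{L}_{\partial_{x^k}Z^\gamma}(F)_{\mu\ell}(t,\XX_\C)$. For $\partial_t^\infty$ or $S$, the product rule on the $t$-factor generates exactly one extra contribution of the form $(\widehat{v}^\mu/v^0)\mathcal{L}_{Z^\gamma}(F)_{\mu\ell}(t,\XX_\C)=t^{-2}(\Delta^\ell_{Z^\gamma}+\Phi^\gamma_\ell)$, which combines with $\partial_t^\infty(t^{-1})\Phi^\gamma_\ell=-t^{-2}\Phi^\gamma_\ell$ (resp.\ $S(t^{-1})\Phi^\gamma_\ell=-t^{-1}\Phi^\gamma_\ell$) to give the announced formulas. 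The $\partial_t\C^n_{t,v}$-modulation contributions from Lemma \ref{LemCom1} are absorbed into $\Delta^\ell_{\partial_{x^n}Z^\gamma}$-terms by Remark \ref{DeltatranslaS}. The rotation $\widehat{\Omega}_{ij}$ is analogous, with one genuine algebraic manipulation: the cross-terms $-\delta^i_\ell v^\mu G_{\mu j}+\delta^j_\ell v^\mu G_{\mu i}$ produced by Lemma \ref{LemCom1} combine with the parallel cross-terms generated by the second identity of part~$2$ of Proposition \ref{Proasympelec} to reconstitute $-\delta^i_\ell t^{-1}\Delta^j_{Z^\gamma}+\delta^j_\ell t^{-1}\Delta^i_{Z^\gamma}$.

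The main technical step, and the principal obstacle, is the Lorentz-boost case $\widehat{\Omega}^\infty_{0k}=z^k\partial_t^\infty+\widehat{v}^k t\partial_t+v^0\partial_{v^k}$, where all three features above are active at once. The calculation $\widehat{\Omega}^\infty_{0k}(t/|v^0|^2)=(z^k-\widehat{v}^k t)/|v^0|^2$, which uses $v^0\partial_{v^k}|v^0|^{-2}=-2\widehat{v}^k/|v^0|^2$, is what produces the unusual $z^k t^{-2}\Delta^\ell_{Z^\gamma}$ term in the claim; the $\delta^k_\ell\widehat{v}^n v^\mu G_{\mu n}$ cross-term from Lemma \ref{LemCom1} accounts for $\delta^k_\ell\widehat{v}_n t^{-1}\Delta^n_{Z^\gamma}$; and the first identity of part~$2$ of Proposition \ref{Proasympelec}, namely $\widehat{v}^\mu\mathbb{F}_{\mu\ell}[\widehat{\Omega}^\infty_{0k}\widehat{Z}^\gamma_\infty f_\infty]/v^0=v^0\partial_{v^k}\Phi^\gamma_\ell-\delta^k_\ell\widehat{v}^n\Phi^\gamma_n$, governs the derivative of the asymptotic piece. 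The $\Phi$-contributions arising from all these sources must then be grouped and shown to cancel in pairs, while the $\Delta$-contributions reassemble — via Remark \ref{DeltatranslaS} applied to $\partial_tZ^\gamma$ and $\partial_{x^n}Z^\gamma$ — into the six terms of the announced formula. The matching is purely algebraic and the remaining work is bookkeeping; no new analytic input is needed.
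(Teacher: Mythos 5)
Your strategy coincides with the paper's: split $t^{-1}\Delta^\ell_{Z^\gamma}$ into the field piece $\tfrac{t}{|v^0|^2}v^\mu\mathcal{L}_{Z^\gamma}(F)_{\mu\ell}(t,\XX_\C)$ and the asymptotic piece $-t^{-1}\Phi^\gamma_\ell$ (your notation), differentiate the scalar prefactors by Leibniz, feed the field piece into Lemma \ref{LemCom1} with $G=\mathcal{L}_{Z^\gamma}F$ and the asymptotic piece into part~$2$ of Proposition \ref{Proasympelec}, and reassemble via Remark \ref{DeltatranslaS}. For $\partial_{z^k}$, $\partial_t^\infty$, $S$ and $\widehat{\Omega}_{ij}$ this closes exactly as you describe.

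The boost case, however, does not close as you assert, and it is precisely the computation you relegate to ``bookkeeping''. Write $A:=\tfrac{t^2}{|v^0|^2}v^\mu\mathcal{L}_{Z^\gamma}(F)_{\mu\ell}(t,\XX_\C)=\Delta^\ell_{Z^\gamma}+\Phi^\gamma_\ell$. The two prefactor contributions are
$$\widehat{\Omega}^\infty_{0k}\Bigl(\tfrac{t}{|v^0|^2}\Bigr)\,v^\mu\mathcal{L}_{Z^\gamma}(F)_{\mu\ell}(t,\XX_\C)=\bigl(z^kt^{-2}-\widehat{v}^kt^{-1}\bigr)A,\qquad -\,\widehat{\Omega}^\infty_{0k}\bigl(t^{-1}\bigr)\,\Phi^\gamma_\ell=\bigl(z^kt^{-2}+\widehat{v}^kt^{-1}\bigr)\Phi^\gamma_\ell.$$
The terms $\pm\, t^{-1}\widehat{v}^\mu\mathbb{F}_{\mu\ell}\bigl[\widehat{\Omega}^\infty_{0k}\widehat{Z}^\gamma_\infty f_\infty\bigr]/v^0$ and $\pm\,\delta^k_\ell\widehat{v}^n t^{-1}\Phi^\gamma_n$ produced by Lemma \ref{LemCom1} and Proposition \ref{Proasympelec} do cancel, but the sum of the two displayed quantities equals
$$\bigl(z^kt^{-2}-\widehat{v}^kt^{-1}\bigr)\Delta^\ell_{Z^\gamma}+2z^kt^{-2}\Phi^\gamma_\ell,$$
so the $\Phi$-contributions do \emph{not} all cancel in pairs: a residual $2z^kt^{-2}\,\widehat{v}^\mu\mathbb{F}_{\mu\ell}\bigl[\widehat{Z}^\gamma_\infty f_\infty\bigr](v)/v^0$ survives, which is absent from the announced right-hand side. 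This is also forced by consistency with your (correct) $\partial_t^\infty$ identity: writing $\widehat{\Omega}^\infty_{0k}=z^k\partial_t^\infty+\widehat{v}^kt\partial_t+v^0\partial_{v^k}$, the first summand alone contributes $z^k$ times $-t^{-2}\Delta^\ell_{Z^\gamma}+2\tfrac{\widehat{v}^\mu}{v^0}\mathcal{L}_{Z^\gamma}(F)_{\mu\ell}(t,\XX_\C)$, which equals $z^kt^{-2}\Delta^\ell_{Z^\gamma}+2z^kt^{-2}\Phi^\gamma_\ell$ and not $z^kt^{-2}\Delta^\ell_{Z^\gamma}$, while the remaining summands cannot generate a compensating term of the form $z^k$ times a function of $v$ alone. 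You must therefore either exhibit a cancellation that the algebra does not provide, or record that the stated boost identity acquires the extra term $2z^kt^{-2}\widehat{v}^\mu\mathbb{F}_{\mu\ell}[\widehat{Z}^\gamma_\infty f_\infty](v)/v^0$ (harmless for every subsequent estimate, since $|\mathbb{F}[\widehat{Z}^\gamma_\infty f_\infty]|\lesssim\sqrt{\epsilon}$, but a genuine discrepancy with the formula you claim to derive). As written, your proof asserts an exact cancellation that the computation does not deliver.
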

\begin{proof}
For the translations $\partial_{z^k}$, the scaling vector field $S$ and the rotations $\widehat{\Omega}_{ij}$, it suffices to apply the previous Lemma \ref{LemCom1} to $G= \mathcal{L}_{Z^\gamma}F$, Proposition \ref{Proasympelec} and to use 
$$ \partial_{z^k}(t)=\widehat{\Omega}_{ij}(t)=0, \qquad S(t)=t, \qquad \partial_{z^k}(v^0)=S(v^0)=\widehat{\Omega}_{ij}(v^0)=0.$$   
For $\partial_t^\infty=\partial_t-\widehat{v} \cdot \nabla_z$ and the Lorentz boosts, use Lemma \ref{LemCom1}, Proposition \ref{Proasympelec} as well as
$$t^{-1}\partial_t^\infty \! \big( t^2 |v^0|^{-2} \big) \!=2|v^0|^{-2}, \quad \partial_t^\infty \! \big( t^{-1} \big)=-t^{-2}, \quad t^{-1}\widehat{\Omega}^\infty_{0k}\big( t^2 |v^0|^{-2} \big)=2z^k|v^0|^{-2}, \quad \widehat{\Omega}^\infty_{0k} \big( t^{-1} \big)=-z^k t^{-2}-\widehat{v}^kt^{-1}.$$
\end{proof}
As $[\partial_{z^k},\partial_{z^i}]=[\partial_{z^k},v^0 \partial_{v^i}]=0$, we get, by applying Lemmata \ref{LemCom1} and \ref{LemCom2}, for $|\gamma|=0$, the first order commutation formula for the $z$ derivatives.
\begin{Lem}\label{Comz}
We have, for any $k \in \llbracket 1 , 3 \rrbracket$,
$$[\T_F^\infty , \partial_{z^k}] = \frac{\delta_j^i-\widehat{v}^i \widehat{v}_j}{t v^0} \Delta_{\partial_{x^k}}^j(t,z,v)\partial_{z^i}-\frac{\widehat{v}^\mu}{v^0} {\mathcal{L}_{\partial_{x^k}}(F)_{\mu}}^j(t,\XX_\C)\left( v^0\partial_{v^j}- v^0\partial_{v^j} (\C^i_{t,v}) \partial_{z^i} \right)\!.$$
\end{Lem}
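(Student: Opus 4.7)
The plan is to unfold $\T_F^\infty$ using \eqref{eq:Vlasov} together with Definition \ref{DefDelta}, and then observe that $\partial_{z^k}$ trivially commutes with almost every ingredient appearing in the operator. Indeed, $\partial_{z^k}$ commutes with $\partial_t$, with $\partial_{v^j}$, and with multiplication by any smooth function depending only on $(t,v)$: this includes $\widehat{v}^\mu$, $v^0$, the transverse projector $\delta^i_j - \widehat{v}_j\widehat{v}^i$, the asymptotic field $\mathbb{F}_\mu{}^j[f_\infty](v)$, the correction $\C^i_{t,v}$, and its derivative $\partial_{v^j}\C^i_{t,v}$. Consequently the only terms of $\T_F^\infty$ that give a nontrivial contribution to $[\T_F^\infty,\partial_{z^k}]$ are the ones involving the electromagnetic field $F_\mu{}^j$ evaluated at $(t,\XX_\C(t,z,v))$.

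For these terms the computation is immediate: since $\partial_{z^k}\XX_\C^\ell(t,z,v) = \delta^\ell_k$ and $\mathcal{L}_{\partial_{x^k}} = \nabla_{\partial_{x^k}}$ acts by plain differentiation on Cartesian components of the $2$-form $F$, one has
\[
\partial_{z^k}\!\bigl[F_\mu{}^j(t,\XX_\C)\bigr] \;=\; \mathcal{L}_{\partial_{x^k}}(F)_\mu{}^j(t,\XX_\C).
\]
Applied to the Lorentz-force part of $\T_F^\infty$, this directly produces the second term in the claimed formula. Applied to $\Delta^j$, this is exactly the $|\gamma|=0$ case of Lemma \ref{LemCom2}, which yields $\partial_{z^k}(\Delta^j) = \Delta^j_{\partial_{x^k}}$; equivalently, one can observe that $\partial_{z^k}f_\infty$ is a translation-type derivative, so $\mathbb{F}[\partial_{z^k}f_\infty]=0$ by Proposition \ref{Proasympelec}, and therefore the subtraction of $\mathbb{F}_\mu{}^j$ in the definition of $\Delta$ does not obstruct the differentiation.

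Assembling these two contributions, and keeping in mind the sign convention $[\T_F^\infty,\partial_{z^k}] = -[\partial_{z^k},\T_F^\infty]$ (so that the minus sign in front of the $\Delta^j\,\partial_{z^i}$ term in $\T_F^\infty$ flips to a plus sign in the commutator), produces exactly the stated identity. The coefficient $(\delta^i_j-\widehat{v}^i\widehat{v}_j)/(tv^0)$ in front of $\Delta^j_{\partial_{x^k}}\partial_{z^i}$ is inherited verbatim from the coefficient of $\Delta^j\partial_{z^i}$ in $\T_F^\infty$, which is unchanged by $\partial_{z^k}$, and the negative sign in front of the Lorentz-force block is inherited from the sign manipulation already described.

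There is no genuine obstacle here: the statement is essentially a specialization of the more general Lemma \ref{LemCom2} at $|\gamma|=0$, recorded separately because it will be iterated in the higher-order commutation formulas for $\K^\infty$. The only mild care needed is to track the sign coming from the commutator convention and to use the vanishing of $\mathbb{F}$ on translation-type derivatives of $f_\infty$ so that the $\partial_{z^k}$-derivative of the subtraction term in $\Delta^j$ is in fact zero.
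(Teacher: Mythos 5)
Your proof is correct and follows essentially the same route as the paper, which derives the lemma by applying Lemmata \ref{LemCom1} and \ref{LemCom2} with $|\gamma|=0$ together with the observations $[\partial_{z^k},\partial_{z^i}]=[\partial_{z^k},v^0\partial_{v^i}]=0$ — exactly the facts you invoke when noting that $\partial_{z^k}$ only fails to commute with the $F(t,\XX_\C)$-dependent coefficients. Your sign bookkeeping and the use of $\mathbb{F}[\partial_{z^k}f_\infty]=0$ to identify $\partial_{z^k}\Delta^j$ with $\Delta^j_{\partial_{x^k}}$ are both accurate.
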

The cases of the other derivatives are more involved to compute. For this reason, we will directly state the higher order commutation formula. Moreover, as $v^0 \partial_{v^i}$ are not part of our set of commutators, we need to rewrite them as
\begin{equation}\label{eq:expressionvderiv}
 v^0 \partial_{v^k}=\widehat{\Omega}^\infty_{0k} - z^k \partial_t^\infty - \widehat{v}^k  S+\widehat{v}^k z^i \partial_{z^i} .
 \end{equation}

\subsection{Commutation formula}
It will be convenient to introduce the schematic notation $P_{p,q}(\C)$ in order to denote any quantity of the form
$$ P_{p,q}(\C):= \prod_{1 \leq k \leq p}  \big( v^0\partial_{v}\big)^{\kappa_k} (t\partial_t)^{n_k}  \! \left( \C^{i_k}\right), \qquad (p,q) \in \mathbb{N}^2, \quad 1 \leq i_k \leq 3, \quad  q=\sum_{1 \leq k \leq p} |\kappa_k|+n_k . $$
By convention, we set $P_{0,q}(\C)=1$ for $p=0$. Note that these quantities does not generate strong divergent terms since, in view of Proposition \ref{Proasympelec} and $\epsilon \log^{-1}(T) \leq 1$, we have 
\begin{equation}\label{estiPolyC}
\hspace{-15mm} \forall \, (t,v) \in [T,+\infty[\times \R^3_v, \qquad \qquad\big|P_{p,q}(\C) \big|(t,v) \lesssim \epsilon^{\frac{p}{2}} |v^0|^{-p}  \log^p( t) \leq \log^{2p}( t).
\end{equation}
In order to state the commutation formula in a concise form, we will use the notation $\partial_{z^0}\!=\partial_t^\infty=\partial_t-\widehat{v} \cdot \nabla_x$. Recall further that, for $\widehat{Z}^\beta_\infty \in \widehat{\mathbb{P}}_S^\infty$, $\beta_H$ denotes the number of homogeneous vector fields, that is $\widehat{\Omega}_{ij}$, $\widehat{\Omega}^\infty_{0k}$ and $S$, composing $\widehat{Z}^\beta$. The quantity $\beta_T=|\beta|-\beta_H$ is the number of translations $\partial_{z^\mu}$.
\begin{Pro}\label{Comgen}
Let $\beta$ be a multi-index such that $|\beta| \leq N$. Then, $[\T^\infty_F,\widehat{Z}_\infty^\beta]$ can be written as a linear combination, with coefficients which are polynomials in $(\widehat{v}^1,\widehat{v}^2,\widehat{v}^3)$, of the following five families of error terms.
\begin{alignat}{2}
&\hspace{-1mm} \bullet \; \,  \widehat{Z}_\infty^\kappa \T_F^\infty,  \tag{Err-0} \label{T0} \\ \noalign{\vskip3pt}
& \hspace{-1mm} \bullet \; \,  R \!\left(z \right) P_{p,q}(\C) \, t^{-1-m}\Delta^i_{Z^\gamma}(t,z,v) \, \partial_{z^\nu}  \widehat{Z}_\infty^\kappa, \qquad &&  \nu \in \llbracket 0 , 3 \rrbracket,    \tag{Err-1} \label{T1} ,  \\ \noalign{\vskip3pt}
& \hspace{-1mm} \bullet \; \,  R \!\left(z \right) P_{p,q}(\C) \, t^{-m} \, \frac{\widehat{v}^\mu}{v^0} \mathcal{L}_{Z^\gamma}(F)_{\mu i}(t,\XX_\C) \, \widehat{Z}_\infty \widehat{Z}_\infty^\kappa, \qquad \qquad  &&  \widehat{Z}_\infty \in \widehat{\mathbb{P}}_S^\infty,  \tag{Err-2} \label{T2} 
\end{alignat}
where $i \in \llbracket 1,3 \rrbracket$, $m \in \mathbb{N}$, $R$ is a polynomial in $z=(z^j)_{1 \leq j \leq 3}$ of degree $\deg R$, 
$$\deg R +\kappa_H \leq \beta_H, \qquad \qquad \deg R \leq m, \qquad  \qquad |\gamma|+|\kappa | \leq |\beta | , \qquad \qquad |\kappa | \leq |\beta |-1 , \qquad \qquad  p, \, q \leq |\beta|+1 . $$
Moreover, for the terms \eqref{T1}--\eqref{T2}, we have the following extra condition. If $\deg R +\kappa_H = \beta_H$, then
\begin{itemize}
\item either $m \geq 1$
\item or $ \gamma_T \geq 1$ and $p=0$.
\end{itemize}
\end{Pro}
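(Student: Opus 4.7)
The plan is to prove this by induction on $|\beta|$, with the base case $|\beta|=1$ furnished by the first-order commutator formulas already computed in Lemmas \ref{LemCom1} and \ref{LemCom2}. For instance, for $\widehat{Z}_\infty = \widehat{\Omega}_{0k}^\infty$ applied to $\T_F^\infty$, one checks directly that each term produced fits one of the three families: the action on $\partial_t$ trivially gives an \eqref{T0} contribution modulo a commutator with $\widehat{v}\cdot \nabla_z$ that is absorbed into \eqref{T1}, the action on the $t^{-1}\Delta^i\partial_{z^i}$ piece is handled by Lemma \ref{LemCom2} (yielding \eqref{T1} terms with $m\leq 1$, $\deg R\leq 1$, and the correct $\kappa_H$ budget), and the action on the Lorentz-force piece is handled by Lemma \ref{LemCom1} (yielding \eqref{T2} terms). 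The constraint $\deg R+\kappa_H = \beta_H = 1$ is saturated precisely by the $z^k t^{-2}\Delta^\ell_{Z^\gamma}$ term in $\widehat{\Omega}_{0k}^\infty[t^{-1}\Delta]$, which satisfies $m=1 \geq 1$; the $\partial_{x^k}Z^\gamma$-type terms produced by $\partial_{z^k}$ saturate with $\gamma_T\geq 1$ and $p=0$, exactly the second alternative.

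For the inductive step, assuming the formula for multi-indices of length $p$, I would write $\widehat{Z}_\infty^\beta = \widehat{Z}_\infty \widehat{Z}_\infty^{\beta'}$ with $|\beta'|=p$ and use
\[
[\T_F^\infty,\widehat{Z}_\infty^\beta] \;=\; [\T_F^\infty,\widehat{Z}_\infty]\,\widehat{Z}_\infty^{\beta'} \;+\; \widehat{Z}_\infty\,[\T_F^\infty,\widehat{Z}_\infty^{\beta'}].
\]
The first summand is handled by the base case: each factor of the form $\widehat{Z}_\infty^\kappa\T_F^\infty$ produced stays of type \eqref{T0}, while \eqref{T1}/\eqref{T2} terms absorb the trailing $\widehat{Z}_\infty^{\beta'}$ into the $\widehat{Z}_\infty^\kappa$ slot, increasing $|\kappa|$ by $p$ and respecting all four budget inequalities. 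The second summand is expanded by applying $\widehat{Z}_\infty$ termwise to the inductive expression; by the Leibniz rule, $\widehat{Z}_\infty$ may hit the polynomial $R(z)$, the $\C$-polynomial $P_{p,q}(\C)$, the power of $t$, the $\Delta$-factor, the Lorentz-force factor, or the trailing operator $\partial_{z^\nu}\widehat{Z}_\infty^\kappa$ (respectively $\widehat{Z}_\infty\widehat{Z}_\infty^\kappa$). Each of these subcases is controlled: the action on $R(z)$ by elementary calculus (with $\widehat{\Omega}_{ij}, \widehat{\Omega}_{0k}^\infty, \widehat{S}$ contributing both a polynomial-in-$z$ piece and a $\widehat{v}$-polynomial piece), the action on $P_{p,q}(\C)$ by inspection of the definition (raising $p$ and $q$ by at most one each), the action on $t^{-m}$ by Leibniz for $\widehat{S}$ or $\widehat{\Omega}_{0k}^\infty$ (which raises $m$ by $1$ and may introduce a $z^k/t$ factor), the action on $\Delta$ by Lemma \ref{LemCom2}, and the action on the Lorentz-force term by Lemma \ref{LemCom1}.

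The main obstacle will be bookkeeping of the saturation inequality $\deg R + \kappa_H \leq \beta_H$ together with the dichotomy that enforces either $m\geq 1$ or ($\gamma_T\geq 1$ and $p=0$) when equality holds. One must check, case by case on the type of $\widehat{Z}_\infty$, that in every subcase above the budget is preserved. The cleanest way is to observe:
\begin{itemize}
\item if $\widehat{Z}_\infty$ is a translation $\partial_{z^\mu}$ then $\beta_H$ is unchanged, and the only dangerous action is the one raising $\gamma_T$ (via Lemma \ref{LemCom2}), which keeps us in the second saturation alternative;
\item if $\widehat{Z}_\infty$ is homogeneous then $\beta_H$ increases by $1$, giving us one extra unit in either $\deg R$ (through the action on $R(z)$ or on $\Delta$ producing a $z^k t^{-2}$ term, simultaneously raising $m$ by $1$ and maintaining the first alternative) or in $\kappa_H$ (when the homogeneous derivative is absorbed into the trailing operator via $\widehat{Z}_\infty\widehat{Z}_\infty^\kappa$).
\end{itemize}
The terms arising from the action on $P_{p,q}(\C)$ never contribute to the saturated top-order count because $\C_{t,v}$ itself is already of order $\log t \cdot \epsilon^{1/2}$ and the $v$-derivatives preserve the $\widehat{v}$-polynomial structure; the bounds $p,q\leq |\beta|+1$ propagate automatically from the inductive hypothesis via the base-case increments. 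Finally, the terms of type \eqref{T2} where $\widehat{Z}_\infty$ hits the Lorentz-force factor produce (via Lemma \ref{LemCom1}) either a higher-order Lorentz-force term, which stays in family \eqref{T2} with $|\gamma|+1\leq |\beta|$, or a $\C$-weighted translation-derivative term, which lands in the first saturation alternative of \eqref{T2}. Verifying that no combination of these moves violates the inequalities is tedious but mechanical, and constitutes the bulk of the argument.
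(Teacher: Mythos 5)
Your proposal follows essentially the same route as the paper: induction on $|\beta|$ with the base case supplied by Lemmata \ref{LemCom1}--\ref{LemCom2} together with the rewriting \eqref{eq:Vlasovbis} of the $v$-derivatives, the decomposition $[\T^\infty_F,\widehat{Z}_\infty\widehat{Z}_\infty^{\beta'}]=[\T^\infty_F,\widehat{Z}_\infty]\widehat{Z}_\infty^{\beta'}+\widehat{Z}_\infty[\T^\infty_F,\widehat{Z}_\infty^{\beta'}]$, and the same case-by-case tracking of how $\beta_H$, $\deg R$, $\kappa_H$, $\gamma_T$, $m$ and $p$ evolve so as to preserve the saturation dichotomy. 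The only point you leave implicit is that when $\widehat{Z}_\infty\in\{S,\widehat{\Omega}^\infty_{0k}\}$ the first summand produces a term $\T_F^\infty\widehat{Z}_\infty^{\beta'}$ with the Vlasov operator on the wrong side, which must be rewritten as $\widehat{Z}_\infty^{\beta'}\T_F^\infty+[\T_F^\infty,\widehat{Z}_\infty^{\beta'}]$ and disposed of by a further application of the induction hypothesis, exactly as in the paper.
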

\begin{Rq}
The extra condition provides more decay in $t-|\XX_\C|$, if $\gamma_T \geq 1$, or in $t$, if $m \geq 1$.
\end{Rq}
\begin{proof} 
The proof is based on an induction and in order to avoid any confusion, we will say that the error terms \eqref{T0}--\eqref{T2} written in the statement of the proposition are associated to the derivative $\widehat{Z}_\infty^\beta$. The starting point consists in writting the Vlasov operator as follows
\begin{equation}\label{eq:Vlasovbis}
\hspace{-5mm} \T_F^\infty=\partial_t-\frac{\delta_j^i-\widehat{v}_j \widehat{v}^i}{v^0} t^{-1} \Delta^j \partial_{z^i}+\frac{\widehat{v}^\mu}{v^0} {F_{\mu}}^j(t,\XX_\C)\left( \widehat{\Omega}^\infty_{0j} - z_j \partial_t^\infty - \widehat{v}_j  S+ \widehat{v}_j z^i \partial_{z^i}-v^0 \partial_{v^j} \C^i(t,v) \partial_{z^i} \right),
\end{equation}
where we rewrite the $v$-derivatives using \eqref{eq:expressionvderiv}. We will further need the following relations,
\begin{align}
\nonumber &[\partial_{z^k},\partial_{z^i}]=0, \qquad [\partial_{z^k},\partial_t^\infty]=0, \qquad [\partial_{z^k},S]=\partial_{z^k}, \qquad  [ \partial_{z^k},\widehat{\Omega}^\infty_{0k} ]=\partial_t^\infty, \qquad [\partial_{z^k} , \widehat{\Omega}_{ij}]= \delta_k^i \partial_{z^j}-\delta^j_k \partial_{z^i}, \\
& [\partial_t^\infty,\partial_{z^i}]=0, \qquad \, [\partial_t^\infty,\partial_t^\infty]=0, \qquad \, [\partial_t^\infty,S]=\partial_t^\infty, \qquad \,  [ \partial_{t}^\infty,\widehat{\Omega}^\infty_{0k} ]=\partial_{z^k}, \qquad  [\partial_t^\infty , \widehat{\Omega}_{ij}]= 0. \label{eq:commutePinf}
\end{align}
We will also make use of
\begin{alignat*}{2}
&\partial_t^\infty (t)=1, \qquad &&\partial_{z^k}(t)=0, \qquad \quad  S(t)=t, \qquad \quad  \widehat{\Omega}_{ij}(t)=0, \qquad \qquad \qquad \; \; \; \widehat{\Omega}^\infty_{0k}(t)=z^k+t\widehat{v}^k , \\
&\partial_t^\infty (z^\ell)=-\widehat{v}^\ell, \qquad &&\partial_{z^k}(z^\ell)=\delta_k^\ell, \qquad  S(z^\ell)=z^\ell, \qquad \widehat{\Omega}_{ij}(z^\ell)=\delta^\ell_j z^i-\delta_i^\ell z^j, \qquad \, \widehat{\Omega}^\infty_{0k}(z^\ell)=-z^k \widehat{v}^\ell ,\\
&\partial_t^\infty (v^\mu)=0, \qquad &&\partial_{z^k}(v^\mu)=0, \qquad \; S(v^\mu)=0, \qquad \; \widehat{\Omega}_{ij}(v^\mu)=\delta^\ell_j v^i-\delta_i^\ell v^j, \qquad \widehat{\Omega}^\infty_{0k}(v^\mu)=\delta_k^\mu v^0+\delta_0^\mu v^k.
\end{alignat*}
Next, given a quantity of the form $P_{p,q}(\C)$, we have the schematic relations
\begin{align*}
& \partial_{z^k}P_{p,q}(\C)=0, \qquad \partial^\infty_t P_{p,q}(\C)=\partial_t P_{p,q}(\C)=t^{-1}P_{p,q+1}(\C), \qquad  S P_{p,q}(\C)\! =t\partial_t P_{p,q}(\C)=P_{p,q+1}(\C), \\ 
&\widehat{\Omega}_{ij} P_{p,q}(\C)=\big(v^i \partial_{v^j}-v^j \partial_{v^i}\big)P_{p,q}(\C)=\widehat{v}^iP_{p,q+1}(\C)+\widehat{v}^jP_{p,q+1}(\C) , \\
& \widehat{\Omega}_{0k}^\infty P_{p,q}(\C) = v^0 \partial_{v^k}P_{p,q}(\C)+z^k \partial_t P_{p,q}(\C)+\widehat{v}^k t \partial_t P_{p,q}(\C)=P_{p,q+1}(\C)+z^k t^{-1}P_{p,q+1}(\C)+\widehat{v}^k P_{p,q+1}(\C).
\end{align*}
Finally, in order to commute with the scaling vector field $S$ or $\widehat{\Omega}^\infty_{0k}$, we will use
$$
[\partial_t, t \partial_t]=\partial_t=\T_F^\infty +\frac{\delta_j^i-\widehat{v}_j \widehat{v}^i}{v^0} t^{-1} \Delta^j \partial_{z^i}-\frac{\widehat{v}^\mu}{v^0} {F_{\mu}}^j(t,\XX_\C)\left( \widehat{\Omega}^\infty_{0j} - z_j \partial_t^\infty - \widehat{v}_j  S+ \widehat{v}_j z^i \partial_{z^i}-v^0 \partial_{v^j} \C^i(t,v) \partial_{z^i} \right).
$$
The first order commutation formula then follows from these relations as well as Lemmata \ref{LemCom1}--\ref{LemCom2}.

Let $n \in \mathbb{N}^*$ such that the statement holds for any multi-index $|\xi| =n$ and consider $\widehat{Z}_\infty \in \widehat{\mathbb{P}}_S^\infty$ as well as $|\beta| =n$. We use then the general relation
$$[ \T^\infty_F, \widehat{Z}_\infty\widehat{Z}_\infty^\beta ]=  [ \T^\infty_F, \widehat{Z}_\infty] \widehat{Z}_\infty^\beta + \widehat{Z}_\infty[ \T^\infty_F, \widehat{Z}_\infty^\beta ]. $$
Appealing to the first order commutation formula, we can write $[ \T^\infty_F, \widehat{Z}_\infty] \widehat{Z}_\infty^\beta$ as a linear combination of terms of type \eqref{T1}--\eqref{T2}, corresponding to the derivative $\widehat{Z}_\infty\widehat{Z}_\infty^\beta$, and, if $\widehat{Z}_\infty=S$ or $\widehat{Z}_\infty = \widehat{\Omega}^\infty_{0k}$, $\T_F^\infty \widehat{Z}_\infty^\beta$. We deal with the last term by writting $\T_F^\infty \widehat{Z}_\infty^\beta=\widehat{Z}_\infty^\beta \T_F^\infty+[\T_F^\infty , \widehat{Z}_\infty^\beta]$ and then by using the induction hypothesis. The other term, $\widehat{Z}_\infty[ \T^\infty_F, \widehat{Z}_\infty^\beta ]$, can be treated by combining the induction hypothesis with the following property.

Let $0 \leq \lambda \leq 2$ and $\mathbf{E}^\beta_\lambda$ be an error term of the form (Err-$\lambda$), associated to $\widehat{Z}_\infty^\beta$. Then, 
\begin{itemize}
\item if $(\widehat{Z}_\infty, \lambda) \neq (\partial_t^\infty,1)$, $\widehat{Z}_\infty \mathbf{E}^\beta_\lambda$ is a linear combination of terms of type (Err-$\lambda$) associated to $\widehat{Z}_\infty \widehat{Z}_\infty^\beta$.
\item $\partial_t^\infty \mathbf{E}^\beta_1$ is a linear combination of terms of type \eqref{T1}--\eqref{T2}, associated to $\partial_t^\infty \widehat{Z}_\infty^\beta$. 
\end{itemize}
This can be observed by combining again the relations written in the beginning of the proof with Lemmata \ref{LemCom1}--\ref{LemCom2}. The crucial part consists in observing that the extra condition is preserved. For this, introduce $|\xi|=|\beta|+1$ such that $\widehat{Z}_\infty^\xi= \widehat{Z}_\infty \widehat{Z}_\infty^\beta$ and make the next observations. Once we apply $\widehat{Z}_\infty$ to an error term $E^\beta_\lambda$, $\lambda \in \{1,2\}$, we obtain error terms $E^\xi_j$, $j \in \{1,2 \}$, such that
\begin{itemize}
\item if $\widehat{Z}_\infty \in \{ \partial_t^\infty , \partial_{z^1}, \, \partial_{z^2}, \, \partial_{z^3} \}$, then $\xi_H=\beta_H$ and $\deg R$ as well as $\kappa_H$ does not strictly increase. The quantities $\gamma_T$ and $m$ does not strictly decrease. Moreover, according to Lemmata \ref{LemCom1}--\ref{LemCom2}, if $p$ strictly increases, then $m$ increases by $1$.
\item Otherwise $\widehat{Z}_\infty$ is an homogeneous vector field and $\xi_H=\beta_H+1$. As before, none of the quantities $\gamma_T$ and $m$ strictly decrease. Moreover, $\deg R+\kappa_H$ increases at most by $1$, in which case either $p$ does not stricly increase or $m$ increases by $1$.
\end{itemize}
\end{proof}

It is for the purpose of exploiting the hierarchies related to the condition $\deg_z \! R+\kappa_H \leq \beta_H$ that we will propagate weighted norms such as the ones of Definition \ref{DeffuncspaVla}. For this we will apply the next result, which is a consequence of Proposition \ref{Comgen}.
\begin{Cor}\label{CorCom}
Let $(M_z,M_v) \in \R^2$ and $h$ be a sufficiently regular solution to $\T_F^\infty (h)=0$. Then, for any $|\beta| \leq N$, we can bound $\big[\T_F, \langle v \rangle^{M_v} \, \langle z \rangle^{M_z-\beta_H} \widehat{Z}_\infty^\beta \big](h)$ by a linear combination of the following terms,
\begin{flalign}\label{T1bis}
\hspace{-3mm}& \;   \bullet \; \, \frac{\log^{2N+2}(t)}{ \langle z \rangle \, v^0 \, t}   \left|  \widehat{v}^\mu\Big( t^2   \mathcal{L}_{Z^\gamma}(F)_{\mu i}(t,\XX_\C)-  \mathbb{F}_{\mu i}\big[\widehat{Z}_\infty^\gamma f_\infty \big](v)  \Big) \right| \left| \langle v \rangle^{M_v} \, \langle z \rangle^{M_z-\kappa_H} \, \widehat{Z}_\infty^\kappa h \right|,   \tag{T-1}  \\ \noalign{\vskip2pt}
\hspace{-3mm}& \; \bullet \; \, \frac{\langle t+|z| \rangle}{v^0}\Big|  \widehat{v}^\mu  \mathcal{L}_{Z^\gamma}(F)_{\mu i}(t,\XX_\C) \Big| \left|\langle v \rangle^{M_v} \, \langle z \rangle^{M_z-\kappa_H} \, \widehat{Z}_\infty^\kappa h \right| , \qquad \qquad \qquad \qquad \quad Z^\gamma=\partial_{x^\nu}Z^\xi \tag{T-2} \label{T2bis}  \\ \noalign{\vskip2pt}
\hspace{-3mm}& \; \bullet \; \log^{2N+2}(t) \frac{\langle t+|z| \rangle}{v^0 \, t} \Big|   \mathcal{L}_{Z^\gamma} F (t,\XX_\C) \Big| \left|  \langle v \rangle^{M_v} \, \langle z \rangle^{M_z-\kappa_H} \, \widehat{Z}_\infty^\kappa h \right|,  \tag{T-3} \label{T3bis}  \\ \noalign{\vskip2pt}
\hspace{-3mm}& \; \bullet \; \,   t^{-\frac{7}{4}}  \left|  \langle v \rangle^{M_v} \, \langle z \rangle^{M_z-\kappa_H} \, \widehat{Z}_\infty^\kappa h \right|,  \tag{T-4} \label{T4bis}  
\end{flalign}
where $1 \leq i \leq 3$, and $|\gamma|+|\kappa| \leq |\beta|+1$ and $|\gamma|, \, |\kappa| \leq |\beta|$.
\end{Cor}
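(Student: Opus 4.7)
The plan is to expand the full commutator via the Leibniz rule
\[
[\T_F^\infty, w_\beta \widehat{Z}_\infty^\beta](h) \;=\; w_\beta \, [\T_F^\infty, \widehat{Z}_\infty^\beta](h) \;+\; \T_F^\infty(w_\beta) \, \widehat{Z}_\infty^\beta(h),
\]
with $w_\beta := \langle v \rangle^{M_v} \langle z \rangle^{M_z - \beta_H}$, and to apply Proposition \ref{Comgen} to the first piece. Since $\T_F^\infty(h) = 0$, the (Err-0) contributions vanish, so it suffices to dominate each (Err-1) and (Err-2) term, multiplied by $w_\beta$, by one of the four quantities \eqref{T1bis}--\eqref{T4bis}, and to handle the residual term $\T_F^\infty(w_\beta) \widehat{Z}_\infty^\beta(h)$ separately by direct computation.

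For a typical (Err-1) term $R(z) P_{p,q}(\C) t^{-1-m} \Delta^i_{Z^\gamma} \partial_{z^\nu} \widehat{Z}_\infty^\kappa(h)$, the natural weight for $\partial_{z^\nu} \widehat{Z}_\infty^\kappa(h)$ is $\langle z \rangle^{M_z - \kappa_H}$ since the translation $\partial_{z^\nu}$ does not change the count of homogeneous vector fields. Combining $|R(z)| \lesssim \langle z \rangle^{\deg R}$ with the bound $|P_{p,q}(\C)| \lesssim \log^{2N+2}(t)$ from \eqref{estiPolyC}, the relative prefactor is controlled by $\log^{2N+2}(t) \, \langle z \rangle^{\deg R + \kappa_H - \beta_H} \, t^{-1-m} \, |\Delta^i_{Z^\gamma}|$. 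I will then split into three regimes according to the hierarchy in Proposition \ref{Comgen}. If $\deg R + \kappa_H < \beta_H$, the $\langle z \rangle$-power is at most $-1$, yielding \eqref{T1bis}. If $\deg R + \kappa_H = \beta_H$ and $\gamma_T \geq 1$ (so $p = 0$), then $\mathbb{F}[\widehat{Z}_\infty^\gamma f_\infty] = 0$ and $\Delta^i_{Z^\gamma}$ reduces to $t^2 \widehat{v}^\mu \mathcal{L}_{Z^\gamma}(F)_{\mu i}/v^0$, so the prefactor is at most $t^{1-m} |\widehat{v}^\mu \mathcal{L}_{Z^\gamma}(F)_{\mu i}|/v^0 \lesssim t |\widehat{v}^\mu \mathcal{L}_{Z^\gamma}(F)_{\mu i}|/v^0$, absorbed into \eqref{T2bis} via $t \leq \langle t+|z|\rangle$. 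If $\deg R + \kappa_H = \beta_H$ with $m \geq 1$, I split $\Delta^i_{Z^\gamma}$ into its $t^2 \mathcal{L}_{Z^\gamma} F$ part and its $\mathbb{F}$ part: the former contributes $\langle z \rangle^{\deg R} t^{1-m} |\mathcal{L}_{Z^\gamma} F|/v^0$, which fits into \eqref{T3bis} using $\deg R \leq m$ and $\langle t + |z| \rangle / t \geq 1$, while the latter contributes $\sqrt{\epsilon} \log^{2N+2}(t) \, t^{-1-m}/v^0$ by Proposition \ref{Proasympelec}, absorbed into \eqref{T4bis} since $t^{-2} \log^{2N+2}(t) \lesssim t^{-7/4}$ for $t \geq T \geq 2$.

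The (Err-2) terms follow the same case split, the only difference being that $\widehat{Z}_\infty \widehat{Z}_\infty^\kappa$ carries $\kappa_H + [\widehat{Z}_\infty\ \text{homogeneous}]$ homogeneous vector fields, producing at worst one extra factor of $\langle z \rangle$; this is absorbed into $\langle t + |z| \rangle / t$ to yield \eqref{T3bis} when $m \geq 1$, or matched against \eqref{T2bis} in the boundary case $m = 0$, $\gamma_T \geq 1$. Finally, the weight commutator $\T_F^\infty(w_\beta) \widehat{Z}_\infty^\beta(h)$ is computed directly from the expression \eqref{eq:Vlasov}: the action of the transport term $-\frac{\delta^i_j - \widehat{v}_j \widehat{v}^i}{t v^0} \Delta^j \partial_{z^i}$ on $\langle z \rangle^{M_z - \beta_H}$ produces a \eqref{T1bis}-type bound with $|\gamma| = 0$; the correction $v^0 \partial_{v^j}(\C^i) \partial_{z^i}$ contributes a term controlled using $|v^0 \partial_{v^j} \C^i| \lesssim \sqrt{\epsilon} \log(t)$ from Proposition \ref{Proasympelec}, absorbed into \eqref{T3bis}; and the action of the Lorentz force on $\langle v \rangle^{M_v}$ yields $\widehat{v}^\mu F_{\mu j}(t,\XX_\C) v^j/\langle v \rangle^2$, bounded by $|F|/v^0$ and fitting into \eqref{T3bis} thanks to $\log^{2N+2}(t) \langle t + |z| \rangle / t \geq 1$. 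The main technical obstacle lies in the boundary case $\deg R + \kappa_H = \beta_H$ with $m \geq 1$ and $\gamma_T = 0$, where neither the $\langle z\rangle^{-1}$ saving of \eqref{T1bis} nor the $\gamma_T \geq 1$ branch of \eqref{T2bis} is available, and it is the splitting of $\Delta^i_{Z^\gamma}$ together with the sharp constraint $\deg R \leq m$ that makes \eqref{T3bis} and \eqref{T4bis} sufficient to conclude.
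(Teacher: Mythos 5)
Your argument follows the paper's proof essentially step for step: you reduce to the unweighted case $M_v=M_z=0$ by computing $\T_F^\infty$ on the weights, apply Proposition \ref{Comgen} (with the \eqref{T0} terms vanishing since $\T_F^\infty(h)=0$), and run the same case analysis on $\deg R+\kappa_H$ versus $\beta_H$, invoking the extra condition ($m\geq 1$, or $\gamma_T\geq 1$ with $p=0$), the splitting of $\Delta^i_{Z^\gamma}$ into its $t^2\mathcal{L}_{Z^\gamma}F$ and $\mathbb{F}$ parts, the bounds \eqref{estiPolyC} and $|\mathbb{F}|\lesssim\sqrt{\epsilon}$, and the absorption of logarithms into $t^{-7/4}$. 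One cosmetic remark: in the boundary case $\deg R+\kappa_H=\beta_H$ the factor $\langle z\rangle^{\deg R}$ is exactly absorbed into the target weight $\langle z\rangle^{M_z-\kappa_H}$ (consistent with your ``relative prefactor'' bookkeeping), so the later appeal to $\deg R\leq m$ in the \eqref{T3bis} step is superfluous and, read literally, would leave an unabsorbed power of $\langle z\rangle/t$.
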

\begin{proof}
Let us first reduce the proof to the case $M_v=M_z=0$. Note that
\begin{align*}
 \big| \big[\T^\infty_F, \langle v \rangle^{M_v} \, \langle z \rangle^{M_z-\beta_H} \, \widehat{Z}_\infty^\beta \big](h) \big| &\leq  (|M_z|+N) \big|z_i\T_F^\infty(z^i)\big| \langle v \rangle^{M_v} \, \langle z \rangle^{M_z-\beta_H-2}\big|\widehat{Z}_\infty^\beta h \big| \\ \noalign{\vskip2pt}
 & \quad +|M_v| \big| \T_F^\infty(\langle v \rangle) \big| \langle v \rangle^{M_v-1} \, \langle z \rangle^{M_z-\beta_H}\big|\widehat{Z}_\infty^\beta h \big| \\ \noalign{\vskip2pt}
 & \quad  + \langle v \rangle^{M_v} \, \langle z \rangle^{M_v} \, \langle z \rangle^{-\beta_H}\big| \big[\T^\infty_F, \widehat{Z}_\infty^\beta \big](h) \big|.
 \end{align*}
Recall now from \eqref{estiPolyC} that $v^0|\partial_{v^j} \C^i_{t,v}| \lesssim \sqrt{\epsilon} \log(t) $ and remark, in view of the expression \eqref{eq:Vlasov} of $\T_F^\infty$, that
 \begin{align*}
\T_F^\infty (\langle v \rangle) &= \widehat{v}_i \T_F^\infty(v^i)= \widehat{v}^\mu \widehat{v}^i  F_{\mu i}(t,\XX_\C)  , \\
\T_F^\infty (  z^i  )  & =   -\frac{\delta_j^i-\widehat{v}_j \widehat{v}^i}{tv^0} \widehat{v}^\mu \Big( t^2  {F_{\mu}}^j(t,\XX_\C)-{\mathbb{F}_{\mu}}^j[f_\infty](v) \Big)-\frac{\widehat{v}^\mu}{v^0} {F_{\mu}}^j(t,\XX_\C)v^0 \partial_{v^j} \C^i_{t,v}   .
\end{align*}
The first term on the right hand side of the commutator estimate can then be bounded by quantities of type \eqref{T1bis} and \eqref{T3bis}. We can control the second one by terms of the form \eqref{T3bis}. Next, in order to deal with the last term on the right hand side, it suffices to prove the result for $M_v=M_z=0$. For this, we apply the previous Proposition \ref{Comgen}. As $\T^\infty_F(h)=0$, the terms \eqref{T0} vanish. In view of the estimate \eqref{estiPolyC} of the correction coefficients, the error terms \eqref{T1}--\eqref{T2} can be bounded by quantities of the form
\begin{alignat*}{2}
\mathbf{A}_1&:= \log^{2p}(t) \, t^{-1-m}|\Delta^i_{Z^\gamma}|(t,z,v) \langle z \rangle^{-\beta_H + \deg R} \, \big|\partial_{z^\nu} \widehat{Z}^\kappa h \big| , \qquad && \nu \in \llbracket 0,3 \rrbracket,  \\ \noalign{\vskip2pt}
\mathbf{A}_2&:= \log^{2p}(t) \, t^{-m} \left| \frac{\widehat{v}^\mu}{v^0} \mathcal{L}_{Z^\gamma}(F)_{\mu i}(t,\XX_\C) \right| \langle z \rangle^{-\beta_H + \deg R}  \, \big| \widehat{Z}_\infty \widehat{Z}^\kappa h \big|, \qquad \qquad && \widehat{Z}_\infty \in \widehat{\mathbb{P}}_S^\infty, 
\end{alignat*}
where $$\deg R +\kappa_H \leq \beta_H, \qquad i \in \llbracket 1,3 \rrbracket, \qquad m \in \mathbb{N}, \qquad p \leq |\beta|+1,   \qquad  |\gamma|+|\kappa | \leq |\beta | , \qquad |\kappa | \leq |\beta |-1  $$
and $\deg R +\kappa_H =\beta_H$ implies 
\begin{itemize}
\item $m \geq 1$
\item or $\gamma_T \geq 1$ and $p=0$.
\end{itemize}
Note that, for any $Z \in \mathbb{K}$ and $0 \leq \nu \leq 3$, we have $[\partial_{x^\nu},Z]=0$ or $[\partial_{x^\nu},Z]=\pm \partial_{x^\lambda}$ for a certain $0 \leq \lambda \leq 3$. Hence, if $\gamma_T \geq 1$, we can assume without loss of generality that $Z^\gamma = \partial_{x^\nu} Z^\zeta$.

In order to deal with $\mathbf{A}_2$, we stress that there exists $|\xi|=|\kappa|+1$ such that $\widehat{Z}_\infty^\xi = \widehat{Z}_\infty \widehat{Z}_\infty^\kappa$ and $\xi_H \leq  \kappa_H+1$.
\begin{itemize}
\item If $  - \beta_H+\deg  R \leq -\kappa_H-1 \leq -\xi_H$, $\mathbf{A}_2$ is less than a term of type \eqref{T3bis}.
\item Otherwise $  - \beta_H+\deg  R =-\kappa_H$, so that $\langle z \rangle^{-\beta_H + \deg R} \leq \langle z \rangle \, \langle z \rangle^{-\xi_H }$. If $p \geq 1$, we have $m \geq 1$ and $\mathbf{A}_2$ can be bounded by \eqref{T3bis}. Finally, for the case $p=0$, we necessarily have $\gamma_T \geq 1$ and $\mathbf{A}_2$ can be controlled by quantities of the form \eqref{T2bis}.
\end{itemize}
For $\mathbf{A}_1$, we introduce $|\xi|=|\kappa|+1$ verifying $\widehat{Z}_\infty^\xi = \partial_{z^\nu} \widehat{Z}_\infty^\kappa$, so that $\xi_H =  \kappa_H$. Then,
\begin{itemize}
\item if $\deg R +\kappa_H  \leq \beta_H-1$, we have $\langle z \rangle^{-\beta_H + \deg R} \leq  \langle z \rangle^{-1} \, \langle z \rangle^{-\xi_H}$ and $\mathbf{A}_1$ is bounded above by \eqref{T1bis}.
\item If $m \geq 1$, we have
$$ \mathbf{A}_1 \lesssim \frac{\log^{2p}(t) }{v^0t^2} \left(t^2 \big| \mathcal{L}_{Z^\gamma}F \big|(t,\XX_\C)+\big|\mathbb{F}\big[ \widehat{Z}_\infty^\gamma f_\infty \big] \big|(v)  \right)\langle z \rangle^{-\xi_H} \, \big|\widehat{Z}^\xi h \big|.$$
Since $\big|\mathbb{F}\big[ \widehat{Z}_\infty^\gamma f_\infty \big] \big|(v) \lesssim \sqrt{\epsilon} \lesssim \log(t)$ by Proposition \ref{Proasympelec} and $\log^{2p+1}(t) \lesssim t^{1/4}$, $\mathbf{A}_1$ is controlled by terms of the form \eqref{T3bis}--\eqref{T4bis}.
\item Otherwise  $\deg R +\kappa_H = \beta_H$ and $m=0$, so that $\gamma_T \geq 1$ and $p=0$. We then have
$$\mathbb{F}\big[ \widehat{Z}_\infty^\gamma f_\infty \big]=0, \qquad t^{-1}|\Delta_{Z^\gamma}^i|(t,z,v)\lesssim \sup_{1 \leq k \leq 3} \, \frac{t}{v^0} \big| \widehat{v}^\mu\mathcal{L}_{Z^\gamma}(F)_{\mu k}(t,\XX_\C) \big|, \qquad \gamma_T \geq 1,$$ 
and $\mathbf{A}_1$ is bounded by terms of type \eqref{T2bis}.
\end{itemize}
\end{proof}

One can check in the commutation formula of Corollary \ref{CorCom} that the error terms containing a high order derivative of $F$ carry a low order derivative of $h$. Consequently, we can expect to close the energy estimates by bounding first a well-chosen weighted $L^2$ norm of the lower order derivatives of $h$ and by controlling then a weaker norm of the higher order derivatives $\widehat{Z}^\beta_\infty h$, $|\beta| \geq N-2$.

More concretely, in the terms \eqref{T1bis}--\eqref{T3bis}, when $|\gamma| \geq N-2$ the distribution function is allowed to absorb more powers of $\langle z \rangle$ and $v^0$. Since we have a weaker control of $\mathcal{L}_{Z^\gamma}F$ at the top order $|\gamma|=N$, we unify the treatment of \eqref{T2bis}--\eqref{T3bis} in the following way.
\begin{Rq}\label{RqCom}
If $|\gamma| \geq N-2$, the terms \eqref{T2bis}--\eqref{T3bis} are bounded by a sum of quantities of the form
$$ \log^{2N+4}(t)  \frac{ \langle t+|\XX_\C| \rangle \, \langle z \rangle \, \langle v \rangle}{t} \langle t-|\XX_\C| \rangle\big|  \nabla_{t,x}  \mathcal{L}_{Z^\xi} F\big|(t,\XX_\C) \left|\langle v \rangle^{M_v} \, \langle z \rangle^{M_z-\kappa_H} \, \widehat{Z}_\infty^\kappa h \right|, $$ 
where $|\xi|+|\kappa| \leq N$ and $ N-3 \leq |\xi| \leq N-1$.
\end{Rq}
\begin{proof}
Remark that if $Z^\gamma=ZZ^\xi$, then $|\mathcal{L}_{Z^\gamma}F| \lesssim \langle t+r \rangle |\nabla_{t,x}  \mathcal{L}_{Z^\xi}F|$. It then remains us to observe
$$ \frac{\langle t+|z| \rangle}{v^0} + \frac{\langle t+|z| \rangle \, \langle t+|\XX_\C| \rangle}{tv^0} \lesssim \frac{ \langle t+|\XX_\C| \rangle^2}{tv^0} \lesssim \frac{ \langle t+|\XX_\C| \rangle}{t}  \langle t-|\XX_\C| \rangle \, \langle z \rangle \, \langle v \rangle \log^2(t).$$
The first inequality ensues from $|z| \lesssim |\XX_\C|+t+|\C_{t,v}| \lesssim t+|\XX_\C|$, where we used $|\C_{t,v}| \lesssim \epsilon \log(t) \lesssim \log^2(t)$. For the second one, it suffices to apply Lemma \ref{gainv} for $x=\XX_\C=z+t\widehat{v}+\C_{t,v}$.
\end{proof}

Finally, we end this subsection by writing an other consequence of Proposition \ref{Comgen}. We will use it in order to prove a strong convergence estimate for the spatial average of $\widehat{Z}^\beta f$ and derive the asymptotic behavior of the source term of the Maxwell equations. The idea is that we can get rid of the worst error terms \eqref{T1bis} in the previous commutation formula by performing integration by parts in $z$.
\begin{Cor}\label{Corestipartialtgg}
Let $h$ be a sufficiently regular solution to $\T_F^\infty (h)=0$ and $|\beta| \leq N-1$. Then,
$$  \partial_t \int_{\R^3_z} \widehat{Z}^\beta_\infty h(t,z,v) \dr z$$
can be bounded by a linear combination of terms of the form
 \begin{flalign}
& \; \bullet \; \, \log^{2N}(t) \frac{1}{v^0} \int_{\R^3_z}t   \Big| \nabla_{t,x}  \mathcal{L}_{Z^\gamma} F  \Big|(t,\XX_\C) \left|   \widehat{Z}_\infty^\kappa h \right|(t,z,v) \dr z,  \tag{SA1} \label{T3bisbis}  \\
& \; \bullet \; \, \log^{2N}(t) \frac{1}{v^0} \int_{\R^3_z}  \Big|   \mathcal{L}_{Z^\gamma} F  \Big|(t,\XX_\C) \left|   \langle z \rangle^{N-\kappa_H} \, \widehat{Z}_\infty^\kappa h \right|(t,z,v) \dr z,  \tag{SA2} \label{T2bisbis}  \\
& \; \bullet \; \, \log^{2N+1}(t) \frac{1}{v^0 t^2} \int_{\R^3_z}    \langle z \rangle^{N-\kappa_H} \, \left| \widehat{Z}_\infty^\kappa h \right|(t,z,v) \dr z,  \tag{SA3} \label{T4bisbis}  
\end{flalign}
where $|\gamma|+|\kappa| \leq N$ and $|\gamma| \leq N-1$.
\end{Cor}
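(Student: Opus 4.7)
My plan is to start from the trivial swap $\partial_t \int_{\R^3_z} \widehat{Z}^\beta_\infty h \, \dr z = \int_{\R^3_z} \partial_t \widehat{Z}^\beta_\infty h \, \dr z$ and then use the Vlasov equation \eqref{eq:Vlasov} to express the time derivative in a form suited for integration by parts in $z$. Writing $g_\beta := \widehat{Z}^\beta_\infty h$ and reading off \eqref{eq:Vlasov}, I would decompose
\[
\partial_t g_\beta = \T_F^\infty g_\beta + \tfrac{\delta_j^i-\widehat{v}_j\widehat{v}^i}{tv^0}\Delta^j \partial_{z^i} g_\beta - \tfrac{\widehat{v}^\mu}{v^0}\mathcal{L}_{\mathrm{id}}(F)_{\mu}{}^{j}(t,\XX_\C)\, v^0\partial_{v^j}g_\beta + \tfrac{\widehat{v}^\mu}{v^0}\mathcal{L}_{\mathrm{id}}(F)_{\mu}{}^{j}(t,\XX_\C)\, v^0\partial_{v^j}\C_{t,v}^i \, \partial_{z^i}g_\beta .
\]
Since $\T_F^\infty h = 0$, the first term equals $[\T_F^\infty, \widehat{Z}^\beta_\infty] h$, for which Proposition~\ref{Comgen} applies. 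The remaining three bulk terms are handled directly: the two $\partial_{z^i}g_\beta$ pieces are integrated by parts in $z$, producing $\partial_{z^i}\Delta^j = t^2(\widehat{v}^\mu/v^0)[\partial_{x^i}\mathcal{L}_{Z^\gamma}(F)_{\mu}{}^j](t,\XX_\C)$ (the $\mathbb{F}[\widehat{Z}^\gamma_\infty f_\infty]$ contribution to $\Delta$ dies because it does not depend on $z$), which falls squarely into the shape \eqref{T3bisbis}. For the velocity-derivative term, I would expand using $v^0\partial_{v^j} = \widehat{\Omega}_{0j}^\infty - z^j\partial_t^\infty - \widehat{v}^j S + \widehat{v}^j z^i\partial_{z^i}$ from \eqref{eq:expressionvderiv}, the first three pieces becoming $\widehat{Z}^{\kappa'}_\infty h$ with $|\kappa'|=|\beta|+1 \leq N$ (with at most one extra power of $\langle z\rangle$ from the $z^j$, absorbed into $\langle z\rangle^{N-\kappa_H'}$ since $|\beta|\leq N-1$ forces $N-\beta_H\geq 1$), while the last piece is again IBP'd in $z$.

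For the commutator contribution $\int_z[\T_F^\infty,\widehat{Z}^\beta_\infty]h\,\dr z$ I would invoke Proposition~\ref{Comgen}. The (Err-0) terms vanish. For each (Err-1) term $R(z)P_{p,q}(\C)t^{-1-m}\Delta^i_{Z^\gamma}\partial_{z^\nu}\widehat{Z}^\kappa_\infty h$ with $\nu\in\{1,2,3\}$, I integrate by parts in $z$: the $\partial_{z^\nu}$ either lands on $\Delta^i_{Z^\gamma}$, producing $t^2\nabla_{t,x}\mathcal{L}_{Z^\gamma}F$ (feeding (SA1) or, after writing $\nabla_{t,x}\mathcal{L}_{Z^\gamma}F = \mathcal{L}_{Z^{\gamma'}}F$ with $|\gamma'|\leq N-1$, feeding (SA2) with the $\langle z\rangle^{\deg R}$ weight absorbed into $\langle z\rangle^{N-\kappa_H}$ via $\deg R + \kappa_H\leq \beta_H\leq N-1$), or on the polynomial $R(z)$, reducing its degree. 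The (Err-1) terms with $\nu=0$ (that is, $\partial_t^\infty$) are left as they stand: $\partial_t^\infty\widehat{Z}^\kappa_\infty h = \widehat{Z}^{\kappa'}_\infty h$ is just a higher order derivative with $\kappa'_H=\kappa_H$, so splitting $\Delta^i_{Z^\gamma} = t^2\widehat{v}^\mu\mathcal{L}_{Z^\gamma}(F)_{\mu i}(t,\XX_\C) - \widehat{v}^\mu\mathbb{F}_{\mu i}[\widehat{Z}^\gamma_\infty f_\infty](v)$ produces one piece matching (SA2) and one matching (SA3) via the uniform bound $|\mathbb{F}[\widehat{Z}^\gamma_\infty f_\infty]|\lesssim\sqrt{\epsilon}$ of Proposition~\ref{Proasympelec} and the gain $t^{-1-m}$ (which is $t^{-2}$ once $m\geq 1$). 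The (Err-2) terms are already in (SA2)/(SA3) form after the same $\langle z\rangle$-bookkeeping.

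The hard part is ensuring everything really does land in one of the three families: I must be careful in the boundary case $\deg R + \kappa_H = \beta_H$ where Proposition~\ref{Comgen} only guarantees $m\geq 1$ or ($\gamma_T\geq 1$ and $p=0$). In the subcase $m\geq 1$ I gain an extra $t^{-1}$ factor, which transforms the $t^2|\mathcal{L}_{Z^\gamma}F|$ piece into $t|\mathcal{L}_{Z^\gamma}F|$; combined with an additional factor $\langle t+|\XX_\C|\rangle \lesssim \log^2(t)\langle t-|\XX_\C|\rangle\langle v\rangle^2\langle z\rangle$ from Lemma~\ref{gainvadapted}, this reduces to a form absorbable into (SA2). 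In the subcase $\gamma_T\geq 1$, the field derivative $\mathcal{L}_{Z^\gamma}F$ already contains a translation, so I can trade one power of $t+|\XX_\C|$ for the required decay via Lemma~\ref{improderiv} and again land in (SA2). The logarithmic factors $\log^{2p}(t)$ from $|P_{p,q}(\C)|\lesssim\log^{2p}(t)$ with $p\leq |\beta|+1\leq N$ are absorbed into the $\log^{2N}(t)$ or $\log^{2N+1}(t)$ factors of (SA1)--(SA3). No new analytic input beyond Proposition~\ref{Comgen}, the estimate \eqref{estiPolyC} on $\C_{t,v}$, the bound on $|\mathbb{F}[\widehat{Z}^\gamma_\infty f_\infty]|$, and Lemma~\ref{gainvadapted} is needed.
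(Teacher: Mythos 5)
Your overall strategy coincides with the paper's: decompose $\partial_t$ via \eqref{eq:Vlasovbis}, invoke Proposition~\ref{Comgen}, and integrate by parts in $z$ whenever a $\partial_{z^j}$ falls on $\widehat{Z}^\kappa_\infty h$ so that the derivative lands on $\Delta^i_{Z^\gamma}$ and produces the $t\,|\nabla_{t,x}\mathcal{L}_{Z^\gamma}F|$ shape of \eqref{T3bisbis}. However, there is a genuine gap in your treatment of the \eqref{T1} terms with $\nu=0$. You propose to leave $t^{-1-m}\Delta^i_{Z^\gamma}\,\partial_t^\infty\widehat{Z}^\kappa_\infty h$ ``as it stands'' and split $\Delta^i_{Z^\gamma}$; but such terms do occur with $m=0$ (for instance the first-order commutator $[\T_F^\infty,\widehat{\Omega}^\infty_{0k}]$ contains $-\tfrac{\delta_j^k-\widehat{v}_j\widehat{v}^k}{v^0}\,t^{-1}\Delta^j\,\partial_t^\infty$, coming from $[\partial_{z^k},\widehat{\Omega}^\infty_{0k}]=\partial_t^\infty$, and here $\deg R+\kappa_H=0<\beta_H$ so the extra condition of Proposition~\ref{Comgen} does not force $m\geq 1$). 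For $m=0$ the split yields $t^{-1}\cdot t^2|\mathcal{L}_{Z^\gamma}F|=t\,|\mathcal{L}_{Z^\gamma}F|$ and $t^{-1}\,|\mathbb{F}[\widehat{Z}^\gamma_\infty f_\infty]|$, which exceed the allowed shapes \eqref{T2bisbis} and \eqref{T4bisbis} by a full factor of $t$; there is no family of the form $t\,|\mathcal{L}_{Z^\gamma}F|$ in the statement, and no pointwise input at your disposal recovers that factor. The missing step --- which the paper performs --- is to further write $\partial_t^\infty=t^{-1}S-t^{-1}z\cdot\nabla_z-\widehat{v}\cdot\nabla_z$ when it acts on $\widehat{Z}^\kappa_\infty h$: the $\widehat{v}\cdot\nabla_z$ piece is then integrated by parts in $z$ and lands in \eqref{T3bisbis}, while the $t^{-1}S$ and $t^{-1}z\cdot\nabla_z$ pieces supply exactly the extra power $t^{-1}$ that turns $t^{-1}\Delta^i_{Z^\gamma}$ into $t^{-2}\Delta^i_{Z^\gamma}$ (at the cost of one harmless power of $\langle z\rangle$ and one more derivative of $h$), after which the split into \eqref{T2bisbis} and \eqref{T4bisbis} goes through.

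A secondary point: in your discussion of the boundary case $\deg R+\kappa_H=\beta_H$ with $m\geq1$, the claim that an additional factor $\langle t+|\XX_\C|\rangle\lesssim\log^2(t)\,\langle t-|\XX_\C|\rangle\,\langle v\rangle^2\langle z\rangle$ from Lemma~\ref{gainvadapted} lets you absorb $t\,|\mathcal{L}_{Z^\gamma}F|$ into \eqref{T2bisbis} is not coherent --- multiplying by an upper bound for $\langle t+|\XX_\C|\rangle$ only makes the term larger. In fact when $m\geq1$ the prefactor is already $t^{-1-m}\leq t^{-2}$, so $t^{-1-m}\,t^2|\mathcal{L}_{Z^\gamma}F|\leq|\mathcal{L}_{Z^\gamma}F|$ fits \eqref{T2bisbis} directly (with the $\langle z\rangle^{\deg R}$ weight controlled by $\deg R\leq\beta_H\leq N-1\leq N-\kappa_H$), and no appeal to Lemma~\ref{gainvadapted} is needed at this stage; that lemma is only used in the subsequent Corollary~\ref{Corestipartialtg}.
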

\begin{proof}
We use \eqref{eq:Vlasovbis} in order to write
\begin{equation*}
   \partial_t \widehat{Z}^\beta_\infty  h = \T_F^\infty \big( \widehat{Z}^\beta_\infty  h \big) +\frac{\delta_j^i-\widehat{v}_j \widehat{v}^i}{v^0t}  \Delta^j \partial_{z^i}\widehat{Z}^\beta_\infty  h - \frac{\widehat{v}^\mu}{v^0} {F_{\mu}}^j(t,\XX_\C)\! \left( \widehat{\Omega}^\infty_{0j} \! - z_j \partial_t^\infty \! - \widehat{v}_j  S+ \widehat{v}_j z^i \partial_{z^i}-v^0 \partial_{v^j} \C^i_{t,v} \partial_{z^i} \! \right) \! \big( \widehat{Z}^\beta_\infty  h \big).
\end{equation*}
As $\T_F^\infty (h)=0$, we can then express $\partial_t \int_{\R^3_z} \widehat{Z}^\beta_\infty h(t,z,v) \dr z$ as a combination, with coefficients which are polynomials in $\widehat{v}$, of quantities of the form
\begin{alignat}{2}
& \hspace{-1mm} \bullet \; \, \int_{\R^3_z} R \!\left(z \right) P_{p,q}(\C) \, t^{-1-m}\Delta^i_{Z^\gamma}(t,z,v) \, \partial_{z^\nu}  \widehat{Z}_\infty^\xi h (t,z,v) \dr z, \qquad &&  \nu \in \llbracket 0 , 3 \rrbracket   \label{SAT1} ,  \\ \noalign{\vskip3pt}
& \hspace{-1mm} \bullet \; \,  \int_{\R^3_z} R \!\left(z \right) P_{p,q}(\C) \, t^{-m} \, \frac{\widehat{v}^\mu}{v^0} \mathcal{L}_{Z^\gamma}(F)_{\mu i}(t,\XX_\C) \, \widehat{Z}_\infty \widehat{Z}_\infty^\xi h (t,z,v) \dr z, \qquad \qquad  &&  \widehat{Z}_\infty \in \widehat{\mathbb{P}}_S^\infty,   \label{SAT2} 
\end{alignat}
where $i \in \llbracket 1,3 \rrbracket$, $m \in \mathbb{N}$, $R$ is a polynomial in $z=(z^j)_{1 \leq j \leq 3}$ of degree $\deg R$, 
$$\deg R +\xi_H \leq \beta_H, \qquad \qquad \deg R \leq m, \qquad  \qquad |\gamma|+|\xi | \leq |\beta | ,  \qquad \qquad  p, \, q \leq |\beta|+1 . $$
Note then that $\deg R \leq N-1 - \xi_H$ and recall from \eqref{estiPolyC} that $|P_{p,q}(\C)| \lesssim \log^{2p}(t)$. One can then control \eqref{SAT2} by \eqref{T2bisbis}, where $\kappa$ verifies $|\kappa|=|\xi|+1$ and $\xi_H \leq \kappa_H+1$. For \eqref{SAT1}, if $\nu=0$, we write
$$ \partial_{z^0}= \partial_t^\infty=\partial_t-\widehat{v} \cdot \nabla_z= t^{-1}S-t^{-1} z \cdot \nabla_z-\widehat{v} \cdot \nabla_z.$$
Hence, we are lead to bound
\begin{alignat}{2}
& \hspace{-9mm} \log^{2p}(t) \bigg|\int_{\R^3_z} t^{-1}\Delta^i_{Z^\gamma}(t,z,v) \, \partial_{z^j}  \widehat{Z}_\infty^\xi h (t,z,v) \dr z \bigg|, \qquad &&  j \in \llbracket 1 , 3 \rrbracket    \label{SAT1bis} ,  \\ \noalign{\vskip3pt}
& \hspace{-9mm}  \log^{2p}(t) \bigg|\int_{\R^3_z} R \!\left(z \right)  t^{-2}\Delta^i_{Z^\gamma}(t,z,v) \, \widehat{Z}_\infty^\kappa h (t,z,v) \dr z \bigg|, \qquad && \deg R +\kappa_H \leq \beta_H+1, \quad |\kappa| \leq |\beta|+1-|\gamma|. \label{SAT2bis}
\end{alignat}
By performing an integration by parts, we get
$$
\bigg|\int_{\R^3_z} t^{-1}\Delta^i_{Z^\gamma}(t,z,v) \, \partial_{z^j}  \widehat{Z}_\infty^\xi (t,z,v) \dr z \bigg|  \lesssim \int_{\R^3_z} t \big|\nabla_{\partial_{x^j}}\mathcal{L}_{Z^\gamma} F\big|(t,\XX_\C) \,  \big|\widehat{Z}_\infty^\xi h\big|(t,z,v) \dr z .$$
Next, in order to control \eqref{SAT2bis} by quantities of the form \eqref{T2bisbis}--\eqref{T4bisbis}, we simply write
$$ \big| R \!\left(z \right)  t^{-2}\Delta^i_{Z^\gamma}(t,z,v) \, \widehat{Z}_\infty^\kappa h (t,z,v) \big| \lesssim \Big( \big| \mathcal{L}_{Z^\gamma} F \big|(t,\XX_\C)+ t^{-2}\big| \mathbb{F} \big[ \widehat{Z}^\gamma_\infty f_\infty \big] \big| (v) \Big) \langle z \rangle^{N-\kappa_H}  \big| \widehat{Z}_\infty^\kappa h (t,z,v) \big|$$
and we use $\big| \mathbb{F} \big[ \widehat{Z}^\gamma_\infty f_\infty \big] \big| (v) \lesssim \sqrt{\epsilon} \lesssim \log(t)$.
\end{proof}

It allows to derive the following simpler estimate. For this, recall from Definition \ref{DeffuncspaVla} the energy $\mathbf{E}^{7,14}_N[\cdot]$.
\begin{Cor}\label{Corestipartialtg}
Consider $h$ a sufficiently regular solution to $\T_F^\infty (h)=0$. For any $|\beta| \leq N-1$ and almost all $(t,v) \in [T,+\infty[ \times \R^3_v$,
\begin{align*}
 \bigg| \partial_t \int_{\R^3_z} \widehat{Z}^\beta_\infty h(t,z,v) \dr z \bigg|  \lesssim \sum_{|\kappa| \leq N} & \, \frac{\sqrt{\Lambda} \log^{2N+4}(t)}{ t^2} \int_{\R^3_z}  \langle v \rangle^3  \langle z \rangle^{N+2-\kappa_H} \, \left| \widehat{Z}_\infty^\kappa g \right|(t,z,v) \dr z \\
 & + \log^{2N+2}(t) \big|  \mathbf{E}^{7,13}_{N}[h] \big|^{\frac{1}{2}} \int_{\R^3_z}  \frac{\langle t-|\XX_\C| \rangle}{ \langle z \rangle^4 \, \langle v \rangle^{12}} \left|\nabla_{t,x} \mathcal{L}_{Z^\kappa}F \right|(t,\XX_\C) \dr z  .
\end{align*}
\end{Cor}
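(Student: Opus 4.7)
The strategy is to invoke Corollary \ref{Corestipartialtgg} to reduce the problem to estimating the three families of error terms (SA1), (SA2) and (SA3), and then to distribute each of them to one of the two terms in the conclusion: the ``Type~A'' bound involving $\sqrt{\Lambda}\log^{2N+4}(t)/t^2$ and the ``Type~B'' bound involving $|\mathbf{E}^{7,13}_N[h]|^{1/2}$ and $|\nabla_{t,x}\mathcal{L}_{Z^\kappa}F|$.

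I plan to first dispatch the easy pieces. The term (SA3) falls directly into Type~A after noting $\log^{2N+1}(t)\leq\log^{2N+4}(t)$, $1/v^0\leq \langle v\rangle^3$ and $\langle z\rangle^{N-\kappa_H}\leq \langle z\rangle^{N+2-\kappa_H}$. For the low-order piece of (SA2), with $|\gamma|\leq N-3$, I apply the pointwise estimate $|\mathcal{L}_{Z^\gamma}F|(t,\XX_\C)\lesssim\sqrt{\Lambda}\,\langle t+|\XX_\C|\rangle^{-1}\langle t-|\XX_\C|\rangle^{-1}$ provided by Proposition \ref{PropropMaxfield}, and then invoke Lemma \ref{gainvadapted} to trade one $\langle t-|\XX_\C|\rangle^{-1}$ for $\log^2(t)\,\langle v\rangle^2\langle z\rangle/\langle t+|\XX_\C|\rangle$. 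Combined with $\langle t+|\XX_\C|\rangle \gtrsim t$, this yields $|\mathcal{L}_{Z^\gamma}F|(t,\XX_\C)\lesssim\sqrt{\Lambda}\log^2(t)\,\langle v\rangle^2\langle z\rangle/t^2$, and (SA2) is then absorbed by Type~A.

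The remaining pieces, i.e.\ (SA1) and the high-order part of (SA2), go to Type~B. For (SA2) with $N-3 \leq |\gamma|\leq N-1$, I write $Z^\gamma=ZZ^\xi$ with $|\xi|\leq N-2$, so that $|\mathcal{L}_{Z^\gamma}F|\lesssim\langle t+r\rangle|\nabla_{t,x}\mathcal{L}_{Z^\xi}F|$; after using Lemma \ref{gainvadapted} to bound the extra $\langle t+r\rangle$ factor, this reduces to the same shape as (SA1). For (SA1), I apply Cauchy--Schwarz in $z$, splitting the integrand so that the $h$-side carries the exact weights $\langle z\rangle^{14+2N-2\kappa_H}\langle v\rangle^{26}$ defining $\mathbf{E}^{7,13}_N[h]$, while the $F$-side is left with the dual weights $\langle t-|\XX_\C|\rangle^2/(\langle z\rangle^{8}\langle v\rangle^{24})$. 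To pass from the squared $L^2_z$ bound on the $F$-side to the linear integral required by Type~B, I dominate the $h$-side factor pointwise by $|\mathbf{E}^{7,13}_N[h]|^{1/2}$ (possibly after a second use of Cauchy--Schwarz in $v$ to match the integrated definition of the energy), and absorb the remaining $t$ factor into $\langle t-|\XX_\C|\rangle$ via Lemma \ref{gainvadapted}.

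The main obstacle is the weight bookkeeping: one must choose the Cauchy--Schwarz split so that the $h$-side recombines to precisely the weights $\langle z\rangle^{14+2N-2\kappa_H}\langle v\rangle^{26}$ defining $\mathbf{E}^{7,13}_N[h]$, while the $F$-side lands on the specific weight $\langle t-|\XX_\C|\rangle/(\langle z\rangle^{4}\langle v\rangle^{12})$ required by the conclusion. Lemma \ref{gainvadapted} is the essential tool throughout, converting any factor of $\langle t+|\XX_\C|\rangle$ (produced either by commutation or by the identity $|\mathcal{L}_{Z^\gamma}F|\lesssim \langle t+r\rangle|\nabla_{t,x}\mathcal{L}_{Z^\xi}F|$) into $t$-decay and weights in $(z,v)$, at the price of the logarithmic losses absorbed into the $\log^{2N+2}(t)$ and $\log^{2N+4}(t)$ prefactors.
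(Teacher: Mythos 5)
Your plan for (SA3) and the low-order piece of (SA2) matches the paper: the pointwise decay estimate \eqref{eq:Mpoint} and Lemma \ref{gainvadapted} send those pieces to the Type~A bound. However, your treatment of the Type~B pieces contains a genuine gap, and you also misclassify part of (SA1).

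The core problem is the Cauchy--Schwarz step. After splitting $\int h\,F\,\dr z \le (\int h^2 w\,\dr z)^{1/2}(\int F^2/w\,\dr z)^{1/2}$, the $F$-side is an $L^2_z$ norm of $\langle t-|\XX_\C|\rangle\,\langle z\rangle^{-4}\langle v\rangle^{-12}\,\nabla_{t,x}\mathcal{L}_{Z^\kappa}F$, whereas the conclusion requires a \emph{linear} $L^1_z$ integral of the same quantity. There is no general inequality letting you pass from the former to the latter, and your suggestion to "dominate the $h$-side factor pointwise by $|\mathbf{E}^{7,13}_N[h]|^{1/2}$" after Cauchy--Schwarz does not address this: once Cauchy--Schwarz is applied, the $h$-side is already a single $L^2_z$ number, and controlling it does not linearize the $F$-side. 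The paper instead never uses Cauchy--Schwarz here. It observes that when $F$ carries $N-3 \le |\gamma| \le N-1$ derivatives, the distribution function carries only $|\kappa|\le 3$ derivatives, so the Sobolev embedding $H^2_{z,v}\hookrightarrow L^\infty_{z,v}$ gives the pointwise bound $\langle z\rangle^{5+N-\kappa_H}\langle v\rangle^{13}|\widehat{Z}^\kappa_\infty h|\lesssim |\mathbf{E}^{5,13}_{N-3}[h]|^{1/2}\le |\mathbf{E}^{7,13}_N[h]|^{1/2}$. Pulling this pointwise factor out of the $z$-integral directly produces the linear $L^1_z$ structure of the stated bound.

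Separately, you propose to send \emph{all} of (SA1) to Type~B, but for $|\gamma|\le N-4$ the $h$-factor in (SA1) can carry up to $N-1$ derivatives, so the pointwise Sobolev bound on $\widehat{Z}^\kappa_\infty h$ is unavailable, and Cauchy--Schwarz again produces the wrong functional form. The paper handles (SA1) with $|\gamma|\le N-4$ in Type~A, using $t|\nabla_{t,x}\mathcal{L}_{Z^\gamma}F|(t,\XX_\C)\lesssim\sqrt{\Lambda}\langle t-|\XX_\C|\rangle^{-2}\lesssim\sqrt{\Lambda}\log^4(t)\,t^{-2}\langle z\rangle^2\langle v\rangle^4$, exactly parallel to your treatment of low-order (SA2). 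You should adopt that split: (SA1) and (SA2) with $|\gamma|\le N-4$ go to Type~A via pointwise decay of $F$; (SA1) and (SA2) with $N-3\le|\gamma|\le N-1$ go to Type~B via the pointwise Sobolev bound on $h$ (not Cauchy--Schwarz), after converting the (SA2) factor $|\mathcal{L}_{Z^\gamma}F|$ to $\langle t+r\rangle|\nabla_{t,x}\mathcal{L}_{Z^\xi}F|$ as you already proposed.
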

\begin{proof}
This result is a consequence of Corollary \ref{Corestipartialtgg}, $\Lambda \geq 1$, and the following properties.
\begin{itemize}
\item Lemma \ref{gainvadapted} and the pointwise decay estimates \eqref{eq:Mpoint}, together with Lemma \ref{improderiv}, give, for any $|\gamma| \leq N-4$,
\begin{align*}
 t \big|\nabla_{t,x}\mathcal{L}_{Z^\gamma}  F \big|(t,\XX_\C) & \lesssim \sqrt{\Lambda} \langle t-|\XX_\C| \rangle^{-2} \lesssim \sqrt{\Lambda} \log^4(t)  t^{-2} \langle z \rangle^2 \, \langle v \rangle^4 , \\
  \big|\mathcal{L}_{Z^\gamma}  F \big|(t,\XX_\C) & \lesssim \sqrt{\Lambda} \, t^{-1} \, \langle t-|\XX_\C| \rangle^{-1} \lesssim \sqrt{\Lambda} \log^2(t) t^{-2} \langle z \rangle \, \langle v \rangle^2 .
  \end{align*}
  \item If $N-3 \leq \gamma \leq N-1$, we write $|\mathcal{L}_{Z^\gamma}F| \lesssim \langle t+r \rangle |\nabla_{t,x}\mathcal{L}_{Z^\kappa}F|$, where $\kappa$ is such that $Z^\gamma = ZZ^\kappa$. Moreover, by Lemma \ref{gainvadapted},
  $$ \langle t+|\XX_\C| \rangle \big|\nabla_{t,x}\mathcal{L}_{Z^\gamma}  F \big|(t,\XX_\C) \lesssim  \log^2(t) \, \langle z \rangle \, \langle v \rangle^2 \, \langle t-|\XX_\C| \rangle \big|\nabla_{t,x}\mathcal{L}_{Z^\gamma}  F \big|(t,\XX_\C) .$$
  \item Finally, according to the Sobolev embedding $  H^2_{z,v} \hookrightarrow L^\infty_{z,v}$, we have, for all $|\kappa| \leq 3 \leq N-5$,
$$  \langle z \rangle^{5+N-\kappa_H}  \langle v \rangle^{13}   \big| \widehat{Z}_\infty^\kappa h \big|(t,z,v) \lesssim \big|  \mathbf{E}^{5,13}_{N-3}[h] \big|^{\frac{1}{2}} \leq \big|  \mathbf{E}^{7,13}_{N}[h] \big|^{\frac{1}{2}}.$$
\end{itemize}
\end{proof}

\subsection{Estimates of the error terms}

We now perform estimates which will allow us to control the terms \eqref{T1bis}--\eqref{T3bis}. We start by considering the cases where the electromagnetic field is differentiated at most $N-3$ times, so that it can be estimated pointwise and we have access to improved estimates on its good null components.

\begin{Pro}\label{Proestloworder}
Let $i \in \llbracket 1 , 3 \rrbracket$, $|\gamma| \leq N-3$. Then, for all $(t,z,v) \in [T,+\infty[ \times \R^3_z \times \R^3_v$,
\begin{align*}
\frac{ \log^{2N+2}(t)}{v^0 \langle z \rangle \, t}   \Big|   t^2 \widehat{v}^\mu  \mathcal{L}_{Z^\gamma}(F)_{\mu i}(t,\XX_\C)- \widehat{v}^\mu \mathbb{F}_{\mu i} \big[ \widehat{Z}^\gamma_\infty f_\infty \big](v)  \Big| & \lesssim \frac{\sqrt{\Lambda} }{  t^{\frac{4}{3}}}+\frac{\sqrt{\Lambda} \, \widehat{v}^{\underline{L}}(\XX_\C)  }{ \langle t-|\XX_\C| \rangle^{\frac{4}{3}}}, \\
  \log^{2N+2}(t)\frac{\langle t+|z| \rangle}{v^0 \, t} \big|  \mathcal{L}_{Z^\gamma}(F)(t,\XX_\C) \big| & \lesssim \frac{\sqrt{\Lambda}}{ t^{\frac{7}{4}}}+\frac{\sqrt{\Lambda} \, \widehat{v}^{\underline{L}}(\XX_\C)}{ \langle t-|\XX_\C|\rangle^{2}}.
\end{align*}
If $0 \leq \nu \leq 3$ and $|\xi| \leq N-4$, there holds
$$ \frac{\langle t+|z| \rangle}{v^0}\Big|  \widehat{v}^\mu  \mathcal{L}_{\partial_{x^\nu}Z^\xi}(F)_{\mu i}\Big|(t,\XX_\C)   \lesssim \frac{\sqrt{\Lambda}}{ t^2}+\frac{\sqrt{\Lambda} \, \widehat{v}^{\underline{L}}(\XX_\C)}{ \langle t-|\XX_\C|\rangle^{2}}.$$
\end{Pro}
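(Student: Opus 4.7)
The plan is to reduce each of the three inequalities to a standard application of Young's inequality, after first exploiting the null structure of the Lorentz force and the decomposition $F = (F-F^{\mathrm{asymp}}[f_\infty]) + F^{\mathrm{asymp}}[f_\infty]$. The preliminary observation used throughout is that $|\C_{t,v}| \lesssim \sqrt{\epsilon}\log(t) \lesssim \log^2(t)$ by Proposition \ref{Proasympelec} and the assumption $\epsilon \log^{-1}(T) \leq 1$, which yields $\langle z+\C_{t,v}\rangle \lesssim \log^2(t)\langle z\rangle$ and $\langle t+|z|\rangle \lesssim \log^2(t)\langle t+|\XX_\C|\rangle$ (the latter since $|z|\leq|\XX_\C|+t|\widehat{v}|+|\C_{t,v}|$). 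Any stray $\log^{k}(t)$ factor will then be absorbed at the end into one of the polynomial gains using $\log^{k}(t)\lesssim t^{\eta}$ for arbitrary small $\eta>0$ (at the cost of mildly adjusting the exponents, which the statement permits via the $\lesssim$).

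For the first estimate, I would split
$$t^2\widehat{v}^\mu\mathcal{L}_{Z^\gamma}(F)_{\mu i}(t,\XX_\C)-\widehat{v}^\mu\mathbb{F}_{\mu i}\big[\widehat{Z}_\infty^\gamma f_\infty\big](v) = A'+A''$$
with $A''$ involving only $F^{\mathrm{asymp}}[f_\infty]$ and $A'$ only $F-F^{\mathrm{asymp}}[f_\infty]$. For $A''$, Proposition \ref{ProconvasympLor} gives the clean bound $|A''|\lesssim \sqrt{\Lambda}\log^2(t)\langle z\rangle\langle v\rangle \big((\widehat{v}^{\underline{L}}(\XX_\C))^{1/2}/\langle t-|\XX_\C|\rangle + 1/t\big)$, where crucially the factor is $(\widehat{v}^{\underline{L}})^{1/2}$ and not $|v^{\underline{L}}|^{1/2}$. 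For $A'$, I apply Lemma \ref{nullstruct} and use the improved null decays from Proposition \ref{decayMaxell} applied to $F-F^{\mathrm{asymp}}[f_\infty]$ (noting $\mathcal{E}^{K}_{N-1}[F-F^{\mathrm{asymp}}[f_\infty]]\lesssim D\Lambda$): the good components decay as $\sqrt{\Lambda}\langle t+r\rangle^{-2}\langle t-r\rangle^{-1/2}$ and $|\underline{\alpha}|$ as $\sqrt{\Lambda}\langle t+r\rangle^{-1}\langle t-r\rangle^{-3/2}$. Using $\langle t+|\XX_\C|\rangle\geq t$ and converting the $(\widehat{v}^{\underline{L}}+|\widehat{\slashed{v}}|)$ factor multiplying $|\underline{\alpha}|$ into $\log^2(t)\langle z\rangle\langle t-|\XX_\C|\rangle/t$ via Lemma \ref{gainv}, I get $|A'|\lesssim \sqrt{\Lambda}\log^2(t)\langle z\rangle/\langle t-|\XX_\C|\rangle^{1/2}$. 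Dividing by $v^{0}\langle z\rangle t$ (with $\langle v\rangle/v^{0}=1$) and multiplying by $\log^{2N+2}(t)$ reduces the question to bounding $(\widehat{v}^{\underline{L}})^{1/2}/(t\langle t-|\XX_\C|\rangle)$, $1/(t\langle t-|\XX_\C|\rangle^{1/2})$, and $1/t^{2}$ by the target RHS. For the first, Young with $p=q=2$ applied to $a=(\widehat{v}^{\underline{L}})^{1/2}\langle t-|\XX_\C|\rangle^{-2/3}$, $b=\langle t-|\XX_\C|\rangle^{1/3}/t$ gives $\widehat{v}^{\underline{L}}/\langle t-|\XX_\C|\rangle^{4/3}+1/t^{2}$; for the second, Young with $(p,q)=(4/3,4)$ gives $1/t^{4/3}+1/\langle t-|\XX_\C|\rangle^{2}$ which, combined with $\langle t-|\XX_\C|\rangle\leq 2t$, absorbs into $1/t^{4/3}+\widehat{v}^{\underline{L}}/\langle t-|\XX_\C|\rangle^{4/3}$ (using $\widehat{v}^{\underline{L}}\gtrsim \langle v\rangle^{-2}$ from Lemma \ref{gainv} in the regime where $\langle t-|\XX_\C|\rangle$ is small, together with $1\lesssim \log^2(t)\langle v\rangle^2\langle z\rangle\langle t-|\XX_\C|\rangle/t$ to relate the regimes).

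For the second estimate, simply bound $|\mathcal{L}_{Z^\gamma}F|$ by the sum of its null components, using \eqref{eq:Mpointnull} for $|\alpha|+|\rho|+|\sigma|\lesssim \sqrt{\Lambda}/\langle t+|\XX_\C|\rangle^{2}$ and \eqref{eq:Mpoint} for $|\underline{\alpha}|\lesssim \sqrt{\Lambda}/(\langle t+|\XX_\C|\rangle\langle t-|\XX_\C|\rangle)$. After multiplying by $\langle t+|z|\rangle/(v^{0}t)\lesssim \log^2(t)\langle t+|\XX_\C|\rangle/t$ and cancelling one factor of $\langle t+|\XX_\C|\rangle \gtrsim t$, the estimate reduces to bounding $\sqrt{\Lambda}\log^{2N+4}(t)/t^{2} + \sqrt{\Lambda}\log^{2N+4}(t)/(t\langle t-|\XX_\C|\rangle)$, which is handled by Young applied to the second term with $(p,q)=(7/4,7/3)$ and using $\widehat{v}^{\underline{L}}$ as before. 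The third estimate is the easiest: by Lemma \ref{improderiv}, the translation $\partial_{x^\nu}$ gives an extra $\langle t-|x|\rangle^{-1}$ gain on $|\mathcal{L}_{\partial_{x^\nu}Z^\xi}F|$; combined with Lemma \ref{nullstruct} and the good null decays of $F$ (via \eqref{eq:Mpointnull} and Proposition \ref{Pronullcompoasymp}), one obtains the claim directly without Young's inequality. The main obstacle throughout is the fact that \eqref{eq:Mpointconv} carries $|v^{\underline{L}}|^{1/2}$ rather than $(\widehat{v}^{\underline{L}})^{1/2}$, which forces the decomposition into $F^{\mathrm{asymp}}$ and $F-F^{\mathrm{asymp}}$ and the invocation of the sharper convergence estimate of Proposition \ref{ProconvasympLor}.
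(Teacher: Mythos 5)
Your overall strategy coincides with the paper's: the splitting into $A'+A''$ together with Lemma \ref{nullstruct}, Proposition \ref{decayMaxell} and Proposition \ref{ProconvasympLor} is precisely how the paper establishes \eqref{eq:Mpointconv} in Proposition \ref{PropropMaxfield}, so the "obstacle" you identify at the end is not one — the paper simply invokes \eqref{eq:Mpointconv} directly and then concludes with a case distinction ($|\XX_\C|\geq 2t$ versus $|\XX_\C|\leq 2t$) and $2ab\leq a^2+b^2$, exactly as you propose.

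There is, however, a genuine gap in how you run the final Young step. You discard the factor $\frac{1}{v^0}$ sitting on the left-hand sides (writing $\langle v\rangle/v^0=1$ for the $A''$ term but dropping it for the $A'$ term, and bounding $\langle t+|z|\rangle/(v^0t)\lesssim \log^2(t)\langle t+|\XX_\C|\rangle/t$ in the second estimate). As a result, the contributions $\frac{1}{t\,\langle t-|\XX_\C|\rangle^{1/2}}$ and $\frac{1}{t\,\langle t-|\XX_\C|\rangle}$ enter Young's inequality with no $\widehat{v}^{\underline{L}}$ weight attached, and your attempt to reinstate that weight afterwards "using $\widehat{v}^{\underline{L}}\gtrsim\langle v\rangle^{-2}$ \dots to relate the regimes" cannot work: Lemma \ref{gainv} provides no lower bound on $\widehat{v}^{\underline{L}}$ in terms of $(t,\XX_\C)$ (only an upper bound of that type), and the inequality you would need, e.g.
$$\frac{1}{t\,\langle t-|\XX_\C|\rangle}\lesssim \frac{1}{t^{7/4}}+\frac{\widehat{v}^{\underline{L}}(\XX_\C)}{\langle t-|\XX_\C|\rangle^{2}},$$
is false in the admissible configuration $\langle t-|\XX_\C|\rangle\sim t^{1/2}$, $\widehat{v}^{\underline{L}}\sim\langle v\rangle^{-2}\ll t^{-1/2}$ (compatible with Lemma \ref{gainvadapted} for $\langle z\rangle$ or $\langle v\rangle$ large): the left side is $t^{-3/2}$ while the right side is $O(t^{-7/4}+\widehat{v}^{\underline{L}}t^{-1})$. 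The correct move, which is the whole point of the $\frac{1}{v^0}$ on the left-hand sides, is to convert it via $1\leq 2v^0|\widehat{v}^{\underline{L}}(\XX_\C)|^{1/2}$ (Lemma \ref{gainv}) so that \emph{every} term entering Young's inequality — including those coming from the good null components $\alpha,\rho,\sigma$ and from \eqref{eq:Mpoint} — already carries $|\widehat{v}^{\underline{L}}|^{1/2}$; then $2ab\leq a^2+b^2$ with $a=|\widehat{v}^{\underline{L}}|^{1/2}\langle t-|\XX_\C|\rangle^{-\theta}$ and $b$ a pure power of $t$ (after trading $t^{-1/6}\leq\langle t-|\XX_\C|\rangle^{-1/6}$ in the region $|\XX_\C|\leq 2t$) produces the full power $\widehat{v}^{\underline{L}}$ on the $\langle t-|\XX_\C|\rangle$-decaying part of the target. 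With this correction your argument for all three estimates goes through and matches the paper's.
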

\begin{proof}
For the first estimate, start by using the convergence estimate \eqref{eq:Mpointconv} for $x=z+\C_{t,v}$. Since $\langle z+\C_{t,v} \rangle \lesssim \log^2(t) \,  \langle z \rangle$, we get 
\begin{align*}
  \log^{2N+2}(t)\frac{1}{v^0 \langle z \rangle \, t}   \Big|   t^2 \widehat{v}^\mu  \mathcal{L}_{Z^\gamma}(F)_{\mu i}(t,\XX_\C)- \widehat{v}^\mu \mathbb{F}_{\mu i} \big[ \widehat{Z}^\gamma_\infty f_\infty \big](v)  \Big| & \lesssim \sqrt{\Lambda}  \log^{2N+4}(t)  \bigg( \, \frac{ |v^{\underline{L}}(\XX_\C) |^{\frac{1}{2}}}{ t \, \langle t-|\XX_\C| \rangle^{\frac{1}{2}}} +\frac{1}{t^2} \bigg) \\
 & \lesssim \sqrt{\Lambda}  \big| \widehat{v}^{\underline{L}}(\XX_\C) \big|^{\frac{1}{2}} \,  t^{-\frac{5}{6}}  \langle t-|\XX_\C| \rangle^{-\frac{1}{2}}+\sqrt{\Lambda} \, t^{-\frac{7}{4}}.
 \end{align*}
If $|\XX_\C| \geq 2t$, it implies the stated estimate. Otherwise $t^{-1/6} \leq \langle t-|\XX_\C| \rangle^{-1/6}$ and it then suffices to use the inequality $2ab \leq a^2+b^2$. Next, using first 
$$ t+|z| \leq  2 t+|\XX_\C|+|\C_{t,v}| \lesssim t+|\XX_\C|, \qquad \qquad 1 \leq 2v^0 |\widehat{v}^{\underline{L}}|^{1/2},$$
which is given by Lemma \ref{gainv}, as well as the pointwise decay estimate \eqref{eq:Mpoint}, we obtain
$$\log^{2N+2}(t)\frac{\langle t+|z| \rangle}{v^0 \, t} \big|  \mathcal{L}_{Z^\gamma}(F)(t,\XX_\C) \big| \lesssim  \frac{\sqrt{\Lambda} \big| \widehat{v}^{\underline{L}}(\XX_\C) \big|^{\frac{1}{2}}\log^{2N+2}(t)}{t \, \langle t-|\XX_\C| \rangle} \leq \frac{\sqrt{\Lambda} \log^{4N+4}(t)}{t^2}+\frac{\Lambda \,\widehat{v}^{\underline{L}}(\XX_\C)}{  \langle t - |\XX_\C| \rangle^2} ,$$
which implies the second inequality. Finally, for the last one, we start by expanding $\widehat{v}^\mu \mathcal{L}_{\partial_{x^\nu}Z^\xi}(F)_{\mu i}$ according to the null frame $(\underline{L},L,e_\theta, e_\varphi)$. More precisely, Lemmata \ref{gainv} and \ref{nullstruct} imply
$$
 \left| \widehat{v}^\mu \mathcal{L}_{\partial_{x^\nu}Z^\xi}(F)_{\mu i} \right| \lesssim v^0 \big| \widehat{v}^{\underline{L}} \big|^{\frac{1}{2}} \big( |\alpha (\mathcal{L}_{\partial_{x^\nu}Z^\xi} F)|+|\rho (\mathcal{L}_{\partial_{x^\nu}Z^\xi} F)|+|\sigma (\mathcal{L}_{\partial_{x^\nu}Z^\xi} F)|\big) +v^0 \widehat{v}^{\underline{L}}|\underline{\alpha }( \mathcal{L}_{\partial_{x^\nu}Z^\xi} F)|. $$
Recall from Lemma \ref{improderiv} that we have improved decay estimates for the quantities on the right hand side since $\partial_{x^\nu}Z^\xi$ contains a translation. Hence, using the pointwise decay estimates \eqref{eq:Mpointnull} for the null components of $\mathcal{L}_{\partial_{x^\nu}Z^\gamma}F$, we get
$$ \frac{\langle t+|z| \rangle}{v^0}\Big|  \widehat{v}^\mu  \mathcal{L}_{\partial_{x^\nu}Z^\xi}(F)_{\mu i}(t,\XX_\C) \Big| \lesssim \frac{\sqrt{\Lambda} \big| v^{\underline{L}}(\XX_\C) \big|^{\frac{1}{2}}}{\langle t+|\XX_\C| \rangle \, \langle t-|\XX_\C| \rangle}+\frac{\sqrt{\Lambda}  v^{\underline{L}}(\XX_\C) }{ \langle t-|\XX_\C| \rangle^2}.$$
It remains to use again the inequality $2ab \leq a^2+b^2$.
\end{proof}

We now prove estimates which will allow us to deal with the error terms where the electromagnetic field is differentiated too many times in order to be controlled pointwise. For simplicity, even if the estimates that we have at our disposal on $F$ are worse at the top order, we will unify the analysis of the cases $N-2 \leq |\gamma| \leq N$. In fact, it will be convenient to use the coordinate system $(t,x,v)=(t,z-t\widehat{v}-\C_{t,v},v)$ in the energy estimates. 
\begin{Pro}\label{Proesttoporder}
Let $1 \leq i \leq 3$, $N-2 \leq |\gamma| \leq N$ and $N-3 \leq \xi \leq N-1$. Then, we have
\begin{align*}
  \int_{\R^3_x } \int_{\R^3_v} \frac{1}{ t^2}   \Big|t^2 \widehat{v}^\mu  \mathcal{L}_{Z^\gamma}(F)_{\mu i}(t,x)- \widehat{v}^\mu \mathbb{F}_{\mu i}\big[ \widehat{Z}^\gamma f_\infty \big](v)  \Big|^2 \frac{\dr v  \dr x}{ \langle x-t\widehat{v}-\C_{t,v} \rangle^7 \, \langle v \rangle^{13}} & \lesssim \frac{\Lambda \log^{9}(t)}{t^3}, \\ \noalign{\vskip3pt}
 \int_{\R^3_x } \int_{\R^3_v}\frac{ \langle t+|x| \rangle^2 \, }{t^2} \langle t-|x| \rangle^2\big|  \nabla_{t,x}  \mathcal{L}_{Z^\xi} F\big|^2(t,x) \frac{ \dr v \dr x}{ \langle x-t\widehat{v}-\C_{t,v} \rangle^4 \, \langle v \rangle^{7}} & \lesssim \frac{\Lambda \log^5(t)}{t^4} .
\end{align*}
\end{Pro}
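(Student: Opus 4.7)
The first inequality is essentially a restatement of the bound \eqref{eq:ML2conv} of Proposition \ref{PropropMaxfield}, divided by $t^2$: the right-hand side $\Lambda\log^9(t)/t$ in \eqref{eq:ML2conv} becomes exactly $\Lambda\log^9(t)/t^3$, matching the target. To cover the range $|\gamma|=N$, which is not explicitly included in \eqref{eq:ML2conv}, the plan is to proceed as in the proof of that estimate: write $Z^\gamma=Z Z^\kappa$ for some $|\kappa|=N-1$, exploit the pointwise control $|\mathcal{L}_{Z^\gamma}(F-F^{\mathrm{asymp}}[f_\infty])|\lesssim\langle t+|x|\rangle\,|\nabla_{t,x}\mathcal{L}_{Z^\kappa}(F-F^{\mathrm{asymp}}[f_\infty])|$ together with the top-order weighted norm $\mathcal{E}^{K,1}[\nabla_{t,x}\mathcal{L}_{Z^\kappa}(F-F^{\mathrm{asymp}}[f_\infty])]\lesssim\Lambda$ built into the definition of $\mathbb{M}^{D,\Lambda}_N$, and conclude via the $v$-integration already used to establish \eqref{eq:ML2conv}.

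For the second inequality, the plan is to integrate in $v$ first by Fubini, producing a pointwise spatial weight
\[
J(t,x):=\int_{\R^3_v}\frac{\dr v}{\langle x-t\widehat{v}-\C_{t,v}\rangle^4\langle v\rangle^7},
\]
and then combine the resulting weighted spatial integral with \eqref{eq:ML2}. A naive application of Lemma \ref{decayparticucase} yields only $J(t,x)\lesssim\langle t+|x|\rangle^{-3}$, which in the full integral produces merely $t^{-3}$ decay, one power short of the target $t^{-4}$. The key is therefore to establish the sharper pointwise estimate $J(t,x)\lesssim \log(t)\max(t-|x|,1)/t^4$ on the interior $|x|\le t$. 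To derive it, perform the change of variables $y=x-t\widehat{v}-\C_{t,v}$ (valid under the smallness assumption on $\sqrt{\epsilon}\log(T)/T$, as in Lemma \ref{decayparticucase}): the Jacobian is comparable to $\langle v\rangle^5/t^3$, and using $\langle v\rangle^{-2}=1-|\widehat{v}|^2\sim(t^2-|x-y|^2)/t^2$ produces the representation
\[
J(t,x)\sim\frac{1}{t^5}\int_{|x-y|<t}\frac{t^2-|x-y|^2}{\langle y\rangle^4}\dr y.
\]
The elementary bound $t^2-|x-y|^2\le (t-|x|+|y|)(t+|x|+|y|)\lesssim(t-|x|)(t+|x|)+t|y|$ followed by integration in $y$ then yields the claimed pointwise estimate, the logarithmic factor originating from $\int|y|\,\dr y/\langle y\rangle^4$ on the relevant dyadic scales.

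Substituting this bound and using $\langle t+|x|\rangle\lesssim t$ on $|x|\le t$, one obtains
\[
\int_{|x|\le t}\frac{\langle t+|x|\rangle^2\langle t-|x|\rangle^2}{t^2}J(t,x)|\nabla_{t,x}\mathcal{L}_{Z^\xi}F|^2\dr x \lesssim \frac{\log(t)}{t^4}\int\max(t-|x|,1)\langle t-|x|\rangle^2|\nabla_{t,x}\mathcal{L}_{Z^\xi}F|^2\dr x,
\]
which is $\lesssim \Lambda\log^3(t)/t^4$ by \eqref{eq:ML2}, well within the claimed budget. The exterior region is handled analogously: for $|x|\ge 2t$, the improved bound $J(t,x)\lesssim\langle t+|x|\rangle^{-4}$ (obtained from the lower bound $|y|\ge|x|-t\gtrsim\langle t+|x|\rangle$ in the representation above) combined with $\max(t-|x|,1)=1$ directly gives the desired decay via \eqref{eq:ML2}; the transition region $t\le|x|\le 2t$ requires a refined estimate of $J$ exploiting the constraint $|y|\ge|x|-t$, combined at top order with the splitting $F=F^{\mathrm{asymp}}[f_\infty]+(F-F^{\mathrm{asymp}}[f_\infty])$ to take advantage of the stronger weighted norms $\mathcal{E}^{K,1}$ on the difference. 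The main obstacle throughout is the sharp control of $J(t,x)$ in the interior: the gain of the $\max(t-|x|,1)$ factor, providing the critical extra power of $1/t$, is captured entirely by the extra $\langle v\rangle^{-2}=1-|\widehat{v}|^2$ weight, which via $(t^2-|x-y|^2)/t^2$ effectively restricts the dominant contribution to a neighborhood of the light cone.
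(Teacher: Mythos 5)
Your proposal is correct and follows the same skeleton as the paper's proof: the first bound is \eqref{eq:ML2conv} divided by $t^2$ (and the paper's own proof of \eqref{eq:ML2conv} in Proposition \ref{PropropMaxfield} already treats $|\gamma|=N$ by writing $Z^\gamma=ZZ^\kappa$ and invoking the top-order norm $\mathcal{E}^{K,1}$, exactly as you propose), while the second bound reduces to a pointwise estimate on $J(t,x)=\int_v\langle x-t\widehat v-\C_{t,v}\rangle^{-4}\langle v\rangle^{-7}\dr v$ followed by \eqref{eq:ML2}. Where you genuinely diverge is in how that pointwise estimate is obtained. The paper gets it in one line, uniformly in $x$ and with no case analysis: the last inequality of Lemma \ref{gainv}, $1\lesssim\frac{\max(t-|x|,1)}{\langle t+|x|\rangle}\langle v\rangle^2\langle x-t\widehat v\rangle$, trades two powers of $\langle v\rangle$ and one power of $\langle x-t\widehat v\rangle\lesssim\log^2(t)\,\langle x-t\widehat v-\C_{t,v}\rangle$ for the crucial factor $\max(t-|x|,1)/\langle t+|x|\rangle$, after which Lemma \ref{decayparticucase} gives $J(t,x)\lesssim\max(t-|x|,1)\log^3(t)\,\langle t+|x|\rangle^{-4}$, which is precisely what \eqref{eq:ML2} absorbs. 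Your explicit change of variables and the identity $\langle v\rangle^{-2}\sim(t^2-|x-y|^2)/t^2$ is a from-scratch re-derivation of the same mechanism (Lemma \ref{gainv} is itself proved from the mass-shell relation), and your interior computation is sound and even saves two logarithms; the price is a three-region decomposition that the paper avoids. In particular, the extra apparatus you invoke in the transition region $t\le|x|\le2t$ --- the splitting $F=F^{\mathrm{asymp}}[f_\infty]+(F-F^{\mathrm{asymp}}[f_\infty])$ --- is unnecessary: since $|x|\ge t$ there, the same elementary bound $t^2-|x-y|^2\le 2t|y|$ already yields $J\lesssim\log(t)\,t^{-4}$, which closes via \eqref{eq:ML2} with $\max(t-|x|,1)=1$. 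One small point of care: with the correction present the exact identity reads $\langle v\rangle^{-2}=(t^2-|x-y-\C_{t,v}|^2)/t^2$ with $\C_{t,v}$ depending implicitly on $y$, so you should bound it by $(t^2-|x-y|^2)/t^2+O(\log^2(t)/t)$; this costs an extra logarithm but nothing more, and the target $\log^5(t)$ leaves ample room.
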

\begin{proof}
The first estimate is a direct consequence of \eqref{eq:ML2conv}. For the second one, using first first Lemma \ref{gainv} and then $|\C_{t,v}| \lesssim \log^2(t)$ as well as Lemma \ref{decayparticucase}, we have
$$ \int_{\R^3_v}\frac{ \dr v}{ \langle x-t\widehat{v}-\C_{t_v} \rangle^4 \, \langle v \rangle^{7}} \lesssim \frac{\max (t-|x|,1) }{\langle t+|x| \rangle} \int_{\R^3_v}\frac{ \langle x-t\widehat{v} \rangle \, \dr v}{ \langle x-t\widehat{v}-\C_{t_v} \rangle^4 \, \langle v \rangle^{5}} \lesssim \frac{\max (t-|x|,1) }{\langle t+|x| \rangle^4} \log^3(t).$$
It remains to use the $L^2_x$ estimate \eqref{eq:ML2}. 
\end{proof}

\subsection{Well-posedness for the Vlasov equation with asymptotic data}

The purpose of this subsection is to solve the asymptotic Cauchy problem
\begin{equation}\label{Cauchy:asymp}
\begin{cases}
 \,  \T_F^\infty (h)(t,z,v) =0 , \qquad &(t,z,v) \in [T,+ \infty[ \times \R^3_z \times \R^3_v, \\
 \underset{t \to + \infty}{\lim} \, h(t,z,v) = h_\infty(z,v), \quad &(z,v) \in \R^3_z \times \R^3_v,
\end{cases}
\end{equation}
for a sufficiently regular asymptotic data $h_\infty : \R^3_z \times \R^3_v \to \R$. For this, in order to perform energy estimates and motivated by the commutation formula of Corollary \ref{CorCom}, we introduce (the square of) hierarchised weighted energy norms. We define
$$ \overline{h}(t,x,v) := h(t,x-t\widehat{v}-\C_{t,v},v), \qquad \qquad \T_F \big( \, \overline{h} \, \big) =\overline{ \T_F^\infty ( h)} =0,$$
which simply corresponds to the change of coordinates $(t,z,v)=(t,x+t\widehat{v}+\C_{t,v},v)$, so that $\overline{\XX_\C}(t,x,v)=x$. 
\begin{Def}
Let $M \in \mathbb{N}$, $(N_z,N_v) \in \R^2$ as well as $h :[T,+\infty[ \times \R^3_z \times \R^3_v \to \R$ and $h_\infty : \R^3_z \times \R^3_v \to \R$ be two sufficiently regular functions. Then, we consider
\begin{align*}
\mathbb{E}_M^{N_z,N_v}[h](t,u) := \sup_{|\kappa| \leq M} \, \overline{\mathbb{E}} \Big[ \;   \overline{ \langle z \rangle^{N_z+M-\kappa_H} \,\langle v \rangle^{N_v}  \big|\widehat{Z}^\kappa_\infty  h \big|} \; \Big] (t,u),  \qquad t \geq T, \, u \in \R,
\end{align*}
where $\overline{\mathbb{E}}[ \cdot ]$ is introduced in \eqref{defnormVlasovL2}, as well as
\begin{align*}
 \mathbb{E}_M^{N_v,N_z}[h_\infty] := \sup_{|\beta| \leq M} \int_{\R^3_z} \int_{\R^3_v}       \langle z \rangle^{2N_z+2M-2\beta_H} \, \langle v \rangle^{2N_v}  \big|\widehat{Z}^\beta_\infty h_\infty \big|^2(z,v) \dr v \dr z ,
\end{align*}
which is equivalent to $\sup_{|\kappa| \leq M} \,\|      \langle z \rangle^{N_z+N-|\kappa_v|} \, \langle v \rangle^{N_v+|\kappa_v|} \partial_v^{\kappa_v} \partial_z^{\kappa_z} h_\infty \|_{L^2_{z,v}}^2$.
\end{Def}

We are now able to prove the following existence and uniqueness result for \eqref{Cauchy:asymp}. We could consider less regular initial data but it would not be useful for the purpose of this paper.
\begin{Pro}\label{ProCauchpb}
Let $(N_z,N_v)\in \R^2$ and  $h_\infty : \R^3_z \times \R^3_v \to \R$ be such that $\mathbb{E}_N^{N_z,N_v}[h_\infty]< +\infty$. Then, there exists a unique global and classical solution $h : [T,+\infty[ \times \R^3_z \times \R^3_v \to \R$ to the asymptotic Cauchy problem \eqref{Cauchy:asymp}. Moreover, there exists a constant $B_N^{N_z,N_v} >0$, such that
\begin{equation}\label{eq:estiwellposed}
 \sup_{t \geq T} \, \sup_{u \in \R} \, \mathbb{E}_{N-3}^{N_z,N_v}[h](t,u)+ \sup_{t \geq T} \, \sup_{u \in \R} \, \mathbb{E}^{N_z-3,N_v-6}_{N}[h](t,u) \leq \mathbb{E}_{N}^{N_z,N_v}[h_\infty]\exp \left[ \exp \left( B_N^{N_z,N_v} \, \sqrt{\Lambda} \right) \right] .
 \end{equation}
\end{Pro}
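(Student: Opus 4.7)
The strategy is to construct $h$ as the limit of a sequence of backward Cauchy solutions. For each integer $n \geq T$, let $h_n$ denote the unique classical solution, defined on $[T,n]$, to $\T_F^\infty(h_n) = 0$ with the final condition $h_n(n,\cdot,\cdot) = h_\infty$. Well-posedness of this standard backward Cauchy problem is guaranteed because the coefficients of $\T_F^\infty$ are smooth functions of $(t,z,v)$ (via the $C^{N-3}$ Sobolev embedding applied to $F \in \mathbb{M}_N^{D,\Lambda}$, combined with Proposition \ref{Proasympelec}), and characteristics do not escape to infinity on the compact time interval $[T,n]$. The heart of the argument is a uniform (in $n$) weighted $L^2$ estimate that will simultaneously yield compactness, the stated bound, and uniqueness.

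The central energy estimate is obtained by applying the backward energy inequality of Proposition \ref{ProenergyL2Vlasov} to $\overline{\langle z \rangle^{N_z+M-\beta_H}\langle v \rangle^{N_v}\widehat{Z}^\beta_\infty h_n}$ on the spacetime region truncated by a constant-$t$ slice and the outgoing null cone $C_u^{t,n}$; the final-time flux is absorbed by $\mathbb{E}^{N_z,N_v}_N[h_\infty]$. The commutator source is controlled by expanding $[\T_F,\langle v\rangle^{N_v}\langle z\rangle^{N_z+M-\beta_H}\widehat{Z}^\beta_\infty](h_n)$ using Corollary \ref{CorCom} (together with Remark \ref{RqCom} at the top order). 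The resulting bound is closed by a bootstrap/hierarchy on the two norms appearing in the statement: one controls the top-order derivatives $|\beta| \leq N$ with the weaker weights $(N_z-3,N_v-6)$, while the other controls $|\beta| \leq N-3$ with the full weights $(N_z,N_v)$. This hierarchy is compatible with Corollary \ref{CorCom} because whenever $|\gamma| \geq N-2$ the companion factor $\widehat{Z}^\kappa h_n$ has $|\kappa| \leq 2$ and can therefore be estimated in $L^\infty_{z,v}$ by $\mathbb{E}^{N_z-3,N_v-6}_N[h_n]^{1/2}$ via Sobolev embedding.

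The error terms \eqref{T1bis}--\eqref{T3bis} with $|\gamma|\leq N-3$ are handled pointwise through Proposition \ref{Proestloworder}, which produces factors of the form $\sqrt{\Lambda}\,t^{-4/3}$ (time-integrable on $[T,n]$, giving a contribution to the $\sup_t$ bound) and $\sqrt{\Lambda}\,\widehat{v}^{\underline L}(\XX_\C)\langle t-|\XX_\C|\rangle^{-4/3}$; the second is crucially paired against the cone flux $\sqrt{2}\int_{C_u^{t,n}}\widehat{v}^{\underline L}|\cdots|^2\mathrm{d}\mu_{C_u}$ present in $\overline{\mathbb{E}}[\cdot]$, and upon integration in $\underline{u}$ along the cone produces the integrable factor $\langle u\rangle^{-4/3}$ needed for the $\sup_u$ bound. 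The top-order error terms $|\gamma|\geq N-2$ are rewritten via Remark \ref{RqCom} and then estimated by Cauchy--Schwarz in $(x,v)$ using the $L^2_{x,v}$ bound of Proposition \ref{Proesttoporder}, which again provides $\log^9(t)\,t^{-3}$ and $\log^5(t)\,t^{-4}$ integrable decays. Term \eqref{T4bis} is trivially integrable. Collecting everything yields a two-variable integral inequality of the form
\[
\Psi(t,u) \leq \mathbb{E}^{N_z,N_v}_N[h_\infty] + C\sqrt{\Lambda}\int_t^n\!\frac{\Psi(\tau,u)\,\mathrm{d}\tau}{\langle \tau\rangle^{4/3}} + C\sqrt{\Lambda}\int_{u}^{+\infty}\!\frac{\Psi(t,u')\,\mathrm{d}u'}{\langle u'\rangle^{4/3}},
\]
where $\Psi(t,u) = \mathbb{E}^{N_z,N_v}_{N-3}[h_n](t,u) + \mathbb{E}^{N_z-3,N_v-6}_N[h_n](t,u)$, and a two-variable Gronwall-type argument (as already employed in the choice of foliation, see Section \ref{SubsecenergyVla}) produces the double exponential $\exp(\exp(B\sqrt{\Lambda}))$.

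With the uniform bounds on $(h_n)$, weak-$*$ compactness in the appropriate weighted $L^2$ space yields a limit $h$ along a subsequence; the equation $\T_F^\infty(h_n)=0$ and standard elliptic-in-$(t,z,v)$ arguments upgrade the convergence sufficiently to pass to the limit in the equation and conclude $\T_F^\infty(h)=0$ with $h$ satisfying the same estimate \eqref{eq:estiwellposed}. The asymptotic condition $h(t,\cdot,\cdot)\to h_\infty$ in, say, $L^2_{\mathrm{loc}}$, is recovered by integrating the equation $h_n(t,z,v) - h_\infty(z,v) = -\int_t^n \T_F^\infty(h_n)\cdots$... rather, by using the Duhamel-type identity $h_n(t,z,v) - h_\infty(z,v) = \int_t^n \partial_\tau h_n(\tau,z,v)\,\mathrm{d}\tau$ combined with the commutator estimates applied in the spirit of Corollary \ref{Corestipartialtg}, which shows that $\|h_n(t,\cdot,\cdot) - h_\infty\|$ in the relevant norm is $O(\log^{\alpha}(t)/t^{1/3})$ uniformly in $n$. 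Uniqueness is then immediate: two solutions with identical asymptotic data produce a difference $\delta h$ satisfying $\T_F^\infty(\delta h) = 0$, and applying Proposition \ref{ProenergyL2Vlasov} with vanishing flux at $t=+\infty$ (thanks to the asymptotic boundedness) closes the same Gronwall loop with zero data, forcing $\delta h \equiv 0$. The main technical obstacle is the careful execution of the two-variable Gronwall argument and the verification that the null-structure factor $\widehat{v}^{\underline L}$ in the pointwise bounds of Proposition \ref{Proestloworder} matches exactly the measure-theoretic weight in the cone flux defining $\overline{\mathbb{E}}[\,\cdot\,]$, without which the $\sup_u$ bound could not be closed.
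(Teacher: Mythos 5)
Your construction is genuinely different from the paper's, though the central a priori estimate is the same. The paper first carries out a detailed analysis of the characteristic ODE flow of $\T_F^\infty$ (Lemmas \ref{Lemcharacbase}--\ref{Lemderivcharac}, Proposition \ref{Procharacinv}), shows that the characteristics $(\Z,\V)(t,t_0,\cdot,\cdot)$ converge as $t_0\to+\infty$ to a $C^1$-diffeomorphism, and then defines $h(t,z,v):=h_\infty(\Z(\infty,t,z,v),\V(\infty,t,z,v))$ explicitly; uniqueness and existence follow from Duhamel along these characteristics, while the energy estimate is subsequently established through a truncation/mollification argument allowing Proposition \ref{Proapriori} to be invoked. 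You instead approximate by backward Cauchy problems $h_n$ on $[T,n]$ with prescribed final data $h_\infty$ and extract a limit by compactness; this sidesteps the full characteristics convergence analysis, at the cost of a more delicate limit-passage. The key technical content — the two-variable Gronwall inequality via Lemma \ref{LemGronwall2variables}, the commutator decomposition of Corollary \ref{CorCom}/Remark \ref{RqCom}, and Propositions \ref{Proestloworder} and \ref{Proesttoporder} to control the error terms — is identical (this is exactly the paper's Proposition \ref{Proapriori}). Two of your steps need tightening. First, ``elliptic-in-$(t,z,v)$ arguments'' is not the right mechanism here: the transport operator has no ellipticity, and the correct route is that the weighted $H^N$ bounds give uniform $C^1_{\mathrm{loc}}$ control (by Sobolev embedding, using $N\geq 8$), the equation then gives uniform control of $\partial_t h_n$, and Arzel\`a--Ascoli produces a classical limit in $C^1_{\mathrm{loc}}$ satisfying the equation. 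Second, your uniqueness argument requires $\lim_{t\to\infty}\int|\delta h|^2(t,\cdot)=0$; the pointwise asymptotic condition alone does not give this, but combined with the uniform boundedness of the higher weighted $L^2$ norms (giving uniform integrability via tail-cutting on $|z|,|v|>R$ plus dominated convergence on compacts), it does. The paper sidesteps both of these subtleties at the price of the longer characteristics analysis — which, as a side benefit, yields quantitative control on the deviation of the characteristics from straight lines that it reuses elsewhere.
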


The proof of Proposition \ref{ProCauchpb} is divided in three parts. We derive first a priori estimates for solutions to the Vlasov equation with asymptotic data. Then, we prove, by studying the characteristics of the operator $\T_F^\infty$, that for any $h_\infty \in C^1(\R^3_z \times \R^3_v)$, there exists a global $C^1$ solution to \eqref{Cauchy:asymp}. Finally, by combining these results and by a standard approximation argument, we are able to conclude the proof of Proposition \ref{ProCauchpb}.
\subsubsection{A priori estimates} We start by proving a two variables version of Gronwall's inequality.

\begin{Lem}\label{LemGronwall2variables}
Let $\Psi \geq 0$ and three continous function 
$$\psi_1 \in L^1( [T,+\infty[ , \R_+), \qquad \psi_2 \in L^1( \R_u , \R_+), \qquad H: [T,+ \infty[ \times \R_u \to \R_+.$$
Assume further that $ H(t_1,u) \geq H(t_2,u)$ for all $t_1 \leq t_2$ and $u \in \R$ as well as
$$ \forall \, (t,u) \in [T,+\infty[ \times \R_u, \qquad  H(t,u) \leq \Psi+\int_{\tau=t}^{+\infty} \psi_1(\tau) H(\tau,u) \dr s+\int_{U=u}^{+\infty} \psi_2(U) H(t,U) \dr U  .$$
Then,
$$ \forall \, (t,u) \in [T,+\infty[ \times \R_u, \qquad H(t,u) \leq \Psi \exp \left( \| \psi_1 \|_{L^1_t}  + e^{ \| \psi_1 \|_{L^1_t} } \| \psi_2\|_{L^1_u} \right).$$
\end{Lem}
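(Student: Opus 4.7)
The idea is to iterate the classical one-dimensional backward Gronwall inequality twice, first in $t$ and then in $u$, linking the two applications through the monotonicity hypothesis $H(t_1,u) \geq H(t_2,u)$ for $t_1 \leq t_2$. Recall that if $\phi \in C^0([T,+\infty[,\R_+)$ satisfies $\phi(t) \leq a+\int_t^{+\infty} b(\tau) \phi(\tau) \dr \tau$ with $a \geq 0$, $b \in L^1([T,+\infty[,\R_+)$ and finite integrals, then $\phi(t) \leq a \exp\big(\int_t^{+\infty} b\big)$. This follows by differentiating $\Phi(t) := \int_t^{+\infty} b \phi \, \dr \tau$, noting that $\Phi'(t)+b(t)\Phi(t) \geq -a b(t)$, and integrating against the factor $\exp(-\int_t^{+\infty} b)$.

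First, I would fix $u \in \R$ and introduce $S(t,u) := \Psi+\int_u^{+\infty} \psi_2(U) H(t,U) \dr U$. Since $H(\cdot,U)$ is non-increasing in $t$ for every $U$, the same holds for $S(\cdot,u)$, and hence $S(t,u) \leq S(T,u)$ for all $t \geq T$. The hypothesis then rewrites as $H(t,u) \leq S(T,u)+\int_t^{+\infty} \psi_1(\tau) H(\tau,u) \dr \tau$, and a backward Gronwall in $t$, with $c := \|\psi_1\|_{L^1_t}$, produces
$$H(t,u) \, \leq \, e^{c}\, S(T,u) \, = \, e^{c}\Psi + e^{c} \int_u^{+\infty} \psi_2(U) H(T,U) \dr U.$$

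Next, I would specialise this bound at $t=T$ to obtain, for $\chi(u) := H(T,u)$, the one-variable inequality $\chi(u) \leq e^{c}\Psi+\int_u^{+\infty} e^{c} \psi_2(U) \chi(U) \dr U$. A second backward Gronwall, this time in $u$ with integrable weight $e^{c} \psi_2$ and constant source $e^{c} \Psi$, yields $\chi(u) \leq e^{c} \Psi \exp\big( e^{c} \|\psi_2\|_{L^1_u} \big)$. Invoking once more the monotonicity $H(t,u) \leq H(T,u)=\chi(u)$ for $t \geq T$ gives the claimed estimate. I do not expect any substantial obstacle: the only care needed is to check that the auxiliary integrals remain finite (which is guaranteed by $\psi_1,\psi_2 \in L^1$ together with the uniform bound on $H$ furnished by its monotonicity in $t$), and to carry out the two Gronwalls in the right order — $t$ first, then $u$ — so that the factor $e^{c}$ produced by the first step propagates into the exponent of the second. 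This is precisely what produces the iterated exponential $\exp(c+e^{c}\|\psi_2\|_{L^1_u})$ in the conclusion, rather than a naive product.
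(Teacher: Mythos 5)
Your proposal is correct and follows essentially the same route as the paper: a backward Gronwall in $t$ for fixed $u$ (exploiting the monotonicity of $t\mapsto\int_u^{+\infty}\psi_2 H(t,\cdot)$), followed by a backward Gronwall in $u$, which is exactly how the iterated exponential $\exp(c+e^{c}\|\psi_2\|_{L^1_u})$ arises. The only cosmetic difference is that you freeze the second Gronwall at $t=T$ and conclude by monotonicity, whereas the paper runs it for each fixed $t$; both are equally valid.
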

\begin{proof}
Fix $u \in \R_u$. As $t \mapsto \int_{U=u}^{+\infty} \psi_2(U) H(t,U) \dr u$ is non-increasing, we have, by applying Gronwall's inequality to $t \mapsto H(t,u)$,
\begin{align*}
 \forall \, t \geq T, \qquad H(t,u) & \leq \left(\Psi+\int_{U=u}^{+\infty} \psi_2(U) H(t,U) \dr U \right) \exp \left( \int_{\tau=t}^{+ \infty} \psi_1(\tau) \dr \tau \right) \\
 & \leq \Psi e^{ \| \psi_1 \|_{L^1_t} }+\int_{U=u}^{+\infty} e^{ \| \psi_1 \|_{L^1_t} } \psi_2(\tau) H(t,U) \dr u .
 \end{align*}
Since this inequality holds for all $u \in \R_u$, we can this time fix $t \geq T$ and apply Gronwall's inequality to the function $u \mapsto H(t,u)$.
\end{proof}

We are now able to prove the next result.

\begin{Pro}\label{Proapriori}
Consider an asymptotic data $h_\infty : \R^3_z \times \R^3_v \to \R$ and $h : [T,+ \infty[ \times \R^3_z \times \R^3_v \to \R$. Assume that $h$ is a solution to \eqref{Cauchy:asymp} satisfying
$$ \mathbb{E}_{N}^{N_z,N_v}[h_\infty] < +\infty, \qquad \qquad \sup_{t \geq T} \, \sup_{u \in \R} \, \mathbb{E}_{N-3}^{N_z,N_v}[h](t,u)+ \sup_{t \geq T} \, \sup_{u \in \R} \, \mathbb{E}^{N_z-3,N_v-6}_{N}[h](t,u) < +\infty.$$
Then, $h$ verifies the estimate \eqref{eq:estiwellposed}.
\end{Pro}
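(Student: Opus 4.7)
The plan is to derive \eqref{eq:estiwellposed} by combining the energy inequality of Proposition~\ref{ProenergyL2Vlasov} with the commutation formula of Corollary~\ref{CorCom}, the error estimates of Propositions~\ref{Proestloworder}--\ref{Proesttoporder}, and the two-variable Gronwall inequality of Lemma~\ref{LemGronwall2variables}. For each multi-index $|\beta| \leq N$, I work in the $(t,x,v)$ coordinates and apply Proposition~\ref{ProenergyL2Vlasov} to $\overline{w_\beta \, \widehat{Z}^\beta_\infty h}$ with the weight $w_\beta(z,v) = \langle z \rangle^{N_z+N-\beta_H}\langle v \rangle^{N_v}$. Since $\T_F^\infty(h) = 0$, the source term is the commutator $[\T_F^\infty, w_\beta\widehat{Z}^\beta_\infty](h)$, controlled by Corollary~\ref{CorCom}, and the asymptotic $L^2$ boundary term at $t = +\infty$ equals $\|w_\beta \widehat{Z}^\beta_\infty h_\infty\|_{L^2_{z,v}}^2 \leq \mathbb{E}_N^{N_z,N_v}[h_\infty]$.

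The heart of the argument is to estimate the four types of error terms \eqref{T1bis}--\eqref{T4bis} in two regimes. In the \emph{low-order regime} $|\gamma| \leq N-3$ (which exhausts all error terms when $|\beta| \leq N-3$, by the constraint $|\gamma|+|\kappa| \leq |\beta|+1$), I apply the pointwise bounds of Proposition~\ref{Proestloworder}. After Cauchy--Schwarz in $v$ these terms split into a contribution of size $\sqrt{\Lambda}\, t^{-4/3}$, which is integrable in time and will produce the $\psi_1$ of Lemma~\ref{LemGronwall2variables}, and a contribution of size $\sqrt{\Lambda}\,\widehat{v}^{\underline{L}}(\XX_\C)\langle t-|\XX_\C|\rangle^{-4/3}$, which is absorbed by the $\widehat{v}^{\underline{L}}$ factor built into the flux on $C_u^t$ in \eqref{defnormVlasovL2} and produces, after a further Cauchy--Schwarz on the truncated null cone, a contribution $\psi_2(u) \lesssim \langle u \rangle^{-4/3}\sqrt{\Lambda}$ integrable in $u$. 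This closes the estimate for $\mathbb{E}_{N-3}^{N_z,N_v}[h](t,u)$ in the form
\[
\mathbb{E}_{N-3}^{N_z,N_v}[h](t,u) \leq \mathbb{E}_N^{N_z,N_v}[h_\infty] + \int_t^{+\infty}\! \psi_1(\tau) \mathbb{E}_{N-3}^{N_z,N_v}[h](\tau,u) \,\dr \tau + \int_u^{+\infty}\! \psi_2(U) \mathbb{E}_{N-3}^{N_z,N_v}[h](t,U)\,\dr U.
\]

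In the \emph{top-order regime} $|\gamma| \in \{N-2,N-1,N\}$, reached only when $|\beta| \geq N-2$, the field can no longer be estimated pointwise. Using Remark~\ref{RqCom} to replace $\mathcal{L}_{Z^\gamma}F$ by $\langle t+r\rangle|\nabla_{t,x}\mathcal{L}_{Z^\xi}F|$ with $|\xi| \leq N-1$, Cauchy--Schwarz in $(x,v)$ combined with the weighted $L^2$ bounds of Proposition~\ref{Proesttoporder} forces the $h$-factor to absorb weights $\langle x-t\widehat{v}-\C_{t,v}\rangle^{7/2}\langle v\rangle^{13/2}$, which precisely accounts for the drop $(N_z,N_v) \leadsto (N_z-3,N_v-6)$ in the right-hand side. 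The crucial point is that the hierarchy $\deg R + \kappa_H \leq \beta_H$ in Proposition~\ref{Comgen} ensures that whenever $|\gamma|$ is large, $|\kappa|$ is sufficiently small so that the $h$-factor $\widehat{Z}^\kappa_\infty h$ is still controlled by $\mathbb{E}_N^{N_z-3,N_v-6}[h]$; moreover the $t^{-3/2}$ and $t^{-2}$ decay in Proposition~\ref{Proesttoporder} provide a second integrable $\psi_1$. This closes an analogous integral inequality for $\mathbb{E}_N^{N_z-3,N_v-6}[h](t,u)$ with the same structure.

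Two applications of Lemma~\ref{LemGronwall2variables}, first to the weaker norm $\mathbb{E}_N^{N_z-3,N_v-6}[h]$ (which closes autonomously since the hierarchy never requires a stronger norm on the RHS) and then to $\mathbb{E}_{N-3}^{N_z,N_v}[h]$ (whose top-order feedback is controlled by the weaker norm already estimated), yield the stated double-exponential factor $\exp(\exp(B\sqrt{\Lambda}))$, since $\|\psi_1\|_{L^1_t} + \|\psi_2\|_{L^1_u} \lesssim \sqrt{\Lambda}$ and each application contributes one exponential. The main obstacle is the bookkeeping at top order: one must verify that the coupling between the two norms is triangular, i.e.\ the top-order estimate never requires knowledge of the stronger norm, which is exactly what the hierarchy in Corollary~\ref{CorCom} (and the careful choice of weights in Definition~\ref{DeffuncspaVla}) guarantees.
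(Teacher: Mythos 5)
Your overall strategy --- the energy identity of Proposition \ref{ProenergyL2Vlasov} on the truncated regions, the commutation formula of Corollary \ref{CorCom}, the error bounds of Propositions \ref{Proestloworder} and \ref{Proesttoporder}, and two applications of the two-variable Gronwall inequality of Lemma \ref{LemGronwall2variables} --- is exactly the paper's. The low-order part is also correct: for $|\beta|\leq N-3$ the constraint $|\gamma|\leq |\beta|$ keeps every field derivative at order $\leq N-3$, so Proposition \ref{Proestloworder} applies pointwise, the two contributions feed the $\psi_1$ and $\psi_2$ of Lemma \ref{LemGronwall2variables} as you describe, and $\mathbb{E}_{N-3}^{N_z,N_v}[h]$ closes by itself.

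However, you have the triangular coupling between the two norms backwards, and as stated your first Gronwall application cannot be executed. The top-order norm $\mathbb{E}_N^{N_z-3,N_v-6}[h]$ is \emph{not} autonomous. When $N-2\leq|\gamma|\leq N$, the Cauchy--Schwarz step against the weighted kernels of Proposition \ref{Proesttoporder} forces extra weights of order $\langle z\rangle^{3}\,\langle v\rangle^{6}$ onto the remaining product $\widehat{Z}^\kappa_\infty h\,\widehat{Z}^\beta_\infty h$. These cannot be placed on the high-order factor $\widehat{Z}^\beta_\infty h$ (that would require a norm strictly stronger than the one being estimated, which is not available); they must be placed on the low-order factor $\widehat{Z}^\kappa_\infty h$ with $|\kappa|\leq 3$, upgrading it to the strong-weight quantity $\langle v\rangle^{N_v}\langle z\rangle^{N_z+N-\kappa_H}\widehat{Z}^\kappa_\infty h$, which is then bounded in $L^\infty_{z,v}$ via the Sobolev embedding $H^2_{z,v}\hookrightarrow L^\infty_{z,v}$ by $\sup_{\tau,u}\mathbb{E}_{N-3}^{N_z,N_v}[h](\tau,u)$. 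Thus the top-order integral inequality carries the \emph{strong} norm on its right-hand side and must be run second, after the strong norm has been closed; conversely, the strong norm $\mathbb{E}_{N-3}^{N_z,N_v}[h]$ has no top-order feedback whatsoever. Swapping the order of your two Gronwall applications, and making the absorption of $\langle z\rangle^{3}\langle v\rangle^{6}$ by the $|\kappa|\leq 3$ factor together with the Sobolev embedding explicit, recovers the paper's argument; the rest of your write-up (the role of the $\widehat{v}^{\underline{L}}$ flux, the integrability exponents $t^{-4/3}$ and $\langle u\rangle^{-4/3}$, and the double-exponential constant) is consistent with it.
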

\begin{proof}
In order to lighten the presentation, we introduce the weighted derivatives of $h$ that we have to bound in order to control the lower order energy norm $\mathbb{E}^{N_z,N_v}_{N-3}[h]$,
$$ h^\kappa_{\mathrm{low}} (t,z,v) := \langle v \rangle^{N_v} \, \langle z \rangle^{N_z+N - \kappa_H}  \widehat{Z}^\kappa_\infty h (t,z,v), \qquad \overline{h^\kappa_{\mathrm{low}}}(t,x,v) = h^\kappa_{\mathrm{low}} (t,x-t\widehat{v}-\C_{t,v},v), \qquad |\kappa| \leq N-3.$$
Let $t_0 \geq T$ and $u_0 \in \R$. According to the energy estimate of Proposition \ref{ProenergyL2Vlasov}, we have
$$  \mathbb{E}_{N-3}^{N_z,N_v}[h](t_0,u_0) \leq \mathbb{E}_{N}^{N_z,N_v}[h_\infty]+  \sup_{|\beta| \leq N-3} \, \int_{t=t_0}^{+ \infty} \int_{|x| \leq t-u_0}  \int_{\R^3_v}  \bigg| \,  \overline{  \T_F^\infty \big( |h^\beta_{\mathrm{low}} |^2 \big)^{\mathstrut} } \, \bigg|(t,x,v) \dr v \dr x \dr t.$$
Since $\T_F^\infty (h)=0$ and $|\beta| \leq N-3$, the integrands on the right hand side of the previous inequality can be estimated by applying the commutation formula of Corollary \ref{CorCom} and Proposition \ref{Proestloworder}. It yields, since $\overline{\XX_\C}(t,x,v)=x$, to
$$  \mathbb{E}_{N-3}^{N_z,N_v}[h](t_0,u_0) - \mathbb{E}_{N}^{N_z,N_v}[h_\infty] \lesssim  \sup_{|\beta| , \, |\kappa| \leq N-3} \,  \int_{t=t_0}^{+ \infty} \! \int_{|x| \leq t-u_0} \int_{\R^3_v} \bigg( \frac{\sqrt{\Lambda}}{t^{\frac{4}{3}}} + \frac{\sqrt{\Lambda} \widehat{v}^{\underline{L}} ( x) }{\langle t-|x| \rangle^{\frac{4}{3}}} \bigg) \Big|\overline{ h^\kappa_{\mathrm{low}} \, h^\beta_{\mathrm{low}} } \Big|(t,x,v) \dr v \dr x \dr t .$$
Since $2h ^\kappa_{\mathrm{low}} \, h^\beta_{\mathrm{low}} \leq |h^\kappa_{\mathrm{low}}|^2+|h^\beta_{\mathrm{low}}|^2$, we finally get
$$  \mathbb{E}_{N-3}^{N_z,N_v}[h](t_0,u_0) - \mathbb{E}_{N}^{N_z,N_v}[h_\infty] \lesssim \sup_{|\kappa| \leq N-3} \mathcal{T}_{t_0,u_0}\big[ |h^\kappa_{\mathrm{low}}|^2 \big]+\mathcal{U}_{t_0,u_0} \big[ |h^\kappa_{\mathrm{low}} |^2\big],$$
where
\begin{align*}
  \mathcal{T}_{t_0,u_0}\big[ | h^\kappa_{\mathrm{low}}|^2 \big]&:=    \int_{t=t_0}^{+ \infty} \! \int_{|x| \leq t-u_0} \int_{\R^3_v}  \frac{\sqrt{\Lambda}}{t^{\frac{4}{3}}} \Big|\overline{ h^\kappa_{\mathrm{low}}  } \Big|^2(t,x,v) \dr v \dr x \dr t  \leq \int_{t=t_0}^{+ \infty}  \frac{\sqrt{\Lambda}}{t^{\frac{4}{3}}} \, \mathbb{E}_{N-3}^{N_z,N_v} [h](t,u_0) \dr t, \\
 \mathcal{U}_{t_0,u_0}\big[ |h^\kappa_{\mathrm{low}}|^2 \big] &:=  \int_{t=t_0}^{+ \infty} \! \int_{|x| \leq t-u_0} \int_{\R^3_v}  \frac{\sqrt{\Lambda} \widehat{v}^{\underline{L}} ( x) }{\langle t-|x| \rangle^{\frac{4}{3}}}  \Big|\overline{h^\kappa_{\mathrm{low}}} \Big|^2(t,x,v) \dr v \dr x \dr t      .
\end{align*}
For $\mathcal{U}_{t_0,u_0}[| h^\kappa_{\mathrm{low}}|^2 ]$, we perform the change of variables $(u,\underline{u},\omega)=(t-|x|,t+|x|,x/|x|)$, so that
\begin{align*}
\mathcal{U}_{t_0,u_0}\big[ |h^\kappa_{\mathrm{low}}|^2 \big] &=  \int_{u \geq u_0}  \int_{\underline{u} \geq \max(u,2t_0-u)} \int_{\mathbb{S}^2_\omega} \int_{\R^3_v} \frac{\sqrt{\Lambda} \widehat{v}^{\underline{L}}(\omega)}{\langle u \rangle^{\frac{4}{3}}}   \Big|\overline{h^\kappa_{\mathrm{low}}} \Big|^2 \Big( \frac{\underline{u}+u}{2},\frac{\underline{u}-u}{2}\omega \Big) \dr v  \frac{r^2}{2} \dr \mu_{\mathbb{S}^2_\omega} \dr \underline{u} \dr u \\
& = \int_{u \geq u_0} \frac{\sqrt{\Lambda}}{\sqrt{2}\langle u \rangle^{\frac{4}{3}}} \int_{C_u^{t_0}}  \int_{\R^3_v} \widehat{v}^{\underline{L}}  \Big|\overline{h^\kappa_{\mathrm{low}}} \Big|^2 \dr v \dr \mu_{C_u} \dr u \leq  \int_{u \geq u_0} \frac{\sqrt{\Lambda} }{\sqrt{2}\langle u \rangle^{\frac{4}{3}}} \, \mathbb{E}_{N-3}^{N_z,N_v} [h](t_0,u) \dr u.
\end{align*}
We then deduce that, for all $(t_0,u_0) \in [T,+ \infty[ \times \R_u$,
 $$   \mathbb{E}_{N-3}^{N_z,N_v}[h](t_0,u_0) - \mathbb{E}_{N}^{N_z,N_v}[h_\infty] \lesssim \int_{t=t_0}^{+ \infty}  \frac{\sqrt{\Lambda}}{t^{\frac{4}{3}}} \, \mathbb{E}_{N-3}^{N_z,N_v} [h](t,u_0) \dr t+\int_{u \geq u_0} \frac{\sqrt{\Lambda} }{\langle u \rangle^{\frac{4}{3}}} \, \mathbb{E}_{N-3}^{N_z,N_v} [h](t_0,u) \dr u.$$
The estimate for $\mathbb{E}_{N-3}^{N_z,N_v}[h]$ then ensues from Lemma \ref{LemGronwall2variables}, applied to $H(t,u)=\sup_{\tau \geq t} \mathbb{E}^{N_z,N_v}_{N-3}[h](\tau,u)$. We now turn to the higher order energy norm. We then introduce the notation
 $$ h^\kappa_{\mathrm{high}}:= \langle v \rangle^{N_v-6} \, \langle z \rangle^{N_z-3+N - \kappa_H}  \widehat{Z}_\infty^\kappa h (t,z,v), \qquad \overline{h^\kappa_{\mathrm{high}}}(t,x,v) = h^\kappa_{\mathrm{high}} (t,x-t\widehat{v}-\C_{t,v},v), \qquad |\kappa| \leq N.$$
 As before, we fix $t_0 \geq T$, $u_0 \in \R$ and we apply the energy inequality of Proposition \ref{ProenergyL2Vlasov}. We claim that it yields to
 \begin{align*}
   \mathbb{E}_{N}^{N_z-3,N_v-6}[h](t_0,u_0) - \mathbb{E}_{N}^{N_z,N_v}[h_\infty] \lesssim \! \sum_{|\beta|\leq N} & \sum_{  |\kappa| \leq N}  \mathcal{T}_{t_0,u_0}\big[  h^\kappa_{\mathrm{high}} \, h^\beta_{\mathrm{high}} \big]+ \mathcal{U}_{t_0,u_0}\big[  h^\kappa_{\mathrm{high}} \, h^\beta_{\mathrm{high}} \big] \\
&   \!+\!\sum_{N-2 \leq |\gamma| \leq N} \, \sum_{|\kappa| \leq N+1-|\gamma|}  \mathcal{I}^{\, \gamma, \kappa,\beta}_{t_0,u_0} +\sum_{N-3 \leq |\xi| \leq N-1} \, \sum_{|\kappa| \leq N-|\xi|} \mathcal{J}^{ \,\xi, \kappa,\beta}_{t_0,u_0}  , 
   \end{align*}
where
   \begin{align*}
\mathcal{I}^{\, \gamma, \kappa,\beta}_{t_0,u_0} &:=  \int_{t=t_0}^{+ \infty} \! \int_{|x| \leq t-u_0} \int_{\R^3_v}   \Big|t^2 \widehat{v}^\mu  \mathcal{L}_{Z^\gamma}(F)_{\mu j}(t,x)- \widehat{v}^\mu \mathbb{F}_{\mu j}\big[ \widehat{Z}^\gamma_\infty f_\infty \big](v)  \Big| \, \Big|\overline{ h^\kappa_{\mathrm{high}} \, h^\beta_{\mathrm{high}} } \Big|(t,x,v) \frac{\log^{2N+2}(t)  \dr v \dr x \dr t}{\langle v \rangle \, \langle x-t\widehat{v}-\C_{t,v} \rangle \, t}  , \\
\mathcal{J}^{\, \xi, \kappa,\beta}_{t_0,u_0}& :=  \int_{t=t_0}^{+ \infty} \! \int_{|x| \leq t-u_0} \int_{\R^3_v}   \frac{ \log^{2N+4}(t) \, \langle t+|x| \rangle}{ t } \langle t-|x| \rangle \Big|  \nabla_{t,x} \mathcal{L}_{ Z^\xi}(F) \Big|(t,x)  \Big|\overline{ \langle z \rangle \, \langle v \rangle \, h^\kappa_{\mathrm{high}} \, h^\beta_{\mathrm{high}} } \Big|(t,x,v) \dr v \dr x \dr t.
\end{align*}
Indeed, we bound $\T_F^\infty( |h^\beta_{\mathrm{high}}|^2)$ by applying the commutation formula of Corollary \ref{CorCom}, for any $|\beta| \leq N$. 
\begin{itemize}
\item The error terms of type \eqref{T1bis}--\eqref{T3bis} give rise, when $N-2 \leq |\gamma| \leq N$ and according to Remark \ref{RqCom}, to $\mathcal{I}^{\, \gamma, \kappa,\beta}_{t_0,u_0}$ and $\mathcal{J}^{\, \xi, \kappa,\beta}_{t_0,u_0}$.
\item Otherwise $|\gamma| \leq N-3$ and they can be controlled by $\mathcal{T}_{t_0,u_0}[  h^\kappa_{\mathrm{high}} \, h^\beta_{\mathrm{high}} ]+ \mathcal{U}_{t_0,u_0} [  h^\kappa_{\mathrm{high}} \, h^\beta_{\mathrm{high}} ]$ according to Proposition \ref{Proestloworder}.
\item Finally the source terms \eqref{T4bis} are bounded by $\mathcal{T}_{t_0,u_0}[  h^\kappa_{\mathrm{high}} \, h^\beta_{\mathrm{high}} ]$.
\end{itemize}
Arguing as previously, we obtain
\begin{align*}
\sum_{|\beta| ,  |\kappa| \leq N}  \mathcal{T}_{t_0,u_0}\big[  h^\kappa_{\mathrm{high}} \, h^\beta_{\mathrm{high}} \big]& \lesssim \sum_{ |\kappa| \leq N}  \mathcal{T}_{t_0,u_0}\big[  |h^\kappa_{\mathrm{high}} |^2 \big]  \lesssim \int_{t = t_0}^{+\infty} \frac{\sqrt{\Lambda}}{t^{\frac{4}{3}}} \mathbb{E}^{N_z-5,N_v-11}_{N}[h](t,u_0) \dr t , \\
\sum_{|\beta| ,  |\kappa| \leq N}  \mathcal{U}_{t_0,u_0}\big[  h^\kappa_{\mathrm{high}} \, h^\beta_{\mathrm{high}} \big]& \lesssim \sum_{ |\kappa| \leq N}  \mathcal{U}_{t_0,u_0}\big[  |h^\kappa_{\mathrm{high}} |^2 \big]  \lesssim \int_{u \geq u_0} \frac{\sqrt{\Lambda} }{2\langle u \rangle^{\frac{4}{3}}} \, \mathbb{E}^{N_z-5,N_v-11}_{N}[h](t,u_0) \dr u.
\end{align*}
Fix now $|\beta| \leq N$, $|\kappa| \leq 3$, $N-2 \leq |\gamma| \leq N$ and $N-3 \leq |\xi| \leq N-1$. The Cauchy-Schwarz inequality in $(x,v)$ gives
\begin{align*}
\mathcal{I}^{\, \gamma, \kappa,\beta}_{t_0,u_0} \leq \int_{t=t_0}^{+ \infty} \bigg( \, &  \bigg|     \int_{\R^3_x } \int_{\R^3_v}  \Big|t^2 \widehat{v}^\mu  \mathcal{L}_{Z^\gamma}(F)_{\mu j}(t,x)- \widehat{v}^\mu \mathbb{F}_{\mu j} \big[ \widehat{Z}_\infty^\gamma f_\infty \big](v)  \Big|^2 \frac{\log^{4N+4}(t)\dr v \, \dr x}{ t^2 \langle x-t\widehat{v}-\C_{t,v} \rangle^7 \, \langle v \rangle^{13}} \bigg|^{\frac{1}{2}} \\
 & \times \bigg|  \int_{|x| \leq t-u_0} \int_{\R^3_v}  \Big|\overline{  \langle z \rangle^{\frac{5}{2}} \, \langle v \rangle^{\frac{11}{2}}  h^\kappa_{\mathrm{high}} \, h^\beta_{\mathrm{high}} } \Big|^2(t,x,v) \dr x \dr v \bigg|^{\frac{1}{2}} \, \bigg) \dr t
\end{align*}
as well as 
\begin{align*}
\mathcal{J}^{\, \xi, \kappa,\beta}_{t_0,u_0} \leq \int_{t=t_0}^{+ \infty} \bigg( &  \bigg|     \int_{\R^3_x } \int_{\R^3_v} \frac{\langle t+|x|\rangle^2}{t^2}\langle t-|x| \rangle^2  \Big|  \nabla_{t,x} \mathcal{L}_{ Z^\xi}(F) \Big|^2\!(t,x) \, \frac{\log^{4N+8}(t)\dr v \, \dr x}{  \langle x-t\widehat{v}-\C_{t,v} \rangle^4 \, \langle v \rangle^{7}} \bigg|^{\frac{1}{2}} \\
 & \times \bigg|  \int_{|x| \leq t-u_0} \int_{\R^3_v}  \Big|\overline{   \langle z \rangle^3 \, \langle v \rangle^{\frac{9}{2}} h^\kappa_{\mathrm{high}} \, h^\beta_{\mathrm{high}} } \Big|^2(t,x,v) \dr x \dr v \bigg|^{\frac{1}{2}} \bigg) \dr t.
 \end{align*}
Applying Proposition \ref{Proesttoporder}, we obtain, 
$$ \mathcal{I}^{\, \gamma, \kappa,\beta}_{t_0,u_0}+\mathcal{J}^{\, \gamma, \kappa,\beta}_{t_0,u_0} \lesssim \int_{t=t_0}^{+ \infty} \frac{\sqrt{\Lambda} \log^{2N+7}(t)}{t^{\frac{3}{2}}} \bigg|  \int_{|x| \leq t-u_0} \int_{\R^3_v}  \Big|\overline{  \langle z \rangle^3 \,  \langle v \rangle^6 h^\kappa_{\mathrm{high}} \, h^\beta_{\mathrm{high}} } \Big|^2(t,x,v) \dr x \dr v \bigg|^{\frac{1}{2}} \dr t.$$
Note, as $|\kappa| \leq 3 \leq N-3$, that $\langle v \rangle^{6} \, \langle z \rangle^3 h^\kappa_{\mathrm{high}}= h^\kappa_{\mathrm{low}}$. Since we have further $|\kappa| \leq 3 \leq N-5$, we can in fact use that $H^2(\R^3_z \times \R^3_v)$ is continously embedded in $L^\infty (\R^3_z \times \R^3_v)$ in order to get
\begin{align*}
 \int_{|x| \leq t-u_0} \int_{\R^3_v}  \Big|\overline{  \langle z \rangle^3 \, \langle v \rangle^{6}  h^\kappa_{\mathrm{high}} \, h^\beta_{\mathrm{high}} } \Big|^2(t,x,v) \dr x \dr v & \lesssim   \sup_{u \in \R}\mathbb{E}^{N_z,N_v}_{N-3}[h](t,u) \int_{|x| \leq t-u_0} \int_{\R^3_v}  \Big|\overline{ h^\beta_{\mathrm{high}} } \Big|^2(t,x,v) \dr x \dr v \\
 & \leq \sup_{u \in \R} \mathbb{E}^{N_z,N_v}_{N-3}[h](t,u) \, \mathbb{E}^{N_z-3,N_v-6}_{N}[h](t,u_0).
 \end{align*}
 By combining the previous estimates and using $2ab \leq a^2+b^2$ as well as $\log^{2N+7}(t)\lesssim t^{1/4}$, we finally get
 \begin{align*}
     \mathbb{E}_{N}^{N_z-3,N_v-6}[h](t_0,u_0) - \mathbb{E}_{N}^{N_z,N_v}[&h_\infty] \lesssim  \int_{t=t_0}^{+ \infty}  \frac{\sqrt{\Lambda}}{t^{\frac{5}{4}}} \, \sup_{\tau \geq T} \, \sup_{u \in  \R} \mathbb{E}_{N-3}^{N_z,N_v} [h](\tau,u) \dr t  \\
     & + \int_{t=t_0}^{+ \infty}  \frac{\sqrt{\Lambda}}{t^{\frac{5}{4}}} \, \mathbb{E}_{N}^{N_z-3,N_v-6} [h](t,u_0) \dr t  +\int_{u \geq u_0} \frac{\sqrt{\Lambda} }{\langle u \rangle^{\frac{4}{3}}} \, \mathbb{E}_{N}^{N_z-3,N_v-6} [h](t_0,u) \dr u.
     \end{align*}
According to the first part of the proof, there exists $\widetilde{D}=\widetilde{D}^{N_z,N_v}_N>0$ such that
$$ \int_{t=t_0}^{+ \infty}  \frac{\Lambda}{t^{\frac{5}{4}}} \, \sup_{\tau \geq T} \, \sup_{u \in  \R} \mathbb{E}_{N-3}^{N_z,N_v} [h](\tau,u) \dr t  \leq 4\Lambda \exp\left( e^{ \widetilde{D} \Lambda} \right) \mathbb{E}^{N_z,N_v}_N [h_\infty]  \leq 4 \exp\left( e^{\Lambda+ \widetilde{D} \Lambda} \right) \, \mathbb{E}^{N_z,N_v}_N [h_\infty].$$
In order to conclude the proof, it remains to apply Lemma \ref{LemGronwall2variables} to $H(t,u)=\sup_{\tau \geq t} \mathbb{E}_{N}^{N_z-3,N_v-6}[h](\tau,u)$, with $\Psi = \big(1+ C \exp \big(e^{\Lambda+\widetilde{D} \Lambda} \big) \big) \mathbb{E}^{N_z,N_v}_N[h_\infty]$, where $C>0$ depends only on $(N,N_z,N_v)$.
\end{proof}

\subsubsection{Study of the characteristics of $\T_F^\infty$}

The purpose of this section is to prove that the characteristics of the operator $\T^\infty_F$ are defined globally and converge as time approaches infinity. For this, it is convenient to introduce
$$
a^i_z:(t,z,v) \mapsto  -\frac{\delta_j^i-\widehat{v}_j \widehat{v}^i}{tv^0} \widehat{v}^\mu \left( t^2  {F_{\mu}}^j(t,\XX_\C)-{\mathbb{F}_{\mu}}^j[f_\infty](v) \right)-\frac{\widehat{v}^\mu}{v^0} {F_{\mu}}^j(t,\XX_\C) v^0 \partial_{v^j}\C^j_{t,v} = \T_F^\infty(z^i), 
$$
so that $\T_F^\infty = \partial_t+a^j_z(t,z,v)\partial_{z^j}+\widehat{v}^\mu {F_{\mu}}^j \big(t,\XX_\C (t,z,v) \big)\partial_{v^j}$.
\begin{Def}\label{defcharac}
For any $(t_0,z,v) \in [T,+\infty[ \times \R^3_z  \times \R^3_v$, we denote by $t \mapsto \big( \Z(t,t_0,z,v), \V(t,t_0,z,v) \big)$ the unique solution to the Cauchy problem
\begin{equation}\label{characdef}
  \dot{\Z}^i = a_z^i(t , \Z, \V), \qquad \dot{\V}^i= \widehat{\V}^\mu {F_{\mu}}^i \big(t,\XX_\C (t,\Z,\V) \big), \qquad \big(\Z (t_0,t_0,z,v),\V (t_0,t_0,z,v) \big)=(z,v).
  \end{equation}
A characteristic of the operator $\T_F^\infty$ will refer to any such function. 
\end{Def}
\begin{Rq}\label{Rqcharactxv}
Let $(t_0,z,v) \in [T,+\infty[ \times \R^3_z \times \R^3_v$, $\Z_t := \Z(t,t_0,z,v)$ and $\V_t :=\V(t,t_0,z,v)$. The characteristic, in the coordinate system $(t,x,v)$, of the Vlasov operator $\T_F$ which is equal to $(z+t_0\widehat{v}+\C_{t_0,v},v)$ at time $t_0$ is 
$$ t \mapsto (\X_t,\V_t), \qquad \qquad \X_t := \Z_t +t \widehat{\V}_t+ \mathscr{C}_{t,\V_t} = \XX_\C \big( t, \Z_t , \V_t \big). $$
In particular, $\dot{\X}_t=\widehat{\V}_t$.
\end{Rq}

Our goal then consists in proving that $(t_0,z,v) \mapsto (\Z,\V )(t',t_0,z,v)$ converges locally in $L^\infty_{t_0,z,v}$ as $t' \to +\infty$ and that the limit belongs to $C^1([T,+\infty[ \times \R^3_z \times \R^3_v)$. Our analysis will rely on the following result.
\begin{Lem}\label{Lemcompuint}
Let $(t_0,z,v) \in [T,+\infty[ \times \R^3_z \times \R^3_v$, $\Z_t := \Z(t,t_0,z,v)$ as well as $\V_t :=\V(t,t_0,z,v)$. Then,
$$ \mathcal{I}(t_0,z,v):= \int_{t=T}^{+\infty} \frac{\V^{\underline{L}}_t \big(\XX_\C(t,\Z_t,\V_t) \big)}{\langle t-|\XX_\C(t,\Z_t,\V_t)| \rangle^{\frac{4}{3}}} \leq \int_{u=-\infty}^{+\infty} \frac{\dr u}{2\langle u \rangle^{\frac{4}{3}}} \leq 5.$$
\end{Lem}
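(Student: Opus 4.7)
The plan is to recognize that the integrand is a total derivative in disguise. Along the characteristic, set $\X_t := \XX_\C(t,\Z_t,\V_t)$ and introduce the retarded time along the trajectory,
$$u(t) := t - |\X_t|.$$
From Remark \ref{Rqcharactxv}, we have $\dot{\X}_t = \widehat{\V}_t$, so a direct computation gives
$$\dot{u}(t) \;=\; 1 - \frac{\X_t \cdot \widehat{\V}_t}{|\X_t|} \;=\; 1 - \omega(t) \cdot \widehat{\V}_t, \qquad \omega(t) := \X_t/|\X_t|.$$
Recalling the definition of the null component $2 \widehat{v}^{\underline{L}}(x) = 1 - (x/|x|) \cdot \widehat{v}$, this rewrites as $\dot{u}(t) = 2\widehat{\V}_t^{\underline{L}}(\X_t)$. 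Since $|\widehat{\V}_t| < 1$, this derivative is nonnegative, so $t \mapsto u(t)$ is monotone non-decreasing and admits a limit $u_\infty \in (u(T),+\infty]$ as $t \to +\infty$.

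The monotonicity justifies the change of variable $u = u(t)$, which yields
$$\mathcal{I}(t_0,z,v) \;=\; \int_{T}^{+\infty} \frac{\widehat{\V}_t^{\underline{L}}(\X_t)}{\langle t-|\X_t| \rangle^{4/3}} \, \dr t \;=\; \int_{u(T)}^{u_\infty} \frac{\dr u}{2 \langle u\rangle^{4/3}} \;\leq\; \int_{-\infty}^{+\infty} \frac{\dr u}{2\langle u\rangle^{4/3}}.$$
The right-hand side is an elementary convergent integral; via the substitution $u = \tan\theta$ (or a Beta function computation) one checks it equals $\tfrac{\sqrt{\pi}\,\Gamma(1/6)}{2\,\Gamma(2/3)} < 5$, which closes the argument.

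There is essentially no obstacle here: the only point requiring attention is the identity $\dot{u} = 2\widehat{\V}^{\underline{L}}(\X)$, which follows from unwinding the definition of the null component, and the observation that this derivative has a definite sign, ensuring the change of variable is valid without further monotonicity argument. The entire mechanism is that the singular weight $\langle t-|\X_t|\rangle^{-4/3}$ is precisely integrated out by the factor $\widehat{\V}^{\underline{L}}(\X_t)$ which measures the rate at which the characteristic crosses level sets of the retarded time $u$.
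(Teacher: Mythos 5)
Your proof is correct and follows exactly the paper's argument: the paper also performs the change of variables $u = t-|\XX_\C(t,\Z_t,\V_t)|$ along the characteristic and uses the identity $\partial_t(u) = 1-\widehat{\V}_t\cdot \XX_\C/|\XX_\C| = 2\widehat{\V}^{\underline{L}}_t(\XX_\C)$ coming from $\dot{\X}_t=\widehat{\V}_t$. You merely add the (welcome) explicit justifications of monotonicity and of the final numerical bound, which the paper leaves implicit.
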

\begin{proof}
In order to bound the first integral by the second one, we perform the change of variables $u=t-|\XX_\C(t,\Z_t,\V_t)|$. In other words, we parameterise the characteristic by $u$ instead of $t$. Note that in the coordinate system $(u,x,v)$, the linear Vlasov operator reads $2 \widehat{v}^{\underline{L}} \partial_u+\widehat{v} \cdot \nabla_x$. In our case, in view of Remark \ref{Rqcharactxv}, we have 
 $$\partial_t (u)=1-\widehat{\V}_t\cdot \frac{\XX_\C(t,\Z_t,\V_t)}{|\XX_\C(t,\Z_t,\V_t)|}=2\widehat{\V}^{\underline{L}}_t\big(\XX_\C(t,\Z_t,\V_t) \big) .$$
 Then, the second integral can be easily estimated.
\end{proof}

We are now able to prove that the characteristics converge locally in $L^\infty_{z,v}$ as $t \to +\infty$.
\begin{Lem}\label{Lemcharacbase}
There exists an absolute constant $C>0$ such that the following statement holds. For all $(t_0,z,v) \in [T,+\infty[ \times \R^3_z \times \R^3_v$, there exists $\Z (\infty,t_0,z,v) \in \R^3_z$ and $\V(\infty ,t_0,z,v) \in \R^3_v$ verifying
$$ \lim_{t \to + \infty} \big( \Z, \V \big) (t,t_0,z,v) = \big( \Z, \V \big) (\infty , t_0 ,z ,v)   .$$
Moreover, we have
$$\forall \, T \leq t \leq \infty, \qquad \quad \big|\V (t,t_0,z,v) - v  \big| \leq  C \sqrt{\Lambda }    , \qquad \big| \Z (t,t_0,z,v) - z  \big| \leq e^{C \sqrt{\Lambda}}  \langle z \rangle $$
and $(\Z,\V)(\infty,t_0,z,v) \to (z,v)$ as $t_0 \to +\infty$, for all $(z,v) \in \R^3_z \times \R^3_v$.
\end{Lem}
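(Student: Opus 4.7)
The plan is to establish uniform a priori bounds on $(\Z,\V)$ by exploiting the null structure of the Lorentz force and the key change-of-variable estimate of Lemma \ref{Lemcompuint}; global existence on $[T,+\infty[$ and convergence then follow by standard ODE arguments.

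For $\V$, I would integrate the equation $\dot{\V}_t^i = \widehat{\V}_t^\mu F_\mu{}^i(t,\X_t)$, where by Remark \ref{Rqcharactxv} the trajectory $\X_t=\XX_\C(t,\Z_t,\V_t)$ satisfies $\dot{\X}_t=\widehat{\V}_t$ — this is what allows the change of variable $u=t-|\X_t|$ with Jacobian $du=2\widehat{\V}_t^{\underline{L}}(\X_t)\,dt$. Decomposing $\widehat{v}^\mu F_{\mu i}$ in the null frame via Lemma \ref{nullstruct}, applying the pointwise bounds \eqref{eq:Mpoint}--\eqref{eq:Mpointnull} of Proposition \ref{PropropMaxfield}, and using $|\widehat{\slashed{v}}|\lesssim(\widehat{v}^{\underline{L}})^{1/2}$ from Lemma \ref{gainv} yields
\[
|\dot{\V}_t|\lesssim \frac{\sqrt{\Lambda}}{\langle t+|\X_t|\rangle^{2}}+\frac{\sqrt{\Lambda}\,\sqrt{\widehat{\V}_t^{\underline{L}}(\X_t)}}{\langle t+|\X_t|\rangle\,\langle t-|\X_t|\rangle}.
\]
The first term is integrable since $\langle t+|\X_t|\rangle\geq t$. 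For the second, after using $\langle t+|\X_t|\rangle\geq t$ and writing $(\widehat{\V}^{\underline{L}})^{1/2}\langle t-|\X|\rangle^{-1}=\bigl(\widehat{\V}^{\underline{L}}/\langle t-|\X|\rangle^{4/3}\bigr)^{1/2}\cdot\langle t-|\X|\rangle^{-1/3}$, Cauchy--Schwarz combines the bound $\int\widehat{\V}_t^{\underline{L}}/\langle t-|\X_t|\rangle^{4/3}\,dt\leq 5$ of Lemma \ref{Lemcompuint} with $\int 1/t^{2}\,dt<\infty$ to give a finite bound. Hence $|\V(t,t_0,z,v)-v|\leq C\sqrt{\Lambda}$ uniformly.

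For $\Z$, I would apply Proposition \ref{Proestloworder} at $|\gamma|=0$, which controls the first term of $a_z^i$ (Definition of $\T_F^\infty$) by
\[
\frac{|\Delta^i|}{tv^0}\lesssim \frac{\langle \Z_t\rangle}{\log^{2N+2}(t)}\!\left(\frac{\sqrt{\Lambda}}{t^{4/3}}+\frac{\sqrt{\Lambda}\,\widehat{\V}_t^{\underline{L}}(\X_t)}{\langle t-|\X_t|\rangle^{4/3}}\right),
\]
whose time integral is $\lesssim\sqrt{\Lambda}$, again by Lemma \ref{Lemcompuint}. The remaining contribution $(\widehat{v}^\mu F_\mu{}^j/v^0)\,v^0\partial_{v^j}\C^i_{t,v}$ is bounded using $|v^0\partial_{v^j}\C^i|\lesssim\sqrt{\epsilon}\log(t)$ (from Proposition \ref{Proasympelec}) together with the null-frame estimate above; because $\log(t)/t^{2}$ and $\log(t)(\widehat{v}^{\underline{L}})^{1/2}/\langle t+|\X|\rangle\langle t-|\X|\rangle^{3/2}$ are both time-integrable by the same Cauchy--Schwarz trick, this term contributes an $O(\sqrt{\Lambda})$ piece independent of $\langle \Z_t\rangle$. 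Writing $|\dot{\Z}_t|\leq C_0(t)\langle \Z_t\rangle+C_1(t)$ with $\int(C_0+C_1)\lesssim\sqrt{\Lambda}$ and applying Gronwall yields $\langle \Z_t\rangle\leq e^{C\sqrt{\Lambda}}\langle z\rangle$, hence $|\Z_t-z|\leq e^{C\sqrt{\Lambda}}\langle z\rangle$ after adjusting the constant.

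Global existence on $[T,+\infty[$ (forward and backward from $t_0$) is then standard: the uniform a priori bounds prevent finite-time blow-up and the right-hand side of \eqref{characdef} is locally Lipschitz. Convergence of $(\Z_t,\V_t)$ as $t\to+\infty$ follows from the Cauchy property, since the integrands $|\dot{\V}_t|$ and $|\dot{\Z}_t|$ lie in $L^1([T,+\infty[)$ with norms independent of $t$ (using the Gronwall bound for $\Z$). Finally, for fixed $(z,v)$,
\[
|\V(\infty,t_0,z,v)-v|+|\Z(\infty,t_0,z,v)-z|\leq \int_{t_0}^{+\infty}\bigl(|\dot{\V}_s|+|\dot{\Z}_s|\bigr)ds \xrightarrow[t_0\to+\infty]{} 0
\]
by integrability of the tails. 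The main obstacle is the self-referential structure of the $\Z$ equation, whose resolution hinges on the fact that the $\langle \Z_t\rangle$-coefficient inherits a \emph{uniformly} bounded time integral from the combination of Lemma \ref{Lemcompuint} and the $\log^{2N+2}(t)$-gain in Proposition \ref{Proestloworder}.
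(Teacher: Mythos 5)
Your argument is correct and follows the paper's proof in all essential respects: both rely on the Duhamel formulas for $\V$ and $\Z$, on the uniform integral bound of Lemma \ref{Lemcompuint} obtained by changing variables to $u=t-|\X_t|$, and on a Grönwall argument to handle the self-referential $\langle\Z_t\rangle$ factor, with convergence as $t_0\to+\infty$ read off from the tail integrals. The only cosmetic difference is that the paper quotes the ready-made pointwise bounds of Proposition \ref{Proestloworder} (which already split the cross term via $2ab\le a^2+b^2$ into $t^{-7/4}$ and $\widehat{v}^{\underline{L}}\langle t-|\XX_\C|\rangle^{-2}$), whereas you re-derive them from Lemma \ref{nullstruct} and \eqref{eq:Mpoint}--\eqref{eq:Mpointnull} and apply Cauchy--Schwarz at the level of the time integral; the underlying estimates are the same.
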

\begin{proof}
Let $1 \leq i \leq 3$. Note first the inequality $|\langle y \rangle - \langle x \rangle | \leq |x-y|$, which holds for all $x, \, y \in \R^3$ and which will be used many times during this proof. Next, Proposition \ref{Proestloworder} provides
\begin{equation*}
\left| \mathbf{T}_F^\infty(v^i) \right|(t,z,v) = \left| \widehat{v}^\mu F_{\mu i}(t,\XX_\C)  \right|\lesssim \frac{\sqrt{\Lambda}}{ t^{\frac{7}{4}}}+ \frac{\sqrt{\Lambda}  v^{\underline{L}}(\XX_\C)}{ \langle t-|\XX_\C| \rangle^2}.
\end{equation*}
Fix $(t_0,z,v) \in [T,+\infty[ \times \R^3_z \times \R^3_v$ and let $\Z_t := \Z(t,t_0,z,v)$ as well as $\V_t :=\V(t,t_0,z,v)$. By Duhamel's principle we have, for all $t \geq  T$,
\begin{equation}\label{DuhamelVi}
 \V^i_t-\V^i_{t_0}=\V^i_t-v^i =  \int_{s =t_0}^t \T_F^\infty(v^i)(s,\Z_s,\V_s) \mathrm{d}s  .
\end{equation}
Moreover, for all $T \leq \tau_1 \leq \tau_2$,
$$
 \int_{s=\tau_1}^{\tau_2}\left| \T_F^\infty (v^i)(s,\Z_s,\V_s) \right| \mathrm{d}s \lesssim \sqrt{\Lambda}  \int_{s=\tau_1}^{+\infty} \frac{1}{s^{\frac{7}{4}}}+\frac{\widehat{\V}^{\underline{L}}(\XX_\C(s,\Z_s,\V_s)}{\langle s-|\XX_\C(s,\Z_s,\V_s)| \rangle^2} \mathrm{d}s \leq \sqrt{\Lambda}  \big( 2+ \, \mathcal{I}(t_0,z,v) \big) \lesssim \sqrt{\Lambda} ,
 $$
 where $\mathcal{I}(t_0,z,v)$ is defined and bounded by $5$ in Lemma \ref{Lemcompuint}. We then deduce from the last estimate and \eqref{DuhamelVi} that $\V^i_t$ converges to $\V^i_\infty$, as $t \to +\infty$, and then that $|\V^i_t-\V^i_{t_0}| \lesssim \sqrt{\Lambda}$ for all $t_0 \geq T$ and $T \leq t \leq \infty$. 

Next, we obtain from Proposition \ref{Proestloworder},
$$
 \left|\T_F^\infty( z^i )\right| = \bigg|-\frac{\delta_j^i-\widehat{v}_j \widehat{v}^i}{tv^0} \widehat{v}^\mu \left( t^2  {F_{\mu}}^j(t,\XX_\C)-{\mathbb{F}_{\mu}}^{j}[f_\infty](v) \right)-\frac{\widehat{v}^\mu}{v^0} {F_{\mu}}^j(t,\XX_\C)v^0 \partial_{v^j} \C^i_{t,v} \bigg| \lesssim \frac{\sqrt{\Lambda} }{  t^{\frac{4}{3}}}\langle z \rangle+\frac{\sqrt{\Lambda} \, \widehat{v}^{\underline{L}}(\XX_\C)  }{  \langle t-|\XX_\C| \rangle^{\frac{4}{3}}}\langle z \rangle.
$$
Fix again $(t_0,z,v) \in [T,+\infty[ \times \R^3_z \times \R^3_v$. By Dumamel's principle, we have, for all $t \ge T$,
\begin{equation}\label{DuhamelZi}
 \Z^i_t-z^i=  \int_{s =t_0}^t \T_F^\infty(z^i)(s,\Z_s,\V_s) \mathrm{d}s  , \qquad \qquad \langle \Z_t \rangle \leq \langle z \rangle + \int_{s = \min (t_0,t)}^{\max (t_0,t)} \big|\T_F^\infty \big( \langle z \rangle \big) \big|(s,\Z_s,\V_s) \mathrm{d}s .
\end{equation}
As $|\T_F (\langle z \rangle)| \leq \langle z \rangle^{-1} |z_i| |\T_F^\infty (z^i)|$, we get, for all $t \geq T$,
$$  \langle \Z_t \rangle - \langle z \rangle \lesssim \int_{s = \min (t_0,t)}^{\max (t_0,t)} \frac{\sqrt{\Lambda}}{s^{\frac{4}{3}}} \langle \Z_s \rangle +\frac{\sqrt{\Lambda} \, \widehat{v}^{\underline{L}}\big(\XX_\C(s,\Z_s,\V_s) \big) }{ \langle t-|\XX_\C(s,\Z_s,\V_s)|\rangle^{\frac{4}{3}} }  \, \langle \Z_s \rangle \, \mathrm{d}s  .$$
Applying first Gronwall's inequality and then Lemma \ref{Lemcompuint}, we obtain that there exists a constant $C_0>0$ such that
$$ \forall \, t \geq T, \qquad \langle \Z_t \rangle \leq \langle z \rangle \exp \big( 3C_0\sqrt{\Lambda} + C_0 \sqrt{\Lambda} \, \mathcal{I}(t_0,z,v) \big) \leq \langle z \rangle \exp \big( 8 C_0\sqrt{\Lambda} \, \big)  .$$
It implies, applying again Lemma \ref{Lemcompuint}, that for all $T \leq \tau_1 \leq \tau_2$,
$$
 \int_{s = \tau_1}^{\tau_2} \left|\T_F( z^i )\right| (s,\Z_s,\V_s) \dr s \lesssim \sqrt{\Lambda} \, e^{8C_0 \sqrt{\Lambda}} \langle z \rangle \left( \int_{s=\tau_1}^{+\infty} \frac{\dr s}{s^{\frac{4}{3}}}+\mathcal{I}(t_0,z,v) \right) \leq  8\sqrt{\Lambda} \, e^{8C_0 \sqrt{\Lambda}} \langle z \rangle  .$$
Thus, from the last estimate and \eqref{DuhamelZi}, we get that $\Z^i_t$ converges, as $t \to +\infty$, and that the estimate $|\Z^i_t-z^i| \lesssim \sqrt{\Lambda} \, e^{8C_0 \sqrt{\Lambda}} \langle z \rangle $ holds for all $t_0 \leq t \leq \infty$. Then, we bound the constant using $a e^c \leq e^{a+c}$. Finally, for the convergence of $(\Z,\V)(\infty,t_0,z,v)$ as $t_0 \to +\infty$, we use the relations \eqref{DuhamelVi}--\eqref{DuhamelZi} for $t=+\infty$ and that the right hand sides go to zero as $t_0 \to +\infty$.
\end{proof}

We now prove that the characteristics as well their first order derivatives converge as $t_0 \to + \infty$. Inverting the limit will provide the $C^1$ regularity of $(\Z,\V)(\infty,t_0,\cdot , \cdot)$. For this, we will make use of the next result.
\begin{Lem}\label{conv}
Let $d \geq 1$ and, for all $n \in \mathbb{N}$, $\psi_n \in C^1(\R^d,\R^d)$ be an invertible function such that
\begin{itemize}
\item $(\psi_n)$ converges pointwise to $\psi : \R^d \to \R^d$ and, for any compact $K \subset \R^d$, $\sup_n \sup_K |\nabla \psi_n | <+\infty$.
\item $(\psi_n^{-1})$ converges pointwise to $\overline{\psi} : \R^d \to \R^d$ as $n \to + \infty$.
\end{itemize}
Then, $\psi$ is invertible and $\overline{\psi}= \psi^{-1}$.
\end{Lem}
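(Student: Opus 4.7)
The plan is to argue symmetrically, establishing $\psi \circ \overline{\psi} = \mathrm{id}$ and $\overline{\psi} \circ \psi = \mathrm{id}$ separately; invertibility of $\psi$ together with the identification $\psi^{-1} = \overline{\psi}$ will then follow. As a preliminary observation, the uniform local bound on $\nabla \psi_n$ makes $(\psi_n)$ locally equi-Lipschitz, so the pointwise convergence upgrades (via Arzel\`a-Ascoli applied to subsequences, all of whose limits must agree with $\psi$) to locally uniform convergence, and in particular $\psi$ inherits continuity and local Lipschitz regularity.

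For $\psi \circ \overline{\psi} = \mathrm{id}$, I would fix $y \in \R^d$ and set $x_n := \psi_n^{-1}(y)$, so that $x_n \to \overline{\psi}(y)$ by hypothesis and $\psi_n(x_n) = y$ by invertibility. The triangle inequality
\begin{equation*}
\bigl| \psi_n(x_n) - \psi(\overline{\psi}(y)) \bigr| \leq \bigl| \psi_n(x_n) - \psi_n(\overline{\psi}(y)) \bigr| + \bigl| \psi_n(\overline{\psi}(y)) - \psi(\overline{\psi}(y)) \bigr|,
\end{equation*}
combined with the local equi-Lipschitz bound for $\psi_n$ on a compact neighbourhood of $\overline{\psi}(y)$ (which handles the first term via $x_n \to \overline{\psi}(y)$) and the pointwise convergence $\psi_n \to \psi$ at the point $\overline{\psi}(y)$ (handling the second), shows $\psi_n(x_n) \to \psi(\overline{\psi}(y))$. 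Since $\psi_n(x_n) = y$, we conclude $\psi(\overline{\psi}(y)) = y$.

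The reverse identity $\overline{\psi} \circ \psi = \mathrm{id}$ is obtained by the same argument with the roles of $(\psi_n, \psi)$ and $(\psi_n^{-1}, \overline{\psi})$ interchanged: fix $x \in \R^d$, set $y_n := \psi_n(x)$, and use $\psi_n^{-1}(y_n) = x$ together with the pointwise convergence of $\psi_n^{-1}$ and a local equi-Lipschitz control on $\psi_n^{-1}$ to conclude $\psi_n^{-1}(y_n) \to \overline{\psi}(\psi(x))$, whence $\overline{\psi}(\psi(x)) = x$. The main point to check is this local equi-Lipschitz property for $(\psi_n^{-1})$, which is not written down explicitly in the hypotheses but is available in the intended application to the characteristic flow of $\T_F^\infty$: the maps $\psi_n$ and $\psi_n^{-1}$ there are respectively forward and backward time-$t_n$ flows of a smooth ODE with locally Lipschitz right-hand side, so the same Gronwall estimates that bound $\nabla \psi_n$ on compacts equally bound $\nabla \psi_n^{-1}$ on compacts by running the characteristic ODE backwards.
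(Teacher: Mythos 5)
Your first half reproduces the paper's argument: fix $y$, use the triangle inequality to split $|\psi_n(\psi_n^{-1}(y)) - \psi(\overline{\psi}(y))|$ into a term handled by the pointwise convergence $\psi_n \to \psi$ at $\overline{\psi}(y)$ and a term handled by the local equi-Lipschitz bound on $\psi_n$ (what the paper phrases as "the mean value theorem") together with $\psi_n^{-1}(y) \to \overline{\psi}(y)$. Your Arzel\`a--Ascoli upgrade to locally uniform convergence is not needed for this step; the paper gets by with the pointwise limit at a single point plus the Lipschitz control, which is slightly leaner. (Note also that the paper's displayed second equality should be a $\leq$; it is a typo.)

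Where you genuinely go beyond the written proof is the second half, and you have put your finger on a real issue. The paper's proof establishes only $\psi \circ \overline{\psi} = \mathrm{id}$ and then declares $\psi$ invertible with $\overline{\psi} = \psi^{-1}$, but a one-sided identity of this kind shows $\overline{\psi}$ injective and $\psi$ surjective, not that $\psi$ is injective. Under the stated hypotheses this conclusion can in fact fail: take $d=1$, $\psi$ a smooth nondecreasing map which is identically zero on $[-1/2,1/2]$ and equals $x\mp 1$ outside $[-3/2,3/2]$, and $\psi_n(x) = \psi(x) + x/n$. Then each $\psi_n$ is a $C^1$ bijection of $\R$, $\sup_n\|\psi_n'\|_\infty < \infty$, $\psi_n \to \psi$ pointwise, and $\psi_n^{-1}$ converges pointwise to the map sending $0\mapsto 0$ and $y\neq 0$ to the unique preimage of $y$ with $|x|>1/2$; yet $\psi$ is not injective. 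So the lemma, as stated, is missing a hypothesis. Your diagnosis is exactly right: running the symmetric argument requires a local equi-Lipschitz bound on $\psi_n^{-1}$, which is not in the lemma's hypotheses but \emph{is} available in the intended application, since the estimates of Proposition \ref{Lemderivcharac} bound $\partial_{z,v}(\Z,\V)(t,t_0,\cdot,\cdot)$ uniformly in both time arguments, hence control $\nabla\psi_n^{-1}$ as well as $\nabla\psi_n$. Your proof is therefore correct as a proof of the conclusion under the additional hypothesis you identify, and correctly flags that the paper's proof (and lemma statement) should be supplemented by that hypothesis or by an appeal to the extra structure present in the characteristic flow.
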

\begin{proof}
For all $x \in \R^d$ and $n \in \mathbb{N}$, we have
$$\big| \psi(\overline{\psi}(x))-x \big|= \big|\psi \big(\overline{\psi}(x) \big)- \psi_n \big( \psi^{-1}_n(x) \big) \big|= \big|\psi \big( \overline{\psi}(x) \big)- \psi_n \big( \overline{\psi}(x) \big)\big|+ \big|\psi_n \big( \overline{\psi}(x) \big)- \psi_n \big( \psi^{-1}_n(x) \big)\big| .$$
By assumption, $\psi \big( \overline{\psi}(x) \big)- \psi_n \big( \overline{\psi}(x) \big) \to 0$ as $n \to +\infty$. Since $(\psi_n^{-1}(x))_n$ is bounded and converges to $\overline{\psi} (x)$, the mean value theorem implies that the second term on the right hand side goes to $0$ as $n \to +\infty$.
\end{proof}
For any $t_ 0 \geq T$, since $(z,v) \mapsto (\Z,\V)(t_0,t_0, z, v)=(z,v)$, we have, for any $1 \leq i , \, j \leq 3 $,
\begin{align}\label{eq:inidata}
\hspace{-2mm} &  \partial_{z^j} \Z^{i}(t_0,t_0 , z ,v )= \partial_{v^j} \V^{i}(t_0,t_0 , z ,v ) = \delta^i_j, \qquad \qquad \partial_{v^j} \Z^{i}(t_0,t_0 , z ,v )=\partial_{z^j} \V^{i}(t_0,t_0 , z ,v )=0.
\end{align}

\begin{Pro}\label{Lemderivcharac}
Let $t \in [T,+ \infty[$. Then,
\begin{itemize}
\item $(z,v) \mapsto (\Z,\V)(t,t_0,z,v)$ converges to a $C^1(\R^3_z \times \R^3_v)$ function $(\Z,\V)(t,\infty,\cdot,\cdot)$, as $t_0 \to +\infty$, which is the inverse of $(\Z,\V)(\infty, t,\cdot,\cdot)$.
\item There exists a constant $D_\Lambda>0$ such that, for all $T \leq t_0 \leq + \infty$, $(z,v) \in \R^3_z \times \R^3_v$ and $1 \leq i, \, j \leq 3$,
\begin{align*}
  \big| \partial_{z^j} \Z^i (t,t_0,z,v)-\delta_j^i \big|+\big| \partial_{z^j} \V^i (t,t_0,z,v) \big| & \leq e^{D_\Lambda \langle v \rangle \, \langle z \rangle^3} , \\
   \langle v \rangle \, \big| \partial_{v^j} \Z^i (t,t_0,z,v) \big|+ \langle v \rangle \, \big| \partial_{v^j} \V^i (t,t_0,z,v) -\delta_j^i\big| & \leq e^{D_\Lambda \langle v \rangle \, \langle z \rangle^3} .
      \end{align*}
\end{itemize}
\end{Pro}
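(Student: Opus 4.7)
Plan: The approach is to differentiate the characteristic ODEs \eqref{characdef} to obtain a linear variational system for the Jacobian of $(z,v) \mapsto (\Z,\V)(t,t_0,z,v)$, derive integrable-in-time bounds on its coefficients from the pointwise electromagnetic estimates of Proposition \ref{PropropMaxfield}, and conclude via Gronwall's inequality; $C^1$ convergence as $t_0 \to +\infty$ then follows by dominated convergence on the integral form of the variational system, and the inversion claim is a consequence of Lemma \ref{conv}.

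Concretely, writing $J^{ij}_y(t) := \partial_{y^j}\Z^i(t,t_0,z,v)$ and $K^{ij}_y(t) := \partial_{y^j}\V^i(t,t_0,z,v)$ for $y \in \{z,v\}$, the differentiation of \eqref{characdef} yields
$$ \dot J^{ij}_y = \partial_{z^k} a^i_z \, J^{kj}_y + \partial_{v^k} a^i_z \, K^{kj}_y, \qquad \dot K^{ij}_y = \partial_{z^k}\!\bigl[\widehat{\V}^\mu F_{\mu i}(t,\XX_\C)\bigr] J^{kj}_y + \partial_{v^k}\!\bigl[\widehat{\V}^\mu F_{\mu i}(t,\XX_\C)\bigr] K^{kj}_y, $$
with initial data supplied by \eqref{eq:inidata}. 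The coefficients involve $\partial_x F$ and $\partial_x^2 F$ at $\XX_\C(t,\Z_t,\V_t)$; I would control them using Proposition \ref{PropropMaxfield} together with Lemma \ref{improderiv} (to convert the translations into $\langle t-|\XX_\C|\rangle$-decay), yielding essentially $\sqrt{\Lambda}\,\langle t+|\XX_\C|\rangle^{-1}\langle t-|\XX_\C|\rangle^{-2}$ and one derivative worse. Enough regularity is available since $N \geq 8$.

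The delicate case is the $\partial_v$-coefficient: here the chain rule brings $\partial_{v^k}\XX_\C(t,\Z,\V) = t(\delta - \widehat{\V}\widehat{\V})/\V^0 + \partial_{v^k}\C_{t,\V}$, with an explicit factor of $t$. I would absorb this via Lemma \ref{gainv} applied at $x = \XX_\C$, namely $t\,\widehat{\V}^{\underline L}(\XX_\C) \lesssim \langle \Z_t\rangle\log^2(t)\max(t-|\XX_\C|,1)$, together with the bound $\langle \Z_t\rangle \leq e^{8C_0\sqrt{\Lambda}}\langle z\rangle$ from Lemma \ref{Lemcharacbase}. After weighting the $v$-entries by $\langle v\rangle$ to balance the $1/\V^0$ factors, the resulting coefficients are integrable in $t$ along the characteristic, the $\widehat{\V}^{\underline L}/\langle t-|\XX_\C|\rangle^{4/3}$ contribution being handled by Lemma \ref{Lemcompuint}. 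Gronwall's inequality then produces the stated bounds; the exponent $\langle v\rangle\langle z\rangle^3$ is a deliberately crude overestimate arising from stacking those $\langle z\rangle$-contributions (from $\langle \Z_t\rangle$ and from the conversion of $t$ into $\max(t-|\XX_\C|,1)\langle \Z_t\rangle$).

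For the last step, the integral forms of \eqref{characdef}, combined with the uniform integrability already used in Lemma \ref{Lemcharacbase}, show that $t_0 \mapsto (\Z,\V)(t,t_0,z,v)$ is Cauchy; applying the same reasoning to the variational system above, together with the pointwise bounds from Gronwall and dominated convergence, yields $C^1$-convergence to a limit $(\Z,\V)(t,\infty,\cdot,\cdot) \in C^1(\R^3_z \times \R^3_v)$ satisfying the stated estimates uniformly in $t_0 \in [T,+\infty]$. To identify this limit as the inverse of $(\Z,\V)(\infty,t,\cdot,\cdot)$, I would apply Lemma \ref{conv} to the sequence $\psi_n := (\Z,\V)(n,t,\cdot,\cdot)$, whose inverses are $\psi_n^{-1} = (\Z,\V)(t,n,\cdot,\cdot)$ by the group property of \eqref{characdef}; both $\psi_n$ and $\psi_n^{-1}$ converge pointwise as $n \to \infty$ by the first part of the statement and by Lemma \ref{Lemcharacbase}, and Step 2 provides the uniform $C^1$-bounds on compacts required to apply that lemma. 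The main obstacle is Step 2: tracking precisely how the factor of $t$ from $\partial_v\XX_\C$ combines with the $\langle t-|\XX_\C|\rangle^{-2}$ decay of $\nabla F$ without losing too many powers of $\langle v\rangle$ or $\langle z\rangle$ in the Gronwall exponent.
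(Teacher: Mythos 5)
Your strategy — differentiate the characteristics in $(z,v)$, bound the resulting variational coefficients using the null structure of the Lorentz force, apply Gronwall along the characteristic, and invoke Lemma \ref{conv} to identify the limit as an inverse — matches the paper's in substance, but the paper packages the differentiation step differently and, crucially, that packaging already resolves the obstacle you flag as the main difficulty. Rather than differentiating the ODE system \eqref{characdef} directly, the paper observes that for each fixed endpoint time, the maps $(t_0,z,v) \mapsto \Z^i(\tau,t_0,z,v)$ and $(t_0,z,v) \mapsto \V^i(\tau,t_0,z,v)$ solve the Vlasov equation $\T_F^\infty(h)=0$, and then controls $\widehat{Z}_\infty h$ for $\widehat{Z}_\infty \in \widehat{\mathbb{P}}^\infty_S$ by the commutation formula of Corollary \ref{CorCom} combined with Proposition \ref{Proestloworder}. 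The explicit factor of $t$ in $\partial_{v^k}\XX_\C$ that you identify as delicate is precisely what motivates the representation \eqref{eq:expressionvderiv}, $v^0\partial_{v^k} = \widehat{\Omega}^\infty_{0k} - z^k\partial_t^\infty - \widehat{v}^k S + \widehat{v}^k z^i\partial_{z^i}$, whose commutators were already catalogued in Proposition \ref{Comgen}; a direct variational ODE would simply rediscover that bookkeeping by hand. Your use of Lemma \ref{Lemcompuint} for the $\widehat{\V}^{\underline L}/\langle t-|\XX_\C|\rangle^{4/3}$ contribution is actually closer to the proof of Lemma \ref{Lemcharacbase}; in this proposition the paper instead converts everything to $t^{-5/4}$ time-decay via Lemma \ref{gainvadapted}, pays the $\langle v\rangle\langle z\rangle^3$ up front, and runs a plain one-variable Gronwall. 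Either route yields the stated crude exponent.

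One imprecision in your final step deserves a flag: the assertion that the integral form of \eqref{characdef} plus the integrability used in Lemma \ref{Lemcharacbase} shows $t_0 \mapsto (\Z,\V)(t,t_0,z,v)$ is Cauchy does not stand on its own. Comparing flows launched at $t_0$ and $t_0'$ via the group property requires a Lipschitz estimate on $(z,v) \mapsto (\Z,\V)(t,t_0',\cdot,\cdot)$, which is exactly the Jacobian bound you are in the process of proving; the paper sidesteps the circularity by first establishing $C^1_{\mathrm{loc}}$ convergence of the derivatives, then extracting a convergent subsequence of function values from the boundedness of Lemma \ref{Lemcharacbase}, applying Lemma \ref{conv} along that subsequence, and concluding convergence of the whole family by uniqueness of the subsequential limit. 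Your plan can be made rigorous with the order corrected, but the subsequence argument is the cleaner way to close this loop.
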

\begin{proof}
Let $\tau \geq T$, $1 \leq i \leq 3$ and $h= \Z^i(\tau, \cdot,\cdot , \cdot)$ or $h= \V^i(\tau, \cdot,\cdot , \cdot)$. By definition of the characteristics, $h$ is solution to the Vlasov equation $\T_F^\infty (h)=0$. The strategy is the following.
\begin{enumerate}
\item First, we prove that the $(z,v)$-derivatives of $h$ converge in $C^1_{\mathrm{loc}}(\R^3_z \times \R^3_v)$, as $t_0 \to + \infty$.
\item Then, according to Lemma \ref{Lemcharacbase}, $t_0 \mapsto (\Z,\V)(\tau,t_0,0,0)$ is bounded. Hence, there exists a sequence of time $(s_n)$, with $s_n \to + \infty$, such that $(\Z,\V)(\tau,s_n,0,0)$ converges as $n \to +\infty$. Together with $1.$, we get that $(\Z,\V)(\tau,s_n,z,v)$ converges for all $(z,v) \in \R^3_z \times \R^3_v$. Applying Lemma \ref{conv}, with $d=6$, to
$$ \psi_n = (\Z,\V)(\tau,s_n,\cdot,\cdot), \qquad \psi_n^{-1} =(\Z,\V)(s_n,\tau,\cdot, \cdot) ,$$
we obtain that $(z,v) \mapsto(\Z,\V)(\infty,\tau,z,v)$ is invertible and that its inverse, denoted $(\Z,\V)(\tau,\infty,z,v)$, is the limit of $(\Z,\V)(\tau,s_n,z,v)$. Finally, since $t_0 \mapsto (\Z,\V)(\tau,t_0,z,v)$ is bounded and has a unique subsequential limit, it converges to $(\Z,\V)(\tau,\infty,z,v)$ as $t_0 \to +\infty$.
\end{enumerate}
In order to prove the first step as well as the estimates for the derivatives of $h$, remark that in view of \eqref{eq:inidata} and
$$ v^0 \partial_{v^k} = \widehat{\Omega}_{0k}^\infty+z^k \partial_t^\infty +\widehat{v}^k S-\widehat{v}^k z \nabla_z ,\qquad \partial_{z^k} \in \widehat{\mathbb{P}}^\infty_S,$$
it suffices to prove that $\widehat{Z}_\infty h(t_0,\cdot , \cdot)$ converges in $C^1_{\mathrm{loc}}$, as $t_0 \to +\infty$, for all $\widehat{Z}_\infty \in \widehat{\mathbb{P}}^\infty_S$. According to the commutation formula of Corollary \ref{CorCom} and Proposition \ref{Proestloworder}, there holds
$$ \sup_{\widehat{Z}_\infty \in \widehat{\mathbb{P}}^\infty_S} \big| \T_F^\infty \big( \widehat{Z}_\infty h \big) \big|(t,z,v)  \lesssim \bigg( \, \frac{\sqrt{\Lambda} }{  t^{\frac{4}{3}}}+\frac{\sqrt{\Lambda} \, \widehat{v}^{\underline{L}}(\XX_\C) }{  \langle t-|\XX_\C| \rangle^{\frac{4}{3}}} \bigg)\langle z \rangle \, \sup_{\widehat{Z}_\infty \in \widehat{\mathbb{P}}^\infty_S} \big|  \widehat{Z}_\infty h  \big|(t,z,v)  \lesssim  \frac{\sqrt{\Lambda} }{  t^{\frac{5}{4}}} \langle v \rangle \, \langle z \rangle^3 \sup_{\widehat{Z}_\infty \in \widehat{\mathbb{P}}^\infty_S} \big|  \widehat{Z}_\infty h  \big|(t,z,v) , $$
where, in the last step, we applied Lemma \ref{gainvadapted}. Fix $(z,v) \in \R^3_z \times \R^3_v$ and let $\Z_{s} := \Z(s,t_0,z,v)$, $\V_{s} :=\V(s,t_0,z,v)$ as well as 
$$ \Phi (s) := \, \sup_{\widehat{Z}_\infty \in \widehat{\mathbb{P}}^\infty_S} \big|  \widehat{Z}_\infty h  \big|(s,\Z_s,\V_s).$$
By Duhamel's principle, and since $\T^\infty_F(|g|)=\frac{g}{|g|} \T_F^\infty(g)$ in $W^{1,1}_{\mathrm{loc}}$, we have
$$ \forall t \geq T, \qquad  |\Phi(t_0) - \Phi(\tau)|  \lesssim  \int_{ s \in [(\tau,t_0)]}  \frac{ \Lambda }{  s^{\frac{5}{4}}} \langle \V_s \rangle \, \langle \Z_s \rangle^3 \Phi(s)  \dr s , \qquad [(\tau,t_0)]:=[\min(\tau,t_0),\max(\tau,t_0)].$$
By Lemma \ref{Lemcharacbase}, $\langle \V_s \rangle \, \, \langle \Z_s \rangle^3 \lesssim C_\Lambda \, \langle v \rangle \, \langle z \rangle^3$, for a certain constant $C_\Lambda >0$.  We then deduce from Gronwall's inequality that there exists $C^1_\Lambda, \, C^2_\Lambda >0$ such that
$$ \forall \, t \geq T, \qquad \sup_{ s \in [(\tau,t_0)]} \Phi (s) \leq \Phi(\tau) \exp \left(  C^1_\Lambda \langle v \rangle \, \langle z \rangle^3 \right) \lesssim \big( \langle v \rangle+ \langle z \rangle \big)  \exp \left(  C^1_\Lambda \langle v \rangle \, \langle z \rangle^3 \right) \lesssim \exp \left(  C^2_\Lambda \langle v \rangle \, \langle z \rangle^3 \right) ,$$
where we estimated $\Phi(\tau)$ by exploiting \eqref{eq:inidata}. Using Duhamel principle once again, we obtain that 
$$ \forall \, t_0 \geq T , \qquad \big| \widehat{Z}_\infty h(t_0,z,v)-\widehat{Z}_\infty h (\tau,z,v) \big| \lesssim \int_{ s \in [(\tau,t_0)]}  \frac{ \sqrt{\Lambda} }{  s^{\frac{5}{4}}} \, \langle \V_s \rangle \, \langle \Z_s \rangle^3 \Phi(s)  \dr s \lesssim \exp \left( \, \overline{D}_\Lambda \langle v \rangle \, \langle z \rangle^3 \right),$$
which concludes the proof.
\end{proof}

Although certain of the estimates that have just derived could be significantly improved, they are sufficient in order to prove regularity results for the limit of the characteristics.
\begin{Pro}\label{Procharacinv}
For all $t_0 \in [T,+\infty[$, $(z,v) \mapsto (\Z,\V)(\infty,t_0,z,v)$ is a $C^1$-diffeomorphism from $\R^3_z \times \R^3_v$ onto itself. Moreover, $(\Z,\V)(\infty,\cdot, \cdot , \cdot) \in C^1([T,+\infty [ \times \R^3_z \times \R^3_v)$.
\end{Pro}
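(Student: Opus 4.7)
The plan is to establish for each fixed $t_0 \in [T, +\infty[$ that $\psi_{t_0} := (\Z, \V)(\infty, t_0, \cdot, \cdot)$ is a $C^1$-diffeomorphism of $\R^3_z \times \R^3_v$, and then to promote this to joint $C^1$-regularity in $(t_0, z, v)$.

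\textbf{Bijectivity.} Fix $t_0$ and consider, for integers $n \geq t_0$, the maps $\psi_n := (\Z, \V)(n, t_0, \cdot, \cdot)$, each a $C^1$-diffeomorphism by standard ODE theory on finite intervals. Lemma \ref{Lemcharacbase} gives pointwise convergence $\psi_n \to \psi_{t_0}$, and Proposition \ref{Lemderivcharac} (combined with $v^0\partial_{v^k} = \widehat{\Omega}^\infty_{0k} - z^k\partial_t^\infty - \widehat{v}^k S + \widehat{v}^k z^i\partial_{z^i}$ to recover $v$-derivatives) yields the uniform bound $|\nabla \psi_n| \leq e^{D_\Lambda\langle v\rangle\langle z\rangle^3}$. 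The inverses $\psi_n^{-1} = (\Z, \V)(t_0, n, \cdot, \cdot)$ converge pointwise to $(\Z, \V)(t_0, \infty, \cdot, \cdot)$ by the first bullet of Proposition \ref{Lemderivcharac} (taking $t = t_0$ there and letting the second time argument tend to $\infty$). Lemma \ref{conv} then gives that $\psi_{t_0}$ is bijective with inverse $(\Z, \V)(t_0, \infty, \cdot, \cdot)$.

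\textbf{$C^1$-regularity in $(z, v)$.} This is the heart of the proof. I would work with the Jacobi matrix $M(t) := D_{(z,v)}(\Z, \V)(t, t_0, z, v)$, which satisfies the linear ODE $\dot M(t) = A(t) M(t)$ with $M(t_0) = I$, where $A(t)$ is the Jacobian of the right-hand side of \eqref{characdef} evaluated along the characteristic $\big(t, (\Z, \V)(t, t_0, z, v)\big)$. Applying the commutator hierarchy of Corollary \ref{CorCom} to $h = \Z^i, \V^i$ (as in the proof of Proposition \ref{Lemderivcharac}) together with the pointwise decay of Proposition \ref{Proestloworder} and the two-variable integrability provided by Lemma \ref{Lemcompuint} yields, on every compact set $K \subset \R^3_z \times \R^3_v$, the tail estimate $\sup_{(z,v)\in K}\int_{t_1}^{t_2}|A(t) M(t)|\,dt \to 0$ as $t_1 \to \infty$. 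Hence $M(t)$ is uniformly Cauchy on compacts and converges to a continuous $M_\infty$. Combined with the uniform-on-compacts convergence $\psi_n \to \psi_{t_0}$ (from Ascoli-Arzelà using the above derivative bound), the classical uniform-convergence-of-derivatives criterion yields $\psi_{t_0} \in C^1$ with $D\psi_{t_0} = M_\infty$.

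\textbf{$C^1$-regularity in $t_0$.} From the semigroup identity $(\Z, \V)(\infty, t_0, z, v) = (\Z, \V)(\infty, t_1, (\Z, \V)(t_1, t_0, z, v))$, valid for any $t_1 \geq t_0$, differentiating in $t_0$ and using the standard ODE formula $\partial_{t_0}(\Z, \V)(t_1, t_0, z, v) = -D_{(z,v)}(\Z, \V)(t_1, t_0, z, v) \cdot F(t_0, z, v)$, where $F$ denotes the right-hand side of \eqref{characdef}, one obtains
\[
\partial_{t_0}(\Z, \V)(\infty, t_0, z, v) = -D_{(z,v)}(\Z, \V)(\infty, t_0, \cdot, \cdot)\big|_{(z,v)} \cdot F(t_0, z, v).
\]
All factors on the right are continuous in $(t_0, z, v)$ by the previous step and the regularity of $F$, so $(\Z, \V)(\infty, \cdot, \cdot, \cdot) \in C^1([T, +\infty[ \times \R^3_z \times \R^3_v)$. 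The main obstacle lies in step two: extracting uniform-on-compacts integrable time decay for the Jacobi coefficient $A(t)$ in a form compatible with the exponential factor $e^{D_\Lambda\langle v\rangle\langle z\rangle^3}$ appearing in the a priori bound on $|M(t)|$. This requires carefully propagating the commutator hierarchy from Corollary \ref{CorCom} through one additional level of differentiation and leveraging the $u$-integrable structure of Lemma \ref{Lemcompuint} along the characteristic.
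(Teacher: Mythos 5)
The paper does not prove $C^1$-regularity of the forward map $\Phi_{\infty,t_0}:=(\Z,\V)(\infty,t_0,\cdot,\cdot)$ by analyzing the forward Jacobi matrix $M(t)=D_{(z,v)}(\Z,\V)(t,t_0,z,v)$ and sending $t\to\infty$, which is what you propose. Instead, it exploits the fact that Proposition \ref{Lemderivcharac} has \emph{already} established that the backward map $\Phi_{t,\infty}:(z,v)\mapsto(\Z,\V)(t,\infty,z,v)$ is $C^1$ and is the inverse of $\Phi_{\infty,t}$. With that in hand, the only thing missing is non-degeneracy: the paper derives a two-sided bound $e^{-\overline{D}_\Lambda\langle v\rangle\langle z\rangle^3}\lesssim |J_{t,t_0}|(z,v)\lesssim e^{\overline{D}_\Lambda\langle v\rangle\langle z\rangle^3}$ on the Jacobian determinant of $\Phi_{t,t_0}$ (the upper bound from \ref{Lemderivcharac}; the lower bound by applying that same upper bound to $\Phi_{t_0,t}$, composing via $\Phi_{t_0,t}\circ\Phi_{t,t_0}=\mathrm{id}$, and using Lemma \ref{Lemcharacbase} to control the growth of $(\Z_t,\V_t)$). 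Passing to $t_0=+\infty$, the Jacobian of $\Phi_{t,\infty}$ is bounded away from zero, so the inverse function theorem makes $\Phi_{t,\infty}$ a local $C^1$-diffeomorphism, hence a global one since it is bijective. The map $\Phi_{\infty,t}$ is then $C^1$ as the inverse of a $C^1$-diffeomorphism with non-vanishing Jacobian, and no further a priori estimate is needed.

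The step you flag as the main obstacle --- ``extracting uniform-on-compacts integrable time decay for the Jacobi coefficient $A(t)$'' --- is a genuine gap in your argument, and it is precisely the difficulty that the paper's IFT route circumvents. Notice also that the pointwise bound in \ref{Lemderivcharac} (and all the a priori estimates built on Corollary \ref{CorCom}) concern the \emph{backward} map: there $h(t_0,z,v)=\Z^i(\tau,t_0,z,v)$ is a bona fide solution of $\T^\infty_F(h)=0$ in $(t_0,z,v)$, so the adapted vector-field hierarchy applies directly. The quantity $A(t)$ in your Jacobi ODE is instead the $(z,v)$-linearization of the \emph{flow generator} along the characteristic, and the map $(t,z,v)\mapsto(\Z,\V)(t,t_0,z,v)$ is \emph{not} a solution of $\T^\infty_F$, so Corollary \ref{CorCom} does not transfer verbatim. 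You would need to re-establish the analogue of \ref{Lemderivcharac} in the forward direction, which is extra work with no payoff: once $\Phi_{t,\infty}$ is $C^1$ with non-degenerate Jacobian, the result for $\Phi_{\infty,t}$ follows for free. Your bijectivity step and your $t_0$-regularity step (semigroup identity plus the standard formula for $\partial_{t_0}$ of a flow) are both fine, though the bijectivity argument essentially restates what \ref{Lemderivcharac} already proved via Lemma \ref{conv}, and the paper's $t_0$-regularity argument --- observing that the distributional Vlasov equation $\T^\infty_F\big((\Z,\V)(\infty,\cdot,\cdot,\cdot)\big)=0$ expresses the $t_0$-derivative in terms of quantities already known to be continuous --- is a lighter-weight alternative.
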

\begin{proof}
Let, for $(t,t_0) \in [T,+\infty[ \times [T,+\infty]$, $J_{t,t_0}$ be the Jacobian determinant of the map $\Phi_{t,t_0}:(z,v) \mapsto (\Z,\V)(t,t_0,z,v)$. Then, Proposition \ref{Lemderivcharac} implies that $\Phi_{t,t_0} \in C^{1}(\R^3_z \times \R^3_v)$ and
\begin{equation}\label{Jacobianesti}
\forall \, (t,t_0,z,v) \in [T,+\infty[^2 \times \R^3_z \times \R^3_v, \qquad \qquad   |J_{t,t_0}|(z,v) \leq 6! \, e^{6D_\Lambda \, \langle v \rangle \, \langle z \rangle^3}.
\end{equation}
Since $\Phi_{t_0,t} \circ \Phi_{t,t_0}=\mathrm{id}$ for all $(t,t_0) \in [T,+\infty[^2$, we get from the previous estimate that, for all $t \geq T$,
$$|J_{t,t_0}|^{-1}(z,v) =|J_{t_0,t}| \big( \Z (t,t_0,z,v),\V(t,t_0,z,v) \big) \leq 6! \, \exp \left(6D_\Lambda \, \langle \V(t,t_0,z,v) \rangle \, \langle \Z(t,t_0,z,v) \rangle^3 \right) .$$
We then deduce from \eqref{Jacobianesti} and Lemma \ref{Lemcharacbase} that
$$ \exists \, \overline{D}_\Lambda >0, \; \forall \, t \geq T, \; \forall \, (z,v) \in \R^3_z \times \R^3_v, \qquad e^{-\overline{D}_\Lambda \, \langle v \rangle \, \langle z \rangle^3 }  \lesssim  |J_{t,t_0}|(z,v) \lesssim  e^{\overline{D}_\Lambda \, \langle v \rangle \, \langle z \rangle^3}.$$
Since these inequalities necessarily hold for $t_0=+\infty$, we obtain from the inverse function theorem that $\Phi_{t,\infty}:(z,v) \mapsto (\Z,\V)(t,\infty,z,v)$ is a local $C^1$-diffeomorphism. As we already know from Proposition \ref{Lemderivcharac} that it is invertible, $\Phi_{t,\infty}$ is in fact a global diffeomorphism. Finally, since $(t_0,z,v) \mapsto (\Z,\V)(t,t_0,z,v)$ is solution to the Vlasov equation $\T_F^\infty(h)=0$ for all $t \geq T$, the same holds for the limit $(\Z,\V)(\infty,\cdot,\cdot,\cdot)$ in the space of distributions. It implies that $\partial_t (\Z,\V)(\infty,\cdot,\cdot,\cdot)$ is continous, which concludes the proof.
\end{proof}

\subsubsection{Proof of Proposition \ref{ProCauchpb}}

Let $h_\infty : \R^3_z \times \R^3_v \to \R$ such that $\mathbb{E}_N^{N_z,N_v}[h_\infty]< +\infty$. Assume that $h \in C^1 \big( [T,+\infty[ \times \R^3_z \times \R^3_v \big)$ is a solution to the asymptotic Cauchy problem \eqref{Cauchy:asymp} and fix $(z,v) \in \R^3_z \times \R^3_v$. By Duhamel principle, we have, for all $(t, t_0) \in [T,+ \infty[^{\,2}$,
$$ h(t,z,v) \!=\! h\big(t_0 , \Z (t_0, t,z,v) , \V (t_0 ,t,z,v) \big)-h \big( t_0,\Z (\infty,t,z,v), \V (\infty ,t,z,v) \big)+h \big( t_0,\Z (\infty ,t,z,v), \V ( \infty, t,z,v) \big).$$
By assumption, the last term on the right hand side converges to $h_\infty \big(  \Z (\infty,t,z,v), \V (\infty,t,z,v) \big)$ as $t_0 \to + \infty$. By the mean value theorem, we have
\begin{align*}
& |h(t_0 , \Z (t_0,t,z,v) , \V (t_0,t,z,v) )-h(t_0,\Z (\infty ,t,z,v), \V (\infty,t,z,v) )|\\
 & \qquad \qquad \qquad \lesssim   \|\nabla_{z,v} h\|_{L^\infty_{t,z,v}}  \big(|\Z (t_0,t,z,v)-\Z (\infty,t,z,v)|+|\V (t_0,t,z,v)-\V (\infty,t,z,v)| \big)
\end{align*}
and, according to Lemma \ref{Lemcharacbase}, the right hand side goes to $0$ as $t_0 \to +\infty$. Hence, we have
\begin{equation}\label{def:soluvlasovfrominf}
 \forall \, (t,z,v) \in [T,+ \infty[ \times \R^3_z \times \R^3_v, \qquad h(t,z,v) := h_\infty \big( \Z(\infty,t,z,v), \V(\infty,t,z,v) \big),
 \end{equation}
so that uniqueness holds in the functional space $C^1 \big( [T,+\infty[ \times \R^3_z \times \R^3_v \big)$. Conversally, let $h$ be defined by \eqref{def:soluvlasovfrominf}. By Lemma \ref{Lemcharacbase} and Proposition \ref{Procharacinv}, it is a $C^1$ solution to the asymptotic Cauchy problem \eqref{Cauchy:asymp}. Let $\psi : \R^3_z \times \R^3_v \to \R_+$ be a mollifier and 
$$\psi_n(z,v):= n^6\psi(nz,nv), \qquad h_n := h \ast \psi_n , \qquad \qquad n \in \mathbb{N}^*,$$
so that $h_n$ is a $C^\infty$ solution to \eqref{Cauchy:asymp} with asymptotic data $h_\infty \ast \psi_n$. 
\begin{enumerate}
\item Assume first that $h_\infty$ is compactly supported. By Lemma \ref{Lemcharacbase}, $h_n(t,\cdot,\cdot)$ is then compactly supported as well for all $t \geq T$. We can then apply Proposition \ref{Proapriori} to $h_n$ and derive the estimates stated in Proposition \ref{ProCauchpb}, for the energy norms of $h$, by letting $n \to +\infty$.
\item Let $\chi \in C^\infty (\R,\R_+)$ be a cutoff function such that $\chi(s)=1$ for all $s \leq 1/4$ and $\chi (s)=0$ for all $s \geq 1/2$. Consider then the solution $h_n$ to the Vlasov equation arising from the asymptotic data $(z,v) \mapsto h_\infty (z,v) \chi (|z,v|n^{-1})$. We can then apply the previous step to $h_n$ and derive the expected estimates for $h$ by passing to the limit as $n \to + \infty$.
\end{enumerate}

\subsection{Construction of the solution and the $L^2_x$ estimates}

According to Proposition \ref{ProCauchpb}, we can define $g$ as the unique global solution to \eqref{Cauchy:asymp} with asymptotic data $f_\infty$. Furthermore, Proposition \ref{ProCauchpb} applied for the parameters $(N_z,N_v)=(10,19)$ implies that there exists $B>0$, depending only on $(N,D)$, such that
\begin{equation}\label{eq:gBound}
\mathbf{E}_{N-3}^{10,19}[g]+\mathbf{E}_N^{7,13}[g] \leq   \mathbb{E}_N^{10,19}[f_\infty] \exp \big(  \exp \big( B \sqrt{\Lambda} \, \big) \big) \leq \epsilon B_\Lambda ,
 \end{equation}
where $B_\Lambda := \exp \big(  \exp \big( B \sqrt{\Lambda} \, \big) \big)$. In particular, we get $g \in \mathbb{V}^{B_\Lambda,\epsilon}_N$.
\begin{Rq}\label{RqenergyforTh1}
Similarly, in view of the assumption \eqref{eq:assumpfinfty} verified by $f_\infty$, there exists $B' >0$ depending only on $N$ such that $$\mathbf{E}_N^{8,15}[g] \leq \mathbb{E}_N^{11,21}[f_\infty] \exp \big(  \exp \big( B' \sqrt{\Lambda} \, \big) \big) \lesssim \epsilon \exp \big(  \exp \big( B' \sqrt{\Lambda} \, \big) \big) .$$
This stronger bound is important in order to control sufficiently well the initial data in the perspective of constructing the scattering map in Theorem \ref{Theo3}. However, we do not need it in order to prove the existence of the solution $(f,F)$ to the Vlasov-Maxwell system stated by Theorem \ref{Theo1}.
\end{Rq} 

We conclude this section by deriving decay estimates for $\int_v \widehat{Z}^\beta f \dr v$, where $f(t,x,v)=g(t,x-t\widehat{v}-\C_{t,v},v)$. For this, we will apply Proposition \ref{Protechfordecay} to $\widehat{Z}^\beta f$. It motivates us to relate $\widehat{Z}^\beta f$ with $\widehat{Z}^\kappa_\infty g$ and to prove the convergence of the spatial averages of $\widehat{Z}^\kappa_\infty g$, which is the purpose of the next result.

\begin{Pro}\label{Proconvspat}
For any $|\beta| \leq N-1$, we have
$$ \forall \, t \geq T, \qquad \bigg\| |v^0|^8 \int_{\R^3_z} \widehat{Z}^\beta_\infty g(t,z,\cdot) - \widehat{Z}^\beta_\infty f_\infty (z,\cdot) \dr z \bigg\|_{L^2(\R^3_v)} \lesssim \sqrt{\epsilon \Lambda B_\Lambda} \, \frac{ \log^{3N}(t) }{t}.$$
\end{Pro}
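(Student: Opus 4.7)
The plan is to express the difference as a time-integral of the $\partial_t$-derivative and apply the commutator estimate of Corollary \ref{Corestipartialtg}. Since Proposition \ref{ProCauchpb} ensures $g(s,\cdot,\cdot) \to f_\infty$ pointwise as $s \to +\infty$, the fundamental theorem of calculus gives, for all $(t,v) \in [T,+\infty[ \times \R^3_v$,
\[
\int_{\R^3_z} \widehat{Z}^\beta_\infty g(t,z,v) - \widehat{Z}^\beta_\infty f_\infty (z,v) \dr z = -\int_{s=t}^{+\infty} \partial_s \int_{\R^3_z} \widehat{Z}^\beta_\infty g(s,z,v) \dr z \, \dr s .
\]
Taking the weighted $L^2_v$-norm and applying Minkowski's inequality, the problem reduces to controlling $\||v^0|^8 \partial_s \int_z \widehat{Z}^\beta_\infty g(s,z,\cdot) \dr z \|_{L^2_v}$ uniformly in $s \geq t$ with sharp enough decay and then integrating in $s$. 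I would apply Corollary \ref{Corestipartialtg} and treat the two resulting contributions separately.

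For the first term of Corollary \ref{Corestipartialtg}, which carries an explicit prefactor $\sqrt{\Lambda}\log^{2N+4}(s)/s^2$, I would apply the Cauchy-Schwarz inequality in $z$ against the integrable weight $\langle z \rangle^{-10}$. The remaining factor $\langle z \rangle^{14+2N-2\kappa_H}|\widehat{Z}^\kappa_\infty g|^2$ matches exactly the integrand of $\mathbf{E}^{7,13}_N[g] \leq \epsilon B_\Lambda$, while the velocity weight $|v^0|^{16}\langle v \rangle^6$ from squaring is dominated by the $\langle v \rangle^{26}$ already present in that energy. This yields a bound of order $\sqrt{\epsilon \Lambda B_\Lambda}\log^{2N+4}(s)/s^2$ on the $L^2_v$-norm, which after integration from $t$ to $+\infty$ gives $\sqrt{\epsilon \Lambda B_\Lambda}\log^{2N+4}(t)/t$.

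For the second term, which carries the prefactor $\sqrt{\mathbf{E}^{7,13}_N[g]}\log^{2N+2}(s) \leq \sqrt{\epsilon B_\Lambda}\log^{2N+2}(s)$ and the integrand $\langle s-|\XX_\C|\rangle/(\langle z\rangle^4 \langle v\rangle^{12})|\nabla_{s,x}\mathcal{L}_{Z^\kappa} F|(s,\XX_\C)$ with $|\kappa| \leq N-1$, I would perform the change of variables $x = \XX_\C(s,z,v) = z + s\widehat{v} + \C_{s,v}$, then apply Minkowski's inequality in $x$ to move the $L^2_v$-norm inside. The resulting velocity integral is of the form $\int_v \langle v\rangle^{-8}\langle x-s\widehat{v}-\C_{s,v}\rangle^{-8}\dr v$, which Lemma \ref{decayparticucase} bounds by $\log(s)/\langle s+|x|\rangle^3$. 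A Cauchy-Schwarz in $x$ against the weight $\max(s-|x|,1)\langle s-|x|\rangle^2$ combined with the $L^2_x$ estimate \eqref{eq:ML2} of $\nabla_{t,x}\mathcal{L}_{Z^\kappa} F$ then produces the required time decay.

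The main obstacle is precisely the second term. The $L^2_x$ bound \eqref{eq:ML2} on the top-order derivatives $\nabla_{t,x}\mathcal{L}_{Z^\kappa} F$ is substantially weaker than the pointwise decay available for $|\kappa| \leq N-3$, and the factor $\langle s-|\XX_\C|\rangle$ in the integrand neutralises part of the decay we would otherwise extract. Closing the estimate will require splitting the contributions of $\kappa$ according to whether pointwise bounds from Proposition \ref{PropropMaxfield} are available (using the improved decay of the good null components for $|\kappa| \leq N-3$) or only the weighted $L^2_x$ bound can be invoked (for $N-3 \leq |\kappa| \leq N-1$), together with a careful use of the inequality $\langle s-|x|\rangle^2/\langle s+|x|\rangle^3 \lesssim \max(s-|x|,1)\langle s-|x|\rangle^2/s^3$ to convert the spatial weights into time decay and produce the stated bound $\sqrt{\epsilon \Lambda B_\Lambda}\log^{3N}(t)/t$.
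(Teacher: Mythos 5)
Your skeleton — fundamental theorem of calculus in time, Minkowski's inequality to pass the $L^2_v$ norm under the $s$-integral, Corollary \ref{Corestipartialtg}, and Cauchy--Schwarz in $z$ for the first term absorbed by $\mathbf{E}^{7,13}_N[g]\leq \epsilon B_\Lambda$ — is exactly the paper's route, and that part is fine. The gap is in your treatment of the second term, and it is a genuine one. If you apply Minkowski in $x$ first and only then Cauchy--Schwarz in $x$ against the weight $\max(s-|x|,1)\langle s-|x|\rangle^2$, the dual factor you must integrate is $\int_{\R^3_x}|\max(s-|x|,1)|^{-1}\langle s+|x|\rangle^{-3}\dr x$, which diverges: for $|x|\geq s$ one has $\max(s-|x|,1)=1$ and the radial integrand is $r^2\langle s+r\rangle^{-3}\sim r^{-1}$, non-integrable at infinity. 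Keeping the stronger output of the velocity integral (namely $\max(s-|x|,1)\langle s+|x|\rangle^{-4}$ rather than the $\langle s+|x|\rangle^{-3}$ you quote from Lemma \ref{decayparticucase}) does not rescue this order of operations either: after Minkowski in $x$ the relevant dual factor is $\|\langle s+|x|\rangle^{-2}\|_{L^2_x}\sim s^{-1/2}$, so the second contribution to $\|\,|v^0|^8\partial_s\int_z\widehat{Z}^\beta_\infty g\,\dr z\|_{L^2_v}$ decays only like $s^{-1/2}$ up to logarithms, and $\int_t^{+\infty}s^{-1/2}\dr s$ diverges. Your fallback inequality $\langle s-|x|\rangle^2\langle s+|x|\rangle^{-3}\lesssim \max(s-|x|,1)\langle s-|x|\rangle^2 s^{-3}$ is true but also insufficient: applied to the squared quantity it yields $s^{-3}$, i.e.\ $s^{-3/2}$ for the norm and $t^{-1/2}$ after time integration, still short of the claimed $t^{-1}\log^{3N}(t)$.

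The repair is to never leave the $L^2_{z,v}$ framework. Apply Cauchy--Schwarz only in $z$, against $\langle z\rangle^{-2}\in L^2(\R^3_z)$, so that the squared weighted $L^2_v$ norm of the second term of Corollary \ref{Corestipartialtg} is bounded by $\log^{4N+4}(s)\,\mathbf{E}^{7,13}_N[g]\int_z\int_v \langle s-|\XX_\C|\rangle^2\,|\nabla_{t,x}\mathcal{L}_{Z^\kappa}F|^2(s,\XX_\C)\,\langle z\rangle^{-4}\langle v\rangle^{-8}\dr z\dr v$. After the change of variables $x=\XX_\C(s,z,v)$ this is precisely the quantity controlled by the second estimate of Proposition \ref{Proesttoporder}: there the $v$-integration is performed \emph{first}, and one power of Lemma \ref{gainv} converts $\langle x-s\widehat{v}-\C_{s,v}\rangle^{-1}$ into $\max(s-|x|,1)\langle s+|x|\rangle^{-1}\langle v\rangle^2\log^2(s)$ before Lemma \ref{decayparticucase} is invoked, producing the pointwise-in-$x$ factor $\max(s-|x|,1)\log^3(s)\langle s+|x|\rangle^{-4}$. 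Since this factor multiplies $\langle s-|x|\rangle^2|\nabla_{t,x}\mathcal{L}_{Z^\kappa}F|^2$ directly and $\langle s+|x|\rangle^{-4}\leq s^{-4}$ pointwise, the estimate \eqref{eq:ML2} closes the bound at $\Lambda\log^5(s)s^{-4}$ for the square, hence $s^{-2}$ for the norm and the stated $t^{-1}\log^{3N}(t)$ after integrating in $s$. In short: the loss in your argument comes from converting to an $L^1_x$ quantity via Minkowski before exploiting the velocity integral; the paper's order of operations keeps the square intact so that the full $\langle s+|x|\rangle^{-4}$ gain is available pointwise rather than through an $L^2_x$ norm of its square root.
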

\begin{proof}
We will use that for any sufficiently regular function $\psi :[T,+\infty[ \times \R^3_v$, we have, according to Minkowski integral inequality,
$$ \forall \, T \leq t_1 \leq t_2, \qquad  \big\| \psi (t_2,\cdot)-\psi (t_1 , \cdot) \big\|_{L^2(\R^3_v)} = \bigg\| \int_{t=t_1}^{t_2} \partial_t \psi (t,\cdot) \dr t \bigg\|_{L^2(\R^3_v)} \! \leq \int_{t=t_1}^{t_2} \big\| \partial_t \psi ( t , \cdot) \big\|_{L^2(\R^3_v)} \dr t   .$$
Applying first Corollary \ref{Corestipartialtg} and then the Cauchy-Schwarz inequality in $z$, we get, as $z \mapsto \langle z \rangle^{-2} \in L^2(\R^3_z)$,
\begin{align*}
\int_{\R^3_v} \langle v \rangle^{16} \bigg| \partial_t \int_{\R^3_z} \widehat{Z}^\beta_\infty h(t,z,v) \dr z \bigg|^2  \dr v  \lesssim &\sum_{|\kappa| \leq N}  \, \frac{\Lambda \log^{5N}(t)}{ t^4} \int_{\R^3_z}\int_{\R^3_v}  \langle v \rangle^{22}  \langle z \rangle^{8+2N-2\kappa_H} \, \big| \widehat{Z}_\infty^\kappa g \big|^2(t,z,v) \dr z \dr v \\
 & +  \log^{4N+4}(t) \, \mathbf{E}_N^{7,13}[g] \int_{\R^3_z}\int_{\R^3_v}  \langle t-|\XX_\C| \rangle^2  \left|\nabla_{t,x} \mathcal{L}_{Z^\kappa}F \right|^2(t,\XX_\C) \frac{\dr z \dr v}{  \langle z \rangle^4 \, \langle v \rangle^{8}}   .
\end{align*}
The $L^2$ bound \eqref{eq:gBound} for $g$ and Proposition \ref{Proesttoporder} imply the result.
\end{proof}
We now relate the derivatives of $f$ to the ones of $g$.
\begin{Lem}\label{Lemftog}
Let $|\beta| \leq N$. Then, we have the $L^2_{x,v}$ bound
$$ \forall \, t \geq T, \qquad \int_{\R^3_x} \int_{\R^3_v}  \langle x-t\widehat{v} \rangle^{14} \, \langle v \rangle^{26} \big| \widehat{Z}^\beta f \big|^2(t,x,v)  \dr x \dr v \lesssim \epsilon B_{ \Lambda}\log^{6N}(t)  .$$
For any $|\kappa| \leq N-2$, we have the $L^\infty_{x,v}$ bound
$$ \forall \, t \geq T, \qquad \sup_{(x,v) \in \R^3 \times \R^3} \langle x-t\widehat{v} \rangle^7 \, \langle v \rangle^{13} \big| \widehat{Z}^\beta f \big|(t,x,v)  \lesssim \sqrt{\epsilon B_{ \Lambda}}\log^{3N}(t)  .$$
If $|\beta| \leq N-1$, we have
$$ \forall \, t \geq T, \qquad \int_{\R^3_v} \langle v \rangle^{16}  \bigg|  \int_{\R^3_x} \widehat{Z}^\beta f (t,x,v) \dr x- \int_{\R^3_z} \widehat{Z}^\beta_\infty f_\infty (z,v) \dr z \bigg|^2 \dr v \lesssim \epsilon \Lambda B_\Lambda\frac{ \log^{6N}(t) }{t^2}.$$
\end{Lem}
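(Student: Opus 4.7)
The plan is to use Lemma \ref{Lemrelftog} iteratively to rewrite $\widehat{Z}^\beta f$ in the coordinates $(t,z,v)$ where we have good control of $g$. More precisely, by iterating the identities in Lemma \ref{Lemrelftog}, one can express
\begin{equation*}
\big[\widehat{Z}^\beta f\big](t,z+t\widehat{v}+\C_{t,v},v) = \widehat{Z}^\beta_\infty g(t,z,v) + \sum_{|\kappa| \leq |\beta|-1, \; \kappa \neq \beta} P_\kappa^\beta(t,z,v)\,\widehat{Z}^\kappa_\infty g(t,z,v),
\end{equation*}
where each $P_\kappa^\beta$ is a polynomial in $z$, $\C_{t,v}$ and its $(t,v)$-derivatives, satisfying the hierarchy $\deg_z P_\kappa^\beta + \kappa_H \leq \beta_H$ and the size bound $|P_\kappa^\beta|(t,z,v) \lesssim \log^{q}(t) \langle z \rangle^{\deg_z P_\kappa^\beta}$ for some $q \lesssim N$, thanks to $|\C_{t,v}|, |v^0\partial_v\C_{t,v}|, |t\partial_t \C_{t,v}|, |\widehat{Z}_\infty \C_{t,v}| \lesssim \sqrt{\epsilon}\log(t) \lesssim \log^2(t)$ (from Proposition \ref{Proasympelec}).

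For the $L^2_{x,v}$ bound, I perform the change of variables $z = x - t\widehat{v} - \C_{t,v}$ (Jacobian $1$ at fixed $v$) and use $\langle x - t\widehat{v}\rangle = \langle z + \C_{t,v}\rangle \lesssim \log^2(t)\langle z\rangle$. The polynomial factor $\deg_z P_\kappa^\beta \leq \beta_H - \kappa_H$ combines with the $\langle z\rangle^{14}$ weight to give exactly $\langle z\rangle^{14 + 2\beta_H - 2\kappa_H}$, which is $\leq \langle z\rangle^{14 + 2N - 2\kappa_H}$; thus the bound is controlled by $\log^{O(N)}(t)\,\mathbf{E}^{7,13}_N[g] \lesssim \epsilon B_\Lambda \log^{6N}(t)$ by \eqref{eq:gBound}. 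The $L^\infty_{x,v}$ bound for $|\beta| \leq N-2$ follows from a Sobolev embedding in $\R^3_z \times \R^3_v$ applied to the $L^2$ bound with a few additional derivatives, combined again with the identity above; the square root of the $L^2$ estimate yields the factor $\sqrt{\epsilon B_\Lambda}\log^{3N}(t)$.

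For the convergence of spatial averages, I write
\begin{equation*}
\int_x \widehat{Z}^\beta f(t,x,v)\,\dr x - \int_z \widehat{Z}^\beta_\infty f_\infty(z,v)\,\dr z = \Big(\int_z \widehat{Z}^\beta_\infty g - \widehat{Z}^\beta_\infty f_\infty \,\dr z\Big) + \int_z R^\beta(t,z,v)\,\dr z,
\end{equation*}
where $R^\beta$ is the sum of all correction terms from the iterated use of Lemma \ref{Lemrelftog}. The first bracket is controlled in $L^2_v$ by $\sqrt{\epsilon\Lambda B_\Lambda}\,\log^{3N}(t)/t$ directly by Proposition \ref{Proconvspat}. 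For the remainder, note that each term of $R^\beta$ has the schematic form $P_\kappa^\beta(t,z,v)\,\widehat{Z}^\kappa_\infty g$ with $|\kappa|<|\beta|$ and $P_\kappa^\beta$ polynomial in $z$ with $(t,v)$-dependent coefficients; after integration by parts in $z$, any term carrying a pure $z$-translation (i.e.\ coming from $\partial^\infty_{t}$, $S$, $\widehat{\Omega}_{ij}$, or the translation-type corrections in $\widehat{\Omega}_{0k}$) integrates to zero because the coefficients depend only on $(t,v)$, and only two kinds of contributions survive: (i) those arising from the $\C^k\,\partial_t^\infty g$ piece of the $\widehat{\Omega}_{0k}$ correction, which become $\C^k \partial_t\!\int_z g\,\dr z$-type expressions bounded via Corollary \ref{Corestipartialtg}; and (ii) those arising from the $z^k\partial_t \C^\ell \partial_{z^\ell} g$ piece, which after integration by parts give $\partial_t \C^k \int_z g\,\dr z$-type expressions with $1/t$-decay. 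Both families are $O(\log^{3N}(t)/t)$ in $L^2_v$ after combining with \eqref{eq:gBound} and $|\C|,|t\partial_t\C|,|v^0\partial_v\C| \lesssim \log^2(t)$.

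The main obstacle is the careful bookkeeping in the last step: one must check that, at arbitrary order $|\beta| \leq N-1$, every correction term produced by iterating Lemma \ref{Lemrelftog} either vanishes after integration in $z$ or reduces (via integration by parts) to a sum of terms each falling into one of the two categories above, all of which carry the expected $1/t$ decay. Once this combinatorial fact is established, the $L^2_v$-estimate on $\int_z R^\beta \dr z$ of size $\sqrt{\epsilon \Lambda B_\Lambda}\log^{3N}(t)/t$ is immediate, and squaring yields the claimed bound.
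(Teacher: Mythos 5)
For the first two estimates your proof is essentially the paper's: iterate Lemma \ref{Lemrelftog}, track a $z$-polynomial with the hierarchy $\deg_z P_\kappa^\beta + \kappa_H \leq \beta_H$, change variables, and absorb the $z$-weight into the hierarchical energy $\mathbf{E}^{7,13}_N[g]$; the $L^\infty_{x,v}$ bound then follows by Sobolev embedding in $(z,v)$. Your explicit bookkeeping of the $z$-polynomial is if anything slightly more careful than the paper's stated schematic form, and no issue arises.

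The argument for the third estimate, however, has a genuine gap. You classify the correction terms and assert that, after integrating in $z$, ``any term carrying a pure $z$-translation (i.e.\ coming from $\partial_t^\infty$, $S$, $\widehat{\Omega}_{ij}$, or the translation-type corrections in $\widehat{\Omega}_{0k}$) integrates to zero'' and that only two kinds of surviving terms remain. This is not correct as stated: $\int_z \partial_t^\infty g\,\dr z = \partial_t\int_z g\,\dr z$ does \emph{not} vanish — only the $-\widehat{v}\cdot\nabla_z$ part does — and once you iterate Lemma \ref{Lemrelftog} the corrections produce terms of the schematic form $t^{-m}P_{p,q}(\C)\,(\partial_t^\infty)^n\partial_z^{\alpha_z}\widehat{Z}^\kappa_\infty g$ with $n$ as large as $|\beta|$. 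After integration in $z$ the $\partial_z^{\alpha_z}$ kills the terms with $|\alpha_z|\geq 1$, but you are left with all the terms $\int_z(\partial_t^\infty)^n\widehat{Z}^\kappa_\infty g\,\dr z$, $1 \leq n \leq |\beta|$, which your ``two kinds'' classification does not cover and which Corollary \ref{Corestipartialtg} does not directly bound (it only gives a single $\partial_t$ on the spatial average). The missing ingredient is the algebraic substitution $t\,\partial_t^\infty = \widehat{S}-3 - z\cdot\nabla_z - t\,\widehat{v}\cdot\nabla_z$: this converts each $\partial_t^\infty$ into a factor $t^{-1}$ times a vector field in $\K^\infty$ plus $z$-translations, so that after one more integration by parts $\int_z(\partial_t^\infty)^n\widehat{Z}^\kappa_\infty g\,\dr z$ reduces to $t^{-n}\sum_{|\xi|\leq |\kappa|+n}\int_z\widehat{Z}^\xi_\infty g\,\dr z$, which is then controlled by Proposition \ref{Proconvspat} and the energy bound \eqref{eq:gBound}. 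Without this reduction you cannot extract the factor $1/t$ from the iterated $\partial_t^\infty$ corrections, and the claimed $O(\log^{3N}(t)/t)$ bound on the remainder is not justified.
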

\begin{proof}
By iterating Lemma \ref{Lemrelftog} and by exploiting the commutation relations \eqref{eq:commutePinf} between $\partial_t^\infty$ and $\partial_{z^i}$ with $Z_\infty \in \widehat{\mathbb{P}}_S^\infty$, we obtain the following result. For any $|\beta| \leq N$, we can write
\begin{equation*}
\big( \widehat{Z}^\beta f \big) (t,z+t\widehat{v}+\C_{t,v},v)- \widehat{Z}^\beta_\infty g(t,z,v)
\end{equation*}
as a combination, with coefficients which are polynomials in $\widehat{v}$, of terms of the form
$$  t^{-m}P_{p,q}(\C) \big( \partial_t^\infty \big)^n\partial_z^{\alpha_z} \widehat{Z}^\kappa_\infty g(t,z,v)    , \qquad n+|\alpha_z|+|\kappa| \leq |\beta|, \quad n+|\alpha_z| \geq 1, \quad 0 \leq  p \leq m+|\beta| \leq 2 |\beta|, \quad  q \leq |\beta|.$$
By \eqref{estiPolyC}, we have $|t^{-m}P_{p,q}(\C)| \lesssim t^{-m} \log^{2p}(t) \lesssim 1$ if $m \geq 1$ and $|t^{-m}P_{p,q}(\C)| \lesssim \log^{2N}(t)$ otherwise. Hence,
$$ \langle z+\C_{t,v} \rangle^7 \, \langle v \rangle^{13} \big| \big( \widehat{Z}^\beta f \big) (t,z+t\widehat{v}+\C_{t,v},v) \big| \lesssim \log^{2N+7}(t)  \sup_{|\xi| \leq |\beta|} \langle z \rangle^7 \, \langle v \rangle^{13} \big|\widehat{Z}^\xi_\infty g \big|(t,z,v).$$
The $L^2_{x,v}$ estimates then follows from the boundedness in $L^2_{z,v}$ of the derivatives of $g$ given by \eqref{eq:gBound}. The $L^\infty_{x,v}$ ones follow from the Sobolev embedding $H^2_{z,v} \hookrightarrow L^\infty_{z,v}$. For the spatial average, by integration by parts in $z$, we have
\begin{equation*}
\bigg|\int_{\R^3_x} \widehat{Z}^\beta f  (t,x,v) \dr x- \int_{\R^3_z} \widehat{Z}^\beta_\infty g(t,z,v) \dr z \bigg| \lesssim \sum_{1 \leq n \leq |\beta|} \sum_{|\kappa| \leq |\beta|- n } \log^{2N}(t) \bigg|\int_{\R^3_z} \big( \partial_t^\infty \big)^n \widehat{Z}^\kappa_\infty g(t,z,v) \dr z \bigg|.
\end{equation*}
Then, write $t\partial_t^\infty = S-z\cdot \nabla_z+\widehat{v}\cdot \nabla_z$ and perform again integration by parts, so that the result follows from Proposition \ref{Proconvspat} as well as \eqref{eq:gBound}.
\end{proof}

We are finally able to deduce the following $L^2_x$ estimates for the velocity averages of $f$.
\begin{Cor}\label{Corrho}
For any $|\beta| \leq N-1$ and $\mu \in \llbracket 0 , 3 \rrbracket$, we have 
$$
\forall \, t \geq T, \qquad \int_{\R^3_x} \langle t+|x| \rangle^{5} \bigg|\int_{\R^3_v}\widehat{v}_\mu \widehat{Z}^\beta f (t,x,v) \mathrm{d} v -   J^{\mathrm{asymp}}_{\mu}\big[ Z_\infty^\beta f_\infty \big](t,x) \bigg|^2  \dr x   \lesssim \epsilon \Lambda B_{ \Lambda}\log^{6N}(t) .
$$
\end{Cor}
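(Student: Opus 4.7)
The plan is to mimic the linear-case argument of Corollary \ref{Corlinbound} by invoking Proposition \ref{Protechfordecay} with the distribution function $\widehat{v}_\mu \widehat{Z}^\beta f$ in place of the generic $f$. Since $\beta$ is allowed up to order $N-1$ and Proposition \ref{Protechfordecay} costs one derivative, one ends up needing $L^2_{x,v}$ bounds on $\widehat{Z}^\xi(\widehat{v}_\mu \widehat{Z}^\beta f)$ for $|\xi|\leq 1$, i.e.\ quantities of total order at most $N$. These bounds are provided by the $L^2_{x,v}$ estimate of Lemma \ref{Lemftog} (the weights $\langle x-t\widehat{v}\rangle^{14}\langle v\rangle^{26}$ there are more than enough to absorb the $\langle x-t\widehat{v}\rangle^{8}\langle v\rangle^{14}$ required and the polynomial-in-$\widehat v$ factors coming from commuting $\widehat{Z}^\xi$ with $\widehat{v}_\mu$), so the right-hand side of Proposition \ref{Protechfordecay} is $\lesssim \epsilon B_\Lambda \log^{6N}(t)$.

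The output of Proposition \ref{Protechfordecay} is a comparison of $\int_v \widehat{v}_\mu\widehat{Z}^\beta f\,\dr v$ with the quantity
$$
\mathcal{Q}(t,x):=\frac{\mathds{1}_{|x|<t}}{t^3}\int_{\R^3_y}\Big[|v^0|^5\,\widehat{v}_\mu\widehat{Z}^\beta f\Big]\!\bigg(t,y,\frac{\widecheck{\,x\,}}{t}\bigg)\dr y.
$$
Using the identity $\widehat{v}_\mu=x_\mu/t$ when $v=\widecheck{x/t}$ (with the convention $x_0=-t$), $\mathcal{Q}$ equals $\frac{\mathds{1}_{|x|<t}}{t^3}\frac{x_\mu}{t}\int_y[|v^0|^5\widehat{Z}^\beta f](t,y,\widecheck{x/t})\,\dr y$, which has exactly the structure of $J^{\mathrm{asymp}}_\mu[\,\cdot\,]$ applied to the spatial average of $\widehat{Z}^\beta f$. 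Thus by the triangle inequality it remains to control
$$
\int_{\R^3_x}\langle t+|x|\rangle^5\bigg|\mathcal{Q}(t,x)-J^{\mathrm{asymp}}_\mu\!\big[\widehat{Z}^\beta_\infty f_\infty\big](t,x)\bigg|^2\dr x.
$$

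To handle this term, note that both quantities are supported in $\{|x|<t\}$ and both equal $\frac{x_\mu}{t^4}$ times an integral depending only on $\widecheck{x/t}$; so a change of variables $x=t\widehat{v}$ (with Jacobian $t^3|v^0|^{-5}$, Lemma \ref{cdv}) together with the bound $\langle t+|x|\rangle^7/t^8\lesssim t^{-1}$ on $|x|<t$ converts it into
$$
\lesssim\,t^{2}\int_{\R^3_v}\langle v\rangle^{5}\bigg|\int_{\R^3_y}\widehat{Z}^\beta f(t,y,v)\,\dr y-\int_{\R^3_z}\widehat{Z}^\beta_\infty f_\infty(z,v)\,\dr z\bigg|^2\dr v.
$$
By the third inequality of Lemma \ref{Lemftog} (which provides convergence of spatial averages with rate $t^{-2}$ in a weighted $L^2_v$ norm controlling $\langle v\rangle^{16}$, and hence certainly $\langle v\rangle^{5}$), this is bounded by $\epsilon\Lambda B_\Lambda\log^{6N}(t)$. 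Combining with the contribution from Proposition \ref{Protechfordecay} and using $\Lambda\geq 1$ gives the stated bound.

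The argument is essentially bookkeeping, and I do not anticipate any serious obstacle: the only mildly delicate points are (i) verifying that the commutators $\widehat{Z}^\xi(\widehat{v}_\mu\widehat{Z}^\beta f)$ truly reduce to order at most $N$ derivatives of $f$ multiplied by polynomials in $\widehat v$, so that Lemma \ref{Lemftog} applies, and (ii) checking that the weight losses in the change of variables $x=t\widehat v$ are absorbed by the abundant $\langle v\rangle$ weights available in Lemma \ref{Lemftog}. Both are routine once set up carefully.
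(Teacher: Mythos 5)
Your argument is correct and is essentially the paper's own proof: the paper deduces Corollary \ref{Corrho} from Corollary \ref{Corrho2} with $\mathbf{q}(\omega,v)=\widehat{v}_\mu$, whose proof consists precisely of applying Proposition \ref{Protechfordecay} to $\mathbf{q}\,\widehat{Z}^\beta f$ via the $L^2_{x,v}$ bounds of Lemma \ref{Lemftog}, and then handling the remaining difference by the change of variables $x=t\widehat{v}$ together with the convergence of spatial averages from Lemma \ref{Lemftog}. Your bookkeeping of the weights and of the factor $\langle t+|x|\rangle^{7}t^{-8}\lesssim t^{-1}$ matches the intended computation.
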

\begin{Rq}
Recall that if $\beta_T \geq 1$, so that $\widehat{Z}^\beta$ contains at least one translation, then $J^{\mathrm{asymp}}[Z_\infty^\beta f_\infty]=0$.
\end{Rq}

In fact, this result is a direct consequence of the following one, applied for $\mathbf{q}(\omega,v)=\widehat{v}_\mu$, requiring more decay in $v$ but which will allow us to deal with the top order derivatives, where a parameter $\omega \in \mathbb{S}^2$ will naturally appear.
\begin{Cor}\label{Corrho2}
Let $\mathbf{q} : \mathbb{S}^2_\omega \times \R^3_v \to \R$ be a smooth function such that $|\mathbf{q}|(\cdot , v) + |\nabla_v \mathbf{q}|(\cdot , v)  \lesssim \langle v \rangle^3$ and $|\beta| \leq N-1$. We define
$$ \rho^\beta\big[ f,  f_\infty ; \mathbf{q} \big](t,x,\omega) :=  \int_{\R^3_v} \mathbf{q}(\omega,v) \widehat{Z}^{\beta} f(t,x,v)  \dr v-  \mathds{1}_{|x| < t} \,  \frac{1}{t^3} \mathbf{q} \bigg( \omega, \frac{\widecheck{ \; x \;}}{t} \bigg) \! \int_{\R^3_z}  \Big[ |v^0|^5 \widehat{Z}^\beta_\infty f_\infty \Big] \bigg( z, \frac{\widecheck{ \; x \;}}{t} \bigg) \dr z.$$
There holds, uniformly in $\omega \in \mathbb{S}^2_\omega$,
$$
\forall \, t \geq T , \qquad \int_{\R^3_x} \langle t+|x| \rangle^{5} \Big| \rho^\beta\big[ f,  f_\infty ; \mathbf{q} \big](t,x,\omega) \Big|^2 \dr x   \lesssim \epsilon \Lambda B_{ \Lambda}\log^{6N}(t).
$$
\end{Cor}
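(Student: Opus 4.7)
The plan is to reduce the statement to a combination of two inputs already at our disposal: the generic $L^2_x$ asymptotic expansion of Proposition \ref{Protechfordecay} applied to a distribution function carrying an extra velocity weight, and the quantitative convergence of the spatial averages of $\widehat{Z}^\beta f$ toward those of $\widehat{Z}^\beta_\infty f_\infty$ provided by Lemma \ref{Lemftog}. The target quantity $\rho^\beta[f,f_\infty;\mathbf{q}]$ will then be obtained as the sum of two contributions, each controlled by one of these inputs.

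First, I would apply Proposition \ref{Protechfordecay} to the distribution function $(t,x,v)\mapsto \mathbf{q}(\omega,v)\widehat{Z}^\beta f(t,x,v)$, treating $\omega\in\mathbb{S}^2$ as a parameter. Since, for fixed $x$, the factor $\mathbf{q}\bigl(\omega,\widecheck{x/t}\bigr)$ is independent of the integration variable $y$, it can be pulled out of the $y$-integral appearing in the proposition. This yields a control in $L^2_x$, with rate $t^{-5/2}$, of the difference between $\int_v \mathbf{q}(\omega,v)\widehat{Z}^\beta f\,\dr v$ and $\mathds{1}_{|x|<t}\,t^{-3}\,\mathbf{q}\bigl(\omega,\widecheck{x/t}\bigr)\int_y [|v^0|^5\widehat{Z}^\beta f](t,y,\widecheck{x/t})\,\dr y$, by weighted $L^2_{x,v}$ norms of $\mathbf{q}\widehat{Z}^\beta f$ and of $\widehat{Z}(\mathbf{q}\widehat{Z}^\beta f)$. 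The assumption $|\mathbf{q}|+|\nabla_v\mathbf{q}|\lesssim \langle v\rangle^3$ converts these into weighted $L^2_{x,v}$ norms of $\widehat{Z}^{\beta'}f$ with $|\beta'|\leq |\beta|+1\leq N$, which are in turn bounded by $\epsilon B_\Lambda\log^{6N}(t)$ via the first estimate of Lemma \ref{Lemftog}.

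Second, it remains to replace the spatial average of $|v^0|^5\widehat{Z}^\beta f$ evaluated at velocity $\widecheck{x/t}$ by that of $|v^0|^5\widehat{Z}^\beta_\infty f_\infty$. On the interior region $\{|x|<t\}$, I would perform the change of variable $v_0=\widecheck{x/t}$, whose Jacobian is $t^3\langle v_0\rangle^{-5}$ by Lemma \ref{cdv}. After pulling the constant $|v_0^0|^5$ out of the spatial integral and using $\langle t+|x|\rangle^5\leq 2^5 t^5$ together with $|\mathbf{q}|^2\lesssim \langle v_0\rangle^6$, the corresponding $L^2_x$ contribution is bounded, up to an absolute constant, by $t^2$ times an integral in $v_0\in\R^3$ with weight $\langle v_0\rangle^{11}$ of the squared difference between $\int_y \widehat{Z}^\beta f(t,y,v_0)\,\dr y$ and $\int_z \widehat{Z}^\beta_\infty f_\infty(z,v_0)\,\dr z$. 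Since $11\leq 16$, the third estimate of Lemma \ref{Lemftog} then yields directly a bound of the form $\epsilon\Lambda B_\Lambda\log^{6N}(t)$.

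I do not anticipate any genuine obstacle. The entire argument is linear in the parameter $\omega$, which enters only through $\mathbf{q}(\omega,\cdot)$ and is never differentiated, so all constants along the chain of inequalities are independent of $\omega\in\mathbb{S}^2$. The only point requiring modest bookkeeping is the matching of velocity weights between the $L^2_{x,v}$ norms produced by Proposition \ref{Protechfordecay} and those available in Lemma \ref{Lemftog}, but the margin granted by the hypothesis $|\mathbf{q}|+|\nabla_v\mathbf{q}|\lesssim \langle v\rangle^3$ is precisely tailored to absorb these factors.
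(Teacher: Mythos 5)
Your proposal is correct and takes essentially the same route as the paper: apply Proposition \ref{Protechfordecay} to $\mathbf{q}(\omega,\cdot)\widehat{Z}^\beta f$, control the resulting right-hand side by the first estimate of Lemma \ref{Lemftog}, and then compare the spatial average of $|v^0|^5\widehat{Z}^\beta f$ with that of $|v^0|^5\widehat{Z}^\beta_\infty f_\infty$ via the change of variables $x=t\widehat{v}$ and the third estimate of Lemma \ref{Lemftog}. The only cosmetic inaccuracy is the phrase ``rate $t^{-5/2}$'' for the first step: Proposition \ref{Protechfordecay} gives a bound on the $\langle t+|x|\rangle^5$-weighted $L^2_x$ norm, not a pointwise decay rate, but this does not affect the argument.
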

\begin{proof}
Apply Proposition \ref{Protechfordecay}, to the distribution function $\mathbf{q}(\omega ,v) \widehat{Z}^\beta f$ and for all $\omega \in \mathbb{S}^2_\omega$, together with Lemma \ref{Lemftog}. Then, remark that, by the change of variables $x=t\widehat{v}$ (recall Lemma \ref{cdv}) and Lemma \ref{Lemftog},
$$
\int_{\R^3_x} \langle t+|x| \rangle^{5} \bigg|  \frac{\mathds{1}_{|x|<t}}{t^3}\int_{\R^3_y} \Big[ |v^0|^5 \widehat{Z}^\beta f \Big] \bigg( t,y,\frac{\widecheck{\; x \;}}{t} \bigg)-\Big[ |v^0|^5 \widehat{Z}^\beta_\infty f_\infty \Big] \bigg( y,\frac{\widecheck{\; x \;}}{t} \bigg) \dr y \bigg|^2  \dr x \lesssim \epsilon \Lambda B_\Lambda \log^{6N}(t)
.$$
\end{proof}

In order to deal with the top order derivatives or with quadratic terms, we will make use of the next estimates.
\begin{Cor}\label{Corrho3}
For any $|\beta| \leq N$, we have
$$ \forall \, t \geq T, \qquad  \int_{\R^3_x} \langle t+|x| \rangle^3 \bigg|\int_{\R^3_v}\langle x-t\widehat{v} \rangle^3 \, \langle v \rangle^{9} \big| \widehat{Z}^\beta f (t,x,v)\big| \mathrm{d} v  \bigg|^2  \dr x   \lesssim \epsilon  B_{ \Lambda}\log^{6N}(t).$$
For any $|\kappa| \leq N-2$, there holds
$$ \forall \, (t,x) \in [T,+\infty[ \times \R^3, \qquad \int_{\R^3_v} \langle x- t \widehat{v} \rangle \, \langle v \rangle^5 \big| \widehat{Z}^\kappa f \big|(t,x,v) \dr v \lesssim  \sqrt{\epsilon  B_{ \Lambda}} \, \frac{|\max (t-|x|,1)|^{\frac{3}{2}}}{\langle t+|x| \rangle^{\frac{9}{2}}} \log^{6N}(t) .$$
\end{Cor}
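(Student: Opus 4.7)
The plan is to reduce both estimates to the weighted pointwise and $L^2$ bounds on $\widehat{Z}^\beta f$ already obtained in Lemma \ref{Lemftog}, by applying the generic decay estimates for Vlasov fields established in Section \ref{subsubseclin}.

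For the first inequality, I would apply the $L^2_x$ decay estimate of Proposition \ref{Prosimpledecay} to the function $h = \langle x-t\widehat{v}\rangle^3 \, \langle v\rangle^9 \, \widehat{Z}^\beta f$. This gives
\[
\int_{\R^3_x} \langle t+|x|\rangle^3 \bigg| \int_{\R^3_v} \langle x-t\widehat{v}\rangle^3 \, \langle v\rangle^9 \, \big|\widehat{Z}^\beta f\big|(t,x,v) \dr v \bigg|^2 \dr x \lesssim \int_{\R^3_x}\int_{\R^3_v} \langle x-t\widehat{v}\rangle^{10} \, \langle v\rangle^{23} \, \big|\widehat{Z}^\beta f\big|^2(t,x,v) \dr v \dr x.
\]
Since $\langle x-t\widehat{v}\rangle^{10} \, \langle v\rangle^{23} \leq \langle x-t\widehat{v}\rangle^{14} \, \langle v\rangle^{26}$, the right-hand side is bounded by the weighted $L^2_{x,v}$ norm of $\widehat{Z}^\beta f$ controlled in Lemma \ref{Lemftog}, yielding the claimed bound by $\epsilon B_\Lambda \log^{6N}(t)$.

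For the second inequality, I would instead appeal to the stronger pointwise decay of Corollary \ref{Corpointestistrong} (near and in the exterior of the light cone), applied with $a=3/2$ to the function $h = \langle x-t\widehat{v}\rangle \, \langle v\rangle^5 \, \widehat{Z}^\kappa f$. This produces
\[
\int_{\R^3_v} \langle x-t\widehat{v}\rangle \, \langle v\rangle^5 \, \big|\widehat{Z}^\kappa f\big|(t,x,v) \dr v \lesssim \frac{|\max(t-|x|,1)|^{3/2}}{\langle t+|x|\rangle^{9/2}} \sup_{(x,v)} \langle x-t\widehat{v}\rangle^{13/2} \, \langle v\rangle^{13} \, \big|\widehat{Z}^\kappa f\big|(t,x,v).
\]
Since $|\kappa| \leq N-2$ and $\langle x-t\widehat{v}\rangle^{13/2} \leq \langle x-t\widehat{v}\rangle^{7}$, the pointwise bound of Lemma \ref{Lemftog} controls the supremum by $\sqrt{\epsilon B_\Lambda}\, \log^{3N}(t) \leq \sqrt{\epsilon B_\Lambda}\, \log^{6N}(t)$, which concludes the proof.

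There is no real obstacle here: both estimates are essentially automatic consequences of the decay machinery of Propositions \ref{Prosimpledecay}--\ref{Protechfordecay} applied to well-chosen weighted versions of $\widehat{Z}^\beta f$, combined with the already established bounds from Lemma \ref{Lemftog}. The only choice to make is to tune the weights (and in particular the exponent $a$ in Corollary \ref{Corpointestistrong}) so that the desired powers of $\max(t-|x|,1)$ and $\langle t+|x|\rangle$ match the statement.
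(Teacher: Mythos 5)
Your proposal is correct and follows exactly the same route as the paper's own (one-line) proof: apply Proposition \ref{Prosimpledecay} to $\langle x-t\widehat{v}\rangle^3\langle v\rangle^9\widehat{Z}^\beta f$ and Corollary \ref{Corpointestistrong} (with $a=3/2$) to $\langle x-t\widehat{v}\rangle\langle v\rangle^5\widehat{Z}^\kappa f$, then invoke the weighted $L^2_{x,v}$ and $L^\infty_{x,v}$ bounds of Lemma \ref{Lemftog}. The weight bookkeeping ($\langle x-t\widehat{v}\rangle^{10}\langle v\rangle^{23}$ for the $L^2$ estimate and $\langle x-t\widehat{v}\rangle^{13/2}\langle v\rangle^{13}$ for the pointwise one) is correct and fits comfortably within the norms of Lemma \ref{Lemftog}.
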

\begin{proof}
For the $L^2_x$ estimate, apply Proposition \ref{Prosimpledecay} to $\langle x-t\widehat{v} \rangle^3 \, \langle v \rangle^{9} \, \widehat{Z}^\beta f (t,x,v)$ and use then Lemma \ref{Lemftog}. For the $L^\infty_x$ one, we apply Corollary \ref{Corpointestistrong} to $\langle x- t \widehat{v} \rangle \, \langle v \rangle^5\widehat{Z}^\kappa f$ and we use again Lemma \ref{Lemftog}.
\end{proof}

Finally, let us mention that we can derive a convergence estimate for $g$.
\begin{Pro}\label{Proconvgntoginfty}
For all $t \geq T$, we have
$$\| g(t,\cdot , \cdot)-f_\infty \|_{L^\infty(\R^3_z \times \R^3_v)} \lesssim \sqrt{\epsilon  \Lambda B_\Lambda} \, t^{-\frac{1}{2}} \log^3(t).$$
\end{Pro}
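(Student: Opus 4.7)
The plan is to argue by Duhamel's principle at fixed $(z,v)$ and split according to a threshold $s_0(z,v) := C \langle v \rangle^2 \langle z \rangle$ chosen so that, for $s \geq s_0(z,v)$,
$$
s - |\XX_\C(s,z,v)| \;=\; s - |z + s\widehat{v} + \C_{s,v}| \;\geq\; \frac{s}{4\langle v\rangle^{2}}.
$$
This inequality follows from $1-|\widehat{v}| \geq 1/(2\langle v\rangle^{2})$ and $|\C_{s,v}| \lesssim \sqrt{\epsilon}\log(s)/\langle v\rangle$, the latter contribution being absorbed in the constant $C$ thanks to the smallness $\sqrt{\epsilon}\log^{-1}(T) \leq 1$. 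The role of this threshold is to convert the $t-r$ decay available along the trajectory $x = \XX_\C(s,z,v)$ of the electromagnetic field into genuine time decay at fixed $(z,v)$.

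In the regime $t \leq s_0(z,v)$, the uniform bound $\mathbf{E}^{7,13}_{N}[g] \leq \epsilon B_\Lambda$ together with the Sobolev embedding $H^{3}_{z,v} \hookrightarrow L^{\infty}_{z,v}$ yields $|g(t,z,v)| \lesssim \sqrt{\epsilon B_\Lambda}/\langle z\rangle^{7+N}/\langle v\rangle^{13}$, while the assumption \eqref{eq:extracondi} gives $|f_\infty(z,v)| \lesssim \sqrt{\epsilon}/\langle z\rangle^{4}/\langle v\rangle^{7}$. Hence $|g - f_\infty|(t,z,v) \lesssim \sqrt{\epsilon B_\Lambda}/\langle z\rangle^{4}/\langle v\rangle^{7}$, and the hypothesis $t \leq s_0 \sim \langle v\rangle^{2}\langle z\rangle$ allows us to write $t^{1/2} \lesssim \langle z\rangle^{1/2}\langle v\rangle \leq \langle z\rangle^{4}\langle v\rangle^{7}$, which produces the desired $t^{-1/2}$ factor.

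In the regime $t \geq s_0(z,v)$, since $\T_{F}^{\infty}(g)=0$ and $g(s,z,v) \to f_\infty(z,v)$, we have $g(t,z,v) - f_\infty(z,v) = -\int_{t}^{+\infty} \partial_{s} g(s,z,v)\,\mathrm{d}s$, and from \eqref{eq:Vlasov} the integrand is a linear combination of $s^{-1}(v^{0})^{-1}\Delta^{j}\partial_{z^{i}}g$ and $(v^{0})^{-1}\widehat{v}^{\mu}F_{\mu j}(s,\XX_\C)\bigl(v^{0}\partial_{v^{j}}g - v^{0}\partial_{v^{j}}\C^{i}\partial_{z^{i}}g\bigr)$, all evaluated at $(s,z,v)$. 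The convergence estimate of Proposition \ref{ProconvasympLor} combined with $\langle s-|\XX_\C|\rangle \geq s/(4\langle v\rangle^{2})$ and $\langle z + \C_{s,v}\rangle \lesssim \log^{2}(s)\langle z\rangle$ gives $|\Delta|(s,z,v)/(sv^{0}) \lesssim \sqrt{\Lambda}\log^{2}(s)\langle z\rangle\langle v\rangle/s^{3/2}$; the pointwise decay \eqref{eq:Mpoint} yields $|\widehat{v}^\mu F_{\mu j}|(s,\XX_\C)/v^{0} \lesssim \sqrt{\Lambda}\langle v\rangle/s^{2}$ in the same regime. Sobolev embedding applied to the weighted energy bound provides $|\nabla_{z,v}g|(s,z,v) \lesssim \sqrt{\epsilon B_\Lambda}/\langle z\rangle^{N+6}/\langle v\rangle^{13}$. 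Multiplying these factors, the growth $\langle z\rangle\langle v\rangle$ is absorbed by the decay in $(z,v)$ produced by Sobolev (using $N \geq 8$), and integrating $\int_{t}^{+\infty}\log^{2}(s)/s^{3/2}\,\mathrm{d}s \lesssim \log^{2}(t)/t^{1/2}$ yields $|g - f_\infty|(t,z,v) \lesssim \sqrt{\Lambda \epsilon B_\Lambda}\log^{2}(t)/t^{1/2}$ uniformly in $(z,v)$.

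The main obstacle is the treatment of the term involving $\Delta^{j}$: at a fixed spatial point $(z,v)$, the natural bounds on the electromagnetic field degenerate near the cone $s = |\XX_\C|$, so one cannot integrate freely the short-range contribution $\widehat{v}^{\underline{L}}/\langle s - |\XX_\C|\rangle^{\alpha}$ without exploiting the structural fact that $|\widehat{v}|<1$ forces $s - |\XX_\C|$ to grow linearly once $s$ exceeds $\langle v\rangle^{2}\langle z\rangle$. The threshold $s_0$ exactly balances this geometric constraint against the pointwise decay of $\nabla_{z,v}g$ extracted from the weighted Sobolev control on $g$, and the two-regime argument above then combines into the claimed uniform bound, with $\log^{2}(t)$ ultimately absorbed by the statement's $\log^{3}(t)$.
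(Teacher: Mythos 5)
Your proof is correct in substance and reaches the stated conclusion, but it follows a genuinely different route from the paper's. The paper's proof is much shorter: it simply writes $\partial_t g = \partial_t g - \T_F^\infty(g)$ (so that the time derivative drops out and only the force terms remain), bounds this uniformly in $(t,z,v)$ by $\sqrt{\Lambda}\, t^{-3/2}\log^3(t)\sup_{|\beta|=1}\langle z\rangle^{3/2}\langle v\rangle^2|\widehat{Z}^\beta_\infty g|$ by combining the convergence estimate \eqref{eq:Mpointconv} with Lemma \ref{gainvadapted}, and then integrates in $t$ using the weighted $L^\infty_{z,v}$ bound on the first-order derivatives of $g$ coming from Sobolev embedding applied to $\mathbf{E}^{7,13}_N[g]$. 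The crucial difference is that Lemma \ref{gainvadapted} — namely $\langle t+|\XX_\C|\rangle \lesssim \log^2(t)\langle t-|\XX_\C|\rangle\langle v\rangle^2\langle z\rangle$ — performs globally, for \emph{all} $(t,z,v)$, exactly the $(t-r)\to t$ conversion that you accomplish only in the regime $s\geq s_0(z,v)$, at the price of extra $(z,v)$ weights which are then fed to the Sobolev decay of $g$. Your two-regime split via the threshold $s_0(z,v)\sim\langle v\rangle^2\langle z\rangle$ is therefore redundant given that lemma, but it makes the geometric mechanism more explicit (the threshold is essentially the content of Lemma \ref{Lemxvt}) and in fact gives a marginally better log power. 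Two small imprecisions worth fixing: in the regime $s\geq s_0$, the convergence estimate you need is \eqref{eq:Mpointconv} for the full field $F\in\mathbb{M}^{D,\Lambda}_N$, not Proposition \ref{ProconvasympLor} which only controls the asymptotic part $F^{\mathrm{asymp}}[f_\infty]$; and your claimed weight $\langle z\rangle^{N+6}\langle v\rangle^{13}$ for $|\nabla_{z,v}g|$ is too strong, since writing $\partial_{v^j}$ via $v^0\partial_{v^j}=\widehat{\Omega}^\infty_{0j}-z^j\partial_t^\infty-\widehat{v}^jS+\widehat{v}^jz\cdot\nabla_z$ costs a power of $\langle z\rangle$ and the Sobolev embedding a couple more — the paper's bound $\langle z\rangle^{5+N-\kappa_H}\langle v\rangle^{13}|\widehat{Z}^\kappa_\infty g|\lesssim\sqrt{\epsilon B_\Lambda}$ is the precise one to use, and is still ample for $N\geq 8$.
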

\begin{proof}
Following the proof of Proposition \ref{Proestloworder} and using Lemma \ref{gainvadapted}, one can derive
$$ |\partial_t g |(t,z,v) =|\partial_t g-\T^\infty_F(g)|(t,z,v) \lesssim \sqrt{\Lambda} t^{-\frac{3}{2}} \log^3(t) \sup_{|\beta|=1 } \, \langle z \rangle^{\frac{3}{2}} \, \langle v \rangle^2 \big| \widehat{Z}_\infty^\beta g\big|(t,z,v),$$
which, in view of the weighted $L^\infty_{z,v}$ bounds on the lower order derivatives of $g$, implies the result.
\end{proof}

\section{Construction of the electromagnetic field}\label{SecMax}

In this section, we study the asymptotic Cauchy problem
\begin{equation}\label{kevatalenn:tredebis}
\nabla^\mu G_{\mu \nu} = J(f)_\nu, \qquad \nabla^\mu {}^* \! G_{\mu \nu} =0, \qquad \qquad \lim_{r \to + \infty} r \underline{\alpha} (G)(r+u,r\omega)=\underline{\alpha}^{\infty} (u, \omega ),
\end{equation}
where $f$ is the distribution function constructed in Section \ref{SecVlasov}. 
\begin{Rq}
Given a source term $J$ and an $L^2_{u,\omega}$ radiation field $\underline{\alpha}^{\mathcal{I}^+}$, the asymptotic Cauchy problem for the Maxwell equations admits at most one solution in $L^{\infty}([T,+\infty[,L^2(\R^3_x))$ according to Theorem \ref{Thscat}.
\end{Rq}

Our goal is to prove the next result, where we recall from Definition \ref{DefsetMax} the set $\mathbb{M}^{D,\Lambda}_N$.
\begin{Pro}\label{ProforFnew}
There exists a unique classical solution $F^{\mathrm{new}}$ to \eqref{kevatalenn:tredebis} on $[T,+\infty[ \times \R^3$. Furthermore, if $\Lambda^2 B_\Lambda \epsilon \log^{-1}T$ is small enough, we have
$F^{\mathrm{new}} \in \mathbb{M}^{D,\Lambda}_N$.
\end{Pro}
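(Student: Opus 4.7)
The strategy is to seek $F^{\mathrm{new}}$ in the form $F^{\mathrm{new}} = F^{\mathrm{asymp}}[f_\infty] + H$, where $H$ solves the Maxwell system with source term $J(f) - J^{\mathrm{asymp}}[f_\infty]$ and radiation field $\underline{\alpha}^\infty - \underline{\alpha}^{\mathrm{asymp}}[f_\infty]$ at $\mathcal{I}^+$. Since all the long-range behavior is encoded in $F^{\mathrm{asymp}}[f_\infty]$, the bound $F^{\mathrm{new}} \in \mathbb{M}^{D,\Lambda}_N$ reduces to proving that $H$ and its commuted versions satisfy the weighted energy bounds $\mathcal{E}^K_{N-1}[H] \lesssim \Lambda$ and $\mathcal{E}^{K,1}[\nabla_{t,x}\mathcal{L}_{Z^\gamma}H] \lesssim \Lambda$ for $|\gamma|=N-1$. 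Uniqueness will follow from Theorem \ref{Thscat}: the difference of two solutions solves the vacuum Maxwell equations with vanishing radiation field, hence vanishes.

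First I would verify that $\underline{\alpha}^\infty - \underline{\alpha}^{\mathrm{asymp}}[f_\infty]$ lies in $\mathcal{S}^K_{\mathcal{I}^+}$ and has finite weighted energy $\overline{\mathcal{E}}^{\,\delta}_1$ for some small $\delta > 0$. The decay and regularity of $\underline{\alpha}^{\mathrm{asymp}}_\gamma[f_\infty]$ pointwise and in $L^2_{u,\omega}$ are provided by Proposition \ref{Proradasymp}, while those of $\underline{\alpha}^\infty$ come from the assumption \eqref{eq:assumpunderalphainfty}. The key algebraic point is that the constraint equations \eqref{eq:constr1}--\eqref{eq:constr2} imposed on $\underline{\alpha}^\infty$ match exactly the constraints proved for $\underline{\alpha}^{\mathrm{asymp}}[f_\infty]$ in item $5$ of Proposition \ref{Proradasymp}. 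Thus the difference has vanishing spherical divergence and curl when integrated in $u$, which is precisely the membership condition in $\mathcal{S}^K_{\mathcal{I}^+}$.

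Next I would construct $\mathcal{L}_{Z^\gamma}H$ for $|\gamma| \leq N-1$ by applying Proposition \ref{ProscatMax} with $a=0$. Proposition \ref{ProComMax} shows that $\mathcal{L}_{Z^\gamma}H$ solves the Maxwell system with source $J(\widehat{Z}^\gamma f) - J^{\mathrm{asymp}}[\widehat{Z}^\gamma_\infty f_\infty]$ (modulo lower-order terms of the same type when $Z^\gamma$ contains translations or $S$). Corollary \ref{Corrho} (and its generalization Corollary \ref{Corrho2}) yields
\[
\int_{\R^3_x}\langle t+|x|\rangle^{4+\delta}\bigl|J(\widehat{Z}^\gamma f) - J^{\mathrm{asymp}}[\widehat{Z}^\gamma_\infty f_\infty]\bigr|^2 dx \lesssim \epsilon \Lambda B_\Lambda \, \langle t \rangle^{-1+\delta} \log^{6N}(t),
\]
which via Proposition \ref{CorassumpJforcondi} verifies the hypothesis \eqref{eq:condiSJn} with $B_{\mathrm{source}} \lesssim \epsilon \Lambda B_\Lambda$ up to logarithmic factors. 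Under the smallness $\Lambda^2 B_\Lambda \epsilon \log^{-1}T \ll 1$ the resulting contribution is absorbed by $\Lambda$; combined with the radiation-field bound $\overline{\mathcal{E}}^{\,\delta}_1[\underline{\alpha}^\infty_{\,\gamma} - \underline{\alpha}^{\mathrm{asymp}}_\gamma[f_\infty]] \lesssim \Lambda$, this gives $\mathcal{E}^K_{N-1}[H] \lesssim \Lambda$.

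The main obstacle is the top-order bound, anticipated in Section \ref{Subseckeyidea}. For $|\gamma|=N-1$, the source term for $\nabla_{t,x}\mathcal{L}_{Z^\gamma}H$ involves $N$ derivatives of the Vlasov field, which are controlled only in $L^2_{x,v}$ through $\mathbf{E}^{7,13}_N[g]$; the decay estimate of Corollary \ref{Corrho} is not available at this level. I would apply Proposition \ref{ProscatMax} with $a=1$ and exploit the Glassey-Strauss gain of regularity sketched at the end of Section \ref{subseubsecMaxide}: writing $\partial_{x^\lambda}\mathcal{L}_{Z^\gamma}H_{\mu\nu}$ via the wave equation representation formula and decomposing $\partial_{x^\lambda}$ into derivatives tangential to backward null cones plus the transport operator $\T_0$, one integrates by parts to express $\partial_{x^\lambda}\mathcal{L}_{Z^\gamma}H_{\mu\nu}$ as a functional of $J(\widehat{Z}^\gamma f) - J^{\mathrm{asymp}}[\widehat{Z}^\gamma_\infty f_\infty]$ rather than its derivative. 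The singular solution $f^{\mathrm{sing}}[f_\infty]$ introduced in Section \ref{Subsecfsing} provides the matching Glassey-Strauss-type representation for $\nabla\mathcal{L}_{Z^\gamma}F^{\mathrm{asymp}}[f_\infty]$ (obtained by an approximation argument using a mollifier as in Lemma \ref{Lembasicpropsing}), ensuring that the difference is amenable to the same treatment. The $|\max(t-|x|,1)|^{2}$ weight in $\mathcal{E}^{K,1}$ is exactly what makes Proposition \ref{ProenergyforscatMax}, applied in the interior region where the decay is weakest, close the estimate at order $N$. Combining this with the already-established lower-order bounds yields $F^{\mathrm{new}} \in \mathbb{M}^{D,\Lambda}_N$.
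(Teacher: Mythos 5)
Your decomposition $F^{\mathrm{new}} = F^{\mathrm{asymp}}[f_\infty] + H$ with $H$ treated by a single application of Proposition \ref{ProscatMax} with nonzero radiation field does not close. To invoke Proposition \ref{ProscatMax} (with $a=0$) one must verify $\overline{\mathcal{E}}^{\,\delta}_{1}\big[\underline{\alpha}^\infty_{\,\gamma} - \underline{\alpha}^{\mathrm{asymp}}_\gamma[f_\infty]\big] < \infty$ for some $\delta>0$. By Definition \ref{Defscatstae}, this norm carries the weight $\langle u\rangle^{2(1+\delta)}$ on $|\underline{\alpha}^{\mathcal{I}^+}|^2$ in $L^2_{u,\omega}$ (and heavier weights on $\nabla_u \underline{\alpha}^{\mathcal{I}^+}$). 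The piece $\underline{\alpha}^{\mathrm{asymp}}_\gamma[f_\infty]$ decays like $\langle u\rangle^{-2}$ pointwise (Proposition \ref{Proradasymp}), so its contribution is harmless, but the assumption \eqref{eq:assumpunderalphainfty} on $\underline{\alpha}^\infty$ only gives the weight $\langle u\rangle^{2}\log^2(1+\langle u\rangle)$, a genuinely weaker control than $\langle u\rangle^{2+2\delta}$ for any $\delta>0$. The hypothesis of Proposition \ref{ProscatMax} therefore fails, and the argument as you have set it up cannot be run.

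The proof circumvents this by a further splitting. For each $|\gamma|\leq N-1$ the contribution of the radiation field is isolated into a vacuum Maxwell solution: $G^{\mathrm{vac}}$ (Proposition \ref{ProGvacsolution}) when $\gamma_T = 0$, or $M^{\mathrm{vac}}$ (Proposition \ref{ProMvacsolution}) when $\gamma_T \geq 1$ (this second field is needed because when translations are present $J^{\mathrm{asymp}}[\widehat{Z}^\gamma_\infty f_\infty]$ vanishes, and $\mathcal{E}^K[\mathcal{L}_{Z^\gamma} F^{\mathrm{asymp}}[f_\infty]]$ is bounded separately by Corollary \ref{CorweightedcontrolFasymp}). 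These vacuum fields are controlled via the isometry of Theorem \ref{Thscat}, which only requires finiteness of the $\mathcal{S}^K_{\mathcal{I}^+}$ norm — a single $\langle u\rangle^2$ weight plus $L^2_{u,\omega}$ of $\rho^{\mathcal{I}^+},\sigma^{\mathcal{I}^+}$, which the $\log^2$ factor in \eqref{eq:assumpunderalphainfty} is designed to cover. Proposition \ref{ProscatMax} is only applied to the purely source-driven piece $G^\gamma$ (and to $G^{\xi,k}$ at top order after the Glassey–Strauss reduction), always with identically vanishing radiation field, so the problematic $\overline{\mathcal{E}}^{\,\delta}_1$ contribution never appears. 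Your instinct about the Glassey–Strauss gain of regularity and the role of $f^{\mathrm{sing}}[f_\infty]$ at top order is correct, but that machinery likewise has to be fed a zero-radiation-field problem.
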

\begin{Rq}\label{Rqconvalphantoalphainf}
We deduce from $F^{\mathrm{new}} \in \mathbb{M}^{D,\Lambda}_N$ as well as Propositions \ref{Thscatforsmooth} and \ref{Proradasymp} the convergence estimate
$$ \forall \, (t,r,\omega) \in [T,+\infty[ \times \R_+ \times \mathbb{S}^2, \qquad \qquad \big| \underline{\alpha}\big(F^{\mathrm{new}} \big)(t,r\omega)-\underline{\alpha}^\infty(t-r,\omega) \big| \lesssim \sqrt{\Lambda } \langle t+r \rangle^{-1} .$$
\end{Rq}
In fact, a part of the proof of Proposition \ref{ProforFnew} will also allow us to construct $F^1$, the first electromagnetic field of the sequence of approximate solutions. 
\begin{Pro}\label{ProforF1}
If $f=f_1$, there exists a unique classical solution $F^1$ to \eqref{kevatalenn:tredebis} on $[T,+\infty[ \times \R^3$. Moreover, if $\Lambda B_\Lambda \epsilon \log^{-1}T$ is small enough, we have
$\mathcal{E}^{K}_{N}\big[F^1-F^{\mathrm{asymp}}[f_\infty]\big](t) \leq D \Lambda$ for all $t \geq T$, so that $F^1 \in \mathbb{M}^{D,\Lambda}_N$.
\end{Pro}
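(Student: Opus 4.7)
The uniqueness is immediate: if $F^1$ and $\widetilde{F}^1$ both solve \eqref{kevatalenn:tredebis}, their difference satisfies the vacuum Maxwell equations with vanishing radiation field at $\mathcal{I}^+$, and Theorem \ref{Thscat} forces $F^1=\widetilde{F}^1$. For existence and the energy bound, the plan is to construct $H := F^1 - F^{\mathrm{asymp}}[f_\infty]$ together with all its commuted versions $H^\gamma := \mathcal{L}_{Z^\gamma} H$, for $|\gamma|\leq N$, by applying Proposition \ref{ProscatMax} (with $a=0$ and a small $\delta>0$) to the appropriate asymptotic Cauchy problem for each $H^\gamma$. We then define $F^1 := F^{\mathrm{asymp}}[f_\infty] + H^0$; the uniqueness part of Proposition \ref{ProscatMax} ensures that $\mathcal{L}_{Z^\gamma} H^0$ agrees with the independently constructed $H^\gamma$.

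For $Z^\gamma$ composed only of homogeneous vector fields, the commutation formula of Proposition \ref{ProComMax} identifies the source of the Maxwell equation satisfied by $H^\gamma$ as $J^\gamma := J(\widehat{Z}^\gamma f_1) - J^{\mathrm{asymp}}[\widehat{Z}^\gamma_\infty f_\infty]$ and its radiation field at $\mathcal{I}^+$ as $\underline{\alpha}^\infty_\gamma - \underline{\alpha}^{\mathrm{asymp}}_\gamma[f_\infty]$. Corollary \ref{Corlinbound} applied to $\widehat{Z}^\gamma f_1$ yields, uniformly in $t\geq T$, the bound $\int_{\R^3_x} \langle t+|x|\rangle^5 |J^\gamma|^2 \dr x \lesssim \epsilon$, and Proposition \ref{CorassumpJforcondi} converts this into the decay of $S(J^\gamma,n)$ required by Proposition \ref{ProscatMax}, with $B_{\mathrm{source}} \lesssim \epsilon$. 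On the radiation-field side, Proposition \ref{Proradasymp} controls $\underline{\alpha}^{\mathrm{asymp}}_\gamma[f_\infty]$ in $L^2_{u,\omega}$ weighted by $\langle u\rangle^{5/2}$ by $\sqrt{\epsilon}$, while the decay assumption \eqref{eq:assumpunderalphainfty} on $\underline{\alpha}^\infty$, combined with the recursive formula of Definition \ref{Defrecurrad} and the higher-order part of Proposition \ref{blackboxscat}, bounds $\overline{\mathcal{E}}^\delta_1[\underline{\alpha}^\infty_\gamma]$ by $\Lambda$. Proposition \ref{ProscatMax} then delivers the unique solution $H^\gamma$ with $\sup_{t\geq T}\mathcal{E}^K[H^\gamma](t) \lesssim \Lambda + \epsilon \lesssim \Lambda$.

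For $Z^\gamma$ containing $\gamma_T\geq 1$ translations, we rely on the second commutation formula of Proposition \ref{ProComMax}: writing $Z^\gamma = \partial_{t,x}^\kappa Z^\beta$ with $Z^\beta$ homogeneous and $|\kappa|=\gamma_T$, the source term carries an extra factor $t^{-\gamma_T}$ and thus enjoys strictly stronger integrability than in the purely homogeneous case (directly verifiable via Corollary \ref{CorlinexpanderivativJ}). Alternatively, Lemma \ref{improderiv} reduces the control of the corresponding null components to the homogeneous derivatives already constructed. Proposition \ref{ProscatMax} then applies to these commuted equations with the same bound, and summing over $|\gamma|\leq N$ gives $\mathcal{E}^K_N[H](t)\leq D\Lambda$ for a constant $D$ depending only on $N$, placing $F^1\in\mathbb{M}^{D,\Lambda}_N$.

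The main obstacle is the careful matching of decay weights for the homogeneous commutators at top order: Definition \ref{Defrecurrad} manufactures additional powers of $u$ in $\underline{\alpha}^\infty_\gamma$, so the norm $\overline{\mathcal{E}}^\delta_1[\underline{\alpha}^\infty_\gamma]$ demands weights of the form $\langle u\rangle^{2+2\delta}$ on derivatives of $\underline{\alpha}^\infty$ up to order $|\gamma|+1\leq N+1$ --- precisely the weight provided, up to the logarithmic improvement $\log^2(1+\langle u\rangle)$, by \eqref{eq:assumpunderalphainfty}. The $\log^2$ factor is exactly what allows one to absorb the infinitesimal $\langle u\rangle^{2\delta}$ and close the estimate, which is why the assumption is stated with this specific logarithmic improvement. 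The smallness of $\Lambda B_\Lambda \epsilon \log^{-1}(T)$ is finally used to fix the implicit constant at $D\Lambda$.
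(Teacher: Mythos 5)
Your proposal diverges from the paper's proof at a critical juncture, and the divergence creates a genuine gap.

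The paper does \emph{not} apply Proposition~\ref{ProscatMax} directly with the nonzero radiation field $\underline{\alpha}^\infty_\gamma - \underline{\alpha}^{\mathrm{asymp}}_\gamma[f_\infty]$. Instead, as laid out in Section~\ref{Subseclowerorder}, it decomposes $\mathcal{L}_{Z^\gamma}\big(F^1-F^{\mathrm{asymp}}[f_\infty]\big)$ into two pieces: a solution $G^\gamma$ to the \emph{inhomogeneous} Maxwell equations with source $J(\widehat{Z}^\gamma f_1)-J^{\mathrm{asymp}}[\widehat{Z}^\gamma_\infty f_\infty]$ and \emph{identically zero} radiation field (this is the only place Proposition~\ref{ProscatMax} is invoked, and since $\underline{\alpha}^{\mathcal{I}^+}=0$ there, $\overline{\mathcal{E}}^{\delta}_1[0]=0$ trivially); and a \emph{vacuum} solution $\mathcal{L}_{Z^\gamma}G^{\mathrm{vac}}$ or $\mathcal{L}_{Z^\gamma}M^{\mathrm{vac}}$ carrying the radiation data, constructed via the bijective isometry $\mathscr{F}^+$ of Theorem~\ref{Thscat}. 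The reason for this split is precisely what your proposal trips over: Theorem~\ref{Thscat} requires only the $L^2_{u,\omega}$-type norms $\|\cdot\|_{\mathcal{S}^{\partial_t,N}_{\mathcal{I}^+}}$ and $\|\cdot\|_{\mathcal{S}^K_{\mathcal{I}^+}}$ (the latter with a $\langle u\rangle^2$ weight on $|\underline{\alpha}^{\mathcal{I}^+}|^2$ but no weight on $\rho^{\mathcal{I}^+},\sigma^{\mathcal{I}^+}$), whereas Proposition~\ref{ProscatMax} requires the much stronger norm $\overline{\mathcal{E}}^{\,\delta}_1[\underline{\alpha}^{\mathcal{I}^+}]$, which carries a weight $\langle u\rangle^{2\delta}\langle u\rangle^{4}$ on $|\partial_u\underline{\alpha}^{\mathcal{I}^+}|^2$ (from the $\kappa_u=1$ term of $\|\cdot\|_1$ in Definition~\ref{Defscatstae}), plus weighted derivatives of $\rho^{\mathcal{I}^+}$ and $\sigma^{\mathcal{I}^+}$. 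The assumption~\eqref{eq:assumpunderalphainfty} on $\underline{\alpha}^\infty$ supplies only $\int_u\langle u\rangle^2\log^2(1+\langle u\rangle)|\partial_u\underline{\alpha}^\infty|^2\lesssim\Lambda$; this does \emph{not} bound $\int_u\langle u\rangle^{4+2\delta}|\partial_u\underline{\alpha}^\infty|^2$, even for $|\gamma|=0$, and the mismatch only worsens at higher $|\gamma|$ since Definition~\ref{Defrecurrad} introduces further factors of $u$ with each homogeneous commutation. Your sentence ``The $\log^2$ factor is exactly what allows one to absorb the infinitesimal $\langle u\rangle^{2\delta}$'' has the inequality backwards: $\log^2(1+\langle u\rangle)\lesssim\langle u\rangle^{2\delta}$ for any $\delta>0$, so the $\log^2$ weight is \emph{weaker} than $\langle u\rangle^{2\delta}$, and a stronger right-hand side cannot be deduced from a weaker assumption. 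The logarithmic improvement in~\eqref{eq:assumpunderalphainfty} is actually used for a much milder purpose: to guarantee that $\rho^{\mathcal{I}^+}$ and $\sigma^{\mathcal{I}^+}$ (integrals of $\slashed\nabla\cdot\underline{\alpha}^\infty$ and $\slashed\nabla\times\underline{\alpha}^\infty$) decay at $u=\pm\infty$ and belong to $L^2_{u,\omega}$, which is what $\|\cdot\|_{\mathcal{S}^K_{\mathcal{I}^+}}$ and Definition~\ref{Defscatstae} require.

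So your overall architecture is sound (treat each $H^\gamma$ as an asymptotic Cauchy problem and then glue by uniqueness), but you cannot feed the nonvanishing radiation data into Proposition~\ref{ProscatMax}. You need to peel off that data with the vacuum wave operator of Theorem~\ref{Thscat} first, as the paper does, and reserve Proposition~\ref{ProscatMax} for the piece with zero scattering data.
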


 Our study of \eqref{kevatalenn:tredebis} is divided in three parts. 
\begin{enumerate}
\item We start by proving that there exists a unique classical solution $F_{ \mathrm{new}}$ to \eqref{kevatalenn:tredebis}, which further belongs to $W^{k,\infty} ([T,+\infty[,H^{N-k}(\R^3_x))$ for all $0 \leq k \leq N$. 
\item Then, we prove that the lower order derivatives verify $\mathcal{E}^{K}_{N-1}\big[F^{\mathrm{new}}-F^{\mathrm{asymp}}[f_\infty]\big] \leq \frac{D}{2} \Lambda$ on $[T,+\infty[$.
\item Finally, we deal with the top order derivatives by using the Glassey-Strauss decomposition of the electromagnetic fields, providing $F^{\mathrm{new}} \in \mathbb{M}^{D, \Lambda}_N$.
\end{enumerate}
\begin{Rq}
The first step could be avoided but it allows us to estimate all the derivatives of $F^{\mathrm{new}}$ separately. Otherwise, we would have to prove a higher order version of Proposition \ref{ProscatMax} taking moreover into account the difficulties related to the top order derivatives.
\end{Rq}

The properties of $f$ that we will exploit to deal with $1.$ are the estimates of Corollary \ref{Corrho3}, which turn out to be satisfied by $f_1$ as well (see Corollary \ref{Corlinbound}). For $2.$, we will use further the estimates, for the derivatives of $f$ up to order $N-1$, given by Corollary \ref{Corrho}. According to Corollary \ref{Corlinbound}, the derivatives of $f_1$, up to order $N$, verify the same estimates. Hence, in Sections \ref{Subsec1Max}--\ref{Subseclowerorder}, we also allow $f$ to denote $f_1$, in which case $N\!-\!1$ has to be formally replaced by $N$, so that the results proved there will imply Proposition \ref{ProforF1}.

\subsection{Well-posedness for the Maxwell equations with asymptotic data}\label{Subsec1Max}

\begin{Pro}
There exists a unique solution $F^{\mathrm{new}}$ to \eqref{kevatalenn:tredebis}, defined on $[T,+\infty[ \times \R^3$, which further verifies
$$ \forall \, |\gamma| \leq N, \qquad \qquad  \sup_{t \geq T}  \big\| \mathcal{L}_{Z^\gamma} F^{\mathrm{new}} (t, \cdot) \big\|^2_{L^2 (\R^3)} < + \infty .$$
\end{Pro}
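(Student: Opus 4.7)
The plan is to obtain $F^{\mathrm{new}}$ as $F^{\mathrm{asymp}}[f_\infty] + H$, so that $H$ solves the reduced Maxwell system $\nabla^\mu H_{\mu\nu} = J(f)_\nu - J^{\mathrm{asymp}}_\nu[f_\infty]$ and $\nabla^\mu {}^{*}\!H_{\mu\nu} = 0$, with radiation field $\underline{\alpha}^\infty - \underline{\alpha}^{\mathrm{asymp}}[f_\infty]$. I will build $H$ through Proposition \ref{ProscatMax} at $a = 0$, which requires checking its two hypotheses. The difference of radiation fields belongs to $\mathcal{S}^K_{\mathcal{I}^+}$ and has finite $\overline{\mathcal{E}}^{\,\delta}_1$ norm: decay for $\underline{\alpha}^\infty$ is supplied by \eqref{eq:assumpunderalphainfty}, decay for $\underline{\alpha}^{\mathrm{asymp}}[f_\infty]$ by Proposition \ref{Proradasymp}, and the constraints \eqref{defmoynulle} hold because the asymptotic constraints \eqref{eq:constr1}--\eqref{eq:constr2} assumed on $\underline{\alpha}^\infty$ match exactly those satisfied by $\underline{\alpha}^{\mathrm{asymp}}[f_\infty]$ according to Proposition \ref{Proradasymp}~$5$. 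For the source, Corollary \ref{Corrho} at $|\beta| = 0$ provides $\int_{\R^3_x} \langle t + |x|\rangle^5 \, |J(f) - J^{\mathrm{asymp}}[f_\infty]|^2 \, \mathrm{d}x \lesssim \epsilon \Lambda B_\Lambda \log^{6N}(t)$; trading one power of the spatial weight for $\langle t\rangle^{-(1-\delta)}$ absorbs the logarithm and provides the uniform bound required by Proposition \ref{CorassumpJforcondi}.

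Once $H$ is constructed, $F^{\mathrm{new}} = F^{\mathrm{asymp}}[f_\infty] + H$ solves \eqref{kevatalenn:tredebis}; uniqueness follows from Theorem \ref{Thscat} applied to the difference of any two such solutions, which has a trivial source and finite $\partial_t$-energy after subtracting the pure charge part $\overline{F}$. To convert the $\mathcal{E}^K$ bound produced by Proposition \ref{ProscatMax} into a plain $L^2_x$ bound on $F^{\mathrm{new}}$, I would invoke the forward energy estimate of Proposition \ref{Propartialt} starting from $t = T$: $\|F^{\mathrm{new}}(T,\cdot)\|_{L^2_x}$ is finite thanks to the $\mathcal{E}^K$ control of $H(T,\cdot)$ and the elliptic control on $F^{\mathrm{asymp}}[f_\infty](T,\cdot)$, while Lemma \ref{Lemftog} combined with Proposition \ref{Prosimpledecay} yields $\|J(f)(t,\cdot)\|_{L^2_x} \lesssim \langle t \rangle^{-3/2} \sqrt{\epsilon B_\Lambda} \log^{3N}(t)$, integrable in $t$.

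For the higher order derivatives, I would proceed by induction on $|\gamma| \leq N$. Proposition \ref{ProComMax} shows that $\mathcal{L}_{Z^\gamma}(F^{\mathrm{new}} - F^{\mathrm{asymp}}[f_\infty])$ satisfies a Maxwell system whose source is a combination of $J(\widehat{Z}^\xi f) - J^{\mathrm{asymp}}[\widehat{Z}^\xi_\infty f_\infty]$ for $|\xi| \leq |\gamma|$, and whose radiation field agrees with $\underline{\alpha}^\infty_\gamma - \underline{\alpha}^{\mathrm{asymp}}_\gamma[f_\infty]$ --- the former constructed from $F^{\mathrm{new}}$ through Proposition \ref{blackboxscat} applied inductively, the latter obtained from Proposition \ref{Proradasymp}. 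For $1 \leq |\gamma| \leq N - 1$, reapplying Proposition \ref{ProscatMax} with the source controlled by Corollary \ref{Corrho} and then Proposition \ref{Propartialt} yields the uniform $L^2_x$ bound at order $|\gamma|$.

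The main obstacle will be the top order $|\gamma| = N$: Corollary \ref{Corrho} is no longer available, and Corollary \ref{Corrho3} only delivers a weight $\langle t + |x|\rangle^3$, strictly below the threshold $\langle t + |x|\rangle^{4+\delta}$ required by Proposition \ref{CorassumpJforcondi}. I expect to handle this case by bypassing Proposition \ref{ProscatMax} and invoking Proposition \ref{Propartialt} directly on $\mathcal{L}_{Z^\gamma} F^{\mathrm{new}}$ starting from $t = T$: Lemma \ref{Lemftog} combined with Proposition \ref{Prosimpledecay} controls $\|J(\widehat{Z}^\gamma f)(t,\cdot)\|_{L^2_x}$ by a quantity of order $\langle t\rangle^{-3/2} \log^{3N}(t)$, integrable in $t$, and the initial value at $t = T$ comes from the lower-order control of $F^{\mathrm{new}}$ combined with the elliptic structure of the Maxwell equations on the slice $\{t = T\}$.
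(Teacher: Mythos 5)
Your plan diverges from the paper's, which for this proposition does something much softer: it solves a \emph{forward} Cauchy problem for $H$ with source $J(f)$ and curl-free data prescribed at $t=T$ (so that all-order $L^2$ bounds follow from Proposition \ref{Proinidata0} and the $\partial_t$-energy estimate), extracts the radiation field of $H$ via Proposition \ref{blackboxscat}, and then adds the vacuum field with radiation data $\underline{\alpha}^\infty-\underline{\alpha}^{\mathcal{I}^+}$ supplied by the isometry of Theorem \ref{Thscat}; the decomposition through $F^{\mathrm{asymp}}[f_\infty]$ and Proposition \ref{ProscatMax} is only used \emph{afterwards}, to improve the estimates. Your route contains several genuine gaps. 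First, the hypothesis of Proposition \ref{ProscatMax} fails for the radiation field you feed into it: $\overline{\mathcal{E}}^{\,\delta}_{1}$ requires a polynomial surplus $\langle u\rangle^{2\delta}$ on top of $\|\cdot\|_1$, which itself carries $\langle u\rangle^{2+2\kappa_u}$ on $\nabla_{\partial_u}^{\kappa_u}\underline{\alpha}^{\mathcal{I}^+}$, whereas \eqref{eq:assumpunderalphainfty} only provides $\langle u\rangle^{2}\log^2(1+\langle u\rangle)$ uniformly in the number of $u$-derivatives. That logarithmic weight is calibrated exactly for the $\mathcal{S}^K_{\mathcal{I}^+}$ norm of Theorem \ref{Thscat} (it makes $\rho^{\mathcal{I}^+},\sigma^{\mathcal{I}^+}\in L^2_{u,\omega}$ by Cauchy--Schwarz), not for $\overline{\mathcal{E}}^{\,\delta}_1$; this is why the paper only ever invokes Proposition \ref{ProscatMax} with \emph{vanishing} radiation field and routes all nontrivial scattering data through the vacuum wave operator.

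Second, your induction on $|\gamma|$ does not close. To assert that the radiation field of $\mathcal{L}_{Z^\gamma}F^{\mathrm{new}}$ is $\underline{\alpha}^\infty_\gamma$ (which you need in order to identify $\mathcal{L}_{Z^\gamma}(F^{\mathrm{new}}-F^{\mathrm{asymp}}[f_\infty])$ with the field produced by Proposition \ref{ProscatMax} at order $\gamma$, via uniqueness of the asymptotic Cauchy problem), Proposition \ref{blackboxscat} must be applied to $\mathcal{L}_{Z^\kappa}F^{\mathrm{new}}$ with $|\kappa|=|\gamma|-1$, and its hypotheses demand $L^2_x$ control of the data at order $|\gamma|$ --- precisely what the induction step is supposed to deliver. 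Proposition \ref{ProscatMax} itself gives no control on derivatives of its output, so there is nothing to start this identification from. Third, at top order the initial value $\|\mathcal{L}_{Z^\gamma}F^{\mathrm{new}}(T,\cdot)\|_{L^2}$, $|\gamma|=N$, is \emph{not} recoverable from lower-order control plus the constraint equations: on the slice $\{t=T\}$ the div--curl system gives $\|\nabla_x E\|_{L^2}^2=\|\nabla\cdot E\|_{L^2}^2+\|\nabla\times E\|_{L^2}^2$ with $\nabla\times E=-\partial_t B$, so one spatial derivative is traded for one time derivative of the same order and the argument is circular; for a solution determined by scattering data rather than prescribed Cauchy data there is no a priori regularity of the trace. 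The paper's Remark after the three-step outline acknowledges exactly this: bypassing the forward problem would require a genuinely higher-order version of Proposition \ref{ProscatMax} incorporating the top-order (Glassey--Strauss) machinery, which is not what you have sketched.
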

\begin{proof}
Let $H$ be the unique soluton to
\begin{equation*}
\nabla^\mu H_{\mu \nu} = J(f)_\nu, \quad  \;\nabla^\mu {}^* \! H_{\mu \nu} =0, \qquad \quad \nabla_{\partial_{x^i}}H_{0 j} (T,\cdot)= \nabla_{\partial_{x^j}} H_{0 i} (T,\cdot), \quad \; \nabla_{\partial_{x^i}}{}^* \!H_{0 j}(T,\cdot)= \nabla_{\partial_{x^j}} {}^*\!H_{0 i} (T,\cdot) .
\end{equation*}
$1 \leq i , \, j \leq 3$. Note then that $\nabla^\mu \mathcal{L}_{Z^\gamma}(H)_{\mu \nu} = J(\widehat{Z}^\gamma f)_\nu$ by Proposition \ref{Com}. According to Proposition \ref{Proinidata0}, Lemma \ref{Lemftog} and Corollary \ref{Corrho}, we have 
$$ \forall \, |\gamma| \leq N, \qquad \big\| \mathcal{L}_{Z^\gamma} H (T, \cdot) \big\|_{L^2(\R^3)}+\sup_{t \geq T} \big\| \langle t-| \cdot| \rangle^{\frac{3}{2}} J \big( \widehat{Z}^\gamma f \big)(t,\cdot) \big\|_{L^2(\R^3)} <+ \infty.$$
By the general scattering result of Proposition \ref{blackboxscat}, applied to $\mathcal{L}_{Z^\gamma} H$ for any $|\gamma| \leq N-1$, $H$ admits a radiation field $\underline{\alpha}^{\mathcal{I}^+}$ along future null infinity, which satisfies
$$ \forall \, |\gamma| \leq N, \qquad \big\| \underline{\alpha}^{\mathcal{I}^+}_\gamma \big\|_{L^2(\R_u \times \mathbb{S}^2_\omega)} <+ \infty.$$
Consequently, in view of the assumptions on $\underline{\alpha}^{\infty}$, the radiation field $\underline{\alpha}^{\infty}_\gamma-\underline{\alpha}^{\mathcal{I}^+}_\gamma$ is bounded as well in $L^2_{u,\omega}$ for any $|\gamma| \leq N$. By Theorem \ref{Thscat}, there exists a unique $2$-form $H^{\mathrm{vac}}$ verifying
\begin{equation*}
\nabla^\mu H^{\mathrm{vac}}_{\mu \nu} = 0, \quad \nabla^\mu {}^* \! H_{\mu \nu}^{\mathrm{vac}} =0; \qquad \qquad \forall \, |\gamma| \leq N , \; \forall \, t \in \R_+, \quad  2\big\| \mathcal{L}_{Z^\gamma} H (t, \cdot) \big\|_{L^2(\R^3)}=\big\| \underline{\alpha}^{\infty}_\gamma-\underline{\alpha}^{\mathcal{I}^+}_\gamma \big\|_{L^2(\R_u \times \mathbb{S}^2_\omega)},
\end{equation*}
and $\underline{\alpha}^\infty \! - \underline{\alpha}^{\mathcal{I}^+}$ is the radiation field of $H^{\mathrm{vac}}$ along $\mathcal{I}^+$. Thus, the result holds and $F^{\mathrm{new}}:=H\!+H^{\mathrm{vac}}$.
\end{proof}

\subsection{Improved estimates for the lower order derivatives of the electromagnetic field}\label{Subseclowerorder} We refine now our estimates on $\mathcal{L}_{Z^\gamma}F^{\mathrm{new}}$, for $|\gamma| \leq N-1$. More precisely, we prove here the next result.

\begin{Pro}\label{Projustheretobeproved1}
If $D$ is chosen large enough and if $\Lambda B_\Lambda \epsilon \log^{-1} T$ is small enough, then 
$$ \forall \, t \geq T, \qquad \qquad \mathcal{E}^K_{N-1} \big[ F^{\mathrm{new}}-F^{\mathrm{asymp}}[f_\infty] \big](t) \leq \frac{D}{2} \Lambda .$$ 
\end{Pro}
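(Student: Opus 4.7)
The plan is to apply Proposition \ref{ProscatMax} with $a=0$ separately to each Lie derivative $G^\gamma := \mathcal{L}_{Z^\gamma}(F^{\mathrm{new}}-F^{\mathrm{asymp}}[f_\infty])$, $|\gamma| \leq N-1$. By Proposition \ref{ProComMax}, $G^\gamma$ solves the Maxwell equations with a source $\mathcal{J}^\gamma$: when $Z^\gamma$ is composed only of homogeneous vector fields we have $\mathcal{J}^\gamma = J(\widehat{Z}^\gamma f) - J^{\mathrm{asymp}}[\widehat{Z}^\gamma_\infty f_\infty]$, while for $Z^\gamma = \partial^\kappa_{t,x} Z^\beta$ with $\gamma_T \geq 1$ the source is a combination of $J(\partial^\kappa_{t,x}\widehat{Z}^\beta f)$ and terms of the form $t^{-|\kappa|} J^{\mathrm{asymp}}[D^\xi_{t,x}\widehat{Z}^\beta_\infty f_\infty]$. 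Using the well-posedness obtained in Section \ref{Subsec1Max}, Proposition \ref{blackboxscat} applied to $\mathcal{L}_{Z^\gamma} F^{\mathrm{new}}$, and Proposition \ref{Proradasymp} applied to $\mathcal{L}_{Z^\gamma}F^{\mathrm{asymp}}[f_\infty]$, the radiation field of $G^\gamma$ along $\mathcal{I}^+$ equals $\underline{\alpha}^\infty_\gamma - \underline{\alpha}^{\mathrm{asymp}}_\gamma[f_\infty]$ and lies in $\mathcal{S}^K_{\mathcal{I}^+}$ (the constraint equations being inherited from those of Theorem \ref{Theo1} and Proposition \ref{Proradasymp}).

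The radiation term on the right-hand side of Proposition \ref{ProscatMax} is easily controlled: the assumption \eqref{eq:assumpunderalphainfty} gives $\overline{\mathcal{E}}^{\,\delta}_1[\underline{\alpha}^\infty_\gamma] \lesssim \Lambda$ for any $0 < \delta \leq 1/2$, since $\underline{\alpha}^\infty_\gamma$ is expressible as a combination of derivatives of $\underline{\alpha}^\infty$ of order at most $|\gamma|+1 \leq N$ via Definition \ref{Defrecurrad}; and Proposition \ref{Proradasymp} combined with $\overline{\mathbb{E}}_{N+1}[f_\infty] \leq \sqrt{\epsilon}$ yields $\overline{\mathcal{E}}^{\,\delta}_1[\underline{\alpha}^{\mathrm{asymp}}_\gamma[f_\infty]] \lesssim \epsilon \leq \Lambda$, so altogether $\overline{\mathcal{E}}^{\,\delta}_1[\underline{\alpha}^\infty_\gamma - \underline{\alpha}^{\mathrm{asymp}}_\gamma[f_\infty]] \lesssim \Lambda$.

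The source term is handled by Corollary \ref{Corrho}, which provides $\int_{\R^3_x}\langle t+|x|\rangle^5 |\mathcal{J}^\gamma|^2 \dr x \lesssim \epsilon \Lambda B_\Lambda \log^{6N}(t)$ for purely homogeneous $Z^\gamma$; the case $\gamma_T \geq 1$ can be deduced by induction on $\gamma_T$, writing $\mathcal{L}_{Z^\gamma}(F^{\mathrm{new}}-F^{\mathrm{asymp}}[f_\infty]) = \mathcal{L}_{\partial_{x^\lambda}}\mathcal{L}_{Z^{\gamma'}}(F^{\mathrm{new}}-F^{\mathrm{asymp}}[f_\infty])$ and using that $\nabla^\mu \mathcal{L}_{\partial_{x^\lambda}} H_{\mu\nu} = \partial_{x^\lambda}(\nabla^\mu H_{\mu\nu})$, so that the translations act directly on the divergence and the relevant weighted $L^2$ estimates follow from those at order $|\gamma'|$ combined with Corollary \ref{CorgainderivVla}. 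In either case, an interpolation with $\delta = 1/2$ absorbs the logarithmic growth:
$$ \int_{\R^3_x} \langle t+|x|\rangle^{4+\delta} |\mathcal{J}^\gamma|^2 \dr x \leq \langle t \rangle^{\delta-1} \int_{\R^3_x} \langle t+|x|\rangle^5 |\mathcal{J}^\gamma|^2 \dr x \lesssim \langle t\rangle^{-1/2} \log^{6N}(t) \, \epsilon \Lambda B_\Lambda \lesssim_N \epsilon \Lambda B_\Lambda, $$
uniformly in $t \geq T \geq 2$. Proposition \ref{CorassumpJforcondi} then provides the required bound $\mathcal{E}^K[S(\mathcal{J}^\gamma,n)](t) \lesssim \langle t \rangle^{-\delta} \epsilon \Lambda B_\Lambda$.

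Combining everything, Proposition \ref{ProscatMax} yields a constant $C_N > 0$ with
$$ \mathcal{E}^K[G^\gamma](t) \leq C_N \big(\Lambda + \epsilon \Lambda B_\Lambda\big) \leq 2 C_N \Lambda $$
provided $\epsilon B_\Lambda \leq 1$, which is implied by the smallness of $\Lambda B_\Lambda \epsilon \log^{-1} T$. Moreover, the $L^2_x$ solution of Section \ref{Subsec1Max} must coincide with the one produced by Proposition \ref{ProscatMax}, since their difference is a vacuum solution with zero radiation field, hence zero by Theorem \ref{Thscat}. Taking $D \geq 4C_N$ gives $\mathcal{E}^K_{N-1}[F^{\mathrm{new}}-F^{\mathrm{asymp}}[f_\infty]](t) \leq D\Lambda/2$. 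The main obstacle I anticipate is a careful treatment of the translation-heavy commutators: one must check that the extra $t^{-|\kappa|}$ factors in the commutation formula for $F^{\mathrm{asymp}}$ combine correctly with the strong decay of $J(\partial^\kappa_{t,x}\widehat{Z}^\beta f)$ to produce a source fitting the framework of Proposition \ref{CorassumpJforcondi}, which is why the inductive reduction to the purely homogeneous case through $\partial_{x^\lambda}$ factoring is convenient.
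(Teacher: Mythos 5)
There is a genuine gap, and it sits exactly where your strategy diverges from the paper's: you feed the nonzero radiation datum $\underline{\alpha}^{\infty}_{\gamma}-\underline{\alpha}^{\mathrm{asymp}}_{\gamma}[f_\infty]$ into Proposition \ref{ProscatMax}, but that proposition requires finiteness of $\overline{\mathcal{E}}^{\,\delta}_{1}$, which is a far stronger norm than anything the hypotheses supply. First, your intermediate claim that \eqref{eq:assumpunderalphainfty} gives $\overline{\mathcal{E}}^{\,\delta}_{1}[\underline{\alpha}^{\infty}_{\gamma}]\lesssim\Lambda$ is false: the norm $\|\cdot\|_{1}$ of Definition \ref{Defscatstae} contains $\rho^{\mathcal{I}^+}$ and $\sigma^{\mathcal{I}^+}$, and for $\underline{\alpha}^{\infty}$ taken alone the constraint \eqref{eq:constr1} forces $\rho^{\mathcal{I}^+}(u,\omega)\to -\tfrac12\slashed{\nabla}\cdot\int_{\R_u}\underline{\alpha}^{\infty}\neq 0$ as $u\to+\infty$ (this is the electromagnetic memory), so $\int_{\R_u}\langle u\rangle^{2\delta}|\rho^{\mathcal{I}^+}|^2=+\infty$; only the \emph{difference} with $\underline{\alpha}^{\mathrm{asymp}}_\gamma[f_\infty]$ has integrable $\rho^{\mathcal{I}^+}$, so the triangle-inequality splitting is not available. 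Second, even for the difference, $\overline{\mathcal{E}}^{\,\delta}_{1}$ demands $\int\langle u\rangle^{2\delta}\cdot\langle u\rangle^{4}|\nabla_u\underline{\alpha}^{\mathcal{I}^+}|^2<\infty$ together with weighted derivatives of $\rho^{\mathcal{I}^+},\sigma^{\mathcal{I}^+}$, whereas \eqref{eq:assumpunderalphainfty} puts only the weight $\langle u\rangle\log(1+\langle u\rangle)$ on each derivative of $\underline{\alpha}^{\infty}$, and the operators of Definition \ref{Defrecurrad} already consume one power of $u$ per homogeneous vector field. This is precisely why the paper never applies Proposition \ref{ProscatMax} with nonzero scattering data here: it invokes it only with $\underline{\alpha}^{\mathcal{I}^+}=0$ to absorb the source $J(\widehat{Z}^\gamma f)-J^{\mathrm{asymp}}[\widehat{Z}^\gamma_\infty f_\infty]$ (the field $G^\gamma$ of \eqref{eq:estiGgamma}), and routes the radiation datum through the \emph{isometric} vacuum wave operator of Theorem \ref{Thscat} (Propositions \ref{ProGvacsolution} and \ref{ProMvacsolution}), which requires only the energy norms $\|\cdot\|_{\mathcal{S}^{K}_{\mathcal{I}^+}}$ and $\|\cdot\|_{\mathcal{S}^{\partial_t,N}_{\mathcal{I}^+}}$, with no pointwise weighted derivatives.

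The case $\gamma_T\geq 1$ is also not actually closed by your argument; you correctly flag it as the main obstacle but the inductive reduction through $\partial_{x^\lambda}$ does not by itself produce a weighted $L^2$ bound on the difference of the two sources. The paper's resolution is different and simpler: when $\gamma_T\geq 1$ one does not estimate the difference at all. One uses that $J^{\mathrm{asymp}}[\widehat{Z}^\gamma_\infty f_\infty]=0$, bounds $\mathcal{E}^K[\mathcal{L}_{Z^\gamma}F^{\mathrm{asymp}}[f_\infty]]\lesssim\epsilon$ directly from the improved $\langle t-|x|\rangle^{-1-\gamma_T}$ decay (Corollary \ref{CorweightedcontrolFasymp}), and writes $\mathcal{L}_{Z^\gamma}F^{\mathrm{new}}=\mathcal{L}_{Z^\gamma}M^{\mathrm{vac}}+G^\gamma$, where $M^{\mathrm{vac}}$ carries the radiation datum $\underline{\alpha}^\infty_\gamma$ itself; the key observation making $\underline{\alpha}^\infty_\gamma\in\mathcal{S}^K_{\mathcal{I}^+}$ is that $\int_{\R_u}\underline{\alpha}^\infty_\gamma\,\dr u=0$ as soon as a translation is present, so no constraint equation is needed there. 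Your treatment of the source via Corollary \ref{Corrho} and Proposition \ref{CorassumpJforcondi}, and the final uniqueness identification via Theorem \ref{Thscat}, are fine and match the paper; the proof would be repaired by adopting the paper's splitting of radiation datum and source between the two operators.
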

Let $|\kappa| \leq N-1$. If $Z^\kappa$ is only composed by homogeneous vector fields $\Omega_{0k}$, $\Omega_{ij}$ and $S$, then, according to the commutation formula given by Proposition \ref{ProComMax}, we have
\begin{equation*}
\nabla^\mu \mathcal{L}_{Z^\kappa}(F^{\mathrm{new}}-F^{\mathrm{asymp}}[f_\infty])_{\mu \nu}= J(\widehat{Z}^\kappa f)_\nu-J^{\mathrm{asymp}}_\nu[\widehat{Z}^\kappa_\infty f_\infty], \qquad \qquad \nabla^\mu {}^* \! \mathcal{L}_{Z^\kappa}(F^{\mathrm{new}}-F^{\mathrm{asymp}}[f_\infty])_{\mu \nu} =0.
\end{equation*}
If $\kappa_T \geq 1$, $J(\widehat{Z}^\kappa f)$ and the source term of the commuted asymptotic Maxwell equations are strongly decaying so that we will be able to estimate both fields separately. Recall further that $J^{\mathrm{asymp}}_\nu[\widehat{Z}^\kappa_\infty f_\infty]=0$ in that case. This motivates the first step of the proof, which consists in proving that, for any $|\gamma| \leq N-1$, there exists an electromagnetic field $G^\gamma$ such that
\begin{equation}\label{eq:estiGgamma}
\hspace{-6mm} \nabla^\mu G_{\mu \nu}^\gamma= J(\widehat{Z}^\gamma f)_\nu-J^{\mathrm{asymp}}_\nu \big[\widehat{Z}^\gamma_\infty f_\infty \big], \qquad \nabla^\mu {}^* \! G_{\mu \nu}^\gamma =0; \qquad \qquad \sup_{t \geq T} \mathcal{E}^K [G^\gamma](t) \lesssim \epsilon T^{-1/2} \Lambda B_\Lambda 
\end{equation}
and its radiation field along future null infinity vanishes identically. For this, it suffices to check that we can apply Proposition \ref{ProscatMax} for $\underline{\alpha}^{\mathcal{I}^+}=0$, the source term $J:=J(\widehat{Z}^\gamma f)_\nu-J^{\mathrm{asymp}}_\nu[\widehat{Z}^\gamma_\infty f_\infty]$, $a=0$, $\delta =1/4$ and $B_{\mathrm{source}} \lesssim \epsilon T^{-1/2} \Lambda B_\Lambda$. In view of the strong $L^2_x$ decay estimates satisfied by $J$ given by Corollary \ref{Corrho} and Proposition \ref{CorassumpJforcondi}, we have indeed, for all $n \geq T$,
 $$ \forall \, t \geq T, \qquad \mathcal{E}^{K}\big[ S(J,n) \big] (t) \lesssim \langle t \rangle^{-\frac{1}{4}} \epsilon T^{-\frac{1}{2}} \Lambda B_\Lambda,$$
 so that the assumption \eqref{eq:condiSJn} is verified. Let us now prove the following result, which will also be useful for the treatment of the top order derivatives.
\begin{Pro}\label{ProGvacsolution}
There exists a solution $G^{\mathrm{vac}}$ to the vacuum Maxwell equations such that, for any $|\gamma| \leq N$,
$$  \forall \, t \in \R_+, \quad \qquad \mathcal{E}^K \big[ \mathcal{L}_{Z^\gamma} G^{\mathrm{vac}} \big](t) = \big\| \underline{\alpha}^{\infty}_\gamma- \underline{\alpha}^{\mathrm{asymp}}_\gamma [f_\infty] \big\|^2_{\mathcal{S}^K_{\mathcal{I}^+}} \lesssim \Lambda ,$$
and $\underline{\alpha}^\infty_\gamma - \underline{\alpha}^{\mathrm{asymp}}_\gamma[f_\infty]$ is the radiation field of $\mathcal{L}_{Z^\gamma} G^{\mathrm{vac}}$ along future null infinity $\mathcal{I}^+$.
\end{Pro}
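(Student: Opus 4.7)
The plan is to construct $G^{\mathrm{vac}}$ as the image under the inverse wave operator $(\mathscr{F}^+)^{-1}:\mathcal{S}^K_{\mathcal{I}^+} \to \mathcal{S}^K_{\{t=0\}}$ of Theorem \ref{Thscat} applied to the scattering state
$$\mathfrak{a} := \underline{\alpha}^\infty - \underline{\alpha}^{\mathrm{asymp}}[f_\infty].$$
First I would check $\mathfrak{a} \in \mathcal{S}^K_{\mathcal{I}^+}$. The weighted $L^2$ bound $\int_{\R_u}\int_{\mathbb{S}^2}\langle u\rangle^2|\mathfrak{a}|^2\,d\mu_{\mathbb{S}^2}\,du\lesssim\Lambda$ follows from assumption \eqref{eq:assumpunderalphainfty} and from part $1$ of Proposition \ref{Proradasymp} applied to $\underline{\alpha}^{\mathrm{asymp}}[f_\infty]$, since $\overline{\mathbb{E}}_{N+1}[f_\infty]\leq\sqrt\epsilon\leq\Lambda$. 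The delicate point is the mean-zero condition \eqref{defmoynulle}: by definition of a future scattering state the radiation field $\underline{\alpha}^\infty$ satisfies \eqref{eq:constr1}--\eqref{eq:constr2}, while by part $5$ of Proposition \ref{Proradasymp} the quantity $\underline{\alpha}^{\mathrm{asymp}}[f_\infty]$ satisfies the very same equations with identical right-hand sides. Subtracting kills both right-hand sides and yields $\slashed\nabla\cdot\int_{\R_u}\mathfrak{a}\,du=\slashed\nabla\times\int_{\R_u}\mathfrak{a}\,du=0$, so $\mathfrak{a}\in\mathcal{S}^K_{\mathcal{I}^+}$.

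Applying Theorem \ref{Thscat} then produces a unique vacuum Maxwell solution $G^{\mathrm{vac}}$ with radiation field $\mathfrak{a}$ and $\mathcal{E}^K[G^{\mathrm{vac}}](0)=\|\mathfrak{a}\|_{\mathcal{S}^K_{\mathcal{I}^+}}^2\lesssim\Lambda$. Since $K$ is conformal Killing with factor $4t$ and $\mathbb{T}[G^{\mathrm{vac}}]$ is trace-free, the identity \eqref{eq:divMora} combined with the divergence theorem (as in the source-free version of Proposition \ref{ProMoravac}) yields the conservation $\mathcal{E}^K[G^{\mathrm{vac}}](t)=\mathcal{E}^K[G^{\mathrm{vac}}](0)$ for every $t\geq 0$.

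For higher order $|\gamma|\leq N$, Lemma \ref{LemComMaxwell} guarantees that $\mathcal{L}_{Z^\gamma}G^{\mathrm{vac}}$ remains a vacuum Maxwell solution, and the commutation formulas of Proposition \ref{blackboxscat} together with Definition \ref{Defrecurrad} identify its radiation field as $\mathfrak{a}_\gamma:=\underline{\alpha}^\infty_\gamma-\underline{\alpha}^{\mathrm{asymp}}_\gamma[f_\infty]$. The norm bound $\|\mathfrak{a}_\gamma\|_{\mathcal{S}^K_{\mathcal{I}^+}}^2\lesssim\Lambda$ follows from \eqref{eq:assumpunderalphainfty} (which controls the weighted $L^2$ norms of $\underline{\alpha}^\infty_\gamma$ via Definition \ref{Defrecurrad}) and from the higher-order estimates on $\underline{\alpha}^{\mathrm{asymp}}_\gamma[f_\infty]$ provided by Proposition \ref{Proradasymp}. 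Since $\mathcal{L}_{Z^\gamma}G^{\mathrm{vac}}$ is itself a vacuum Maxwell solution, the mean-zero condition for $\mathfrak{a}_\gamma$ is automatically inherited, so $\mathfrak{a}_\gamma\in\mathcal{S}^K_{\mathcal{I}^+}$. The isometry of Theorem \ref{Thscat} (applied to $\mathcal{L}_{Z^\gamma}G^{\mathrm{vac}}$ via the forward direction, with approximation by smooth compactly supported initial data to bootstrap the required regularity) and conservation of $\mathcal{E}^K$ then give $\mathcal{E}^K[\mathcal{L}_{Z^\gamma}G^{\mathrm{vac}}](t)=\|\mathfrak{a}_\gamma\|_{\mathcal{S}^K_{\mathcal{I}^+}}^2\lesssim\Lambda$, completing the proof.

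The main obstacle is the verification of the mean-zero constraints at order zero; once that is in place, all higher-order bounds are essentially a consequence of Theorem \ref{Thscat}, Lemma \ref{LemComMaxwell} and the already-established asymptotic analysis of $F^{\mathrm{asymp}}[f_\infty]$ in Section \ref{SecMaxasymp}. The matching of the two constraint equations satisfied by $\underline{\alpha}^\infty$ and $\underline{\alpha}^{\mathrm{asymp}}[f_\infty]$ is not a coincidence: it is precisely the constraint identification established in Proposition \ref{Proradasymp} that makes the construction of the modified wave operator possible.
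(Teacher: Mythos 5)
Your proposal is correct and follows essentially the same route as the paper: identify that the constraint equations \eqref{eq:constr1}--\eqref{eq:constr2} imposed on $\underline{\alpha}^\infty$ match the asymptotic constraint equations derived for $\underline{\alpha}^{\mathrm{asymp}}[f_\infty]$ in part~5 of Proposition~\ref{Proradasymp}, so that the difference $\mathfrak{a}$ satisfies the mean-zero condition \eqref{defmoynulle}; then invoke Theorem~\ref{Thscat} to produce $G^{\mathrm{vac}}$, and transfer the result to higher-order derivatives via Proposition~\ref{blackboxscat}, Definition~\ref{Defrecurrad}, and conservation of $\mathcal{E}^K$ for solutions to the vacuum Maxwell equations. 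The only cosmetic difference is that the paper passes explicitly through the intermediate space $\mathcal{S}^{\partial_t,N}_{\mathcal{I}^+}$ to set up the higher-order application of Theorem~\ref{Thscat}, whereas you argue order by order; both are valid.
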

\begin{proof}
Recall from Proposition \ref{Proradasymp} the properties satisfied by the radiation field $\underline{\alpha}^{\mathrm{asymp}}[f_\infty]$ of $F^{\mathrm{asymp}}[f_\infty]$, which imply, for all $|\gamma| \leq N$, 
$$\underline{\alpha}^\infty_\gamma -\underline{\alpha}^{\mathrm{asymp}}_\gamma[f_\infty] \in \mathcal{S}^K_{\mathcal{I}^+}, \qquad \qquad \qquad \big\| \underline{\alpha}^{\infty}_\gamma- \underline{\alpha}^{\mathrm{asymp}}_\gamma [f_\infty] \big\|^2_{\mathcal{S}^K_{\mathcal{I}^+}} \lesssim \Lambda + \epsilon \leq 2 \Lambda.$$ 
Consequently, $\underline{\alpha}^\infty -\underline{\alpha}^{\mathrm{asymp}}[f_\infty] \in \mathcal{S}^{\partial_{t},N}_{\mathcal{I}^+}$ and Theorem \ref{Thscat} implies that there exists a unique solution $G^{\mathrm{vac}}$ to the vacuum Maxwell equations such that
$$\forall \, |\gamma|=N, \qquad  \quad \| \mathcal{L}_{Z^\gamma} G^{\mathrm{vac}} (0,\cdot)\|_{\mathcal{S}^{\partial_t,N}_{\{t=0 \}}} = \big\| \underline{\alpha}^{\infty}_\gamma- \underline{\alpha}^{\mathrm{asymp}}_\gamma [f_\infty] \big\|_{\mathcal{S}^{\partial_t,N}_{\mathcal{I}^+}} $$
and $\underline{\alpha}^\infty - \underline{\alpha}^{\mathrm{asymp}}[f_\infty]$ is the radiation field of $G^{\mathrm{vac}}$ along $\mathcal{I}^+$. By Proposition \ref{blackboxscat}, $\underline{\alpha}^\infty_\gamma - \underline{\alpha}^{\mathrm{asymp}}_\gamma[f_\infty]$ is the radiation field of $\mathcal{L}_{Z^\gamma}G^{\mathrm{vac}}$ for any $|\gamma| \leq N$. The improved energy estimate given in the statement then follows from various applications of Theorem \ref{Thscat}, this time in the energy spaces $S^K_{\{t=0\}}$ and $S^{K}_{\mathcal{I}^+}$, as well as the conservation of $\mathcal{E}^K[\mathcal{L}_{Z^\gamma}G^{\mathrm{vac}}]$ over time.
\end{proof}

We are now able to control the weakly decaying derivatives. Fix then $|\gamma| \leq N-1$ such that $\gamma_T =0$, that is $Z^\gamma$ is only composed by homogeneous vector fields. By uniqueness considerations for the asymptotic Cauchy problem, we have $\mathcal{L}_{Z^\gamma} F^{\mathrm{new}}= \mathcal{L}_{Z^\gamma}G^{\mathrm{vac}}+G^\gamma+\mathcal{L}_{Z^\gamma} F^{\mathrm{asymp}}[f_\infty]$. Thus,
\begin{equation}\label{eq:forconcluProposi}
 \forall \, t \geq T, \qquad \qquad \mathcal{E}^K \big[ \mathcal{L}_{Z^\gamma}\big(F^{\mathrm{new}}-F^{\mathrm{asymp}}[f_\infty] \big) \big](t) \lesssim  \Lambda + \epsilon T^{-1/2} \Lambda B_\Lambda
 \end{equation}
according to Proposition \ref{ProGvacsolution} and \eqref{eq:estiGgamma}. Otherwise $\gamma_T \geq 1$ so $J^{\mathrm{asymp}}[\widehat{Z}_\infty^\gamma f_\infty]=0$ and the source terms of the Maxwell equations verified by $G^\gamma$ and $\mathcal{L}_{Z^\gamma} F^{\mathrm{new}}$ correspond. It leads us to consider a different solution to the vacuum Maxwell equations.
\begin{Pro}\label{ProMvacsolution}
There exists a unique solution $M^{\mathrm{vac}}$ to the vacuum Maxwell equations such that, 
\begin{itemize}
\item for any $|\gamma| \leq N$ and all $t \in \R_+$, we have $2\| \mathcal{L}_{Z^\gamma}M^{\mathrm{vac}}(t,\cdot) \|_{L^2_x}= \| \underline{\alpha}_\gamma^\infty\|_{L^2_{u,\omega}}$,
\item $\underline{\alpha}_\gamma^\infty$ is the radiation field of $\mathcal{L}_{Z^\gamma}M^{\mathrm{vac}}$ along future null infinity $\mathcal{I}^+$,
\item if $\gamma_T \geq 1$, then $\mathcal{E}^K \big[ \mathcal{L}_{Z^\gamma} M^{\mathrm{vac}} \big](t) = \big\| \underline{\alpha}^{\infty}_\gamma \big\|^2_{\mathcal{S}^K_{\mathcal{I}^+}} \lesssim \Lambda$ for all $t \in \R_+$.
\end{itemize}
\end{Pro}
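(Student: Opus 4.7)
The plan is to follow very closely the strategy of Proposition \ref{ProGvacsolution}, the only difference being that the radiation field $\underline{\alpha}^\infty$ is now prescribed directly, rather than as the difference $\underline{\alpha}^\infty - \underline{\alpha}^{\mathrm{asymp}}[f_\infty]$. The construction rests on Theorem \ref{Thscat}, which provides bijective isometries $\mathscr{F}^+$ between $\mathcal{S}^{\partial_t,N}_{\{t=0\}}$ and $\mathcal{S}^{\partial_t,N}_{\mathcal{I}^+}$, and between $\mathcal{S}^K_{\{t=0\}}$ and $\mathcal{S}^K_{\mathcal{I}^+}$.

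First, I would verify that $\underline{\alpha}^\infty \in \mathcal{S}^{\partial_t,N}_{\mathcal{I}^+}$. Using Definition \ref{Defrecurrad}, each $\underline{\alpha}^\infty_\gamma$ for $|\gamma| \leq N$ is a linear combination of terms of the schematic form $u^{a}\,\omega^{b}\,\nabla_{\partial_u}^{\gamma_u} \slashed{\nabla}^{\gamma_\omega} \underline{\alpha}^\infty$ with $a \leq \gamma_u$ and $\gamma_u + |\gamma_\omega| \leq |\gamma|$. The assumption \eqref{eq:assumpunderalphainfty}, which imposes a $\langle u \rangle^{2} \log^2(1+\langle u \rangle)$ weight on the $L^2_{u,\omega}$ norm of these derivatives, therefore immediately yields $\|\underline{\alpha}^\infty_\gamma\|^2_{L^2_{u,\omega}} \lesssim \Lambda$ for all $|\gamma| \leq N$. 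Consequently, $\underline{\alpha}^\infty \in \mathcal{S}^{\partial_t,N}_{\mathcal{I}^+}$ and Theorem \ref{Thscat} produces a unique $M^{\mathrm{vac}}$ solving the vacuum Maxwell equations whose radiation field is $\underline{\alpha}^\infty$.

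The first two bullet points then follow at once. The isometry yields $\|\mathcal{L}_{Z^\gamma} M^{\mathrm{vac}}(t,\cdot)\|^2_{\mathcal{S}^{\partial_t}_{\{t=0\}}} = \|\underline{\alpha}^\infty_\gamma\|^2_{\mathcal{S}^{\partial_t}_{\mathcal{I}^+}}$, and from $2|F|^2 = |\alpha(F)|^2+|\underline{\alpha}(F)|^2+2|\rho(F)|^2+2|\sigma(F)|^2$ we deduce $\|F\|^2_{\mathcal{S}^{\partial_t}_{\{t=0\}}} = 2\|F\|^2_{L^2_x}$, giving the claimed identity. The second bullet is exactly Proposition \ref{blackboxscat}, which asserts that $\underline{\alpha}^\infty_\gamma$ is the radiation field of $\mathcal{L}_{Z^\gamma} M^{\mathrm{vac}}$.

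The hard part is the third item: showing $\underline{\alpha}^\infty_\gamma \in \mathcal{S}^K_{\mathcal{I}^+}$ with $\|\underline{\alpha}^\infty_\gamma\|^2_{\mathcal{S}^K_{\mathcal{I}^+}} \lesssim \Lambda$ whenever $\gamma_T \geq 1$, so that the strong conservation law $\mathcal{E}^K[\mathcal{L}_{Z^\gamma} M^{\mathrm{vac}}](t) = \|\underline{\alpha}^\infty_\gamma\|^2_{\mathcal{S}^K_{\mathcal{I}^+}}$ from the $K$-isometry of Theorem \ref{Thscat} can be invoked. Here the assumption $\gamma_T \geq 1$ is crucial: inspecting Definition \ref{Defrecurrad}, when $Z^\gamma$ contains a translation $\partial_t$ or $\partial_{x^k}$, each contribution to $\underline{\alpha}^\infty_\gamma$ carries at least one factor of $\nabla_{\partial_u}$. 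Schematically $\underline{\alpha}^\infty_\gamma = \nabla_{\partial_u} \beta_\gamma$ for some tangential $1$-form $\beta_\gamma$ (this can be justified by an induction on the word describing $Z^\gamma$, using that translations commute with all elements of $\mathbb{K}$ up to translations, and that the action of each homogeneous vector field on $\nabla_{\partial_u}\beta$ preserves the factored $\nabla_{\partial_u}$ modulo admissible lower-order corrections). Consequently the primitives $\rho^{\mathcal{I}^+}_\gamma$, $\sigma^{\mathcal{I}^+}_\gamma$ defined by \eqref{rhoinfysigmainfty} reduce to the spherical divergence/curl of $\beta_\gamma$ (with the right sign), and the vanishing condition \eqref{defmoynulle} is automatic since $\int_{\R_u} \nabla_{\partial_u}(\slashed{\nabla}\cdot \beta_\gamma)\,\dr u = 0$ by the decay at $u \to \pm\infty$ coming from \eqref{eq:assumpunderalphainfty}. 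The $L^2_{u,\omega}$ norms of $\langle u \rangle \underline{\alpha}^\infty_\gamma$, $\rho^{\mathcal{I}^+}_\gamma$, $\sigma^{\mathcal{I}^+}_\gamma$ are then controlled, via Cauchy--Schwarz and the $\log$-weight in \eqref{eq:assumpunderalphainfty}, by the derivatives of $\underline{\alpha}^\infty$ up to order $|\gamma|+1 \leq N+1$, giving $\|\underline{\alpha}^\infty_\gamma\|^2_{\mathcal{S}^K_{\mathcal{I}^+}} \lesssim \Lambda$ and concluding the proof.
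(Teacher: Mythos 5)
Your proposal is correct and follows essentially the same route as the paper: existence and the first two bullets come from the isometry $\mathscr{F}^+$ of Theorem \ref{Thscat} on the $\mathcal{S}^{\partial_t,N}$ spaces together with \eqref{eq:assumpunderalphainfty}, and the third bullet rests on exactly the observation the paper uses, namely that $\gamma_T\geq 1$ forces $\int_{\R_u}\underline{\alpha}^\infty_\gamma(u,\cdot)\,\dr u=0$ (your $\nabla_{\partial_u}$-exactness argument is just a slightly stronger way of phrasing this), so that \eqref{defmoynulle} holds, $\underline{\alpha}^\infty_\gamma\in\mathcal{S}^K_{\mathcal{I}^+}$, and the $K$-isometry of Theorem \ref{Thscat} plus conservation of $\mathcal{E}^K$ for vacuum fields concludes. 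Your write-up actually supplies more of the inductive detail than the paper's three-sentence proof does.
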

\begin{proof}
For the first two points, we use the property of the operator $\mathscr{F}^+ \vert_{\mathcal{S}^{\partial_{t},N}_{\{t=0\}}}$ given by Theorem \ref{Thscat} and the assumption \eqref{eq:assumpunderalphainfty} on $\underline{\alpha}^\infty$. For the last part, fix $|\gamma| \leq N$ such that $\gamma_T \geq 1$ and recall Definition \ref{Defrecurrad}. Then, 

$$\int_{\R_u} \underline{\alpha}^\infty_\gamma (u,\cdot) \dr u =0,$$ so that, using again \eqref{eq:assumpunderalphainfty}, we have $\underline{\alpha}^{\infty}_\gamma \in \mathcal{S}_{\mathcal{I}^+}^{K}$. It then remains to apply once again Theorem \ref{Thscat} and to use that $\mathcal{E}^K[\cdot]$ is conserved for any sufficiently regular solution to the free Maxwell equations.
\end{proof}

We then deduce that $\mathcal{L}_{Z^\gamma} F^{\mathrm{new}}= \mathcal{L}_{Z^\gamma}M^{\mathrm{vac}}+G^\gamma$ as well as
\begin{equation}\label{kevatalenn:15}
 \forall \, t \geq T, \qquad \qquad \mathcal{E}^K \big[ \mathcal{L}_{Z^\gamma}F^{\mathrm{new}} \big](t) \lesssim  \Lambda + \epsilon T^{-1/2} \Lambda B_\Lambda.
 \end{equation}
Finally, by Corollary \ref{CorweightedcontrolFasymp}, we have $\mathcal{E}^K\big[\mathcal{L}_{Z^\gamma} F^{\mathrm{asymp}}[f_\infty] \big] \lesssim \epsilon \leq \Lambda $. Thus, \eqref{eq:forconcluProposi} is in fact verified for any $|\gamma| \leq N$ and Proposition \ref{Projustheretobeproved1} holds.

\subsection{The top order derivatives}\label{toporder}

We fix, for all this section, $|\gamma| \leq N-1$. In order to control sufficiently well $\nabla_{\partial_{x^k}} \mathcal{L}_{Z^\gamma}F^{\mathrm{new}}$, for $|\gamma|=N-1$, through Proposition \ref{ProscatMax}, we will be interested in the first order spatial derivatives of the following solution to the Maxwell equations. Let $\tau_0 \geq 4T$ and $G^{\tau_0}$ be uniquely defined as the solution to 
$$
 \nabla^\mu G_{\mu \nu}^{\tau_0} = \chi \big(t / \tau_0 \big)\Big[J\big( \widehat{Z}^\gamma f \big)_\nu-J^{\mathrm{asymp}}_\nu\big[ \widehat{Z}^\gamma_\infty f_\infty \big] \Big], \qquad \nabla^\mu {}^* \! G_{\mu \nu}^{\tau_0}=0, \qquad \qquad G^{\tau_0}(\tau_0,\cdot)=0, 
$$
where $\chi \in C^\infty ( \R,[0,1])$ verifies $\chi (s)=1$ for all $s \leq 1/4$ and $\chi(s)=0$ for all $s \geq 1/2$. Then, we will control the time derivative of $\mathcal{L}_{Z^\gamma}F^{\mathrm{new}}$ through the Maxwell equations (see for instance \eqref{VM2}--\eqref{VM3}). For the purpose of controlling the spatial derivatives of $G^{\tau_0}$, we first determine their Glassey-Strauss decomposition, allowing for a gain of regularity which can be compared to the elliptic regularity in the context of the Vlasov-Poisson system.
\subsubsection{Glassey-Strauss decomposition of the derivatives of the field}

We fix $\tau_0 \geq 4T$ and we simplify the notations by dropping the superscript $\tau_0$ of $G^{\tau_0}$. The decomposition of $\nabla_{\partial_{x^k}}G$ that we will use requires to introduce several integral kernels. We point out that, apart for one of them, the only information that we will need on these kernels is an upper bound, so that their precise expression are only useful for the statement of the decomposition of the derivatives of $G$.
\begin{Def}
For any $(i,j) \in \llbracket 1 , 3 \rrbracket^2$, we define the next smooth functions of the variables $(\omega,v) \in \mathbb{S}_\omega^2 \times \R^3_v$,
\begin{equation*}
\mathbf{w}_{0i}(\omega,v)=-\mathbf{w}_{i0}(\omega,v):= \omega_i+\widehat{v}_i, \qquad \mathbf{w}_{ij}(\omega,v)  := \omega_i\widehat{v}_j-\omega_j\widehat{v}_i.
\end{equation*}
Let further, for any $k \in \llbracket 1 , 3 \rrbracket$ and $0 \leq \mu , \, \nu \leq 3$, 
\begin{align*}
\mathbf{a}_{\mu\nu}^k(\omega,v)&:=-3\frac{\mathbf{w}_{\mu \nu}(\omega,v)\omega_k}{\langle v \rangle^4(1+\omega \cdot \widehat{v})^4}-3\frac{\mathbf{w}_{\mu \nu}(\omega,v)\widehat{v}_k}{\langle v \rangle^2(1+\omega \cdot \widehat{v})^3}+\frac{\delta_{k\mu}\widehat{v}_{\nu}-\delta_{k\nu}\widehat{v}_{\mu}}{\langle v \rangle^2(1+\omega \cdot \widehat{v})^2}, \\
\mathbf{b}^k_{\mu \nu} (\omega,v) &:= 3\frac{\mathbf{w}_{\mu \nu}(\omega,v)\omega_k}{\langle v \rangle^2(1+\omega \cdot \widehat{v})^3}-2\frac{\mathbf{w}_{\mu \nu}(\omega,v)\widehat{v}_k}{(1+\omega \cdot \widehat{v})^2}-\frac{\delta_{k\mu}\widehat{v}_{\nu}-\delta_{k\nu}\widehat{v}_{\mu}}{1+\omega \cdot \widehat{v}}, \\
\mathbf{c}^k_{\mu \nu}(\omega,v) &:= \frac{\omega_k \mathbf{w}_{\mu \nu}(\omega,v)}{(1+\omega \cdot \widehat{v})^2}, \qquad \qquad \qquad \mathbf{d}^k_{\mu \nu}(\omega,v)  := \frac{\omega_k \mathbf{w}_{\mu \nu}(\omega,v)}{\langle v \rangle^2(1+\omega \cdot \widehat{v})^3}.
\end{align*}
All these quantities are antisymmetric in $(\mu,\nu)$.
\end{Def}

Now, according to \cite[Corollary~$5.8$]{scat}, the following estimates hold.
\begin{Lem}\label{Lemboundkernel}
For any $ 1 \leq k \leq 3$, we have,
$$ \forall \, v \in \R^3_v, \qquad \Big( \big| \mathbf{a}^k\big| +\big|\nabla_v \mathbf{a}^k\big| +\big| \nabla_v \mathbf{b}^k\big|+\big| \mathbf{c}^k\big|+\big|\nabla_v \mathbf{c}^k\big|+\big| \nabla_v \nabla_v\mathbf{c}^k\big|+\big| \mathbf{d}^k\big|+\big|\nabla_v \mathbf{d}^k\big|\Big)(\cdot,v) \lesssim \langle v \rangle^3.$$
\end{Lem}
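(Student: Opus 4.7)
The statement is essentially a power-counting exercise once one has two elementary ingredients. The plan is to first establish the key inequality $(1+\omega\cdot\widehat{v})^{-1} \lesssim \langle v\rangle^2$, which follows from the identity $1+\omega\cdot\widehat{v} = 2v^L/v^0$ combined with the mass-shell lower bound $v^L \geq 1/(4v^0)$ recalled in Lemma \ref{gainv}. Then I would verify the sharper estimate $|\mathbf{w}_{\mu\nu}|(\omega,v) \lesssim \sqrt{1+\omega\cdot\widehat{v}}$, which is what allows one to gain a half-power in the singular denominators. For $\mathbf{w}_{0i} = \omega_i+\widehat{v}_i$, this follows from the identity $|\omega+\widehat{v}|^2 = 2(1+\omega\cdot\widehat{v}) - (1-|\widehat{v}|^2) \leq 2(1+\omega\cdot\widehat{v})$; for $\mathbf{w}_{ij}$, write $\omega_i\widehat{v}_j - \omega_j\widehat{v}_i = (\omega_i+\widehat{v}_i)\widehat{v}_j - (\omega_j+\widehat{v}_j)\widehat{v}_i$ and use $|\widehat{v}|\leq 1$ to reduce to the previous bound.

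With these two inputs in hand, the bounds on $\mathbf{a}^k,\,\mathbf{c}^k,\,\mathbf{d}^k$ are obtained by inspection. For instance, $|\mathbf{c}^k_{\mu\nu}| \lesssim (1+\omega\cdot\widehat{v})^{-3/2} \lesssim \langle v\rangle^3$, and the same kind of counting works for $\mathbf{a}^k$ and $\mathbf{d}^k$ after using the factors of $\langle v\rangle^{-2}$ or $\langle v\rangle^{-4}$ appearing in their definitions to absorb the excess growth coming from $(1+\omega\cdot\widehat{v})^{-n}$.

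For the derivative estimates, I would record the auxiliary bounds $|\partial_v \widehat{v}|(v) \lesssim \langle v\rangle^{-1}$, $|\partial_v \langle v\rangle^{-n}| \lesssim \langle v\rangle^{-n-1}$, and $|\partial_v \mathbf{w}_{\mu\nu}|(\omega,v) \lesssim \langle v\rangle^{-1}$, all of which follow from the definitions by one-line computations. Each $v$-derivative either falls on a $\langle v\rangle^{-n}$ factor (producing no net change in the effective power), on a $\mathbf{w}_{\mu\nu}$ (replacing $\sqrt{1+\omega\cdot\widehat{v}}$ by $\langle v\rangle^{-1}$), or on a $(1+\omega\cdot\widehat{v})^{-n}$ (producing an extra $(1+\omega\cdot\widehat{v})^{-1}\cdot \langle v\rangle^{-1}$, which is harmless thanks to the improved bound $(1+\omega\cdot\widehat{v})^{-1}\langle v\rangle^{-1} \lesssim \langle v\rangle$). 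A routine term-by-term count for each of $\partial_v\mathbf{a}^k$, $\partial_v\mathbf{b}^k$, $\partial_v\mathbf{c}^k$, $\partial_v\mathbf{d}^k$ and $\partial_v\partial_v\mathbf{c}^k$ gives the $\langle v\rangle^3$ ceiling.

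The step most likely to require care is $\partial_v \partial_v \mathbf{c}^k$, where after two differentiations one obtains denominators of the form $(1+\omega\cdot\widehat{v})^{-4}$ multiplied by $\mathbf{w}_{\mu\nu}$ and pieces of $\partial_v\widehat{v}$. Here the naive count gives $\langle v\rangle^{4}$; one recovers $\langle v\rangle^3$ by invoking simultaneously $|\mathbf{w}_{\mu\nu}| \lesssim \sqrt{1+\omega\cdot\widehat{v}}$ and the identity $(1+\omega\cdot\widehat{v})^{-1}/v^0 = 1/(2v^L) \leq 2v^0 \lesssim \langle v\rangle$ to trade one power of the singular factor against $1/v^0$. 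Since the proof is bookkeeping on explicitly defined algebraic expressions, the statement can equivalently be invoked from \cite[Corollary~5.8]{scat}, where the same kernels appear in the Glassey-Strauss decomposition.
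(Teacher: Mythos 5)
The paper does not actually present a proof of this lemma: it invokes \cite[Corollary~5.8]{scat} directly, so there is no paper proof to compare against. Your proposal is therefore a genuine reconstruction, and it is close, but there is a concrete gap in the power counting for the derivative bounds.

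Your two ingredients, the identity $1+\omega\cdot\widehat{v}=2v^L/v^0$ together with $v^L\geq 1/(4v^0)$ (giving $(1+\omega\cdot\widehat{v})^{-1}\lesssim\langle v\rangle^2$ and $(1+\omega\cdot\widehat{v})^{-1}/v^0\lesssim\langle v\rangle$), and the improved kernel bound $|\mathbf{w}_{\mu\nu}|\lesssim\sqrt{1+\omega\cdot\widehat{v}}$, are both correct and essential. They handle the undifferentiated kernels and the terms where $\partial_v$ falls on $\mathbf{w}_{\mu\nu}$ or a $\langle v\rangle^{-n}$ factor. But the case where the derivative falls on the singular factor $(1+\omega\cdot\widehat{v})^{-n}$ is not covered by the three auxiliary bounds you record. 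You only note $|\partial_v\widehat{v}|\lesssim\langle v\rangle^{-1}$ and then claim the extra factor is $(1+\omega\cdot\widehat{v})^{-1}\langle v\rangle^{-1}\lesssim\langle v\rangle$, which is harmless; but it is not. For instance, $|\mathbf{c}^k|\lesssim (1+\omega\cdot\widehat{v})^{-3/2}\lesssim\langle v\rangle^3$, and with your accounting one differentiation on the denominator gives an extra $\langle v\rangle$, i.e.\ $\langle v\rangle^4$, contradicting the claimed bound $|\nabla_v\mathbf{c}^k|\lesssim\langle v\rangle^3$. The same overshoot happens for the first term of $\mathbf{a}^k$ and for $\mathbf{d}^k$; for $\nabla_v\nabla_v\mathbf{c}^k$ your bookkeeping actually delivers $\langle v\rangle^5$ once the squared derivative of $(1+\omega\cdot\widehat{v})^{-2}$ is counted.

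The missing third ingredient is an improved bound on the differentiated argument of the singular factor. One computes
\begin{equation*}
\partial_{v^j}\big(\omega\cdot\widehat{v}\big)=\frac{\omega_j-(\omega\cdot\widehat{v})\widehat{v}_j}{v^0}=\frac{(\omega_j+\widehat{v}_j)-\widehat{v}_j\big(1+\omega\cdot\widehat{v}\big)}{v^0},
\end{equation*}
so that, using $|\omega+\widehat{v}|\leq\sqrt{2(1+\omega\cdot\widehat{v})}$ and $1+\omega\cdot\widehat{v}\leq 2$,
\begin{equation*}
\big|\partial_{v}\big(\omega\cdot\widehat{v}\big)\big|\lesssim\frac{\sqrt{1+\omega\cdot\widehat{v}}}{v^0},
\end{equation*}
which gains a crucial half-power over the naive bound $1/v^0$. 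With this, differentiating $(1+\omega\cdot\widehat{v})^{-n}$ (keeping the accompanying $\mathbf{w}$) produces an extra factor $(1+\omega\cdot\widehat{v})^{-1/2}/v^0\lesssim 1$, not $\langle v\rangle$, and the power count closes at $\langle v\rangle^3$ for all the first-derivative estimates. For $\nabla_v\nabla_v\mathbf{c}^k$, one combines this improved bound (used twice, for the piece involving $|\partial_v(\omega\cdot\widehat{v})|^2$) with the crude bound $|\partial_v^2(\omega\cdot\widehat{v})|\lesssim|v^0|^{-2}$ (which already suffices for the piece involving a second derivative on the singular factor, as only one denominator power is added). I would suggest you state and prove this third structural inequality explicitly; as written, your argument does not establish the $\langle v\rangle^3$ ceiling for any of the differentiated kernels.
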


Recall the quantities $\rho^\beta\big[ f_n,  f_\infty ; \mathbf{q} \big](t,x,\omega)$ introduced in Corollary \ref{Corrho2}. We further lighten the notations by denoting the Lorentz force as
\begin{equation}\label{Lorentzf}
K^j(t,x,v):=\widehat{v}^{\mu} {F_\mu}^{j}(t,x), \qquad \qquad K_\xi^j(t,x,v) := \widehat{v}^{\mu} {\mathcal{L}_{Z^{\xi}}(F)_\mu}^{j}(t,x), \qquad 1 \leq j \leq 3, \quad   |\xi| \leq N. 
\end{equation}

\begin{Pro}\label{GSdecomoderiv}
Let $1 \leq k \leq 3$. Then, on $[T,\tau_0] \times \R^3_x$, $\nabla_{\partial_{x^k}}G$ can be written as
$$ 4\pi \nabla_{\partial_{x^k}} G= G^{\mathrm{ver}}_{k}+G^{\chi'}_{k}+G^{\chi''}_{k}+G^{TT}_{k}+G^{TS}_{k}+G^{SS}_{k}, $$
where the six $2$-forms are defined as follows. Fix indices $0 \leq \mu, \nu \leq 3$ and let us lighten the presentation by using, in the integrals below, the shorthands $$\omega :=\frac{y-x}{|y-x|}, \qquad \qquad t_y:=t+|y-x|.$$
 We have linear quantities,
\begin{itemize}
\item the vertex term
$$G^{\mathrm{ver}}_{k, \, \mu \nu}(t,x) :=  \int_{\sigma \in \mathbb{S}^2} \chi \left( \frac{t}{\tau_0} \right)  \rho^\gamma_k\big[ f,  f_\infty ; \mathbf{d}_{\mu \nu} \big](t,x,\sigma)  \dr \mu_{\mathbb{S}^2}.$$
\item The most singular term,
$$ G^{TT}_{k, \, \mu\nu}(t,x) :=  \int_{|y-x|\leq \tau_0-t }\chi \left( \frac{t+|y-x|}{\tau_0} \right) \rho^\gamma_k\big[ f,  f_\infty ; \mathbf{a}_{\mu \nu} \big](t+|y-x|,y,\omega) \frac{ \dr y}{|y-x|^3}$$
where the crucial identity $ \int_{|\sigma|=1} \mathbf{a}_{\mu \nu}^k(\sigma,\widehat{v})\dr \mu_{\mathbb{S}^2}=0$ holds for all $v \in \R^3_v$.
\item The terms arising from the cutoff function,
\begin{align*}
G^{\chi'}_{k, \, \mu\nu}(t,x) := \frac{1}{\tau_0} \int_{|y-x|\leq \tau_0-t }\chi' \left( \frac{t+|y-x|}{\tau_0} \right) \rho^\gamma_k\big[ f,  f_\infty ; \mathbf{b}_{\mu \nu} \big](t+|y-x|,y,\omega)   \frac{ \dr y}{|y-x|^2}, \\
G^{\chi''}_{k, \, \mu\nu}(t,x) := -\frac{1}{\tau_0^2} \int_{|y-x|\leq \tau_0-t }\chi'' \left( \frac{t+|y-x|}{\tau_0} \right) \rho^\gamma_k\big[ f,  f_\infty ; \mathbf{c}_{\mu \nu} \big](t+|y-x|,y,\omega)  \frac{ \dr y}{|y-x|}.
\end{align*}
\end{itemize}
Next, we have the nonlinear quantities $G^{TS}_{k}$ and $G^{SS}_{k}$. There exists integers $N^\gamma_{\xi , \beta}, \, N^{\gamma}_{\xi , \kappa , \beta} \in \mathbb{Z}$ such that
$$ G^{\, TS}_{k, \, \mu\nu}(t,x) := -\sum_{|\xi|+|\beta| \leq |\gamma|}N^{\gamma}_{\xi, \kappa} \int_{|y-x|\leq \tau_0-t } \chi \left( \frac{t_y}{\tau_0} \right)\int_{\R^3_v} \nabla_v \mathbf{b}_{\mu \nu}^k(\omega,v) \cdot \big(\widehat{Z}^{\beta}f \, K_{\xi} \big)(t_y,y,v) \frac{\mathrm{d}v \mathrm{d}y}{|y-x|^2}$$
and $G^{SS}_{k}$ is the sum of the four following terms, 
\begin{align*}
G^{SS,I}_{k, \, \mu \nu}&:=\sum_{|\xi|+|\kappa|+|\beta| \leq |\gamma|} N^{\gamma}_{\xi ,\kappa, \beta} \,\int_{|y-x|\leq \tau_0-t } \chi \left( \frac{t_y}{\tau_0} \right)\int_{\R^3_v}\left[\nabla_v \big( \nabla_v\mathbf{c}_{\mu\nu}^k(\omega,\cdot) \cdot K_{\xi} \big) \cdot K_{\kappa}\widehat{Z}^{\beta}f\right](t_y,y,v) \frac{\mathrm{d}v \mathrm{d}y}{|y-x|}, \\
G^{SS,II}_{k, \, \mu \nu}&:=\sum_{|\xi|+|\beta| \leq |\gamma|} N^{\gamma}_{\xi , \beta} \, \int_{|y-x|\leq \tau_0-t }\chi \left( \frac{t_y}{\tau_0} \right) \int_{\R^3_v} \nabla_v  \mathbf{c}_{\mu\nu}^k(\omega,v) \cdot \big( \T_0(K_\xi) \widehat{Z}^{\beta}f \big)(t_y,y,v) \frac{\mathrm{d}v \mathrm{d}y}{|y-x|}, \\
G^{SS,III}_{k, \, \mu \nu}&:=\sum_{|\xi|+|\beta| \leq |\gamma|} N^{\gamma}_{\xi , \beta} \, \int_{|y-x|\leq \tau_0-t } \chi \left( \frac{t_y}{\tau_0} \right) \int_{\R^3_v} \mathbf{c}_{\mu \nu}^{k} (\omega,v) \frac{\delta_j^i-\widehat{v}^i \widehat{v}_j}{v^0} \partial_{x^i}\big (K_{\xi}^j\widehat{Z}^{\beta}f \big)(t_y,y,v) \frac{\mathrm{d}v \mathrm{d}y}{|y-x|}, \\
G^{SS,IV}_{k, \, \mu \nu}&:=\sum_{|\xi|+|\beta| \leq |\gamma|} \frac{N^{\gamma}_{\xi , \beta} }{\tau_0} \int_{|y-x|\leq \tau_0-t }\chi' \left( \frac{t_y}{\tau_0} \right) \int_{\R^3_v} \nabla_v \mathbf{c}_{\mu \nu}^k(\omega,v) \cdot \big(\widehat{Z}^{\beta}f \, K_{\xi} \big)(t_y,y,v)  \frac{\mathrm{d}v \mathrm{d}y}{|y-x|}.
\end{align*}
\end{Pro}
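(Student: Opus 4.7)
The plan is to adapt the classical Glassey-Strauss decomposition of \cite{GlStrauss} (see also \cite[Proposition~$5.3$]{scat}), which produces a formula of the stated form when the source of the Maxwell equations is generated by a smooth distribution function satisfying $\T_0 h=0$. The key observation, building on Section \ref{Subsecfsing}, is that $J(\widehat{Z}^\gamma f)-J^{\mathrm{asymp}}\big[\widehat{Z}_\infty^\gamma f_\infty\big]$ is the current density of the effective distribution $\widehat{Z}^\gamma f-\widehat{Z}_\infty^\gamma f^{\mathrm{sing}}[f_\infty]$, the second piece being a singular solution of the linear Vlasov equation. Since $f^{\mathrm{sing}}[f_\infty]$ is only a measure, I would use the sequence $h_n(t,x,v):=\int_z f_\infty(z,v)\dr z\, \psi_n(x-t\widehat{v})$ of smooth approximations provided by Section \ref{Subsecfsing}, for which $\T_0 h_n=0$ and whose weighted velocity averages converge to those of $f^{\mathrm{sing}}[f_\infty]$ in $L^{\infty}_{\mathrm{loc}}(\R^*_+,L^2_x)$ (cf.\ Lemma \ref{Lembasicpropsing} and Remark \ref{Rqmollif}). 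For each $n$, I would let $G^n$ be the solution, vanishing at $t=\tau_0$, of the Maxwell equations with source $J(\widehat{Z}^\gamma f-\widehat{Z}_\infty^\gamma h_n)$, apply the classical Glassey-Strauss decomposition to $\nabla_{\partial_{x^k}} G^n$, and take $n\to+\infty$ to identify $\nabla_{\partial_{x^k}} G$; the velocity averages of the effective distribution then combine into the quantities $\rho^\gamma_k[f,f_\infty;\mathbf{q}]$ of Corollary \ref{Corrho2}.

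For fixed $n$, the classical decomposition proceeds as follows. Starting from the retarded representation of $G^n_{\mu\nu}$, whose source is $\chi(t/\tau_0)\big(\partial_{x^\nu} J(\widehat{Z}^\gamma f-\widehat{Z}_\infty^\gamma h_n)_\mu-\partial_{x^\mu}J(\widehat{Z}^\gamma f-\widehat{Z}_\infty^\gamma h_n)_\nu\big)$, one decomposes each coordinate derivative $\partial_{x^\lambda}$ applied to $J$ as a combination of derivatives tangential to the backward light cone plus a multiple of the transport operator $\T_0$, and then integrates by parts. The tangential derivatives applied to the velocity averages generate the linear vertex term $G^{\mathrm{ver}}_k$ and the leading singular term $G^{TT}_k$, while differentiation of the cutoff $\chi(t/\tau_0)$ produces $G^{\chi'}_k$ and $G^{\chi''}_k$; the cancellation identity $\int_{\mathbb{S}^2_\sigma} \mathbf{a}^k_{\mu\nu}(\sigma,\widehat{v})\dr\mu_{\mathbb{S}^2}=0$ encodes the spherical mean-zero structure and is essential for integrability of the $|y-x|^{-3}$ kernel in $G^{TT}_k$. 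The transport-direction contributions are removed using the Vlasov equation: for $\widehat{Z}_\infty^\gamma h_n$ we have $\T_0(\widehat{Z}_\infty^\gamma h_n)=0$, whereas the commuted Vlasov equation of Proposition \ref{Com} gives
\begin{align*}
\T_0 \widehat{Z}^\gamma f \; = \; \sum_{|\xi|+|\kappa|\leq|\gamma|} \widetilde{C}^\gamma_{\xi,\kappa}\, \widehat{v}^\mu {\mathcal{L}_{Z^\xi}(F)_\mu}^j\, \partial_{v^j}\widehat{Z}^\kappa f
\end{align*}
for suitable integers $\widetilde{C}^\gamma_{\xi,\kappa}$. A single integration by parts in $v$ transfers $\partial_{v^j}$ onto the kernels $\mathbf{b}^k,\mathbf{c}^k$, yielding $G^{TS}_k$ (one $v$-integration by parts, involving $\nabla_v \mathbf{b}^k$) and $G^{SS,I}_k$ (two $v$-integrations by parts coming from the nested structure of $\nabla_v(\nabla_v\mathbf{c}^k_{\mu\nu}\cdot K_\xi)$), whereas $G^{SS,II}_k$ and $G^{SS,III}_k$ arise when the $\T_0$ operator in the underlying Glassey-Strauss formula hits $K_\xi$ or the angular factor $(\delta^i_j-\widehat{v}^i\widehat{v}_j)/v^0$. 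The residual piece $G^{SS,IV}_k$ collects the contribution of $\chi'$ to the nonlinear block, and the algebraic identities determine the integer coefficients $N^\gamma_{\xi,\beta}$ and $N^\gamma_{\xi,\kappa,\beta}$.

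The final step is to pass to the limit $n\to+\infty$. For the linear pieces, the pointwise and $L^2_x$ bounds of Corollary \ref{Corrho3} together with Remark \ref{Rqmollif} allow us to identify the limits with $\rho^\gamma_k[f,f_\infty;\mathbf{q}]$ for $\mathbf{q}\in\{\mathbf{a}^k_{\mu\nu},\mathbf{b}^k_{\mu\nu},\mathbf{c}^k_{\mu\nu},\mathbf{d}^k_{\mu\nu}\}$. For the nonlinear pieces, every contribution in which $\partial_{v^j}$ would fall on $\widehat{Z}_\infty^\gamma h_n$ vanishes in the limit because $\T_0(\widehat{Z}_\infty^\gamma h_n)=0$ for every $n$; only the contributions coming from $\widehat{Z}^\gamma f$ survive and reproduce the stated $TS$ and $SS$ terms. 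The main obstacle is the careful algebraic reduction, for fixed smooth $n$, of all kernel-and-Lorentz-force factors to the six-term form stated and the identification of the correct combinatorial coefficients: this is a routine but lengthy computation that relies on systematic use of the Leibniz rule in $v$ and of the angular identities built into the kernels of Definition \ref{DefKernel}. The uniform bounds of Corollary \ref{Corrho3} for $\widehat{Z}^\beta f$ with $|\beta|\leq N$, combined with the pointwise estimate \eqref{eq:Mpoint} on $\mathcal{L}_{Z^\xi}F$ at the orders $|\xi|\leq|\gamma|\leq N-1$ where one loses a derivative, ensure that every integral appearing in the decomposition is absolutely convergent, so that the limiting identity holds pointwise on $[T,\tau_0]\times\R^3$.
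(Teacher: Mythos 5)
Your proposal is correct and follows essentially the same route as the paper: approximate $f^{\mathrm{sing}}[\widehat{Z}^\gamma_\infty f_\infty]$ by the mollified free-transport solutions $h_n$, apply the classical Glassey--Strauss decomposition (which the paper simply imports from \cite[Theorem~$5.4.1$]{Glassey}) to the field generated by $\widehat{Z}^\gamma f-h_n$, use $\T_0(h_n)=0$ together with the commuted Vlasov equation, $\nabla_v\cdot K_\xi=0$ and $[\T_0,\partial_{v^j}]=-\partial_{v^j}(\widehat{v}^i)\partial_{x^i}$ to produce the $TS$ and $SS$ blocks by integration by parts in $v$, and pass to the limit $n\to+\infty$. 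The only cosmetic difference is that you sketch re-deriving the tangential/transport splitting rather than citing it as a black box.
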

\begin{proof}
Let $k \in \llbracket 1,3 \rrbracket$, $|\gamma|\leq N-1$ and consider a sufficiently regular function $h :[T,\tau_0] \times \R^3_x \times \R^3_v \to \R$ as well as $H$ satisfying
$$
 \nabla^\mu H_{\mu \nu} = \chi \big(t / \tau_0 \big)J( h )_\nu, \qquad \nabla^\mu {}^* \! H_{\mu \nu}=0, \qquad \qquad H(\tau_0,\cdot)=0. 
$$
In order to fit with the framework of Glassey-Strauss, recall further the electric part $E^i:= H_{0i}$ and the magnetic part $B^i = -\frac{1}{2}\varepsilon^{ijk} H_{jk}$ of $H$. We apply\footnote{See also the original version of the result, \cite[Theorem~$4$]{GlStrauss}.} \cite[Theorem~$5.4.1$]{Glassey} to $(\chi(t/\tau_0)h,E,B)$, where we denote abusively $(t,x,v) \mapsto \chi(t/\tau_0)$ by $\chi(t/\tau_0)$. This yields, as $H$ and its derivatives vanish at $t= \tau_0$,
$$ \nabla_{\partial_{x^k}} H_{\mu \nu} = H^{\mathrm{ver}}_{k, \, \mu \nu}+H^{TT}_{k, \, \mu \nu}+\bar{A}^{\, TS}_{k, \, \mu \nu}+\bar{A}^{\,SS}_{k, \, \mu \nu},$$
where
\begin{align*}
H^{\mathrm{ver}}_{k, \, \mu \nu}(t,x) &:=  \int_{\sigma \in \mathbb{S}^2} \chi \left( \frac{t}{\tau_0} \right) \int_{\R^3_v} \mathbf{d}^k_{\mu \nu} ( \sigma,  v ) h(t,x,v)  \dr v \dr \mu_{\mathbb{S}^2} , \\
 H^{TT}_{k, \, \mu\nu}(t,x) &:=  \int_{|y-x|\leq \tau_0-t }\chi \left( \frac{t+|y-x|}{\tau_0} \right) \int_{\R^3_v}\mathbf{a}^k_{\mu \nu} ( \omega, v) h(t+|y-x|,y,v)  \frac{ \dr v \dr y}{|y-x|^3} , \\
\bar{A}^{\, TS}_{k, \,\mu \nu}&:=\int_{|y-x| \leq \tau_0-t} \int_{\R^3_v} \mathbf{b}^k_{\mu \nu}(\omega,v) \T_0 \big( \chi (t/\tau_0 ) h \big)(t+|y-x|,y,v) \frac{\mathrm{d}v \mathrm{d}y}{|y-x|^2}, \\
\bar{A}^{\, SS}_{k, \, \mu \nu} &:=-\int_{|y-x| \leq \tau_0-t} \int_{\R^3_v} \mathbf{c}^k_{\mu \nu}(\omega,v)  \T_0 \T_0 \big( \chi (t/\tau_0 ) h \big)(t+|y-x|,y,v) \frac{\mathrm{d}v \mathrm{d}y}{|y-x|},
\end{align*}
as well as $ \int_{|\sigma|=1} \mathbf{a}^k_{\mu \nu}(\sigma,\widehat{v})\dr \mu_{\mathbb{S}^2}=0 $. We now apply this result for 
$$h:=\widehat{Z}^\gamma f(t,x,v)- h_n, \qquad \qquad h_n(t,x,v):= \int_{\R^3_z} \big[ |v^0|^5 \widehat{Z}_\infty^\gamma f_\infty \Big](z,v) \dr z \psi_n(x-t\widehat{v} ),$$
where $(\psi_n)_{n \geq 1}$ is a mollifier. Then, recall from Section \ref{Subsecfsing} that $(h_n)_{n \geq 1}$ converges to $f^{\mathrm{sing}} \big[ \widehat{Z}_\infty^\gamma f_\infty \big]$ as $n \to \infty$ and the corresponding convergence properties of its velocity averages (see in particular Lemma \ref{Lembasicpropsing}). As $\T_0(h_n)=0$, we obtain by letting $n \to +\infty$,
$$ H^{\mathrm{ver}}_{k} = G^{\mathrm{ver}}_{k}, \qquad H^{TT}_{k} = G^{TT}_k, \qquad  \bar{A}^{\, TS}_{k, \, \mu \nu} =G^{\chi'}_{k}+A^{\, TS}_{k, \, \mu \nu}, \qquad \bar{A}^{\, SS}_{k, \, \mu \nu}=G^{\chi''}_{k}+A^{\,SS}_{k, \, \mu \nu},$$
with, using the shorthand $t_y=t+|y-x|$,
\begin{align*}
A^{\, TS}_{k, \,\mu \nu}&:=\int_{|y-x| \leq \tau_0-t} \chi \left( \frac{t_y}{\tau_0} \right) \int_{\R^3_v} \mathbf{b}^k_{\mu \nu}(\omega,v) \T_0 \big(  \widehat{Z}^\gamma f \big)(t_y,y,v) \frac{\mathrm{d}v \mathrm{d}y}{|y-x|^2}, \\
A^{\, SS}_{k, \, \mu \nu} &:=-\int_{|y-x| \leq \tau_0-t} \int_{\R^3_v} \mathbf{c}^k_{\mu \nu}(\omega,v) \bigg[ \chi \left( \frac{t_y}{\tau_0} \right) \T_0 \T_0 \big(  \widehat{Z}^\gamma f \big)(t_y,y,v)+ \frac{1}{\tau_0}\chi' \! \left( \frac{t_y}{\tau_0} \right) \T_0 \big(  \widehat{Z}^\gamma f \big)(t_y,y,v) \bigg] \frac{\mathrm{d}v \mathrm{d}y}{|y-x|}.
\end{align*}
Recall now that the Lorentz force \eqref{Lorentzf} is divergence free, that is $\nabla_v \cdot K_\xi =0$ for any $|\xi| \leq N$. The commutation formula of Proposition \ref{Com} then yields
\begin{align}\label{eq:tokeepinmind}
\T_0 \big(\widehat{Z}^\gamma f\big)=  -\nabla_v \cdot \big( K \widehat{Z}^\gamma f \big)+ \sum_{|\beta| \leq |\gamma|-1} \sum_{|\xi|+|\beta| \leq |\gamma|} C^\gamma_{\xi,\beta} \, \nabla_v \cdot \big( K_\xi \, \widehat{Z}^\beta f \big).
\end{align}
We set $N^\gamma_{\xi,\beta}=-1$ if $|\xi|+|\beta|=0$ and $N^\gamma_{\xi,\beta}=C^\gamma_{\xi,\beta}$ otherwise. We get $A^{\, TS}_{k}=G^{\,TS}_k$ by performing integration by parts in $v$. Similarly, the part of $A^{\, SS}_{k}$ related to $\chi'$ gives rise to $G^{\, SS,IV}_k$. In order to deal with the last part of $A^{\, SS}_{k}$, we observe $[\T_0, \partial_{v^j}]=-\partial_{v^j}(\widehat{v}^i)\partial_{x^i}$, so that
\begin{align*}
&\T_0 \T_0 \big( \widehat{Z}^\gamma f \big)  =   \sum_{|\xi|+|\beta| \leq |\gamma|} N^\gamma_{\xi,\beta}\bigg[\nabla_v \cdot \Big( K_\xi \, \T_0(\widehat{Z}^\beta f) \Big)+ \nabla_v \cdot \Big(\T_0 \big( K_\xi \big) \, \widehat{Z}^\beta f \Big)-\frac{\delta_j^i-\widehat{v}^i \widehat{v}_j}{v^0} \partial_{x^i} \Big( K_\xi^j  \,\widehat{Z}^\beta f \Big) \bigg].
\end{align*} 
The last term generates $G^{SS,III}_k$ and the second one yields, by integration by parts in $v$, $G^{SS,II}_k$. Finally, we transform the first term using \eqref{eq:tokeepinmind} for $\gamma=\beta$. Performing twice integration by parts in $v$ provides the quantity $G^{SS,I}_k$, for certain integers $N^\gamma_{\xi,\kappa,\beta}$.
\end{proof}

\subsubsection{Estimates for the derivatives of $G$}
Our analysis of the terms of the Glassey-Strauss decomposition of $\nabla_{\partial_{x^k}}G$ will rely on the following result. Recall the shorthand $t_y:=t+|y-x|$.
\begin{Lem}\label{Lemcdvtech}
Let $B>0$ and $: \psi :[T,\tau_0] \times \R^3 \times \mathbb{S}^2 \to \R$ be a function such that
$$\forall \, (s,\omega) \in [T,\tau_0] \times  \mathbb{S}^2, \qquad \qquad \big\| \langle s + | \cdot | \rangle^{\frac{5}{2}} \, \psi (s,\cdot,\omega) \big\|_{L^2(\R^3_x)}^2 \leq B \, \langle s \rangle^{\frac{1}{8}}.$$
Let further $p \in \{1 , 2 , 3 \}$ as well as $b_p$ such that $b_1=b_2=0$ and $b_3=1$. Then, for all $t \in [T,\tau_0]$,
$$  \int_{\R^3_x} \langle t+|x| \rangle^4 \bigg| \int_{b_p \leq  |y-x| \leq \tau_0-t} \psi \bigg( t_y,y,\frac{y-x}{|y-x|} \bigg)  \frac{\mathrm{d}y}{\langle t_y \rangle^{3-p} \, |y-x|^p} \bigg|^2  \dr x  \lesssim \frac{B}{ \sqrt{t}} .$$
\end{Lem}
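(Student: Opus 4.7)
The natural move is to parameterise the $y$--integral by the sphere centered at $x$. Writing $y = x + r\sigma$ with $r = |y-x|$ and $\sigma \in \mathbb{S}^2$, so that $\mathrm{d}y = r^2 \, \mathrm{d}r\, \mathrm{d}\mu_{\mathbb{S}^2}(\sigma)$ and $t_y = t+r$, the inner integral becomes
\[
 \int_{b_p \leq |y-x| \leq \tau_0-t} \psi\Big(t_y, y, \tfrac{y-x}{|y-x|}\Big) \frac{\mathrm{d}y}{\langle t_y\rangle^{3-p}|y-x|^p} = \int_{b_p}^{\tau_0-t} \!\int_{\mathbb{S}^2} \psi(t+r, x+r\sigma, \sigma) \, \frac{r^{2-p}\,\mathrm{d}\mu_{\mathbb{S}^2}(\sigma)\,\mathrm{d}r}{\langle t+r\rangle^{3-p}} .
\]
I would then apply Minkowski's integral inequality to pull the weighted $L^2_x$ norm inside the $(r,\sigma)$--integral, reducing the task to estimating, for each fixed $(r,\sigma) \in [b_p, \tau_0-t] \times \mathbb{S}^2$, the quantity
\[
 \mathcal{J}(t,r,\sigma) := \bigg( \int_{\R^3_x} \langle t+|x|\rangle^4 \, \big|\psi(t+r, x+r\sigma, \sigma)\big|^2 \,\mathrm{d}x \bigg)^{1/2}.
\]

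For this quantity, the translation $z = x + r\sigma$ and the triangle inequality $|x| \leq |z| + r$ yield $\langle t+|x| \rangle \leq \langle s + |z|\rangle$ with $s := t+r$. Writing $\langle s + |z|\rangle^4 \leq \langle s \rangle^{-1}\langle s+|z|\rangle^5$ and invoking the hypothesis on $\psi$ delivers
\[
 \mathcal{J}(t,r,\sigma)^2 \leq \langle s\rangle^{-1} \big\| \langle s+|\cdot|\rangle^{5/2}\, \psi(s,\cdot,\sigma)\big\|_{L^2_x}^2 \leq B\,\langle t+r\rangle^{-7/8}.
\]
Combining this with the previous Minkowski step reduces the proof to bounding
\[
 \sqrt{B}\, \int_{b_p}^{\tau_0-t} \frac{r^{2-p}}{\langle t+r\rangle^{3-p+7/16}}\,\mathrm{d}r,
\]
where the $\sigma$--integral just contributes a constant.

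The remaining one-dimensional integral is handled case by case. For $p=1$ the factor $r/\langle t+r\rangle \lesssim 1$ yields a bound $\lesssim \langle t\rangle^{-7/16}$; for $p = 2$ the integrand is $\langle t+r\rangle^{-1-7/16}$, again $\lesssim \langle t\rangle^{-7/16}$; for $p=3$ the lower cut-off $b_3 = 1$ is needed and splitting $[1,t] \cup [t, \tau_0-t]$ gives $\lesssim \log(t)\langle t\rangle^{-7/16}$. Squaring, each of these is comfortably absorbed into the target $B/\sqrt{t}$ since $7/8 > 1/2$ leaves room to soak up the logarithm. I do not foresee any serious obstacle; the only mild care is the handling of the logarithm in the $p=3$ case, which is why the lemma requires the lower cut-off $b_3 = 1$ rather than allowing $r = 0$.
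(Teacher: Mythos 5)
Your proof is correct, and it takes a genuinely different route from the paper's.

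The paper applies the Cauchy--Schwarz inequality in the $y$-variable: it splits the singular kernel $|y-x|^{-p}$ as $|y-x|^{-(p+1/4)/2}\cdot|y-x|^{-(p-1/4)/2}$, which produces one factor carrying the weighted $|\psi|^2$ and a second, pure-kernel factor $\mathcal{Q}^p_{t,x}$. The kernel is then estimated pointwise (this is where the weight $\langle t+|x|\rangle^{-4}$ and a factor $t^{-3/4}$ are harvested), and the $\psi$-factor is handled via Fubini and the translation $X=x+s\omega$. Your argument instead applies Minkowski's integral inequality directly, pulling the weighted $L^2_x$-norm inside the $(r,\sigma)$-parametrisation of the backward light cone; the triangle inequality on the weight then reduces the whole estimate to a scalar integral in $r$. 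This sidesteps the $\mathcal{Q}^p_{t,x}$ computation entirely, at the small cost of a $\log$ factor in the $p=3$ case, which — as you correctly observe — is comfortably absorbed since $7/8>1/2$. Both proofs use the same translation change of variables at bottom, but Minkowski is the cleaner mechanism here and your estimates are tight; no gap.

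One small stylistic point worth recording if you write this up: to invoke Minkowski's integral inequality for the sign-indefinite $\psi$, you should state it in its vector-valued form, $\bigl\|\int f(\cdot,y)\,\mathrm{d}\mu(y)\bigr\|_{L^2}\le\int\|f(\cdot,y)\|_{L^2}\,\mathrm{d}\mu(y)$, which is valid without any sign assumption. Likewise, when you write $\langle t+|x|\rangle\le\langle s+|z|\rangle$ with $z=x+r\sigma$ and $s=t+r$, it pays to spell out the one-line chain $t+|x|\le t+|z|+r=s+|z|$, since the inequality goes through the reverse triangle bound $|x|=|z-r\sigma|\le|z|+r$ and a careless reader might momentarily read it backwards.
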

\begin{proof}
Let us denote $\frac{y-x}{|y-x|}$ by $\overline{\omega}$. The Cauchy-Schwarz inequality in $y$ gives
$$  \bigg| \int_{ b_p \leq |y-x| \leq \tau_0-t} \psi(t_y,y,\overline{\omega})   \frac{\mathrm{d}y}{\langle t_y \rangle^{3-p} \, |y-x|^p} \bigg|^2 \leq \int_{ |y-x| \leq \tau_0-t} \langle t_y+|y| \rangle^5\big| \psi(t_y,y,\overline{\omega}) \big|^2  \frac{\mathrm{d}y}{\langle t_y \rangle^{3-p} \, |y-x|^{p+\frac{1}{4}}} \, \mathcal{Q}^p_{t,x},$$
where
$$ \mathcal{Q}^p_{t,x} := \int_{b_p \leq |y-x| \leq \tau_0-t}  \frac{\mathrm{d}y}{\langle t+|y-x|+|y| \rangle^{5} \, \langle t+|y-x| \rangle^{3-p} \, |y-x|^{p-\frac{1}{4}}}.$$
Now, we use $|y|/2 \geq |x|/2-|y-x|/2$ and we parameterize the domain of integration, which is a cone, by $(s,\omega)\mapsto (t+s,x+s\omega)$. As $s^2 \langle t+s\rangle^{p-3}s^{-p+1/4} \lesssim \langle s \rangle^{-3/4}$ for $s \geq b_p$, one obtains
$$
\mathcal{Q}^p_{t,x}   \leq 4 \pi  \int_{s=0}^{\tau_0-t} \frac{ \dr s}{\langle t+|x|+s \rangle^5 \langle s \rangle^{\frac{3}{4}}} \lesssim \frac{1}{\langle t+|x| \rangle^{4+\frac{3}{4}}} \lesssim \frac{1}{\langle t+|x| \rangle^{4} \, t^{\frac{3}{4}}} .
$$
Next, we parameterize again the cone by $(s,\omega)\mapsto (t+s,x+s\omega)$. Hence, applying Fubini's theorem and performing then the change of variables $X=x+s\omega$, we get
\begin{align*}
&\int_{\R^3_x} \int_{b_p \leq |y-x| \leq \tau_0-t} \langle t_y+|y| \rangle^5\big| \psi(t_y,y,\overline{\omega}) \big|^2  \frac{\mathrm{d}y}{\langle t_y \rangle^{3-p} \, |y-x|^{p+\frac{1}{4}}}  \dr x \\
& \qquad \qquad =  \int_{s=b_p}^{\tau_0-t} \int_{\mathbb{S}^2_\omega} \int_{\R^3_X} \langle t+s+|X| \rangle^5 \big|\psi(t+s,X,\omega) \big|^2 \dr X \dr \mu_{\mathbb{S}^2_\omega} \frac{ \dr s}{\langle s \rangle \, |s|^{\frac{1}{4}} } \leq 4\pi B \int_{s=b_p}^{\tau_0-t} \frac{\langle t+s \rangle^{\frac{1}{8}} \dr s}{\langle s \rangle \, |s|^{\frac{1}{4}} } \lesssim B \, t^{\frac{1}{8}}.
\end{align*}
\end{proof}
We start by dealing with the linear terms in the decomposition of $\nabla_{\partial_{x^k}} G$.
\begin{Pro}\label{Proestivertex}
Let $ 1 \leq k \leq 3$. Then,
$$ \forall \, t \in [T, \tau_0], \qquad \int_{\R^3_x} \langle t+|x| \rangle^4 \Big( \big|G^{\mathrm{ver}}_k \big|^2+\big|G^{\chi'}_k \big|^2+\big|G^{\chi''}_k \big|^2 +\big|G^{TT}_k \big|^2\Big)(t,x) \dr x \lesssim \frac{\epsilon \Lambda B_\Lambda}{\sqrt{t}}.$$
\end{Pro}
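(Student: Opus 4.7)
The strategy is to treat each of the four quantities separately, in every case reducing matters to the weighted $L^2_x$ decay estimate
\begin{equation*}
\int_{\R^3_x}\langle s+|x|\rangle^5\big|\rho^\gamma[f, f_\infty; \mathbf{q}](s,x,\omega)\big|^2 \dr x \lesssim \epsilon\Lambda B_\Lambda \log^{6N}(s)
\end{equation*}
of Corollary \ref{Corrho2}, which applies uniformly in $\omega\in \mathbb{S}^2$ because the four kernels $\mathbf{a}$, $\mathbf{b}$, $\mathbf{c}$, $\mathbf{d}$ all satisfy the uniform bound $\lesssim \langle v\rangle^3$ given by Lemma \ref{Lemboundkernel}. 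Since $\log^{6N}(s) \lesssim \langle s\rangle^{1/8}$, this is precisely the hypothesis of Lemma \ref{Lemcdvtech} with $B = C\epsilon\Lambda B_\Lambda$, and the target $\epsilon\Lambda B_\Lambda/\sqrt{t}$ leaves a comfortable margin relative to the log losses.

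The vertex term $G^{\mathrm{ver}}_k(t,x)$ is a pure surface integral over $\mathbb{S}^2$ of $\rho^\gamma(t, x, \sigma)$. Cauchy--Schwarz in $\sigma$, followed by the Corollary \ref{Corrho2} bound and the inequality $\langle t+|x|\rangle^4 \leq \langle t+|x|\rangle^5/\langle t\rangle$, immediately yields a bound of order $\epsilon\Lambda B_\Lambda\log^{6N}(t)/\langle t\rangle$, well below the target. For $G^{\chi'}_k$ and $G^{\chi''}_k$, the key observation is that on the support of $\chi'((t+|y-x|)/\tau_0)$ and $\chi''((t+|y-x|)/\tau_0)$ one has $t_y = t+|y-x|\sim\tau_0$, so the prefactors $\tau_0^{-1}$ and $\tau_0^{-2}$ can be absorbed as $\langle t_y\rangle^{-1}$ and $\langle t_y\rangle^{-2}$. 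This puts them exactly in the form handled by Lemma \ref{Lemcdvtech} with $(p, b_p) = (2, 0)$ and $(1, 0)$ respectively, and the announced bound follows.

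The main obstacle is the most singular contribution $G^{TT}_k$, whose kernel $|y-x|^{-3}$ is non-integrable near the vertex. I plan to split at $|y-x| = 1$. The outer piece $|y-x|\geq 1$ falls directly in the scope of Lemma \ref{Lemcdvtech} with $p=3$, whose truncation $b_3 = 1$ was built precisely to avoid the singularity. The inner piece $|y-x|\leq 1$ must exploit the vanishing identity $\int_{\mathbb{S}^2}\mathbf{a}^k_{\mu\nu}(\sigma, v)\dr \mu_{\mathbb{S}^2} = 0$, which translates into $\int_{\mathbb{S}^2}\rho^\gamma(s, y, \sigma)\dr \mu_{\mathbb{S}^2} = 0$ at every $(s, y)$. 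Parametrizing $y = x + r\omega$ with $r\in[0,1]$, the angular integral of $\chi((t+r)/\tau_0)\rho^\gamma(t+r, x+r\omega, \omega)$ vanishes at $r = 0$ after subtracting its value $\chi(t/\tau_0)\rho^\gamma(t,x,\omega)$ (whose mean on $\mathbb{S}^2$ is zero), so a first-order Taylor expansion in $r$ (equivalently, an integration by parts on the sphere) absorbs one factor of $r$ and produces an expression with weight $|y-x|^{-2}$ in place of $|y-x|^{-3}$. The resulting extra derivative either lands on the smooth kernel $\mathbf{a}^k$, giving back a new velocity average of $\widehat{Z}^\gamma f$ of the same order controlled by Corollary \ref{Corrho2}, or on $\widehat{Z}^\gamma f$ itself, which is then reexpressed through commutation and, if needed, the Vlasov equation (to convert $\partial_t$ into $-\widehat{v}\cdot\nabla_x$ modulo lower-order Lorentz-force terms) as a velocity average of $\widehat{Z}^\xi f$ with $|\xi|\leq N$, controlled by Corollary \ref{Corrho3}. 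Since the $r$-integration runs over the bounded interval $[0,1]$, this inner contribution is easily absorbed in the final $\epsilon\Lambda B_\Lambda/\sqrt{t}$ bound. The delicate accounting for this near-vertex piece, and checking that the logarithmic losses from $B_\Lambda$ and $\log^{6N}(t)$ never exceed the $1/\sqrt{t}$ budget, is the main technical challenge of the proof.
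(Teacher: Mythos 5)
Your proposal is correct and follows essentially the same route as the paper: Cauchy--Schwarz plus Corollary \ref{Corrho2} for the vertex term, absorption of $\tau_0^{-1}$ into $\langle t_y\rangle^{-1}$ and Lemma \ref{Lemcdvtech} for the cutoff terms, and the splitting of $G^{TT}_k$ at $|y-x|=1$ with the mean-zero kernel identity and a first-order cancellation for the inner piece. The only point where the paper's execution is slightly cleaner is the near-vertex subtraction: it compares $\rho^\gamma_k(t+s,x+s\omega,\omega)$ with $\rho^\gamma_k(t+s,x,\omega)$ at the \emph{same} time $t+s$, so only a spatial gradient appears (controlled directly by Corollary \ref{CorgainderivVla} together with Corollary \ref{Corrho3}), avoiding the detour through the Vlasov equation that your time-$t$ subtraction would require.
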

\begin{proof}
The estimate for $G^{\mathrm{ver}}_k$ simply follows from the Cauchy-Schwarz inequality in the variables $ \sigma \in \mathbb{S}^2$ and the $L^2_x$ decay estimate of Corollary \ref{Corrho2}. For the other terms, in order to apply the previous Lemma \ref{Lemcdvtech}, we will use that
$$\forall \, (s,\omega) \in [T,\tau_0]\times \mathbb{S}^2_\omega, \qquad \big\| \langle s + | \cdot | \rangle^{\frac{5}{2}} \, \rho^\gamma_k[f,f_\infty;\mathbf{q}_{\mu \nu}] (s,\cdot,\omega) \big\|_{L^2_x}^2 \lesssim \epsilon  \Lambda B_\Lambda \,\langle s \rangle^{\frac{1}{8}},$$ 
for any integral kernel $\mathbf{q} \in \{ \mathbf{a}, \mathbf{b}, \mathbf{c} \}$. This ensues from Corollary \ref{Corrho2}.  We treat $G^{\chi'}_k$ as well as $G^{\chi''}_k$ by noticing that $\tau_0^{-1} \lesssim \langle t_y \rangle^{-1}$ on the domain of integration and by applying Lemma \ref{Lemcdvtech}, for $p=1$ and $p=2$. In order to deal with the singular factor $|y-x|^{-3}$ of the integrand in the term $G^{TT}_k$, we write
\begin{align*}
G^{TT}_k(t,x)&=G^{TT,[0,1]}_k(t,x)+G^{TT,[1,\tau_0-t]}_k(t,x), \\
 G^{TT,[a,b]}_k(t,x)&:= \int_{a \leq |y-x|\leq b }\chi \left( \frac{t_y}{\tau_0} \right) \rho^\gamma_k\big[ f,  f_\infty ; \mathbf{a}_{\mu \nu} \big](t_y,y,\omega) \frac{ \dr y}{|y-x|^3}.
 \end{align*}
Note that we can assume $\tau_0-t \geq 1$ since the integrand vanishes if $\tau_0-t \leq 1$ in view of the support of $\chi$ and $\tau_0 \geq 4$. Then, $G^{TT,[1,\tau_0-t]}_k(t,x)$ can be estimated by applying again Lemma \ref{Lemcdvtech}, for $p=3$. Next, since $\int_{\mathbb{S}^2_\sigma} \mathbf{a}^k_{\mu \nu}(\sigma , \cdot )\dr \mu_{\mathbb{S}^2_\sigma}=0$, there holds
$$ \big|G^{TT,[0,1]}_k\big|(t,x) \leq \int_{s=0}^1 \bigg| \int_{\mathbb{S}^2_\omega} \rho^\gamma_k\big[ f,  f_\infty ; \mathbf{a}_{\mu \nu} \big](t+s,x+s\omega,\omega)-\rho^\gamma_k\big[ f_n,  f_\infty ; \mathbf{a}_{\mu \nu} \big](t+s,x,\omega) \mu_{\mathbb{S}^2_\omega} \bigg| \frac{ \dr s}{s}.$$
We apply the fundamental theorem of calculus $\tau \mapsto \rho^\gamma_k\big[ f,  f_\infty ; \mathbf{a}_{\mu \nu} \big](t+s,x+\tau s \omega ,\omega)$ in order to compensate the singular factor $s^{-1}$. We get
\begin{align*}
 \big|G^{TT,[0,1]}_k \big|^2(t,x) &  \leq \bigg| \int_{s=0}^1 \int_{\mathbb{S}^2_\omega}\int_{\tau=0}^1 \Big| \nabla_x \rho^\gamma_k\big[ f,  f_\infty ; \mathbf{a}_{\mu \nu} \big](t+s,x+\tau \omega ,\omega) \Big| \dr \tau \frac{s|\omega| \dr \mu_{\mathbb{S}^2_\omega}\dr s}{s} \bigg|^2  \\
 & \leq 4 \pi \int_{s=0}^1 \int_{\mathbb{S}^2_\omega}\int_{\tau=0}^1 \Big| \nabla_x \rho^\gamma_k\big[ f,  f_\infty ; \mathbf{a}_{\mu \nu} \big](t+s,x+\tau \omega ,\omega) \Big|^2 \dr \tau  \dr \mu_{\mathbb{S}^2_\omega}\dr s.
 \end{align*}
As $\langle t+|x| \rangle \lesssim \langle  t+s+|x+\tau \omega|\rangle$ for all $(s,\tau) \in [0,1]^2$, we obtain from Fubini's theorem and the change of variables $X=x+\tau  \omega$, 
 $$ \int_{\R^3_x} \langle t+|x| \rangle^4 \big|G^{TT,[0,1]}_k \big|^2(t,x) \dr x \lesssim  \sup_{(s, \omega ) \in [0,1] \times \mathbb{S}^2} \int_{\R^3_X} \langle t+s+|X| \rangle^4 \Big| \nabla_x \rho^\gamma_k\big[ f,  f_\infty ; \mathbf{a}_{\mu \nu} \big](t+s,X ,\omega) \Big|^2 \dr X .$$
It turns out that each of the two terms defining $\nabla_x \rho^\gamma_k \big[ f,  f_\infty ; \mathbf{a}_{\mu \nu} \big]$ are decaying sufficiently fast in order to bound them separatly. Consider $T \leq \tau \leq \tau_0$, $X \in \R^3$, $\omega \in \mathbb{S}^2$ as well as $1 \leq i \leq 3$ and remark that
$$\Big| \partial_{x^i} \rho^\gamma_k \big[ f,  f_\infty ; \mathbf{a}_{\mu \nu} \big] \Big|(\tau,X,\omega) \leq \bigg| \int_{\R^3_v} \mathbf{a}_{\mu \nu}^k (\omega,v) \partial_{x^i} \widehat{Z}^\gamma f (\tau,X,v) \dr v \bigg|+ \bigg|   \partial_{x^i} J^{\mathrm{asymp}}_0 \big[ \mathbf{a}_{\mu \nu}^k(\omega , \cdot) \widehat{Z}^\gamma_\infty f_\infty \big] (\tau,X) \bigg|  .$$
Since $|\mathbf{a}^k_{\mu \nu}(\cdot,v)|+|v^0 \nabla_v \mathbf{a}^k_{\mu \nu}(\cdot,v)| \lesssim \langle v \rangle^4$, we get by applying Corollary \ref{CorgainderivVla},
$$ \bigg| \int_{\R^3_v} \mathbf{a}_{\mu \nu}^k (\omega,v) \partial_{x^i} \widehat{Z}^\gamma f (\tau,X,v) \dr v \bigg| \lesssim \sum_{|\beta| \leq |\gamma|+1} \frac{1}{\langle \tau +|X| \rangle} \int_{\R^3_v} \langle v \rangle^6 \, \langle X-\tau \widehat{v} \rangle\, \big| \widehat{Z}^\beta f \big| (\tau,X,v) \dr v .$$
Similarly, by Lemma \ref{Lemfirstasymp0} and the assumptions on $f_\infty$, we have
 $$\bigg|   \partial_{x^i} J^{\mathrm{asymp}}_0 \big[ \mathbf{a}_{\mu \nu}^k(\omega , \cdot) \widehat{Z}^\gamma_\infty f_\infty \big] (\tau,X) \bigg| \lesssim \frac{1}{\tau^4} \mathds{1}_{|X|<\tau}  \sqrt{\epsilon}  \lesssim \frac{\sqrt{\epsilon} }{\langle \tau+|X| \rangle^4}.$$
 Consequently,
  $$ \int_{\R^3_x} \langle t+|x| \rangle^4 \big|G^{TT,[0,1]}_k \big|^2(t,x) \dr x \lesssim  \sup_{s \in [0,1] } \sup_{|\beta| \leq N}  \int_{\R^3_X} \langle t+s+|X| \rangle^2 \bigg| \int_{\R^3_v} \langle v \rangle^6 \, \langle X-\tau \widehat{v} \rangle\, \big| \widehat{Z}^\beta f \big| (\tau,X,v) \dr v \bigg|^2 \dr X+\frac{\epsilon}{ \langle t \rangle} $$
  and it remains to use the $L^2_x$ estimate of Corollary \ref{Corrho3}.
\end{proof}

We now focus on the nonlinear terms.
\begin{Pro}\label{ProGTS}
For any $1 \leq k \leq 3$, we have
$$ \forall \, t \in [T, \tau_0], \qquad  \int_{\R^3_x} \langle t+|x| \rangle^{4} \big| G^{\, TS}_k (t,x) \big|^2 \dr x \lesssim \frac{\epsilon \Lambda B_\Lambda }{\sqrt{t}} .$$
\end{Pro}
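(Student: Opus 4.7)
The plan is to apply Lemma \ref{Lemcdvtech} with $p=2$ and $b_2=0$ separately to each summand of $G^{TS}_k$. To match that lemma's form, I would set $\psi(s,y,\omega):= \langle s\rangle\, \chi(s/\tau_0)\int_{\R^3_v}\nabla_v \mathbf{b}^k_{\mu\nu}(\omega,v)\cdot \widehat{Z}^\beta f(s,y,v)\,K_\xi(s,y,v)\dr v$, so that the desired bound will follow once I verify $\|\langle s+|\cdot|\rangle^{5/2}\psi(s,\cdot,\omega)\|_{L^2_y}^2 \lesssim \epsilon\Lambda B_\Lambda\,\langle s\rangle^{1/8}$ uniformly in $\omega\in\mathbb{S}^2$. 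Since $\mathcal{L}_{Z^\xi}F$ is independent of $v$, I will factor it out of the velocity integral and split the constraint $|\xi|+|\beta|\leq|\gamma|\leq N-1$ into two subcases according to which of the field or the distribution function carries the larger number of derivatives.

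In the subcase $|\xi|\leq N-3$, I would exploit the null structure of the Lorentz force through Lemma \ref{nullstruct}: combining the improved decay of the good null components \eqref{eq:Mpointnull} with the bound on $\widehat{v}^{\underline{L}}+|\slashed{\widehat{v}}|$ from Lemma \ref{gainv} and the pointwise estimate \eqref{eq:Mpoint} for $\underline{\alpha}$ yields $|K_\xi|(s,y,v)\lesssim \sqrt{\Lambda}\,(1+\langle y-s\widehat{v}\rangle)\,\langle s+|y|\rangle^{-2}$. Inserting this together with $|\nabla_v\mathbf{b}^k|\lesssim\langle v\rangle^3$ from Lemma \ref{Lemboundkernel} reduces the weighted $L^2_y$ bound on $\psi$ to a velocity average controlled by the first part of Corollary \ref{Corrho3}, giving $\|\langle s+|\cdot|\rangle^{5/2}\psi\|_{L^2_y}^2\lesssim \epsilon\Lambda B_\Lambda\,\log^{6N}\!(s)\,\langle s\rangle^{-2}$, which comfortably fits the hypothesis of Lemma \ref{Lemcdvtech}.

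In the complementary subcase $N-2\leq|\xi|\leq N-1$, necessarily $|\beta|\leq 1\leq N-2$, so I would appeal to the second, pointwise part of Corollary \ref{Corrho3} to bound the velocity integral by $\sqrt{\epsilon B_\Lambda}\,\max(s-|y|,1)^{3/2}\,\langle s+|y|\rangle^{-9/2}\,\log^{3N}\!(s)$. Writing $Z^\xi=ZZ^\kappa$ with $|\kappa|=|\xi|-1\in[N-3,N-2]$, the remaining factor $|\mathcal{L}_{Z^\xi}F|$ is absorbed into $\langle s+|y|\rangle\,|\nabla_{t,x}\mathcal{L}_{Z^\kappa}F|$; at which point the weighted $L^2_x$ estimate \eqref{eq:ML2}, combined with $\max(s-|y|,1)^3\leq \max(s-|y|,1)\langle s-|y|\rangle^2$, yields the matching $L^2_y$ bound with a further $\langle s\rangle^{-2}$ to spare. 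In both subcases logarithms are absorbed by $\langle s\rangle^{1/8}$, and Lemma \ref{Lemcdvtech} then delivers the advertised $\epsilon\Lambda B_\Lambda/\sqrt{t}$.

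The main subtlety lies in the first subcase: the crude bound $|K_\xi|\lesssim \sqrt{\Lambda}\langle s+|y|\rangle^{-1}\langle s-|y|\rangle^{-1}$ combined with the $L^2$ estimate on the velocity average via Corollary \ref{Corrho2} would leave behind an extra factor of $\sqrt{\Lambda}$ after the Cauchy--Schwarz step, ruining the linear-in-$\Lambda$ scaling of the target estimate. Exchanging the bad weight $\langle s-|y|\rangle^{-1}$ for the velocity-dependent weight $\langle y-s\widehat{v}\rangle$, to be absorbed by high-order $v$-moments of $\widehat{Z}^\beta f$, is precisely what restores the correct dependence on $\Lambda$.
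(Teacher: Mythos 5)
Your proof is essentially correct and follows the same overall structure as the paper's: bound $|\nabla_v\mathbf{b}^k|$ by $\langle v\rangle^3$, split on the number of derivatives falling on the field versus the distribution function, and feed everything into Lemma~\ref{Lemcdvtech} with $p=2$.

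The one genuine difference is in the low-order-field subcase $|\xi|\leq N-3$: you invoke the null-structure decomposition of $K_\xi$ via Lemma~\ref{nullstruct}, pairing the improved decay of $\alpha,\rho,\sigma$ from~\eqref{eq:Mpointnull} with the $\widehat{v}^{\underline{L}},\,\slashed{\widehat{v}}$ gain of Lemma~\ref{gainv} applied to $\underline{\alpha}$. The paper instead simply bounds $|K_\xi|\lesssim|\mathcal{L}_{Z^\xi}F|\lesssim\sqrt{\Lambda}\langle t_y+|y|\rangle^{-1}\langle t_y-|y|\rangle^{-1}$ and converts the bad factor $\langle t_y-|y|\rangle^{-1}$ into $\langle t_y+|y|\rangle^{-1}\langle y-t_y\widehat{v}\rangle\langle v\rangle^2$ via the last inequality of Lemma~\ref{gainv}. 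Both routes land on the same shape $\sqrt{\Lambda}\,\langle s+|y|\rangle^{-2}\langle y-s\widehat{v}\rangle$ (yours without the spurious $\langle v\rangle^2$, but this costs nothing since $\widehat{Z}^\beta f$ carries abundant $v$-moments), and in both cases the essential trick is identical: trade the $\langle s-|y|\rangle^{-1}$ one cannot afford for a $\langle y-s\widehat{v}\rangle$ weight absorbed by Corollary~\ref{Corrho3}. There is a small internal inconsistency when you say you ``factor out'' $\mathcal{L}_{Z^\xi}F$ as $v$-independent while then using the $v$-dependent null-structure bound on $K_\xi$; this is cosmetic.

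Two quantitative remarks are off but harmless. First, your claimed excess decay $\langle s\rangle^{-2}$ in the bound $\|\langle s+|\cdot|\rangle^{5/2}\psi\|_{L^2_y}^2\lesssim\epsilon\Lambda B_\Lambda\log^{6N}(s)\,\langle s\rangle^{-2}$ does not materialize: the $\langle s\rangle^2$ you built into $\psi$ exactly cancels the $\langle s\rangle^{-2}$ you gain by downgrading the $\langle s+|y|\rangle^3$ weight, leaving only $\epsilon\Lambda B_\Lambda\log^{6N}(s)\lesssim\epsilon\Lambda B_\Lambda\langle s\rangle^{1/8}$, which is still sufficient. Second, the closing paragraph misidentifies the obstruction: Corollary~\ref{Corrho2} is not an available alternative here (the velocity average in $G^{TS}_k$ is raw, with no subtraction of $J^{\mathrm{asymp}}$, so the $\langle t+|x|\rangle^5$-weighted estimate simply does not apply), and the point of the exchange is to tame the $\langle t_y-|y|\rangle^{-1}$ degeneracy at the light cone, not to rescue the power of $\Lambda$. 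Also $\log^{3N}$ should read $\log^{6N}$.
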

\begin{proof}
Recall from Lemma \ref{Lemboundkernel} that $|\nabla_v \mathbf{b}^k|(\cdot,v) \lesssim \langle v \rangle^3$. As $|\chi| \leq 1$, $\big| G^{\, TS}_k (t,x) \big|$ is bounded by a linear combination of the following terms,
$$ \mathfrak{B}^{\xi,\beta}_{t,x} :=   \int_{|y-x|\leq \tau_0-t }  \left| \mathcal{L}_{Z^\xi}F\right|(t_y,y)\int_{\R^3_v} \langle v \rangle^3 \big| \widehat{Z}^\beta f \big|(t_y,y,v) \mathrm{d}v \frac{ \mathrm{d}y}{|y-x|^2}, $$
where $|\xi|+|\beta| \leq |\gamma|$. Assume first that $|\xi| \leq N-3$, so that, by \eqref{eq:Mpoint} and Lemma \ref{gainv}, 
$$ \left| \mathcal{L}_{Z^\xi}F\right|(t_y,y) \lesssim \sqrt{\Lambda} \, \langle t_y+|y| \rangle^{-1} \, \langle t_y-|y| \rangle^{-1} \lesssim \sqrt{\Lambda} \, \langle t_y+|y| \rangle^{-2} \, \langle y- t_y\widehat{v} \rangle \, \langle v \rangle^2.$$
We then deduce, by combining the previous Lemma \ref{Lemcdvtech} with the $L^2_x$ estimate of Corollary \ref{Corrho3} that
\begin{align*}
 \int_{\R^3_x} \langle t+|x|& \rangle^4  \big| \mathfrak{B}^{\xi,\beta}_{t,x}\big|^2 \dr x \\
  & \lesssim  \int_{\R^3_x}  \langle t+|x| \rangle^4 \bigg| \int_{ |y-x| \leq \tau_0-t} \frac{\sqrt{\Lambda}}{\langle t_y +|y| \rangle} \int_{\R^3_v} \langle v \rangle^5 \langle y-t_y \widehat{v} \rangle \big| \widehat{Z}^\beta f \big|(t_y,y,v) \mathrm{d} v   \frac{\mathrm{d}y }{\langle t_y \rangle \, |y-x|^2} \bigg|^2 \dr x  \lesssim  \frac{\epsilon \Lambda B_\Lambda}{  \langle t \rangle^{\frac{1}{2}}}.
 \end{align*}
Assume now that $|\xi| \geq N-2$, so that $|\beta| \leq 1$ and, according to Corollary \ref{Corrho3},
\begin{equation*}
 \int_{\R^3_v} \langle v \rangle^3 \big| \widehat{Z}^\beta f \big|(t_y,y,v) \mathrm{d}v \lesssim \sqrt{\epsilon B_\Lambda} \, \frac{|\max( t_y-|y|,1) |^{\frac{3}{2}} }{ \langle t_y+|y| \rangle^{\frac{9}{2}}} \log^{6N}(t_y)\lesssim \sqrt{\epsilon B_\Lambda} \, \frac{|\max( t_y-|y|,1) |^{\frac{3}{2}} }{ \langle t_y+|y| \rangle^{\frac{9}{2}}} \langle t_y \rangle^{\frac{1}{10}}. 
\end{equation*}
Then, with $\psi(s,z):=\langle s \rangle^{\frac{1}{10}}|\max( s-|z|,1) |^{\frac{3}{2}} \, \langle s+|z| \rangle^{ -\frac{7}{2}}  \left| \mathcal{L}_{Z^\xi}F\right|(s,z)$, we have
$$ \int_{\R^3_x} \langle t+|x| \rangle^4 \big| \mathfrak{B}^{\xi,\beta}_{t,x}\big|^2 \dr x \lesssim \epsilon B_\Lambda   \int_{\R^3_x} \langle t+|x| \rangle^4 \bigg|  \int_{ |y-x| \leq \tau_0-t} \psi(t_y,y) \frac{\mathrm{d}y }{\langle t_y \rangle \, |y-x|^2} \bigg|^2 \dr x .$$
It then remains to apply Lemma \ref{Lemcdvtech} and to prove
 $$ \sup_{T \leq s \leq \tau_0} \big\| \langle s + | \cdot | \rangle^{\frac{5}{2}} \, \psi (s,\cdot) \big\|_{L^2(\R^3_x)}^2 \lesssim \Lambda \langle s \rangle^{\frac{1}{8}}.$$
For this, note that $\langle s+|z| \rangle^{ -1}  | \mathcal{L}_{Z^\xi}F|(s,z) \lesssim \sup_{Z \in \mathbb{K}}  |\nabla_{t,x} \mathcal{L}_{Z^\kappa}F|(s,z)$, where $Z^\xi=ZZ^\kappa$, and use \eqref{eq:ML2}.
\end{proof}

Finally, we deal with $G^{SS}_k$.
\begin{Pro}\label{ProGSS}
For any $1 \leq k \leq 3$, we have
$$ \forall \, t \in [T, \tau_0], \qquad  \int_{\R^3_x} \langle t+|x| \rangle^{4} \big| G^{\, SS}_k (t,x) \big|^2 \dr x \lesssim \frac{\epsilon \Lambda^2 B_\Lambda }{\sqrt{t}} .$$
\end{Pro}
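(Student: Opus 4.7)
The plan is to treat each of the four pieces $G^{SS,I}_k, G^{SS,II}_k, G^{SS,III}_k, G^{SS,IV}_k$ separately using the same template as the proof of Proposition \ref{ProGTS}. In all four terms the singular factor in the spatial integral is $|y-x|^{-1}$, so the natural tool is Lemma \ref{Lemcdvtech} applied with exponent $p=1$ (hence $b_p=0$, no boundary layer at $|y-x|\leq 1$ to extract). In each case I identify an integrand $\psi(s,y,\omega)$ and verify the hypothesis $\|\langle s+|\cdot|\rangle^{5/2}\psi(s,\cdot,\omega)\|_{L^2_x}^2\lesssim B\langle s\rangle^{1/8}$ with $B\lesssim \epsilon\Lambda^2 B_\Lambda$.

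For the three bilinear terms $G^{SS,II}_k$, $G^{SS,III}_k$ and $G^{SS,IV}_k$ the analysis mirrors that of $G^{TS}_k$ essentially verbatim. The only structural novelties are: (i) for $G^{SS,II}_k$, the factor $\T_0(K_\xi)$ expands as $\widehat v^\mu(\partial_t+\widehat v^i\partial_{x^i})\mathcal{L}_{Z^\xi}(F)_{\mu j}$ and hence behaves like a Lorentz force with a multi-index of length $|\xi|+1\leq |\gamma|-|\beta|+1\leq N$, so I split the cases $|\xi|+1\leq N-3$ (use the pointwise bound \eqref{eq:Mpoint} and apply Corollary \ref{Corrho3} on the $v$-moment of $\widehat Z^\beta f$ in $L^2_x$) and $|\xi|+1\geq N-2$ (use the $L^2_x$ bound \eqref{eq:ML2} on $\nabla_{t,x}\mathcal{L}_{Z^{\xi'}}F$ and the pointwise $L^\infty_x$ bound of Corollary \ref{Corrho3} for the $v$-moment, noting $|\beta|\leq 1$ in this regime); (ii) for $G^{SS,III}_k$ the $\partial_{x^i}$ inside the integrand distributes between $K^j_\xi$ and $\widehat Z^\beta f$, each of the resulting two pieces is of the same type as a term in $G^{TS}_k$ with total derivative count $|\xi|+|\beta|+1\leq N$; (iii) for $G^{SS,IV}_k$, the factor $\tau_0^{-1}\chi'(t_y/\tau_0)$ is supported on $\tau_0/4\leq t_y\leq \tau_0/2$, where $\tau_0^{-1}\lesssim \langle t_y\rangle^{-1}$, so this term inherits the $G^{TS}_k$ estimate without loss.

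The truly new term is the trilinear $G^{SS,I}_k$. Using Lemma \ref{Lemboundkernel} to bound the kernel by $\langle v\rangle^6$, one has schematically
\[
|G^{SS,I}_k|(t,x)\lesssim \!\!\!\sum_{|\xi|+|\kappa|+|\beta|\leq|\gamma|}\!\!\! \int_{|y-x|\leq \tau_0-t}\!\! |\mathcal L_{Z^\xi}F||\mathcal L_{Z^\kappa}F|(t_y,y)\int_{\R^3_v}\langle v\rangle^6|\widehat Z^\beta f|(t_y,y,v)\,\frac{dv\,dy}{|y-x|}.
\]
Since $|\xi|+|\kappa|+|\beta|\leq N-1$, at most one of $|\xi|$, $|\kappa|$ exceeds $N-3$. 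In the sub-case $|\xi|,|\kappa|\leq N-3$, I bound both Lorentz forces pointwise via \eqref{eq:Mpoint} combined with Lemma \ref{gainv} (trading the $\langle t_y-|y|\rangle^{-1}$ decay for $\langle y-t_y\widehat v\rangle\langle v\rangle^2$), producing a factor $\Lambda$, and estimate the remaining $v$-integral in $L^2_x$ by Corollary \ref{Corrho3}, giving the factor $\sqrt{\epsilon B_\Lambda}$; squaring yields exactly $\epsilon\Lambda^2 B_\Lambda/\sqrt t$ after Lemma \ref{Lemcdvtech}. In the sub-case $|\xi|\leq N-3$, $|\kappa|\geq N-2$ (and the symmetric one), I keep the pointwise bound on $\mathcal L_{Z^\xi}F$, write $|\mathcal L_{Z^\kappa}F|\lesssim \langle t_y+|y|\rangle|\nabla_{t,x}\mathcal L_{Z^{\kappa'}}F|$, absorb the latter factor together with the $v$-moment into $\psi(s,y,\omega)$, and use \eqref{eq:ML2} for the $L^2_x$ norm of $\nabla_{t,x}\mathcal L_{Z^{\kappa'}}F$ together with the pointwise $L^\infty_x$ bound of Corollary \ref{Corrho3} for the $v$-moment (here $|\beta|\leq 1$ so that bound applies).

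The main obstacle is exactly this last sub-case of $G^{SS,I}_k$: the trilinear structure forces us to place the top-order factor $\mathcal L_{Z^\kappa}F$ in $L^2_x$ while still keeping a \emph{pointwise} bound on the other Lorentz force factor $\mathcal L_{Z^\xi}F$, whose $\sqrt\Lambda$ loss explains the quadratic dependence $\Lambda^2$ in the target inequality (compared to the linear $\Lambda$ of $G^{TS}_k$). The bookkeeping requires careful tracking of $\langle v\rangle$ weights so that Corollary \ref{Corrho3} (which demands $\langle v\rangle^5$ for the pointwise bound and $\langle v\rangle^9$ for the $L^2_x$ bound) and the $\log^{6N}(t)$ factors are absorbed by the $\langle t\rangle^{1/8}$ slack built into Lemma \ref{Lemcdvtech}; this is the same slack that was used for $G^{TS}_k$ and should be sufficient here as well.
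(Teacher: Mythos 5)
Your overall strategy is the same as the paper's: reduce everything to Lemma \ref{Lemcdvtech} with $p=1$, put one Lorentz force factor of the trilinear term in $L^\infty$ to produce the extra $\sqrt{\Lambda}$, and split low/high derivative counts between the pointwise bounds \eqref{eq:Mpoint} and the $L^2_x$ bound \eqref{eq:ML2}. Your treatments of $G^{SS,I}_k$, $G^{SS,II}_k$ and $G^{SS,IV}_k$ are correct and match the paper's. The problem is item (ii), the piece of $G^{SS,III}_k$ where $\partial_{x^i}$ falls on the distribution function.

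Passing from $p=2$ (the $G^{TS}_k$ kernel $\langle t_y\rangle^{-1}|y-x|^{-2}$) to $p=1$ (the kernel $\langle t_y\rangle^{-2}|y-x|^{-1}$) costs exactly one power of $\langle t_y\rangle$, and each of the four $SS$-pieces must supply it from its own structure: the second Lorentz force in $G^{SS,I}_k$, the gradient $\T_0(K_\xi)\sim\nabla_{t,x}\mathcal{L}_{Z^\xi}F$ in $G^{SS,II}_k$ (one extra power of $\langle t-r\rangle^{-1}$ relative to $\mathcal{L}_{Z^\xi}F$), and $\tau_0^{-1}\lesssim\langle t_y\rangle^{-1}$ in $G^{SS,IV}_k$. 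When $\partial_{x^i}$ hits $\widehat{Z}^\beta f$ in $G^{SS,III}_k$, declaring the result ``of the same type as a term in $G^{TS}_k$ with derivative count $|\xi|+|\beta|+1$'' supplies nothing: $\int_v\langle v\rangle^a\langle y-t_y\widehat v\rangle^b\,|\widehat{Z}^{\beta'}f|\,\dr v$ with $|\beta'|=|\beta|+1$ obeys exactly the same bounds (Corollary \ref{Corrho3}) as with $|\beta'|=|\beta|$, so if you run your own computation you will find that $\|\langle s+|\cdot|\rangle^{5/2}\psi(s,\cdot)\|_{L^2_x}$ exceeds what Corollary \ref{Corrho3} provides by one full power of $\langle s+|x|\rangle$, and Lemma \ref{Lemcdvtech} does not apply. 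The missing ingredient is that $\partial_{x^i}$ is a \emph{translation}: after splitting $K^j_\xi=\widehat v^\mu\,\mathcal{L}_{Z^\xi}(F)_{\mu}{}^{j}$ into its $v$-part and $x$-part, the derivative can be pulled outside the velocity integral, and Lemma \ref{LemgainderivVla}/Corollary \ref{CorgainderivVla} (based on the identities \eqref{eq:transladerivatives}) converts $\partial_{x^i}\int_v(\cdots)\,\dr v$ into $\langle t_y+|y|\rangle^{-1}$ times velocity averages of $\widehat Z$-derivatives, with an admissible loss of $\langle y-t_y\widehat v\rangle\,\langle v\rangle^2$. This is exactly how the paper closes this term; keeping the derivative inside the absolute value, as your reduction implicitly does, destroys the gain. (Note the same phenomenon occurs for the companion piece where $\partial_{x^i}$ hits $K_\xi$: it only works because $\nabla_{t,x}\mathcal{L}_{Z^\xi}F$ is treated via Lemma \ref{improderiv}/\eqref{eq:ML2} and not as a generic $\mathcal{L}_{Z^{\xi'}}F$ with $|\xi'|=|\xi|+1$ — which your $G^{SS,II}_k$ discussion does do correctly.)
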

\begin{proof}
Let us first prove that $ G^{\, SS}_k (t,x) $ can be bounded by a linear combination of terms of the form
$$\sqrt{\Lambda} \int_{|y-x|\leq \tau_0-t } \Big( \langle t_y \rangle \big|\nabla_{t,x} \mathcal{L}_{Z^\xi}F (t_y,y)\big|+ \big| \mathcal{L}_{Z^\xi}F (t_y,y)\big| \Big) \! \int_{\R^3_v}\langle v \rangle^5 \, \langle y-t_y \widehat{v} \rangle \, \big|\widehat{Z}^{\beta}f \big|(t_y,y,v) \mathrm{d}v\frac{ \mathrm{d}y}{ \langle t_y \rangle \, |y-x|}, $$
where $|\xi|+|\beta| \leq N$ and $|\xi| \leq N-1$. Indeed, all the integral kernels and their derivatives are bounded by $\langle v \rangle^3$ according to Lemma \ref{Lemboundkernel}. Moreover,
\begin{itemize}
\item we control $ G^{\, SS,IV}_k (t,x)$ by using $ \tau_0^{-1} \leq t_y^{-1}$.
\item For $ G^{\, SS,I}_k (t,x)$, we remark that $|\xi| +|\kappa| \leq N-1$ implies that either $K_\xi(t_y,y,v)$ or $K_\kappa (t_y,y,v)$ can be estimated pointwise by $\sqrt{\Lambda} \, \langle t_y \rangle^{-1}$.
\item We control $ G^{\, SS,II}_k (t,x)$ as well as the terms obtained when $\partial_{x^i}$ is applied to the Lorentz force in $ G^{\, SS,III}_k (t,x)$ by simply using the triangle inequality for integrals.
\item Finally, when $\partial_{x^i}$ is applied to the distribution function in $G^{\, SS,III}_k (t,x)$, we use Corollary \ref{CorgainderivVla}. 
\end{itemize}
Fix now $|\xi|+|\beta| \leq N$ and assume first that $|\xi| \leq N-4$. Then, using first the pointwise decay estimates \eqref{eq:Mpoint} together with Lemma \ref{improderiv} and then Lemma \ref{gainv},
$$ \langle t_y \rangle \, \left|\nabla_{t,x} \mathcal{L}_{Z^\xi}F \right|(t_y,y)+ \left| \mathcal{L}_{Z^\xi}F \right|(t_y,y) \lesssim \sqrt{\Lambda} \, \langle t_y-|y| \rangle^{-2} \lesssim \sqrt{\Lambda} \, \langle  t_y +|y| \rangle^{-2} \, \langle y-t_y \widehat{v} \rangle^2 \, \langle v \rangle^4.$$
Otherwise we write $ |\mathcal{L}_{Z^\xi}F|(t_y,y)\lesssim \langle t_y+|y| \rangle|\nabla_{t,x}\mathcal{L}_{Z^\kappa}F|(t_y,y)$ for $Z^\xi=ZZ^\kappa$. Moreover, we have $|\beta| \leq 3$ so Corollary \ref{Corrho3} provides
$$ \int_{\R^3_v}\langle v \rangle^5 \, \langle y-t_y \widehat{v} \rangle \, \big|\widehat{Z}^{\beta}f \big|(t_y,y,v) \dr v \lesssim \sqrt{\epsilon  B_\Lambda} \, \frac{|\max (t_y -|y| ,1)|^{\frac{3}{2}} }{\langle t_y+|y| \rangle^{\frac{9}{2}}} \langle t_y \rangle^{\frac{1}{10}}.$$ 
Consequently, we have
\begin{align*}
 \big| G^{\, SS}_k (t,x) \big|  \lesssim & \sum_{|\beta| \leq N} \int_{|y-x|\leq \tau_0-t } \frac{\Lambda}{\langle t_y+|y| \rangle} \int_{\R^3_v}\langle v \rangle^9 \, \langle y-t_y \widehat{v} \rangle^3 \, \big|\widehat{Z}^{\beta}f \big|(t_y,y,v) \dr v \frac{ \mathrm{d}y \dr x}{\langle t_y \rangle^2 \, |y-x|} \\
 & +\sum_{|\xi| \leq N-1}\sqrt{\epsilon \Lambda B_\Lambda }  \int_{|y-x|\leq \tau_0-t } \langle t_y \rangle^{\frac{1}{10}} \frac{|\max (t_y -|y| ,1)|^{\frac{3}{2}} }{\langle t_y+|y| \rangle^{\frac{5}{2}}}\big| \nabla_{t,x} \mathcal{L}_{Z^\xi}F \big|(t_y,y) \frac{ \mathrm{d}y \dr x}{ \langle t_y \rangle^2|y-x|}.
\end{align*}
It remains to use Lemma \ref{Lemcdvtech} as well as the $L^2_x$ estimates given by Corollary \ref{Corrho3} and \eqref{eq:ML2}.
\end{proof}

\subsubsection{Improved estimates for the top order derivatives}\label{subsubsecmachin}

In order to conclude the proof of Proposition \ref{ProforFnew}, it remains to establish that, if $D$ is chosen large enough and if $\Lambda^2 B_\Lambda \epsilon \log^{-1} T$ is small enough, then
\begin{equation}\label{eq:toprovefortheendofsec}
\hspace{-9mm}  \forall \, |\gamma|=N-1, \qquad \qquad  \sup_{t \geq T} \, \mathcal{E}^{K,1} \big[ \nabla_{\partial_{x^\lambda}} \mathcal{L}_{Z^\gamma} \big( F^{\mathrm{new}}-F^{\mathrm{asymp}}[f_\infty] \big) \big](t) \leq \frac{D \Lambda}{8},
 \end{equation}
for any $0 \leq \lambda \leq 3$. We first deal with the case $\lambda \neq 0$ and we then fix $|\gamma|=N-1$ as well as $1 \leq k \leq 3$. Hence, Combining the Glassey-Strauss decomposition of $G^n$ given by Proposition \ref{GSdecomoderiv} with Propositions \ref{Proestivertex}--\ref{ProGSS}, we obtain
$$ \forall \, n \geq 4T, \; \forall \, t \geq T, \qquad   \mathcal{E}^{K,1} \big[ \nabla_{\partial_{x^k}} G^n \big](t) \lesssim \epsilon T^{-\frac{1}{4}} \Lambda^2 B_\Lambda \langle t \rangle^{-\frac{1}{4}}.$$
We claim there exists $G^{\gamma,k}$ such that
\begin{equation}\label{eq:estiGgammabis}
\hspace{-6mm} \nabla^\mu G_{\mu \nu}^{\gamma,k}= \partial_{x^k}J(\widehat{Z}^\gamma f)_\nu-\partial_{x^k}J^{\mathrm{asymp}}_\nu \big[\widehat{Z}^\gamma_\infty f_\infty \big], \qquad \nabla^\mu {}^* \! G_{\mu \nu}^{\gamma,k} =0; \qquad \quad \sup_{t \geq T} \mathcal{E}^{K,1} [G^{\gamma,k}](t) \lesssim \frac{\epsilon  \Lambda^2 B_\Lambda}{T^{\frac{1}{4}}} 
\end{equation}
and its radiation field along $\mathcal{I}^+$ is identically zero. This is obtained by applying Proposition \ref{ProscatMax} for $\underline{\alpha}^{\mathcal{I}^+}=0$, the source term $J:=\partial_{x^k}J(\widehat{Z}^\gamma f)_\nu-\partial_{x^k}J^{\mathrm{asymp}}_\nu[\widehat{Z}^\gamma_\infty f_\infty]$, $a=1$, $\delta =1/4$ and $B_{\mathrm{source}} \lesssim \epsilon T^{-1/4} \Lambda^2 B_\Lambda$. For this, note that $S(J,n)=\nabla_{\partial_{x^k}}G^n$, so that \eqref{eq:condiSJn} is verified.

Recall from Propositions \ref{ProGvacsolution}--\ref{ProMvacsolution} the solutions to the vacuum Maxwell equations $G^{\mathrm{vac}}$ and $M^{\mathrm{vac}}$. Consider first the case $\gamma_T=0$, so that
$$\nabla_{\partial_{x^k}}\mathcal{L}_{Z^\gamma} F^{\mathrm{new}}=G^{\gamma,k}+ \nabla_{\partial_{x^k}}\mathcal{L}_{Z^\gamma}G^{\mathrm{vac}}+\nabla_{\partial_{x^k}}\mathcal{L}_{Z^\gamma} F^{\mathrm{asymp}}[f_\infty]$$
and \eqref{eq:toprovefortheendofsec} holds for this multi-index $\gamma$ and $\lambda =k$ in view of Proposition \ref{ProGvacsolution}, Lemma \ref{improderiv} as well as \eqref{eq:estiGgammabis}. Otherwise $\gamma_T \geq 1$, so $J^{\mathrm{asymp}}[\widehat{Z}_\infty^\gamma f_\infty]=0$ and $\nabla_{\partial_{x^k}}\mathcal{L}_{Z^\gamma} F^{\mathrm{new}}=G^{\gamma,k}+ \nabla_{\partial_{x^k}}\mathcal{L}_{Z^\gamma}M^{\mathrm{vac}}$. Proposition \ref{ProMvacsolution}, combined with Lemma \ref{improderiv}, and \eqref{eq:estiGgammabis} then imply
\begin{equation}\label{kevatalenn:16}
 \forall \, t \geq T, \qquad \qquad  \mathcal{E}^{K,1} \big[ \nabla_{\partial_{x^k}} \mathcal{L}_{Z^\gamma}  F^{\mathrm{new}} \big](t) \lesssim \Lambda +\epsilon T^{-\frac{1}{4}} \Lambda^2 B_\Lambda. 
 \end{equation}
Finally, Lemma \ref{improderiv} and Corollary \ref{CorweightedcontrolFasymp} imply that $ \mathcal{E}^{K,1} \big[ \nabla_{\partial_{x^k}} \mathcal{L}_{Z^\gamma}  F^{\mathrm{asymp}}[f_\infty] \big](t) \lesssim \epsilon$ if $\gamma_T \geq 1$, so that \eqref{eq:toprovefortheendofsec} holds for $\lambda =k$ in this case as well.

It remains to treat the case $\lambda =0$. For this, note that if $H$ is solution to the Maxwell equations with source term $J$, that is $\nabla^\mu H_{\mu \nu}=J_\nu$ and $\nabla^\mu {}^* \! H_{\mu \nu}=0$, then
\begin{equation}\label{eq:timederivequation}
 \hspace{-6mm} \partial_t H_{0i}=\partial_{x^1}H_{1i}+\partial_{x^2}H_{2i}+\partial_{x^3}H_{3i}-J_i, \qquad \partial_t H_{ij}=\partial_{x^i} H_{0j}+\partial_{x^j} H_{i0}, \qquad \qquad 1 \leq i, \, j \leq 3.
 \end{equation}
We then estimate the time derivative by using the commutation formula of Proposition \ref{ProComMax} and by controlling the source term through Corollary \ref{Corrho} if $\gamma_T=0$ or Corollaries \ref{CorgainderivVla} and \ref{Corrho3} if $\gamma_T \geq 1$. 

\section{Proof of Theorem \ref{Theo1}}\label{SecCauchy}
The main part of this section will be devoted to the proof of the following result. For simplicity, given $C \in \R$, we will denote $\exp \big( \exp \big(C\sqrt{\Lambda} \,\big) \big)$ by $C_\Lambda$. Recall the constants $ \overline{\varepsilon}$ and $B_\Lambda$ given by Proposition \ref{Picard}.
\begin{Lem}\label{LemCauchyseq}
There exists $0<\varepsilon_0 \leq \overline{\varepsilon}$ and $C \geq B$, which depends only on $N$, such that the next statement holds. If $\epsilon \log^{-1}(T) C_\Lambda \leq \varepsilon_0$, then the sequence $(f_n,F^n)_{n \geq 1}$ is Cauchy in $L^\infty \big( [T,+ \infty[ , L^2(\R^3_x \times \R^3_v) \times L^2(\R^3_x) \big)$.
\end{Lem}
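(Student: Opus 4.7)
The plan is to control the differences $\delta f_n := f_{n+1} - f_n$ and $\delta F^n := F^{n+1} - F^n$ in $L^\infty_t L^2_{x,v}$ and $L^\infty_t L^2_x$ respectively, and to set up a geometric contraction. First I would derive the equations of motion for these differences. Since $\T_{F^n}(f_{n+1}) = \T_{F^{n-1}}(f_n) = 0$ and both $f_{n+1}, f_n$ share the same asymptotic condition $f_\infty$ relative to the trajectories corrected by $\C_{t,v}$ (which depends only on $f_\infty$, not on $F^n$), $\delta f_n$ solves
\begin{equation*}
\T_{F^n}(\delta f_n) = \widehat{v}^\mu (\delta F^{n-1})_\mu{}^j \, \partial_{v^j} f_n, \qquad (\delta f_n)\big(t, x+t\widehat{v}+\C_{t,v}, v\big) \xrightarrow[t\to +\infty]{} 0.
\end{equation*}
Similarly, $\delta F^n$ solves the Maxwell equations with source $J(\delta f_n)$ and vanishing radiation field along $\mathcal{I}^+$.

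For the Maxwell step, I would apply Proposition \ref{ProscatMax} with $\underline{\alpha}^{\mathcal{I}^+} = 0$, $a=0$, $\delta = 1/4$, together with Proposition \ref{CorassumpJforcondi}. The key is to bound $J(\delta f_n)$ in a weighted $L^2_x$ norm using $|J(\delta f_n)|(t,x) \leq \int_v |\delta f_n|\,\mathrm{d}v$, a Cauchy--Schwarz in $v$ with a weight $\langle v\rangle^{-a}$, and the uniform moment bounds on $f_{n+1}, f_n$ coming from $f_n \in \mathbb{V}^{B_\Lambda,\epsilon}_N$. This yields, schematically,
\begin{equation*}
\sup_{t \geq T} \|\delta F^n(t)\|_{L^2_x} \leq C_\Lambda^{(1)} \sup_{t \geq T} \|\delta f_n(t)\|_{L^2_{x,v}}.
\end{equation*}

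For the Vlasov step, I would apply the $L^2$-energy inequality of Proposition \ref{ProenergyL2Vlasov} with $u \to -\infty$ to $h = \delta f_n$. The boundary term at $t = +\infty$ vanishes by dominated convergence (since the Jacobian of the change of variables $x = z + t\widehat{v} + \C_{t,v}$ is $1$), leaving
\begin{equation*}
\|\delta f_n(t)\|_{L^2_{x,v}}^2 \lesssim \int_t^{+\infty} \int_{\R^3_x}\int_{\R^3_v} \big|\widehat{v}^\mu (\delta F^{n-1})_\mu{}^j \partial_{v^j} f_n\big|\,|\delta f_n|\, \mathrm{d}v \mathrm{d}x \mathrm{d}\tau .
\end{equation*}
Using Lemma \ref{Lemrelftog} to rewrite $\partial_v f_n$ in terms of the derivatives of $g_n$, the Sobolev embedding $H^2_{z,v} \hookrightarrow L^\infty_{z,v}$ applied to the weighted norm $\mathbf{E}^{7,13}_N[g_n] \leq B_\Lambda \epsilon$, and a Cauchy--Schwarz in $x$ lead, after handling the Lorentz force via the null decomposition of Lemma \ref{nullstruct} (in the spirit of Proposition \ref{Proestloworder}) to
\begin{equation*}
\sup_{\tau \geq t} \|\delta f_n(\tau)\|_{L^2_{x,v}} \leq C_\Lambda^{(2)} \sqrt{\epsilon B_\Lambda}\, \log^{K}(T) \int_t^{+\infty} \tau^{-1-\delta'}\, \|\delta F^{n-1}(\tau)\|_{L^2_x}\,\mathrm{d}\tau \leq C_\Lambda^{(2)}\, \epsilon\, \log^{-1}(T) \, C_\Lambda \sup_{\tau \geq T} \|\delta F^{n-1}(\tau)\|_{L^2_x}.
\end{equation*}

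Combining the two steps produces $\sup_t \|\delta F^n\|_{L^2_x} + \sup_t \|\delta f_n\|_{L^2_{x,v}} \leq \frac{1}{2}\big(\sup_t \|\delta F^{n-1}\|_{L^2_x} + \sup_t \|\delta f_{n-1}\|_{L^2_{x,v}}\big)$ provided the absolute constant $\varepsilon_0$ is chosen small enough so that $\epsilon \log^{-1}(T) C_\Lambda \leq \varepsilon_0$ absorbs the various polynomial logarithmic losses and the constants $C_\Lambda^{(1)}, C_\Lambda^{(2)}$. Geometric convergence then gives the Cauchy property. The main obstacle is the Vlasov step: the factor $\partial_v f_n$ contains a $t$-weight through the expansion $v^0 \partial_v = \widehat{\Omega}_0^\infty - z\partial_t^\infty - \widehat{v}\, S + \widehat{v}\, z\!\cdot\!\nabla_z$, and a crude bound loses powers of $t$. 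Controlling this growth requires carefully exploiting the weighted moments of $g_n$ in both $\langle z\rangle$ and $\langle v\rangle$ built into $\mathbb{V}^{B_\Lambda,\epsilon}_N$, together with the improved null behavior of the Lorentz force $\widehat{v}^\mu \delta F^{n-1}_\mu{}^j$ near the light cone, so that the time integral in the energy estimate converges and produces the $\log^{-1}(T)$-type smallness factor needed to close the contraction.
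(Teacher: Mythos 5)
Your overall architecture — difference equations for $\delta f_n=f_{n+1}-f_n$ and $\delta F^n=F^{n+1}-F^n$, Vlasov and Maxwell steps, geometric contraction — is the right one and matches the paper's scheme. But the decision to close the iteration in plain $L^\infty_t L^2_{x,v}\times L^\infty_t L^2_x$ does not work, and this is where your proof has a genuine gap.

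The Vlasov step requires a \emph{decaying-in-time} bound on the source $\widehat{v}^\mu(\delta F^{n-1})_\mu{}^j\,\partial_{v^j}f_n$, and you correctly identify that $\partial_v f_n$ produces a $t$-weight after passing to the $(t,z,v)$ variables. The only place that $t$-decay can come from is the electromagnetic factor, and more precisely from its good null components — which is exactly what you flag at the end. However, the improved null behavior of $\alpha(\delta F^{n-1}),\rho(\delta F^{n-1}),\sigma(\delta F^{n-1})$ (decay $\sim\langle t+r\rangle^{-2}$ as in Proposition~\ref{decayMaxell} / \eqref{eq:ML2convCauchy}) is only available if you control the Morawetz-weighted energy $\mathcal{E}^K[\delta F^{n-1}]$, not its plain $L^2_x$ norm. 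The unweighted $L^2_x$ bound gives no pointwise or $t$-decay whatsoever, so the time integral in your energy estimate does not converge. Similarly, the Maxwell step requires a \emph{weighted} $L^2_x$ estimate of $J(\delta f_n)$ with $\langle t+|x|\rangle^{4+\delta}$ weight (Proposition~\ref{CorassumpJforcondi}), which in turn demands heavy $\langle x-t\widehat{v}-\C_{t,v}\rangle$ and $\langle v\rangle$ moments of $\delta f_n$ — again far more than $\|\delta f_n\|_{L^2_{x,v}}$.

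The paper's fix is to run the contraction on strictly stronger quantities and then deduce the plain $L^2$ Cauchy property a fortiori: on the Vlasov side one tracks $\mathbf{E}^{7,13}_1[g_{n+1}-g_n]$ (heavy $\langle z\rangle$, $\langle v\rangle$ moments in the $(t,z,v)$ coordinates \emph{and one order of derivatives}, needed because the commutation with $\widehat{Z}^\beta_\infty$ differentiates the difference of transport operators), and on the Maxwell side one tracks $\mathcal{E}^K[\delta F^n]+\mathcal{E}^{K,1}[\nabla_{t,x}\delta F^n]$. Propositions~\ref{ProdiffVla} and~\ref{Prodiffelec} then give the contraction with the factor $\epsilon\,T^{-1/4}\Lambda^2 C_\Lambda$. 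You correctly intuit that the weighted moments of $g_n$ and the null structure of the Lorentz force must enter, but without also tracking the Morawetz-weighted norms of $\delta F^{n-1}$ (including one derivative) these ideas cannot actually be deployed, and the contraction does not close.
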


Before proving it, let us show that, together with the results of the previous sections, it implies Theorem \ref{Theo1} except the uniqueness statement. Indeed, we obtain from Lemma \ref{LemCauchyseq} that there exists a distribution function $f :[T,+ \infty[ \times \R^3_x \times \R^3_v \to \R$ and a $2$-form $F$, defined on $[T,+\infty[$, such that
$$ f_n \xrightarrow[n \to + \infty]{}  f \qquad \text{in $L^\infty \big( [T,+ \infty[ , L^2(\R^3_x \times \R^3_v)  \big)$}, \qquad \qquad F^n \xrightarrow[n \to + \infty]{} F \qquad  \text{in $ L^\infty \big( [T,+ \infty[ ,  L^2(\R^3_x) \big)$.}$$
We then deduce, in view of the definition \eqref{Cauchy1}--\eqref{Cauchy2} of $(f_n,F^n)_{n \geq 1}$,
$$ \T_F (f)=0 \qquad \text{in $\mathcal{D} \big(]T,+\infty[ \times \R^3_x \times \R^3_v \big) $}, \qquad \qquad \nabla^\mu F_{\mu \nu}= J(f)_\nu, \quad \nabla^\mu {}^* \! F_{\mu \nu}=0 \qquad \text{in $\mathcal{D} \big(]T,+\infty[ \times \R^3_x \big) $},$$
so that $(f,F)$ is a weak solution to the Vlasov-Maxwell system \eqref{VM1}--\eqref{VM3}. Since $(F^n)_{n \geq 1}$ is a bounded sequence in $ \mathbb{M}^{D,\Lambda}_N$, we have for any $|\gamma|=N-1$ and all $t \geq T$, 
$$  \mathcal{E}_{N-1}^K \big[F-F^{\mathrm{asymp}}[f_\infty] \big](t) + \mathcal{E}^{K,1} \big[ \nabla_{t,x} \mathcal{L}_{Z^\gamma} \big(F-F^{\mathrm{asymp}}[f_\infty] \big) \big](t)  \leq D \Lambda   .$$
Let $g(t,z,v):=f(t,z+t\widehat{v}+\C_{t,v},v)$. By Remark \ref{RqenergyforTh1}, if we choose $C \geq B'$, there holds, for all $t \geq T$,
\begin{align*}
  \sup_{|\beta| \leq N} \int_{\R^3_z} \int_{\R^3_v} \langle z \rangle^{16+2N-2\beta_H} \langle v \rangle^{30} \big| \widehat{Z}^\beta_\infty g (t,z,v) \big|^2 \dr v \dr z  &\leq  \epsilon \exp \big(  \exp \big( B' \sqrt{\Lambda} \, \big) \big)\leq C_\Lambda \epsilon .
\end{align*}
Consequently, $(f,F)$ is a classical solution to the Vlasov-Maxwell system. By interpolating we also have convergence in stronger topologies so that, by Proposition \ref{Proconvgntoginfty} and Remark \ref{Rqconvalphantoalphainf},
\begin{equation}\label{eq:rateofconvtoscat}
 \lim_{t \to +\infty} g(t,z,v) = f_\infty(z,v), \qquad \qquad \qquad  \lim_{r \to + \infty} r \underline{\alpha} (F)(t+u,r \omega) = \underline{\alpha}^\infty (u,\omega).
 \end{equation}
Hence, we have modified scattering for $f$, along the correction of the linear trajectories $t \mapsto (x+t\widehat{v}+\C_{t,v},v)$, to $f_\infty$. Moreover, $\underline{\alpha}^\infty$ is the radiation field along future null infinity $\mathcal{I}^+$ of the electromagnetic field $F$.
\begin{Rq}
Since $F \in \mathbb{M}^{D,\Lambda}_N$ and $(f,F)$ is a solution to the Vlasov-Maxwell system with asymptotic data $(f_\infty,\underline{\alpha}^\infty)$, all the results of Sections \ref{SecVlasov}--\ref{SecMax} apply to $(f,F)$. In particular, Proposition \ref{Proconvgntoginfty} and Remark \ref{Rqconvalphantoalphainf} provide a uniform rate of convergence for \eqref{eq:rateofconvtoscat}.
\end{Rq}
\subsection{The approximate solutions form a Cauchy sequence} Since the arguments are very similar to part of those used during the last two Sections \ref{SecVlasov}--\ref{SecMax}, we will be more sketchy.  Consider, for $n \geq 1$,
$$ \delta_{\mathrm{Vlasov}}^n :=  \mathbf{E}^{7,13}_1[g_{n+1}-g_n ], \qquad \qquad  \delta_{\mathrm{Maxwell}}^n := \sup_{t \geq T} \mathcal{E}^K \big[F^{n+1}-F^{n} \big](t) +\sup_{t \geq T} \mathcal{E}^{K,1}\big[\nabla_{t,x} \big( F^{n+1}-F^{n} \big) \big](t)   $$
and let us control first $g_{n+1}-g_n$.
\begin{Pro}\label{ProdiffVla}
For all $n \geq 2$, we have $\delta_{\mathrm{Vlasov}}^n \lesssim \epsilon  T^{-\frac{1}{4}}C_\Lambda   \delta_{\mathrm{Maxwell}}^{n-1}$, where $C >0$ depends only on $N$, and
$$
 \forall \, t \geq T, \qquad \bigg\| |v^0|^8 \int_{\R^3_z} \big[ g_{n+1}-g_n \big](t,z,\cdot) \dr z \bigg\|_{L^2(\R^3_v)}  \lesssim  \frac{\sqrt{ \epsilon \Lambda C_\Lambda} \log^4(t)}{T^{\frac{1}{8}} t} \sqrt{\delta_{\mathrm{Maxwell}}^{n-1}} .
$$
\end{Pro}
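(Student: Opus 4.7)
\medskip

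\noindent\textbf{Proof proposal for Proposition \ref{ProdiffVla}.} The plan is to subtract the two Vlasov equations satisfied by $g_{n+1}$ and $g_n$, commute once with the vector fields of $\widehat{\mathbb{P}}_S^\infty$, then run the energy estimate of Proposition~\ref{ProenergyL2Vlasov} in the coordinates $(t,x,v)$ on the resulting inhomogeneous transport equation. Since $g_{n+1}$ (resp.\ $g_n$) solves $\T^\infty_{F^n}(g_{n+1})=0$ (resp.\ $\T^\infty_{F^{n-1}}(g_n)=0$) with the same asymptotic data $f_\infty$, the difference $h_n:=g_{n+1}-g_n$ satisfies $h_n(t,\cdot,\cdot)\to 0$ and
\[
\T^\infty_{F^n}(h_n) \;=\; \bigl(\T^\infty_{F^{n-1}}-\T^\infty_{F^n}\bigr)(g_n),
\]
where, in view of \eqref{eq:Vlasov}, the right-hand side is linear in $F^{n-1}-F^n$ evaluated at $\XX_\C$, multiplied by derivatives of $g_n$. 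Analogous commutator formulas as in Corollary~\ref{CorCom} will provide the corresponding identity for $\widehat{Z}_\infty h_n$, $|\widehat{Z}_\infty|\le 1$, with two types of error terms: those intrinsic to the operator $\T^\infty_{F^n}$ (estimated exactly as in Section~\ref{SecVlasov} since $F^n\in\mathbb{M}^{D,\Lambda}_N$), and new source terms of the schematic form
\[
 \frac{\widehat{v}^\mu}{v^0}\,\mathcal{L}_{Z^\gamma}\bigl(F^{n-1}-F^n\bigr)_{\mu j}(t,\XX_\C)\cdot(\text{derivatives of $g_n$ up to order 2})\,,\qquad |\gamma|\le 1.
\]

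\smallskip

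\noindent First I would absorb the first family of error terms through the two-variable Gr\"onwall inequality of Lemma~\ref{LemGronwall2variables}, producing a constant $C_\Lambda$. For the new source term, I would apply the weighted Klainerman–Sobolev estimate of Proposition~\ref{decayMaxell} to $\mathcal{L}_{Z^\gamma}(F^{n-1}-F^n)$, $|\gamma|\le 1$, which is controlled in the $\mathcal{E}^K_1$ norm by $\delta_{\mathrm{Maxwell}}^{n-1}$, and pair it with the weighted pointwise bounds for $g_n$ coming from $g_n\in\mathbb{V}^{B_\Lambda,\epsilon}_N$. The null structure of the Lorentz force (Lemma~\ref{nullstruct}), together with Lemmata~\ref{gainv}--\ref{gainvadapted} to convert powers of $\widehat{v}^{\underline{L}}(\XX_\C)$ into $\langle t-|\XX_\C|\rangle$-decay, will reduce matters to two time integrals analogous to $\mathcal{T}_{t_0,u_0}$ and $\mathcal{U}_{t_0,u_0}$ appearing in Proposition~\ref{Proapriori}, each producing an integrable factor $t^{-4/3}$ or $\langle u\rangle^{-4/3}$. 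The extra smallness factor $\epsilon T^{-1/4}$ will come from the boundedness $\sqrt{\mathbf{E}_N^{7,13}[g_n]}\lesssim\sqrt{\epsilon C_\Lambda}$ combined with one integration in $t\ge T$ of a surplus power of $\langle t\rangle^{-5/4}$, which is always available since we only ask for first order energies on the left-hand side.

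\smallskip

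\noindent For the spatial-average statement, the starting point is that $\int_z(g_{n+1}-g_n)(t,z,v)\dr z\to 0$ as $t\to+\infty$ for almost every $v$, so that Minkowski's integral inequality reduces the problem to estimating $\|\,|v^0|^8\partial_t\int_z h_n\dr z\|_{L^2_v}$ and integrating in time from $t$ to $+\infty$. The analog of Corollary~\ref{Corestipartialtg} applied to the inhomogeneous equation for $h_n$ now produces, in addition to the terms treated there, a single new contribution proportional to $t\,\|\nabla_{t,x}(F^{n-1}-F^n)\|_{L^2_x}\cdot\sqrt{\mathbf{E}^{7,13}_N[g_n]}$ with a favorable $\langle t-|\XX_\C|\rangle$-weight, which after a change of variables $y=\XX_\C(t,z,v)$ (Lemma~\ref{decayparticucase}) yields the factor $\langle t\rangle^{-1}\log^4(t)\sqrt{\epsilon\Lambda C_\Lambda}\sqrt{\delta_{\mathrm{Maxwell}}^{n-1}}$, which is integrable in $t\in[T,+\infty[$ up to the announced loss $T^{-1/8}$.

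\smallskip

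\noindent The main obstacle will be the careful choice of the weights in $\langle z\rangle$ and $\langle v\rangle$ in the energy used to close the estimate: the new source term brings a factor of $\langle z\rangle\,\langle v\rangle$ through the convergence estimate \eqref{eq:Mpointconv}, so I must verify that working merely at order $|\beta|\le 1$ on the left-hand side (as the norm $\mathbf{E}^{7,13}_1$ specifies) still leaves enough room in the hierarchy $\langle z\rangle^{14+2N-2\beta_H}$ used to control $g_n$ for the quadratic Cauchy-Schwarz step to close. The rest of the argument is a direct adaptation of the a priori estimates of Proposition~\ref{Proapriori}, with $f_\infty$ replaced by the zero asymptotic data.
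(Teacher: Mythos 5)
Your skeleton is correct (write the difference $h_n=g_{n+1}-g_n$, commute once, apply the energy estimate of Proposition~\ref{ProenergyL2Vlasov}, split the source into the intrinsic commutator errors and the new term $(\T^\infty_{F^{n-1}}-\T^\infty_{F^n})(g_n)$, and close via the two-variable Gr\"onwall lemma). The spatial-average step is also essentially the paper's: Minkowski in time, then the analog of Corollary~\ref{Corestipartialtgg}.

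However, there is a genuine gap in your treatment of the new source term. You propose to estimate $\mathcal{L}_{Z^\gamma}(F^{n-1}-F^n)$, $|\gamma|\le 1$, \emph{pointwise} via the weighted Klainerman–Sobolev estimate of Proposition~\ref{decayMaxell}, and pair this with $L^2$ bounds for the Vlasov factors. But Proposition~\ref{decayMaxell} requires control of $\mathcal{E}^K_2[\cdot]$, i.e.\ \emph{two} Lie derivatives of the input field, and applying it to $\mathcal{L}_{Z^\gamma}(F^{n-1}-F^n)$ with $|\gamma|\le 1$ would need $\mathcal{E}^K_3[F^{n-1}-F^n]$. The quantity $\delta^{n-1}_{\mathrm{Maxwell}}$ only controls $\mathcal{E}^K[F^n-F^{n-1}]$ and $\mathcal{E}^{K,1}[\nabla_{t,x}(F^n-F^{n-1})]$; it does not even give $\mathcal{E}^K_1[F^n-F^{n-1}]$, since for a homogeneous vector field $Z$ one has $|\mathcal{L}_Z F|\sim\langle t+r\rangle|\nabla_{t,x}F|$, and the weight $\langle t+r\rangle^2$ in $\mathcal{E}^K[\mathcal{L}_Z F]$ is not absorbed by the weight $|\max(t-r,1)|^2$ in $\mathcal{E}^{K,1}[\nabla_{t,x}F]$ near the light cone. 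So the Cauchy–Schwarz split you propose (Maxwell in $L^\infty$, Vlasov in $L^2$) cannot close.

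The paper makes the \emph{opposite} split. Following the proof of \eqref{eq:ML2conv}, one establishes the weighted $L^2_{x,v}$ estimate
\begin{equation*}
 \sup_{|\gamma| \leq 1} \int_{\R^3_x} \int_{\R^3_v}  t^4 \left|  \mathcal{L}_{Z^\gamma} (F^n-F^{n-1})  \right|^2(t,x) \frac{\dr v \mathrm{d} x}{\langle x-t\widehat{v} -\C_{t,v} \rangle^{7} \langle v \rangle^{13}}  \lesssim   \frac{\log^9(t)}{t}\,  \delta_{\mathrm{Maxwell}}^{n-1},
\end{equation*}
which uses only the first-order $\mathcal{E}^{K,1}[\nabla_{t,x}\cdot]$ information available in $\delta_{\mathrm{Maxwell}}^{n-1}$, after trading $v$-moments for $\langle t-r\rangle$-weights via Lemma~\ref{gainv} and Lemma~\ref{decayparticucase}. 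The Vlasov factor $g_n$, being in $\mathbb{V}^{B_\Lambda,\epsilon}_N$ with $N\ge 8$, is then placed in $L^\infty_{z,v}$ through the Sobolev embedding $H^2_{z,v}\hookrightarrow L^\infty_{z,v}$, giving $\big|\langle v\rangle^{19}\langle z\rangle^{13-\kappa_H}\widehat{Z}^\kappa_\infty g_n\big|\lesssim\sqrt{\epsilon B_\Lambda}$. This also dissolves the ``main obstacle'' you flag at the end: since $g_n$ is only needed pointwise, no delicate weight bookkeeping in the $\langle z\rangle^{14+2N-2\beta_H}$ hierarchy is required to close at order $|\beta|\le 1$.
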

\begin{proof}
Let $n \geq 2$. Following the proof of the estimate \eqref{eq:ML2conv}, one can obtain
\begin{equation}\label{eq:ML2convCauchy}
\hspace{-8mm} \forall \, t \geq T, \qquad \quad  \sup_{|\gamma| \leq 1} \int_{\R^3_x} \int_{\R^3_v}  t^4 \left|  \mathcal{L}_{Z^\gamma} (F^n-F^{n-1})  \right|^2(t,x) \frac{\dr v \mathrm{d} x}{\langle x-t\widehat{v} -\C_{t,v} \rangle^{7} \langle v \rangle^{13}}  \lesssim   \frac{\log^9(t)}{t}  \delta_{\mathrm{Maxwell}}^{n-1}.
\end{equation}
Let $|\beta| \leq 1$. Then, since $\T_{F^{n}}^\infty(g_{n+1})=\T^\infty_{F^{n-1}}(g_n)=0$, we have
\begin{align*}
 \T_{F^{n}}^\infty \Big[ \langle v \rangle^{13} \, \langle z \rangle^{8-\beta_H} \,\widehat{Z}^\beta_\infty &\big(g_{n+1}-g_n \big) \Big] = \T_{F^{n}}^\infty \big( \langle v \rangle^{13} \, \langle z \rangle^{8-\beta_H} \big) \widehat{Z}^\beta_\infty\big( g_{n+1}-g_n \big) \\ 
 &+\langle v \rangle^{13} \, \langle z \rangle^{8-\beta_H} \,\big[\T^\infty_{F^{n}},\widehat{Z}^\beta_\infty \big]\big(g_{n+1}-g_n \big)   +\langle v \rangle^{13} \, \langle z \rangle^{8-\beta_H} \widehat{Z}^\beta \big( \T_{F^{n}}^\infty (g_n)-\T^\infty_{F^{n-1}}(g_n) \big),
 \end{align*}
 where $\big[\T^\infty_{F^{n}},\widehat{Z}^\beta_\infty \big]=0$ if $|\beta|=0$. According to Corollary \ref{CorCom} and Proposition \ref{Proestloworder},
 \begin{align*}
 & \big|\T_{F^{n}} \big( \langle v \rangle^{13} \, \langle z \rangle^{8-\beta_H} \big) \widehat{Z}^\beta_\infty\big( g_{n+1}-g_n \big)+\langle v \rangle^{13} \, \langle z \rangle^{8-\beta_H} \,\big[\T^\infty_{F^{n}},\widehat{Z}^\beta_\infty \big]\big(g_{n+1}-g_n \big)\big|(t,z,v) \\
 & \qquad \qquad \qquad \qquad \qquad \lesssim \! \Big( \sqrt{ \Lambda}   t^{-\frac{4}{3}}+\sqrt{\Lambda} \,  \widehat{v}^{\underline{L}}(\XX_\C)  \, \langle t-|\XX_\C| \rangle^{-\frac{4}{3}} \Big) \sup_{|\kappa| \leq 1} \langle v \rangle^{13} \, \langle z \rangle^{8-\kappa_H} \big|  \widehat{Z}^\kappa_\infty \big(g_{n+1}-g_n \big) \big|(t,z,v) .
 \end{align*} 
 Moreover, we have
 $$\T_{F^{n}}^\infty -\T^\infty_{F^{n-1}}  =  \frac{\delta_j^i-\widehat{v}_j \widehat{v}^i}{tv^0} \widehat{v}^\mu t^2  {\big[F^{n-1}-F^{n}\big]_{\mu}}^{j}(t,\XX_\C)\partial_{z^i}+\frac{\widehat{v}^\mu}{v^0} {\big[F^{n}-F^{n-1}\big]_{\mu}}^{j}(t,\XX_\C)\left( v^0\partial_{v^j}-v^0 \partial_{v^j} \C^i_{t,v} \partial_{z^i} \right) .$$
Now, applying Lemmata \ref{LemCom1}--\ref{LemCom2} to $F=F^{n}-F^{n-1}$ and $f_\infty=0$, one can derive\footnote{Alternatively, one can apply Lemma \ref{LemCom2} to $F^n$ as well as $F^{n-1}$ and consider the difference of the resulting quantities.}
\begin{align*}
\big| \langle v \rangle^{13} \, \langle z \rangle^{8-\beta_H} \widehat{Z}^\beta \big( \T_{F^{n}}^\infty (g_n)-&\T^\infty_{F^{n-1}}(g_n) \big) \big|\lesssim \frac{1}{v^0  t} \sup_{|\xi|,  |\kappa| \leq 1} t^2\big|  \mathcal{L}_{Z^\xi}  \big( F^{n}-F^{n-1} \big)\big|(t,\XX_\C)  \, \big| \, \langle v \rangle^{13} \langle z \rangle^{9-\kappa_H}  \widehat{Z}_\infty^\kappa g_n \big| ,
\end{align*}
where we used $|v^0 \partial_{v^j} \C_{t,v}^i| \lesssim \log^2(t) \lesssim t$. Next, as $H^2_{z,v} \hookrightarrow L^\infty_{z,v}$, we have $\big| \, \langle v \rangle^{19} \langle z \rangle^{13-\kappa_H}  \widehat{Z}_\infty^\kappa g_n \big| \lesssim \sqrt{\epsilon B_\Lambda}$ in view of the bound \eqref{eq:gBound}. Consequently, using \eqref{eq:ML2convCauchy} and performing $L^2_{z,v}$ estimates as in Proposition \ref{Proapriori}, we get, since $\widehat{Z}^\beta_\infty (g_{n+1}-g_n)(t,\cdot , \cdot) \to 0$ as $t \to +\infty$ by construction,
$$\delta^n_{\mathrm{Vlasov}} \leq  \sup_{t \geq T} \, \sup_{u \in \R} \mathbb{E}^{7,13}_1[g_{n+1}-g_n ](t,u) \lesssim \frac{\epsilon B_\Lambda}{T^{\frac{1}{4}}} C'_\Lambda \delta_{\mathrm{Maxwell}}^{n-1} =\frac{\epsilon C_\Lambda}{T^{\frac{1}{4}}} \delta_{\mathrm{Maxwell}}^{n-1}, \qquad C',\, C >0.$$
For the $L^2_v$ decay estimate of the spatial average of $g_{n+1}-g_n$, note first that it goes to $0$ as $t \to +\infty$. Then, we use the relation $\T_{F^n}(g_{n+1}-g_n)=(\T_{F^{n-1}}-\T^\infty_{F^n})(g_n)$ and integration by parts in $z$, as the ones performed in Corollary \ref{Corestipartialtgg}. Using also $| \, \langle v \rangle^{16} \langle z \rangle^7  \widehat{Z}_\infty^\kappa g_n | \lesssim  \sqrt{\epsilon B_\Lambda}$ given by \eqref{eq:gBound}, \eqref{eq:Mpoint} as well as Lemmata \ref{improderiv} and \ref{gainvadapted}, we get 
\begin{align*}
 \bigg|\partial_t \int_{\R^3_z} \big( g_{n+1}-g_n \big)(t,z,v) \dr z \bigg| &\lesssim \frac{\sqrt{\Lambda} \log^4(t)}{t^2} \sup_{|\kappa| \leq 1}\int_{\R^3_z}  \langle z \rangle^2 \, \langle v \rangle^3 \, \big|\widehat{Z}^\kappa_\infty \big(g_{n+1}-g_n \big)\big|(t,z,v) \dr z \\
 & \quad +\frac{\sqrt{\epsilon B_\Lambda} \log^2(t)}{ t^2} \sup_{|\xi| \leq 1} \int_{\R^3_z}   t^2 \big|\mathcal{L}_{Z^\xi} \big(F^{n}-F^{n-1} \big)\big|(t,\XX_\C) \frac{\dr z}{\langle z \rangle^6\langle v \rangle^{15}}.
 \end{align*}
We then obtain the convergence estimate by following the proof of Proposition \ref{Proconvspat} and by using \eqref{eq:ML2convCauchy} as well as the estimate obtained for $\delta_{\mathrm{Vlasov}}^{n}$.
\end{proof}

We control now $F^{n+1}-F^n$.
\begin{Pro}\label{Prodiffelec}
For all $n \geq 2$ and $t \geq T$, we have
\begin{align*}
 \delta_{\mathrm{Maxwell}}^{n} \lesssim  \epsilon T^{-\frac{1}{4}}\Lambda^2C_\Lambda \delta_{\mathrm{Maxwell}}^{n-1}.
 \end{align*}
\end{Pro}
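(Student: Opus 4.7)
The plan is to emulate the construction of $F^{\mathrm{new}}$ in Section \ref{SecMax} applied to the difference $G_n := F^{n+1}-F^n$, which by \eqref{Cauchy1} satisfies the Maxwell system
\begin{equation*}
\nabla^\mu (G_n)_{\mu\nu} = J(f_{n+1}-f_n)_\nu, \qquad \nabla^\mu {}^{*}\!(G_n)_{\mu\nu}=0,
\end{equation*}
with radiation field identically zero along $\mathcal{I}^+$ (both $F^{n+1}$ and $F^n$ scatter to $\underline{\alpha}^\infty$). Since no commutation with $Z^\gamma$ is needed, I would invoke Proposition \ref{ProscatMax} with $\underline{\alpha}^{\mathcal{I}^+}=0$ and $a=0$ for $\mathcal{E}^K[G_n]$, and rerun the Glassey-Strauss argument of Subsection \ref{subsubsecmachin} (for the multi-index $|\gamma|=0$) for $\mathcal{E}^{K,1}[\nabla_{\partial_{x^k}}G_n]$; the time derivative is then recovered from \eqref{eq:timederivequation}.

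The heart of the matter is thus a weighted $L^2_x$ decay estimate for the source $J(f_{n+1}-f_n)$, and more generally for the quantities $\rho^0[f_{n+1}-f_n,0;\mathbf{q}]$ that appear in the Glassey-Strauss decomposition of Proposition \ref{GSdecomoderiv}. I would first write $(f_{n+1}-f_n)(t,x,v)=(g_{n+1}-g_n)(t,x-t\widehat v-\mathscr C_{t,v},v)$ and, mimicking the proof of Proposition \ref{Protechfordecay}, decompose
\begin{equation*}
J(f_{n+1}-f_n)_\nu = \Bigl[J(f_{n+1}-f_n)_\nu - \tfrac{\mathds 1_{|x|<t}}{t^3}\tfrac{x_\nu}{t}\!\!\int_{\R^3_z}\!|v^0|^5(g_{n+1}-g_n)\bigl(t,z,\tfrac{\widecheck{\,x\,}}{t}\bigr)\dr z\Bigr] + \mathrm{leading}_n.
\end{equation*}
The bracketed remainder is controlled in weighted $L^2_x$ by the first-order $L^2_{z,v}$ norm of $g_{n+1}-g_n$, hence by $\delta^n_{\mathrm{Vlasov}}\lesssim \epsilon T^{-1/4}C_\Lambda\,\delta^{n-1}_{\mathrm{Maxwell}}$ thanks to Proposition \ref{ProdiffVla}; the leading profile, after the change of variables $x=t\widehat v$, is controlled by the convergence estimate for the spatial averages in the second bound of Proposition \ref{ProdiffVla}. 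After verifying Proposition \ref{CorassumpJforcondi}'s hypothesis with a spare $\langle t\rangle^{-\delta}$ ($\delta=\tfrac14$), Proposition \ref{ProscatMax} delivers $\sup_t\mathcal{E}^K[G_n](t)\lesssim \epsilon T^{-1/4}C_\Lambda\,\delta^{n-1}_{\mathrm{Maxwell}}$.

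For the gradient estimate I would introduce the truncated auxiliary field $G_n^{\tau_0}$ as in Subsection \ref{subsubsecmachin} and apply the decomposition of Proposition \ref{GSdecomoderiv} with $|\gamma|=0$. The linear pieces $G^{\mathrm{ver}}_k$, $G^{\chi'}_k$, $G^{\chi''}_k$, $G^{TT}_k$ are handled exactly as in Proposition \ref{Proestivertex}, their $L^2_x$ smallness reducing to the estimate above; the cutoff at $|y-x|\le 1$ in $G^{TT}_k$ uses the extra identity $\int_{\mathbb{S}^2}\mathbf{a}^k_{\mu\nu}(\sigma,\widehat v)\dr\mu_{\mathbb{S}^2}=0$ as in that proof. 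The nonlinear $G^{TS}_k$, $G^{SS}_k$ pieces are the delicate part: using $\T_{F^n}(f_{n+1})=\T_{F^{n-1}}(f_n)=0$ one rewrites
\begin{equation*}
\T_0(f_{n+1}-f_n) = -\nabla_v\!\cdot\!\bigl(K^n(f_{n+1}-f_n)\bigr)-\nabla_v\!\cdot\!\bigl((K^n-K^{n-1})f_n\bigr),
\end{equation*}
and similarly for $\T_0\T_0$. The first contribution is estimated through the pointwise bound $|K^n|\lesssim\sqrt\Lambda$ from \eqref{eq:Mpoint} times the weighted $L^2$ bound on $f_{n+1}-f_n$ (yielding a factor $\sqrt\Lambda\,\sqrt{\delta^n_{\mathrm{Vlasov}}}$), while the second uses the weighted $L^2_x$ control of $F^n-F^{n-1}$ encoded in $\delta^{n-1}_{\mathrm{Maxwell}}$ against the pointwise decay of $f_n$ of Lemma \ref{Lemftog}. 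Carrying the quadratic Cauchy-Schwarz manipulations of Proposition \ref{ProGSS} then produces the announced $\Lambda^2 C_\Lambda\,\epsilon T^{-1/4}\,\delta^{n-1}_{\mathrm{Maxwell}}$.

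The main obstacle is precisely these nonlinear $G^{SS}$ terms, where one Lorentz-force factor must be absorbed pointwise and the other in $L^2$ with an integrable weight in $s=t+|y-x|$; extracting a genuine $T^{-1/4}$ smallness (rather than merely $\langle t\rangle^{-1/4}$) requires carefully placing the gain from $\delta^n_{\mathrm{Vlasov}}$—which itself carries the factor $\epsilon T^{-1/4} C_\Lambda$—as one of the two $L^2$ norms in Lemma \ref{Lemcdvtech}, while keeping $\sqrt\Lambda$ in the pointwise slot. Once this bookkeeping is done, combining both estimates yields $\delta^n_{\mathrm{Maxwell}}\lesssim \epsilon T^{-1/4}\Lambda^2 C_\Lambda\,\delta^{n-1}_{\mathrm{Maxwell}}$, closing the Cauchy property after absorbing the constant in $C_\Lambda$.
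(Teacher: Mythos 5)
Your proposal is correct and follows essentially the same route as the paper: Proposition \ref{Protechfordecay} combined with the two bounds of Proposition \ref{ProdiffVla} yields the weighted $L^2_x$ decay of $J(f_{n+1}-f_n)$ and of the Glassey-Strauss quantities $\rho^0[f_{n+1}-f_n,0;\mathbf q]$; Propositions \ref{CorassumpJforcondi} and \ref{ProscatMax} with zero radiation field then control $\mathcal{E}^K[F^{n+1}-F^n]$; and the gradient estimate is recovered by rerunning the Glassey--Strauss decomposition of Section \ref{toporder} at order $|\gamma|=0$, splitting $\T_0(f_{n+1}-f_n)$ exactly as you write via the pair of Vlasov equations, with the time derivative obtained from \eqref{eq:timederivequation}. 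The paper's proof is terser (it refers back to Section \ref{toporder} for the nonlinear pieces and invokes Proposition \ref{Prosimpledecay} for the auxiliary $L^2_x$ bound on $\int_v \langle x-t\widehat v\rangle^3\langle v\rangle^9|\widehat Z^\beta(f_{n+1}-f_n)|\,dv$), but the underlying argument is the one you describe.
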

\begin{proof}
Let $\mathbf{q}:\mathbb{S}^2_\omega \times \R^3_v \to \R$ such that $|\mathbf{q}|(\cdot,v)+|\nabla_v \mathbf{q}|(\cdot,v) \lesssim |v^0|^3$. We have from Proposition \ref{Protechfordecay}, applied to $ \mathbf{q}(\omega ,v) (f_{n+1}-f_n)$, and the previous Proposition \ref{ProdiffVla},
$$
\forall \, t \geq T , \qquad \; \sup_{\omega \in \mathbb{S}^2_\omega} \, \int_{\R^3_x} \langle t+|x| \rangle^{5} \bigg| \int_{\R^3_v} \mathbf{q}(\omega ,v ) \big( f_{n+1}-f_n \big)(t,x,v) \dr v\bigg|^2 \dr x   \lesssim \epsilon T^{-\frac{1}{4}} \Lambda C_\Lambda  \log^{8}(t) \delta_{\mathrm{Maxwell}}^{n-1}.
$$
Similarly, according to Proposition \ref{Prosimpledecay}, we have for any $|\beta| \leq 1$,
$$ \forall \, t \geq T, \qquad  \int_{\R^3_x} \langle t+|x| \rangle^3 \bigg|\int_{\R^3_v}\langle x-t\widehat{v} \rangle^3 \, \langle v \rangle^{9} \big| \widehat{Z}^\beta (f_{n+1}-f_n) \big|(t,x,v) \mathrm{d} v  \bigg|^2  \dr x   \lesssim \epsilon T^{-\frac{1}{4}} C_\Lambda  \log^{12}(t) \delta_{\mathrm{Maxwell}}^{n-1}.$$
We refer to the proof of Corollary \ref{Corrho2} for more details. Consider now, for $\tau_0 \geq 4T$,
\begin{alignat*}{2}
 \nabla^\mu G^{\tau_0}_{\mu \nu} &= \chi \big(t / \tau_0 \big)J\big(  f_{n+1}-f_n \big)_\nu, \qquad &&\nabla^\mu {}^* \! G_{\mu \nu}^{\tau_0}=0, \qquad G(\tau_0,\cdot)={}^* \! G (\tau_0,\cdot)=0. 
 \end{alignat*}
Bounding $\log^{8}(t)$ by $t^{1/4}$, we obtain from Proposition \ref{CorassumpJforcondi} that
 $$ \forall \, t \geq T, \qquad \mathcal{E}^K\big[ G^{\tau_0} \big](t) \lesssim  \epsilon T^{-\frac{1}{4}} \Lambda C_\Lambda  \delta^{n-1}_{\mathrm{Maxwell}} \, \langle t \rangle^{-\frac{3}{4}}.$$
Hence, since the radiation field of $F^{n+1}-F^n$ vanishes, we have $\mathcal{E}^K [ F^{n+1}-F^n ] \lesssim \epsilon T^{-1/4} \Lambda C_\Lambda \delta^{n-1}_{\mathrm{Maxwell}}$ on $[T,+\infty[$ according to Proposition \ref{ProscatMax}. Following the analysis carried out in Section \ref{toporder} and using the pointwise decay estimates given by \eqref{eq:Mpoint}, for $F^{n+1}$ and $F^n$, as well as Corollary \ref{Corrho3}, for $f_{n+1}$ and $f_n$, we get
\begin{align*}
& \sup_{t \geq T} \, \langle t \rangle^{\frac{1}{4}} \, \mathcal{E}^{K,1} \big[ \nabla_{\partial_{x^k}}G^{\tau_0} \big](t) \lesssim \epsilon T^{-\frac{1}{4}} \Lambda^2 C_\Lambda \delta_{\mathrm{Maxwell}}^{n-1},
\end{align*}
for any $1 \leq k \leq 3$. We then obtain from Proposition \ref{ProscatMax} the stated estimates for the spatial derivatives of $F^{n+1}-F^n$. The time derivative can be handled by exploiting \eqref{eq:timederivequation}.
\end{proof}
Hence, if $\epsilon T^{-1/4}\Lambda^2 C_\Lambda$ is small enough, there exists $0 \leq \eta <1$ such that $\delta^n_{\mathrm{Maxwell}}+\delta^n_{\mathrm{Vlasov}} \leq \eta^n \delta^1_{\mathrm{Maxwell}}$ for all $n \geq 2$. By considering a slightly larger constant $C$, this implies Lemma \ref{LemCauchyseq}. 

\subsection{Uniqueness of the solution} To conclude the proof of Theorem \ref{Theo1}, it remains us to prove that $(f,F)$ is the only solution to the Vlasov-Maxwell system with scattering data $(f_\infty,\underline{\alpha}^\infty)$ such that
\begin{equation}\label{Un:uniq}
 \mathbf{E}_8^{7,13}[g]+ \sup_{t \geq T} \mathcal{E}^K_7 \big[F-F^{\mathrm{asymp}}[f_\infty] \big](t) +\sup_{|\gamma|=7} \, \sup_{t \geq T} \mathcal{E}^{K,1}\big[\nabla_{t,x} \mathcal{L}_{Z^\gamma}\big(F-F^{\mathrm{asymp}}[f_\infty] \big) \big](t)  <+\infty.  \tag{Uniq}
 \end{equation}
Let $(h,H)$ be a solution to the same asymptotic Cauchy problem and verifying the bound \eqref{Un:uniq}. Since local well-posedness hold, $(h,H)=(f,F)$ is a consequence of the next result.

\begin{Lem}
There exists $T' \geq T$ such that $(h,H)=(f,F)$ on $[T',+\infty[$.
\end{Lem}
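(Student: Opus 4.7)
The plan is to reduce the uniqueness statement to a contraction-type argument entirely analogous to the Cauchy sequence argument of Propositions \ref{ProdiffVla} and \ref{Prodiffelec}. Let $\tilde{g}(t,z,v):=h(t,z+t\widehat{v}+\mathscr{C}_{t,v},v)$, and for any $T' \geq T$ define
\[
\delta(T'):= \mathbf{E}^{7,13}_1[g-\tilde{g}] + \sup_{t \geq T'}\mathcal{E}^K[F-H](t) + \sup_{t \geq T'}\mathcal{E}^{K,1}\big[\nabla_{t,x}(F-H)\big](t),
\]
where the first norm is taken over $[T',+\infty[$. The assumption \eqref{Un:uniq} guarantees $\delta(T) < +\infty$, and the objective is to show that for $T'$ large enough $\delta(T') \leq \tfrac{1}{2}\delta(T')$, which forces $\delta(T')=0$ and hence $(h,H)=(f,F)$ on $[T',+\infty[$.

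For the Vlasov side, $g-\tilde{g}$ satisfies $\T_F^\infty(g-\tilde{g}) = (\T_F^\infty-\T_H^\infty)(\tilde{g})$ with vanishing asymptotic data at $t=+\infty$, since both $g$ and $\tilde{g}$ converge to $f_\infty$. The right-hand side is precisely of the form estimated in the proof of Proposition \ref{ProdiffVla}, with $F^n-F^{n-1}$ replaced by $F-H$ and $g_n$ replaced by $\tilde{g}$. Since \eqref{Un:uniq} gives the analog of the bound \eqref{eq:gBound} for $\tilde{g}$ (and then, via the standard Sobolev embedding argument, pointwise bounds for $\widehat{Z}^\kappa_\infty \tilde{g}$ up to order $\leq 3$), as well as the analog of Proposition \ref{PropropMaxfield} for $H$, the commutation formula of Corollary \ref{CorCom} combined with the energy estimate of Proposition \ref{ProenergyL2Vlasov} yields, exactly as in Proposition \ref{ProdiffVla},
\[
\mathbf{E}^{7,13}_1[g-\tilde{g}] \leq C_\Lambda \, \epsilon \, (T')^{-1/4}\Big(\sup_{t \geq T'}\mathcal{E}^K[F-H](t) + \sup_{t \geq T'}\mathcal{E}^{K,1}\big[\nabla_{t,x}(F-H)\big](t)\Big).
\]

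For the Maxwell side, $F-H$ solves the Maxwell equations with source $J(f)-J(h)$ and with identically vanishing radiation field along $\mathcal{I}^+$ (since both $F$ and $H$ admit $\underline{\alpha}^\infty$ as radiation field). Following the scheme of Propositions \ref{Proestivertex}--\ref{ProGSS} and Section \ref{subsubsecmachin}, the $L^2_x$ control of $\int_v q(\omega,v)(f-h)\,\mathrm{d}v$ in terms of $\mathbf{E}^{7,13}_1[g-\tilde{g}]$ comes from Proposition \ref{Protechfordecay} applied to the difference, while the Glassey–Strauss decomposition of $\nabla_{\partial_{x^k}}G^{\tau_0}$ (now with $G^{\tau_0}$ generated by the truncated source $J(f)-J(h)$) produces the bound
\[
\sup_{t \geq T'}\mathcal{E}^K[F-H](t)+\sup_{t \geq T'}\mathcal{E}^{K,1}\big[\nabla_{t,x}(F-H)\big](t) \leq C_\Lambda \, \epsilon \,\Lambda^2 (T')^{-1/4}\, \mathbf{E}^{7,13}_1[g-\tilde{g}],
\]
through Proposition \ref{ProscatMax} applied with vanishing radiation field and $a \in \{0,1\}$. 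Combining the two estimates gives $\delta(T') \leq \big(C_\Lambda^2\,\epsilon^2\,\Lambda^2 (T')^{-1/2}\big)\,\delta(T')$, which for $T'$ large enough implies $\delta(T')=0$, hence $(h,H)=(f,F)$ on $[T',+\infty[$.

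The main obstacle I anticipate is verifying that the a priori bounds used throughout Sections \ref{SecVlasov}--\ref{SecMax} (most notably Proposition \ref{PropropMaxfield}, Corollaries \ref{Corrho3}--\ref{Corrho2}, and the top-order Glassey--Strauss estimates) are genuinely consequences of \eqref{Un:uniq} alone, without having to invoke the functional spaces $\mathbb{V}^{B_\Lambda,\epsilon}_N$ and $\mathbb{M}^{D,\Lambda}_N$. The point is that \eqref{Un:uniq} provides exactly the $\mathcal{E}^K_7$ and $\mathcal{E}^{K,1}$ bounds on $H-F^{\mathrm{asymp}}[f_\infty]$ and the weighted $L^2_{z,v}$ bound on $\tilde{g}$ at order $8$ needed to rerun the pointwise and $L^2$ decay arguments at the moderate order ($|\beta| \leq 1$ for the Vlasov difference, first-order derivatives only for the Maxwell difference) relevant to the contraction. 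No higher-order propagation is required, and local well-posedness for the Vlasov–Maxwell system then extends the identity from $[T',+\infty[$ back to $[T,+\infty[$.
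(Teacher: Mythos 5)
Your proposal is correct and follows essentially the same route as the paper: the paper's proof notes that \eqref{Un:uniq} places $(h,H)$ in $\mathbb{V}^{\cdot,\epsilon}_8\times\mathbb{M}^{D,\Lambda'}_8$ for some $\Lambda'\geq\Lambda$, picks $T'$ so that $\epsilon\log^{-1}(T')C_{\Lambda'}\leq\varepsilon_0$, and then applies Propositions \ref{ProdiffVla} and \ref{Prodiffelec} directly to the pair $(h,H)$, $(f,F)$ (with a slight abuse of notation) to obtain the same self-improving inequality you derive. The only cosmetic difference is that your contraction constant should be allowed to depend on the possibly larger constant $\Lambda'$ controlling $H$ rather than on $\Lambda$ itself, which is harmless since $T'$ may depend on it.
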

\begin{proof}
Let $\mathbf{h}(t,z,v):=h(t,z+t\widehat{v}+\C_{t,v},v)$. By assumption, there exists $\Lambda' \geq 1$ such that
\begin{align*}
\mathbf{E}_8^{7,13}[\mathbf{h}]+ \sup_{t > T } \mathcal{E}^K \big[ H-F^{\mathrm{asymp}}[f_\infty] \big](t)+\sup_{|\gamma|=7} \, \sup_{t \geq T} \mathcal{E}^{K,1}\big[\nabla_{t,x} \mathcal{L}_{Z^\gamma}\big(H-F^{\mathrm{asymp}}[f_\infty] \big) \big](t) &\leq D \Lambda',
 \end{align*}
that is $F \in \mathbb{M}^{D,\Lambda'}_8$. We can assume, without loss of generality, that $\Lambda \leq \Lambda'$. Consider now $T' \geq T$ such that $\epsilon \log^{-1}(T')C_{\Lambda'} \leq \varepsilon_0$, where the constants $C_{\Lambda'}$ and $\varepsilon_0$ are given by Lemma \ref{LemCauchyseq}. Applying Proposition \ref{ProdiffVla} to, with a slight abuse of notations,
$$ (f_{n+1},F^{n})=(h,H), \qquad (f_n,F^{n-1})=(f,F), \qquad T=T', \qquad \Lambda=\Lambda',$$
and Proposition \ref{Prodiffelec} to $(f_{n+1},F^{n+1})=(h,H)$ and $(f_n,F^{n})=(f,F)$, we get $(h,H)=(f,F)$ on $[T',+\infty[$. 
\end{proof}

\section{Improved velocity decay assumptions on $f_\infty$ when the regularity order $N$ is large}\label{Secweakerassump}

We explain here how to get rid of the extra hypothesis \eqref{eq:extracondi}, which is relevant as soon as $N \geq 9$ but only significant for $N \gg 1$. This assumption \eqref{eq:extracondi} ensures that all the derivatives of $F^{\mathrm{asymp}}[f_\infty]$ are well-defined up to order $N$. 
\subsection{Commutation properties of $\mathbb{K}$ with $F^{\mathrm{asymp}}[\cdot]$}
The idea, if $\mathcal{L}_{Z^\gamma}F^{\mathrm{asymp}}[f_\infty]$ cannot be controlled sufficiently well, consists in considering $F^{\mathrm{asymp}}[\widehat{Z}^\gamma_\infty f_\infty]$, which requires less decay in $v$ on $f_\infty$ in order to be well-defined. To check it, compare $\overline{\mathbb{E}}_{|\gamma|+1}[f_\infty]$ with $\overline{\mathbb{E}}_1[\widehat{Z}^\gamma_\infty f_\infty]$. We then estimate the difference of this two fields.
\begin{Pro}\label{ProcommuFasympZ}
Let $1 \leq |\gamma| \leq N$ such that $\gamma_T=0$. If $\overline{\mathbb{E}}_{N+1}[f_\infty] < +\infty$, we have
$$ \forall \, (t,x) \in [t_0,+\infty[ \times \R^3, \qquad \big|\mathcal{L}_{Z^\gamma}\big( F^{\mathrm{asymp}} \big[  f_\infty \big] \big)- F^{\mathrm{asymp}} \big[ \widehat{Z}^\gamma_\infty f_\infty \big]-\mathcal{L}_{Z^\gamma} ( \, \overline{F} \,) \big|(t,x) \lesssim \frac{\overline{E}_{N+1}[f_\infty]}{\langle t+|x| \rangle \, \langle t-|x| \rangle^2}.$$
\end{Pro}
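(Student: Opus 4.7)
The strategy is to show that both $\mathcal{L}_{Z^\gamma}(F^{\mathrm{asymp}}[f_\infty])$ and $F^{\mathrm{asymp}}[\widehat{Z}^\gamma_\infty f_\infty]$ solve the Maxwell equations with the \emph{same} source term, so that their difference is a vacuum Maxwell field whose long-range tail is encoded by the Coulomb piece $\mathcal{L}_{Z^\gamma} \overline{F}$. Indeed, since $\gamma_T = 0$, Proposition~\ref{ProComMax} yields $\nabla^\mu \mathcal{L}_{Z^\gamma}(F^{\mathrm{asymp}}[f_\infty])_{\mu\nu} = J^{\mathrm{asymp}}[\widehat{Z}^\gamma_\infty f_\infty]_\nu$, which coincides with $\nabla^\mu F^{\mathrm{asymp}}[\widehat{Z}^\gamma_\infty f_\infty]_{\mu\nu}$ by Definition~\ref{DefJasymp}; the magnetic parts similarly agree.

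Using the decomposition of Definition~\ref{Defdecompasymp}, I would write $\mathcal{L}_{Z^\gamma}(F^{\mathrm{asymp}}[f_\infty]) = F^{\gamma,\mathrm{hom}} + F^{\gamma,\mathrm{inh}}$ and $F^{\mathrm{asymp}}[\widehat{Z}^\gamma_\infty f_\infty] = \widetilde{F}^{\mathrm{hom}}_\gamma + \widetilde{F}^{\mathrm{inh}}_\gamma$. The two inhomogeneous pieces solve the same wave equation $\Box(\cdot)_{\mu\nu} = \partial_{x^\nu} J^{\mathrm{asymp}}[\widehat{Z}^\gamma_\infty f_\infty]_\mu - \partial_{x^\mu} J^{\mathrm{asymp}}[\widehat{Z}^\gamma_\infty f_\infty]_\nu$ with vanishing Cauchy data at $t_0$, hence $F^{\gamma,\mathrm{inh}} = \widetilde{F}^{\mathrm{inh}}_\gamma$. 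The problem reduces to controlling $F^{\gamma,\mathrm{hom}} - \widetilde{F}^{\mathrm{hom}}_\gamma - \mathcal{L}_{Z^\gamma}\overline{F}$. I would then decompose this difference through the smoothed point-charge potential $\widetilde{F}$ of Proposition~\ref{Propurechargetilde} as
\[
 F^{\gamma,\mathrm{hom}} - \widetilde{F}^{\mathrm{hom}}_\gamma - \mathcal{L}_{Z^\gamma}\overline{F} = \bigl(F^{\gamma,\mathrm{hom}} - \mathcal{L}_{Z^\gamma} \widetilde{F}\bigr) + \mathcal{L}_{Z^\gamma}\bigl(\widetilde{F} - \overline{F}\bigr) - \widetilde{F}^{\mathrm{hom}}_\gamma
\]
and bound each piece separately: the first is handled by estimate~\eqref{eq:estihomFtilde} from the proof of Proposition~\ref{Prohompart}, the second by the pointwise control in Proposition~\ref{Propurechargetilde} (noting that $\mathds{1}_{1\leq|x|-t\leq 2} \lesssim \langle t-|x|\rangle^{-2}$ and that the total charge $Q$ is controlled by $\overline{\mathbb{E}}_0[f_\infty]$).

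For the remaining term $\widetilde{F}^{\mathrm{hom}}_\gamma$, I would apply Proposition~\ref{Prohompart} with $f_\infty$ replaced by $\widehat{Z}^\gamma_\infty f_\infty$; the key point is that the total charge associated to $\widehat{Z}^\gamma_\infty f_\infty$ vanishes, so the bad $\langle t+|x|\rangle^{-2}\mathds{1}_{|x|-t\geq 1}$ contribution drops out and only the desired $\langle t+|x|\rangle^{-1}\langle t-|x|\rangle^{-2}$ bound remains. To verify the vanishing of $Q' := -\int \widehat{Z}^\gamma_\infty f_\infty \, dz\, dv$, it is enough by induction to check it for one homogeneous factor $\widehat{Z}_\infty$ acting on an arbitrary function $h(z,v)$: for $\widehat{\Omega}_{ij}$ and $\widehat{S}$ this is immediate by integration by parts in $z$ (and $v$), while for $\widehat{\Omega}_{0k}^\infty = -z^k\widehat{v}\cdot\nabla_z + v^0\partial_{v^k}$ (applied to a $t$-independent function) two integrations by parts produce $\int \widehat{v}^k h\,dzdv - \int \widehat{v}^k h\, dzdv = 0$, using $\partial_{v^k}v^0 = \widehat{v}^k$. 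The main technical obstacle is this last identification of $F^{\gamma,\mathrm{inh}}$ with $\widetilde{F}^{\mathrm{inh}}_\gamma$ together with the vanishing-charge computation — once these are in place, the three pointwise bounds follow directly from the tools already developed in Section~\ref{SecMaxasymp}, and summing them gives the desired estimate with the weight $\overline{\mathbb{E}}_{N+1}[f_\infty]$, which dominates $\overline{\mathbb{E}}_1[\widehat{Z}^\gamma_\infty f_\infty]$ thanks to the extra derivatives and polynomial weights built into $\widehat{Z}^\gamma_\infty$.
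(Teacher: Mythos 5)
Your proof is correct and follows the same route as the paper. The paper's argument is deliberately terse (it records only the two bullet points — equality of inhomogeneous parts, and control of the homogeneous difference via the analysis in Section~\ref{subsubsechompar} and \eqref{eq:estihomFtilde} — plus the observation $Q^\gamma = 0$); your proposal fills in precisely the content that is being referenced, namely the splitting of the homogeneous difference through $\widetilde{F}$ and the application of Proposition~\ref{Prohompart} to $\widehat{Z}^\gamma_\infty f_\infty$ with vanishing charge so that the $\langle t+|x|\rangle^{-2}\mathds{1}_{|x|-t\geq 1}$ term drops out.
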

\begin{proof}
This estimate is related to the asymptotic expansion of their initial data. More precisely, we have
$$ \forall \, 1 \leq |\gamma| \leq N+1, \qquad \qquad  Q^\gamma:=\int_{\R^3_z} \int_{\R^3_v} \widehat{Z}^\gamma_\infty f_\infty (z,v) \dr z \dr v =0.$$
These identities are obtained by integration by parts, for $|\gamma|=1$, and a direct induction for the general case. Thus, the stated estimate ensues from the following observations.
\begin{itemize}
\item The inhomogeneous part, according to Definition \ref{Defdecompasymp}, of $F^{\mathrm{asymp}}[\widehat{Z}^\gamma_\infty f_\infty]$ and $\mathcal{L}_{Z^\gamma}F^{\mathrm{asymp}} [  f_\infty ]$ are equal.
\item Up to terms decaying as $\overline{E}_{N_0+1}[f_\infty] \, \langle t+r \rangle^{-1} \, \langle t-r \rangle^{-2}$, the difference of their homogeneous part is equal to $\mathcal{L}_{Z^\gamma}\overline{F}$. This is a consequence of the analysis carried out in Section \ref{subsubsechompar} (see in particular \eqref{eq:estihomFtilde}).
\end{itemize}
\end{proof}

Thus, although $ F^{\mathrm{asymp}} \big[ \widehat{Z}^\gamma_\infty f_\infty \big]$ captures the asymptotic behavior of $\mathcal{L}_{Z^\gamma} F^{\mathrm{asymp}} [  f_\infty ]$ in the interior of the light cone $|x|<t$, and then along timelike trajectories, it fails to do so in the exterior region. Fortunately, this failure can be quantified by an electromagnetic field that we know well, $\mathcal{L}_{Z^\gamma} \overline{F}$, which is a derivative of the pure charge part of $F^{\mathrm{asymp}}[f_\infty]$.

\subsection{Alternative functional space for the electromagnetic fields}

Motivated by the previous discussion, we introduce the following sets of $2$-forms.
\begin{Def}
Let $D \geq 1$ and $\widetilde{\mathbb{M}}_N^{D,\Lambda}$ be the set of the $2$-forms $F$ defined on $[T,+\infty[ \times \R^3$ and verifying
\begin{alignat}{2}
 \sup_{t \geq T} \mathcal{E}^K_2 \big[ F-F^{\mathrm{asymp}}[f_\infty] \big](t) & \leq D \Lambda ,  \label{kevat:condi1} \\
\sup_{t \geq T} \mathcal{E}^K_2 \big[ \mathcal{L}_{Z^\gamma} \big( F-\overline{F} \big)-F^{\mathrm{asymp}}[ Z_\infty^\gamma f_\infty ]  \big](t) & \leq D \Lambda , \qquad \qquad 1 \leq |\gamma| \leq N-3 \label{kevat:condi2} \\
\hspace{-16mm} \sup_{t \geq T} \mathcal{E}^{K,1} \big[ \nabla_{t,x} \mathcal{L}_{Z^\gamma} \big( F-\overline{F} \big)-\nabla_{t,x}F^{\mathrm{asymp}}[ Z_\infty^\gamma f_\infty ] \big](t) & \leq D \Lambda , \qquad \qquad |\gamma|=N-1. \label{kevat:condi3}
 \end{alignat}
\end{Def}

Then the following remarks are importants.
\begin{itemize}
\item Controlling two derivatives of $ \mathcal{L}_{Z^\gamma} ( F-\overline{F} \,)-F^{\mathrm{asymp}}[ Z_\infty^\gamma f_\infty ] $ is important in order to derive pointwise decay estimates.
\item The assumptions of Theorem \ref{Theo1} satisfied by $f_\infty$ imply that $\overline{\mathbb{E}}_3[Z_\infty^\gamma f_\infty] \lesssim \epsilon$ for all $|\gamma| \leq N-3$, so that the results of Section \ref{SecMaxasymp} can be applied to the distribution function $Z_\infty^\gamma f_\infty$ and for $N_0=2$. 
\item Similarly, $\overline{\mathbb{E}}_2[Z_\infty^\xi f_\infty] \leq \epsilon$ for all $|\xi| \leq N-1$. In particular, we can apply Proposition \ref{Proasymp} as well as the first part of Proposition \ref{Proradasymp} to $Z_\infty^\xi f_\infty$ and for $N_0=1$.
\item For all $F \in \widetilde{\mathbb{M}}_N^{D,\Lambda}$, the five estimates \eqref{eq:Mpoint}--\eqref{eq:ML2conv} hold. This can be obtained by using the estimates satisfied by $\overline{F}$ and its derivatives, which are given by Proposition \ref{Propurecharge}, and by following the proof of Proposition \ref{PropropMaxfield}. 
\end{itemize}

\subsection{Modifications in the proof of Theorem \ref{Theo1}}

The extra assumption \eqref{eq:extracondi} was not directly exploited in Section \ref{SecVlasov}, where the analysis of the Vlasov equation were carried out. Similarly, since we proved that $(f_n,F^n)_{n \geq 1}$ is a Cauchy sequence in a space of low regularity, we did not use \eqref{eq:extracondi} neither\footnote{More precisely, we only used it through the estimates \eqref{eq:Mpoint}--\eqref{eq:ML2conv}, which hold as well in this setting.} in Section \ref{SecCauchy}. In addition to Proposition \ref{PropropMaxfield}, already mentioned, the analysis of Section \ref{SecMax} has to be modified as well. More precisely, we need to prove that $(F^n)_{n \geq 1}$ is a bounded sequence in $\widetilde{\mathbb{M}}_N^{D,\Lambda}$. For this, we will use the next result which ensues from Proposition \ref{Prorad}.
\begin{Lem}\label{LemradFbar}
For any multi-index $\gamma$, the radiation field of $\mathcal{L}_{Z^\gamma} \overline{F}$ along future null infinity vanishes identically. Moreover, for any $0 \leq \mu, \, \nu \leq 3$, $r^2 \mathcal{L}_{Z^\gamma} ( \,\overline{F} \,)_{\mu \nu}(r+u,r\omega)$ converges as $r \to +\infty$ in $L^\infty_{\mathrm{loc}}(\R_u \times \mathbb{S}^2_\omega)$.
\end{Lem}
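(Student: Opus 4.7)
The plan is essentially to read both claims off the structural decomposition already provided by Proposition \ref{Prorad}, combined with the pointwise null-component bounds of Proposition \ref{Propurecharge}. Neither claim requires any new estimate; the work is just to track uniformity as $r\to+\infty$.

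For the vanishing of the radiation field, I would use the last part of Proposition \ref{Propurecharge}, which gives
$$\bigl|\underline{\alpha}(\mathcal{L}_{Z^\gamma}\overline{F})\bigr|(t,x)\leq C_\gamma\,\frac{Q}{\langle t+|x|\rangle^{2}}\mathds{1}_{|x|\geq t+1}.$$
Evaluating at $(t,x)=(r+u,r\omega)$ gives $r|\underline{\alpha}(\mathcal{L}_{Z^\gamma}\overline{F})|(r+u,r\omega)\lesssim Qr\langle 2r+u\rangle^{-2}\mathds{1}_{u\leq -1}$, which tends to $0$ as $r\to+\infty$ uniformly in $(u,\omega)$ on any bounded set in $\R_u\times\mathbb{S}^2_\omega$. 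In particular $r\underline{\alpha}(\mathcal{L}_{Z^\gamma}\overline{F})(r+u,r\omega)\rightharpoonup 0$ in $\mathcal{D}'(\R_u\times\mathbb{S}^2_\omega)$, so in the sense of Definition \ref{Defrad} the radiation field of $\mathcal{L}_{Z^\gamma}\overline{F}$ is identically zero.

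For the second statement, I would directly invoke Proposition \ref{Prorad}, which writes each Cartesian component $\mathcal{L}_{Z^\gamma}(\overline{F})_{\mu\nu}(t,x)$ as a finite linear combination of terms of the form
$$\frac{Q}{|x|^{2+\gamma_T}}\,P\!\left(\frac{x}{|x|},\frac{t}{|x|}\right)(t-|x|)^{p}\chi^{(q)}(t-|x|),\qquad 0\leq p\leq q,$$
with $P$ polynomial. Plugging in $(t,x)=(r+u,r\omega)$ and multiplying by $r^{2}$ yields a finite sum of terms
$$\frac{Q}{r^{\gamma_T}}\,P\!\left(\omega,1+\tfrac{u}{r}\right) u^{p}\chi^{(q)}(u).$$
For each such term: if $\gamma_T\geq 1$ the prefactor $r^{-\gamma_T}$ kills it uniformly on compact subsets of $\R_u\times\mathbb{S}^2_\omega$ (since $P$ and $\chi^{(q)}$ are bounded on compact sets); if $\gamma_T=0$ then by continuity of $P$ the expression converges to $QP(\omega,1)u^{p}\chi^{(q)}(u)$ uniformly on compact subsets. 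Summing the finitely many terms gives the claimed $L^{\infty}_{\mathrm{loc}}$ convergence of $r^{2}\mathcal{L}_{Z^\gamma}(\overline{F})_{\mu\nu}(r+u,r\omega)$.

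There is no real obstacle here, since Proposition \ref{Prorad} has already done the structural work. The only point that needs a line of care is upgrading the pointwise convergence of Proposition \ref{Prorad} to uniform convergence on compact sets; this is immediate from the explicit form of each summand, since on $(u,\omega)\in K\times\mathbb{S}^2$ with $K$ compact the argument $1+u/r$ of $P$ converges to $1$ uniformly, and the factors $u^{p}\chi^{(q)}(u)$ are bounded. Consistency with the first claim is also manifest: the compact $u$-support of $\chi^{(q)}$ for $q\geq 1$ and the value $\chi(u)=0$ for $u\geq -1$ for $q=0$ ensure that the limiting profile is concentrated in $\{u\leq -1\}$, and along the null component $\underline{\alpha}$ this profile is $O(r^{-1})$, consistent with the vanishing radiation field.
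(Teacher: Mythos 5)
Your proof is correct and takes essentially the same route as the paper, which simply observes that the lemma ``ensues from Proposition \ref{Prorad}''; your elaboration of the uniform-on-compacts convergence is exactly the intended argument. The only cosmetic difference is that you invoke Proposition \ref{Propurecharge} for the vanishing of the radiation field, whereas it also follows directly from Proposition \ref{Prorad} (since $r^2\mathcal{L}_{Z^\gamma}(\overline{F})_{\mu\nu}$ bounded forces $r\underline{\alpha}(\mathcal{L}_{Z^\gamma}\overline{F})\to 0$); either way is fine.
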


By keeping the notations of Section \ref{SecMax}, we have to prove $F^{\mathrm{new}} \in \widetilde{\mathbb{M}}_N^{D,\Lambda}$ for $D$ chosen large enough and if $\epsilon \log^{-1}T$ small enough, which is a stronger statement than $F^{\mathrm{new}} \in \mathbb{M}_N^{D,\Lambda}$. Note now that the condition \eqref{kevat:condi1} holds by the results of Section \ref{Subseclowerorder}. Note that Proposition \ref{Propurecharge} and Lemma \ref{improderiv} imply, for any multi-index $\kappa$ such that $\kappa_T \geq 1$,
$$ \forall \, t \in \R_+ , \qquad \qquad \mathcal{E}^{K,1} \big[ \mathcal{L}_{Z^\kappa} \overline{F}  \big]  (t) \lesssim \epsilon.$$
Since $F^{\mathrm{asymp}}\big[ \widehat{Z}^\kappa_\infty f_\infty \big]=0$ for such multi-indices $\kappa$, we obtain from \eqref{kevatalenn:15} and \eqref{kevatalenn:16} that \eqref{kevat:condi2}--\eqref{kevat:condi3} hold for any $\gamma$ verifying $\gamma_T \geq 1$. Fix now $1 \leq |\gamma| \leq N-3$ as well as $|\xi|=N-1$ such that $\gamma_T=\xi_T=0$ and let us prove that \eqref{kevat:condi2} as well as \eqref{kevat:condi3} hold for them. Recall the fields $G^{\gamma}$ and $G^{\xi,k}$, which were introduced in \eqref{eq:estiGgamma} and \eqref{eq:estiGgammabis}. Compared with the analysis performed in Section \ref{SecMax}, we will need to consider slightly different decompositions of the derivatives of $F^{\mathrm{new}}$. More precisely, let us prove that
\begin{enumerate}
\item for any $|\kappa| \leq 2$, there exists an electromagnetic fields $\overline{G}^{\kappa,\gamma}$ such that
$$ \mathcal{L}_{Z^\kappa Z^\gamma}F^{\mathrm{new}}\! = \mathcal{L}_{Z^\kappa}\big( F^{\mathrm{asym}} \big[ \widehat{Z}^\gamma_\infty f_\infty]+\mathcal{L}_{Z^\gamma}  \overline{F} \, \big)+\mathcal{L}_{Z^\kappa } G^{\gamma,\mathrm{vac}}+G^{\kappa\gamma} +\overline{G}^{\kappa,\gamma}, \qquad  \sup_{t \geq T} \mathcal{E}^K \big[\, \overline{G}^{\kappa,\gamma} \big](t) \lesssim  \epsilon T^{-\frac{1}{2}} ,$$
where, for any $|\beta| \leq N-1$ and $1 \leq k \leq 3$, $G^{\beta,\mathrm{vac}}$ is the unique solution to the vacuum Maxwell equations with radiation field $\underline{\alpha}^{\infty}_\beta-\underline{\alpha}^{\mathrm{asymp}}_\beta \big[ \widehat{Z}^\beta_\infty f_\infty]$. Moreover, $G^{\kappa \gamma}=G^{\zeta}$, where $Z^\kappa Z^\gamma = Z^\zeta$.
\item For any $|\xi|=N-1$, there exists $\overline{G}^{\xi,k}$ such that
$$\nabla_{\partial_{x^k}}\mathcal{L}_{Z^\xi} F^{\mathrm{new}} \! =\nabla_{\partial_{x^k}}  \big(F^{\mathrm{asymp}}\big[ \widehat{Z}^\xi_\infty f_\infty \big]+\mathcal{L}_{Z^\xi}  \overline{F} \, \big)+\nabla_{\partial_{x^k}}G^{\xi,\mathrm{vac}}+G^{\xi,k}+\overline{G}^{\xi,k}\!, \quad \; \sup_{t \geq T} \mathcal{E}^K \big[ \, \overline{G}^{\xi,k} \big](t) \lesssim \epsilon T^{-\frac{1}{2}} .$$
\end{enumerate}
Note then that according to Theorem \ref{Thscat} and Lemma \ref{improderiv},
$$ \forall \, t \geq T, \qquad \mathcal{E}^K \big[\mathcal{L}_{Z^\kappa } G^{\gamma,\mathrm{vac}} \big](t) +\mathcal{E}^{K,1} \big[\nabla_{\partial_{x^k}} G^{\xi,\mathrm{vac}} \big](t) \lesssim \Lambda, \qquad \qquad |\kappa| \leq 2.$$
If $1.$ and $2.$ hold true, \eqref{kevat:condi2}--\eqref{kevat:condi3} would be verified in view of the estimates derived for $G^{\gamma}$ and $G^{\xi,k}$ in Section \ref{SecMax}. For simplicity, we denote by $\overline{J}^\beta$ the source term of Maxwell equations verified by $\mathcal{L}_{Z^\beta} \overline{F}$. Let us justify that, for any multi-index $\beta$, we can define $\overline{G}^{\beta}$ as the unique finite energy electromagnetic field, with identically zero radiation fields, such that
\begin{equation*}
 \nabla^\mu \overline{G}_{\mu \nu}^{\beta}= -\overline{J}^{\, \beta}_\nu, \qquad \qquad \nabla^\mu {}^* \! \overline{G}_{\mu \nu}^{\beta} =0 .
\end{equation*}
Moreover, it turns out that $\mathcal{E}^K \big[\overline{G}^\beta \big] \lesssim \epsilon T^{-1/2}$ on $\R_+$. For this, we use Proposition \ref{ProscatMax}, applied for $\underline{\alpha}^{\mathcal{I}^+}=0$, $\delta=1/2$ and $B_{\mathrm{source}}=\epsilon T^{-1/2}$, together with the next result.
\begin{Lem}
For any $|\kappa| \leq N$, $a \in \R_+$ and all $n \in \mathbb{N}^*$, we have, uniformly in $a$,
$$ \forall \, t \in [0,n], \qquad \mathcal{E}^{K,a} \big[ S( \, \overline{J}^\kappa\!,n) \big](t) \lesssim \epsilon \, \langle t \rangle^{-1},$$
where $S( \, \overline{J}^\kappa\!,n)$ is introduced in Definition \ref{DefSJn}.
\end{Lem}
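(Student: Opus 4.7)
The plan is to apply the weighted Morawetz energy estimate of Proposition~\ref{ProenergyforscatMax} to $S := S(\overline{J}^\kappa,n)$ on the slab $[t,n]$. Since $S$ and its derivatives vanish at $\tau=n$ and $|\chi(\tau/n)| \leq 1$, setting $\mathcal{A}(t) := \mathcal{E}^{K,a}[S](t) + \sup_{u \in \R} \mathcal{F}^{K,a}[S]_t^n(u)$ leaves us with the task of bounding
\[
\mathcal{A}(t) \lesssim \int_t^n \int_{\R^3} |\max(\tau-|x|,1)|^{2a} \big| K^\lambda S_{\lambda \nu} \overline{J}^{\kappa,\nu} \big|(\tau,x) \dr x \dr \tau.
\]
The crucial observation yielding uniformity in $a$ is that, by Proposition~\ref{Propurecharge}, $\overline{J}^\kappa$ is supported in $\{1 \leq |x|-\tau \leq 2\}$, on which $\tau-|x| \leq -1$ and therefore $|\max(\tau-|x|,1)|^{2a} \equiv 1$; the same reasoning shows that on any outgoing cone $C_u$ meeting the support, the flux weight $|\max(u,1)|^{2a}$ is also identically one.

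Next, I would expand $K^\lambda S_{\lambda\nu}\overline{J}^{\kappa,\nu}$ in the null frame as in Lemma~\ref{Lemforenergy}, producing four source contributions $\langle \tau-r\rangle^2|\rho||J^\kappa_{\underline L}|$, $\langle \tau+r\rangle^2|\rho||J^\kappa_L|$, $\langle \tau-r\rangle^2|\underline\alpha||\slashed J^\kappa|$, and $\langle \tau+r\rangle^2|\alpha||\slashed J^\kappa|$. On the support, Proposition~\ref{Propurecharge} provides $|J^\kappa_{\underline L}| \lesssim \sqrt\epsilon/\langle \tau+r\rangle^2$, $|J^\kappa_L| \lesssim \sqrt\epsilon/\langle \tau+r\rangle^4$ and $|\slashed J^\kappa| \lesssim \sqrt\epsilon/\langle \tau+r\rangle^3$ (with $|Q| \lesssim \sqrt\epsilon$ following from the $L^\infty$ decay assumption on $f_\infty$). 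For the first and fourth terms, whose geometric weights match the $K$-flux $2\mathbb{T}[S]_{L\nu}K^\nu = \langle\underline u\rangle^2|\alpha|^2 + \langle u\rangle^2(|\rho|^2+|\sigma|^2)$ on outgoing cones, I would foliate the spacetime integral using $dx \dr \tau = (1/\sqrt{2})\dr \mu_{C_u}\dr u$ and restrict $u$ to $[-2,-1]$. Cauchy--Schwarz on each $C_u^{t,n}$ together with the direct computations
\[
\int_{C_u^{t,n}}\langle \underline u\rangle^2 |\slashed J^\kappa|^2 \dr \mu_{C_u} \lesssim \epsilon \int_t^n \underline u^{-2}\dr \underline u \lesssim \frac{\epsilon}{\langle t\rangle}, \qquad \int_{C_u^{t,n}}\langle u\rangle^2 |J^\kappa_{\underline L}|^2 \dr\mu_{C_u} \lesssim \frac{\epsilon}{\langle t\rangle},
\]
controls these two critical contributions by $\lesssim \sqrt{\mathcal{A}(t)}\sqrt{\epsilon/\langle t\rangle}$.

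For the remaining two pairings, whose weights do not line up with the outgoing-cone flux, I would revert to constant-$\tau$ Cauchy--Schwarz using the full norm $\mathcal{E}^{K,a}[S](\tau)$, which via $4\mathbb{T}[S]_{0\nu}K^\nu$ controls both $\langle\tau+r\rangle^2\rho^2$ and $\langle\tau-r\rangle^2|\underline\alpha|^2$. The faster decay of $J^\kappa_L$ and the factor $\langle\tau-r\rangle^2\sim 1$ on the shell of area $\sim \tau^2$ give $\int\langle\tau+r\rangle^2|J^\kappa_L|^2 \dr x + \int\langle\tau-r\rangle^2|\slashed J^\kappa|^2 \dr x \lesssim \epsilon/\tau^4$, so these terms contribute at most $\int_t^n \sqrt{\mathcal{A}(\tau)} \sqrt\epsilon/\tau^2 \dr\tau \lesssim \sqrt{\sup_{\tau \geq t}\mathcal{A}(\tau)}\sqrt\epsilon/\langle t\rangle$, which is strictly lower order. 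Introducing $\tilde M(t) := \sup_{\tau \in [t,n]} \mathcal{A}(\tau)$, which is non-increasing, the combined estimate yields $\tilde M(t) \leq C\sqrt{\tilde M(t)}\sqrt{\epsilon/\langle t\rangle}$, hence $\mathcal{A}(t) \leq \tilde M(t) \lesssim \epsilon/\langle t\rangle$ uniformly in $a \geq 0$ and $n$. The main obstacle is precisely this mismatch between the null components carried by the outgoing $K$-flux and those appearing in the expansion of $K \cdot S \cdot \overline{J}^\kappa$: only the $\rho$-$J^\kappa_{\underline L}$ and $\alpha$-$\slashed J^\kappa$ pairings can be absorbed by the null-cone method, and one must verify that the two leftover pairings are genuinely subleading -- a fact resting on the stronger decay of $J^\kappa_L$ and on the smallness of $\langle \tau-r\rangle$ on the shell.
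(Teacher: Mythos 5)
The proposal is correct and follows essentially the same route as the paper: the $K$-weighted energy inequality of Proposition~\ref{ProenergyforscatMax}, the null-frame expansion of $K^\mu H_{\mu\nu}\overline{J}^{\kappa,\nu}$, the bounds on the null components of $\overline{J}^\kappa$ from Proposition~\ref{Propurecharge}, Cauchy--Schwarz on constant-$\tau$ slices for the $\rho$ and $\underline\alpha$ terms and on outgoing cones for the $\alpha$ term, then a self-improving inequality. The only cosmetic differences are that you also treat the $\rho$--$\overline{J}^\kappa_{\underline L}$ pairing by the cone method whereas the paper groups it with the slice terms, and the paper's $a$-uniformity argument is slightly more direct (finite speed of propagation gives that $H$ itself vanishes for $t-|x|>-1$, so $\mathcal{E}^{K,a}[H]=\mathcal{E}^{K}[H]$ identically); both variants are sound.
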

\begin{proof}
Let $n \geq 1$, $|\kappa| \leq N$, $H:= S(\overline{J}^\kappa,n)$ and remark that we only need to consider the case $a=0$. Indeed, $H$ is supported in the exterior of the light cone by finite speed of propagation. Note then, according to Proposition \ref{ProenergyforscatMax} and since $H(n,\cdot)=0$,
$$\forall \, t \geq 0, \qquad \qquad \mathcal{E}^K [H](t)+\sup_{u \in \R} \mathcal{F}^K [H]_t^n(u) \leq 8 \int_{\tau=t}^n \int_{\R^3_x}  \big| K^{\mu} H_{\mu \nu} \,  \overline{J}^{\kappa,\nu}\big| (\tau,x) \dr x \dr \tau . $$
By expanding the integrand according to the null frame $(\underline{L},L,e_\theta,e_\varphi)$, we obtain, exactly as in \eqref{eq:expanullframeeee},
\begin{align*}
2\big| K^{\mu} H_{\mu \nu} \, \overline{J}^{\kappa,\nu}\big| & \leq \langle t-r \rangle^{2} |\rho (H)| \big|\overline{J}^{\kappa}_{\underline{L}} \big|+\langle t+r \rangle^{2} |\rho (H)| \big|\overline{J}^{\kappa}_{L} \big|+\langle t-r \rangle^{2} |\underline{\alpha} (H)| \big| \slashed{\overline{J}}^{\kappa} \big|+\langle t+r \rangle^{2} |\alpha (H)| \big| \slashed{\overline{J}}^{\kappa} \big|,
\end{align*}
where $ \slashed{\overline{J}}^{\kappa} $ is the angular part of $\overline{J}^{\kappa}$. According to Lemma \ref{LemComMaxwell}, $\overline{J}^{\kappa}$ is a linear combination of $\mathcal{L}_{Z^\beta}\overline{J}$. Thus, by Proposition \ref{Propurecharge},
\begin{align*}
\big| K^{\mu} H_{\mu \nu} \, \overline{J}^{\kappa,\nu}\big| & \lesssim  \sqrt{\epsilon}\, \langle t+r \rangle^{-2} |\rho (H)| \mathds{1}_{1 \leq r-t \leq 2} +\sqrt{\epsilon} \, \langle t+r \rangle^{-3} |\underline{\alpha} (H)|   \mathds{1}_{1 \leq r-t \leq 2} +\sqrt{\epsilon} \, \langle t+r \rangle^{-1} |\alpha (H)|   \mathds{1}_{1 \leq r-t \leq 2}.
\end{align*}
Now, by the Cauchy-Schwarz inequality in $x$, we have
\begin{align*}
 \int_{\tau=t}^n \int_{1 \leq |x| -t \leq 2}  \frac{|\rho (H)|(\tau,x)}{\langle \tau + |x| \rangle^2}   +\frac{|\underline{\alpha} (H)|(\tau,x)}{\langle \tau + |x| \rangle^3} \dr x \dr \tau & \lesssim  \sup_{[t,n]} \big| \mathcal{E}^K [H] \big|^{\frac{1}{2}} \int_{\tau=t}^n \bigg| \int_{| r-t| \leq 2} \frac{r^2 \,\dr r}{\langle t+r \rangle^6} \bigg|^{\frac{1}{2}} \dr \tau \\
 & \lesssim  \langle t \rangle^{-\frac{1}{2}} \sup_{[t,n]} \big| \mathcal{E}^K [H] \big|^{\frac{1}{2}} .
 \end{align*}
For the other term, we perform the change of variables $(u,\underline{u},\omega)=(\tau-|x|,\tau+|x|,x/|x|)$ and we apply the Cauchy-Schwarz inequality in $(\underline{u},\omega)$. We obtain
\begin{align*}
\int_{\tau=t}^n \int_{1 \leq |x| -t \leq 2} \frac{|\alpha (H)|(\tau,x)}{\langle \tau + |x| \rangle} \dr x \dr \tau & \leq  \int_{u=-1}^{-2}  \int_{\underline{u}=2t-u}^{2n-u} \int_{\mathbb{S}^2_\omega}  \frac{|\alpha (H)|(u,\underline{u},\omega)}{\langle\underline{u} \rangle}  \, r^2 \dr \mu_{\mathbb{S}^2_\omega} \dr \underline{u} \dr u \\
& \lesssim \sup_{ \R} \big| \mathcal{F}^K [H]_t^n \big|^{\frac{1}{2}} \int_{u=-1}^{-2} \bigg| \int_{\underline{u}=2t-u}^{2n-u} \frac{r^2  \dr \underline{u} }{\langle\underline{u} \rangle^4}  \bigg|^{\frac{1}{2}} \dr u \lesssim \langle t \rangle^{-\frac{1}{2}} \sup_{ \R} \big| \mathcal{F}^K [H]_t^n \big|^{\frac{1}{2}}.
\end{align*}
We finally get the result by combining the previous inequalities.
\end{proof}

Then, we set $\overline{G }^{\kappa,\gamma}\!:= \overline{G}^{\kappa \gamma}$ and, for $\beta$ such that $\partial_{x^k} Z^\xi =Z^\beta$, $\overline{G}^{\xi,k}\!:= \overline{G}^\beta$. Since uniqueness holds for the asymptotic Cauchy problem for the Maxwell equations, the properties $1.$ and $2.$ hold, which concludes the proof of Theorem \ref{Theo1}.

\appendix

\section{Elliptic estimates}\label{secApp}

We prove here classical estimates for solutions to the Poisson equation, which are in particular relevant in order to control initial electromagnetic fields. 

\begin{Lem}\label{Leminidata}
Let $\psi : \R^3 \to \R$ be a sufficiently regular function and $\phi$ be the unique function in $H^1(\R^3)$ such that $\Delta \, \phi = \psi$. Then, 
$$ \int_{\R^3_x}  \left| \nabla \phi  (x) \right|^2 \dr x  \lesssim \int_{\R^3_x} \langle x \rangle^{\frac{5}{2}} |\psi (x)|^2 \dr x . $$
Moreover, if $ \int_{\R^3} \psi =0$, we have
$$ \int_{\R^3_x} \langle x \rangle^{\frac{5}{2}} \left| \nabla \phi  (x) \right|^2 \dr x  \lesssim \int_{\R^3_x} \langle x \rangle^{6} |\psi (x)|^2 \dr x, \qquad   \int_{\R^3_x} \langle x \rangle^{\frac{9}{2}} \left| \nabla^2 \phi  (x) \right|^2 \dr x  \lesssim \int_{\R^3_x} \langle x \rangle^6 |\psi (x)|^2 \dr x + \int_{\R^3_x} \langle x \rangle^{\frac{1}{2}} |\phi (x)|^2 \dr x .$$
as well as 
$$\sup_{x \in \R^3} \langle x \rangle^3  \left| \nabla \phi (x) \right| \lesssim \sup_{y \in \R^3} \, \langle y \rangle^{5} |\psi (y) |.$$
\end{Lem}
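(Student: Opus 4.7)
The plan is to handle the four bounds in two layers: the basic unweighted estimate will come from a Hardy–Littlewood–Sobolev argument on $|\nabla|^{-1}\psi$, while the three weighted bounds will arise from (i) a Bochner-type integration-by-parts identity that relates $\int w|\nabla^2\phi|^2$ and $\int w'|\nabla\phi|^2$ to $\int w|\psi|^2$ up to a $|\phi|^2$ remainder, and (ii) a direct pointwise analysis of the Newtonian potential with the monopole subtracted, which exploits $\int\psi=0$. For the first estimate, I would first write $\nabla\phi=-\mathcal{R}\,|\nabla|^{-1}\psi$ with $\mathcal{R}$ the vector of Riesz transforms, so that $\|\nabla\phi\|_{L^2}=\||\nabla|^{-1}\psi\|_{L^2}\lesssim\|\psi\|_{L^{6/5}}$ by HLS; Hölder's inequality with the pair $(5/3,5/2)$ then gives
$$\|\psi\|_{L^{6/5}}^{6/5} \;=\; \int \langle x\rangle^{3/2}|\psi|^{6/5}\cdot\langle x\rangle^{-3/2}\,\dr x \;\leq\; \Big(\int \langle x\rangle^{5/2}|\psi|^2\,\dr x\Big)^{3/5}\Big(\int \langle x\rangle^{-15/4}\,\dr x\Big)^{2/5},$$
whose last factor is finite since $15/4>3$.

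For the weighted bound on $\nabla^2\phi$, the plan is to exploit $\Delta\phi=\psi$ through a Bochner-type identity. Two integrations by parts in $\int w\,\partial_i\partial_j\phi\,\partial_i\partial_j\phi\,\dr x$ with $w=\langle x\rangle^{9/2}$ yield
$$\int w|\nabla^2\phi|^2 \;=\; \int w\psi^2 \;+\; \int \nabla w\cdot\nabla\phi\;\psi \;-\; \sum_{i,j}\int\partial_i w\,\partial_j\phi\,\partial_i\partial_j\phi.$$
Since $|\nabla w|\lesssim \langle x\rangle^{7/2}$, Cauchy–Schwarz with a small parameter absorbs the last term into the left-hand side at the cost of $\int\langle x\rangle^{5/2}|\nabla\phi|^2$, and the middle term is bounded similarly by $\int\langle x\rangle^{9/2}|\psi|^2 + \int\langle x\rangle^{5/2}|\nabla\phi|^2$. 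One integration by parts on $\int \langle x\rangle^{5/2}|\nabla\phi|^2$ separately yields
$$\int\langle x\rangle^{5/2}|\nabla\phi|^2 \;=\; -\int\langle x\rangle^{5/2}\phi\psi \;-\; \int\phi\,\nabla\langle x\rangle^{5/2}\cdot\nabla\phi \;\lesssim\; \int\langle x\rangle^{9/2}|\psi|^2 + \int\langle x\rangle^{1/2}|\phi|^2,$$
and together these two chains give the weighted $\nabla^2\phi$-estimate. Boundary terms vanish thanks to the decay of $\phi$ and $\nabla\phi$ provided by the Newtonian representation.

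The remaining two bounds exploit the zero-average hypothesis through the subtractions
$$\nabla\phi(x) \,=\, \frac{1}{4\pi}\!\int\!\bigg(\frac{x-y}{|x-y|^3}-\frac{x}{|x|^3}\bigg)\psi(y)\,\dr y,\quad \phi(x) \,=\, -\frac{1}{4\pi}\!\int\!\bigg(\frac{1}{|x-y|}-\frac{1}{|x|}\bigg)\psi(y)\,\dr y.$$
Splitting the $y$-integral at $|y|=|x|/2$, the bracketed kernels are respectively $O(|y|/|x|^3)$ and $O(|y|/|x|^2)$ in the inner region, while in the outer region the $\langle y\rangle^{-5}$ decay of $\psi$ (or its moment-weighted $L^2$ norm in the second case) handles the remaining mass. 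This yields $|\nabla\phi(x)|\lesssim \langle x\rangle^{-3}\sup_y\langle y\rangle^5|\psi(y)|$, which is the pointwise estimate, and an analogous argument combined with Cauchy–Schwarz gives $|\phi(x)|\lesssim \langle x\rangle^{-2}\,\|\langle\cdot\rangle^3\psi\|_{L^2}$ for $|x|$ large (with an $L^2$-based bound near the origin coming from HLS). Integrated against $\langle x\rangle^{1/2}$, this last estimate produces $\int\langle x\rangle^{1/2}|\phi|^2\lesssim \int\langle x\rangle^6|\psi|^2$, which inserted into the chain above eliminates the $|\phi|^2$ remainder and delivers the weighted $\nabla\phi$-estimate.

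The hard part will be the weighted $\phi$-bound $\int\langle x\rangle^{1/2}|\phi|^2\lesssim \int\langle x\rangle^6|\psi|^2$: without the cancellation $\int\psi=0$ the Newton potential decays only like $|x|^{-1}$ at infinity and the left-hand side actually diverges, which is reflected in the failure of the Stein–Weiss admissibility condition $\alpha<n/p'$ for this choice of exponents. The argument must carefully exploit the dipole-order asymptotics produced by the monopole subtraction, paying particular attention to the annular region $|x|/2\le|y|\le 2|x|$ where the crude kernel bound is insufficient and the decay of $\psi$ in the weighted $L^2$ norm must be used instead.
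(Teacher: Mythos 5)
Your proposal is correct, but it follows a genuinely different route from the paper in three of the four bounds. For the unweighted $L^2$ estimate the paper splits the Green kernel into $|z|^{-2}\mathds{1}_{|z|\leq 1}+|z|^{-2}\mathds{1}_{|z|>1}$ and applies Young's convolution inequality twice, whereas you go through the Riesz transform identity $\nabla\phi=-\mathcal{R}|\nabla|^{-1}\psi$ and Hardy--Littlewood--Sobolev, converting $\|\psi\|_{L^{6/5}}$ to the weighted $L^2$ norm by H\"older; both work and give the same exponent $5/2$. For the weighted Hessian bound the paper computes $\Delta(\langle x\rangle^{9/4}\phi)$ and invokes standard elliptic regularity, while you derive a Bochner-type identity
\begin{equation*}
\int w|\nabla^2\phi|^2 = \int w\psi^2 + \int \nabla w\cdot\nabla\phi\,\psi - \sum_{i,j}\int\partial_j w\,\partial_i\partial_j\phi\,\partial_i\phi,
\end{equation*}
with $w=\langle x\rangle^{9/2}$, and absorb the cross term. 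The integration-by-parts route is a clean alternative, though it requires the boundary terms at infinity to vanish, which you rightly note is furnished by the dipole decay coming from $\int\psi=0$ (for general $\psi$ with only $\int\langle x\rangle^6|\psi|^2<\infty$ one should make this rigorous by truncation). The biggest divergence is in the weighted $\nabla\phi$ bound: the paper proves it directly by the same Green-kernel decomposition used for the pointwise estimate, applying Cauchy--Schwarz and Young to the three pieces, whereas you derive it as a corollary of the Bochner chain plus the auxiliary bound $\int\langle x\rangle^{1/2}|\phi|^2\lesssim\int\langle x\rangle^6|\psi|^2$. That auxiliary bound is not proved in the paper (the lemma deliberately leaves the $\phi$ term on the right-hand side of the Hessian estimate, since in the application $\phi$ is controlled separately), but the argument you sketch for it---pointwise $|\phi(x)|\lesssim\langle x\rangle^{-2}\|\langle\cdot\rangle^3\psi\|_{L^2}$ from the monopole-subtracted Newtonian potential together with Cauchy--Schwarz in each of the inner, outer, and annular pieces---does close: the annular region gives $\langle x\rangle^{-5/2}$, the inner Taylor-expanded piece gives $\langle x\rangle^{-2}$, and integration against $\langle x\rangle^{1/2}$ is then finite since $7/2>3$. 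So your route costs an extra lemma that the paper avoids, but in exchange yields the slightly stronger statement that the $\phi$ term can be eliminated from the Hessian bound. The pointwise estimate is essentially the same idea as the paper's, differing only in whether you split at $|y|=|x|/2$ or at $|x-y|=|x|/2$.
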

\begin{Rq}
We refer to the proof of Proposition \ref{Proinidata} in order to deal with the case $ \int_{\R^3} \psi =Q \neq 0$.
\end{Rq}
\begin{proof}
Using the Green function of the Laplacian, we have, for almost all $x \in \R^3$,
\begin{equation}\label{Green}
 4\pi \nabla \phi (x)  = \int_{\R^3_y} \frac{x-y}{|x-y|^3} \psi (y) \dr y = \int_{|x-y| \leq \frac{|x|}{2}}\frac{x-y}{|x-y|^3} \psi (y) \dr y+ \int_{|x-y| \geq \frac{|x|}{2}}\frac{x-y}{|x-y|^3} \psi (y) \dr y  .
 \end{equation}
 We start by the unweighted $L^2$ estimate, for which we use Young's convolution inequality. For this, we need to be careful since the Kernel does not belong to any $L^p$ space. We proceed as follows,
\begin{align*}
\int_{\R^3_x} \bigg| \int_{\R^3_y} \frac{x-y}{|x-y|^3} \psi (y) \dr y \bigg|^2 \dr x & \leq \int_{\R^3_x} \bigg| \int_{\R^3_y} \frac{\mathds{1}_{|x-y| \leq 1}}{|x-y|^2} |\psi (y)| \dr y \bigg|^2 \dr x +\int_{|x| \leq 1} \bigg| \int_{\R^3_y} \frac{\mathds{1}_{|x-y| \geq 1}}{|x-y|^2} |\psi (y)| \dr y \bigg|^2 \dr x  \\
 & \leq \bigg| \int_{\R^3_z} \! \mathds{1}_{|z| \leq 1}\frac{\dr z}{|z|^2} \| \psi \|_{L^2(\R^3)} \bigg|^2+ \bigg|\int_{\R^3_z} \! \mathds{1}_{|z| \geq 1}\frac{\dr z}{|z|^{\frac{10}{3}}} \bigg|^{\frac{6}{5}}\|\psi\|^{2}_{L^{10/9}(\R^3_x)} \! \lesssim  \| \psi \|_{L^2}^2+\|\psi\|^{2}_{L^{10/9}}.
 \end{align*}
Then, we apply Hölder inequality in order to get $\|\psi\|_{L^{10/9}} \lesssim \| \langle \cdot \rangle^{\frac{5}{4}} \psi \|_{L^2}$. We now focus on the weighted $L^\infty$ estimate and we treat first the region $\{ |x| \leq 1 \}$. We have
$$ \forall \, |x| \leq 1, \qquad  |\nabla \phi|(x) \leq  \int_{\R^3_y} \frac{\dr y}{\langle y \rangle^2 |x-y|^2} \dr y \, \sup_{z \in \R^3} \, \langle z \rangle^2 |\psi (z)| \lesssim \sup_{z \in \R^3} \, \langle z \rangle^2 |\psi (z)|.$$
Fix now $|x| \geq 1$ and use \eqref{Green} in order to decompose $\nabla \phi(x)$ into three parts,
$$
 \nabla \phi (x)  = \int_{|x-y| \leq \frac{|x|}{2}}\frac{x-y}{|x-y|^3} \psi (y) \dr y+ \frac{x}{|x|^3}\int_{|x-y| \geq \frac{| x |}{2}} \psi (y) \dr y+\int_{|x-y| \geq \frac{| x |}{2}}\frac{(x-y)|x|^3-x|x-y|^3}{|x|^3|x-y|^3} \psi (y) \dr y.
$$
Then, as $|x-y| \leq |x|/2$ implies $\langle y \rangle \geq |x|/2$, we have
$$ \bigg|\int_{|x-y| \leq \frac{|x|}{2}}\frac{x-y}{|x-y|^3} \psi (y) \dr y \bigg| \lesssim \frac{1}{|x|^4} \int_{|x-y| \leq \frac{|x|}{2}}\frac{1}{|x-y|^2} \langle y \rangle^{4}\left|   \psi (y) \right|  \dr y \lesssim \frac{1}{|x|^3} \sup_{y \in \R^3} \, \langle y \rangle^{4} |\psi (y) |  .$$
Next, since the average of $\psi$ vanishes by assumption, we have 
$$  \bigg| \frac{x}{|x|^3}\int_{|x-y| \geq \frac{|x|}{2}} \psi (y) \dr y \bigg| = \bigg| \frac{x}{|x|^3}\int_{|x-y| \leq \frac{|x|}{2}} \psi (y) \dr y \bigg| \lesssim \frac{1}{| x |^6}  \int_{|x-y| \leq \frac{|x|}{2}} \langle y \rangle^{4} |\psi (y) | \dr y \lesssim \frac{1}{|x|^3} \sup_{z \in \R^3} \, \langle z \rangle^{4} |\psi (z) | .$$
For the last integral, note first that for $a =x-y$,
\begin{equation}\label{eq:techLapla}
|a|x|^3-x|a|^3|=|(a-x)|x|^3+x(|x|^3-|a|^3)| \lesssim |a-x||x|(|a|^2+|x|^2)=|y||x|(|x-y|^2+|x|^2).
\end{equation} 
As $|y-x| \geq |x|/2$ on the domain of integration, we get
$$
\int_{|x-y| \geq \frac{| x |}{2}}\frac{|(x-y)|x|^3-x|x-y|^3|}{|x|^3|x-y|^3} \big|\psi (y) \big| \dr y  \lesssim \int_{|x-y| \geq \frac{|x|}{2}}  \frac{|y||\psi (y)|}{|x|^2|x-y|}   \dr y  \lesssim   \frac{1}{|x|^3}\int_{|x-y| \geq \frac{|x|}{2}}  \frac{\dr y}{\langle y \rangle^4 }   \sup_{z \in \R^3} \, \langle z \rangle^{5} |\psi (z) |.$$
Since the integral on the right hand side has a finite value, this concludes the proof of the $L^\infty$ estimate. For the weighted $L^2$ estimate, we consider again these three terms. It suffices to deal with the region $\{|x| \geq 1\}$ since we already treated the complement one. By decomposing the Kernel as $1/|z|^2 \mathds{1}_{|z| \leq 1}+1/|z|^2 \mathds{1}_{|z| > 1}$ and by applying Young's convolution inequality
\begin{align*}
&\bigg| \int_{|x| \geq 1} \langle x \rangle^{\frac{5}{2}} \bigg| \int_{|x-y| \leq \frac{\langle x \rangle}{2}}\frac{x-y}{|x-y|^3} \psi (y) \dr y \bigg|^2 \dr x \bigg|^{\frac{1}{2}}  \lesssim \bigg| \int_{\R^3_x} \bigg| \int_{|x-y| \leq \frac{|x|}{2}}\frac{1}{|x-y|^2} \langle y \rangle^{\frac{5}{4}} \left|   \psi (y) \right|  \dr y \bigg|^2 \dr x \bigg|^{\frac{1}{2}} \\
& \qquad \qquad  \leq \int_{\R^3_z} \mathds{1}_{|z| \leq 1} \frac{\dr z}{|z|^2}  \big\| \langle \cdot \rangle^{\frac{5}{4}} \, \psi \big\|_{L^2(\R^3)}+\bigg| \int_{\R^3_z} \mathds{1}_{|z| \geq 1} \frac{\dr z}{|z|^4} \bigg|^{\frac{1}{2}}  \big\| \langle \cdot \rangle^{\frac{5}{4}} \, \psi \big\|_{L^1(\R^3)} \lesssim \big\| \langle \cdot \rangle^{\frac{5}{4}} \, \psi \big\|_{L^2(\R^3)}+\big\| \langle \cdot \rangle^{\frac{5}{4}} \, \psi \big\|_{L^1(\R^3)}.
\end{align*}
For the second term, we use again that the average of $\psi$ vanishes and then that $|y| \geq |x| / 2$ on the domain of integration in order to get
$$ \int_{|x| \geq 1} \langle x \rangle^{\frac{5}{2}} \bigg| \frac{x}{|x|^3}\int_{|x-y| \geq \frac{| x |}{2}} \psi (y) \dr y \bigg|^2 \dr x \lesssim \int_{|x| \geq 1} \frac{1}{|x|^{\frac{7}{2}}} \bigg| \int_{|x-y| \leq \frac{|x|}{2}} \langle y \rangle \left|   \psi (y) \right|  \dr y \bigg|^2 \dr x \lesssim \big\| \langle \cdot \rangle \, \psi \big\|^2_{L^1(\R^3)}  .$$
Finally, for the last term, we use \eqref{eq:techLapla} and then that $|x-y| \gtrsim |x|$ on the domain of integration. It yields
$$
 \int_{|x| \geq 1} \langle x \rangle^{\frac{5}{2}} \bigg|\int_{|x-y| \geq \frac{| x |}{2}}\frac{(x-y)|x|^3-x|x-y|^3}{|x|^3|x-y|^3} \psi (y) \dr y \bigg|^2 \dr x   \lesssim  \int_{|x| \geq 1} \frac{1}{|x|^{\frac{7}{2}}} \bigg| \int_{|x-y| \geq \frac{|x|}{2}}\langle y \rangle  |\psi (y)|   \dr y \bigg|^2 \dr x  \lesssim \big\| \langle \cdot \rangle \, \psi \big\|^2_{L^1} $$
and the result ensues from $ \big\| \langle \cdot \rangle^{5/4} \, \psi \big\|_{L^1(\R^3)}\lesssim  \big\| \langle \cdot \rangle^3 \, \psi \big\|^2_{L^2(\R^3)}$. For the second order derivatives, we remark that
$$ \Delta \big( \langle x \rangle^{\frac{9}{4}} \phi \big)= \langle x \rangle^{\frac{9}{4}} \psi+\frac{9}{2} \langle x \rangle^{\frac{1}{4}} x^i\partial_{x^i} \phi + \frac{27}{4}  \langle x \rangle^{\frac{1}{4}} \phi +\frac{9}{16}|x|^2 \langle x \rangle^{-\frac{7}{4}} \phi.$$
By elliptic regularity, we then get
$$ \big\| \langle \cdot \rangle^{\frac{9}{4}} \nabla^2 \phi \big\|_{L^2_x} \lesssim \big\| \nabla^2 \big(\langle \cdot \rangle^{\frac{9}{4}} \phi \big) \big\|_{L^2_x}+ \big\| \langle \cdot \rangle^{\frac{5}{4}} \nabla \phi \big\|_{L^2_x}+ \big\| \langle \cdot \rangle^{\frac{1}{4}} \phi \big\|_{L^2_x} \lesssim \big\| \langle \cdot \rangle^{\frac{9}{4}} \psi \big\|_{L^2_x}+\big\| \langle \cdot \rangle^{\frac{5}{4}} \nabla \phi \big\|_{L^2_x}+ \big\| \langle \cdot \rangle^{\frac{1}{4}} \phi \big\|_{L^2_x}.$$
\end{proof} 

We are now able to state higher order statements. For this, it will be convenient to exploit the following ordered set of vector fields,
$$
\mathbb{Q} := \{ \partial_{x^k}, \, \Omega_{ij}, \, S_x \, | \, 1 \leq k \leq 3, \, 1 \leq i, \, j \leq 3 \}=\{ \Gamma^\ell \, | \, 1 \leq \ell \leq 7\}, \qquad S_x:= x^\ell \partial_{x^\ell}.
$$
which verify $\mathbb{Q} \subset \mathbb{K}$ on $\{t=0 \}$ as well as the following commutation relations,
\begin{equation}\label{comudelta}
 \forall \, \Gamma \in \mathbb{Q} \setminus {S_x}, \quad [\Delta, \Gamma]=0,  \qquad \qquad [\Delta,S_x]=2\Delta.
 \end{equation}
If $\gamma \in \llbracket 1 , 7 \rrbracket^p$, we define the operator $\Gamma^\gamma \in \mathbb{Q}^{|\gamma|} $ as $\Gamma^{\gamma_1} \dots \Gamma^{\gamma_p}$. Note that these weighted derivatives capture decay in $x$. Indeed, in view of the relations $|x|^2\partial_{x^j}=x^jS_x+x^i\Omega_{ij}$, for any sufficiently regular function $\Phi$,
\begin{equation}\label{eq:equinormDelta}
 \sup_{|\gamma| \leq N} |\nabla \Gamma^\gamma \Phi|(x) \lesssim \sup_{|\kappa| \leq N} \langle x \rangle^{|\kappa|}| \nabla \partial_x^\kappa \Phi|(x) \lesssim \sup_{|\gamma| \leq N} |\nabla \Gamma^\gamma \Phi|(x).
 \end{equation}
 \begin{Lem}\label{Lemdatahigh0}
Let $N \geq 0$, $t_0 \geq 0$, $f :  [t_0,+ \infty[ \times \R^3_x \times \R^3_v \to \R$ be a sufficiently regular function and $\phi$ be the unique $H^1(\R^3)$ solution to $\Delta \, \phi = \int_v f(t_0,\cdot,v)\dr v$. Then, 
$$ \forall \, |\kappa| \leq N, \qquad  \int_{\R^3_x} \langle x \rangle^{2|\kappa|} \left|\nabla \partial_x^\kappa \phi(x) \right|^2   \dr x \lesssim_{t_0} \sup_{|\beta| \leq N} \int_{\R^3_x} \int_{\R^3_v} \langle x \rangle^{\frac{5}{2}} \langle v \rangle^4 \big| \widehat{Z}^\beta f (t_0,x,v) \big|^2 \dr v \dr x. $$
\end{Lem}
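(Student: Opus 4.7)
The plan is to reduce the weighted estimate for standard derivatives $\partial_x^\kappa \phi$ to an unweighted $L^2$ estimate for derivatives along the conformal commutators in $\mathbb{Q}$, and then to reduce this to the known Lemma \ref{Leminidata}. The starting observation is the equivalence \eqref{eq:equinormDelta}, which allows one to replace $\sup_{|\kappa|\leq N} \langle x \rangle^{|\kappa|} |\nabla \partial_x^\kappa \phi|$ by $\sup_{|\gamma|\leq N} |\nabla \Gamma^\gamma \phi|$ in pointwise norms, and hence also in $L^2_x$ norms. So the goal becomes the control of $\|\nabla \Gamma^\gamma \phi\|_{L^2_x}$ for every $|\gamma|\leq N$.

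The next step uses the commutator identity \eqref{comudelta}, that is $[\Delta,\Gamma]=0$ for $\Gamma \in \mathbb{Q}\setminus\{S_x\}$ and $[\Delta,S_x]=2\Delta$. An easy induction on $|\gamma|$ shows that $\Delta \Gamma^\gamma \phi$ is a linear combination, with integer coefficients depending only on $\gamma$, of terms of the form $\Gamma^{\gamma'} \Delta \phi = \Gamma^{\gamma'} \int_v f(t_0,\cdot,v) \dr v$ with $|\gamma'|\leq |\gamma|$. Applying the unweighted $L^2$ estimate of Lemma \ref{Leminidata} to $\Gamma^\gamma \phi$, which indeed belongs to $H^1(\R^3)$ since $\phi$ is sufficiently regular, yields
\[
\|\nabla \Gamma^\gamma \phi\|_{L^2_x}^2 \lesssim \sup_{|\gamma'|\leq |\gamma|} \Big\| \langle x \rangle^{5/4} \, \Gamma^{\gamma'} \textstyle\int_v f(t_0,\cdot,v)\dr v \Big\|_{L^2_x}^2.
\]

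It remains to move the weighted commutators inside the velocity integral. Each single application of $\Gamma \in \mathbb{Q}$ can be transferred onto $f$ as follows: for $\Gamma=\partial_{x^k}$ this is immediate; for $\Gamma=\Omega_{ij}$ an integration by parts in $v$ gives $\Omega_{ij} \int_v f \dr v = \int_v \widehat{\Omega}_{ij} f \dr v$, in accordance with Lemma \ref{LemCom}; and for $\Gamma=S_x=x^\ell \partial_{x^\ell}$ one uses the algebraic identity $S_x = S - t\partial_t$, so that at $t=t_0$,
\[
S_x \textstyle\int_v f(t_0,\cdot,v)\dr v = \int_v \big(\widehat{S}f - 3 f - t_0 \, \partial_t f\big)(t_0,\cdot,v) \dr v,
\]
where $\partial_t=\widehat{\partial_t}\in\widehat{\mathbb{K}}$ is itself a commutator. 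Iterating these three identities $|\gamma'|$ times yields an expression for $\Gamma^{\gamma'} \int_v f \dr v$ as a linear combination, with coefficients depending polynomially on $t_0$, of terms $\int_v \widehat{Z}^\beta f(t_0,\cdot,v) \dr v$ with $|\beta| \leq |\gamma'|$.

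Finally, the Cauchy--Schwarz inequality in $v$, together with $\int_v \langle v \rangle^{-4} \dr v <+\infty$, gives
\[
\Big|\textstyle\int_v \widehat{Z}^\beta f(t_0,x,v)\dr v\Big|^2 \lesssim \int_v \langle v \rangle^4 \big|\widehat{Z}^\beta f\big|^2(t_0,x,v)\dr v,
\]
so that multiplying by $\langle x \rangle^{5/2}$ and integrating in $x$ gives the bound by the right-hand side of the statement. Combining these steps produces the desired inequality. The only mildly delicate point, which is really more bookkeeping than obstacle, is the treatment of the scaling $S_x$: one must both handle the commutator $[\Delta,S_x]=2\Delta$ (producing the $t_0$-dependent coefficients at the elliptic level) and convert $x^\ell \partial_{x^\ell}$ into a combination of the lifted operators $\widehat{S}$ and $\partial_t$ acting on $f$ (producing a further $t_0$-dependent polynomial factor), which is precisely why the final constant depends on $t_0$.
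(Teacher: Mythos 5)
Your proof is correct and follows essentially the same route as the paper's: reduce via \eqref{eq:equinormDelta} and \eqref{comudelta} to applying the unweighted $L^2$ estimate of Lemma \ref{Leminidata} to $\Gamma^\gamma\phi$, transfer the $\mathbb{Q}$-derivatives onto $f$ by integration by parts in $v$ and the identity $S_x=S-t_0\partial_t$ (whence the $t_0$-dependence), and finish with Cauchy--Schwarz in $v$. Your write-up is slightly more explicit than the paper's terse proof, but the structure and key steps are identical.
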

\begin{proof}
Apply first Lemma \ref{Leminidata} to $\Gamma^\gamma \phi$, for any $|\gamma| \leq N$, together with the commutation relations \eqref{comudelta}. Then, we use \eqref{eq:equinormDelta} for $p=1$ and we remark that, by integration by parts in $v$ and since $S_x=S-t_0 \partial_t$,
$$ \int_{\R^3_x} \int_{\R^3_v} \langle x \rangle^{\frac{5}{2}} \bigg| \int_{\R^3_v} \Gamma^\xi f(t_0,x,v) \dr v \bigg|^2 \dr x \lesssim \langle t_0 \rangle^N \sup_{|\beta| \leq N} \int_{\R^3_x} \int_{\R^3_v} \langle x \rangle^{\frac{5}{2}} \bigg| \int_{\R^3_v} \widehat{Z}^\beta f(t_0,x,v) \dr v \bigg|^2 \dr x.$$
It remains to use the Cauchy-Schwarz inequality in $v$.
\end{proof}
\begin{Lem}\label{Lemdatahigh}
Let $N \geq 0$, $\psi : \R^3 \to \R$ be a sufficiently regular function supported in $\{x \in \R^3 \, | \, |x| \leq 3\}$ and such that $\int_{\R^3} \psi=0$. Let further $\phi$ be the unique $H^1(\R^3)$ solution to $\Delta \, \phi = \psi$. Then, for any $|\kappa| \leq N$,
$$ \forall \, x \in \R^3, \qquad  \langle x \rangle^{3+|\kappa|} \left|\nabla \partial_x^\kappa \phi(x) \right| \lesssim \big\| \psi \big\|_{W^{N,\infty}(\R^3)}.  $$
If $N \geq 1$, we have, for any $|\kappa| \leq N+1$,
$$  \int_{\R^3_x} \langle x \rangle^{\frac{5}{2}+2|\kappa|} \left|\nabla \partial_x^\kappa \phi(x) \right|^2   \dr x \lesssim \big\| \psi \big\|_{H^{N}(\R^3)}. $$
\end{Lem}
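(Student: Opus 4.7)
\medskip

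\noindent\textbf{Plan.} The strategy is to bootstrap Lemma \ref{Leminidata} to higher order by commuting $\Delta$ with the vector fields $\Gamma \in \mathbb{Q}$. An induction based on \eqref{comudelta} yields $\Delta(\Gamma^\gamma \phi) = \sum_{|\xi| \leq |\gamma|} c^\gamma_\xi \Gamma^\xi \psi$ for some $c^\gamma_\xi \in \mathbb{Z}$. To apply Lemma \ref{Leminidata} to $\Gamma^\gamma \phi$, I need to check that each source $\Gamma^\xi \psi$ is compactly supported in $\{|x| \leq 3\}$ (which follows since each element of $\mathbb{Q}$ is tangential to spheres about the origin) and mean zero: for $\partial_{x^k}$ and $\Omega_{ij}$ this is immediate by integration by parts, and for $S_x$ one has $\int_{\R^3} S_x \psi = -3\int \psi = 0$, so an induction gives $\int \Gamma^\xi \psi = 0$ for $|\xi| \leq N$.

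\medskip

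\noindent For the pointwise bound, I would apply the $L^\infty$ part of Lemma \ref{Leminidata} directly to $\Gamma^\gamma \phi$ for each $|\gamma| \leq N$. Since $\Gamma^\xi \psi$ is compactly supported, $\sup_y \langle y \rangle^5 |\Gamma^\xi \psi| \lesssim \|\Gamma^\xi \psi\|_{L^\infty}$, giving $\sup_x \langle x \rangle^3 |\nabla \Gamma^\gamma \phi| \lesssim \|\psi\|_{W^{|\gamma|,\infty}}$. Translating to flat derivatives via \eqref{eq:equinormDelta} yields $\langle x \rangle^{3+|\kappa|}|\nabla \partial_x^\kappa \phi| \lesssim \langle x \rangle^3 \sup_{|\gamma| \leq |\kappa|} |\nabla \Gamma^\gamma \phi| \lesssim \|\psi\|_{W^{N,\infty}}$.

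\medskip

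\noindent The weighted $L^2$ estimate is more delicate because the naive argument would cost $\|\psi\|_{H^{N+1}}$ rather than the claimed $\|\psi\|_{H^N}$: this is where the second part of Lemma \ref{Leminidata} (elliptic regularity) is essential. Applying it to $\Gamma^\gamma \phi$ for $|\gamma| \leq N$ gives
\[
\int \langle x \rangle^{9/2} |\nabla^2 \Gamma^\gamma \phi|^2 \dr x \lesssim \int \langle x \rangle^6 |\Delta \Gamma^\gamma \phi|^2 \dr x + \int \langle x \rangle^{1/2} |\Gamma^\gamma \phi|^2 \dr x \lesssim \|\psi\|_{H^N}^2 + \int \langle x \rangle^{1/2} |\Gamma^\gamma \phi|^2 \dr x,
\]
since $\Gamma^\xi \psi$ is supported in $\{|x| \leq 3\}$. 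The last term I would handle by a weighted Hardy inequality in $\R^3$ (valid because $\Gamma^\gamma \phi \to 0$ at infinity by the Newton-potential representation with compactly supported mean-zero source), reducing it to $\int \langle x \rangle^{5/2} |\nabla \Gamma^\gamma \phi|^2 \dr x$, which the first part of Lemma \ref{Leminidata} controls by $\|\psi\|_{H^N}^2$. The upshot is uniform control of $\int \langle x \rangle^{9/2} |\nabla^2 \Gamma^\gamma \phi|^2 + \int \langle x \rangle^{5/2} |\nabla \Gamma^\gamma \phi|^2$ by $\|\psi\|_{H^N}^2$ for $|\gamma| \leq N$.

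\medskip

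\noindent To conclude for $|\kappa| \leq N+1$, I use \eqref{eq:equinormDelta} to bound $\langle x \rangle^{|\kappa|}|\nabla \partial_x^\kappa \phi|$ by $\sup_{|\gamma| \leq |\kappa|}|\nabla \Gamma^\gamma \phi|$. The case $|\kappa| \leq N$ is immediate from the first estimate. For $|\kappa| = N+1$, I write $\Gamma^\gamma = \Gamma \Gamma^{\gamma'}$ with $|\gamma'| = N$; a direct computation gives $|\nabla \Gamma^\gamma \phi| \lesssim |\nabla \Gamma^{\gamma'}\phi| + \langle x \rangle |\nabla^2 \Gamma^{\gamma'}\phi|$, so that
\[
\int \langle x \rangle^{5/2} |\nabla \Gamma^\gamma \phi|^2 \dr x \lesssim \int \langle x \rangle^{5/2} |\nabla \Gamma^{\gamma'}\phi|^2 \dr x + \int \langle x \rangle^{9/2}|\nabla^2 \Gamma^{\gamma'}\phi|^2 \dr x \lesssim \|\psi\|_{H^N}^2,
\]
which closes the estimate. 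The main obstacle is precisely this one-derivative gain: without exploiting the weighted second-order elliptic regularity from Lemma \ref{Leminidata} together with a suitable Hardy-type inequality, one would only obtain the weaker bound $\|\psi\|_{H^{N+1}}$.
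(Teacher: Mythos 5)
Your proof is correct and follows the same skeleton as the paper's: commute $\Delta$ past the operators in $\mathbb{Q}$ using \eqref{comudelta}, check that each source $\Gamma^\xi\psi$ remains compactly supported and mean-zero, apply Lemma~\ref{Leminidata} to $\Gamma^\gamma\phi$, translate to flat derivatives via \eqref{eq:equinormDelta}, and close the top-order ($|\kappa|=N+1$) case by the weighted second-order elliptic estimate from Lemma~\ref{Leminidata}, which you correctly identify as the source of the one-derivative gain. The one place you diverge from the paper is the treatment of the lower-order error $\int\langle x\rangle^{1/2}|\Gamma^{\gamma}\phi|^2\,\mathrm{d}x$ produced by that elliptic estimate. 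You invoke a weighted Hardy inequality (plus the Newton-potential decay of $\Gamma^\gamma\phi$) to convert it into $\int\langle x\rangle^{5/2}|\nabla\Gamma^\gamma\phi|^2$. The paper instead uses a purely pointwise trick: since $N\geq 1$ one can split $\Gamma^\gamma=\Gamma\Gamma^\xi$ with $|\xi|=N-1$ and bound $\langle x\rangle^{1/4}|\Gamma^\gamma\phi|\leq\langle x\rangle^{5/4}|\nabla\Gamma^\xi\phi|$ (because each $\Gamma\in\mathbb{Q}$ has coefficients that are either constant or linear in $x$), so no integral inequality or decay-at-infinity discussion is needed. Both routes land on the same term and cost $\|\psi\|_{H^N}$; the paper's is marginally more elementary and self-contained. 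One small slip in your write-up: the claim that $\Gamma^\xi\psi$ stays supported in $\{|x|\leq 3\}$ "since each element of $\mathbb{Q}$ is tangential to spheres" is not the right reason ($\partial_{x^k}$ and $S_x$ are not tangential to spheres about the origin); it holds simply because a first-order differential operator cannot enlarge the support.
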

\begin{proof}
By integration by parts and an induction, we have
$$ \forall \, |\gamma| \leq N, \qquad  \int_{\R^3} \Gamma^\gamma \psi =0.$$
Apply then Lemma \ref{Leminidata} to $\Gamma^\gamma \phi$, for any $|\gamma| \leq N$, together with the commutation relations \eqref{comudelta}. Since $\psi$ is compactly supported, one obtains both the $L^\infty_x$ and the $L^2_x$ estimates, up to $|\kappa| \leq N$, by using \eqref{eq:equinormDelta}. For the record, we in particular have
$$ \forall \, |\beta| \leq N, \qquad \big\| \langle \cdot \rangle^{\frac{5}{4}} \Gamma^\beta \phi \big\|_{L^2(\R^3_x)} \lesssim \big\| \psi \big\|_{H^{N}(\R^3)}. $$
 Assume now that $N \geq 1$ and remark that
$$ \forall \, |\kappa|=N+1, \qquad  \langle x \rangle^{|\kappa|} \left|\nabla \partial_x^\kappa \phi(x) \right|\lesssim \sup_{|\gamma| \leq N+1}  \left| \nabla \Gamma^\gamma \phi(x) \right| \lesssim \sup_{|\gamma| = N} \langle x \rangle \left| \nabla^2 \Gamma^\gamma \phi(x) \right| + \sup_{|\beta| \leq N} \left|\nabla \Gamma^\beta \phi(x) \right| .$$ 
We have already proved the required weighted $L^2$ estimate for the second term on the right hand side. Consider then $|\gamma| =N$, $|\xi|=N-1$ and $\Gamma \in \mathbb{Q}$ such that $\Gamma^\gamma=\Gamma \Gamma^\xi$. As $\Gamma$ is either a translation $\partial_{x^i}$ or a homogeneous vector field, we have
$$ \langle x \rangle^{\frac{1}{4}} \left| \Gamma^\gamma \phi(x) \right| \leq \langle x \rangle^{\frac{5}{4}} \left|\nabla \Gamma^\xi \phi(x) \right|,$$
so that we can control $\langle \cdot \rangle^{\frac{9}{4}} \left| \nabla^2 \Gamma^\gamma \phi \right|$ in $L^2$ by applying once again from Lemma \ref{Leminidata} and \eqref{comudelta}.
\end{proof}

\renewcommand{\refname}{References}
\bibliographystyle{abbrv}
\bibliography{biblio}

\end{document}